\documentclass[reqno]{amsart}
\usepackage{amssymb,amsmath,epsfig,graphics,mathrsfs}

\usepackage{tabularx}
\usepackage{fancyhdr}

\usepackage{hyperref}
\hypersetup{
	colorlinks   = true,
	urlcolor     = blue,
	linkcolor    = blue,
	citecolor   = red ,
	bookmarksopen=true
}

\usepackage{dsfont}

\usepackage{cancel} 
\usepackage{esvect}
\usepackage{tikz}
\usepackage{tikz-3dplot}

\usepackage{pgfplots}
\usepackage{pgfmath}

\pgfplotsset{compat=1.18}

\usepackage{multicol}

\usepackage{amssymb,amsmath,epsfig,graphics,mathrsfs}

\usepackage{tikz}

\usepackage{graphicx}
\usepackage{caption}

\usepackage{bbm}
\usepackage[normalem]{ulem}

\usepackage[dvipsnames,table,xcdraw]{xcolor} 
\usepackage[a4paper,
left=1in,
right=1in,
top=1in,
bottom=1in,
footskip=.25in]{geometry}

\def \R {\mathbb{R}}
\def \l {\lambda}

\numberwithin{equation}{section}

\newcommand{\beq}{\begin{equation}}
	\newcommand{\bea}[1]{\begin{array}{#1} }
		\newcommand{\eeq}{ \end{equation}}
	\newcommand{\ea}{ \end{array}}

\newtheorem{theorem}{Theorem}[section]
\newtheorem{lemma}[theorem]{Lemma}
\newtheorem{proposition}[theorem]{Proposition}
\newtheorem{corollary}[theorem]{Corollary}
\newtheorem{remark}[theorem]{Remark}
\newtheorem{definition}[theorem]{Definition}

\makeatletter
\def\@settitle{\begin{center}%
		\baselineskip14\p@\relax
		\bfseries
		\uppercasenonmath\@title
		\@title
		\ifx\@subtitle\@empty\else
		\\[1ex]\uppercasenonmath\@subtitle
		\footnotesize\mdseries\@subtitle
		\fi
	\end{center}%
}
\def\subtitle#1{\gdef\@subtitle{#1}}
\def\@subtitle{}
\makeatother

\newtheorem*{claim}{Claim}

\usepackage[section]{placeins}

\usepackage{pgfplots}

\begin{document}
	\title[Long-time dynamics for the 2D Keller--Segel equation at critical mass]{Determination of the long-time dynamics for the 2D Keller--Segel equation at critical mass} 

\author[F.~Buseghin]{Federico Buseghin}
\address{\noindent F.~Buseghin: CY Cergy Paris University. Laboratoire AGM, 2 avenue Adolphe Chauvin 95302 Cergy-Pontoise}
\email{federico.buseghin@cyu.fr}

\author[C.~Collot]{Charles Collot}
\address{\noindent C.~Collot: CY Cergy Paris University. Laboratoire AGM, 2 avenue Adolphe Chauvin 95302 Cergy-Pontoise}
\email{charles.collot@cyu.fr}

\setcounter{tocdepth}{2}

\begin{abstract}

We consider the parabolic-elliptic Keller-Segel equation in two dimensions on the whole space. We prove that for arbitrary initial data with critical mass $8\pi$ and finite second momentum, all solutions have the same universal behaviour. They are globally defined and, asymptotically for large times, they converge to a renormalized stationary state of the equation which concentrates around the center of mass of the solution at a universal logarithmic-in-time scale. Our result holds for general solutions without symmetry assumptions, and we furthermore provide explicit convergence rates. In the radial case, by combining our result with previous ones (by Blanchet-Dolbeault-Perthame, Mizoguchi, and related works), this achieves a complete classification of the possible dynamics: for subcritical masses solutions converge to a self-similar expander, at the critical mass they concentrate a stationary state in infinite time at the aforementioned universal scale, and for supercritical masses they blow up in finite time by type II concentration of a stationary state at another universal scale. Our proof starts with soliton resolution, then controls the motion of the stationary state and the remainder in new spaces, and devises new techniques for the multi-scale linearized analysis, which eventually enables the stability and modulation analysis around an approximate solution.

\end{abstract}

\maketitle
\makeatletter
\def\@oddhead{\normalfont\scriptsize
\hfil LONG-TIME DYNAMICS FOR THE 2D KELLER--SEGEL EQUATION AT CRITICAL MASS
\hfil\thepage}
\def\@evenhead{\normalfont\scriptsize
\thepage\hfil F.~BUSEGHIN AND C.~COLLOT\hfil}
\def\@oddfoot{}
\def\@evenfoot{}
\makeatother

\section{Introduction}
\subsection{Dynamics of the Keller-Segel equation on the plane and main result}

We consider the two dimensional Keller-Segel system
\begin{align}\label{KS}
	\begin{cases}
		\partial_{t}u=\Delta u - \nabla \cdot(u\nabla \Phi_{u}), \ \ \ (x,t)\in \mathbb{R}^{2}\times(0,\infty),\\
		-\Delta \Phi_{u}(x)=u(x), \ \ \ x\in \mathbb{R}^{2} \\
		u(0,x)=u_{0} \geq0, \ \ \ \ x \in \mathbb{R}^{2}.
	\end{cases}
\end{align}
The second equation is solved explicitly via the formula $\Phi_u(x)=-(2\pi)^{-1}\log|\cdot|*u$, leading to
\begin{equation} \label{id:definition-nablaPhiu}
\nabla \Phi_u (x)= -\frac{1}{2\pi}\int_{\mathbb R^2} \frac{x-y}{|x-y|^2}u(y)dy.
\end{equation}
While $\log |\cdot|*u$ might not be well-defined, here as in the literature, an abuse is made in that solutions are those of the first equation in \eqref{KS} with $\nabla \Phi_u$ given by \eqref{id:definition-nablaPhiu}.

The system \eqref{KS} is generally regarded as the fundamental mathematical model for describing chemotactic aggregation of certain microorganisms \cite{P,KS}. For recent surveys on the subject, we refer the reader to \cite{H,HP,BBook,SBook}. It describes the evolution of a population density subject to diffusion and chemotactic drift induced by a chemical signal. Moreover, after a suitable rescaling, the model can also be interpreted as describing the distribution of self-gravitating particles \cite{CS}. Since the seminal work of Jäger and Luckhaus \cite{JL}, the mathematical analysis of this system has attracted considerable attention. We refer the interested reader to some representative mathematical studies on the consequences of the competition between diffusion and chemotactic aggregation in \eqref{KS} and related systems \cite{CP,NS,B,HV0,SS,BKLN,BDP,S,BCM,BCC,CHVY}.

It is known that the Cauchy problem for \eqref{KS} is well-posed in $L^1$, see \cite{W} and references therein and Theorem \ref{th:lwp:L1} here. Solutions with $u_0\in L^1$ are instantaneously smooth $u\in C^{\infty}((0,T)\times \mathbb R^2)$ and bounded together with their derivatives $\nabla^k u \in L^\infty_{loc}((0,T),L^\infty(\mathbb R^2))$ for all $k\in \mathbb N$, and their maximal time of existence $T>0$ is finite if and only if $\lim_{t\uparrow T}\| u(t)\|_{L^\infty(\mathbb R^2)}=\infty$. Their mass is preserved,
\begin{equation} \label{id:mass-conservation}
	M[u(t)]=\int_{\mathbb R^2} u(t,x)dx =M[u_0].
\end{equation}
The equation \eqref{KS} admits spatial translation and scaling invariances, in that if $u$ is a solution then so is
$$
(t,x)\mapsto \frac{1}{\lambda^2}u(\frac{t}{\lambda^2},\frac{x-x_0}{\lambda}).
$$
The mass is invariant by this transformation, so that \eqref{KS} is $L^1$-critical. In the present article, the data is assumed to be \emph{localized} in that it further has finite second momentum,
\begin{equation} \label{id:hp-data-2}
	u_0\in L^1(\langle x \rangle^2 dx).
\end{equation}

The Keller-Segel system \eqref{KS} is also well-posed in $L^1(\langle x \rangle^2dx)$, see Theorem \ref{thm:lwp-L1x2}. The first momentum of such solutions is preserved
\begin{equation}\label{first-momentum}
	M^{-1}\int_{\mathbb R^2} x u(t,x)dx = M^{-1}\int_{\mathbb R^2} x u_0(x)dx = x^*[u_0]\in \mathbb R^2
\end{equation}
and their second momentum
$$
\int_{\mathbb R^2} |x-x^*|^2u(t)dx= \mu [u(t)]>0
$$
 is explicitly given by
\begin{equation}\label{second-momentum}
	\mu [u(t)] =\mu[u_0] +4t  \left(1-\frac{M}{8\pi} \right)M.
\end{equation}
Furthermore, the free energy functional
\begin{align}\label{freeenergy}
	\mathcal{F}[u](t)=\int_{\R^2}u(x,t)\log u(x,t)dx - \frac{1}{2}\int_{\R^2}u(x,t)\Phi_u(x,t)dx.
\end{align}
is well-defined for all $t\in (0,T)$. It decreases over time with
\begin{equation}\label{freeenergy-dissipation}
	\mathcal{F}[u](t')=\mathcal{F}[u](t)-\int_{t}^{t'} \int_{\mathbb R^2} u(s,x)|\nabla \log u(s,x)-\nabla \Phi_{u(s)}(x)|^2 dx
\end{equation}
for all $0<t<t'<T$.

The critical mass for solutions is $8\pi$. Indeed, the free energy \eqref{freeenergy} is bounded from below precisely at mass $8\pi$, and its minimizers are exactly the stationary states
\begin{align}\label{steadstate}
U(x)=\frac{8}{(1+|x|^2)^2}, \qquad \int_{\mathbb R^2} U(x),dx=8\pi,
\end{align}
see Section \ref{sec:stationary-states}. This characterization follows from the sharp logarithmic Hardy--Littlewood--Sobolev inequality \cite{CL}, which can be viewed as a limiting case of the sharp Hardy--Littlewood--Sobolev inequality \cite{Lieb}. This variational structure was exploited in the seminal work of Blanchet, Dolbeault and Perthame \cite{BDP} to rigorously establish $8\pi$ as the threshold between global existence and blow-up. When the total mass is strictly below the critical threshold $8\pi$, it is well understood that diffusion dominates the dynamics. Solutions converge to unique self-similar expanders and the corresponding convergence rates were characterized in \cite{BDP,BKLN,N1,EM}. Conversely, when the mass exceeds $8\pi$, the negativity of the production rate of the second moment \eqref{second-momentum} necessarily induces finite-time blow-up. The blow-up is of type II, and the first rigorous construction of a precise example with anomalous self-similarity is due to the pioneering works \cite{HV0, HV1, HV2, V}, which was subsequently shown to be stable in the radial case in \cite{RS}, and in the non-radial case in \cite{CGMN1} where the precise blow-up rate was proved. A complete classification of radial solutions in the supercritical regime was later obtained in \cite{M}. Beyond radial symmetry, the dynamics in the supercritical regime becomes considerably richer. In addition to the formation of multiple blow-up spikes at isolated points \cite{BDdPM}, interactions between singularities may occur, including the collision and merging of two or more concentration points.
This phenomenon was first described through formal matched asymptotic expansions in \cite{SSV}, and a rigorous construction was subsequently provided in \cite{CGMN2}.

We will focus on \emph{threshold solutions}, namely solutions with the \emph{critical mass}
\begin{equation} \label{threshold mass}
M=\int_{\mathbb R^2} u(t,x)\,dx =8\pi.
\end{equation}
Such $L^1$ solutions are known to be globally regular, see \cite{W}, \cite{BCM} for data with $u_0\in L^1(\langle x \rangle^2dx)$ and \cite{BKLN} for an earlier result in the radial case. Note that \cite{W} shows the existence of a unique global mild solution, and that Theorem \ref{th:lwp:L1} shows global mild solutions are globally regular. In this case, we get from \eqref{second-momentum} that the second momentum is constant over time $\int |x-x^*|^2 u(t,x)dx =\mu[u_0]$. Moreover, these solutions converge to a Dirac at their center of mass in the sense that $u(t_n)\to 8\pi \delta_{x^*}$ in the weak-star sense for measures for any sequence  $t_n\to \infty$ \cite{BCM}. An anomalous self-similar asymptotic behaviour had been predicted on a formal level in \cite{C, CS}, see also \cite{Serr}. The first rigorous construction of a radial solution with such behaviour was achieved in \cite{GM}, where its stability within the radial class was also established. This analysis was later extended to the non-radial setting in \cite{DdPDMW}. The solutions of \cite{GM,DdPDMW} have the form
\begin{align} \label{mainth:intro-decomposition}
	u(t,x) \;=\; \frac{1}{\lambda^{2}(t)} \, U\!\left(\frac{x-\xi(t)}{\lambda(t)}\right)+\tilde u, 
	\qquad \lambda(t) = \frac{c}{\sqrt{\ln t}}, 
	\qquad \xi(t) \to x^{\star} \in \mathbb{R}^{2},
\end{align}
for some constant $c>0$, where $\tilde u$ converges to zero in some appropriate sense. The main result of this article is to prove that in fact \emph{this behaviour is universal and holds for all solutions}.

\begin{theorem}[Universal large time asymptotics at critical mass] \label{main:th:main}
	
	Let $u$ be a solution of the Keller-Segel system \eqref{KS} whose initial data satisfies $\int_{\mathbb R^2} u_0(x)(1+|x|^2)dx<\infty $ and which has critical mass $M(u)=8\pi$. Let $x^*$ denote its center of mass and $\mu$ its second momentum as defined in \eqref{first-momentum} and \eqref{second-momentum}.
	
	Then the solution $u$ is global and its second momentum is constant over time. Moreover, it can be decomposed for large times as a stationary state concentrating at its center of mass
	$$
	u(x,t)=\frac{1}{\lambda^2(t)}U\left(\frac{x-x(t)}{\lambda(t)}\right)+\tilde u(x,t)
	$$
	at a scale
	\begin{equation} \label{mainth:scale}
	\lambda(t)= \sqrt{\frac{\mu}{8\pi}} \frac{1+o_{t\to \infty}(1)}{\sqrt{\ln t}},
	\end{equation}
	and with the convergence rates
	\begin{equation} \label{cv-rates}
	\| \tilde u(t)\|_{L^1(\mathbb R^2)}+\lambda(t)^2\| \tilde u(t)\|_{L^\infty(\mathbb R^2)}\leq C t^{-1+\epsilon} \quad \mbox{and} \quad |x(t)-x^*|\leq C t^{-1/2+\epsilon}
	\end{equation}
	for $t \geq 1$, for any $\epsilon>0$ for some constant $C=C(u,\epsilon)$.
		
\end{theorem}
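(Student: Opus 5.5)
We follow the route sketched in the abstract. The argument has three stages: soliton resolution, then setting up a modulated decomposition, then a long-time stability and modulation analysis.

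\emph{Step 1: soliton resolution.}
Global existence and conservation of $\mu$ follow from \eqref{second-momentum} with $M=8\pi$, together with the global regularity of critical mass solutions recalled above. The free energy \eqref{freeenergy} is bounded below at mass $8\pi$ by the logarithmic HLS inequality and is nonincreasing by \eqref{freeenergy-dissipation}. Hence the dissipation $\int u|\nabla\log u-\nabla\Phi_u|^2$ is integrable in time, and along a full-measure set of times it is small. The vanishing of $u|\nabla \log(u/e^{\Phi_u})|^2$ together with mass $8\pi$ characterises the stationary states $\lambda^{-2}U((x-\xi)/\lambda)$, as described in Section \ref{sec:stationary-states}. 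Combining this with weak-star convergence to $8\pi\delta_{x^*}$ and a compactness/profile argument, we obtain that
\[
u(t)-\lambda(t)^{-2}U((\cdot-x(t))/\lambda(t)) \to 0 \quad\text{in } L^1,
\]
with $\lambda(t)\to0$ and $x(t)\to x^*$. This first holds along a sequence of times. It is upgraded to all large times by $L^1$ continuity of the flow and the monotonicity of $\mathcal F$.

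\emph{Step 2: modulated decomposition in self-similar variables.}
Near the soliton manifold we fix $(\lambda,x)$ by orthogonality conditions against generalized kernel elements of the linearized operator $\mathcal L$ around $U$: the scaling direction $\Lambda U=\nabla\cdot(yU)$ and the translations $\nabla U$, together with mass neutrality. We then work in the renormalised variables $y=(x-x(t))/\lambda$ and $ds=dt/\lambda^2$. The key structural input is that the second moment is exactly conserved. Since $\int|y|^2U=\infty$, conservation of $\mu$ forces the remainder to carry the second moment at large distances, and this produces the logarithmic correction. Following the formal computation of \cite{C,CS} and \cite{GM}, we build an approximate solution $U_\lambda + \lambda$-dependent corrections in the parabolic zone $|x|\lesssim\sqrt t$. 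Matching this with conservation of $\mu$ yields the modulation law
\[
\lambda^{-2}\approx \frac{8\pi}{\mu}\ln t .
\]

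\emph{Step 3: stability and modulation analysis.}
This is the main obstacle. The linearized operator $\mathcal L$ is not coercive in standard weighted spaces because $\Lambda U$ decays only like $|y|^{-4}$ and the mass is critical. I would use the quadratic form adapted to the free energy, namely $\int u\,|\nabla(\cdot/U-\Phi)|^2$-type norms, in which $\mathcal L$ is self-adjoint with a spectral gap on the orthogonal complement of the kernel. The time-dependent multi-scale structure (inner zone $|y|\lesssim1$, parabolic zone $|x|\sim\sqrt t$) must be handled with separate weighted energy estimates glued together. Using these, we close a bootstrap giving polynomial decay $t^{-1+\epsilon}$ of the remainder in $L^1$ and, via parabolic smoothing, in $\lambda^2 L^\infty$. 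The modulation equations then give $|\lambda_s/\lambda - \text{law}|$ integrable, which yields \eqref{mainth:scale}. They also give $|x_t|\lesssim t^{-3/2+\epsilon}$; integrated from $t$ to $\infty$, together with $x(t)\to x^*$ from conservation of the first momentum \eqref{first-momentum} (which pins the limit of $x(t)$), this gives $|x(t)-x^*|\lesssim t^{-1/2+\epsilon}$.

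I expect the coercivity and control of the remainder at large $|y|$ to be the main difficulty. Step 1 only provides smallness without rate, so the bootstrap must start from a small but rate-free remainder. The rates then have to be generated by the dissipation.
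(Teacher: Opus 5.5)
Your Step 1 matches the paper's soliton-resolution argument in substance. One difference: the upgrade to all large times there is an exit-from-tube contradiction that reruns the compactness and vanishing-dissipation argument, not monotonicity of $\mathcal F$ alone.

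The gap is in Step 3, where the mechanism you propose cannot close, for three reasons.

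\emph{No spectral gap in the free-energy form.} That form gives $\langle \mathcal L v,\mathcal M v\rangle=-\int U|\nabla \mathcal M v|^2$ with $\mathcal M v=v/U-\Phi_v$. This is only dissipation of $\|\nabla v\|_{L^2_{1/U}}$ modulo $\Lambda U,\nabla U$. On $\mathbb R^2$ there is no spectral gap, since the Hardy inequality only controls $\int v^2\langle y\rangle^{-2}U^{-1}$. The gap must come from the parabolic zone. In $z=(x-x^*)/\sqrt t$, $\tau=\ln t$, the operator $L_\infty=\Delta+(4z/|z|^2+z/2)\cdot\nabla+1$ has gap $-2$ in $L^2(|z|^4e^{|z|^2/4}dz)$. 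The paper propagates this gap to all of $|z|\lesssim 1$ through a single matched quadratic form plus a global Hardy inequality. That form glues the inner form $\int fg/U-\int\nabla\Phi_f\cdot\nabla\Phi_g\,\chi_R$, a weight built from a parabolically averaged distance in $\nu\ll|z-\xi|\ll1$, and the Gaussian weight. Separately localized energy estimates, as you suggest, produce diffusive boundary terms that cannot be closed.

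\emph{The remainder is not in any Gaussian-weighted space.} It is only in $L^1(\langle x\rangle^2dx)$. The paper therefore splits $\varepsilon=\varepsilon_{\mathrm{in}}+(1-\chi_2)\varepsilon_{\mathrm{out}}$. Here $\varepsilon_{\mathrm{out}}$ solves a Dirichlet problem on $\{|z|>1\}$ that contracts in $L^1(|z|^2dz)$ at rate exactly $e^{-\tau}$, upgraded to $o(e^{-\tau})$.

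\emph{Fast oscillation of the modulation parameters.} For unprepared data you only know $|\dot\lambda|+|\dot x|=o(\lambda^{-1})$, which allows fast oscillations. Any weight centred at $x(t)$ then produces a $\partial_\tau$-error you cannot absorb. The paper overcomes this with the H\"older-$1/2$ slow variation of $(\lambda,x)$, obtained from localized first moments, and with parabolic averaging of the distance to the soliton.

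Your extraction of the asymptotics also fails. It is the truncated soliton $\alpha U_{\nu,\xi}\mathcal X_\star$, not the remainder, that carries the second moment. The scale law is algebraic: $\mu e^{-\tau}=\int|z|^2w=16\pi\nu^2|\log\nu|+\int|z|^2\varepsilon+o(e^{-\tau})$. The whole point of the inner and outer estimates is to prove $\int|z|^2\varepsilon=o(e^{-\tau})$, i.e.\ that the remainder's second moment tends to zero in original variables; $L^1$ decay of $\tilde u$ does not give this.

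Integrating modulation equations cannot replace this step. They give $|\lambda\dot\lambda|\lesssim\|\tilde u\|_{L^1}$, an error of size $t^{-1+\epsilon}$. That error dominates the law itself, $\lambda\dot\lambda\approx-\mu/(16\pi t\ln^2 t)$, and is not integrable. The $t^{-1+\epsilon}$ rate is obtained only afterwards. The paper starts from $\|\varepsilon_{\mathrm{in}}\|_{L^1}\lesssim\nu^{-1}\|\varepsilon_{\mathrm{in}}\|_{L^2_\omega}$ and iterates $\kappa\mapsto(\kappa+1)/2$ with the Rapha\"el--Schweyer form in inner variables.

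Your bound $|\dot x|\lesssim t^{-3/2+\epsilon}$ likewise has no source. Modulation gives only $|\dot x|\lesssim\lambda^{-1}\|\tilde u\|_{L^1}\approx t^{-1+\epsilon}\sqrt{\ln t}$, which is not integrable. The position rate comes instead from exact conservation of the first moment, $8\pi\xi=-\int z\,\varepsilon\,dz+O(e^{-3\tau/2})$ with $\xi=(x(t)-x^*)/\sqrt t$, combined with the decay of $\varepsilon$.
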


\begin{remark}

The convergence rates \eqref{cv-rates} are almost sharp, in that it is clear from our far field analysis that $t^{-\alpha}$ and $t^{-\alpha+1/2}$ rates are impossible for $\alpha>1$. Obtaining $\alpha=1$ would require us to expand our approximate solution to the next order, as it currently would yield a logarithmic loss.

Our formula for the scale \eqref{mainth:scale} shows that $c=\sqrt{\mu/(8\pi)} $ in \eqref{mainth:intro-decomposition}. This is slightly different from the value $c=\sqrt{\mu}$ found in \cite{GM}, but we believe our constant is the correct one. In the work \cite{DdPDMW} the constant $c>0$ was shown to exist but was not computed explicitly.

\end{remark}

In the next two paragraphs, we explain the role that Theorem~\ref{main:th:main} plays in the current literature.

\medskip
\noindent 
\textbf{Sharpness of the finite second moment assumption.} The large-time dynamics of critical-mass solutions is highly sensitive to the assumptions imposed on the initial datum. As formally observed in \cite{CS} the conserved second momentum should be regarded as the selection rule for the unique admissible asymptotic scale.

The works \cite{BCC,CF,C} (see also \cite{LGNY} for an approach based on rearrangement techniques and PDE methods) show a different behaviour than that of Theorem \ref{main:th:main} for solutions that fail logarithmically to belong to $L^1(\langle x \rangle^2 dx)$. If the initial datum $u_0$ has mass $8\pi$, finite free energy, and finite relative entropy
\[
H_\lambda[u_0]
:=
\int_{\mathbb R^2}
\frac{\bigl(\sqrt{u_0(x)}-\sqrt{U_\lambda(x)}\bigr)^2}
{\sqrt{U_\lambda(x)}}\,dx
<\infty , \quad \text{where} \quad 
U_\lambda(x):=\frac{1}{\lambda^2}\,U\!\left(\frac{x}{\lambda}\right)
\]
then the corresponding solution converges towards the stationary state $U_\lambda$ in $L^1(\mathbb R^2)$. Moreover, quantitative stability estimates for the logarithmic Hardy--Littlewood--Sobolev inequality imply the best rate of convergence currently available in this framework,
\[
\|u(t)-U_\lambda\|_{L^1(\mathbb R^2)}
\lesssim t^{-1/16},
\]

In the absence of a finite relative entropy condition the asymptotic behavior can be drastically different. The work \cite{NaiS} showed that, for suitably chosen critical-mass radial initial data, solutions may oscillate among prescribed stationary states instead of converging to a single equilibrium. Bounded oscillatory solutions and unbounded oscillatory solutions with infinite-time blow-up were constructed. This picture was further clarified in \cite{LGNY1}, where it was proved that any bounded radial solution either converges to a single stationary state or oscillates through a whole interval of stationary states. Moreover, non-radial solutions whose asymptotic dynamics involve a non-trivial continuum of stationary states were constructed.

This discussion highlights how much the behaviour at spatial infinity matters for global dynamics. On bounded domains, solutions with critical mass concentrate in infinite time but with yet a different rate \cite{BKLN,KS,SiCh}

\medskip
\noindent 
\textbf{Minimal (infinite time) blow-up} Minimal solutions are those whose mass (or another conserved quantity) equals that of the ground state of the equation. Non-trivial asymptotic behaviours of such solutions are expected to be threshold dynamics for the equation, and are believed to display rigidity features. Solutions of Theorem \ref{main:th:main} are indeed at the threshold between self-similar diffusion and finite-time blow-up, and have a rigid universal asymptotic behaviour. Critical mass solutions to the parabolic-parabolic Keller-Segel equation, studied in \cite{M2,Ho} for example, could have similar properties. Minimal mass blow-up has been studied for the nonlinear Schr\"odinger equation \cite{Merle,RaSz} and the generalized KdV equation \cite{MMR}. Minimal dynamics associated to unstable manifolds near the ground state have been studied for energy critical equations in \cite{DuMe1,DuMe2,CMR}.

\medskip

\subsection{Classification of the dynamics in the radial setting}

In the radial case, Theorem \ref{main:th:main} finishes the \emph{complete classification of all dynamics} by determining the dynamics at critical mass. In the subcritical case $M<8\pi$, solutions were known to exist globally and converge towards the unique self-similar expander of mass $M$ \cite{BDP,BKLN,N1,EM}. In the supercritical case $M>8\pi$, all radial solutions undergo type II finite-time blow-up and concentrate a stationary state with a universal concentration scale, as obtained in \cite{M} that compares the solution to the known blow-up examples \cite{HV0,RS,CGMN1}. The unified presentation of these asymptotic regimes will appear in a companion note \cite{BC}, yielding the following trichotomy:

\begin{theorem}[Classification of radial dynamics]
	\label{thm:classification}
	Let $u$ be a radially symmetric solution to \eqref{KS} such that $u_0\in L^1(\langle x \rangle^2 dx)$, with maximal time of existence $T$. Then the following alternatives hold.
	
	\begin{enumerate}
		
	\item \textbf{Subcritical case}. If \(M<8\pi\), the solution is global \(T=\infty\) and, denoting by $\Psi_M$ the unique radial self-similar expander with $\int \Psi_M dx=M$, there holds
	\[
	u(t,x)=\frac{1}{t+1} \Psi_M \!\left(\frac{x}{\sqrt{t+1}}\right) + \tilde{u}(x,t), \qquad \quad \lim_{t\to \infty} (1+t)^\alpha (\|\tilde{u}(t)\|_{L^{1}}+t\|\tilde{u}(t)\|_{L^{\infty}})=0
	\]
    for some $\alpha>0$.
\item \textbf{Critical case}. If \(M=8\pi\), the solution is global \(T=+\infty\) and can be decomposed for large times as a stationary state concentrating at the origin
$$
u(x,t)=\frac{1}{\lambda^{2}(t)}U\left(\frac{x-x(t)}{\lambda(t)}\right)+\tilde u(x,t), \qquad \quad \lim_{t\to \infty}(1+t)^{1-\epsilon}(\| \tilde u(t)\|_{L^1}+\lambda(t)^2\| \tilde u(t)\|_{L^\infty})=0,
$$
for any $\epsilon>0$, at scale (where below $\mu$ is the second momentum \eqref{second-momentum})
$$
\lambda(t)= \sqrt{\frac{\mu}{8\pi}} \frac{1+o_{t\to \infty}(1)}{\sqrt{\ln t}}.
$$
		\item \textbf{Supercritical case}. If \(M>8\pi\), then the solution blows up in finite time \(T<\infty\) and can be decomposed as a stationary state concentrating at the origin
		\[
		u(x,t)
		=
		\frac1{\lambda^{2}(t)}
		U\!\left(\frac{x}{\lambda(t)}\right)
		+ \tilde u\!\left(x,t\right), \qquad  \lim_{R\to 0}\sup_{[0,T)} \| \tilde u(t)\|_{L^1(|x|<R)}=\lim_{t\to T} \lambda^2(t)\| \tilde u(t)\|_{L^\infty} =0 
		\]
		at scale (where below \(\gamma\) is the Euler--Mascheroni constant)
		\[
		\lambda(t)
		=
		2e^{-1-\gamma/2}
		\sqrt{T-t}\,
		\exp\!\left(
		-\sqrt{\frac{|\log(T-t)|}{2}}
		\right)(1+o_{t\to T}(1)).
		\]
		
	\end{enumerate}
\end{theorem}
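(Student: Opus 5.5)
The classification theorem is assembled from three regimes, and I will treat each one separately. The case split is on the mass $M$. Radial symmetry with $u_0\in L^1(\langle x\rangle^2dx)$ makes the center of mass $x^*=0$ by \eqref{first-momentum}, which is why the concentration points are at the origin. The second-momentum identity \eqref{second-momentum} then organizes the whole trichotomy.

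\textbf{Subcritical case.} For $M<8\pi$, I will invoke the results of \cite{BDP,BKLN} for global existence. I will then pass to self-similar variables $v(s,y)=(1+t)u(t,\sqrt{1+t}\,y)$ with $s=\log(1+t)$. In these variables the equation acquires a confining drift $\frac12\nabla\cdot(yv)$, and its free energy is displaced by $\frac12|y|^2$. The works \cite{N1,EM} give exponential convergence in $s$ of $v$ to $\Psi_M$ in weighted $L^1$ spaces. This yields a decay $(1+t)^{-\alpha}$ of $\|\tilde u\|_{L^1}$. Parabolic smoothing on unit time intervals in $s$ upgrades this to the $L^\infty$ statement, with the factor $t$ accounting for the scaling of $L^\infty$ norms.

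\textbf{Critical case.} For $M=8\pi$, the statement is exactly Theorem \ref{main:th:main} specialized to radial data. Radiality forces $x^*=0$, and the $\lim$ form with $1-\epsilon$ follows by applying \eqref{cv-rates} with $\epsilon/2$. The scale formula \eqref{mainth:scale} transfers verbatim.

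\textbf{Supercritical case.} For $M>8\pi$, the identity \eqref{second-momentum} shows that $\mu[u(t)]$ decreases linearly. It would become negative at a finite time, so $T<\infty$; this is the virial argument of \cite{BDP}. For the blow-up profile, I will rely on Mizoguchi's classification \cite{M}. That work shows that every radial blow-up is type II, concentrating exactly $8\pi$ at the origin with no other radial concentration, with the residual mass staying regular away from the origin. It also shows, by comparison and intersection-number arguments in the radial mass variable against the stable blow-up solutions of \cite{HV0,RS,CGMN1}, that the scale follows the universal law with the constant $2e^{-1-\gamma/2}$.

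The main obstacle in assembling the theorem is checking that the precise formulations match. In the supercritical case this means checking that the convergence statements of \cite{M} imply the two vanishing conditions on $\tilde u$ as written: the local $L^1$ smallness uniform in time, and the $\lambda^2\|\tilde u\|_{L^\infty}\to0$ bound. This may require a parabolic regularity argument in the renormalized variables near the singularity, combined with $L^\infty$ bounds away from the origin. I would first try to derive the $L^\infty$ statement from convergence in renormalized variables on compact sets, plus a localized maximum principle in the exterior region.
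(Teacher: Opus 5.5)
Your proposal matches the paper's approach: the theorem is obtained by assembling known results, namely \cite{BDP,BKLN,N1,EM} for $M<8\pi$, Theorem~\ref{main:th:main} (with $x^*=0$ by radiality) for $M=8\pi$, and Mizoguchi's radial classification \cite{M} for $M>8\pi$, which relies on comparison with the constructions \cite{HV0,RS,CGMN1}. The authors defer the detailed matching of the precise formulations to the companion note \cite{BC}, which is consistent with your identification of that matching, especially in the supercritical case, as the remaining work.
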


\begin{remark}
Among the three universal asymptotic behaviours above, the one for subcritical masses $M<8\pi$ remains valid in the non-radial case and for $L^1$ data, \cite{N1}. As discussed above, the one for the critical mass $M=8\pi$ remains valid in the non-radial case (Theorem \ref{main:th:main}) but other scale dynamics exist without finite second momentum, and the one for the supercritical mass $M>8\pi$ remains valid for radial $L^1$ data but other singular collisional dynamics exist in the non-radial case.
\end{remark}

\subsection{Strategy, novelties, and organization of the article}

\subsubsection{A roadmap for the classification of anomalous self-similar behaviours}

Our proof of Theorem \ref{main:th:main} relies on a general strategy, applicable to similar problems, that has three main steps.

\smallskip

\noindent \textbf{Step 1-A.} \emph{Soliton resolution in the base scale-invariant space}. The starting point in Section \ref{sec:soliton-resolution} is to show the emergence of a stationary state with time dependent position $x(t)$ and scale $\lambda(t)$ in the critical space $L^1$. Namely, that $u$ can be decomposed as
\begin{equation} \label{intro-sol-res}
u=U_{\lambda(t),x(t)}+\tilde u, \quad \lim_{t\to \infty}\|\tilde u(t)\|_{L^1}=0.
\end{equation}
Here, we rely on the dissipative structure of the Keller-Segel system associated to the free energy functional \eqref{freeenergy}, and on the classification of zero dissipation functions, see Theorem \ref{th:rigidity}.

\smallskip

\noindent \textbf{Step 1-B.} \emph{Soliton resolution in additional scale-invariant spaces}. As the sole $L^1$ topology does not allow to control well the flow, we refine in Section \ref{sec:roughconv} the convergence \eqref{intro-sol-res}:
\begin{itemize}
\item \emph{Modulation slow-down in scale-invariant spaces}. By a standard orthogonal decomposition on the soliton manifold, we show the following bound that controls the motion on timescales $\approx \lambda^2(t)$,
\begin{equation} \label{introduction-modulation}
\lim_{t\to \infty}\lambda \dot \lambda =\lim_{t\to \infty}\lambda \dot x= 0.
\end{equation}
To control over longer timescales, we prove that in the critical H\"older $1/2$ space,
\begin{equation} \label{introduction-holder}
\| x(\cdot)\|_{\dot C^{1/2}([t,\infty)}=\sup_{t'\geq t, \ R>0} R^{-1/2}|x(t'+R)-x(t')|\to 0 \quad \mbox{and}\quad \| \lambda(\cdot)\|_{\dot C^{1/2}([t,\infty)}\to 0
\end{equation}
as $t\to \infty$, which means that the soliton slows down over parabolic scales. This is proved using localized first momenta.
\item \emph{Vanishing of the remainder in scale-invariant weighted $L^\infty$ spaces}. For $\tilde u$ in \eqref{intro-sol-res}, we show
\begin{equation} \label{introduction-pointwise}
|\nabla^k \tilde u(t,x)|=\frac{o_{t\to \infty}(1)}{(\lambda(t)+|x-x(t)|)^{2+k}} \quad \mbox{and} \quad  |\nabla \Phi_{ \tilde u}(t,x)|=\frac{o_{t\to \infty}(1)}{\lambda(t)+|x-x(t)|}
\end{equation}
for $k=0,1$. This is obtained thanks to a new quantitative $\varepsilon$-regularity theorem, a result of independent interest in Appendix \ref{sec:epsilon-regularity}.
\end{itemize}
\smallskip

\noindent \textbf{Step 2-A.} \emph{Local linear analysis}. We next study in Section \ref{section:LinAnalysis} the \emph{linearized dynamics around the moving soliton} \eqref{intro-sol-res}. The linearized operator in parabolic variables $(z,\tau)=(t^{-1/2}(x-x(t)),\ln t)$ is
\begin{equation} \label{def:Lnuxi}
L_{\nu,\xi}\varepsilon(z)= \Delta \varepsilon +(\frac z2-\nabla \Phi_{\hat U_{\nu,\xi}})\cdot\nabla \varepsilon+(2\hat U_{\nu,\xi}+1)\varepsilon -\nabla \hat U_{\nu,\xi}\cdot \nabla \Phi_{\varepsilon}
\end{equation}
where $(\nu,\xi)=t^{-1/2}(\lambda,x-x(t))$ and, anticipating step 3 below, $\hat U_{\nu,\xi}$ is an approximate solution close to $U_{\nu,\xi}$, see \eqref{matchedsol}. As a general feature of anomalous self-similarity, multiple scales arise which we first handle \emph{locally and separately}:
\begin{itemize}
\item In the inner zone $\{|z-\xi|\lesssim \nu\}$ the leading order term is $L_{in}\tilde v=\Delta\tilde v -\nabla \Phi_u\cdot \nabla \tilde v +2U\tilde v-\nabla U\cdot \Phi_{\tilde v}$. It is symmetric for the quadratic form $\langle u,v\rangle_{in}=\int U^{-1}uv-1/2\int \Phi_u v$ induced by the Hessian of the free energy around its minimizer $U$.
\item In the nearby parabolic zone $\{\nu\ll |z-\xi|\ll 1\}$ the leading term $L_{near}=\Delta+4|z-\xi(\tau)|^{-2}(z-\xi(\tau))\cdot \nabla$ is that around a moving Dirac mass. We introduce the new concept of \emph{parabolically averaged distance to the soliton}
\begin{equation} \label{introduction-averaged-distance}
d(z,\xi,\nu,\tau)= "\mbox{Average of }\tau'\mapsto |z-\xi(\tau')|\mbox{ over a }|z-\xi(\tau)|^2 \mbox{ timescale}",
\end{equation}
see Definition \ref{def:ParabolicDistance}.
That is, given a point $z$ at distance $|z-\xi(\tau)|$ from the soliton, we average the distance to the soliton over the forward parabola until it meets the soliton. With this distance we build the weighted space $L^2_{d^4}$, with scalar product $\langle \cdot,\cdot \rangle_{near}$ in which $L_{near}$ is dissipative \emph{and} the time oscillations of $\xi$ are controllable, thanks to \eqref{introduction-holder}.
\item In the parabolic outer zone $\{|z|\approx 1\}$ the operator $L_{\infty}=\Delta+(1/2+4|z|^{-2})z\cdot \nabla +1$ is well known. It is self-adjoint in the Gaussian space $L^2_{|z|^4e^{|z|^2/4}}$, with scalar product $\langle\cdot,\cdot\rangle_{out}$.
\item In the far away outer zone $\{|z|\gg 1\}$, the same operator, now considered on $L^1(|z|^2dz)$, generates a contraction semi-group.
\end{itemize}

\smallskip

\noindent \textbf{Step 2-B.} \emph{Global linear analysis}. The linearized evolution $\partial_\tau \varepsilon=L_{\nu,\xi}\varepsilon$ is then described \emph{globally on $\mathbb R^2$}, relying on the previous local linear analysis in two different ways:
\begin{itemize}
\item \emph{Inner-outer gluing for zones that can be isolated}. The far away outer zone $\{|z|\gg 1\}$ is, to leading order, isolated from the parabolic zone $\{|z|\lesssim 1\}$, as the solution mostly moves from the former to the latter. We thus employ \emph{inner-outer gluing} and look for a solution under the form
\begin{equation} \label{intro-inner-outer-gluing}
\varepsilon=\varepsilon_{in}+(1-\chi)\varepsilon_{out}.
\end{equation}
It transforms the evolution equation into a triangular system, where $\varepsilon_{out}$ evolves to leading order via $L_{\infty}$, and affects $\varepsilon_{in}$ via boundary terms. This is actually directly performed at the nonlinear level, see \eqref{introduction-inner-outer-system}.
\item \emph{Matched scalar product for zones that cannot be isolated}. The inner zone $\{|z-\xi|\lesssim \nu\}$, the nearby parabolic zone $\{\nu\ll |z-\xi|\ll 1\}$ and the parabolic outer zone $\{|z|\approx 1\}$ cannot be isolated one from another due to diffusion, and have to be handled altogether. We construct a \emph{matched scalar product} by gluing the corresponding scalar products together:
\begin{equation} \label{introduction-matched-product}
\langle \cdot,\cdot \rangle_* = "\langle \cdot,\cdot \rangle_{in}\mbox{ matched with }\langle \cdot,\cdot \rangle_{near}\mbox{ in }\{|z-\xi|\approx R\nu\}\mbox{ and matched with }\langle \cdot,\cdot \rangle_{out} \mbox{ in }\{|z|\approx \zeta_*\}",
\end{equation}
see \eqref{def:matchedscalarproduct}. We show for $R$ large and $\zeta_*$ small, that on a finite co-dimensional space it is indeed a scalar product and that the operator $L_{\nu,\xi}$ is dissipative with a precise spectral gap:
\begin{equation} \label{introduction-dissipation}
		\langle L_{\nu,\xi}(\varepsilon),\varepsilon \rangle_*
		\le
		(-2+ \delta_0)\langle \varepsilon,\varepsilon\rangle_* 
		-c_0\|\nabla \varepsilon \|_{L^2_{\omega}}^{2}
		+
		C\left[\nu^{6}
		\langle u\, ,(\Lambda U)_{\nu,\xi}
		\,\chi_\eta\rangle^{2}_{L^2}+\sum_{i=1}^{2}\nu^{8}
		\langle u\, \partial_{z_{i}}U_{\nu,\xi}
		\,\chi_\eta \rangle^{2}_{L^2}\right]
\end{equation}
where $\delta_0>0$ is arbitrarily small, and we refer to Proposition \ref{prop:global_matched_coercivity} for details on $c_0,\omega,C,\eta$.
\end{itemize}

\smallskip
\noindent \textbf{Step 3.} \emph{Justification of linearized dynamics, description of modulation, and rigidity of asymptotics}. To control the full nonlinear evolution in Section \ref{sec:refined-convergence}, we first replace the soliton $U_{\nu,\xi}$ by a better approximate solution, the \emph{matched soliton} $\tilde U_{\nu,\xi}$, see \eqref{eq:ansatz}. The decomposition \eqref{intro-inner-outer-gluing} of the remainder $\varepsilon$ leads to the inner-outer gluing system
\begin{align} \label{introduction-inner-outer-system}
& \left\{ \begin{array}{l l l l} & \partial_\tau\varepsilon_{\mathrm{in}}=L_{\nu,\xi}( \varepsilon_{\mathrm{in}})+E_{\nu,\xi}+A_{\nu,\xi}(\varepsilon_{\mathrm{in}})+\mbox{Mod}(\nu_\tau,\xi_\tau)+Q_{\mathrm{in}}+B[\varepsilon_{\mathrm{out}}], &\\
& \varepsilon_{\mathrm{in}}(\tau_0,z)= \varepsilon_0(z)\chi(z/4),&
\end{array} \right. \\
&\left\{ \begin{array}{l l l l} & \partial_\tau\varepsilon_{\mathrm{out}}=L_\infty \varepsilon_{\mathrm{out}}+b_\varepsilon\cdot\nabla\varepsilon_{\mathrm{out}}+c_\varepsilon\varepsilon_{\mathrm{out}}, & \qquad \mbox{for } |z|\geq 1, \\
& \varepsilon_{\mathrm{out}}(\tau,z)=0, & \qquad \mbox{for }|z|=1,\\
& \varepsilon_{\mathrm{out}}(\tau_0,z)=\varepsilon_0(z)(1-\chi_4(z)).&
\end{array} \right.
\end{align}

Above, the approximate solution $\tilde U_{\nu,\xi}$ generates the error terms $E_{\nu,\xi}$ and its motion generates the modulation terms $\mbox{Mod}(\nu_\tau,\xi_\tau)$. The small linear terms $A_{\nu,\xi}(\varepsilon_{\mathrm{in}})$ arising from the difference $\tilde U_{\nu,\xi}-U_{\nu,\xi}$ are irrelevant. The localized quadratic terms $Q_{\mathrm{in}}$ are small thanks to the pointwise bounds \eqref{introduction-pointwise}. The boundary terms are $B[\varepsilon_{\mathrm{out}}]$, and $b_\varepsilon,c_\varepsilon$ gather residual terms in the outer parabolic zone $\{|z|\geq 1\}$ from the soliton and $\varepsilon$.

Going further than Step 1-B, we now control the evolution in non-scale invariant spaces. First $\varepsilon_{\mathrm{out}}$ is estimated in Subsection \ref{subsec:outer} by combining spectral gap and semi-group type estimates,
\begin{equation} \label{introduction-bd-varepsilon-out}
\| \varepsilon_{\mathrm{out}}\| =o_{\tau\to \infty}(e^{-2\tau})
\end{equation}
in various norms $\| \cdot\|$ thanks to parabolic regularization. Then $\varepsilon_{\mathrm{in}}$ is controlled in Subsection \ref{subsec:inner} by an energy estimate based on the matched scalar product \eqref{introduction-matched-product}, the dissipation \eqref{introduction-dissipation} and eventually written as
\begin{align*}
		\partial_{\tau}\langle \varepsilon_{\mathrm{in}},\varepsilon_{\mathrm{in}} \rangle_*
		\le
		(-4+\delta)\langle \varepsilon_{\mathrm{in}},\varepsilon_{\mathrm{in}} \rangle_*
		-
		C\|\nabla \varepsilon_{\mathrm{in}}\|^{2}_{L^{2}_{\omega}}
		+
		o_{\tau \to \infty}\!\left( \left(\sup_{0<h<1}\frac{1}{h}\int_{\tau-h}^\tau\|\nabla \varepsilon_{\mathrm{in}}(\sigma)\|_{L^{2}_{\omega}}d\sigma\right)^{2}\right)
		+
		o_{\tau \to \infty}(e^{-2\tau}).
\end{align*}
Obtaining this identity is delicate and requires the following. The \emph{matched soliton} is chosen so that the error $E_{\nu,\xi}$ it generates is small, and so that the crossed term $\langle \varepsilon_{in},\mbox{Mod}(\nu_\tau,\xi_\tau)\rangle_*$ produces cancelations from explicit algebraic identities and orthogonality conditions. Precise quantitative versions of the modulation estimates \eqref{introduction-modulation}-\eqref{introduction-holder}-\eqref{introduction-averaged-distance} are needed to further bound $\langle \varepsilon_{in},\mbox{Mod}(\nu_\tau,\xi_\tau)\rangle_*$ and the time derivative of the matched scalar product itself. The nonlocal term above is due to parabolic averaging \eqref{introduction-averaged-distance}. Integrating in time, we obtain
\begin{equation} \label{introduction-bd-varepsilon-in}
\| \varepsilon_{\mathrm{in}}\| =o(e^{-2\tau})
\end{equation}
in various norms $\| \cdot \|$. The rigidity of the large time asymptotics then comes from the conservation of the second momentum \eqref{second-momentum},
$$
\int \tilde U_{\nu,\xi}|z|^2dz+\int \varepsilon |z|^2dz=\mu e^{-2\tau},
$$
as the second term is negligible since $o(e^{-2\tau})$ by \eqref{introduction-bd-varepsilon-out}-\eqref{introduction-bd-varepsilon-in}, so that an explicit computation for the first solitonic term shows $\nu(\tau)= \sqrt{\mu /8\pi} \sqrt{\tau}^{-1}e^{-\tau/2}(1+o_{\tau\to \infty}(1))$. From there, further refinement for the convergence follows from bootstrap-type arguments.

\subsubsection{Novelties and difficulties}

\paragraph{\emph{(i) Completing the roadmap of the classification of soliton-based asymptotic behaviours for data in the large}} A few works on soliton resolution (Step 1-A above) show the convergence of general solutions to a sum of solitons, without describing further their modulation parameters and the remainder (see \cite{CDKM1} for a review). More numerous works on well-prepared initial data construct examples of solutions with precise behaviour (Step 3 above); this the case of \cite{GM,DdPDMW} showing the existence of a solution behaving as in Theorem \ref{main:th:main}. In order to obtain the precise behaviour for general solutions, one has to \emph{develop the bridge between soliton resolution, and the perturbative analysis of well-prepared data}. The main novelty here is the identification of a clear roadmap, where the bridge consists in the new Step 1-B, Step 2, and to finding a robust and flexible framework for Step 3 (in particular we use a simpler approximate solution compared to \cite{GM,DdPDMW}).

For $1d$ or radial parabolic problems, classification of precise asymptotic behaviours for data in the large was obtained thanks to comparison principle and intersection number counting \cite{MaMe,M3,M}. This was not applicable to the present problem since it is non-radial and non-local; the goal of the present roadmap was to generalize these previous approaches. The recent works \cite{JeLa,KiMe,KiMe2} were able to obtain precise behaviours, in cases where the interaction between solitons drives the dynamics. The difference here is that the interaction between the soliton and the remainder drives the dynamics, which is why we believe our new framework to study precisely the remainder is relevant for future works. A recent work \cite{JKKK} for the 1d Calogero-Moser derivative NLS developed another approach to control the remainder, based on the complete integrability of their equation.

\smallskip

\paragraph{\emph{(ii) Initiation of non-radial soliton resolution for $2d$ Keller-Segel}} The present work starts the study of non-radial soliton resolution for Equation \eqref{KS} (Theorem \ref{thn:soliton-resolution}). So far only the emergence of Dirac masses in the sense of measures was known (see \cite{BCM}  for the mass critical case and \cite{SBook} for the mass supercritical case. Our approach relies on ideas for soliton resolution ranging from the seminal work \cite{St} to more recent non-radial works \cite{DJKM}. We expect it to be extendable in other cases to the Keller-Segel equation.

\smallskip

\paragraph{\emph{(iii) Identification of scale-invariant spaces for modulation enabling linearized analysis}}

 A major difficulty is that even if one knows soliton resolution \eqref{intro-sol-res}, which describes the solution in the inner zone $|x-x(t)|\lesssim \lambda(t)$, the determination of the precise dynamics requires to understand the solution at the much larger parabolic scale $|x|\lesssim \sqrt{t}$ and over the much longer parabolic time scale. For previous perturbative constructions \cite{GM,DdPDMW} well-prepared initial data ensure the remainder is localized in the parabolic zone $\{|x|\approx \sqrt{t}\}$ and that the modulation parameters vary slowly $|\dot x(t)|,|\dot \lambda(t)|\lesssim t^{-1}\lambda(t)$. For unprepared initial data however, we only know a priori that $|\dot x(t)|,|\dot \lambda(t)| \lesssim (\lambda(t))^{-1}$ which can allow for arbitrarily fast oscillations! We face the difficulty of \emph{describing linearized dynamics around a possibly fastly oscillating soliton}.

A key novelty of the present work is that such description of the linearized dynamics is possible, by combining the standard modulation estimates \eqref{introduction-modulation} that control the dynamics at the \emph{soliton time scale} $|t'-t|\lesssim \lambda^2$, with estimates in the scale-invariant H\"older $1/2$ space \eqref{introduction-holder} to control the dynamics up to the \emph{parabolic time scale} $|t'-t|\lesssim t$. At a technical level, we introduce the novel concept of parabolic averaging \eqref{introduction-averaged-distance} to control the dynamics near the tail of the soliton. The idea is that at a point $x$ far away from a soliton, its tail is not well described by the far field expansion $\lambda^2|x-x(t)|^{-4}$ of $U_{\lambda,x(t)}(x)$, because $\lambda$ and $x(t)$ are determined from local orthogonalities in the inner zone which is distant from $x$. We thus replace it by a parabolically averaged tail, and this removes all issues related to the possibly fast oscillations of $\lambda$ and $x(t)$! We believe this may have applications to other problems.

\smallskip

\paragraph{\emph{(iv) New techniques for multiscale analysis for the remainder in classification problems}} We develop here for the first time the concept of \emph{matched scalar product} for solutions in the large, see \eqref{introduction-matched-product}. It was previously developed by the second author and collaborators for well-prepared initial data in the context of blow-up \cite{CGMN1,CGMN2}. The idea is that it is a counterpart to \emph{matched asymptotics} which produce solutions to multiscale problems by matching solutions of localized problems, in that we match local functionals (here scalar products) to produce a globally defined functional to control the linearized flow. Due to the poor control of the modulation parameters (see (iii) above), we have to be more careful in the design of the matched product. Its application to a classification problem shows the robustness of this concept.

A second novelty is the use of \emph{inner-outer gluing} for solutions in the large, see \eqref{intro-inner-outer-gluing}. It was previously developed for well-prepared initial data \cite{DdPDMW,DDW}. This enables us to isolate the dynamics in the far away outer region $|x|\gg \sqrt{t}$, and to get our result in the optimal space $L^1(|x|^2dx)$ (see discussion above). This idea can also be applied to other classification problems.

\smallskip 

\noindent \emph{(v) Applications of $\varepsilon$-regularity theory to get scale-invariant weighted $L^\infty$ decay}. In order to show that the effects of some nonlinear terms are negligible, we prove the pointwise bounds \eqref{introduction-pointwise} for the remainder. A novelty of the present work is to obtain such bounds using techniques from $\varepsilon$-regularity, but in a classification context. Namely, we prove a new quantitative $\varepsilon$-regularity result, Theorem \ref{thm:quantitative-epsilon-reg}. Previous results (e.g., \cite{SBook}) did not establish pointwise smallness or uniformity, whereas we establish both. As a result, we get robust scaled $L^1_{loc}\to W^{1,\infty}_{loc}$ estimates for the Keller-Segel equation.

\subsubsection{Organization of the article}

After introducing the notation in Section \ref{sec:notation}, well-posedness results and regularization estimates are given in Section \ref{sec:LWP}. Section \ref{sec:soliton-resolution} shows soliton resolution in $L^1 $, recalling the classification of stationary states in Subsection \ref{sec:stationary-states}, and then showing it along a sequence of times in Subsection \ref{subsec:sequential} and continuously in time in Subsection \ref{subsec:continuous-sol-res}. The convergence in soliton resolution is showed in other scale invariant spaces in Section \ref{sec:roughconv}: for the modulation parameters derivatives are estimated Subsection \ref{subsec:rough-modulation} and H\"older $1/2$ norms is estimated in Subsection \ref{subsec:slowvar-modulation}, and for the remainder pointwise estimate are given in Subsection \ref{subsec:pointwise-remainder}. The linearized evolution is studied in Section \ref{section:LinAnalysis} where the parabolic-outer, far away outer, nearby outer and inner zones are studied separately in Subsections  \ref{subsection:parabolic-outer}, \ref{subsection:far-outer}, \ref{subsection:nearby-outer} and \ref{subsection:inner} respectively. Then the matched scalar product is defined in Subsection \ref{subsection:matched-sp} and the corresponding coercivity of the operator is shown in Subsection \ref{subsection:coercivity}. The refined convergence is then obtained in Section \ref{sec:refined-convergence}. Quantitative versions of the modulation estimates of Subsections \ref{subsec:rough-modulation} and \ref{subsec:slowvar-modulation} are given in Subsection \ref{subsec:quant-mod}. The ansatz for the solution is described in Subsection \ref{subsec:ansatz}, and the outer and inner parts of the remainder are estimated in Subsections \ref{subsec:outer} and \ref{subsec:inner} respectively. The precise asymptotic behaviour is obtained in Subsection \ref{subsec:asymptotics}. The appendix is mostly devoted to the proof of the new $\varepsilon$-regularity result in Section \ref{sec:epsilon-regularity} and contains some standard estimates for the Poisson field in Section \ref{sec:poisson}.

\subsection{Acknowledgements}

Funded by the European Union. Views and opinions expressed are however those of the author(s) only and do not necessarily reflect those of the European Union or the European Research Council Executive Agency. Neither the European Union nor the granting authority can be held responsible for them.

This work is supported by ERC grant (FloWAS, No. 101117820, 
DOI 10.3030/101117820). C. Collot was supported by the CY Initiative of Excellence Grant "Investissements d'Avenir" ANR-16-IDEX-0008 via Labex MME-DII, the grant "Chaire Professeur Junior" ANR-22-CPJ2-0018- 01 of the French National Research Agency, and the grant BOURGEONS ANR-23-CE40-0014-01 of the French National Research Agency.

The authors thank Pierre Rapha\"el for suggesting this problem. They also thank Kleber Carrapatoso, Kihyun Kim, Frank Merle and Van Tien Nguyen for related stimulating discussions.

\section{Notation} \label{sec:notation}

\subsection*{Analysis on $\mathbb R^2$} The Euclidean basis is
$$
e_1=(1,0)^\top \quad \mbox{and} \quad e_2=(0,1)^\top.
$$
The gradient is $\nabla u = (\partial_{x_1}u,\partial_{x_2}u)^\top$ and the divergence is $\nabla \cdot f = \partial_{x_1}f_1+\partial_{x_2}f_2$. For $\alpha= (\alpha_1,\alpha_2)$ we write $\partial^\alpha u =\partial_{x_1}^{\alpha_1}\partial^{\alpha_2}_{x_2}$. For $k\in \mathbb N$, the vector of all $k$-th order derivatives is
$$
\nabla^k u = (\partial^{\alpha} u)_{\alpha_1+\alpha_2=k}.
$$
The ball is $B_R(x_0)=\{x\in \mathbb R^2, \ |x-x_0|\leq R\}$ where $|\cdot|$ denotes the Euclidean distance. The complement of a set $E$ is $E^c=\{x\in \mathbb R^2, \ x\notin E\}$. The Japanese bracket is
$$
\langle x \rangle=\sqrt{1+|x|^2}.
$$

\subsection*{Renormalizations}
Given a function $u$ on $\mathbb R^2$ and parameters $\lambda>0$ and $y\in \mathbb R^2$, we write
$$
u_{\lambda,y}=\frac{1}{\lambda^2} u(\frac{x-y}{\lambda}).
$$
The above notation will be the convention for rescaling and translation of all functions except for the cut-off function $\chi$, for which we will use a different convention, see \eqref{id:notation-cutt-off}. The scaling operator is
$$
\Lambda u = 2u+x\cdot \nabla u.
$$

\subsection*{Function spaces}

For $k\in \mathbb N$ and $p\in [1,\infty]$ the standard Sobolev space $W^{k,p}(\mathbb R^2)$ is the closure of smooth and compactly supported functions for the norm
$$
\| u \|_{W^{k,p}(\mathbb R^2)}= \sum_{l=0}^k \| \nabla^l u \|_{L^p(\mathbb R^2)}.
$$
The notation $C^k$ refers to standard spaces of $k$-times differentiable functions, whose norm is the sum of the supremum of all such derivatives. The H\"older space $\dot C^{1/2}$ is the one associated to the norm
$$
\| f\|_{\dot C^{1/2}([T,\infty))}= \sup_{t,t'\geq T, \ t\neq t'} \frac{|f(t)-f(t')|}{\sqrt{t-t'}}.
$$
The space of smooth compactly supported functions is $C^\infty_c$. The weighted spaces $L^2_w$ are associated to the norm
$$
\| u\|_{L^2_\omega}^2=\int u^2\omega.
$$

\subsection*{The heat equation}

The two-dimensional heat kernel is defined by
\[
G(t,x)=\frac{1}{4\pi t} e^{-\frac{|x|^2}{4t}},
\]
and the associated heat semigroup is given by
\[
S(t)u_0 = G(t)*u_0.
\]

\subsection*{Cut-offs}
We denote by \(\chi\) a smooth cut-off function satisfying $\chi(x)=1 \ \text{for } |x|\le 1$ and $\chi(x)=0 \ \text{for } |x|\ge 2$. In addition, $\chi$ is radially symmetric and radially decreasing. For any \(R>0\) and $y\in \mathbb R^2$, we define the rescaled and translated cut-off
\begin{equation} \label{id:notation-cutt-off}
\chi_{R}(x)=\chi\!\left(\frac{x}{R}\right) \quad \mbox{and} \quad \chi_{R,y}(x)=\chi\!\left(\frac{x-y}{R}\right).
\end{equation}

\subsection*{Inequalities}

We write \(A \lesssim B\) if there exists a universal constant \(C>0\) such that \(A \le C B\).
Similarly, \(A \gtrsim B\) means \(B \lesssim A\), and \(A \approx B\) means that both
\(A \lesssim B\) and \(B \lesssim A\) hold.

When needed, we indicate the dependence of the implicit constant by a subscript.
For instance, \(A \lesssim_{\eta} B\) means that \(A \le C_{\eta} B\), where the constant
\(C_{\eta}>0\) depends only on the parameter \(\eta\).

We write $A=o(B)$ as $t\to \infty $ (or another limit) if $A/B \to 0$.

The notation $o_{\alpha}(1)$ denotes a quantity converging to zero in the relevant limit, with a rate that may depend on the parameter $\alpha$.

\section{Local well-posedness and parabolic regularization} \label{sec:LWP}

\subsection{Local well-posedness in $L^1$}

The purpose of this section is to recall the local well-posedness of the Keller-Segel system \eqref{KS} in the critical space $L^1$ in two dimensions. The arguments are standard, following the approach by Weissler \cite{Weissler1,Weissler2} and Brezis-Cazenave \cite{BrCa}. More details can be found in the companion paper \cite{BC}.

\begin{definition} \label{def:solution}
	
	Given $u_0\in L^1$, a mild solution on $[0,T)$ is a function $u$ such that $u\in C([0,T),L^1(\mathbb R^2))$ and $t^{1/4}u(t)\in L^\infty_{loc}([0,T),L^{4/3})$ with $\lim_{t\downarrow 0}t^{1/4}\|u(t)\|_{L^{4/3}}=0$ for which
	\begin{equation} \label{id:mild-solution}
		u(t)=G(t)*u_0+\int_0^t \nabla G(t-s)* \cdot (u(s)\nabla \Phi_{u(s)})ds 
	\end{equation}
	for $t\in [0,T)$.

\end{definition}

One can verify for a mild solution, using for example the Hardy-Littlewood-Sobolev, Young and H\"older inequalities, that $s\mapsto \nabla G(t-s)* \cdot (u(s)\nabla \Phi_{u(s)})\in L^1([0,t]\times \mathbb R^2)$ for $t\in [0,T)$ from the regularity assumptions on $u$; in particular the right-hand side above in \eqref{id:mild-solution} is well-defined and this identity makes sense as an equality between measurable functions.

In fact the concept of mild solutions is only needed around the initial time $t=0$, as the result below shows that mild solutions are instantaneously smooth and become classical solutions.

\begin{theorem} \label{th:lwp:L1}
	
	Let $u_0\in L^1(\mathbb R^2)$. Then there exists a maximal time of existence $T=T(u_0)$ and a mild solution $u$ to the Keller-Segel system \eqref{KS} on $[0,T)$ such that
	\begin{itemize}
		\item \emph{Continuity of the flow in $L^1$.} We have $u\in C([0,T(u_0)),L^1(\mathbb R^2))$, and the conservation of mass \eqref{id:mass-conservation}. Moreover, the flow is locally continuous in that for any $\delta>0$ and $0<T'<T$, there exists $\epsilon>0$ (all depending on $u_0$) such that if $\|v_0-u_0\|_{L^1}\leq \epsilon$, then $T(v_0)\geq T'$ and $\| v(t)-u(t)\|_{L^1}\leq \delta$ for $t\in [0,T']$.
		\item \emph{Instantaneous regularization}. We have $u,\nabla \Phi_u\in C^\infty((0,T)\times \mathbb R^2)$ and $u$ is a classical solution to \eqref{KS} on $(0,T)$. Moreover, for any $k\in \mathbb N$, there holds $u\in C((0,T),W^{k,\infty} \cap W^{k,1} (\mathbb R^2))$.
		\item \emph{Blow-up criterion}. We have $T(u_0)<\infty$ if and only if $
		\lim_{t\uparrow T} \| u(t,\cdot)\|_{L^\infty(\mathbb R^2)}=\infty 
		$, and in this case
		$$
		\| u(t,\cdot)\|_{L^\infty(\mathbb R^2)}\geq \frac{1}{T-t} .
		$$
		\item \emph{Uniqueness}. The solution is unique in the class of mild solutions in the sense that if $u'$ is another mild solution on $[0,T')$ with the same initial data $u_0$, then $T'\leq T$ and $u=u'$ on $[0,T')$.
	\end{itemize}
	
\end{theorem}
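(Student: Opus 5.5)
The plan is the classical Weissler / Brezis--Cazenave fixed point in the critical space $L^1$, using $L^{4/3}$ as auxiliary norm. For $T>0$ let $X_T$ be the space of $u\in C([0,T],L^1)$ with $t^{1/4}u\in L^\infty((0,T],L^{4/3})$, normed by
$$\|u\|_{X_T}=\sup_{0<t\le T}\left(\|u(t)\|_{L^1}+t^{1/4}\|u(t)\|_{L^{4/3}}\right).$$
Let $\Gamma$ be the Duhamel map defined by the right-hand side of \eqref{id:mild-solution}. The key bilinear estimate combines Hardy--Littlewood--Sobolev, $\|\nabla\Phi_u\|_{L^4}\lesssim\|u\|_{L^{4/3}}$, with H\"older, $\|u\nabla\Phi_v\|_{L^1}\lesssim \|u\|_{L^{4/3}}\|v\|_{L^{4/3}}$, and the heat kernel bounds $\|\nabla G(t-s)*f\|_{L^p}\lesssim (t-s)^{-1/2-(1-1/p)}\|f\|_{L^1}$. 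For $p=1$ and $p=4/3$ this gives Beta-integrals: $\int_0^t (t-s)^{-1/2}s^{-1/2}ds$ is bounded, and $t^{1/4}\int_0^t (t-s)^{-3/4}s^{-1/2}ds$ is bounded, both uniformly in $t$ as expected from scaling. Since $G(t)*u_0\in L^1$ with $t^{1/4}\|G(t)*u_0\|_{L^{4/3}}\to0$ as $t\to 0$ (density of nice functions), the linear part is small in the $L^{4/3}$ component for $T$ small. A contraction on a small ball in that component then gives existence, and the same argument gives uniqueness in the class of Definition \ref{def:solution}. One works with a difference estimate on the short interval where both $t^{1/4}\|\cdot\|_{L^{4/3}}$ are small, then propagates by continuation. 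Mass conservation follows by integrating the divergence-form Duhamel term.

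For continuous dependence, I would use the same estimates. The smallness time depends only on the "profile" of $u_0$ (via how fast $t^{1/4}\|G(t)*u_0\|_{L^{4/3}}$ vanishes), and this is stable under $L^1$-small perturbations since $\|G(t)*(v_0-u_0)\|_{L^{4/3}}\le t^{-1/4}\|v_0-u_0\|_{L^1}$. Covering $[0,T']$ by finitely many steps, using compactness of $u([0,T'])$ in $L^1$, then gives $T(v_0)\ge T'$ and the $\delta$-closeness.

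For instantaneous regularization, I would bootstrap on the Duhamel formula started at positive times. From $u(t)\in L^{4/3}\cap L^1$, iterate the same fixed-point scheme in $L^p$ for increasing $p$, then in $W^{k,p}$, using $\nabla\Phi_u\in L^\infty$ once $u\in L^1\cap L^p$ with $p>2$, and parabolic smoothing $\|\nabla^{k+1}G(t)*f\|$. This yields $u\in C((0,T),W^{k,1}\cap W^{k,\infty})$ for all $k$. Time regularity comes from the equation, and $u,\nabla\Phi_u\in C^\infty$ follows, so $u$ is classical.

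For the blow-up criterion, the point is to show that the $L^\infty$ norm controls the existence time. If $\|u(t_0)\|_{L^\infty}\le A$, a local theory in $L^1\cap L^\infty$ closes with $\|\nabla\Phi_u\|_{L^\infty}\lesssim \|u\|_{L^1}^{1/2}\|u\|_{L^\infty}^{1/2}$ on a time $\gtrsim 1/A$. For the sharp lower bound $\|u(t)\|_{L^\infty}\ge 1/(T-t)$, I would instead use the equation in nondivergence form, $\partial_t u=\Delta u-\nabla\Phi_u\cdot\nabla u+u^2$. Comparing at a maximum point gives $\frac{d}{dt}\|u\|_{L^\infty}\le\|u\|_{L^\infty}^2$, hence $\|u(t)\|_{L^\infty}\ge 1/(T-t)$, which also shows $\lim\|u\|_{L^\infty}=\infty$ if $T<\infty$. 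I expect the main obstacle to be the justification of this maximum-principle step on the unbounded domain $\mathbb R^2$, together with uniqueness among merely mild solutions at $t=0$, where the only available smallness is the vanishing condition $\lim t^{1/4}\|u\|_{L^{4/3}}=0$.
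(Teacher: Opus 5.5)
Your proposal is correct and follows the route the paper points to. The paper proves nothing here itself: it cites Weissler and Brezis--Cazenave and defers to the companion note \cite{BC}. Your argument is that approach, namely a fixed point in $L^1$ with the auxiliary norm $t^{1/4}\|u(t)\|_{L^{4/3}}$ (exactly the class of Definition \ref{def:solution}), closed by Hardy--Littlewood--Sobolev and heat-kernel bounds, followed by the standard bootstrap for regularity, the compactness covering for continuous dependence, and the $L^1\cap L^\infty$ continuation argument plus the maximum-principle inequality $D^+\|u\|_{L^\infty}\le\|u\|_{L^\infty}^2$ for the sharp blow-up bound $\|u(t)\|_{L^\infty}\geq 1/(T-t)$.
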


\begin{proof}
The proof relies on the Hardy–Littlewood–Sobolev inequality, and then the standard approach based on a semilinear fixed point argument and heat kernel estimates, as in the works of Weissler, Brezis and Cazenave. We refer the interested reader to the companion note \cite{BC} for the complete proof.

\end{proof}
We point out that the above blow-up criterion is not optimal in the radial setting. Indeed, it is known (see \cite{S, M}) that radial blow-up solutions satisfy
\begin{align}\label{typeIIblow-up}
\limsup_{t\to T}(T-t)|u(t)|_{L^{\infty}(\R^{2})}=\infty.
\end{align}
Furthermore, \cite{NaiSu} established the same conclusion in bounded domains without any radial symmetry assumption, under the additional hypothesis of finite entropy of the initial data. In \cite{SBook}, a proof of \eqref{typeIIblow-up} under the additional assumption of finite second moment of the initial data is provided.

The solutions of Theorem \ref{th:lwp:L1} coincide with all known solutions for $u_0\in L^1$ as in \cite{BCM}. Indeed, this theorem states that the flow map for Schwartz initial data $u_0\in \mathcal S$ (which can be constructed easily by standard arguments) extends continuously and uniquely in $L^1$, in the sense that given any $u_0\in L^1$ and any Schwartz approximating sequence $u_{n,0}\in \mathcal S$ that converges to $u_0$ in $L^1$ we have $\liminf T(u_{n,0})\geq T(u_n)$ and $u_n\to u$ in $C([0,T'],L^1)$ for any $T'<T(u_0)$.

The following results will be useful in the proof of the soliton resolution later on. Both are regularization estimates that are uniform with respect to a set of initial data. The first one is a short time regularization estimate for bounded initial data.

\begin{lemma}\label{Solitoncorollary}
	For any $R>0$, there exists $T_1(R)>0$, and, for any $T_0<T_1(R)$ and $N\in \mathbb N$, there exists a constant $C(R,T_0,N)$ such that if $u_0\in L^1\cap L^\infty$ satisfies
	\begin{align*}
		\|u_{0}\|_{L^{1}\cap L^\infty}\leq R
	\end{align*}
	then the solution to the Keller-Segel system given by Theorem \ref{th:lwp:L1} exists on the time interval $[0,T_1(R)]$. Moreover, it satisfies
	\begin{align}\label{ReguliarityPara}
		\sup_{T_{0}\le t \le T_{1}} \| \partial_t^\alpha \nabla^{\alpha'} u(t,\cdot )\|_{L^{1}(\mathbb{R}^{2})\cap L^{\infty}(\mathbb{R}^{2})}\le C[N,T_{0},R]
	\end{align} 
	for all $\alpha+\alpha'\leq N$.
\end{lemma}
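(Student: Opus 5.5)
Since the data lie in $L^1\cap L^\infty$, I will not work in the critical framework of Definition \ref{def:solution}. Instead I will close a fixed point in the subcritical space $X_T=C([0,T],L^1\cap L^\infty)$ and then invoke the uniqueness part of Theorem \ref{th:lwp:L1} to identify the two solutions. The basic bound is the Poisson field estimate
$$\|\nabla\Phi_u\|_{L^\infty}\lesssim \|u\|_{L^1}^{1/2}\|u\|_{L^\infty}^{1/2},$$
obtained by splitting the kernel $|x-y|^{-1}$ at $|x-y|=r$ and optimizing in $r$. With it, $\|u\nabla\Phi_u\|_{L^1\cap L^\infty}\lesssim \|u\|_{L^1\cap L^\infty}^2$, and the same bilinear estimate gives the difference bound. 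Since $\|\nabla G(t-s)\|_{L^1}\lesssim (t-s)^{-1/2}$ and $\|G(t)\|_{L^1}=1$, the Duhamel map \eqref{id:mild-solution} satisfies
$$\|\Psi(u)\|_{X_T}\le R+CT^{1/2}\|u\|_{X_T}^2 .$$
It is therefore a contraction on the ball of radius $2R$ in $X_T$ once $T_1(R)=c\min(1,R^{-2})$. An element of $X_{T_1}$ is automatically a mild solution in the sense of Definition \ref{def:solution}, because $t^{1/4}\|u\|_{L^{4/3}}\le t^{1/4}(2R)\to0$. By uniqueness it coincides with the solution of Theorem \ref{th:lwp:L1}, so that solution exists on $[0,T_1]$ and obeys $\sup_{[0,T_1]}\|u\|_{L^1\cap L^\infty}\le 2R$.

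For the derivative bounds \eqref{ReguliarityPara} I will bootstrap spatial regularity on a nested sequence of times $0<t_1<t_2<\dots<t_N=T_0$, with $t_k=kT_0/N$. Step 0 is the uniform bound just obtained. Suppose inductively that $\|\nabla^j u(t)\|_{L^1\cap L^\infty}\le C_k$ for all $j\le k$ and $t\in[t_k,T_1]$. Restart the Duhamel formula at time $t_k$ and apply $\nabla^{k+1}$. Put $k$ derivatives on $u\nabla\Phi_u$, using the Leibniz rule together with the bounds $\|\nabla^{j+1}\Phi_u\|_{L^\infty}\lesssim \|\nabla^j u\|_{L^1\cap L^\infty}$ from the same kernel splitting. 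Put one more derivative on the heat kernel, which costs an integrable $(t-s)^{-1/2}$. The linear term $\nabla^{k+1}G(t-t_k)*u(t_k)$ is bounded using $\|\nabla G(t-t_k)\|_{L^1}\lesssim (t-t_k)^{-1/2}\le (t_{k+1}-t_k)^{-1/2}$ on $[t_{k+1},T_1]$. This controls $\nabla^{k+1}u$ in $L^1\cap L^\infty$ by a constant depending only on $(R,T_0,k)$. The induction is linear in the top-order term, so no smallness is needed beyond Step 0.

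Time derivatives then follow from the equation: $\partial_t u=\Delta u-\nabla\cdot(u\nabla\Phi_u)$, and $\partial_t\nabla\Phi_u=\nabla\Phi_{\partial_t u}$. Iterating, $\partial_t^\alpha\nabla^{\alpha'}u$ is a polynomial expression in spatial derivatives of $u$ of order at most $2\alpha+\alpha'+1$ and Poisson fields of such derivatives. It is therefore bounded once we run the spatial induction up to order $2N+1$. Each constant depends only on $R$, $T_0$ and $N$.

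The main technical point is the Poisson field estimate at higher order. One must bound $\|\nabla^{j}\nabla\Phi_u\|_{L^\infty}$ using only $L^1\cap L^\infty$ norms of $\nabla^j u$. Applying $\nabla^j$ to the convolution \eqref{id:definition-nablaPhiu} and splitting the kernel as above does this without singular integrals, which keeps the induction closed in $L^1\cap L^\infty$ with constants uniform over the class of data.
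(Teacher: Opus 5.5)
\paragraph{Comparison with the paper.} Your proof is correct, but it takes a different route from the paper for the existence and uniform-bound part.

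\emph{The paper's route.} It interpolates to get $\|u_0\|_{L^{4/3}}\le R$. It then outsources existence on $[0,T^*(R)]$ and uniform smoothing of all $\partial_t^\alpha\nabla^{\alpha'}u$ in $L^\infty\cap L^{4/3}$ to Theorem 3.5 and Corollary 3.9 of the companion note \cite{BC}. The only new work is the $L^1$ bounds on spatial derivatives: Duhamel restarted at $t_0$, heat kernel bounds, and Hardy--Littlewood--Sobolev. Time derivatives then follow from the equation.

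\emph{Your route.} You close a subcritical fixed point directly in $L^1\cap L^\infty$. The elementary bound $\|\nabla\Phi_u\|_{L^\infty}\lesssim\|u\|_{L^1}^{1/2}\|u\|_{L^\infty}^{1/2}$ (the same splitting as \eqref{bd:Poisson-field-Linfty}) replaces Hardy--Littlewood--Sobolev. You then handle $L^1$ and $L^\infty$ simultaneously in one Duhamel bootstrap. This makes the lemma self-contained and gives an explicit $T_1(R)\sim R^{-2}$. It also controls $\|u\|_{L^1}$ without using nonnegativity, whereas the paper's $k=0$ step relies on mass conservation. The price is redoing what the paper imports from \cite{BC}. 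The derivative bootstrap and the treatment of time derivatives are essentially the same in both arguments.

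\paragraph{Small points to fix.}
\begin{itemize}
\item $t\mapsto G(t)*u_0$ is not continuous into $L^\infty$ at $t=0$ for general $u_0\in L^\infty$. Work in $C([0,T],L^1)\cap L^\infty([0,T],L^\infty)$ instead. Your estimates are unchanged, and only $C([0,T),L^1)$ is required by Definition \ref{def:solution}.
\item Uniqueness in Theorem \ref{th:lwp:L1} only gives $T(u_0)\ge T$ for your fixed-point time $T$. To get existence on the closed interval $[0,T_1]$, either take $T_1<T$ or use the blow-up criterion together with $\|u\|_{L^\infty}\le 2R$.
\item You need spatial regularity up to order about $2N+1$ for the time derivatives. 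So partition $(0,T_0]$ into at least that many steps, not $N$.
\item For the $L^1$ part of the time-derivative bounds, say explicitly that every monomial of $\partial_t^\alpha\nabla^{\alpha'}u$ contains a factor $\partial_t^\beta\nabla^j u$. This holds because it is inherited from the divergence structure of $\Delta u-\nabla\cdot(u\nabla\Phi_u)$. Put that factor in $L^1$ and the Poisson fields in $L^\infty$; the Poisson fields themselves decay only like $|x|^{-1}$ and are not in $L^1$.
\end{itemize}
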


\begin{proof}
		
		Let $p=4/3$. Let $u_0\in L^1\cap L^\infty$ with $\| u_0\|_{L^1\cap L^\infty}\leq R$. The interpolation inequality $\| u_0\|_{L^p}\leq \| u_0\|_{L^1}^{\frac 1p}\| u_0\|_{L^\infty}^{1-\frac 1p}\leq R$  allows to apply Theorem 3.5. in \cite{BC} that shows that the solution exists at least on the time interval $[0,T^*(R)]$ where $T^*(R)$ is independent of $u_0$. Moreover, by Corollary 3.9. in \cite{BC} we have uniform parabolic smoothing estimates. More precisely, for every \(\tau_0>0\) and every \(k\in\mathbb N\),
\begin{align}\label{Corollary2.8smoothing}
\sup_{\tau_0 \leq t \leq T^*}
\|\partial_t^\alpha \nabla^{\alpha'}u(t)\|_{L^\infty\cap L^{4/3}(\mathbb R^2)}
\le C(k,\tau_0,R),
\qquad \alpha+\alpha'\le k.
\end{align}
		
		There remains to show the $L^1$ estimates for $\partial^\alpha \nabla^{\alpha'}u$. We first show the $L^1$ estimate for $\nabla^{k}u$ for any $k\in \mathbb N$. The estimate for $k=0$ follows from the conservation of mass. Let now $i\in \{1,2\}$ and $t_0\in (0,T^*(R))$. We have for all $t\in [t_0,T^*(R)]$ that
		\begin{equation} \label{identity-partialtderivatives2}
		\partial_{x_i} u(t) = \partial_{x_i} G(t-t_0)u(t_0)+\int_{t_0}^t \nabla G(t-s)\cdot (\partial_{x_i}u\nabla \Phi_u (s)+u\nabla \Phi_{\partial_{x_i} u} (s))ds.
		\end{equation}
		Applying the standard heat kernel estimate for the first term and Hardy-Littlewood-Sobolev inequality (see Lemma 3.4. in \cite{BC}) for the second term shows,
		$$
		\| \partial_{x_i} u(t)\|_{L^1}\lesssim \frac{\| u(t_0)\|_{L^1}}{\sqrt{t-t_0}}+\int_{t_0}^t \frac{1}{\sqrt{t-s}} \| \nabla u(s) \|_{W^{1,4/3}(\mathbb R^2)}^2 ds \lesssim \frac{R}{\sqrt{t-t_0}}+C(1,\epsilon,R)^2
		$$
		where $C(l,\epsilon,R)$ is given in \eqref{Corollary2.8smoothing}. This shows the desired $L^1$ bound for $\nabla u$. The bounds for higher order derivatives are proved analogously by applying spatial derivatives to \eqref{identity-partialtderivatives2}, and we get
        \begin{align}\label{Corollary2.8smoothingbis}
        \sup_{\tau_0 \leq t \leq T^*}
\| \nabla^{\alpha'}u(t)\|_{L^1(\mathbb R^2)}
\le C(k,\tau_0,R),
\qquad \alpha'\le k
		\end{align}
        for any integer $k$, up to changing the constant $C(k,\tau_0,R)$. Now that all spatial derivatives \(\nabla^{\alpha'}u\), \(\alpha'\in\mathbb N\), are bounded in \(L^1\cap L^\infty\), we estimate mixed space-time derivatives by induction using the equation. Namely, for $\alpha=1$ we have
        \begin{equation} \label{identity-partialtderivatives}
        \partial_t u = \Delta u +u^2-u\cdot \nabla \Phi_u.
        \end{equation}
        Combining \eqref{Corollary2.8smoothing} and \eqref{Corollary2.8smoothingbis}, and using H\"older's inequality, one easily bounds in $W^{k,1}$ all terms in the right-hand side, showing $\| \partial_t \nabla^{\alpha'}u(t)\|_{L^1(\mathbb R^2)}
\le C(k,\tau_0,R)$ for $\tau_0\leq t \leq T^*(R)$, for all $\alpha'\le k$, up to possibly taking a larger constant $C(k,\tau_0,R)$. Higher order $\partial_t$ derivatives are estimated similarly by induction by differentiating \eqref{identity-partialtderivatives} with respect to $t$. One then obtains bounds for all quantities \(\partial_t^\alpha\nabla^{\alpha'}u\). This finishes the proof.
		
\end{proof}

The second result is a large time regularization estimate in the vicinity of the ground state.

\begin{lemma} \label{lem:uniform-regularization-near-soliton}
	
	For any $\delta>0$, $0<T_0<T_1$ and $N\in \mathbb N$, there exists $\epsilon>0$ such that if $u_0\in L^1$ satisfies
	\begin{align*}
		\|u_{0}-U\|_{L^{1}}\leq \epsilon
	\end{align*}
	then the solution to the Keller-Segel system given by Theorem \ref{th:lwp:L1} exists on the time interval $[0,T_1]$. Moreover, it satisfies
	\begin{align}\label{ReguliarityPara-near-ground-state}
		\sup_{T_{0}\le t \le T_{1}} \| \partial_t^\alpha \nabla^{\alpha'} \left(u(t,\cdot )-U(\cdot)\right)\|_{L^{1}(\mathbb{R}^{2})\cap L^{\infty}(\mathbb{R}^{2})}\le \delta
	\end{align} 
	for all $\alpha+\alpha'\leq N$.
	
\end{lemma}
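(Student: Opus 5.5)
The approach is a compactness-plus-continuity argument that combines the continuity of the flow in Theorem \ref{th:lwp:L1} with the uniform smoothing of Lemma \ref{Solitoncorollary}. The key observation is that $U$ is a global stationary smooth solution, so $T(U)=\infty$. The continuity statement of Theorem \ref{th:lwp:L1}, applied with $u_0=U$ and $T'=T_1$, gives some $\epsilon_1>0$ with two properties whenever $\|u_0-U\|_{L^1}\le\epsilon_1$. First, $T(u_0)>T_1$. Second, $\sup_{[0,T_1]}\|u(t)-U\|_{L^1}\le\delta'$ for any prescribed $\delta'$. This settles existence on $[0,T_1]$ and gives $L^1$ closeness at all times. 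What remains is to upgrade $L^1$ closeness into closeness of all derivatives in $L^1\cap L^\infty$ on $[T_0,T_1]$.

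For the upgrade we first want uniform bounds. Since $u_0$ is only in $L^1$, Lemma \ref{Solitoncorollary} cannot be applied directly at $t=0$. We therefore work with a small time $s_0\in(0,T_0/2)$. We claim that $\|u(s_0)\|_{L^1\cap L^\infty}\le R$ uniformly over the data with $\|u_0-U\|_{L^1}\le\epsilon_1$. To prove this, we use the mild formulation. Write $u=U+w$, where $w$ solves a perturbative Duhamel equation with small $L^1$ data. The fixed point in the space $t^{1/4}L^{4/3}$ (as in \cite{BC}) shows that $t^{1/4}\|w(t)\|_{L^{4/3}}$ stays small on a short interval $[0,s_1]$ whose length depends only on $U$. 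Iterating the same smoothing estimates then gives a uniform $L^\infty$ bound at time $s_0$.

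Next we cover $[T_0,T_1]$ by finitely many restarts. Restarting at times $s_0+kT_1(R)/2$, Lemma \ref{Solitoncorollary} yields uniform bounds $\|\partial_t^\alpha\nabla^{\alpha'}u(t)\|_{L^1\cap L^\infty}\le C$ on $[T_0,T_1]$. At each restart the $L^1\cap L^\infty$ size at the restart time is controlled, because the bounds from the previous interval propagate. Subtracting the corresponding bounds for $U$ shows that $w=u-U$ is uniformly bounded in every $W^{k,1}\cap W^{k,\infty}$, in time and space, on $[T_0,T_1]$.

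Finally, smallness of the derivatives follows by interpolation. We have $\|w(t)\|_{L^1}\le\delta'$ and uniform bounds on $\nabla^{k}w$ in $L^1\cap L^\infty$ for all $k$. Gagliardo–Nirenberg interpolation then gives $\|\nabla^{\alpha'}w\|_{L^1\cap L^\infty}\le C_N\delta'^{\theta}$ for some $\theta>0$. Time derivatives are handled by expressing $\partial_t^\alpha w$ through the equation as polynomial expressions in the spatial derivatives of $u$, $U$ and $\nabla\Phi$. The Poisson field terms $\nabla\Phi_w$ are controlled by HLS or Appendix \ref{sec:poisson}-type estimates, using $\|\nabla\Phi_w\|_{L^\infty}\lesssim\|w\|_{L^1}^{1/2}\|w\|_{L^\infty}^{1/2}$. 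Choosing $\delta'$ so that $C_N\delta'^{\theta}\le\delta$, and then $\epsilon\le\epsilon_1$ accordingly, concludes the proof.

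The main obstacle is the uniform instantaneous $L^\infty$ bound at time $s_0$ for data that are merely $L^1$-close to $U$. Such data may concentrate, so no bound of the form $\|u_0\|_{L^\infty}\le R$ is available. This is why the closeness to $U$ must be exploited through the perturbative Duhamel argument. If that argument is cumbersome, a simpler route is to argue by contradiction. Take a sequence $u_{0,n}\to U$ in $L^1$ violating the estimate. Use the continuity of the flow to get $u_n(s_0)\to U$ in $L^1$, and combine this with the regularization from Theorem \ref{th:lwp:L1}.
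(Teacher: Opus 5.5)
Your first step is fine: apply the continuity statement of Theorem \ref{th:lwp:L1} at the global solution $U$ with some $T'>T_1$ rather than $T'=T_1$, so that you get existence on the closed interval. Your final interpolation step is also fine once uniform bounds are available.

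\textbf{The gap is in the restart paragraph.} Suppose $\|u(t_0)\|_{L^1\cap L^\infty}\le R$. Then Lemma \ref{Solitoncorollary} only gives, at the next restart time, a bound $R_1=C(R,T_1(R)/2,0)$, which in general exceeds $R$. The next admissible step $T_1(R_1)$ is then shorter. The sizes $R_k$ may grow fast enough that $\sum_k T_1(R_k)<T_1-s_0$. So the claim that ``the bounds from the previous interval propagate'' is not justified, and restarting local smoothing need not reach $T_1$ with constants uniform in $u_0$.

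The qualitative regularity in Theorem \ref{th:lwp:L1} cannot fill this, since it is not uniform in the data. The same objection applies to your contradiction route: continuity of the flow is only stated in $L^1$, and the regularization you would combine it with is again non-uniform.

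\textbf{The missing idea.} The perturbative Duhamel argument around $U$, which you use only near $t=0$, is exactly what gives a non-deteriorating bound. The contraction for $w=u-U$, with the linear terms in $U$ included, runs on an interval of length $s_1$ depending only on $U$. It requires only that $\|w(t_0)\|_{L^1}$ be small at the starting time, and you already know this for every $t_0\in[0,T_1]$ from the continuity of the flow.

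\begin{itemize}
\item Restart this argument at $t_0=t-\min(s_1,T_0/4)$ for each $t\in[T_0/2,T_1]$. Uniqueness of mild solutions identifies the fixed point $U+w$ with $u$.
\item This gives $\|w(t)\|_{L^\infty}\lesssim \delta'/\min(s_1,T_0)$ uniformly on $[T_0/2,T_1]$. Hence a single size $R$ is valid at all these times.
\item Now restart Lemma \ref{Solitoncorollary} at $t-\theta$ with $\theta=\min(T_1(R),T_0)/2$. This gives uniform derivative bounds at every $t\in[T_0,T_1]$, and your interpolation step concludes.
\end{itemize}

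This repaired argument is essentially the paper's proof. The paper runs the contraction for $\tilde u=u-U$, linear terms in $U$ included, over all of $[0,T_1]$. It obtains smallness of $\tilde u$ in $L^1\cap L^\infty$ directly on $[T_0,T_1]$, and then gets the higher derivatives by parabolic regularity, without any interpolation.
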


	\begin{proof}
Writing
$u=U+\tilde u$,
where \(U\) is the stationary solution, the perturbation \(\tilde u\) satisfies
\[
\partial_t\tilde u
=\Delta\tilde u
-\nabla\cdot(\tilde u\nabla\Phi_{\tilde u})
-\nabla\cdot(U\nabla\Phi_{\tilde u})
-\nabla\cdot(\tilde u\nabla\Phi_U).
\]

The proof of the existence a straightforward perturbative modification of the proof of Theorem~3.11. in \cite{BC}. For \(\|\tilde u_0\|_{L^1}\)
sufficiently small, the additional terms involving \(U\) are controlled in the
same functional framework and yield a contraction mapping on a suitable ball.
This gives existence on \([0,T_1]\) together with
\[
\sup_{T_0\le t\le T_1}\|\tilde u(t)\|_{L^1\cap L^\infty}
\le \delta
\]
provided $\epsilon$ is small enough.

The estimates for higher-order spatial and mixed space-time derivatives then
follow by repeating the standard parabolic regularity arguments in the proof of Corollary 3.9. in \cite{BC} and the proof of Lemma \ref{Solitoncorollary} here.
\end{proof}

\subsection{Local well-posedness in $L^1(\langle x \rangle^2 dx)$}

We state below the well-posedness of the Keller-Segel equation \eqref{KS} for the initial data studied in the present article satisfying \eqref{id:hp-data-2}.

\begin{theorem} \label{thm:lwp-L1x2}
	
	For any nonnegative $u_0\in L^1(\langle x \rangle^2 dx)$, let $u$ be the solution to the Keller-Segel system \eqref{KS} provided by Theorem \ref{th:lwp:L1}. Then the following properties hold
	
	\begin{itemize}
		\item \emph{Continuity of the flow in $L^1(\langle x \rangle^2dx)$.} We have $u\in C([0,T(u_0)),L^1(\langle x \rangle^2 dx))$.
		\item \emph{Instantaneous boundedness and decrease of the free energy functional}. For all $t\in (0,T(u_0))$ we have $u(t)|\log u(t)|,u(t)|\Phi_{u(t)}|\in L^1(\mathbb R^2)$, so that the free energy $\mathcal F[u](t)$ \eqref{freeenergy} is well-defined. We also have that $u|\nabla \log u-\nabla \Phi_u|^2\in L^1_{loc}((0,T),L^1(\mathbb R^2))$ and the identity \eqref{freeenergy-dissipation} holds true for all $0<t<t'<T(u_0)$.
		\item \emph{First and second momenta}. There hold the identities \eqref{first-momentum} and \eqref{second-momentum}.
	\end{itemize}

\end{theorem}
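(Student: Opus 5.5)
We treat Theorem \ref{thm:lwp-L1x2} as a set of a priori estimates, proved first for Schwartz data and then passed to general $u_0\in L^1(\langle x\rangle^2dx)$ by density. This uses the continuity of the flow in Theorem \ref{th:lwp:L1}.

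\textbf{Step 1: weighted moment bound.} For smooth, rapidly decaying solutions we test the equation against the weight $\langle x\rangle^2$. Integrating by parts gives
$$
\frac{d}{dt}\int \langle x\rangle^2 u=4M+2\int x\cdot \nabla\Phi_u\, u .
$$
We symmetrize the drift term using \eqref{id:definition-nablaPhiu}:
$$
\int x\cdot\nabla\Phi_u u=-\frac{1}{4\pi}\iint u(x)u(y)\,dx\,dy=-\frac{M^2}{4\pi}.
$$
Hence
$$
\frac{d}{dt}\int |x|^2u=4M-\frac{M^2}{2\pi}=4M\Big(1-\frac{M}{8\pi}\Big).
$$
This is exactly \eqref{second-momentum} once the center of mass is fixed. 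For \eqref{first-momentum}, testing against $x$ gives $\frac{d}{dt}\int xu=\int u\nabla\Phi_u$, which vanishes by antisymmetry of the kernel.

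To justify these identities for $L^1(\langle x\rangle^2dx)$ data, we use truncated weights $\langle x\rangle^2\chi_R$ (or $\min(|x|^2,R^2)$ smoothed). The errors from the cut-off are controlled by $\|u\|_{L^1}$ and $\int_{|x|>R}\langle x\rangle^2u$. For the drift term we note that $|(x\chi_R(x)-y\chi_R(y))\cdot(x-y)|/|x-y|^2$ is bounded uniformly in $R$. This gives a Gronwall bound on $\int\langle x\rangle^2 u$ that is uniform over an $L^1(\langle x\rangle^2)$-neighbourhood of the data. Combining it with the $L^1$ continuity of Theorem \ref{th:lwp:L1} and the vanishing of tails yields $u\in C([0,T),L^1(\langle x\rangle^2dx))$ and the moment identities in the limit.

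\textbf{Step 2: free energy.} For $t>0$ the solution lies in $L^1\cap L^\infty$, so $u\log^+u\in L^1$. The negative part of $u\log u$ is controlled by the second moment through the standard bound $u|\log u|\mathbf 1_{u<1}\lesssim u|x|^2+e^{-|x|^2/2}$. For $u\Phi_u$, we use $|\log|x-y||\le \log(1+|x|)+\log(1+|y|)+|\log|x-y||\mathbf 1_{|x-y|<1}$. The first two contributions are bounded by the moments, and the last by $\|u\|_{L^\infty}\|u\|_{L^1}$.

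The dissipation identity \eqref{freeenergy-dissipation} is first proved on $[t,t']\subset(0,T)$ by differentiating $\mathcal F$ and integrating by parts. The smoothness and $W^{k,1}\cap W^{k,\infty}$ bounds of Theorem \ref{th:lwp:L1} make this legitimate once spatial decay is available. Otherwise we approximate by Schwartz data, use continuity in $L^1(\langle x\rangle^2)$ and in $W^{k,1}\cap W^{k,\infty}$ on compact time intervals to pass to the limit in $\mathcal F$, and use Fatou on the dissipation, upgraded to equality by the local convergence of $\nabla u$.

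\textbf{Main obstacle.} The delicate part is the rigorous passage to the limit. It requires uniform control of $\int u|\log u|$ and $\int u|\Phi_u|$ along approximations, and an identity rather than just an inequality in the dissipation, which needs $u|\nabla\log u|^2$ to converge. We would handle the region where $u$ is small by writing $u|\nabla\log u-\nabla\Phi_u|^2=4|\nabla\sqrt u|^2+\dots$. We would then use Gaussian-type lower and upper bounds from parabolic regularity on compact time intervals, or alternatively pass to the limit in the time-integrated weak form, where the dissipation is recovered through the equation.
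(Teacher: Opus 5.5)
\textbf{Verdict: gap.} Your Step~1 is correct, and so is the first half of Step~2. Step~1 uses truncated weights with $\|D^2\phi_R\|_{L^\infty}$ bounded uniformly in $R$, which gives the moment identities and the tail control that yields $u\in C([0,T),L^1(\langle x\rangle^2dx))$. The first half of Step~2 is the finiteness of $\int u|\log u|$ and $\int u|\Phi_u|$ for $t>0$. The paper gives no argument of its own here; it only refers to Theorem 4.1 of \cite{BC}.

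The gap is the dissipation identity \eqref{freeenergy-dissipation}, which you flag as the main obstacle but do not establish.
\begin{itemize}
\item Approximating by Schwartz data and applying Fatou only gives $\mathcal F[u](t')+\int_t^{t'}\int u|\nabla\log u-\nabla\Phi_u|^2\le\mathcal F[u](t)$. Local convergence of $\nabla u_n$ cannot upgrade this to an equality: it does not control dissipation escaping to spatial infinity or concentrating where $u$ is small.
\item Theorem \ref{th:lwp:L1} only gives $L^1$-continuity of the flow, not continuity in $W^{k,1}\cap W^{k,\infty}$ with respect to the data, so even the approximation step needs more than you cite.
\item Gaussian upper bounds are not available: data with only a finite second moment keep polynomial tails for $t>0$.
\item $W^{k,1}\cap W^{k,\infty}$ bounds by themselves control neither $\log u$ where $u\to0$ nor the finiteness of $\int|\nabla u|^2/u$.
\end{itemize}

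\textbf{How to close it.} You need neither lower bounds nor approximation. On $[t,t']\subset(0,T)$ you have $u>0$, by the strong maximum principle and $\int u=M>0$. Take $\psi_R=\chi_R$, which is nondecreasing in $R$. Then you may differentiate and integrate by parts on compact support:
\[
\frac{d}{ds}\int u\log u\,\psi_R=-\int\frac{|\nabla u|^2}{u}\psi_R+\int\nabla u\cdot\nabla\Phi_u\,\psi_R+\int u\log u\,\Delta\psi_R+\int u(\log u+1)\nabla\Phi_u\cdot\nabla\psi_R .
\]
The key point is that $(\log u+1)\nabla u=\nabla(u\log u)$. Hence the cut-off error coming from diffusion is
\[
\int u\log u\,\Delta\psi_R=O\Big(R^{-2}\int_{|x|\ge R}u|\log u|\Big),
\]
and the drift error is $O\big(R^{-1}\|\nabla\Phi_u\|_{L^\infty}\int_{|x|\ge R}u(1+|\log u|)\big)$. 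Both tend to $0$ with bounds uniform on $[t,t']$, thanks to the $L^1$ bound on $u|\log u|$ you already proved. The Fisher term has a sign, so monotone convergence gives at once $\int_t^{t'}\int|\nabla u|^2/u<\infty$ and the exact entropy identity.

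Treat the potential part the same way, using $\frac12\int u\psi_R\,\Phi_{u\psi_R}$ and the following facts:
\begin{itemize}
\item $|\Phi_{u\psi_R}(x)|\lesssim\log(2+|x|)$, uniformly in $R$ and $s\in[t,t']$;
\item $|\nabla\psi_R|\lesssim R^{-1}$;
\item $\|\nabla\Phi_{u(1-\psi_R)}\|_{L^\infty}\lesssim\|u(1-\psi_R)\|_{L^1}^{1/2}\|u\|_{L^\infty}^{1/2}\to0$.
\end{itemize}
Adding the entropy and potential identities gives \eqref{freeenergy-dissipation} with equality.
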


\begin{proof}
We refer the interested reader to Theorem 4.1. in the companion note \cite{BC} for the complete proof.
	
\end{proof}

\section{Soliton Resolution} \label{sec:soliton-resolution}

Throughout the rest of the article, we will study solutions $u$ of the Keller-Segel equation such that $u_0\in L^1(\langle x \rangle^2 dx) $ and with critical mass  $\int_{\mathbb R^2} u_0 dx =8\pi$, as provided by Theorems \ref{th:lwp:L1} and \ref{thm:lwp-L1x2}. We will stop mentioning explicitly Theorems \ref{th:lwp:L1} and \ref{thm:lwp-L1x2} in the sequel, unless at key locations, and will always implicitly assume the properties they provide (regularity of the solution, well-definedness of the free energy functional, etc.). We will now start establishing additional properties of these solutions, until we are able to prove the main result of the article. We begin with the properties recalled in the introduction:
\begin{equation} \label{id:property-1}
\mbox{the solution $u$ is global, i.e. it exists for all }t\in [0,\infty),
\end{equation}
and moreover, its mass, center of mass and second momentum are constant over time,
\begin{equation} \label{id:property-2}
\int_{\mathbb R^2} u(t,x) dx =8\pi, \quad (8\pi)^{-1}\int_{\mathbb R^2} x u(t,x) dx =x^* \quad \mbox{and} \quad \int_{\mathbb R^2} |x-x^*|^2 u(t,x) dx =\mu.
\end{equation}
The main result of this section is the continuous resolution in the critical space $L^1$ into a stationary state as $t\to \infty$, which is the following Theorem.

\begin{theorem}\label{thn:soliton-resolution}
	Let $u$ be a solution of the Keller-Segel equation satisfying $u_0\in L^1(\langle x \rangle^2 dx)$ and $\int u_0=8\pi$, and recall it then satisfies \eqref{id:property-1}-\eqref{id:property-2}. There exist continuous functions $\lambda(t)>0$ with $\lim_{t\to \infty} \lambda(t)=0$ and $x(t)\in \mathbb{R}^{2}$ with $\lim_{t\to \infty}x(t)=x^*$ such that $u$ can be decomposed as
	\begin{align*}
		u(x,t)=\frac{1}{\lambda^{2}(t)}U\big( \frac{x-x(t)}{\lambda(t)} \big)+\tilde u(x,t)
	\end{align*}
	where as $t\to \infty$, the remainder satisfies $\tilde{u}(t)\to 0$ in $L^1$, and more generally for any $k\in \mathbb N$,
	\begin{align*}
		 \lim_{t\to \infty}(\lambda^{2+k}(t)\| \nabla^k \tilde{u}(t)\|_{L^\infty}+\lambda^{k}(t)\| \nabla^k \tilde{u}(t)\|_{L^{1}})=0.
	\end{align*}
	
\end{theorem}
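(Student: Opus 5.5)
We will prove Theorem \ref{thn:soliton-resolution} in two stages: first sequential resolution, then upgrading to continuous-in-time resolution.

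\textbf{Sequential resolution.} Since the mass is $8\pi$, the logarithmic Hardy--Littlewood--Sobolev inequality bounds $\mathcal F[u](t)$ from below uniformly in $t$. By \eqref{freeenergy-dissipation} the dissipation $D(t)=\int u|\nabla\log u-\nabla\Phi_u|^2$ is then integrable on $(1,\infty)$. Hence for any $T_0>0$ we can pick times $t_n\to\infty$ with $\int_{t_n}^{t_n+T_0}D\,ds\to0$. Next define a concentration scale $\lambda_n$ and a center $x_n$ by requiring that $u(t_n)$ has mass $4\pi$ in $B_{\lambda_n}(x_n)$ and at most $4\pi$ in every ball of radius $\lambda_n$. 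The weak-star convergence $u(t)\to8\pi\delta_{x^*}$ from \cite{BCM} gives $\lambda_n\to0$ and $x_n\to x^*$. Renormalize $v_n(s,y)=\lambda_n^2u(t_n+\lambda_n^2s,x_n+\lambda_n y)$. Then $v_n$ solves \eqref{KS}, has mass $8\pi$, and has vanishing dissipation on $[0,T_0\lambda_n^{-2}]$.

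\textbf{Compactness and rigidity.} The non-concentration normalization is the input for the $\varepsilon$-regularity theorem of Appendix \ref{sec:epsilon-regularity}. It gives uniform local bounds on $v_n$ and its derivatives for $s\ge1$, so a subsequence converges locally smoothly to a limit $v$ with zero dissipation, i.e.\ $\nabla\log v=\nabla\Phi_v$. Theorem \ref{th:rigidity} then forces $v=U_{\lambda,y}$. The normalization fixes $\lambda,y$ in a compact set, and we absorb them into $\lambda_n,x_n$.

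We must also show that no mass is lost in the limit, i.e.\ $v$ has mass exactly $8\pi$. Since $\int v=8\pi$ automatically for a stationary state, the real issue is the tail: we need $\int_{|y|>R}v_n\to0$ uniformly. Here we use the constancy of the second momentum together with $x_n\to x^*$ to rule out mass escaping to spatial infinity. Mass escaping at intermediate scales would create a second bubble, and the remaining mass would then be too small to form a stationary state, which has mass $8\pi$; we exclude this by a profile-decomposition-type argument applied to the vanishing-dissipation limit. Once tightness holds, $v_n(1)\to U$ in $L^1$, and Lemma \ref{lem:uniform-regularization-near-soliton} upgrades this to convergence in all $W^{k,1}\cap W^{k,\infty}$. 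Rescaling back gives the stated weighted bounds at the times $t_n+\lambda_n^2$.

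\textbf{From sequential to continuous resolution.} Define $\delta(t)=\inf_{\lambda,y}\|u(t)-U_{\lambda,y}\|_{L^1}$. We argue by contradiction: suppose there are times $s_n\to\infty$ with $\delta(s_n)\ge\eta>0$. By continuity of the flow there are then times $t'_n$ with $\delta(t'_n)=\eta'$ exactly, for a small $\eta'$. Near these times we use the fact that the dissipation integrated over windows of length $\lambda(t)^2 T_0$ tends to zero. This follows because $D$ is integrable and, for solutions near the soliton manifold, the dissipation controls the distance to it. Rerunning the compactness argument above at $t'_n$ then yields a limiting profile at distance $\eta'$ from the soliton manifold but with zero dissipation, contradicting Theorem \ref{th:rigidity}.

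Hence $\delta(t)\to0$. For $t$ large we then choose $(\lambda(t),x(t))$ by orthogonality conditions via the implicit function theorem, which makes them continuous in $t$, and apply Lemma \ref{lem:uniform-regularization-near-soliton} at every time to get the derivative bounds. The step I expect to be hardest is this last one: ensuring that small dissipation on soliton-scale windows holds for \emph{all} large times, not just along a sequence. I would first try a uniform-in-time Lyapunov argument based on the quantity $\mathcal F[u]-\mathcal F[U]\ge0$, which tends to zero monotonically and controls the $L^1$ distance to the manifold of stationary states.
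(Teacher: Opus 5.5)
\textbf{There is a genuine gap in the compactness step of your sequential resolution.}

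Your normalization only says that, at the single time $t_n$, every ball of radius $\lambda_n$ carries mass at most $4\pi$. That is not the hypothesis of Theorem \ref{thm:quantitative-epsilon-reg} or Corollary \ref{thm:L1Linfsmall}, which require the mass in $B_{8R}$ to be below a \emph{small} threshold. Mass $4\pi$ is not small. Nothing prevents it from sitting in a ball of radius $10^{-10}\lambda_n$ at time $t_n$, or from concentrating further during the rescaled window. So the claimed uniform bounds on $v_n$ for $s\ge 1$ do not follow, and you have no limit to feed into rigidity. A quantitative regularity theory for local mass below $8\pi$ might rescue your route, but nothing of the sort is available here.

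The missing idea is to choose times that control the solution on a backward window. The paper sets $t_n=\inf\{t:\ \|u(t)\|_{L^\infty}\ge n\}$ and $\lambda_n=n^{-1/2}$, and restarts the rescaled flow at $t_n-\theta\lambda_n^2$ for a small fixed $\theta$. Then $\|v_{n,0}\|_{L^1}=8\pi$ and $\|v_{n,0}\|_{L^\infty}\le 1$, so Lemma \ref{Solitoncorollary} gives uniform smooth bounds on a fixed interval containing $\theta$. The normalization $v_n(\theta,0)=1\ge v_n(\theta,\cdot)$ makes the limit nontrivial. There, $\varepsilon$-regularity is used only to show that the level $n$ is attained, via uniform decay at infinity coming from the second moment.

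Also, Theorem \ref{th:rigidity} assumes $f>0$ everywhere. For a nonnegative limit you need the argument of Corollary \ref{cor:rigidity}: continuity of $\nabla\Phi_f$ forces $\{f>0\}=\mathbb R^2$.

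\textbf{Several of the difficulties you single out are not real.}
\begin{itemize}
\item Vanishing dissipation is automatic. Since $D\in L^1(1,\infty)$ and the space-time dissipation is invariant under the Keller--Segel scaling, the rescaled dissipation on any window whose original starting time tends to infinity goes to zero. You need no special choice of $t_n$ for this, and no inequality bounding the distance to the soliton manifold by the dissipation.
\item The Lyapunov argument on $\mathcal F[u]-\mathcal F[U]$ is unnecessary. It would also require $\mathcal F[u(t)]\to\mathcal F[U]$ and a quantitative stability form of the logarithmic HLS inequality, neither of which is available here.
\item Tightness is immediate, as in Lemma \ref{lem:cv-L1-sequential}. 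Once the local limit is a stationary state it has mass $8\pi=\int v_n$, so local convergence forces the tails to vanish. No second-moment or profile-decomposition argument is needed.
\end{itemize}

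\textbf{Your continuous-in-time step is sound in outline}, and measuring the distance only in $L^1$ is fine. However, the uniform bounds at the exit time $t_n'$ cannot come from your sequential compactness argument. Pick $t_n''$ with $t_n''+\lambda(t_n'')^2=t_n'$, where $\lambda$ is chosen continuously by modulation while $\delta\le\eta'$. Then apply Lemma \ref{lem:uniform-regularization-near-soliton} from $t_n''$, with $\eta'$ below its threshold. Vanishing dissipation, rigidity and conservation of mass then give $L^1$ convergence to a soliton. The contradiction is therefore with $\delta(t_n')=\eta'$, not with Theorem \ref{th:rigidity}. This is Steps 2--3 of the proof of Proposition \ref{continuoussolresPR}.
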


\begin{proof}

This will be the content of this section; the result being eventually a direct consequence of Proposition \ref{continuoussolresPR} and Lemma \ref{limpara}.

\end{proof}

\subsection{The Free Energy and classified solutions to the Liouville equation} \label{sec:stationary-states} 

In this section we will introduce some properties of the \emph{free-energy} \eqref{freeenergy} associated to \eqref{KS}. We recall that the free energy is instantaneously well-defined and dissipated for solutions starting in $L^1(\langle x \rangle^2dx)$, see Theorem \ref{thm:lwp-L1x2}.

\subsubsection{The Keller-Segel equation as a gradient flow}

The Keller-Segel equation \eqref{KS} can be written as a gradient flow evolution of the free energy functional \eqref{freeenergy} (see \cite{BlCalCar} and \cite{O}),
$$
\partial_t u = \nabla \cdot \left( u \frac{\delta \mathcal F}{\delta u}\right)
$$
where $\delta \mathcal F/\delta u$ denotes the differential of $\mathcal F$. We now explain this at a formal level. Given an initial datum $u_0\in L^1(\langle x \rangle^2dx)$, the Keller-Segel evolution lies on the manifold $\mathcal M=\{v\geq 0, \ \int v dx=\int u_0 dx, \ \int v\langle x \rangle^2dx<\infty \}$. This manifold can be equipped with the following Riemannian structure. Given a $v\in \mathcal M$, functions of the tangent space $T\mathcal M (v)=\{w\in L^1(\langle x \rangle^2 dx), \ \int w dx=0\}$ can formally be written under the form $w=\nabla \cdot (v\nabla p)$ where $p$ is defined modulo constant. The metric $g$ defined by the following formula at $v\in \mathcal M$,
$$
g(w,w')= \int v \nabla p\cdot \nabla p' =-\int p w',
$$
is the one that induces the Wasserstein distance on $\mathcal M$. The Keller-Segel equation can then be written as
$$
\partial_t u=-\textup{grad} \mathcal F(u)
$$
where $\textup{grad} \mathcal F$ denotes the gradient of $\mathcal F$ for the metric $g$.

\subsubsection{Classification of stationary states}
Recall the free energy dissipation identity \eqref{freeenergy-dissipation}, where the dissipation rate 
$
\int u \, |\nabla \log u - \nabla \Phi_u|$
can be interpreted as a Fisher information. We now invoke a classical rigidity result, stated precisely below, which asserts that all functions with vanishing dissipation rate correspond to the unique stationary state, up to scaling and translation. This rigidity ultimately follows from the classification of solutions to the Liouville equation established by W.~Chen and C.~Li \cite{CL}.

Indeed, by Theorem~1 in \cite{CL}, any solution of  
\begin{align}\label{LiouvilleEq}
	\begin{cases}
		-\Delta u = e^{u}, & \text{in } \mathbb{R}^{2}, \\[0.3em]
		\displaystyle \int_{\mathbb{R}^{2}} e^{u(x)} \, dx < \infty,
	\end{cases}
\end{align}
is radially symmetric and necessarily takes the explicit form  
\begin{equation} \label{LiouvilleEq-2}
\phi_{\lambda,y}(x) 
= \log \!\left( \frac{8 \lambda^{2}}{(\lambda^{2} + |x-y|^{2})^{2}} \right),
\end{equation}
for some $\lambda > 0$ and $y \in \mathbb{R}^2$. 

\begin{theorem}[Classification of zero dissipation rate functions] \label{th:rigidity}
	
	Assume that $f>0$ and $g$ are smooth function on $\mathbb R^2$ with $f\in L^1$ which solve
	$$
	\left\{ \begin{array}{l l}
		\nabla \log f-\nabla g=0 , \\   -\Delta g=f .
	\end{array}\right.
	$$
	on $\mathbb R^2$. Then there exist $\lambda>0$ and $y\in \mathbb R^2$ such that
	$$
	f(x)=\frac{1}{\lambda^2}U(\frac{x-y}{\lambda}).
	$$
	
\end{theorem}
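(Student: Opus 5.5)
The plan is to reduce the statement to the Chen--Li classification of solutions to the Liouville equation \eqref{LiouvilleEq}. Since $f>0$ is smooth, $\log f$ is well defined and smooth, and the first equation $\nabla(\log f - g)=0$ on the connected set $\mathbb R^2$ gives a constant $c\in\mathbb R$ with $\log f = g + c$, that is $f=e^{g+c}$. Set $w:=g+c$. Then $-\Delta w=-\Delta g=f=e^{w}$ on $\mathbb R^2$, and $\int_{\mathbb R^2} e^{w}dx=\int f\,dx<\infty$ because $f\in L^1$. So $w$ is a smooth (in particular classical) solution of \eqref{LiouvilleEq}.

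Next I will invoke Theorem~1 of \cite{CL} as recalled before the statement. It gives $\lambda>0$ and $y\in\mathbb R^2$ with $w=\phi_{\lambda,y}$ as in \eqref{LiouvilleEq-2}. Exponentiating,
$$
f(x)=e^{w(x)}=\frac{8\lambda^2}{(\lambda^2+|x-y|^2)^2}=\frac{1}{\lambda^2}\,\frac{8}{\bigl(1+|x-y|^2/\lambda^2\bigr)^2}=\frac{1}{\lambda^2}U\Bigl(\frac{x-y}{\lambda}\Bigr),
$$
which is the claimed form with $U$ from \eqref{steadstate}. A quick check that $\int f=8\pi$ follows a posteriori and need not be assumed.

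I do not expect a real obstacle. The only points to verify are that the hypotheses of \cite{CL} are met and that the constant $c$ is harmless. On the first point, Chen--Li require only a $C^2$ solution with $e^w\in L^1$, and this holds by the assumed smoothness. On the second point, the additive constant is exactly what produces the scaling parameter: any $c$ is allowed, and the classification absorbs it into $\lambda$. No normalization of $g$ (such as $g=\Phi_f$) is needed, since the argument uses only $-\Delta g=f$.
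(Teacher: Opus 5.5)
Your proposal is correct and is essentially the paper's proof: integrate the first equation to get $\log f = g + C$, absorb the constant into $g$ so that it solves the Liouville equation with $e^{g+C} = f \in L^1$, and conclude by the Chen--Li classification. Your explicit check that $e^{\phi_{\lambda,y}} = \lambda^{-2}U((\cdot-y)/\lambda)$ only spells out the final step, which the paper leaves implicit.
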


\begin{proof}
	
	By the first equation we know that $\log f-g=C$ for some integration constant $C$. Up to changing $g$ to $g+C$ we can assume $C=0$ without loss of generality. Hence $f=e^g$, so by the second equation the function $g$ solves the Liouville equation \eqref{LiouvilleEq}. By \cite{CL}, $g$ is of the form \eqref{LiouvilleEq-2}. Hence the result using $f=e^g$.
	
\end{proof}

As a corollary, we have the following result which will be used to show soliton resolution later on.

\begin{corollary} \label{cor:rigidity}
	
	Let $f\in C^\infty (\mathbb R^2)\cap L^1(\mathbb R^2)$ be nonnegative and satisfy
	\begin{equation}\label{FundIdent}
			\nabla \log f-\nabla \Phi_{f}=0 \ \ \text{on the set }\{f>0\}, 
	\end{equation} 
	where $\nabla \Phi_f$ is given by \eqref{id:definition-nablaPhiu}, with $ f \le 1$ on $\mathbb R^2$ and $f(0)=1$. Then 
	$$
	f(x)= \frac{1}{8} U \left( \frac{x}{\sqrt{8}} \right).
	$$
	
\end{corollary}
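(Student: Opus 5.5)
The plan is to reduce the corollary to Theorem \ref{th:rigidity} by taking $g=\log f$. Two things need checking first: that $f>0$ on all of $\mathbb R^2$, so that $\log f$ is globally defined and smooth, and that $-\Delta g=f$. After that, the normalisations $f\le 1$ and $f(0)=1$ fix the scale and the translation parameter.

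\emph{Step 1: a global bound on the Poisson field.} Since $0\le f\le 1$ and $f\in L^1$, I bound $\nabla\Phi_f$ from \eqref{id:definition-nablaPhiu} uniformly. Split the integral into $|x-y|\le r$ and $|x-y|>r$:
$$|\nabla \Phi_f(x)|\le \frac{1}{2\pi}\int \frac{f(y)}{|x-y|}dy \lesssim r\|f\|_{L^\infty}+r^{-1}\|f\|_{L^1}.$$
Optimising in $r$ gives $\|\nabla\Phi_f\|_{L^\infty}\lesssim \|f\|_{L^1}^{1/2}\|f\|_{L^\infty}^{1/2}=:K<\infty$. The same splitting shows $\nabla\Phi_f$ is continuous. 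Since $f$ is smooth and integrable, $\nabla\Phi_f$ is also smooth, with $\nabla\cdot\nabla\Phi_f=-f$; this follows because $-(2\pi)^{-1}\log|\cdot|$ is the fundamental solution of $-\Delta$, so $\nabla\Phi_f=\nabla(-(2\pi)^{-1}\log|\cdot|)*f$ has divergence $-f$.

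\emph{Step 2: positivity of $f$.} This is the step I expect to be the main obstacle, since \eqref{FundIdent} only holds on $\{f>0\}$ and a priori $f$ could vanish somewhere. Let $\Omega$ be the connected component of the open set $\{f>0\}$ containing $0$; it is nonempty because $f(0)=1$. Suppose $\Omega\neq\mathbb R^2$, and pick a point $x_1\in\partial\Omega$ together with a path inside $\Omega$ from $0$ to $x_1$. I would choose this path as a segment from a point near $x_1$, adjusted to first-exit points so that it stays in $\Omega$ up to the endpoint. Along the path, \eqref{FundIdent} and Step 1 give $|\nabla\log f|\le K$. Hence $\log f$ stays bounded below by $-K\cdot(\text{length})$ up to the endpoint. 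By continuity $f(x_1)>0$, which contradicts $x_1\in\partial\{f>0\}$. Therefore $\Omega$ is open and closed in $\mathbb R^2$, so $\Omega=\mathbb R^2$ and $f>0$ everywhere.

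\emph{Step 3: applying the rigidity theorem.} Set $g=\log f$, which is smooth. By \eqref{FundIdent}, $\nabla g=\nabla\log f=\nabla\Phi_f$ on $\mathbb R^2$. Taking the divergence and using Step 1 gives $-\Delta g=-\nabla\cdot\nabla\Phi_f=f$. Thus $(f,g)$ satisfies the hypotheses of Theorem \ref{th:rigidity}, so $f=\lambda^{-2}U((x-y)/\lambda)=8\lambda^2/(\lambda^2+|x-y|^2)^2$ for some $\lambda>0$ and $y\in\mathbb R^2$.

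\emph{Step 4: fixing the parameters.} This function has a unique maximum, equal to $8/\lambda^2$ and attained at $x=y$. The conditions $f\le 1$ and $f(0)=1$ say that the maximum value is $1$ and is attained at $0$. Hence $y=0$ and $\lambda^2=8$. A direct check then gives
$$f(x)=\frac{1}{(1+|x|^2/8)^2}=\frac18 U\left(\frac{x}{\sqrt 8}\right),$$
as claimed.
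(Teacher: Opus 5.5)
Your proof is correct and follows the same route as the paper: a bound on $\nabla\Phi_f$ rules out zeros of $f$, because $\log f$ cannot reach $-\infty$ along a path on which its gradient is bounded. Then Theorem~\ref{th:rigidity} together with $f\le 1$, $f(0)=1$ forces $\lambda=\sqrt 8$ and $y=0$. Two small refinements over the paper: you take $g=\log f$ directly instead of building $g$ with the Poincar\'e lemma, and your finite-length path to an accessible boundary point makes precise the paper's brief claim that $\log f$ extends continuously to $\bar\Omega$.
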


\begin{proof}
	
	We first claim that $\nabla \Phi_f \in C^\infty (\mathbb R^2)$. To show it, we pick an arbitrary $R>0$, and consider the set $\{|y|<R/10\}$. Using that $\chi_R(a)$ is $1$ for $|a|\leq R$ and $0$ for $|a|\geq 2R$ we can write
	$$
	\nabla \Phi_f(y)=  -\frac{1}{2\pi}\int\frac{y-z}{|y-z|^{2}}(1-\chi_{R/10}(z-y))(1-\chi_R(z))f(z)dz -\underbrace{\frac{1}{2\pi}\int_{\R^2}\frac{y-z}{|y-z|^{2}}f(z)\chi_R(z)dz}_{=-\nabla \Phi_{\chi_R} f(y)}.
	$$
	The first term is $C^\infty$ since the underlying convolution kernel is smooth, and the second term is smooth since it is the gradient of $-\frac{1}{2\pi}\log(|\cdot|)*(\chi_R f)$ which is smooth by elliptic regularity. Hence $\nabla \Phi_f $ is $C^\infty$ on the set $\{|y|<R/10\}$, so that $\nabla \Phi_f$ is smooth on $\mathbb R^2$ as $R$ is arbitrary.
	
	Let $\Omega=\{f>0\}$. Since $-\nabla \cdot \nabla \Phi_{f} = f$, by setting $H_{f} := \log f$ \eqref{FundIdent} yields
	\[
	-\Delta H_{f}(y) = e^{H_{f}(y)} \quad \text{in } \Omega.
	\]
	We claim that $\Omega= \mathbb R^2$. Suppose by contradiction $\Omega \neq \mathbb{R}^2$. Then there exists a boundary point $y^*\in \partial \Omega$ where $f(y_n) \to 0$ along a sequence $y_n$ of points $\Omega$ that converge to $y^*$, so that $H_{f}(y_n) \to -\infty$. Since $\nabla H_{f} = \nabla \Phi_{f}$ in $\Omega$ and $\nabla \Phi_{f}$ is a continuous function, we deduce that $\nabla H_{f}$ is continuous on $\bar \Omega$. Hence $H_f$ is also continuous on $\bar \Omega$, which contradicts $H_{f}(y_n) \to -\infty$. Hence $\Omega=\mathbb R^2$.
	
	Now, a direct computation shows
	$$
	\partial_{y_2}\left(\int_{\R^2}\frac{y_1-z_1}{|y-z|^{2}}f(z)dz \right)= \partial_{y_1}\left(\int_{\R^2}\frac{y_2-z_2}{|y-z|^{2}}f(z)dz \right).
	$$
	By Poincar\'e's Lemma, this implies that there exists a function $g$ defined on $\mathbb R^2$ such that $\nabla g =\nabla \Phi_f$. It then satisfies $-\Delta g=- \nabla \cdot \nabla \Phi_f=f$ on $\mathbb R^2$. Since $f>0$ on $\mathbb R^2$, \eqref{FundIdent} shows that it also satisfies $\nabla \log f-\nabla g=0$ on $\mathbb R^2$. By applying Theorem \ref{th:rigidity} we have
	\[
	f(y) = \frac{1}{\lambda^2} \frac{8}{\big(1 + \frac{|y - y_0|^2}{\lambda^2}\big)^2}
	\]
	for some $\lambda > 0$ and $y_0 \in \mathbb{R}^2$. The above function takes all values in $(0,8/\lambda^2)$ so that the constraints $0\leq f \leq 1$ and $\| f\|_{L^\infty}=1$ imply $\lambda=\sqrt{8}$. The only point at which it takes the value $1$ is $y_0$, so that $y_0=0$ since $f(0)=1$. Hence the result.
	
\end{proof}

\subsection{Sequential soliton resolution} \label{subsec:sequential}

We now begin our proof of Theorem \ref{thn:soliton-resolution}. As a starting point we recall the result established by \cite{BCM}, slightly restated here in view of the preliminary observations made in the previous sections.

\begin{theorem}[Theorem~1.3 in \cite{BCM}]\label{BCMThm}
	Let $u$ be a solution to the Keller--Segel system \eqref{KS} satisfying $u_0 \in L^1(\langle x \rangle^2 dx)$ and $\int_{\mathbb{R}^{2}} u(x,t) \, dx = 8\pi$, and recall it satisfies \eqref{id:property-1}-\eqref{id:property-2}. Then, for any sequence $t_n \to \infty$, the sequence $u(t_n)$ converges, in the weak-$\ast$ sense of measures, to a Dirac delta of mass $8\pi$ concentrated at the center of mass $x^*$.
\end{theorem}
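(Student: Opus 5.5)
Since the statement is Theorem 1.3 of \cite{BCM}, one could simply cite it. If I were to reprove it, I would argue as follows. Fix a sequence $t_n\to\infty$. By \eqref{id:property-2}, the measures $u(t_n)\,dx$ have mass $8\pi$ and uniformly bounded second moment $\mu$ about $x^*$. Hence they are tight, and by Prokhorov a subsequence converges weakly-$\ast$ (indeed narrowly) to a nonnegative measure $\nu$ of mass $8\pi$. Moreover $\nu$ has center of mass $x^*$, because first moments pass to the limit under the uniform second-moment bound. Since any subsequence has a further subsequence converging to some limit, it suffices to show that every such limit equals $8\pi\delta_{x^*}$.

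The key tool I would use is a virial-type identity for a smooth test function $\varphi$. Symmetrizing the drift term, one gets
$$\frac{d}{dt}\int\varphi u=\int\Delta\varphi\,u-\frac{1}{4\pi}\iint\frac{(\nabla\varphi(x)-\nabla\varphi(y))\cdot(x-y)}{|x-y|^2}u(x)u(y)\,dx\,dy .$$
For $\varphi=|x-x^*|^2$ this gives exactly the constant-second-moment identity at mass $8\pi$.

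The idea is to exploit the fact that the second moment is conserved while, heuristically, diffusion should spread mass. I would consider the Wasserstein-type concentration function $\int|x-y|^2u(x)u(y)$, which equals $2\cdot 8\pi\mu$ and is constant. That alone does not force concentration, so I would instead use the free energy. The functional $\mathcal F[u(t)]$ is nonincreasing and bounded below at critical mass, by the logarithmic HLS inequality on the sphere or plane, thanks to the conserved second moment. Therefore the dissipation $\int_0^\infty\int u|\nabla\log u-\nabla\Phi_u|^2<\infty$. Along a sequence of times the dissipation tends to $0$, and by Theorem \ref{th:rigidity} this forces any profile to be a stationary state $U_{\lambda,y}$.

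Now $U_{\lambda,y}$ has infinite second moment, whereas the limit $\nu$ has second moment at most $\mu$. So a nontrivial absolutely continuous limit can only arise if $\lambda\to0$, which yields a Dirac mass. Combined with the center of mass being $x^*$ and the total mass being $8\pi$, this forces $\nu=8\pi\delta_{x^*}$.

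The main obstacle I expect is upgrading the time-averaged vanishing of the dissipation to information along an arbitrary sequence $t_n$, and excluding mass splitting into several pieces, say a Dirac plus diffuse mass. To handle splitting, I would use the $\varphi$-identity with a localized test function to show $\int\varphi u$ is slowly varying on time intervals of length $O(1)$. This transfers the limit from good times $s_n\in[t_n,t_n+1]$ with small dissipation back to $t_n$. At the good times, any diffuse mass component would carry a lower-bounded share of the free energy through log-HLS, incompatible with total mass exactly $8\pi$ concentrating. Equivalently, one can apply log-HLS to the concentrated part. Alternatively, I would quote the proof in \cite{BCM}, which follows this route.
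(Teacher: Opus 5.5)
Citing \cite{BCM} is exactly what the paper does: the statement is imported without proof and then used, e.g.\ in Step~1 of Lemma~\ref{FirstDecoLemma} to get $\limsup_{t\to\infty}\|u(t)\|_{L^\infty}=\infty$. Your reproof sketch, however, has two steps that fail as written.

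\emph{Transfer from good times.} The transfer back to $t_n$ does not work on windows of length $O(1)$. The weak formulation only gives $|\frac{d}{dt}\int\varphi u|\lesssim\|\nabla^2\varphi\|_{L^\infty}$ at mass $8\pi$, so $\int\varphi u$ can move by $O(1)$ over $[t_n,t_n+1]$. Nothing makes it ``slowly varying'' at that scale. You need good times at vanishing distance from $t_n$, and integrability of the dissipation does provide them. Let $\epsilon_n$ be the dissipation integrated over $[t_n,\infty)$. Then some $s_n\in[t_n,t_n+\sqrt{\epsilon_n}]$ has dissipation at most $\sqrt{\epsilon_n}$, and $\int\varphi u(t_n)-\int\varphi u(s_n)\to0$.

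\emph{Identifying the limit at $s_n$.} This is the more serious gap. Small dissipation at a single time gives no compactness. Theorem~\ref{th:rigidity} applies only to a smooth positive function with exactly zero dissipation. At $s_n$ you merely have a weak-$\ast$ limit measure, which a priori could be a Dirac mass of mass $<8\pi$ plus a diffuse part. The infinite second moment of $U_{\lambda,y}$ rules out a limit only once you have one to which rigidity applies.

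The missing step is to use log-HLS as an \emph{upper bound on the entropy} under non-concentration, not as ``a lower-bounded share of the free energy''. Suppose $\sup_y\int_{B_r(y)}u(s_n)\le 8\pi-\delta$. Applying log-HLS to localized pieces of subcritical mass bounds $\int u(s_n)\log u(s_n)$ in terms of $\delta$, $r$, $\mu$ and $\mathcal F[u(1)]$. This equi-integrability lets the vanishing dissipation control $\|\nabla\sqrt{u(s_n)}\|_{L^2}$, because the critical term $\int u|\nabla\Phi_u|^2$ can then be absorbed. You obtain a strong $L^1$ limit $\bar u$ of mass $8\pi$, with zero dissipation and $\int|x-x^*|^2\bar u\le\mu$. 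After a regularity and positivity argument as in Corollary~\ref{cor:rigidity}, rigidity gives $\bar u=U_{\lambda,y}$, contradicting the finite second moment. Hence the mass fully concentrates at $s_n$, and tightness plus the conserved center of mass give $8\pi\delta_{x^*}$.

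If you instead meant to extract profiles by blow-up rescaling as in Lemma~\ref{FirstDecoLemma}, a full-mass bubble would automatically rule out splitting. But you would then have to prove $\|u(s_n)\|_{L^\infty}\to\infty$ and normalize the times independently. The paper obtains that divergence from this very theorem, so relying on it would be circular.
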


The goal of this subsection is to establish the \emph{sequential soliton resolution}, which is stated precisely in the following proposition.

\begin{proposition}\label{SeqSolResPR}
	Let $u$ be a solution to the Keller--Segel system \eqref{KS} satisfying $u_0 \in L^1(\langle x \rangle^2 dx)$ and $\int_{\mathbb{R}^{2}} u(x,t) \, dx = 8\pi$, and recall it satisfies \eqref{id:property-1}-\eqref{id:property-2}. Then, there exists a sequence of times $t_n\to \infty$, scales $\lambda_n>0$ and positions $x_n\in \mathbb R^2$ such that
	$$
	u(t_n,x)= \frac{1}{\lambda_n^2}U\left(\frac{x-x_n}{\lambda_n}\right)+\tilde u_n (x)
	$$
	with
	$$
	\| \tilde u_n\|_{L^1} \to 0 \quad \mbox{as }n\to \infty.
	$$
\end{proposition}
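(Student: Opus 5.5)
The plan is to use the free energy dissipation identity \eqref{freeenergy-dissipation}, the lower bound on $\mathcal F$ at critical mass, compactness after renormalization, and Corollary \ref{cor:rigidity}.

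\textbf{Step 1: vanishing dissipation along a sequence.} At mass $8\pi$ the free energy is bounded from below by the logarithmic Hardy--Littlewood--Sobolev inequality \cite{CL}. Together with \eqref{freeenergy-dissipation}, this gives
$$
\int_1^\infty D(s)\,ds<\infty, \qquad D(s)=\int u|\nabla\log u-\nabla\Phi_u|^2 .
$$
Hence for each $n$ we can pick $s_n\in[n,n+1]$ with $D(s_n)\le\int_n^{n+1}D\to 0$. A more robust choice, needed to get compactness in time, is to work on the windows $[s_n, s_n+\lambda_n^2 T]$ introduced in Step 2. Applying the pigeonhole principle in the renormalized time variable, we select $t_n$ such that $\int_{t_n-\lambda_n^2T}^{t_n} D\,ds\to 0$. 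This quantity is scale invariant, because $D$ scales like $\lambda^{-2}$.

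\textbf{Step 2: renormalization.} Let $m_n=\|u(s_n)\|_{L^\infty}$. It must tend to infinity: otherwise Theorem \ref{BCMThm} would be contradicted, since a bounded sequence of densities cannot converge weakly to a Dirac mass. Choose $y_n$ with $u(s_n,y_n)\ge m_n/2$, then set $\lambda_n=m_n^{-1/2}$ and $v_n(y)=\lambda_n^2u(s_n,y_n+\lambda_ny)$, so that $\|v_n\|_{L^\infty}\le1$. To obtain uniform smoothness, start from the earlier time $s_n-\lambda_n^2T_0$. This requires that the $L^\infty$ norm stays comparable on that window, which I would obtain by taking $s_n$ to be a near-maximizer of $\|u\|_{L^\infty}$ over $[0,s_n]$ (a "record time"). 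Then Lemma \ref{Solitoncorollary}, applied to the rescaled solution, gives bounds on all derivatives of $v_n$, uniform in $n$.

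\textbf{Step 3: passage to the limit.} By Arzel\`a--Ascoli, a subsequence satisfies $v_n\to f$ in $C^\infty_{loc}$, with $f(0)\ge 1/2$, $\|f\|_{L^\infty}\le1$ and $\int f\le 8\pi$ by Fatou. The rescaled dissipation tends to zero, so $\nabla\log f=\nabla\Phi_f$ on $\{f>0\}$. To justify convergence of the nonlocal field $\nabla\Phi_{v_n}\to\nabla\Phi_f$ locally, I split the kernel into a near part and a far part. The far part is harder: mass escaping to infinity contributes a field of the form $\frac{m}{2\pi}\frac{y-a}{|y-a|^2}$ with $|a|\to\infty$, which vanishes locally. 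Up to rescaling, Corollary \ref{cor:rigidity} (with normalization $\|f\|_\infty$ instead of $f(0)=1$) then gives $f=U_{\lambda,y}$, which has mass exactly $8\pi$.

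\textbf{Step 4: upgrade to $L^1$.} Since $\int f=8\pi=\int v_n$ and $v_n\to f$ locally uniformly with $v_n\ge0$, Scheff\'e's lemma gives $\|v_n-f\|_{L^1}\to0$. Undoing the rescaling yields the decomposition with $\lambda_n\lambda$ and $x_n=y_n+\lambda_n y$.

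The main obstacle is Step 2, namely obtaining uniform regularity at a time where the dissipation also vanishes. The $L^\infty$ normalization at the selected time does not by itself control earlier times. If the record-time choice conflicts with the selection of small dissipation, I would fall back on an $\varepsilon$-regularity argument: small local mass at scale $\lambda_n$ gives local bounds, and concentration of $8\pi$ mass is handled by defining $\lambda_n$ as the radius containing, say, $4\pi$ of the mass around the concentration point.
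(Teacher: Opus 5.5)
Your proposal is correct and is essentially the paper's argument: record (first-hitting) times for $\|u(t)\|_{L^\infty}$, rescaling by the $L^\infty$ norm from a slightly earlier time so that Lemma~\ref{Solitoncorollary} gives uniform smoothing, vanishing of the scale-invariant space-time dissipation, Corollary~\ref{cor:rigidity}, and the $L^1$ upgrade by mass conservation as in Lemma~\ref{lem:cv-L1-sequential}. The conflict you flag as the main obstacle does not exist: since $\int_1^\infty D<\infty$, the dissipation over any window of bounded length at times tending to infinity vanishes as the tail of a convergent integral, so no pigeonhole selection is needed and the record times work directly, exactly as in Step~4 of the proof of Lemma~\ref{FirstDecoLemma}.
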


\begin{proof}
	
	It is a direct consequence of combining Proposition \ref{prop:primary-sequential-soliton-resolution} with Lemma \ref{lem:cv-L1-sequential}.
\end{proof}

\subsubsection{Primary sequential soliton resolution}

In this subsection we prove a first sequential soliton resolution result. The remaining features of this resolution in order to show Proposition \ref{SeqSolResPR} will be obtained in the next subsubsection.

\begin{proposition} \label{prop:primary-sequential-soliton-resolution}
	Let $u_0$ satisfy the assumptions of Proposition \ref{SeqSolResPR}. Then there exists $t_{n}\to \infty$, $\lambda_{n}\to 0 $, $x_{n}\in \mathbb R^2$  such that
	\begin{align}\label{Decomposition2}
		u(t_{n},x)=\frac{1}{\lambda_{n}^{2}}U \left(\frac{x-x_n}{\lambda_n}\right) +\frac{1}{\lambda_{n}^{2}}\tilde{v}_{n}(\frac{x-x_{n}}{\lambda_{n}}),
	\end{align}
	and as $n\to \infty$ we know that $\tilde{v}_{n}$ converges to $0$ in $C^{\infty}_{\text{loc}}(\mathbb{R}^{2})$.
\end{proposition}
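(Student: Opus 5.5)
The plan is to combine the free energy dissipation \eqref{freeenergy-dissipation} with a blow-up (renormalization) argument at the scale of the maximum of $u$, and then to identify the limit profile through Corollary \ref{cor:rigidity}.

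\textbf{Step 1: choice of times.} Since $M=8\pi$, the logarithmic HLS inequality gives that $\mathcal F[u](t)$ is bounded from below. Together with \eqref{freeenergy-dissipation}, this yields
$$
\int_1^\infty D(s)\,ds<\infty, \qquad D(s)=\int u|\nabla\log u-\nabla\Phi_u|^2 .
$$
Consequently, for each $n$ we can pick $t_n\in[n,n+1]$ with $D(s)$ small on average over a short window around $t_n$. We also need $\|u(t)\|_{L^\infty}\to\infty$. Otherwise, along some sequence $u$ would stay bounded in $L^\infty$, and weak-$*$ convergence to $8\pi\delta_{x^*}$ (Theorem \ref{BCMThm}) would be contradicted, since bounded densities cannot concentrate mass at a point. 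Let $M(t)=\|u(t)\|_{L^\infty}$. To get parabolic control near the selected times, we use a ``maximal-in-past'' selection: choose $t_n$ so that $\sup_{s\le t_n}M(s)\le 2M(t_n)$ on a backward window. This is possible because $M(t)\to\infty$ along a subsequence, or we simply pick record times.

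\textbf{Step 2: rescaling.} Set $\mu_n=M(t_n)^{-1/2}$ and let $x_n$ be a point where $u(t_n,\cdot)$ attains (nearly) its maximum. Define
$$
w_n(s,y)=\mu_n^2\,u(t_n+\mu_n^2 s,\;x_n+\mu_n y)\qquad\text{for } s\in[-1,0].
$$
By scaling invariance, $w_n$ solves \eqref{KS} with mass $8\pi$, satisfies $w_n\le 2$ on this window, and has $w_n(0,0)=1$. Lemma \ref{Solitoncorollary}, applied from time $s=-1$ with $R=8\pi+2$, gives uniform $C^k$ bounds on $[-1/2,0]$. By Arzel\`a--Ascoli we extract a subsequence with $w_n\to w$ in $C^\infty_{loc}$. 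The rescaled dissipation is scale invariant:
$$
\int_{-1/2}^0\int w_n|\nabla\log w_n-\nabla\Phi_{w_n}|^2 \;=\;\int_{t_n-\mu_n^2/2}^{t_n} D .
$$
This tends to zero if the windows lie in $[n,\infty)$, because the tail of an integrable function vanishes (we need $\mu_n\to0$, which holds). So no averaging subtlety arises: the tail integral suffices.

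\textbf{Step 3: identification of the limit.} The limit profile is $f=w(0)$. It satisfies $f\ge0$, $f\le 2$ (with a factor that can be fixed by normalizing), $f(0)=1$, $f\in L^1$ by Fatou, and it is smooth. The main obstacle is passing to the limit in the nonlocal term $\nabla\Phi_{w_n}$, since mass may escape to infinity. I would split the kernel into a near part and a far part using $\chi_R$. The near part converges locally uniformly. The far part is bounded by $(8\pi)/R$, so it is small uniformly. Hence $\nabla\Phi_{w_n}\to\nabla\Phi_f$ locally uniformly. Fatou's lemma on $\{f>0\}$ then gives $f|\nabla\log f-\nabla\Phi_f|^2=0$ for a.e. time, and by continuity at $s=0$, after selecting via the time average. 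A cleaner alternative would be to work with $\sqrt{w}$: in that variable the dissipation reads $4|\nabla\sqrt w-\tfrac12\sqrt w\nabla\Phi_w|^2$, which is lower semicontinuous. The max-normalization needs adjusting: rescale by $\|f\|_\infty$ so that $f\le1$ and $f(0)=1$, possibly by choosing the exact maximum point. Corollary \ref{cor:rigidity} then gives $f=\tfrac18U(\cdot/\sqrt8)$. Returning to the original variables, $\lambda_n=\sqrt8\,\mu_n\to0$, and $\tilde v_n\to0$ in $C^\infty_{loc}$ after translating and rescaling. This yields \eqref{Decomposition2}.
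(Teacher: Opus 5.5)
Your proposal is essentially the paper's argument, via Lemma \ref{FirstDecoLemma}: first-hitting (record) times of $\|u(t)\|_{L^\infty}$, rescaling at the level of the maximum, uniform smoothing from Lemma \ref{Solitoncorollary}, vanishing of the scale-invariant rescaled dissipation as a tail of the time-integrable dissipation, a near/far splitting to pass to the limit in $\nabla\Phi$, and Corollary \ref{cor:rigidity}. Two small adjustments are needed. First, Lemma \ref{Solitoncorollary} only gives control up to rescaled time $T_1(8\pi+2)$, which may be smaller than $1$, so you should restart from $s=-\delta$ with $T_0<\delta<T_1$, as the paper does, rather than from $s=-1$. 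Second, no renormalization of $f$ is needed: $w_n(0,\cdot)\le 1$ by the very definition of $\mu_n$, so $f\le 1$ and $f(0)=1$ hold directly, and choosing $x_n$ as a near-maximum point suffices.
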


The proof of Proposition~\ref{prop:primary-sequential-soliton-resolution} is given at the end of this subsection. We first establish a profile decomposition result. We also recall, for clarity, that here we use the representation \eqref{id:definition-nablaPhiu} when speaking of solutions to the Keller-Segel system \eqref{KS}.

\begin{lemma}\label{FirstDecoLemma}
	Let $u_0$ satisfy the assumptions of Proposition \ref{SeqSolResPR}, then there exist $t_n\to \infty$, $\lambda_{n}\to 0 $, $x_{n}\in \mathbb R^{2}$ such that
	\begin{align}\label{Decomposition}
		u(t_{n},x)=\frac{1}{\lambda_{n}^{2}}f_{\infty}(\frac{x-x_{n}}{\lambda_{n}})+\frac{1}{\lambda_{n}^{2}}\tilde{v}_{n}(\frac{x-x_{n}}{\lambda_{n}}),
	\end{align}
	where $f_{\infty}\in C^{\infty}(\mathbb{R}^{2})$ satisfies
	\begin{equation} \label{eq:finfty}
		\nabla \log f_{\infty}-\nabla \Phi_{f_{\infty}}=0 \ \ \text{in }\{f_\infty>0\},
	\end{equation}
	where $\nabla \Phi_{f_\infty}$ is given by \eqref{id:definition-nablaPhiu}, $\int_{\R^2}f_{\infty}(y)dy\le 8\pi$ and that $ f_{\infty} \le 1$ on $\mathbb R^2$ and $f_\infty(0)=1$, and as $n\to \infty$ we know that $\tilde{v}_{n}(y)$ converges to $0$ in $C^{\infty}_{\text{loc}}(\mathbb{R}^{2})$.
\end{lemma}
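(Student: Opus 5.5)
We will extract a sequence of times along which the free energy dissipation vanishes, then rescale at the point of maximal density and pass to the limit by compactness.

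\textbf{Step 1: choosing the times.} Since the mass is $8\pi$, the logarithmic Hardy--Littlewood--Sobolev inequality shows that $\mathcal F[u](t)$ is bounded from below. It is also nonincreasing by \eqref{freeenergy-dissipation}. Hence
$$
\int_1^\infty D(s)\,ds<\infty, \qquad D(s)=\int u|\nabla\log u-\nabla\Phi_u|^2 .
$$
Integrability alone gives only $\liminf D=0$. For the rescaled dissipation to vanish we need more, namely $D(t)\lambda(t)^2\to 0$ with $\lambda(t)^{-2}=\|u(t)\|_{L^\infty}$. The dissipation is invariant under $L^1$-scaling up to the factor $\lambda^{-2}$: for $v(y)=\lambda^2u(\lambda y+x)$ one has $\int v|\nabla\log v-\nabla\Phi_v|^2=\lambda^2 D$. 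We therefore pick $t_n$ in intervals $[T_n,2T_n]$ with $D(t_n)\le T_n^{-1}\int_{T_n}^{2T_n}D=o(1/T_n)$. To make $\lambda(t_n)^2/T_n$ bounded we would prefer $\lambda(t_n)^2\lesssim t_n$. If instead $\|u(t)\|_{L^\infty}\ll 1/t$ on a set of large proportion, Nash-type heat kernel bounds would contradict the concentration $u(t)\rightharpoonup 8\pi\delta_{x^*}$ of Theorem \ref{BCMThm}. In the same way, Theorem \ref{BCMThm} forces $\|u(t)\|_{L^\infty}\to\infty$: an $L^\infty$ bound together with mass concentration is impossible. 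This gives $\lambda_n\to0$. This selection is the step I expect to be the main obstacle. An alternative to the heat kernel argument is to work directly with time-averaged dissipation on the rescaled parabolic cylinders $[t_n-\lambda_n^2,t_n]$ and select $t_n$ so that $\int_{t_n-\lambda_n^2}^{t_n}D\to0$, using that $\sum$ of such integrals over disjoint windows is finite. The latter is what I would implement, since it only uses integrability.

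\textbf{Step 2: rescaling and compactness.} Let $x_n$ be a point where $u(t_n)$ attains (or nearly attains, using smoothness and decay) its maximum. Define $v_n(s,y)=\lambda_n^2u(t_n+\lambda_n^2s,x_n+\lambda_n y)$ for $s\in[-1,0]$. Then $v_n$ solves \eqref{KS}, has mass $8\pi$, and satisfies $v_n(0,0)=1$. We need a uniform bound $v_n\le C$ on the backward window. This follows by choosing $t_n$ so that $\|u(t)\|_{L^\infty}$ does not increase too much on $[t_n-\lambda_n^2,t_n]$. Alternatively, we use the blow-up-type lower bound $\|u(t)\|_{L^\infty}\ge$ const$/(T-t)$ reversed, i.e. the local existence time in Lemma \ref{Solitoncorollary}, which gives forward control from time $t_n$. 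It is then simpler to rescale at time $t_n$ and evolve forward on $[0,T_1(R)]$ in rescaled time. With $R=8\pi+1$, Lemma \ref{Solitoncorollary} provides $C^\infty$ bounds for $s\in[T_0,T_1]$. Arzelà--Ascoli then gives convergence in $C^\infty_{loc}$ to a limit $f$.

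\textbf{Step 3: the limit profile.} By Fatou's lemma, $\int f\le 8\pi$. The time-integrated rescaled dissipation tends to zero, and both $\nabla\log v_n$ on $\{f>0\}$ and $\nabla\Phi_{v_n}$ converge locally. For the field, we split the kernel into a near part, handled by local convergence, and a far part, which tends to the corresponding integral for $f$ by uniform mass bounds and tightness of the near part. Fatou on the dissipation then gives $\nabla\log f=\nabla\Phi_f$ on $\{f>0\}$. The normalization $f\le1$, $f(0)=1$ would be arranged by defining $\lambda_n$ and $x_n$ through the sup at the limit time instead of the initial time, which is possible by the smallness of the time variation. Finally, $\tilde v_n:=v_n-f\to0$ in $C^\infty_{loc}$, which yields \eqref{Decomposition} with $f_\infty=f$.
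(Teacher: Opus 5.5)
There is a genuine gap in Step 2, at exactly the point that produces the normalization $f_\infty(0)=1$, $0\le f_\infty\le 1$, and even $f_\infty\not\equiv 0$. You correctly note that compactness with this normalization at time $t_n$ needs a bound $\|u(t)\|_{L^\infty}\lesssim\|u(t_n)\|_{L^\infty}$ on a backward window $[t_n-\delta\lambda_n^2,t_n]$. However, you never produce times with this property.

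Your fallback does not work. Suppose you rescale at $t_n$ by $\lambda_n^{-2}=\|u(t_n)\|_{L^\infty}$ and evolve forward. Lemma \ref{Solitoncorollary} then controls $v_n$ only on $[T_0,T_1]$ with $T_0>0$. Meanwhile the datum $v_n(0)=\lambda_n^2u(t_n,x_n+\lambda_n\cdot)$ has no uniform regularity at scale one. Nothing in your argument excludes a spike of height $1$ and negligible mass at the origin; this happens for the heat flow when a small concentrated mass dominates the sup. Such a spike disperses on $[0,T_0]$, so the limit $f(s,\cdot)$, $s\in[T_0,T_1]$, need not satisfy $f(s,0)=1$ and may vanish identically. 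Renormalizing by the sup at a later time does not help either. If $\|v_n(s_0)\|_{L^\infty}\to0$, the renormalized functions lose all uniform derivative bounds, and there is no ``smallness of the time variation'' available on $[0,T_0]$.

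The missing idea is to choose $t_n$ as first hitting times of the $L^\infty$ norm, e.g.\ $t_n=\inf\{t\ge1:\ \|u(t)\|_{L^\infty}\ge n\}$ for $n>\|u(1)\|_{L^\infty}$. The map $t\mapsto\|u(t)\|_{L^\infty}$ is continuous, and $\limsup_{t\to\infty}\|u(t)\|_{L^\infty}=\infty$ by Theorem \ref{BCMThm}. Hence $t_n\to\infty$ and $\|u(t_n)\|_{L^\infty}=n$, attained at some $x_n$ since $u(t_n)$ is continuous and vanishes at infinity. Crucially, $\|u(t)\|_{L^\infty}<n$ for all $t\in[1,t_n)$.

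Now take $\lambda_n=n^{-1/2}$ and $0<\delta<T_1(8\pi+1)$, and rescale at time $t_n-\delta\lambda_n^2$. The datum has $L^1\cap L^\infty$ norm at most $8\pi+1$, so Lemma \ref{Solitoncorollary} gives uniform $C^\infty$ bounds on $[T_0,T_1]\ni\delta$. At rescaled time $\delta$, which is physical time $t_n$, one has $v_n(\delta,0)=1$ and $0\le v_n(\delta,\cdot)\le1$, and these pass to the $C^\infty_{loc}$ limit. This is essentially the paper's argument.

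Conversely, the selection you labelled the main obstacle in Step 1 is unnecessary. The time-integrated dissipation is scale invariant. The physical window corresponding to $[T_0,T_1]$ has length $O(\lambda_n^2)$ and lies in $[t_n-1,\infty)$. So its dissipation is at most $\int_{t_n-1}^\infty D\,dt\to0$ for any $t_n\to\infty$, with no pointwise-in-time selection or heat-kernel argument needed. With these two changes, your Step 3 goes through as in the paper.
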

\begin{proof}
	We split the proof in six steps. 
	
    \smallskip 
	\noindent\textbf{Step 1.} \emph{Divergence in $L^\infty$.} We claim that
	\[
	\limsup_{t\to+\infty}\|u(t)\|_{L^\infty(\mathbb R^2)}=+\infty.
	\]
	Indeed, this follows immediately from the weak convergence towards a Dirac mass established in Theorem~\ref{BCMThm}.

    \smallskip 
    \noindent\textbf{Step 2.} \emph{Definition of the sequence of renormalizations.} Define the sequence of times
	\[
	t_n := \inf \{ t > 0 \mid \|u(t)\|_{L^\infty} \ge n \}.
	\]
	By Theorem \ref{th:lwp:L1} and Step 1, $t_n$ is well-defined and $t_n\to \infty$. By definition, we have $\|u(t)\|_{L^\infty} < n$ for $t < t_n$. We claim that for all large enough $n$,
	\begin{equation} \label{id:sequential-soliton-claim}
	\|u(t_n)\|_{L^\infty} = n  \mbox{ and there exists }x_n\in \mathbb R^2 \mbox{ such that } u(t_n)=n.
	\end{equation}
	To prove the claim, we first prove that
	\begin{equation} \label{id:sequential-soliton-claim-localization}
	u(x,t)\to 0 \quad \mbox{as }|x|\to \infty, \quad \mbox{uniformly for }t\geq 1.
	\end{equation}
	To show this, fix $\delta>0$, and let $\sigma$ and $\varepsilon$ be as in Theorem~\ref{thm:L1Linfsmall} and define $R=\sigma^{-1/2}$. Pick a time $t\geq 1$ and let
	\[
	t_0:=t-\sigma R^2\ge 0.
	\]
	For $x_0\in \mathbb R^2$ with $|x_0|>1$, by the boundedness of the second moment \eqref{id:property-2}, we have
	\[
	\int_{B_{8R}(x_0)} u(x,t_0)\,dx
	\le
	\frac{1}{|x_0|^2}\int_{\mathbb{R}^2} u(x,t_0)\,|x|^2\,dx
	\lesssim
	\frac{1}{|x_0|^2} \leq \varepsilon
	\]
	provided $|x_0|$ is sufficiently large depending on $\varepsilon$ but independently of $t$. Applying Theorem~\ref{thm:L1Linfsmall}, we obtain
	\[
	|u(x_0,t)|\lesssim \delta.
	\]
	Since $\delta>0$ is arbitrary, the uniform convergence \eqref{id:sequential-soliton-claim-localization} follows.
	
	By Theorem \ref{th:lwp:L1} we have $u\in C([t_n-1,t_n+1],W^{2,\infty} \cap W^{2,1} (\mathbb R^2))$. This implies as $\partial_t u$ is given by \eqref{KS} that $\partial_t u\in C([t_n-1,t_n+1],L^\infty (\mathbb R^2))$. This fact and \eqref{id:sequential-soliton-claim-localization} imply the claim \eqref{id:sequential-soliton-claim}.

	We then define the rescaled profile
	\[
	f_n(y) := \lambda_n^2 \, u(t_n, x_n + \lambda_n y), \qquad \lambda_n := 1/\sqrt{n}.
	\]
	Let $\delta > 0$ small to be fixed in the next step and define
	\[
	v_{n,0}(y) = \lambda_n^2 \, u(t_n - \delta \lambda_n^2, x_n + \lambda_n y).
	\]
	Let $v_n$ denote the solution to the Keller-Segel equation \eqref{KS} with initial data $v_{n,0}$. Then we have
	\[
	u(t_n+(s-\delta)  \lambda_n^2, x) = \frac{1}{\lambda_n^2} \, v_n\Big(s, \frac{x - x_n}{\lambda_n}\Big) \quad \mbox{and} \quad f_n(y) = v_n(\delta, y).
	\]
	\noindent \textbf{Step 3.} \emph{Extraction of a limit and first properties.} We claim that there exist two times $0<T_0<\delta<T_1$ such that, up to extracting a subsequence, $v_n$ converges in $C^{\infty}_{\mathrm{loc}}([T_0, T_1] \times \mathbb{R}^2)$ to a function $f$, and $f_n$ converges in $C^{\infty}_{\mathrm{loc}}(\mathbb R^2)$ to a function $f_{\infty}$, which are both nonnegative, satisfy $f(y,\delta)=f_\infty(y)$ and
	\begin{align} 
	\label{fintyprops-1}
		& \int_{\mathbb R^2} f(s,y)dy\leq 8\pi \mbox{ for all }s\in [T_0,T_1], \\
		\label{fintyprops-2}
		&f_{\infty}(0) = 1, \quad 0\leq f_\infty(y)\leq 1 \mbox{ for all }y\in \mathbb R^2, \quad \mbox{and} \quad \int_{\mathbb R^2} f_\infty(y)dy\leq 8\pi.
	\end{align}
	We now prove this claim. Observe that $\|v_{n,0}\|_{L^1} = 8\pi$ and $\|v_{n,0}\|_{L^{\infty}} < 1$. By Lemma~\ref{Solitoncorollary}, if $\delta$ is sufficiently small (smaller than the time up to which we have uniform control of Sobolev norms), taking $0<T_0 < \delta < T_1$, we obtain
	\begin{equation} \label{intermediate-bd-vn}
		\sup_{T_0\leq t \leq T_1}\| \partial^\alpha_s \nabla_y^{\alpha'} v_n\|_{L^{\infty}(\mathbb R^2)\cap L^1(\mathbb R^2)} \le C[N, T_0] \quad \text{for all } n
	\end{equation}
	for all $\alpha+\alpha'\leq N$.
	This implies that $v_n$ is uniformly bounded in $W^{N,\infty}([T_0, T_1] \times \mathbb{R}^2)$ for any $N \in \mathbb{N}$.  
	Applying the Arzel\`a--Ascoli theorem, we extract a subsequence converging in $C^{\infty}_{\mathrm{loc}}([T_0, T_1] \times \mathbb{R}^2)$:
	\[
	v_n(y, s) \xrightarrow{C^{\infty}_{\mathrm{loc}}(\mathbb{R}^2 \times (T_0, T_1))} f(y, s).
	\]
	In particular, we have 
	\[
	f_n(y)=v_n(\delta, y) \to f_{\infty}(y) \quad \text{in } C^{\infty}_{\mathrm{loc}}(\mathbb{R}^2).
	\]
	Moreover, we have $v_n(\delta,0)= 1$ and $0 \le v_n(\delta, y) \le 1$ for all $y\in \mathbb R^2$ by Step 2, and $\int v_n dy=8\pi$. The first two properties in \eqref{fintyprops-2} follow from the convergence in $C^\infty_{loc}$, while \eqref{fintyprops-1} and the third property in \eqref{fintyprops-2} follow from Fatou's Lemma.
	
	\smallskip
	\noindent \textbf{Step 4.} \emph{Vanishing free energy dissipation.}
	We claim that    
	\begin{align}
		\int_{T_0}^{T_1} \int_{\mathbb{R}^2} v_n(s,y) \, \big|\nabla \log v_n(s,y) - \nabla \Phi_{v_n}(s,y)\big|^2 \, dy \, ds \to 0 \quad \text{as } n \to \infty.
	\end{align}
	
	To prove it, first notice that by the free energy dissipation identity \eqref{freeenergy-dissipation} we know $\mathcal F[u(t)]$ decreases over time, and is bounded from below by the lower bound on the free energy. Thus it converges to some limit $\lim_{t\to \infty}\mathcal F[u(t)]$. By \eqref{freeenergy-dissipation} we obtain
	$$
	\int_1^{\infty}  \int_{\mathbb{R}^2} u(t) \, \big|\nabla \log u(t) - \nabla \Phi_u(t)\big|^2 \, dx \, dt = \mathcal F[u(1)]- \lim_{t\to \infty}\mathcal F[u(t)].
	$$
	We introduce
	$$
	I_n = \int_{t_n + \lambda_n^2 (T_0 - \delta)}^{t_n + \lambda_n^2 (T_1 - \delta)} \int_{\mathbb{R}^2} 
	u(x,t) \, \big|\nabla \log u(x,t) - \nabla \Phi_u(x,t)\big|^2 \, dx \, dt .
	$$
	Note that $I_n\to 0$ as $n\to\infty$, since the time interval of integration shrinks to zero and the integrand is integrable in time.
	 After performing a change of variables in the identity \eqref{id:definition-nablaPhiu} we obtain
	$$
	\nabla \Phi_u (t_n+t,x)= \frac{1}{\lambda_n}\nabla \Phi_{v_n(\frac{t+\delta \lambda_n^2}{\lambda_n^2})} (\frac{x-x_n}{\lambda_n})
	$$
	Therefore, we can rewrite $I_n$ as
	\begin{align*}
		I_n &= \int_{\lambda_n^2 (T_0 - \delta)}^{\lambda_n^2 (T_1 - \delta)} \frac{1}{\lambda_n^4} \int_{\mathbb{R}^2} 
		v_n\Big(\frac{x-x_n}{\lambda_n}, \frac{t + \delta \lambda_n^2}{\lambda_n^2}\Big) 
		\Big|\nabla_y \log v_n\Big(\frac{x-x_n}{\lambda_n}, \frac{t + \delta \lambda_n^2}{\lambda_n^2}\Big) - \nabla_y \Phi_{v_n}\Big(\frac{x-x_n}{\lambda_n}, \frac{t + \delta \lambda_n^2}{\lambda_n^2}\Big)\Big|^2 dx \, dt \\
		&= \int_{T_0}^{T_1} \int_{\mathbb{R}^2} v_n(s,y) \, \big|\nabla \log v_n(s,y) - \nabla \Phi_{v_n}(s,y)\big|^2 \, dy \, ds.
	\end{align*}
	Combining the above identity with the convergence $I_n\to 0$ then gives the desired claim.
	
	\smallskip
	\noindent \textbf{Step 5.} \emph{Zero free energy dissipation rate for the limit.} In this step, we show that $f$ satisfies
	\begin{equation} \label{final equation}
		\nabla \log f(y,s) - \nabla \Phi_{f}(y,s) = 0 \quad \text{in } \{ (y,s)\in \mathbb R^2 \times (T_0,T_1), \ f(y,s)>0\}, 
	\end{equation}
	where $\nabla \Phi_f$ is given by \eqref{id:definition-nablaPhiu}. Before proceeding, let us make some remarks on $\nabla \Phi_f$. We observe that
	\[
	|\nabla \Phi_{v_n}(y,s) - \nabla \Phi_f(y,s)| 
	\le \underbrace{\int_{B_R(y)} \frac{|v_n(z,s) - f(z,s)|}{|z-y|} \, dz}_{\text{arbitrarily small, since } v_n \to f \text{ in } C^\infty_{\mathrm{loc}}} 
	+ \underbrace{\int_{B_R^c(y)} \frac{|v_n(z,s) - f(z,s)|}{|z-y|} \, dz}_{\text{arbitrarily small for large } R \text{ using } L^1\text{ bound}}.
	\]
	Hence, $\nabla \Phi_{v_n}(y,s) \to \nabla \Phi_f(y,s)$ for all $(y,s) \in \mathbb{R}^2 \times [T_0, T_1]$. In addition, by \eqref{intermediate-bd-vn} and standard estimates, we have that
	$$
	\sup_{T_0\leq t \leq T_1} \| \partial^\alpha \nabla^{\alpha'}\nabla \Phi_{v_n}\|_{W^{N,\infty}(\mathbb R^2)}\lesssim C(N,T_0) \quad \mbox{for all }n
	$$
	for all $\alpha+\alpha'\leq N$. Thus, $\nabla \Phi_{v_n}$ is uniformly bounded in $W^{N,\infty}([T_0,T_1]\times \mathbb R^2)$. By the Arzel\`a-Ascoli Theorem, we obtain that $\lim_{n\to \infty}\nabla \Phi_{v_n}=\nabla \Phi_f \in C^\infty([T_0,T_1]\times \mathbb R^2)$.
	
	Now, assume that $f(\bar{y},\bar{s}) > 0$ for some $(\bar{y},\bar{s}) \in \mathbb{R}^2 \times (T_0, T_1)$. By the smoothness of $f$, there exists $\varepsilon > 0$ such that $f > 0$ in the neighborhood $B_\varepsilon(\bar{y},\bar{s})$. Since $v_n \ge 0$, we have
	\[
	\int_{B_\varepsilon(\bar{y},\bar{s})} v_n(s,y) \, \big|\nabla \log v_n(s,y) - \nabla \Phi_{v_n}(s,y)\big|^2 \, dy \, ds
	\le \int_{T_0}^{T_1} \int_{\mathbb{R}^2} v_n(s,y) \, \big|\nabla \log v_n(s,y) - \nabla \Phi_{v_n}(s,y)\big|^2 \, dy \, ds,
	\]
	which tends to $0$ as $n \to \infty$. Passing to the limit, we obtain $f(s,y) \, \big|\nabla \log f(s,y) - \nabla \Phi_{f}(s,y)\big|^2=0$ on $B_\varepsilon(\bar{y},\bar{s})$. Since $f(y,s) > 0$ in $B_\varepsilon(\bar{y},\bar{s})$ this implies $\nabla \log f(\bar{y},\bar{s}) = \nabla \Phi_f(\bar{y},\bar{s})$ on this set. As $(\bar y,\bar s)$ is arbitrary, this concludes the proof of \eqref{final equation}.
	
	We can now end the proof of the Proposition. By Step 3 we have $f_n(y)=f_\infty+\tilde v_n(y)$ where $\tilde v_n\to 0$ in $C^\infty_{loc}(\mathbb R^2)$. By Step 2 this gives $u(t_{n},x)=\lambda_{n}^{-2}f_{\infty}((x-x_{n}0/\lambda_{n})+\lambda_{n}^{-2}\tilde{v}_{n}((x-x_{n})/\lambda_{n})$. The function $f_\infty$ satisfies all the desired properties of the Proposition by \eqref{fintyprops-2}, and by \eqref{eq:finfty} since $f(\delta,y)=f_\infty(y)$.
	
	\end{proof}

We can now give the proof of Proposition \ref{prop:primary-sequential-soliton-resolution} as a consequence of the profile decomposition result of Lemma \ref{FirstDecoLemma} and of the rigidity result of Corollary \ref{cor:rigidity}.

\begin{proof}[Proof of Proposition \ref{prop:primary-sequential-soliton-resolution}]
	
	We start by applying Lemma \ref{FirstDecoLemma} and keep the same notation as in its statement. As $f_{\infty}\in C^{\infty}$ and satisfies \eqref{eq:finfty} with $\int_{\R^2}f_{\infty}(y)dy\le 8\pi$ and $ f_{\infty} \le 1$ on $\mathbb R^2$ and $f_\infty(0)=1$, applying Corollary \ref{cor:rigidity} shows $f_\infty (y)=(1/8)U\left(y/\sqrt{8}\right)$. The desired result then follows up to multiplying $\lambda_n$ by $\sqrt{8}$.
	
\end{proof}

\subsubsection{Additional feature of the sequential soliton resolution}
In this subsection we recover additional information on the decomposition \eqref{Decomposition2}.
\begin{lemma} \label{lem:cv-L1-sequential}
	Let $u$ be a solution as in Proposition \ref{prop:primary-sequential-soliton-resolution} and consider the decomposition \eqref{Decomposition2}. We have 
	\begin{align*}
		\|\tilde{v}_{n}\|_{L^{1}(\mathbb{R}^{2})}\to 0 \ \ \text{as }n\to \infty.
	\end{align*}
\end{lemma}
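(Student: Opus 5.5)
The proof rests on mass counting. The soliton already carries the full critical mass, $\int U=8\pi=\int u(t_n)$, and local convergence already gives most of the remaining control. Write $v_n(y)=\lambda_n^2u(t_n,x_n+\lambda_n y)=U(y)+\tilde v_n(y)$, so $v_n\geq 0$ and $\int v_n=8\pi$. Fix $R>0$. Since $\tilde v_n\to 0$ in $C^\infty_{loc}$, in particular uniformly on $B_R$, we get $\int_{B_R}|\tilde v_n|\to 0$ and hence
$$
\int_{B_R} v_n \to \int_{B_R} U = 8\pi-\int_{B_R^c}U .
$$

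By conservation of mass, $\int_{B_R^c} v_n = 8\pi-\int_{B_R}v_n \to \int_{B_R^c}U$. The outside estimate then uses positivity, $|\tilde v_n|\leq v_n+U$, which gives
$$
\int_{B_R^c}|\tilde v_n|\leq \int_{B_R^c}v_n+\int_{B_R^c}U .
$$
Combining the two pieces,
$$
\limsup_{n\to\infty}\|\tilde v_n\|_{L^1}\leq 2\int_{B_R^c}U\lesssim R^{-2}.
$$
Since $R$ is arbitrary, $\|\tilde v_n\|_{L^1}\to0$. The $L^1$ norm is invariant under the renormalization, so this also gives $\|\tilde u_n\|_{L^1}\to 0$ in Proposition \ref{SeqSolResPR}.

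The only point needing care is that the extra factor $\sqrt 8$ absorbed into $\lambda_n$ at the end of the proof of Proposition \ref{prop:primary-sequential-soliton-resolution} is harmless. Since $f_\infty$ is exactly $U$ up to rescaling, the $C^\infty_{loc}$ convergence persists after rescaling, and the argument above applies verbatim. The step that does the real work is that the mass of the limiting profile equals the full mass $8\pi$, so that no mass is lost at infinity or escapes to other bubbles. This is automatic here because $\int U=8\pi$ exactly, so the argument is a Brezis–Lieb/Scheffé type argument and has no genuine obstacle.
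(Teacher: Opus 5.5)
Your proposal is correct and follows essentially the same route as the paper: local $L^1$ convergence of $\tilde v_n$ on $B_R$, conservation of the total mass $8\pi=\int U$ to force $\int_{B_R^c}v_n$ to be small, and the positivity bound $|\tilde v_n|\le v_n+U$ outside $B_R$. Your remark that the $\sqrt 8$ rescaling preserves the $C^\infty_{loc}$ convergence of the remainder is accurate: the new remainder is $8\tilde v_n(\sqrt 8\,\cdot)$.
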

\begin{proof}
Changing variables $x=x_{n}+\lambda_{n}y$, we introduce
	\begin{align*}
		v_{n}(y):=\lambda_{n}^{2}u(x_{n}+\lambda_{n}y,t_{n})=U(y)+\tilde{v}_{n}(y).
	\end{align*}
	Let $\epsilon>0$. Pick $R>1$ large enough so that $\int_{B_R^c} Udy<\epsilon/4$, so in particular $|\int_{B_R} Udy-8\pi|<\epsilon/4$. Since $\tilde{u}_{n}\to 0$ in $C^{\infty}_{\text{loc}}$, one has
	$$
	\int_{B_R}| \tilde u_n| dy < \epsilon/4
	$$
	for all large enough $n$. Then $|\int_{B_R} v_n dy - 8\pi|<\epsilon/2$. Since $\int_{\mathbb R^2} v_n dy =8\pi$, this implies $\int_{B_R^c} v_n dy<\epsilon/2$. Hence
	$$
	\int_{B_R^c}| \tilde u_n|dy \leq \int_{B_R^c} U dy+\int_{B_R^c}v_n dy <\epsilon/4+\epsilon/2=3\epsilon/4.
	$$
	Therefore, $\int_{\mathbb R^2}| \tilde u_n|dy<\epsilon$ which, since $\epsilon$ is arbitrary, implies the result.
	\end{proof}

\subsection{Continuous in time soliton resolution} \label{subsec:continuous-sol-res}

In this section we prove Theorem \ref{thn:soliton-resolution}. It is a direct consequence of Proposition \ref{continuoussolresPR} and Lemma \ref{limpara} we show below.

\subsubsection{Primary continuous in time soliton resolution}

In this subsection, we aim to prove the continuous in time soliton resolution in $L^{1}(\mathbb{R}^{2})$ with uniform bounds in $W^{k,\infty}\cap W^{k,1}(\R^{2})$ for any $k\in \mathbb N$.

\begin{proposition}\label{continuoussolresPR}
	Let $u$ be a solution to the Keller--Segel system \eqref{KS} satisfying $u_0 \in L^1(\langle x \rangle^2 dx)$ and $\int_{\mathbb{R}^{2}} u(x,t) \, dx = 8\pi$, and recall it satisfies \eqref{id:property-1}-\eqref{id:property-2}. Let $k\in \mathbb N$. For any constant $K>0$, there exist functions $\lambda(t)>0$, $x(t)\in \mathbb{R}^{2}$ such that 
	\begin{align*}
		u(x,t)=\frac{1}{\lambda^{2}(t)}U\big( \frac{x-x(t)}{\lambda(t)} \big)+ \frac{1}{\lambda^{2}(t)}\tilde{v}\big( t, \frac{x-x(t)}{\lambda(t)}\big)
	\end{align*}
	with 
	\begin{align*}
		\lim_{t\to \infty}\|\tilde{v}(t)\|_{L^{1}}=0, \ \ \ \limsup_{t\to \infty}\|\tilde{v}(t)\|_{W^{k,\infty}\cap W^{k,1}}< K.
	\end{align*}
\end{proposition}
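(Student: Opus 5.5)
The plan is to upgrade the sequential resolution of Proposition \ref{SeqSolResPR} to all times by a contradiction argument, using the free energy dissipation together with the uniform regularization of Lemma \ref{Solitoncorollary}. We define the distance to the soliton manifold
$$
d(t)=\inf_{\lambda>0,\ y\in\mathbb R^2}\Big\| u(t)-\frac{1}{\lambda^2}U\Big(\frac{\cdot-y}{\lambda}\Big)\Big\|_{L^1}.
$$
This quantity is continuous in $t$ by the $L^1$ continuity of the flow, and Proposition \ref{SeqSolResPR} gives $d(t_n)\to 0$. We argue by contradiction and suppose $\limsup_{t\to\infty} d(t)>0$.

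We then rerun the profile extraction of Lemma \ref{FirstDecoLemma} along an arbitrary sequence $s_n\to\infty$, rescaling at the level $\lambda_n=\|u(s_n)\|_{L^\infty}^{-1/2}$ rather than at first-hitting times. The obstruction is that we need $\|u\|_{L^\infty}\lesssim \lambda_n^{-2}$ on a backward window $[s_n-\delta\lambda_n^2,s_n]$ in order to apply Lemma \ref{Solitoncorollary}.

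To obtain this, we choose $s_n$ as near-maximizers of $\|u\|_{L^\infty}$ on a window. Alternatively, we select times where $d(s)$ is small but not yet zero. Concretely, fix small $\eta>0$. By continuity, there are times $s_n\to\infty$ with $d(s_n)=\eta$ and $d\le\eta$ on $[t_n,s_n]$. On that interval $u$ is $L^1$-close to some $U_{\lambda,y}$. By the $L^1\to L^\infty$ regularization near $U$ (Lemma \ref{lem:uniform-regularization-near-soliton}, applied after rescaling at each time $s-\lambda(s)^2$), we get $\lambda(s)^2\|u(s)\|_{L^\infty}\approx 1$, with $\lambda(\cdot)$ comparable across times separated by $\lambda^2$.

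Rescaling at $s_n$ gives a sequence $v_n$ that is bounded in $C^\infty$ on $[T_0,T_1]$ and has vanishing dissipation, exactly as in Steps 4–5. The limit $f_\infty$ is therefore a zero-dissipation profile. After normalization, Corollary \ref{cor:rigidity} identifies it as $U$ up to symmetries, and the argument of Lemma \ref{lem:cv-L1-sequential} (mass $8\pi$ and no loss at infinity) yields $L^1$ convergence. This gives $d(s_n)\to 0$, contradicting $d(s_n)=\eta$. Hence $d(t)\to 0$.

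Once $d(t)\to0$, we choose $(\lambda(t),x(t))$ as near-minimizers of $d(t)$, or via orthogonality conditions; continuity is not required in this statement. This gives $\|\tilde v(t)\|_{L^1}\to0$ in rescaled variables, since the $L^1$ norm is scale invariant.

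For the $W^{k,\infty}\cap W^{k,1}$ bound, we apply Lemma \ref{lem:uniform-regularization-near-soliton} to the rescaled solution started at time $t-\lambda(t)^2$. There the data is $\epsilon$-close to $U$ in $L^1$, which needs the closeness at $t-\lambda^2$ with the same parameters, and hence a small control on the variation of $(\lambda,x)$ over time $\lambda^2$. This variation is handled by the same $L^1$ continuity and the uniqueness of near-minimizers. The lemma then gives derivatives within $\delta<K$ of those of $U$ at the rescaled time $1$, and therefore $\|\tilde v(t)\|_{W^{k,\infty}\cap W^{k,1}}<K$ for large $t$.

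The main obstacle is this parameter-comparability step: showing that the scale does not jump on times of order $\lambda^2$, so that near-soliton data at an earlier time is available. We would first try to control it by continuity of the flow near $U$ in $L^1$ (Theorem \ref{th:lwp:L1}), rescaled. Since $U$ is stationary, for any $\epsilon$ closeness at time $t-\lambda^2$ propagates to time $t$ with comparable parameters, and a continuity/connectedness argument then fixes the parameters at the earlier time.
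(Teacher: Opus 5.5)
Your proposal is correct in its ingredients and uses the same mechanism as the paper: regularization near $U$ over windows of length $\lambda^2$ via Lemma~\ref{lem:uniform-regularization-near-soliton}, vanishing dissipation on rescaled windows combined with Corollary~\ref{cor:rigidity}, and the mass argument of Lemma~\ref{lem:cv-L1-sequential}. It is organized differently, though.

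The paper runs the exit-time argument in the mixed tube $\mathcal T_{\eta,K}$. Its distance $d_{\eta,K}$ weights the $L^1$ distance by $\eta^{-1}$ and the $W^{k,1}\cap W^{k,\infty}$ distance by $K^{-1}$, and the argument starts from the improved times of Lemma~\ref{ImprovedSequentialLemma}. At the exit time the paper must improve both components: the $W^k$ one by regularization from an earlier time, and the $L^1$ one by dissipation and rigidity.

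You work with the pure $L^1$ distance, so your contradiction only needs the $L^1$ improvement, and you recover the $W^k$ control afterwards. This is more economical. It also gives $\|\tilde v(t)\|_{W^{k,\infty}\cap W^{k,1}}\to 0$ directly, which the paper only obtains in Lemma~\ref{limpara} by letting $K\to 0$.

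The one step you leave open is choosing the earlier time, and there you should follow the paper rather than your suggested fix. Rescaling at $t-\lambda(t)^2$ with the parameters of time $t$ requires $\lambda(t-\lambda(t)^2)\approx\lambda(t)$. Forward stability of the flow near $U$ only controls the future of a given time, over a window measured in that time's own scale. By itself it does not rule out the scale having grown by a large factor in between; you would need an extra chain-of-windows argument for that.

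The paper avoids backward comparability altogether:
\begin{enumerate}
\item Inside the tube, Lemma~\ref{GeometricalDeco} gives continuous parameters.
\item The intermediate value theorem then yields $t_0$ with $t_0+\lambda^2(t_0)=t$. This is possible once $s_n-t_n\ge\lambda(t_n)^2$, which follows from $d(t_n)\to 0$ and $L^1$ continuity of the flow near $U$.
\item One rescales with the parameters at $t_0$. Since the distance is an infimum over parameters, the scale at $t$ never has to be compared with the scale at $t_0$.
\end{enumerate}

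The same choice settles your final $W^k$ step. Simply set $(\lambda(t),x(t)):=(\lambda(t_0),x(t_0))$; Lemma~\ref{lem:uniform-regularization-near-soliton} controls the $L^1$ norm of the remainder as well, so both conclusions hold with these parameters.
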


To prove Proposition \ref{continuoussolresPR}, we will need some results that provide further information on the sequential soliton resolution we previously investigated. The first lemma, through parabolic regularization, allows us to gain better control of the remainder.
\begin{lemma}\label{ImprovedSequentialLemma}
	Let be a solution as in Proposition \ref{continuoussolresPR}. There exist $t_{n}\to \infty$, $\lambda_{n}>0$, $x_{n}\in \mathbb{R}^{2}$ such that for any $t_n\leq t \leq t_n+\lambda_n^2$,
	\begin{align*}
		u(t_{n}+\lambda_n^2 s,x)=\frac{1}{\lambda_{n}^{2}}
		U\big(\frac{x-x_{n}}{\lambda_{n}}\big)+\frac{1}{\lambda_{n}^{2}}\tilde{v}\big(t,\frac{x-x_{n}}{\lambda_{n}} \big),
	\end{align*}
	where the function $\tilde u$ satisfies for all $k\in \mathbb N$ that
	\begin{align*}
		\lim_{n\to \infty} \sup_{t_n\leq t \leq t_{n}+\lambda_n^2}\|\tilde{v}(t,\cdot)\|_{W^{k,\infty}\cap W^{k,1}}=0.
	\end{align*}
\end{lemma}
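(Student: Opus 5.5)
We combine the sequential soliton resolution of Proposition \ref{SeqSolResPR} with the uniform regularization near the ground state, Lemma \ref{lem:uniform-regularization-near-soliton}. The one twist is that the regularization only applies after a short positive time. So we start the rescaled flow slightly \emph{before} the times at which we already have $L^1$ closeness, and pick the new times $t_n$ inside the window where smoothing has taken place.

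\textbf{Setup.} Let $(t_n',\lambda_n,x_n)$ be given by Proposition \ref{SeqSolResPR}, so that
\[
v_{n,0}(y):=\lambda_n^2u(t_n',x_n+\lambda_n y)
\]
satisfies $\|v_{n,0}-U\|_{L^1}\to 0$. Let $v_n$ be the solution of \eqref{KS} with data $v_{n,0}$. By scaling invariance and uniqueness in Theorem \ref{th:lwp:L1},
\[
u(t_n'+\lambda_n^2 s,x)=\lambda_n^{-2}v_n\big(s,(x-x_n)/\lambda_n\big).
\]

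\textbf{Applying the regularization lemma.} Fix $T_0=1$, $T_1=2$, and let $\delta>0$ and $N\in\mathbb N$ be arbitrary. Lemma \ref{lem:uniform-regularization-near-soliton} gives $\epsilon(\delta,N)$ such that, once $\|v_{n,0}-U\|_{L^1}\le\epsilon$ (which holds for all $n\ge n_0(\delta,N)$),
\[
\sup_{1\le s\le 2}\|\nabla^{\alpha'}(v_n(s)-U)\|_{L^1\cap L^\infty}\le\delta \quad\text{for all } \alpha'\le N.
\]
Since $\delta$ and $N$ are arbitrary, this yields
\[
\lim_{n\to\infty}\sup_{1\le s\le 2}\|v_n(s)-U\|_{W^{N,\infty}\cap W^{N,1}}=0 \quad\text{for every } N.
\]
A diagonal argument is not needed: the quantity tends to $0$ for each fixed $N$, which is what the statement requires.

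\textbf{Choice of the new times.} Set $t_n:=t_n'+\lambda_n^2$, keeping the same $\lambda_n$ and $x_n$. For $t\in[t_n,t_n+\lambda_n^2]$ write $t=t_n'+\lambda_n^2 s$ with $s\in[1,2]$, and define $\tilde v(t,\cdot):=v_n(s,\cdot)-U$. This gives exactly the claimed decomposition, and the uniform smallness of $\tilde v$ on the window is the bound from the previous step.

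\textbf{Main obstacle.} The only delicate point is that Lemma \ref{lem:uniform-regularization-near-soliton} is stated for data $L^1$-close to $U$ itself, not to a rescaled copy $U_{\lambda,x}$. This is handled by the rescaling above: the $L^1$ norm is scale invariant, so closeness of $u(t_n')$ to $U_{\lambda_n,x_n}$ is the same as closeness of $v_{n,0}$ to $U$. Finally, $t_n\to\infty$ because $t_n'\to\infty$.
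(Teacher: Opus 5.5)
Your proposal is correct and follows essentially the same route as the paper. The paper also rescales at the times given by Proposition \ref{SeqSolResPR}, applies Lemma \ref{lem:uniform-regularization-near-soliton} on the window $s\in[1,2]$, and then shifts the sequence of times to $t_n'+\lambda_n^2$.
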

\begin{proof}
	
	Starting with the sequential soliton resolution of Proposition \ref{SeqSolResPR}
	\begin{align*}
		u(t_{n},x)=\frac{1}{\lambda_{n}^{2}}U\big(\frac{x-x_{n}}{\lambda_{n}}\big)+o_{L^1}(1) \quad \mbox{as }n\to \infty,
	\end{align*}
	we let $v_{n,0} (y)= \lambda_n^2 u(x_n+\lambda_n y)$ and $v$ be the corresponding solution of \eqref{KS}, so that
	$$
	u(t_n+\lambda_n^2 s,x)= \frac{1}{\lambda_n^2}v_n\left(s,\frac{x-x_n}{\lambda_n} \right).
	$$
	Then we have $v_{n,0}=U+\tilde v_{n,0}$ with $\|\tilde v_{n,0}\|_{L^1}\to 0$ as $n\to \infty$. Applying the uniform regularization estimates near the ground state of Lemma \ref{lem:uniform-regularization-near-soliton} we obtain that the associated solutions $v_n$ can be decomposed for $s\in [1,2]$ as
	$$
	v_n(s,\cdot)= U(\cdot) + \tilde v_n (s,\cdot), \qquad \mbox{with} \quad \sup_{1\leq s \leq 2} \| \tilde v_n (s,\cdot)\|_{W^{k,1}\cap W^{k,\infty}} \to 0
	$$
	as $n\to \infty$ for any $k\in \mathbb N$. We get the desired result by redefining the sequence $t_n$ to be $t_n+\lambda_n^2$.
	
\end{proof}

We now fix $k\in \mathbb N$ arbitrarily large. Define for $K,\delta>0$ the adapted distance to the set of solitons
\begin{align*}
	d_{\delta,K}(u)
	= \inf_{\lambda>0,\,x^*\in\R^{2}} 
	\left( \delta^{-1} \left\| 
	y\mapsto \lambda^2 u(x^*+\lambda y) -U(y)
	\right\|_{L^1} + K^{-1} \left\| 
	y\mapsto \lambda^2 u(x^*+\lambda y) -U(y)
	\right\|_{W^{k,1}\cap W^{k,\infty}} \right)
\end{align*}
and consider the tube
$$
\mathcal T_{\delta,K}= \{ u\in W^{k,1}(\mathbb R^2)\cap W^{k,\infty}(\mathbb R^2), \ \ | \ d_{\delta,K}(u)<1\}.
$$

We now denote
$$
t_{n*} \mbox{ the sequence provided by Lemma } \ref{ImprovedSequentialLemma}.
$$
For $K,\eta>0$, we define
$$
t_n^*=t_n^*[\eta,K] = \sup \{ t^*\geq t_{n*},\ \ | u(t)\in \mathcal T_{\eta,K} \ \mbox{for all } t\in [t_{n*},t^*]\}
$$
Note that by Lemma \ref{ImprovedSequentialLemma}, we have $t_n^*>t_{n*}$ for all $n$ large enough, and possibly $t_n^*=\infty$. The next Lemma shows that if the result of Proposition \ref{continuoussolresPR} does not hold true, then the solution enters and exits the tube $\mathcal T_{\eta,K}$ infinitely many times.

\begin{lemma} \label{lem:set-up-contradiction-continuous}
	
	Assume the result of Proposition \ref{continuoussolresPR} does not hold true. Then there exists $K>0$, such that for all $\eta>0$ small enough, one has
	$$
	t_{n}^*<\infty \mbox{ for all large enough }n\in \mathbb N.
	$$
	
\end{lemma}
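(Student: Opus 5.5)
We argue by contraposition: we show that if for some $K>0$ and arbitrarily small $\eta>0$ there are infinitely many $n$ with $t_n^*=\infty$, then the conclusion of Proposition \ref{continuoussolresPR} holds with this $K$. Suppose the Proposition fails; in particular it fails for every $K>0$. We will show the Lemma holds with this $K$ (any fixed $K$, say $K=1$), by contradiction.

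So assume that for this $K$ there is a sequence $\eta_j\to 0$ such that, for each $j$, $t_n^*[\eta_j,K]=\infty$ for infinitely many $n$. Fix such a $j$ and such an $n=n_j$.

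\textbf{Step 1: the solution stays in the tube.} By definition of $t_n^*$, we have $u(t)\in\mathcal T_{\eta_j,K}$ for all $t\ge t_{n_j*}$. Hence $d_{\eta_j,K}(u(t))<1$ for all such $t$. Since $d_{\delta,K}$ is a sum of two nonnegative terms, for each such $t$ we can choose parameters $\lambda(t),x(t)$ (through an approximate infimum) such that
$$
\|\tilde v(t)\|_{L^1}<\eta_j,\qquad \|\tilde v(t)\|_{W^{k,1}\cap W^{k,\infty}}<K,
$$
where $\tilde v(t,y)=\lambda^2u(x+\lambda y)-U(y)$.

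\textbf{Step 2: construct parameters on all large times.} We build a single pair of functions $\lambda(t),x(t)$ by concatenation. On $[t_{n_j*},t_{n_{j+1}*})$ we use the parameters adapted to $\eta_j$. We may pass to a subsequence so that $t_{n_j*}$ is increasing and tends to $\infty$; this is possible since $t_{n*}\to\infty$ and infinitely many $n$ are admissible for each $j$. We take $t_{n_{j+1}*}\ge t_{n_j*}$.

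The key point is that for $t\ge t_{n_{j+1}*}$ the solution also lies in the tube $\mathcal T_{\eta_j,K}$, but we simply switch to the finer $\eta_{j+1}$-parameters. With this choice, for $t\in[t_{n_j*},t_{n_{j+1}*})$ we get $\|\tilde v(t)\|_{L^1}<\eta_j\to 0$, while the $W^{k,1}\cap W^{k,\infty}$ bound stays below $K$ throughout. This gives exactly the conclusion of Proposition \ref{continuoussolresPR}, with $\limsup<K$ strictly. If needed, we first replace $K$ by $K/2$ in the tube definition so that the strict inequality survives the $\limsup$. This contradicts the assumption that the Proposition fails.

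\textbf{Step 3: the remaining subtlety.} The only delicate points are the following.

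First, the Proposition demands functions $\lambda,x$ with no continuity requirement. Measurable or arbitrary selection is therefore enough, and the approximate infimum in $d$ suffices.

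Second, the quantifier structure. Negating "for all $\eta$ small, $t_n^*<\infty$ for all large $n$" gives: for each small threshold there exist arbitrarily small $\eta$ with infinitely many $n$ having $t_n^*=\infty$. We need a whole sequence $\eta_j\to0$, and the negation provides it directly.

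Third, the time interval $[t_{n_1*},\infty)$ must be covered. This holds because $t_n^*=\infty$ means membership in the tube for all later times.

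The main obstacle is this bookkeeping of quantifiers and the choice of $K$. Note that $K$ here is exactly the $K$ from the negated Proposition: the Proposition fails for some $K$, and we take that one.
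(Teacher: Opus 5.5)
Your argument is the paper's: negate the Lemma, use $t_n^*=\infty$ to keep $u(t)$ in $\mathcal T_{\eta_j,K}$ for all later times, and patch the parameters together to obtain the conclusion of Proposition~\ref{continuoussolresPR}. Your choice of $n_j$ with $t_{n_j*}$ increasing to $\infty$ makes explicit a step the paper leaves implicit.

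One correction concerns the quantifiers. The failure of the Proposition only gives some $K_0$ for which it fails. By monotonicity in $K$, it then fails for every $K\le K_0$, but not for every $K>0$. So ``any fixed $K$, say $K=1$'' is not justified.

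Also, with $K=K_0$ itself, the tube bound $\|\tilde v(t)\|_{W^{k,1}\cap W^{k,\infty}}<K_0$ only gives $\limsup\le K_0$, not the strict inequality you assert. You must run the argument with some $K<K_0$, for example $K_0/2$, as your closing remarks suggest.

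The paper avoids this bookkeeping by negating the Lemma directly. That gives the tube property for \emph{every} $K$, hence the Proposition for every $K$ (apply it with $K/2$).
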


\begin{proof}
	
	Assume by contradiction the result of the Lemma \ref{lem:set-up-contradiction-continuous} does not hold true. Then, for all $K>0$, there exists a sequence $\eta_m\to 0 $ such that
	$$
	t_{n_{m,l}}^*[K,\eta_m]=\infty \mbox{ for a sequence }(n_{m,l})_{l\in \mathbb N} \mbox{ with } n_{m,l}\to \infty .
	$$
	In particular, setting $n_m=n_{m,1}$, we have
	$$
	t_{n_m}^*[K,\eta_m]=\infty \qquad \mbox{for all }m\in \mathbb N.
	$$
	By definition of $\mathcal T_{\eta_m,K}$, this means that for all $t\geq t_{n_m*}$, the solution $u$ can be decomposed as
	$$
	u(t,x)=\frac{1}{\lambda^2(t)}U\left(\frac{x-x^*(t)}{\lambda(t)}\right)+\frac{1}{\lambda^2(t)} \tilde u\left(t,\frac{x-x^*(t)}{\lambda(t)}\right)
	$$
	for some $\lambda(t),x^*(t)$ with $\tilde u$ satisfying
	$$
	\| \tilde u (t) \|_{L^1}\leq \eta_m \quad \mbox{and} \quad \| \tilde u (t) \|_{W^{k,1}\cap W^{k,\infty}}\leq K.
	$$
	But since $\eta_m\to 0$ as $m\to \infty$, this implies that the statement of Proposition \ref{continuoussolresPR} does hold true, which we assumed was not the case. Hence the desired contradiction.
	
\end{proof}

We shall now use the following lemma, which provides a \emph{geometric decomposition} for any function that is sufficiently close in $L^{1}$ to a steady state. This lemma yields a \emph{canonical choice} for the parameters $\lambda(t)$ and $x^{\star}(t)$ on the time interval $[t_{n*},t_n^*]$.

\begin{lemma}\label{GeometricalDeco}
	For all $K>0$ the following hold for any sufficiently small $\delta>0$. For each $u\in \mathcal T_{\delta,K}$ there exists a unique decomposition
	\begin{align*}
		u 
		= \frac{1}{\lambda[u]^{2}} 
		U\!\left( \frac{x - x^{\star}[u]}{\lambda[u]} \right)
		+ \frac{1}{\lambda[u]^{2}} 
		\tilde{v}\!\left( \frac{x - x^{\star}[u]}{\lambda[u]} \right),
	\end{align*}
	where the perturbation $\tilde{u}$ satisfies the orthogonality conditions
	\begin{align}\label{orthogonalitycond}
		\int_{\R^{2}} \tilde{v}(y)\,\Lambda U(y)\,dy 
		= \int_{\R^{2}} \tilde{v}(y)\,\partial_{y_{1}} U(y)\,dy
		= \int_{\R^{2}} \tilde{v}(y)\,\partial_{y_{2}} U(y)\,dy
		= 0.
	\end{align}
	The parameters $\lambda[u]$ and $x^{\star}[u]$ depend in a Fréchet differentiable way on $u$ in $\mathcal{T}_{\delta}$ and moreover
	\[
	\|\tilde{v}\|_{L^{1}} \lesssim \delta \quad \mbox{and} \quad \|\tilde{v} \|_{W^{k,1}\cap W^{k,\infty}} \lesssim K.
	\]
\end{lemma}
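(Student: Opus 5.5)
The plan is to run the standard modulation argument: the implicit function theorem applied in $L^1$ near the soliton manifold, with the symmetries providing the nondegeneracy. Define, for $(\mu,z)\in(0,\infty)\times\mathbb R^2$ and $u\in L^1$, the rescaled function $w_{\mu,z}[u](y)=\mu^2u(z+\mu y)$ and the map
$$
F(u,\mu,z)=\Big(\int (w_{\mu,z}[u]-U)\Lambda U,\ \int (w_{\mu,z}[u]-U)\partial_{y_1}U,\ \int (w_{\mu,z}[u]-U)\partial_{y_2}U\Big)\in\mathbb R^3 .
$$
Since $\int U\,\Lambda U=\int U\partial_iU=0$, this reduces to $\int w_{\mu,z}[u]\,\Lambda U$ and $\int w_{\mu,z}[u]\partial_i U$. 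After the change of variables, these equal $\int u(x)\,\Lambda U(\frac{x-z}{\mu})\,dx$ and $\int u(x)\,\partial_iU(\frac{x-z}{\mu})\,dx$.

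This form shows that $F$ is linear and bounded in $u\in L^1$, because $\Lambda U,\nabla U\in L^\infty$. It is also smooth in $(\mu,z)$, since $\Lambda U,\nabla U$ are smooth with derivatives in $L^\infty$; the factor $\mu^{-1}$ is harmless near $\mu=1$. Hence $F$ is $C^1$ jointly, Fréchet in $u$. At $(U,1,0)$, the derivative in $(\mu,z)$ is given up to sign by the Gram matrix of $(U,\Lambda U, \nabla U)$. Concretely, $\partial_\mu w_{\mu,z}[U]|_{(1,0)}=\Lambda U$ and $\partial_{z_i}w|_{(1,0)}=\partial_iU$. So the Jacobian is the matrix of $\int \Lambda U\,\Lambda U$, $\int\Lambda U\,\partial_iU$ and $\int\partial_iU\,\partial_jU$. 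Radial symmetry of $U$ makes the off-diagonal entries vanish, so the matrix is $\mathrm{diag}(\|\Lambda U\|_{L^2}^2,\|\partial_1U\|^2,\|\partial_2U\|^2)$, which is invertible. The implicit function theorem then gives a $C^1$ map $u\mapsto(\mu[u],z[u])$ on an $L^1$-ball $\{\|u-U\|_{L^1}<\delta_0\}$. This map satisfies $|\mu-1|+|z|\lesssim \|u-U\|_{L^1}$, and it is unique in a neighborhood of $(1,0)$.

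Next I transport this to the whole tube using the symmetry group. Take $u\in\mathcal T_{\delta,K}$, and pick $(\lambda_0,x_0)$ nearly realizing the infimum, so that $\|w_{\lambda_0,x_0}[u]-U\|_{L^1}<\delta$ and $\|w_{\lambda_0,x_0}[u]-U\|_{W^{k,1}\cap W^{k,\infty}}<K$. Apply the local result to $w_{\lambda_0,x_0}[u]$ and compose the group actions: $\lambda[u]=\lambda_0\mu$ and $x^\star[u]=x_0+\lambda_0 z$. Then $\tilde v=w_{\lambda[u],x^\star[u]}[u]-U$ satisfies \eqref{orthogonalitycond} by construction. The choice is independent of $(\lambda_0,x_0)$ by local uniqueness, combined with the following rigidity fact: if $\|w_{a,b}[U]-U\|_{L^1}$ is small, then $(a,b)$ is close to $(1,0)$. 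This holds because $(a,b)\mapsto w_{a,b}[U]$ is injective and $\|w_{a,b}[U]-U\|_{L^1}\to 2\cdot 8\pi$ as $(a,b)$ degenerates. Two admissible choices therefore differ by a group element close to the identity, and uniqueness in the implicit function theorem identifies them. Fréchet differentiability is local, and it follows from the composition formula with $(\lambda_0,x_0)$ frozen on a neighborhood.

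For the bounds, write $\tilde v = w_{\mu,z}[w_{\lambda_0,x_0}u]-U = w_{\mu,z}[w_{\lambda_0,x_0}u-U]+(w_{\mu,z}[U]-U)$. The first term has $L^1$ norm below $\delta$ because rescaling preserves $L^1$. Its $W^{k,1}\cap W^{k,\infty}$ norm is $\lesssim K$, since $\mu\in[1/2,2]$ and $W^{k,p}$ norms change by bounded factors under such rescalings. The second term is $O(|\mu-1|+|z|)=O(\delta)$ in every $W^{k,p}$, by smoothness and decay of $U$.

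The main obstacle is the global uniqueness step, namely ruling out distinct far-apart parameter pairs. It is handled by the $L^1$ rigidity fact just stated, and this is why $\delta$ must be taken small. Since the $L^1$ smallness alone controls the parameters, $\delta$ can in fact be chosen independently of $K$.
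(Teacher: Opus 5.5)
Your proposal is correct and follows the standard implicit-function-theorem modulation argument that the paper invokes by reference without giving details: local inversion at $(U,1,0)$ via the diagonal Gram matrix of $\Lambda U,\partial_1U,\partial_2U$, transport along the scaling/translation group, and well-definedness from the $L^1$-rigidity of the soliton family. The only loose end is your closing remark that $\delta$ can be taken independent of $K$: as written you get $\|\tilde v\|_{W^{k,1}\cap W^{k,\infty}}\lesssim K+\delta$, and to get $\lesssim K$ you need $|\mu-1|+|z|\lesssim \bigl|\int (w_{\lambda_0,x_0}[u]-U)(\Lambda U,\nabla U)\bigr|\lesssim \min(\delta,K)$, which holds because $\Lambda U,\nabla U\in L^1\cap L^\infty$.
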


\begin{proof}
	The proof follows from a standard application of the implicit function theorem. We mention for example \cite{CZ}, \cite{CRS} as references for the proofs of analogue results.
	\end{proof}

We are now ready to present the proof of the continuous-in-time soliton resolution stated in Proposition \ref{continuoussolresPR}.

\begin{proof}[Proof of Proposition \ref{continuoussolresPR}]
		
	\smallskip
	\noindent 
	\textbf{Step 1.} \emph{Setting up the argument.} We proceed by contradiction.  
	Assume that Proposition~\ref{continuoussolresPR} does not hold.  
	Then by Lemma \ref{lem:set-up-contradiction-continuous}, we know that we can pick any $K>0$ and then any $\eta>0$ small enough, such that (up to extracting subsequences) there exist two sequences $t_{n_{\star}}$ and $t_{n}^*$ with $t_{n*}<t_n^*<t_{n+1}^*$ and on the interval $[t_{n*},t_n^*]$ functions $\lambda(t),x(t)$ such that the solution $u$ can be decomposed as
	\begin{align} \label{id:proof-continuous-decomposition-1}
		u(t,x) 
		= \frac{1}{\lambda(t)^{2}} 
		U\left( \frac{x - x(t)}{\lambda(t)} \right)
		+ \frac{1}{\lambda(t)^{2}} 
		\tilde{u}\!\left( t, \frac{x - x(t)}{\lambda(t)} \right),
	\end{align}
	with $\| \tilde{u}(t_{n_{\star}}) \|_{L^{1}} \to 0$ and $\| \tilde{u}(t_{n_{\star}}) \|_{W^{k,\infty}\cap W^{k,1}} \to 0$, and
	\begin{equation} \label{id:proof-continuous-decomposition-2}
		\| \tilde{u}(t) \|_{L^{1}} < \eta, 
		\quad \| \tilde{u}(t) \|_{W^{k,1}\cap W^{k,\infty}} < K,
		\qquad \text{for all } t \in [t_{n_{\star}}, t_{n}^{\star}),
	\end{equation}
	and at time $t_{n}^{\star}$
	\begin{equation} \label{id:proof-continuous-decomposition-3}
		d_{\eta,K}(u(t^*_n))= 1.
	\end{equation}
	Note that to justify \eqref{id:proof-continuous-decomposition-3} above, on the one hand the inequality $d_{\eta,K}(u(t^*_n))\geq 1$ follows from the definition of $t^*_n$ and Theorem \ref{th:lwp:L1}. Indeed, if $d_{\eta,K}(u(t^*_n))< 1$ then we would have $d_{\eta,K}(u(t))< 1$ on an interval of the form $[t_n^*,t_n^*+\delta]$ by the regularity of solutions provided by Theorem \ref{th:lwp:L1}, which would contradict the definition of $t_n^*$. On the other hand, the inequality $d_{\eta,K}(u(t^*_n))\leq 1$ follows from the fact that $d_{\eta,K}(u(t))< 1$ for all $t\in [t_{n*},t_n^*)$, and again the regularity of the solution provided by Theorem \ref{th:lwp:L1}. Hence $d_{\eta,K}(u(t^*_n))= 1$.

	By Lemma~\ref{GeometricalDeco}, in the interval $[t_{n_{\star}}, t_{n}^{\star}]$ we have a canonical choice of parameters $\lambda(t)$ and $x(t)$.  
	Indeed, by \eqref{id:proof-continuous-decomposition-1} and \eqref{id:proof-continuous-decomposition-2} we can apply Lemma~\ref{GeometricalDeco} and obtain that, up to relabelling $\lambda(t)$, $x(t)$, $\tilde v(t)$, we have
	\begin{align*}
		u(t,x) 
		= \frac{1}{\lambda(t)^{2}} 
		U\!\left( \frac{x - x(t)}{\lambda(t)} \right)
		+ \frac{1}{\lambda(t)^{2}} 
		\tilde{v}\!\left( t, \frac{x - x(t)}{\lambda(t)} \right), \qquad \| \tilde v(t)\|_{L^1}\lesssim \eta, \qquad \| \tilde v(t)\|_{W^{k,1}\cap W^{k,\infty}}\lesssim K
	\end{align*}
	where now $\lambda(t)$ and $x(t)$ are continuous on $[t_{n*},t_n^*]$ (the parameters are Fr\'echet differentiable with respect to $u$, which is continuous in time). It is convenient to introduce an additional sequence of times $t_{0_{n}}\in (t_{n*},t_n^*)$ satisfying 
	\[
	t_{0_{n}} + \lambda^{2}(t_{0_{n}}) = t_{n}^{\star}.
	\]
	This is always possible because $\lambda(t)$ is continuous and strictly positive on $[t_{n*},t_n^*]$, and that $t_n^*>t_{n*}+\lambda(t_{n*})^2$ by Lemma \ref{ImprovedSequentialLemma}. The situation described above can be illustrated by the following diagram.
	\begin{center}
		\begin{tikzpicture}
			\draw[->] (0,0) -- (10,0);
			
			\draw (2,0.1) -- (2,-0.1) node[below] {$t_{n_{\star}}$};
			\node[above] at (2,0.1) {start of soliton resolution};
			
			\draw (7,0.1) -- (7,-0.1) node[below] {$t_{0_{n}}$};
			
			\draw (9,0.1) -- (9,-0.1) node[below] {$t_{n}^{\star}$};
			\node[above] at (9,0.1) {end of soliton resolution};
		\end{tikzpicture}
	\end{center}
	
	\noindent \textbf{Step 2.} \emph{Improving the $W^{k,1}\cap W^{k,\infty}$ bound at time $t_n^*$.} Let $\epsilon>0$ that will be chosen small enough depending on $K$ but independent of $\eta$. We know that $\|\tilde{v}(t_{0_{n}})\|_{L^{1}}\lesssim\eta$ for some small $\eta$. 

    We change variables $y = (x - x(t_{0_{n}}))/\lambda(t_{0_{n}})$ and consider the solution $v_n(s,y)$ to the Keller--Segel equation with initial data 
	$v_{n,0}(y)=\lambda^{2}(t_{0_{n}})u(x(t_{0_n})+\lambda (t_{0_{n}} )y,t_{0_{n}})$. By the regularization estimate near the ground state from Lemma \ref{lem:uniform-regularization-near-soliton}, if $\eta$ is chosen small enough depending on $\epsilon$, there holds the decomposition
	\begin{equation} \label{continuous-resolution-id-1}
		v_n(s,y)= U +\tilde v_n(s,y), \quad \mbox{with }\sup_{1\leq s \leq 2}\|\tilde{v}_n(\cdot,t)\|_{W^{k,1}\cap W^{k,\infty}} \leq \epsilon.
	\end{equation}
	This implies in particular the following decomposition at time $t_n^*$,
	\begin{equation} \label{continuous-resolution-id-3}
		u(t_n^*,x)= \frac{1}{\lambda(t_{0_n})^2}U \left(\frac{x-x(t_{0_n})}{\lambda(t_{0_n})}\right)+\frac{1}{\lambda(t_{0_n})^2}\tilde v_n \left(1,\frac{x-x(t_{0_n})}{\lambda(t_{0_n})}\right).
	\end{equation}
	
	\smallskip
	
	\noindent \textbf{Step 3.} \emph{Improving the $L^1$ bound at time $t_n^*$.} By the bounds on $\tilde v_n$ from Step 2 and standard compactness arguments, there exists a subsequence such that
	\begin{align*}
		v_{n}(y,s) \to f(y,s)
		\quad \text{in } C^{k-1}_{\mathrm{loc}}([1,2]\times \mathbb R^2).
	\end{align*}
	Introducing $f_\infty(y)= f(1,y)$, one can repeat exactly the same arguments as in the proof of Lemma \ref{FirstDecoLemma} to show that $f_{\infty}\in C^{\infty}(\mathbb{R}^{2})$ is a nonnegative function that satisfies
	\begin{align*} 
			\nabla \log f_{\infty}-\nabla \Phi_{f_{\infty}}=0 \ \ \text{in }\{f_\infty>0\}, \ \ \ 
	\end{align*} 
	with $\int_{\R^2}f_{\infty}(y)dy\le 8\pi$. Moreover, since $f_\infty$ is the limit of $v_{n}(1,y)=U+O_{W^{k,1}\cap W^{k,\infty}}(\epsilon)$ from Step 2, we get $f_\infty\neq 0$. Appealing to the rigidity result of Corollary \ref{cor:rigidity}, we deduce $f_\infty (y)=\mu^{-2}U\left( (y-z)/\mu\right)$ for some $\mu>0$ and $z\in \mathbb R^2$. This shows
	$$
	v_n(1,y) = \frac{1}{\mu^2}U\left(\frac{y-z}{\mu}\right)+\tilde w_n(y) , \quad \mbox{with} \ \tilde w_n\to 0 \ \mbox{in }C^{k-1}_{loc}.
	$$
	Reasoning exactly as in the proof of Lemma \ref{lem:cv-L1-sequential}, this implies $\tilde w_n\to 0$ in $L^1$. Hence
	\begin{equation} \label{continuous-resolution-id-2}
		v_n(1,y) = \frac{1}{\mu^2}U\left(\frac{y-z}{\mu}\right)+\tilde w_n(y) , \quad \mbox{with} \ \tilde w_n\to 0 \ \mbox{in }C^{k-1}_{loc}\cap L^1.
	\end{equation}
	Combining \eqref{continuous-resolution-id-1} and \eqref{continuous-resolution-id-2} gives
	$$
	\frac{1}{\mu^2}U\left(\frac{\cdot-z}{\mu}\right)-U= \lim_{n\to \infty} \tilde v_n(1,\cdot) \quad \mbox{with} \|\tilde v_{n}(1,\cdot)\|_{W^{k,1}\cap W^{k,\infty}}\leq \epsilon,
	$$
	implying $|\mu-1|+|z|\lesssim \epsilon$. Thus, by \eqref{continuous-resolution-id-1} and \eqref{continuous-resolution-id-2} again we deduce
	$$
	\| \tilde w_n \|_{W^{k,1}\cap W^{k,\infty}} = \left\| U- \frac{1}{\mu^2}U\left(\frac{\cdot-z}{\mu}\right)+\tilde v_n(1,\cdot) \right\|_{W^{k,1}\cap W^{k,\infty}} \lesssim \epsilon.
	$$
	By the above inequality and \eqref{continuous-resolution-id-2} we deduce that for all $n$ large enough,
	$$
	\eta^{-1}\| \tilde w_n \|_{L^1}+K^{-1}\| \tilde w_n \|_{W^{k,1}\cap W^{k,\infty}}\lesssim \epsilon K^{-1}.
	$$
	Injecting \eqref{continuous-resolution-id-2} in \eqref{continuous-resolution-id-3} shows
	$$
	u(t_n^*,x)= \frac{1}{(\mu \lambda(t_{0_n}))^2}U\left(\frac{x-(x(t_{0_n})+\lambda(t_{0_n})z)}{(\mu \lambda(t_{0_n}))^2}\right)+\frac{1}{ \lambda(t_{0_n})^2} \tilde w_n \left(\frac{x-x(t_{0_n})}{ \lambda(t_{0_n})^2} \right)
	$$
	but then
	$$
	d_{\eta,K}(u(t_n^*,\cdot )\lesssim  \epsilon K^{-1} <1
	$$
	for all large enough $n$, provided $\epsilon$ has been chosen small enough. This contradicts \eqref{id:proof-continuous-decomposition-3} and ends the proof of Proposition \ref{continuoussolresPR}.

\end{proof}

\subsubsection{Additional features of the soliton resolution}
In this section, we establish the limiting behavior of the parameters 
$\lambda(t)$ and $x(t)$ introduced in the previous decomposition, and the convergence of $\tilde u$ to $0$ in $W^{k,1}\cap W^{k,\infty}$.

\begin{lemma}\label{limpara}
	Let $u$ be as in Proposition \ref{continuoussolresPR}. There exists continuous functions of time $\lambda(t),x(t)$ such that
	\begin{align*}
		u(x,t) 
		= \frac{1}{\lambda^{2}(t)} 
		U\!\left( \frac{x - x(t)}{\lambda(t)} \right) 
		+ \frac{1}{\lambda^{2}(t)} 
		\tilde{v}\!\left( t, \frac{x - x(t)}{\lambda(t)} \right),
	\end{align*}
	with $\lim_{t\to \infty} \|\tilde{v}(t)\|_{W^{k,1}\cap W^{k,\infty}} = 0$, $\lim_{t\to \infty}\lambda(t)=0$ and $\lim_{t\to \infty}x(t)=x^*$.
\end{lemma}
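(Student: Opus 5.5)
We plan to derive Lemma \ref{limpara} from Proposition \ref{continuoussolresPR} in three steps: first upgrade the bounded remainder to a vanishing one, then show $\lambda(t)\to 0$, then show $x(t)\to x^*$.

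\textbf{Step 1: vanishing of the remainder in $W^{k,1}\cap W^{k,\infty}$.} Apply Proposition \ref{continuoussolresPR} with regularity index $k+2$ and any fixed $K$. For $t$ large this gives $\|\tilde v(t)\|_{L^1}\to 0$ and $\|\tilde v(t)\|_{W^{k+2,1}\cap W^{k+2,\infty}}\le K$.

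Fix $\epsilon>0$. Apply Lemma \ref{lem:uniform-regularization-near-soliton} with $T_0=1$, $T_1=2$, $N=k$ and $\delta=\epsilon$ to the rescaled solution started at time $t_0$ from $v_{0}(y)=\lambda^2(t_0)u(t_0,x(t_0)+\lambda(t_0)y)$. For $t_0$ large, $\|v_0-U\|_{L^1}$ is below the required threshold. This yields
\[
\sup_{1\le s\le 2}\|v(s)-U\|_{W^{k,1}\cap W^{k,\infty}}\le \epsilon .
\]

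To cover all large times we need every $t$ to be of the form $t_0+s\lambda^2(t_0)$ with $s\in[1,2]$. Modulation continuity handles this: $\lambda(t)$ and $\lambda(t_0)$ are comparable for $|t-t_0|\lesssim\lambda^2(t_0)$. Alternatively, use the intermediate value theorem, as in the choice of $t_{0_n}$, to pick $t_0$ with $t_0+\lambda^2(t_0)=t$.

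At time $t$ we then have $\lambda^2(t_0)u(t,x(t_0)+\lambda(t_0)\cdot)=U+O(\epsilon)$ in $W^{k,1}\cap W^{k,\infty}$. Re-expressing this in the canonical modulation of Lemma \ref{GeometricalDeco} changes the parameters only by $O(\epsilon)$, since the decomposition is Fréchet differentiable with respect to the data. Hence $\|\tilde v(t)\|_{W^{k,1}\cap W^{k,\infty}}\lesssim\epsilon$ for all $t$ large. The parameters stay continuous because they are given by Lemma \ref{GeometricalDeco} applied to a continuous-in-time trajectory inside the tube.

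\textbf{Step 2: $\lambda(t)\to 0$.} Argue by contradiction. If $\lambda(t_n)\ge c>0$ along some sequence $t_n\to\infty$, then $u(t_n)$ is close in $L^1$ to a profile $\lambda(t_n)^{-2}U((\cdot-x(t_n))/\lambda(t_n))$ with $\|\cdot\|_{L^\infty}\le 8c^{-2}$.

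Test against a continuous cutoff supported in a small ball $B_r(x^*)$. By Theorem \ref{BCMThm} the mass of $u(t_n)$ in $B_r(x^*)$ tends to $8\pi$. The soliton part, however, has mass at most $8c^{-2}\pi r^2+o(1)$ in that ball. For $r$ small this is a contradiction.

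\textbf{Step 3: $x(t)\to x^*$.} Again use the weak-$*$ convergence. Suppose $|x(t_n)-x^*|\ge c$ along some sequence. Since $\lambda(t_n)\to0$, the soliton places mass $8\pi-o(1)$ inside $B_{c/2}(x(t_n))$, and $\|\tilde u\|_{L^1}\to0$. Testing against a bump near $x^*$ then gives mass at most $o(1)$ near $x^*$, contradicting Theorem \ref{BCMThm}. The uniform second-moment bound \eqref{id:property-2} rules out escape to infinity, so all sequences can be treated this way.

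\textbf{Main obstacle.} The delicate point is Step 1: passing from the bound at time $t_0$ to a bound at every large time, while keeping the modulation parameters canonical. I would first try the intermediate-value choice of $t_0$ with $t_0+\lambda^2(t_0)=t$, combined with the stability of the parameters in Lemma \ref{GeometricalDeco}.
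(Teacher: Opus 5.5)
Your proof is correct. Steps 2 and 3 are the same contradiction arguments against Theorem \ref{BCMThm} that the paper uses; you make the test functions explicit, which is fine. The appeal to the second moment in Step 3 is unnecessary: the soliton mass near $x^*$ is $o(1)$ whether or not $x(t_n)$ stays bounded.

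Where you differ is Step 1. The paper does not re-run any regularization. It observes that Proposition \ref{continuoussolresPR} holds for every $K>0$ and applies it along a sequence $K_n\to 0$. It then passes to the canonical parameters of Lemma \ref{GeometricalDeco}, so that the canonical remainder has $\limsup\lesssim K_n$ for every $n$.

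You instead use only the $L^1$ part of Proposition \ref{continuoussolresPR}, together with membership in the tube so that canonical continuous parameters exist. You upgrade to $W^{k,1}\cap W^{k,\infty}$ directly by applying Lemma \ref{lem:uniform-regularization-near-soliton} from a starting time $t_0$ chosen by the intermediate value theorem so that $t_0+\lambda^2(t_0)=t$. This works for every $t\ge T+\lambda^2(T)$, since $t_0\mapsto t_0+\lambda^2(t_0)$ is continuous on $[T,\infty)$. You then return to the canonical parameters by the implicit function theorem.

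This is essentially Step 2 of the proof of Proposition \ref{continuoussolresPR}, run at every large time instead of only at the exit times $t_n^*$. It makes explicit that $L^1$ soliton resolution self-improves by parabolic smoothing at the soliton time scale. It also shows that your index $k+2$ and the size of $K$ play no role. The paper's route is shorter but leans on the full ``for any $K$'' strength of Proposition \ref{continuoussolresPR}. Both routes ultimately rest on the uniqueness and Lipschitz dependence of the decomposition in Lemma \ref{GeometricalDeco}.

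Keep the intermediate value choice rather than your first alternative. Comparability of $\lambda(t)$ and $\lambda(t_0)$ for $|t-t_0|\lesssim\lambda^2(t_0)$ does not follow from continuity of $\lambda$ alone at this stage.
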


\begin{proof}
	
	\textbf{Step 1.} \emph{Control of the remainder and continuity.} We remark that  Proposition \ref{continuoussolresPR} is valid for all $k\in \mathbb N$ for any constant $K>0$. Thus, by applying this theorem with a sequence $K_n\to 0$ we conclude that there exist functions $\lambda(t)>0$, $x(t)\in \mathbb{R}^{2}$ such that 
	\begin{align} \label{Decomposition1000}
		u(x,t)=\frac{1}{\lambda^{2}(t)}U\big( \frac{x-x(t)}{\lambda(t)} \big)+ \frac{1}{\lambda^{2}(t)}\tilde{u}\big( t, \frac{x-x(t)}{\lambda(t)}\big)
	\end{align}
	with $\lim_{t\to \infty} \|\tilde{u}(t)\|_{W^{k,\infty}\cap W^{k,1}}=0$ for any $k\in \mathbb N$. Next, one can apply Lemma \ref{GeometricalDeco} and, up to redefining $\tilde u$, $\lambda$ and $x$, we obtain that $\lambda$ and $x$ are continuous functions and that the above convergence remains valid.
	
	\smallskip
	
	\noindent \textbf{Step 2.} \emph{Control of the scale.} We now examine the limiting behavior of $\lambda(t)$.  
	Assume, for contradiction, that $\lambda(t) \not\to 0$.  
	Then there exists a sequence $t_{n} \to \infty$ and a constant $\delta > 0$ such that $\lambda(t_{n}) \ge \delta$ for all $n$.  
	In this case,
	\begin{align*}
		\frac{1}{\lambda^{2}(t_{n})}
		\left( U + \tilde{u}(t_{n}, \cdot) \right)
		\!\left( \frac{x - x(t_{n})}{\lambda(t_{n})} \right)
		\lesssim \frac{1}{\lambda^{2}(t_{n})}
		\le \frac{1}{\delta^{2}}
		\quad \text{for all } n.
	\end{align*}
	However, this contradicts the result of \cite{BCM}, see Theorem \ref{BCMThm} here, which shows that the limiting profile must be a Dirac mass.
	
	\smallskip
	
	\noindent \textbf{Step 3.} \emph{Control of the position.} Assume by contradiction that there exists $\epsilon>0$ and a sequence $t_n\to \infty$ such that $x_n=x(t_n)$ satisfies $|x_n-x^*|>\epsilon$. Letting $\lambda_n=\lambda(t_n)$, we know that by \eqref{Decomposition1000} we have
	\begin{align*}
		u(t_{n},x)=\frac{1}{\lambda_{n}^{2}} \frac{8}{(1+\frac{|x-x_{n}|^{2}}{\lambda_{n}^{2}})^{2}}+\tilde{u}_{n}(x)
	\end{align*}
	and that $\|\tilde{u}_{n}\|_{L^{1}(\R^{2})}\to 0$ as $n\to \infty$. As $\lambda_n \to 0$ while $|x_n-x^*|>\epsilon$, we have that on the set $\{|x-x^*|<\epsilon/2\}$, the function $x\mapsto  8\lambda_{n}^{-2} (1+\frac{|x-x_{n}|^{2}}{\lambda_{n}^{2}})^{-2}\lesssim_\epsilon \lambda_n^2$ converges uniformly to $0$ as $n\to \infty$. Therefore,
	$$
	\| u(t_n) \|_{L^1(B_{\epsilon/2}(x^*))}\to 0
	$$
	as $n\to \infty$. But this contradicts the result of \cite{BCM}, see Theorem \ref{BCMThm}, that states that $u(t_n)$ must converge to a Dirac mass of $8\pi$ centred at $x^*$.	
\end{proof}

\section{Rough convergence in additional scale invariant spaces}\label{sec:roughconv}

In the soliton resolution provided by Theorem~\ref{thn:soliton-resolution}, we have shown that a solution $u$ of the Keller-Segel equation \eqref{KS} emanating from an initial data $u_0\in L^1(\langle x \rangle^2 dx)$ with $\int u_0=8\pi$ can be written asymptotically as $t \to \infty$ in the form
\begin{align} \label{id:property-3}
	u(x,t)
	&= U_{\lambda(t),x_{\star}(t)}+\tilde u(x,t) .
\end{align}
The modulation parameters $\lambda(t)$ and $x_{\star}(t)$ are uniquely determined by the orthogonality conditions
\begin{equation} \label{id:property-4}
\int_{\mathbb R^2} \tilde{v}\, \Lambda U \,dy = 0,
\qquad
\int_{\mathbb R^2} \tilde{v}\, \partial_{y_i}U \,dy = 0,
\end{equation}
for $i=1,2$, where $\tilde u(x) = \lambda^{-2} \tilde v((x-x_\star)/\lambda)$. Moreover, they satisfy
\begin{equation} \label{id:property-4}
\lambda(t)\to 0,
\qquad
x_{\star}(t)\to x^*,
\end{equation}
and the remainder term obeys
\begin{equation} \label{id:property-5}
\|\tilde u\|_{L^1}=o(1),
\qquad
\|\nabla^k \tilde u\|_{L^\infty}=o\!\left(\lambda(t)^{-2-k}\right) \mbox{ for }k=0,1,2,
\end{equation}
as $t\to \infty$; more generally $\| \tilde v\|_{W^{k,1}\cap W^{k,\infty}}\to 0$ as $t \to \infty$ for any $k\in \mathbb N$. We call this convergence rough because firstly it is only measured in the scale invariant space $L^1$ (and in other norms but as a direct consequence of parabolic regularization), and because it does not provide a convergence rate neither for the remainder $\tilde u$ nor for the parameters $\lambda $ and $x_\star$. The goal of this section is to show a rough convergence in other scale invariant spaces, which will be used to control more precisely the flow in subsequent sections.

These scaling critical norms are the following. For the remainder $\tilde u$, to control nonlinear effects the bound $\| \tilde u(t) \|_{L^\infty}=o(\lambda^{-2}(t))$ in \eqref{id:property-5} is not sufficient as it diverges and is not localized; we will thus bound $\| |x-x_\star(t)|^2\tilde u(t)\|_{L^\infty}$ and similar norms. For the parameters of the soliton we will bound $\lambda \dot \lambda $ and $\lambda \dot x_\star$, which is a standard approach that amounts to bound time derivative of modulation parameters. A novelty of the present work is the convergence in the H\"older norms $\| \lambda\|_{\dot C^{1/2}([T,\infty)}$ and $\| x_\star \|_{\dot C^{1/2}([T,\infty)}$ which will be the key to control the linearized flow in the zone $|x|\lesssim \sqrt{t}$ in the next section. For $\mu>0$ the rescaled solution $\mu^{-2}u(x/\mu,t/\mu^2)$ can also be decomposed as \eqref{id:property-3} with parameters $\mu \lambda(t/\mu^2)$, $\mu x_\star(t/\mu^2)$ and remainder $\mu^{-2}\tilde u(x/\mu,t/\mu^2)$. The aforementioned norms are left unchanged by this transformation, hence they are scale invariant norms.

The main result of this Section is the following Proposition, that gathers all estimates proved hereafter.

\begin{proposition}[Qualitative convergence in scale invariant spaces] \label{pr:cv-scale-invariant}

Assume $u$ is a solution of the Keller-Segel equation \eqref{KS} with $u_0\in L^1(\langle x \rangle^2 dx)$ and $\int u_0=8\pi$. Recall it satisfies \eqref{id:property-1}-\eqref{id:property-2} and can be decomposed as \eqref{id:property-3}-\eqref{id:property-5}. Then the following hold.

\smallskip

\noindent \emph{(i) Control of the parameters}. We have the modulation estimate
$$
|\dot \lambda(t)|+|\dot x_\star(t)|\leq o(\lambda^{-1}(t))
$$
as $t\to \infty$. Moreover, $\| \lambda \|_{\dot C^{1/2}([T,\infty))}+\| x_\star \|_{\dot C^{1/2}([T,\infty))}\to 0$ as $T\to \infty$, i.e.
$$
|\lambda(t+R^2)-\lambda (t)|+|x_\star(t+R^2)-x_{\star}(t)|\leq o(R)
$$
for all $R\geq 0$, uniformly as $t\to \infty$.

\smallskip

\noindent \emph{(ii) Control of the remainder}. In the \emph{nearby parabolic region} $\{|x-x_\star(t)|\le \sqrt t\}$, 
	\begin{equation} \label{bd:pointwise-tildeu-interior}
	(\lambda(t)+|x-x_\star(t)|)\,|\nabla \widetilde u(x,t)|
	+
	|\widetilde u(x,t)|
	=
	o\!\left(\frac{1}{\lambda(t)^2+|x-x_\star(t)|^2}\right).
	\end{equation}
	
	\medskip
	
	\noindent In the \emph{far away parabolic region} $\{|x-x_\star(t)|\ge \sqrt t\}$,
	\begin{equation} \label{bd:pointwise-tildeu-exterior}
	\sqrt{t}\,|\nabla \widetilde{u}(x,t)|
	+
	|\widetilde u(x,t)|
	\lesssim
	\frac{1}{t}\frac{1}{|x-x_\star(t)|^2}.
	\end{equation}
	
	\noindent For the Poisson field, for all $x\in \mathbb R^2$,
	\begin{equation} \label{bd:pointwise-Phitildeu}
	|\nabla \Phi_{\widetilde u}(x,t)|
	=
	o\!\left(\frac{1}{\lambda(t)+|x-x_\star(t)|}\right).
	\end{equation}
	
\end{proposition}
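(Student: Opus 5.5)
The proof will be split along the two items, in the order: modulation derivatives, H\"older control of the parameters, then pointwise bounds on the remainder.

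\textbf{Modulation estimate.} I first differentiate in time the orthogonality conditions \eqref{id:property-4}, in the renormalized variables $y=(x-x_\star)/\lambda$ and $s$ with $ds/dt=\lambda^{-2}$. The equation for $\tilde v$ reads
$$
\partial_s\tilde v=\mathcal L\tilde v-\nabla\cdot(\tilde v\nabla\Phi_{\tilde v})+\frac{\lambda_s}{\lambda}\Lambda(U+\tilde v)+\frac{x_{\star,s}}{\lambda}\cdot\nabla(U+\tilde v),
$$
where $\mathcal L$ is the linearized operator around $U$. Pairing with $\Lambda U$ and $\partial_{y_i}U$ produces a $3\times 3$ linear system for $(\lambda_s/\lambda,x_{\star,s}/\lambda)$. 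Its matrix is $\mathrm{diag}(\langle\Lambda U,\Lambda U\rangle,\langle\partial_iU,\partial_iU\rangle)+O(\|\tilde v\|_{W^{1,1}})$, hence invertible. The right-hand side is $O(\|\tilde v\|_{W^{2,1}\cap W^{2,\infty}})=o(1)$ by Theorem \ref{thn:soliton-resolution}. This gives $\lambda\dot\lambda,\lambda\dot x_\star=o(1)$.

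\textbf{H\"older control.} For $R\lesssim\lambda(t)$ the H\"older bound follows by integrating the previous estimate. For $R\gg\lambda$ I would use localized momenta. The position is read off from the localized first momentum $m_\rho(t)=(8\pi)^{-1}\int x\,\chi_{\rho,x_\star(t_0)}u\,dx$ with $\rho\approx R$, which agrees with $x_\star$ up to $o(\rho)$. Indeed, mass at distances $\gtrsim\rho$ from $x_\star$ is $o(1)$, and the $U$-tail contributes $\lambda^2\log$ terms. The time derivative is computed from \eqref{KS}. The diffusion term gives $O(\rho^{-1})\int_{|x|\sim\rho}u$. The drift term is antisymmetric in the self-interaction and leaves only contributions where some mass lies in the annulus, bounded by $\rho^{-1}\int_{|x-x_\star|\gtrsim\rho}u=o(\rho^{-1})$. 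Integrating over a time $R^2$ gives $|m_\rho(t+R^2)-m_\rho(t)|=o(R)$, provided the solution stays concentrated, which a continuity (bootstrap) argument in $R$ secures. The scale is handled analogously with a localized second momentum $\int|x-x_\star|^2\chi_\rho u$, whose derivative is $4\cdot 8\pi(1-8\pi/8\pi)$ up to annulus errors. It therefore varies by $o(R^2)$, while its solitonic part is $\approx 8\pi\lambda^2\log(\rho/\lambda)$. The extraction of $\lambda$ from this logarithmic relation is where I expect the main obstacle. I would pick $\rho=R$ and compare two localizations, or use instead $\int\log$-weighted or $\Lambda U$-adapted test functions at intermediate scales, so that the logarithm yields $|\lambda^2(t+R^2)-\lambda^2(t)|\log(R/\lambda)=o(R^2)$, hence $|\Delta\lambda|=o(R)$.

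\textbf{Pointwise remainder bounds.} Here I apply the $\varepsilon$-regularity theorem (Theorem \ref{thm:L1Linfsmall} and its quantitative version from the appendix) at a point $x$ with $r=|x-x_\star(t)|\ge\lambda$, on the parabolic cylinder of radius $r/4$ over times $[t-\sigma r^2,t]$. The H\"older control of $x_\star$ guarantees the soliton stays at distance $\approx r$ during that cylinder, so the mass there is $o(1)$ from the $L^1$ smallness of $\tilde u$ plus the soliton tail $\lesssim\lambda^2/r^2$. Scaled $\varepsilon$-regularity then yields $|u|+r|\nabla u|=o(r^{-2})$, and subtracting $U_{\lambda,x_\star}$ gives \eqref{bd:pointwise-tildeu-interior}; for $r\lesssim\lambda$ it follows directly from Theorem \ref{thn:soliton-resolution}. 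In the far zone $r\ge\sqrt t$ I use instead the cylinder over $[t/2,t]$ of radius $\sqrt t$. The constant second momentum gives mass $\lesssim\mu/r^2$ in the ball of radius $r/2$ around $x$, and the linear dependence of the quantitative $\varepsilon$-regularity on the local mass gives $|u|\lesssim t^{-1}r^{-2}$, with the gradient bound gaining $t^{-1/2}$, which is \eqref{bd:pointwise-tildeu-exterior}. Finally \eqref{bd:pointwise-Phitildeu} follows from splitting the Biot–Savart integral \eqref{id:definition-nablaPhiu} into $|y-x|<r/2$, where the pointwise bound is used, and the rest, where $\|\tilde u\|_{L^1}=o(1)$ is used, with Appendix \ref{sec:poisson} estimates.
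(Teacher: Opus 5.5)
Most of your plan follows the paper. The modulation estimate comes from differentiating the orthogonality conditions. The position is controlled by a localized first momentum with a bootstrap in $R$. The interior bound comes from $\varepsilon$-regularity on cylinders that avoid the soliton, and $\nabla\Phi_{\tilde u}$ from splitting the Biot--Savart integral. Two steps, however, do not go through as written.

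\textbf{The far-zone bound \eqref{bd:pointwise-tildeu-exterior}.} The quantitative $\varepsilon$-regularity is not linear in the local mass. Theorem \ref{thm:quantitative-epsilon-reg} gives $\|u(t)\|_{L^\infty(B_1)}\lesssim \varepsilon_0/t+M(1+M^2)t$. The second term accounts for mass that may flow in from outside $B_8$ (Lemma \ref{lem:smallL1}) and depends only on the total mass $M=8\pi$. Rescaled to your cylinder of radius $\sim\sqrt t$ and duration $\sim t$, it only yields $|u(x,t)|\lesssim t^{-1}\left(\mu|x-x_\star|^{-2}+O(1)\right)=O(t^{-1})$. Optimizing radius and time lag gives at best $O(|x-x_\star|^{-3})$, which is weaker than $t^{-1}|x-x_\star|^{-2}$ whenever $|x-x_\star|\ll t$.

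The missing point is that the conserved second momentum bounds the local mass, $\int_{B_{c\sqrt t}(x)}u(s)\,dy\lesssim|x-x_\star|^{-2}$, at \emph{every} time $s\in[t/8,t]$, not only at the initial time of the cylinder. The paper therefore uses Corollary \ref{thm:L1Linfsmall} only with $\delta=1$, to make $u$ and $\nabla\Phi_u$ bounded at scale $\sqrt t$ on the cylinder. It then treats $\partial_t u=\Delta u+b\cdot\nabla u+cu$ as a linear equation with controlled coefficients. Finally it applies the interior $L^1$--$L^\infty$ estimate of Theorem \ref{thm:Lieberman} to the space-time integral: $u(x,t)\lesssim r^{-4}\iint_{Q_{2r}(x,t)}u\lesssim r^{-2}|x-x_\star|^{-2}$ with $r\approx\sqrt t$.

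\textbf{The H\"older bound for $\lambda$.} The localized second-momentum route, whose logarithmic inversion you leave open, is unnecessary. Your modulation estimate already gives $\frac{d}{dt}\lambda^2=2\lambda\dot\lambda=o(1)$, hence $|\lambda^2(t+R^2)-\lambda^2(t)|=o(R^2)$ for all $R\ge0$. Then $|\lambda(t+R^2)-\lambda(t)|\le|\lambda^2(t+R^2)-\lambda^2(t)|^{1/2}=o(R)$. The paper's Lemma \ref{lem:cv-holder-scale} phrases this with two cases:
\begin{itemize}
\item if both values of $\lambda$ are $\le\delta R$, the bound is trivial;
\item otherwise, divide by $\lambda(t+R^2)+\lambda(t)\ge\delta R$.
\end{itemize}
This also gives for free the concentration $\lambda=o(R)$ on the time window, which your bootstrap for $x_\star$ needs.

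One small fix in that bootstrap: center the moment as $\int(x-x_\star(t_0))\chi_{\rho,x_\star(t_0)}u\,dx$. With $\int x\,\chi_{\rho,x_\star(t_0)}u\,dx$, the extra term $x_\star(t_0)\int\chi_{\rho,x_\star(t_0)}u$ produces an error of size $|x_\star(t_0)|\,o(1)$, which is not $o(\rho)$ for small $\rho$.
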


\begin{remark}

We call such estimates \emph{qualitative} because they are expressed with $o()$'s and not with explicit norms of the remainder $\tilde u$. We will establish \emph{quantitative} versions of these estimates, useful for the full nonlinear analysis later on, in Section \ref{sec:refined-convergence}.

\end{remark}

\begin{proof}

These results are proved in Lemmas \ref{lem:modest}, \ref{lem:cv-holder-scale} and \ref{lemma:QualSlowVariation}, Proposition \ref{Prop:Pointest} and Corollary \ref{Coro:PointPoisson} respectively.

\end{proof}

\subsection{The renormalized evolution in inner variables}

We introduce the inner variables
\begin{align} \label{id:inner-variables}
	u(x,t) = \frac{1}{\lambda^2} v(y,s),
	\qquad
	y = \frac{x-x_\star}{\lambda},
	\qquad
	s = \int_1^t \lambda^{-2}(t')dt'.
\end{align}
A direct computation shows that \(v\) solves the renormalized Keller-Segel equation
\begin{align}\label{innerKS}
	v_s-\frac{\partial_s \lambda}{\lambda}\Lambda v -\frac{\partial_s x_{\star}}{\lambda}\cdot \nabla v = \Delta v -\nabla \cdot (v\nabla \Phi_v).
\end{align}
The decomposition \eqref{id:property-3}-\eqref{id:property-5} becomes
\begin{align} \label{id:property-3bis}
	v(y,s) = U(y) + \tilde v(s,y), \quad \mbox{with} \quad \| \tilde v(s)\|_{W^{k,1}\cap W^{k,\infty}}\to 0 \mbox{ for all }k\in \mathbb N.
\end{align}
As $U$ is a stationary state, the renormalized remainder $\tilde v$ solves
	\begin{align}\label{innerKS-remainder}
	\tilde v_s &= \frac{\partial_s \lambda}{\lambda}\Lambda (U+\tilde v) +\frac{\partial_s x_\star}{\lambda}\cdot \nabla (U+\tilde v )+ \Delta \tilde v -\nabla \cdot (\tilde v\nabla \Phi_U)-\nabla \cdot (U\nabla \Phi_{\tilde v})-\nabla (\tilde v\nabla \Phi_{\tilde v}).
	\end{align}

\subsection{Rough modulation estimates} \label{subsec:rough-modulation}

We here bound $\dot \lambda $ and $\dot x_\star$. We first record a basic estimate on the field generated by the remainder term.
\begin{lemma}\label{lem:grad_phi_tilde_u}
	Assume that $\|\tilde u\|_{L^\infty}=o\!\left(\lambda^{-2}\right)$ and $\|\tilde u\|_{L^1}=o(1)$ as $t \to \infty$. Then
	$$
	|\nabla  \Phi_{\tilde u} |\leq o\left(\frac{1}{\lambda(t)}\right).
	$$
	
\end{lemma}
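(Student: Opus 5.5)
The plan is to bound the Biot--Savart type integral \eqref{id:definition-nablaPhiu} directly by splitting it into a near part and a far part at a radius proportional to $\lambda(t)$. Fix $x\in\mathbb R^2$, a time $t$, and a small parameter $\delta>0$ to be chosen later. Write
$$
|\nabla \Phi_{\tilde u}(x,t)|\leq \frac{1}{2\pi}\int_{|x-y|\leq \delta\lambda}\frac{|\tilde u(y,t)|}{|x-y|}dy+\frac{1}{2\pi}\int_{|x-y|> \delta\lambda}\frac{|\tilde u(y,t)|}{|x-y|}dy .
$$

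For the near part, put $\tilde u$ in $L^\infty$ and integrate the kernel over the disc. This uses $\int_{|x-y|\leq r}|x-y|^{-1}dy=2\pi r$, and gives a bound
$$
\lesssim \delta\lambda \|\tilde u\|_{L^\infty}=\delta\lambda\, o(\lambda^{-2})=\delta\, o(\lambda^{-1}).
$$
In fact one can take $\delta=1$ here, since the $L^\infty$ factor already carries the $o(\cdot)$.

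For the far part, bound the kernel by $(\delta\lambda)^{-1}$ and use the $L^1$ norm, which gives
$$
\lesssim (\delta\lambda)^{-1}\|\tilde u\|_{L^1}=\delta^{-1}\lambda^{-1}o(1).
$$
Taking $\delta=1$, both contributions are $o(\lambda^{-1})$ uniformly in $x$, which proves the lemma.

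An equivalent route is to pass to the rescaled variables \eqref{id:inner-variables}. Since $\nabla\Phi_{\tilde u}(x)=\lambda^{-1}\nabla\Phi_{\tilde v}(y)$, it suffices to show $\|\nabla\Phi_{\tilde v}\|_{L^\infty}\lesssim \|\tilde v\|_{L^1}^{1/2}\|\tilde v\|_{L^\infty}^{1/2}$. This is the same split with the optimized radius $r=(\|\tilde v\|_{L^1}/\|\tilde v\|_{L^\infty})^{1/2}$, and the right-hand side tends to $0$ by \eqref{id:property-3bis}. No step is a genuine obstacle. The only care needed is that the $o(\cdot)$ bounds are uniform in $x$, and this holds because both norms used are global.
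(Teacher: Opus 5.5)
Your proof is correct and follows the paper's argument. The paper splits the kernel integral at a radius $\rho$ exactly as you do and bounds it by $\rho\|\tilde u\|_{L^\infty}+\rho^{-1}\|\tilde u\|_{L^1}$; the only difference is that it takes the optimized radius $\rho=\|\tilde u\|_{L^\infty}^{-1/2}\|\tilde u\|_{L^1}^{1/2}$, which is your "equivalent route", rather than $\rho=\lambda$, and your choice works equally well here.
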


\begin{proof}
	For any $\rho>0$, we decompose
	\begin{equation} \label{bd:Poisson-field-Linfty}
	|\nabla \Phi_{\tilde u}(x)|
	\lesssim
	\int_{|x-y|\le \rho}\frac{|\tilde u(y)|}{|x-y|}\,dy
	+
	\int_{|x-y|\ge \rho}\frac{|\tilde u(y)|}{|x-y|}\,dy \lesssim \rho\,\|\tilde u\|_{L^\infty}+\rho^{-1}\|\tilde u\|_{L^1}.
	\end{equation}
	We can optimize in $\rho$ by choosing $\rho=\|\tilde u\|_{L^\infty}^{-1/2}\|\tilde u\|_{L^1}^{1/2}$, which yields the result.
\end{proof}

We can now derive the qualitative modulation estimates.

\begin{lemma}\label{lem:modest}
	Let $u$ be a solution of the Keller-Segel equation \eqref{KS} as in Proposition \ref{pr:cv-scale-invariant}. Then $\lim_{t\to \infty}\lambda \dot \lambda = \lim_{t\to \infty} \lambda \dot x_\star=0$. 
	\end{lemma}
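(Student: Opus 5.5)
We will run the standard modulation argument in the inner variables \eqref{id:inner-variables}. Since $\partial_s\lambda/\lambda=\lambda\dot\lambda$ and $\partial_s x_\star/\lambda=\lambda\dot x_\star$, the claim is equivalent to
$$
a(s):=\frac{\partial_s \lambda}{\lambda}\to 0, \qquad b(s):=\frac{\partial_s x_\star}{\lambda}\to 0 \qquad (s\to\infty).
$$
We obtain these by differentiating in $s$ the three orthogonality conditions \eqref{id:property-4}. The parameters are differentiable in time, since Lemma \ref{GeometricalDeco} gives Fr\'echet differentiable dependence on $u$ and $u$ is a smooth solution.

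Concretely, we pair \eqref{innerKS-remainder} with $\Lambda U$, $\partial_{y_1}U$ and $\partial_{y_2}U$. The left-hand sides vanish because $\int \tilde v\,\Lambda U=\int\tilde v\,\partial_{y_i}U=0$ for all $s$. On the right-hand side, the modulation terms give the linear system
$$
a\big(\langle \Lambda U,\Lambda U\rangle+\langle\Lambda\tilde v,\Lambda U\rangle\big)+\sum_j b_j\big(\langle\partial_j U,\Lambda U\rangle+\langle\partial_j\tilde v,\Lambda U\rangle\big)=-\langle \mathcal L\tilde v+N(\tilde v),\Lambda U\rangle,
$$
and similarly with $\partial_{y_i}U$ in place of $\Lambda U$. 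Here $\mathcal L\tilde v=\Delta\tilde v-\nabla\cdot(\tilde v\nabla\Phi_U)-\nabla\cdot(U\nabla\Phi_{\tilde v})$ and $N(\tilde v)=-\nabla\cdot(\tilde v\nabla\Phi_{\tilde v})$.

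By radial symmetry of $U$, the matrix $\big(\langle\Lambda U,\Lambda U\rangle,\langle\partial_iU,\partial_jU\rangle\big)$ is diagonal with cross terms $\langle\partial_jU,\Lambda U\rangle=0$, and it is nondegenerate. Indeed $\Lambda U\in L^2$, since $\Lambda U\sim|y|^{-4}$ at infinity. The perturbative entries, e.g. $\langle\Lambda\tilde v,\Lambda U\rangle=-\langle\tilde v,(\Lambda-2)\Lambda U\rangle$ after integrating by parts, are bounded by $\|\tilde v\|_{L^1}\|\cdot\|_{L^\infty}=o(1)$. Hence the system is invertible for large $s$ with a uniformly bounded inverse, and $|a|+|b|\lesssim|\mathrm{RHS}|$.

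For the right-hand side, we integrate by parts to put all derivatives on the smooth, decaying test functions $\Lambda U,\partial_iU$:
$$
|\langle\Delta\tilde v,\Lambda U\rangle|\le\|\tilde v\|_{L^1}\|\Delta\Lambda U\|_{L^\infty},\qquad |\langle\tilde v\nabla\Phi_U,\nabla\Lambda U\rangle|\lesssim\|\tilde v\|_{L^1},
$$
where the second uses that $\nabla\Phi_U$ is bounded. The terms involving $\nabla\Phi_{\tilde v}$ are bounded by $\|\nabla\Phi_{\tilde v}\|_{L^\infty}\big(\|U\nabla\Lambda U\|_{L^1}+\|\tilde v\|_{L^1}\|\nabla\Lambda U\|_{L^\infty}\big)$. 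Lemma \ref{lem:grad_phi_tilde_u}, applied in inner variables, gives $\|\nabla\Phi_{\tilde v}\|_{L^\infty}\lesssim\|\tilde v\|_{L^1}^{1/2}\|\tilde v\|_{L^\infty}^{1/2}=o(1)$. The same estimates hold with $\partial_iU$. All these bounds are $o(1)$ by \eqref{id:property-3bis}.

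The only delicate point is justifying the differentiation of the orthogonality conditions and the integrations by parts, i.e. the decay at infinity. This is handled by the regularity $u\in C((0,\infty),W^{k,1}\cap W^{k,\infty})$ from Theorem \ref{th:lwp:L1}, together with the decay of $U$ and its derivatives. It then follows that $a,b\to0$, which is the statement.
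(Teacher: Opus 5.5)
Your proof is correct and follows the paper's argument: you differentiate the three orthogonality conditions in inner variables, invert the diagonal system coming from $\int \Lambda U\,\partial_i U=\int \partial_1 U\,\partial_2 U=0$ up to $o(1)$ perturbations, and show the forcing is $o(1)$. The only difference is cosmetic. You integrate by parts onto $\Lambda U,\partial_i U$ and use only $\|\tilde v\|_{L^1}$, $\|\tilde v\|_{L^\infty}$ (the latter through the Poisson-field bound). The paper instead bounds the remainder $\mathcal R$ directly in $L^\infty$, using the $W^{k,\infty}$ smallness of $\tilde v$, and pairs it with $\Lambda U\in L^1$.
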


\begin{proof}
	Since $\nabla \Phi_{\tilde v}(s,y)=\lambda \nabla \Phi_{\tilde u}(t,x)$, we have $\| \Phi_{\tilde v}\|_{L^\infty}=o(1)$ by \eqref{bd:Poisson-field-Linfty}. Injecting this estimate and the estimates \eqref{id:property-3bis} in \eqref{innerKS-remainder} shows
	$$
	\tilde v_s = \frac{\partial_s \lambda}{\lambda}\Lambda (U+\tilde v) +\frac{\partial_s x_{\star}}{\lambda}\cdot \nabla (U+\tilde v )+\mathcal R,
	$$
	where $\|\mathcal R(s)\|_{L^\infty(\mathbb R^2)}\to 0$ as $s\to \infty$. Differentiating the first orthogonality condition \eqref{id:property-4} and using the above equation for $\tilde v$, we obtain
	\begin{align}
	\nonumber0 & =\frac{\partial_s \lambda}{\lambda} \int \Lambda (U+\tilde v)\Lambda U dy +\frac{\partial_s x_{\star}}{\lambda}\cdot \left( \int \nabla \tilde v  \Lambda U dy \right)+\int \mathcal R \Lambda U dy, \\
	&\label{id:modulation-estimate-techI}= \frac{\partial_s \lambda}{\lambda} \left( \int (\Lambda U)^2 dy +o(1)\right) +\frac{\partial_s x_{\star}}{\lambda}\cdot o(1)+o(1), 
	\end{align}
	where for the second term we used that $\int \Lambda U \partial_{y_i} Udy=0$ for $i=1,2$. Differentiating the second orthogonality condition \eqref{id:property-4} for $i=1,2$, yields similarly
	\begin{equation} \label{id:modulation-estimate-techII}
	0 =  \frac{\partial_s \lambda}{\lambda} o(1)+ \frac{\partial_{s}x_{\star,i}}{\lambda} \left(\int (\partial_{y_i}U)^2+o(1)\right)+ \frac{\partial_{s}x_{\star,i'}}{\lambda} o(1) +o(1).
	\end{equation}
	where $i'=2$ if $i=1$ and $i'=1$ if $i=1$, and where we used $\int \partial_{y_1}U\partial_{y_2}Udy=0$. Combining \eqref{id:modulation-estimate-techI} and \eqref{id:modulation-estimate-techII} for $i=1,2$ shows $ \partial_s \lambda =o(\lambda)$ and $\lambda \partial_s x_{\star}=o(\lambda)$ as $s\to \infty$. Going back to the original variable $t$, this is  $\lambda \dot \lambda =o(1)$ and $\lambda \dot x_\star=o(1)$ which is the desired result.
	
	\end{proof}

\subsection{Slow variation of the soliton over parabolic scales} \label{subsec:slowvar-modulation}

In this subsection we show that the H\"older $1/2$ norm of $\lambda $ and $x_\star$ converges to $0$ in time. Brownian motion, modelled by the linear heat equation, travels on average at a distance $R$ over a timescale of order $R^2$. The results of Lemmas \ref{lem:cv-holder-scale} and \ref{lemma:QualSlowVariation} show that the stationary state, asymptotically over time, only moves (by distance and scaling) at a distance $o(R)$ over a timescale of order $R^2$. This shows it \emph{varies slowly over parabolic scales}.

\begin{lemma} \label{lem:cv-holder-scale}

Let $u$ be a solution of the Keller-Segel equation \eqref{KS} as in Proposition \ref{pr:cv-scale-invariant}. Then $\| \lambda \|_{\dot C^{1/2}([T,\infty))}\to 0$ as $T\to \infty$.

\end{lemma}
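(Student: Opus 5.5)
The plan is to split into short and long parabolic windows. On short windows I will use the modulation estimate of Lemma \ref{lem:modest}. On long windows I will use a localized second momentum, whose time derivative vanishes to leading order exactly at critical mass $8\pi$. We must show that $|\lambda(t+R^2)-\lambda(t)|\le o(R)$ uniformly in $R>0$ as $t\to\infty$.

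\emph{Short windows, $R\le A\lambda(t)$ with $A$ a large fixed constant.} Lemma \ref{lem:modest} gives $|\partial_t(\lambda^2)|=o(1)$. Hence on $[t,t+R^2]$ we have $|\lambda^2(t')-\lambda^2(t)|\le o(1)R^2\le o(1)A^2\lambda(t)^2$, so $\lambda(t')\approx\lambda(t)$ there. It follows that $|\lambda(t+R^2)-\lambda(t)|\lesssim R^2 o(1)/\lambda(t)\le o(1)AR$. This is $o(R)$ for every fixed $A$.

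\emph{Long windows, $R\ge A\lambda(t)$.} Fix a center $a$, for instance $a=x_\star(t)$. Define
$$I_R(t')=\int u(t',x)\,|x-a|^2\chi_{R,a}(x)\,dx,\qquad J_R(t')=\int u(t',x)\,(x-a)\chi_{R,a}(x)\,dx,$$
for $t'\in[t,t+R^2]$. Let $\phi=|x-a|^2\chi_{R,a}$. Symmetrizing the drift term gives
$$\frac{d}{dt'}\int u\phi=\int u\Delta\phi-\frac{1}{4\pi}\iint u(x)u(y)\frac{(x-y)\cdot(\nabla\phi(x)-\nabla\phi(y))}{|x-y|^2}\,dx\,dy.$$
On $\{\chi=1\}$ we have $\Delta\phi=4$ and the symmetrized kernel equals $2$. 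The leading contribution is therefore $4m-\frac{2m^2}{4\pi}$, where $m$ is the mass near $a$, and this vanishes when $m=8\pi$. What remains is bounded by the mass in $\{|x-a|\gtrsim R\}$ together with the mass deficit $8\pi-m$. By soliton resolution (Theorem \ref{thn:soliton-resolution}), both are $o(1)+O(\lambda^2/R^2)$ on the window, provided $|x_\star(t')-a|\ll R$ there. This gives $|I_R(t+R^2)-I_R(t)|\le o(R^2)$. The same computation applied to $J_R$ yields $|\dot J_R|\lesssim o(1)/R$, hence $|J_R(t+R^2)-J_R(t)|\le o(R)$.

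Next I will expand the quantities at time $t'$ using the decomposition \eqref{id:property-3}. The soliton contributes
$$I_R\approx 16\pi\lambda^2\log(R/\lambda)+O(\lambda^2)+8\pi|x_\star-a|^2,\qquad J_R\approx 8\pi(x_\star-a).$$
The remainder contributes at most $R^2\|\tilde u\|_{L^1}=o(R^2)$ to $I_R$ and $o(R)$ to $J_R$. The localized variance $I_R-|J_R|^2/(8\pi)$ removes the dependence on the position, since $8\pi|x_\star-a|^2$ cancels against $|J_R|^2/(8\pi)$. Consequently $F(\lambda):=\lambda^2\log(R/\lambda)$ satisfies $|F(\lambda(t+R^2))-F(\lambda(t))|=o(R^2)$. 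The function $F$ is increasing with $F'\gtrsim\lambda$ on $\lambda\le R/A$. If both $\lambda(t)$ and $\lambda(t+R^2)$ are at most $R/A$, this gives $|\lambda(t+R^2)-\lambda(t)|=o(R)$. Alternatively, both $\lambda$'s are small compared with $R$, and their difference is again $o(R)$.

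The main obstacle is making this long-window argument self-consistent. It needs a priori that $x_\star$ stays within $\ll R$ of $a$ on the window, and that $\lambda(t')\le R/A$ throughout, so that the expansion and the cutoff error estimates are valid. I would close this with a continuity/bootstrap argument in the endpoint $t'\in[t,t+R^2]$. The short-window estimate gives the bootstrap assumptions at the start of the window. The bounds derived from $I_R$ and $J_R$ above then improve them, namely $\lambda\le o(R)$ and $|x_\star-a|\le o(R)$, which propagates the assumptions across the whole window. A secondary technical point is controlling the cutoff-region interaction terms uniformly when $R$ is comparable to $\sqrt t$ or larger. There I would use the finite second momentum \eqref{id:property-2} to bound the tail mass in $\{|x-a|\gtrsim R\}$.
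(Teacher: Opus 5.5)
Your short-window computation is correct, and it is in fact the paper's entire proof; no long-window argument is needed. The integrated bound $|\lambda^2(t+R^2)-\lambda^2(t)|\le o(1)R^2$ from Lemma \ref{lem:modest} holds for every $R>0$. You only used $R\le A\lambda(t)$ to make the division by $\lambda(t)$ cost at most a factor $A$.

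The paper instead splits according to the size of the endpoint values relative to $R$. Fix $\delta>0$. If $\lambda(t),\lambda(t+R^2)\le\delta R$, then trivially $|\lambda(t+R^2)-\lambda(t)|\le\delta R$. Otherwise $\lambda(t)+\lambda(t+R^2)\ge\delta R$, and dividing $|\lambda^2(t+R^2)-\lambda^2(t)|$ by this sum gives $o(1)R/\delta$; then let $\delta\to0$. Even more directly, $|a-b|\le\sqrt{|a^2-b^2|}$ for $a,b\ge0$ gives $|\lambda(t+R^2)-\lambda(t)|\le o(1)^{1/2}R$ in one line.

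Your long-window case $\lambda(t)\le R/A$ is therefore immediate. Either $\lambda(t+R^2)\le R/A$ and the difference is at most $R/A$. Or $\lambda(t+R^2)>R/A$, and your own division by $\lambda(t)+\lambda(t+R^2)$ gives $o(1)AR$. Then send $A\to\infty$.

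The localized-virial route can be made to work, but it is much heavier, and its bookkeeping as written is off.
\begin{itemize}
\item The tail mass $\int_{|x-a|>R}U_{\lambda,x_\star}\approx 8\pi\lambda^2/R^2$, integrated over a window of length $R^2$, contributes $O(\sup\lambda^2)=O(R^2/A^2)$ to $I_R(t+R^2)-I_R(t)$. It does not contribute $o(R^2)$.
\item The $O(\lambda^2)$ term in your expansion of $I_R$ has the same size. With these errors, inverting $F$ never gives anything better than $|\lambda(t+R^2)-\lambda(t)|\lesssim R/A$, which is the trivial bound.
\item For the same reason, the bootstrap cannot improve to $\lambda\le o(R)$ on the window (take $\lambda(t)\approx R/A$). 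It closes only at the level of thresholds. For example, assume $\lambda\le 2R/A$ and $|x_\star-a|\le R/4$ on the window. Then use that $16\pi\bigl(F(2R/A)-F(R/A)\bigr)\gtrsim R^2\log A/A^2$ beats the $O(R^2/A^2)+o(R^2)$ errors, for $A$ large and then $T$ large depending on $A$.
\end{itemize}

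Once repaired, all that machinery only shows that $\lambda$ stays $\lesssim R/A$ on the window, which the dichotomy above gives for free. The one genuine by-product of your route is simultaneous control of $x_\star$ through $J_R$. The paper obtains that separately in Lemma \ref{lemma:QualSlowVariation}, using the present lemma as an input.
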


\begin{proof}

Pick $\delta>0$. Let $R>0$ and $t\geq T$. Consider first the case when both $\lambda(t+R^2)\leq \delta R$ and $\lambda(t)\leq \delta R$. Then we have $|\lambda(t+R^2)-\lambda(t)|\leq \delta R$.

Second, consider the case when either $\lambda(t+R^2)\geq \delta R$ or $\lambda(t)\geq \delta R$. By Lemma \ref{lem:modest} we have $\frac{d}{dt}(\lambda^2)=o(1)$. Integrating gives $|\lambda^2(t+R^2)-\lambda^2(t)|\leq o(R^2)$. Using $|a-b|=|a^2-b^2|/|a+b|$ this implies $|\lambda(t+R^2)-\lambda(t)|\leq \delta^{-1} o( R)$ as $T\to \infty$ in this second case. By combining the two cases, since $\delta$ is arbitrary, one gets $|\lambda(t_0+R^2)-\lambda(t)|=o(R)$ as desired.

\end{proof}

In order to prove the slow variation of the position $x_\star(t)$ over parabolic scales, we record a localized first-moment variation estimate.

\begin{lemma}\label{lem:locfirstmom}
	
Let $u$ be a solution of the Keller-Segel equation \eqref{KS} as in Proposition \ref{pr:cv-scale-invariant}. Then, for any $\xi\in\mathbb{R}^2$ and $R>0$, one has
	\begin{align*}
		\left| \frac{d}{dt}\left(\int_{\mathbb{R}^2} u(x,t)(x-\xi)\chi_{R,\xi}\,dx  \right) \right|
		\lesssim  \int_{|x-\xi|\ge R} \frac{u(x,t)}{|x-\xi|}\,dx.
	\end{align*}
	
\end{lemma}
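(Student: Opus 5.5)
We compute the time derivative directly from the equation and integrate by parts, writing $\psi(x)=(x-\xi)\chi_{R,\xi}(x)$. Since $u$ is smooth and decays together with its derivatives (Theorem \ref{th:lwp:L1}), and $\psi$ is smooth and compactly supported, we have
$$
\frac{d}{dt}\int u\,\psi\,dx=\int u\,\Delta\psi\,dx+\int u\,\nabla\Phi_u\cdot\nabla\psi\,dx .
$$
The diffusion term is easy. $\Delta\psi$ vanishes on $\{|x-\xi|\le R\}$, because $\psi$ is linear there. On the annulus $R\le|x-\xi|\le 2R$ we have $|\Delta\psi|\lesssim R^{-1}\lesssim |x-\xi|^{-1}$. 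Hence $\left|\int u\Delta\psi\right|\lesssim \int_{|x-\xi|\ge R}u/|x-\xi|\,dx$.

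For the drift term we use \eqref{id:definition-nablaPhiu} and symmetrize in the standard way:
$$
\int u\nabla\Phi_u\cdot\nabla\psi=-\frac{1}{4\pi}\iint \frac{(x-y)\cdot(\nabla\psi(x)-\nabla\psi(y))}{|x-y|^2}u(x)u(y)\,dx\,dy .
$$
The key point is that $\nabla\psi$ is the identity matrix on $B_R(\xi)$. So when both $x,y\in B_R(\xi)$, the integrand reduces to $(x-y)\cdot\big((x-y)\cdot e_j\big)/|x-y|^2$ componentwise. Writing it out for the $j$-th component $\psi_j=(x_j-\xi_j)\chi_{R,\xi}$, we get $\nabla\psi_j(x)-\nabla\psi_j(y)=0$ there, since both equal $e_j$. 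The integrand therefore vanishes identically on $B_R(\xi)\times B_R(\xi)$. On the remaining region at least one of $x,y$, say $x$ after symmetry, lies outside $B_R(\xi)$. There we bound $|\nabla\psi(x)-\nabla\psi(y)|\le \|\nabla^2\psi\|_{L^\infty}|x-y|$ with $\|\nabla^2\psi\|_{L^\infty}\lesssim R^{-1}$. This yields a contribution $\lesssim R^{-1}\int_{|x-\xi|\ge R}u(x)\,dx\cdot\int u(y)dy$.

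This is almost the claim, but it carries the wrong weight $R^{-1}$ instead of $|x-\xi|^{-1}$ for $x$ far away. The main obstacle is to recover the weight $|x-\xi|^{-1}$. We split the complement region further:
\begin{itemize}
\item If $|x-\xi|\ge 4R$ and $|y-\xi|\le 2R$, then $\nabla\psi(x)=0$, so the integrand is bounded by $|\nabla\psi(y)|/|x-y|\lesssim 1/|x-\xi|$.
\item If both $|x-\xi|,|y-\xi|\ge 4R$, the integrand is zero.
\item If $|x-\xi|\le 4R$ (and $x\notin B_R(\xi)$), the Lipschitz bound gives $R^{-1}\approx|x-\xi|^{-1}$.
\item If $|y-\xi|\ge 2R$ and $|x-\xi|\ge 4R$, the integrand vanishes since $\nabla \psi (x)=\nabla \psi(y)=0$ when both are outside $B_{2R}(\xi)$.
\end{itemize}
Collecting the cases and using $\int u=8\pi$ from \eqref{id:property-2}, we obtain $\lesssim\int_{|x-\xi|\ge R}u(x)/|x-\xi|\,dx$, with the symmetric roles of $x$ and $y$ handled identically.

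Finally, the computation is justified because $u(t)\in W^{2,1}\cap W^{2,\infty}$ for $t>0$, and $\nabla\Phi_u$ is bounded by Lemma \ref{lem:grad_phi_tilde_u}-type estimates. This makes the integration by parts and the Fubini symmetrization legitimate.
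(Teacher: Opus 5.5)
Your proposal is correct and follows essentially the same route as the paper. In both, the symmetrized kernel $\frac{(x-y)\cdot(\nabla\psi(x)-\nabla\psi(y))}{|x-y|^2}$ vanishes on $B_R(\xi)\times B_R(\xi)$ and on $B_{2R}(\xi)^c\times B_{2R}(\xi)^c$, is $O(R^{-1})$ by the Lipschitz bound when one variable lies in a bounded annulus around $\xi$, and is $O(|x-\xi|^{-1})$ for far--inner interactions, after which mass conservation closes the estimate; the only cosmetic difference is that you split at $R,2R,4R$, whereas the paper uses $B_R(\xi)$, $B_{2R}(\xi)\setminus B_R(\xi)$ and $B_{2R}(\xi)^c$.
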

\begin{proof}
We recall the weak formulation of the Keller--Segel equation (see Section~3.6 of \cite{SBook} for further details): for every $\phi\in C_c^\infty(\mathbb{R}^2)$,
\begin{align*}
	\frac{d}{dt}\int_{\mathbb{R}^2} u(x,t)\phi(x)\,dx
	=
	\int_{\mathbb{R}^2} u(x,t)\Delta\phi(x)\,dx
	+
	\iint_{\mathbb{R}^2\times\mathbb{R}^2}
	\rho_\phi(x,y)\,u(x,t)u(y,t)\,dx\,dy,
\end{align*}
where
\begin{align*}
	\rho_\phi(x,y)
	=
	-\frac{1}{4\pi}
	\frac{x-y}{|x-y|^2}
	\cdot
	\bigl(\nabla\phi(x)-\nabla\phi(y)\bigr).
\end{align*}
	Fix \(i\in\{1,2\}\) and choose $\phi(x)=(x-\xi)_i\chi_R(x)$. Then $\nabla\phi(x)=e_i\chi_R(x)+(x-\xi)_i\nabla\chi_R(x)$. Using $\Delta (x-\xi)_i=0$ and $|\nabla^k \chi_{R,\xi}|\lesssim R^{-k}\mathbbm 1(R<|x-\xi|<2R)$ the first term is
	$$
	\int_{\mathbb{R}^2} u(x,t)\Delta\phi(x)\,dx=O\left( \int_{R<|x-\xi|<2R} \frac{u(x,t)}{|x-\xi|}\,dx \right).
	$$
	For the second, we decompose	
	\begin{align*}
		I_R:=B_R(\xi),\qquad
		M_R:=B_{2R}(\xi)\setminus B_R(\xi),\qquad
		E_R:=\mathbb{R}^2\setminus B_{2R}(\xi).
	\end{align*}
	Since \(\nabla\phi=e_i\) on \(I_R\) and \(\nabla\phi=0\) on \(E_R\), the bilinear kernel $\rho_\phi$ vanishes on \(I_R\times I_R\) and \(E_R\times E_R\). Thus only the regions $\mathbb R^2\times M_R\cup M_R\times \mathbb R^2$ and $I_R \times E_R\cup E_R\times I_R$ contribute. 
	
	If at least one of the variables lies in \(M_R\), then $|\rho_\phi(x,y)|\lesssim R^{-1}$. Using symmetry and conservation of mass, all such contributions are bounded by
	\begin{align*}
		\iint_{M_R\times \mathbb R^2\cup \mathbb R^2\times M_R}\rho_\phi(x,y)u(x)u(y)\,dx\,dy \lesssim \frac1R\int_{M_R}u(x,t)\,dx
		\lesssim
		\int_{|x-\xi|\ge R}\frac{u(x,t)}{|x-\xi|}\,dx.
	\end{align*}

	Finally, for the outer--inner contribution, if \(x\in E_R\) and \(y\in I_R\), then
		$\nabla\phi(x)=0$ and
		$\nabla\phi(y)=e_i$,
	so that $|\rho_\phi(x,y)|\lesssim |x-y|^{-1}$.Using again conservation of mass, this yields
	\begin{align*}
		\iint_{E_R\times I_R} |\rho_\phi(x,y)|u(x)u(y)\,dx\,dy
		&\lesssim
		\left(\int_{|x-\xi|\ge 2R}\frac{u(x)}{|x-\xi|}\,dx\right)
		\left(\int_{|y-\xi|\le R}u(y)\,dy\right) \lesssim
		\int_{|x-\xi|\ge 2R}\frac{u(x)}{|x-\xi|}\,dx
	\end{align*}
	and similarly for the integration over $I_R\times E_R$. Collecting the above estimates, we obtain
	\begin{align*}
		\left| \frac{d}{dt}\int_{\mathbb{R}^2} u(x,t)(x-\xi)_i\chi\!\left(\frac{x-\xi}{R}\right)\,dx \right|
		\lesssim
		\int_{|x-\xi|\ge R}\frac{u(x,t)}{|x-\xi|}\,dx.
	\end{align*}
	Since this holds for each component \(i=1,2\), the vectorial estimate follows.
\end{proof}

\begin{lemma}\label{lemma:QualSlowVariation}
	Let $u$ be a solution of the Keller-Segel equation \eqref{KS} as in Proposition \ref{pr:cv-scale-invariant}. Then $\| x_\star \|_{\dot C^{1/2}([T_0,\infty))}\to 0$ as $T_0\to \infty$.
	\end{lemma}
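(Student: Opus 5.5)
We need to show: for every $\delta>0$ there is $T_0$ such that $|x_\star(t+R^2)-x_\star(t)|\le \delta R$ for all $t\ge T_0$, $R>0$. We split into the short-time and long-time regimes relative to the soliton scale. Fix $\delta>0$ and a small parameter $\kappa>0$ to be tuned.

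\emph{Short times: $R\le \kappa^{-1}\max(\lambda(t),\lambda(t+R^2))$.} Here we integrate the modulation estimate of Lemma \ref{lem:modest}, $|\dot x_\star|=o(\lambda^{-1})$. To do this we need $\lambda$ comparable along $[t,t+R^2]$. By Lemma \ref{lem:modest}, $\frac{d}{dt}\lambda^2=o(1)$, so $|\lambda^2(t')-\lambda^2(t)|\le o(1)R^2\le o(1)\kappa^{-2}\max(\lambda(t),\lambda(t+R^2))^2$. For $T_0$ large depending on $\kappa$, this forces $\lambda(t')\approx\lambda(t)$ for all $t'\in[t,t+R^2]$. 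Then
\[
|x_\star(t+R^2)-x_\star(t)|\le \int_t^{t+R^2}|\dot x_\star|\le o(1)\frac{R^2}{\lambda(t)}\lesssim o(1)\kappa^{-1}R,
\]
which is at most $\delta R$ for $T_0$ large.

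\emph{Long times: $R\ge \kappa^{-1}\max(\lambda(t),\lambda(t+R^2))$.} Here we use the localized first momentum of Lemma \ref{lem:locfirstmom}. Set $\xi=x_\star(t)$ and $\rho=\eta R$, where $\eta\in(0,1)$ is a fixed small constant, and define
\[
m(t')=\int u(x,t')(x-\xi)\chi_{\rho,\xi}\,dx .
\]
\begin{itemize}
\item At $t'=t$: since $u=U_{\lambda,x_\star}+\tilde u$ and $\int y\,U\chi=0$ by symmetry, we get $|m(t)|\le \rho\int_{|x-\xi|\ge\rho}U_\lambda+2\rho\|\tilde u\|_{L^1}\lesssim \lambda^2/\rho+\rho\,o(1)$. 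This is at most $\delta\rho$ once $\kappa\ll\eta$ and $T_0$ is large.
\item At $t'=t+R^2$: write $D=x_\star(t+R^2)-\xi$. If $|D|\le \rho/2$, the same computation gives $m(t+R^2)=8\pi D+O(\lambda^2/\rho)+o(\rho)$. If $|D|\ge\rho/2$, the soliton mass sits near the boundary or outside the ball, and $|m|$ is still bounded below by something of order $\rho$ in a suitable sense. I would handle this case by choosing $\rho$ adaptively, e.g. $\rho=2|D|$, and rerunning the argument.
\item Over the interval: integrating Lemma \ref{lem:locfirstmom} gives
\[
|m(t+R^2)-m(t)|\lesssim\int_t^{t+R^2}\int_{|x-\xi|\ge\rho}\frac{u}{|x-\xi|}\,dx\,dt'.
\]
\end{itemize}

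\emph{Main obstacle.} The crude bound $\int_{|x-\xi|\ge\rho} u/|x-\xi|\le 8\pi/\rho$ only gives $R^2/\rho=R/\eta$, which is not small. So we need the mass outside $B_\rho(\xi)$ to be $o(1)$ along the whole interval, uniformly in $R$. This requires $x_\star(t')$ to stay within $\rho/2$ of $\xi$ throughout, so that outside $B_\rho$ the integrand is bounded by the soliton tail $\lambda^2/\rho^3$ plus $\|\tilde u\|_{L^1}/\rho=o(1)/\rho$. Then the integral is $\lesssim R^2(o(1)/\rho+\lambda^2/\rho^3)=o(1)R\eta^{-1}+\kappa^2\eta^{-3}R$, where we use $\lambda\lesssim\kappa R$ along the interval via the comparability argument above.

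I plan to guarantee this with a continuity (bootstrap) argument in the final time. Let $t_1$ be the first time with $|x_\star(t_1)-\xi|=\rho/2$, and apply the estimate on $[t,t_1]$ with $R^2$ replaced by $t_1-t\le R^2$. We obtain $8\pi\cdot\rho/2\le \delta\rho+o(1)R\eta^{-1}+\kappa^2\eta^{-3}R$. Choosing $\eta$ small, then $\kappa\ll\eta^2$, then $T_0$ large, gives a contradiction. Hence $|D|<\rho/2$ holds on the whole interval, and therefore
\[
8\pi|D|\lesssim \delta\eta R+o(1)R/\eta+\kappa^2\eta^{-3}R ,
\]
which is small compared to $R$.

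Finally, since $\lambda$ only enters through its comparability on the interval, which was checked in the short-time regime and holds symmetrically here, the two regimes together give $|x_\star(t+R^2)-x_\star(t)|\le C\delta R$ for $t\ge T_0$. This is the claim.
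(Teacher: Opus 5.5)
Your proof is correct and follows essentially the paper's route. You integrate $|\dot x_\star|=o(\lambda^{-1})$ on soliton timescales, and on longer timescales you use the localized first momentum of Lemma \ref{lem:locfirstmom} with a continuity argument that keeps $x_\star$ inside the localization ball, so the flux is only $O(\|\tilde u\|_{L^1}/\rho+\lambda^2/\rho^3)$. The differences are organizational: you bootstrap in time for each fixed $R$ with the coarse threshold $\rho/2$, which also covers $R>1$ without invoking $x_\star\to x^*$ and makes your tentative ``adaptive $\rho$'' case unnecessary. The paper instead runs a continuity argument in the scale $R$ with threshold $\delta R$ and handles $R>1$ separately.
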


\begin{proof}
	
	Let $T_0>0$. We need to show that
	\begin{equation} \label{bd:slow-variation-xstar-def}
	\frac{|x_{\star}(T+R^2)-x_\star(T)|}{R}\to 0
	\end{equation}
	as $T_0\to \infty$, uniformly in $T,R\in [T_0,\infty)\times (0,\infty)$. We divide between three timescales.
	
	\smallskip
	
	\noindent \textbf{Step 1.} \emph{Large timescales.} We claim that \eqref{bd:slow-variation-xstar-def} holds as $T_0\to \infty$, uniformly in $T,R\in [T_0,\infty)\times (1,\infty)$. Indeed, in that case since $R>1$ we use \eqref{id:property-4} to bound
	$$
	\frac{|x_{\star}(T+R^2)-x_\star(T)|}{R}\leq |x_{\star}(T+R^2)-x^*|+|x_{\star}(T)-x^*|\to 0 \quad \mbox{as }t\to \infty.
	$$

	\noindent \textbf{Step 2.} \emph{The soliton timescale.} We claim that for any $M>0$, \eqref{bd:slow-variation-xstar-def} holds as $T_0\to \infty$, uniformly for $T\geq T_0$ and $0<R<M\lambda(T)$. Indeed, in this case by Lemma \ref{lem:cv-holder-scale} for all $t\in [T,T+R^2]$ there holds $|\lambda(t)-\lambda(T)|\leq  o(\lambda(T))$ so that $\lambda(t)=\lambda(T)(1+o(1))$. Lemma \ref{lem:modest} then shows $\dot x_{\star}(t)=o(1/\lambda(T))$ for $t\in [T,T+R^2]$. Integrating over time shows
	$$
	\frac{|x_{\star}(T+R^2)-x_\star(T)|}{R}= \frac{\int_{T}^{T+R^2} o((\lambda(T))^{-1}) dt}{R} =o(R(\lambda(T))^{-1})=o_M(1) \quad \mbox{as }t\to \infty.
	$$
	
	\noindent \textbf{Step 3.} \emph{Intermediate timescales.} In this last step we show that \eqref{bd:slow-variation-xstar-def} holds as $T_0\to \infty$, uniformly in $T\geq T_0$ and $\lambda(T)\leq R \leq 1$. Fix $\delta>0$ small. For each $T\gg1$, define
	\begin{equation} \label{slow-variation-technical10}
	\widetilde R_\star[T]
	:=
	\sup\Bigl\{
	\widetilde R\in(0,1)\ :\
	|x_\star(T+R^2)-x_\star(T)|<\delta R
	\ \text{for all }R\in[0,\widetilde R)
	\Bigr\}.
	\end{equation}
	We claim that, for $T$ sufficiently large depending on $\delta$,
	\[
	\widetilde R_\star[T]=1.
	\]
	As $\delta>0$ is arbitrary, this immediately implies the statement of the lemma. The proof below relies on a bootstrap argument. Pick $T$ large and write $\widetilde R_{\star}=\widetilde R_\star[T]$ for simplicity. First, by Step 2, we have 
	\begin{equation}\label{eq:prelim-lower-bound}
		\widetilde R_\star \ge M\lambda(T)
	\end{equation}
	for any $M\gg 1$ if $T_0$ is large enough. By Lemma \ref{lem:cv-holder-scale}, we have for $t\in[T,T+\widetilde R_\star^2]$ that $\lambda(t)\leq \lambda(T)+o(\widetilde R_\star)$. By \eqref{eq:prelim-lower-bound} and since $M\gg 1$ is arbitrary, it follows that
		\begin{equation}\label{eq:lambda-small-on-window}
		\lambda(t)=o(\widetilde R_\star)
		\qquad\text{for all }t\in[T,T+\widetilde R_\star^2]
	\end{equation}
	provided $T_0$ is sufficiently large. We now consider the solution over the time interval $[T,T+\widetilde R_\star]$. On this time interval we have by definition
	\begin{equation}\label{eq:saturation}
		|x_\star(t)-x_\star(T)|<\delta \widetilde R_\star
		\quad\text{for all }t\in[T,T+\widetilde R_\star^2).
	\end{equation}
	We consider the localized center of mass
	\[
	I(t):=\int_{\mathbb R^2}u(x,t)(x-x_\star(T))\chi_{\widetilde R_\star,x_\star(T)}(x)\,dx.
	\]
	Let us first relate $I(T)$ and $I(T+\widetilde R_\star^2)$ to $x_\star$ and $x_\star(T+\widetilde R_\star^2)$. By the soliton resolution decomposition \eqref{id:property-3}-\eqref{id:property-5}, and since $\int y_iU \chi_{\widetilde R_\star}=0$ for $i=1,2$ by radiality of $U$ and $\chi$, we have
	\begin{align}
	\nonumber I(T) & = \int_{\mathbb R^2}\left(U_{\lambda(T),x_\star(T)}(x)+\tilde u(x,T)\right)(x-x_\star(T))\chi_{\widetilde R_\star,x_\star(T)}(x)\,dx\\
	\label{slow-variation-technical3} & = 0+O(\widetilde R_\star\| \tilde u(t)\|_{L^1}) \ = o(\widetilde R_\star).
	\end{align}
	Next, pick $t\in [T,T+\widetilde R_\star^2]$. Since $|x_{\star}(t) -x_{\star}(T)|\leq \delta \widetilde R_\star$ by \eqref{eq:saturation}, we have if $|x-x_\star(T)|\ge \widetilde R_\star$ that
	\begin{equation} \label{slow-variation-technical1}
	|x-x_\star(t)|
	\ge |x-x_\star(T)|-|x_\star(t)-x_\star(T)|
	\ge (1-\delta)\widetilde R_\star
	\ge \frac12\widetilde R_\star,
	\end{equation}
	provided $\delta$ is chosen small enough. Using \eqref{slow-variation-technical1}, the bound $U_{\lambda,x_\star}(x)\lesssim \lambda^2|x-x_\star|^{-4}$ and \eqref{eq:lambda-small-on-window}, we obtain the following technical bound:
	\begin{align}
	 \label{slow-variation-technical2} \int_{|x-x_{\star}(T)|>\widetilde R_\star} U_{\lambda(t),x_\star(t)}(x) |x-x_\star(T)| dx& \lesssim \int_{|x-x_{\star}(T)|>\widetilde R_\star} \frac{\lambda^2(t)}{|x-x_\star(T)|^3}dx  \lesssim \frac{\lambda^2(t)}{\widetilde R_\star}=o (\widetilde R_\star).
	\end{align}
	Using the explicit formula $\int U_{\lambda,y}(x-y')dx=8\pi(y-y')$ and \eqref{slow-variation-technical2}, we find similarly to \eqref{slow-variation-technical3} that
	\begin{align}
	\nonumber I(T+\widetilde R_\star^2) & = \int_{\mathbb R^2}\left(U_{\lambda(T+\widetilde R_\star^2),x_\star(T+\widetilde R_\star^2)}(x)+\tilde u(x,T+\widetilde R_\star^2)\right)(x-x_\star(T))\chi_{\widetilde R_\star,x_\star(T)}(x)\,dx\\
	\nonumber & =  \int_{\mathbb R^2} U_{\lambda(T+\widetilde R_\star^2),x_\star(T+\widetilde R_\star^2)}(x) (x-x_\star(T))dx\\
	\nonumber & \qquad + \int_{\mathbb R^2} U_{\lambda(T+\widetilde R_\star^2),x_\star(T+\widetilde R_\star^2)}(x) (x-x_\star(T)) (\chi_{\widetilde R_\star,x_\star(T)}(x)-1)\,dx+ o(\widetilde R_\star)\\
	\label{slow-variation-technical4} & = 8\pi (x_\star(T+\widetilde R_\star^2)-x_\star(T))+o(\widetilde R_\star).
	\end{align}
	Combining \eqref{slow-variation-technical3} and \eqref{slow-variation-technical4} we see
	$$
	x_\star(T+\widetilde R_\star^2)-x_\star(T)= \frac{1}{8\pi}\int_T^{T+\widetilde R_\star^2} \dot I(t)dt+o(\widetilde R_\star).
	$$
	We now estimate the time derivative of $I(t)$. Applying successively Lemma \ref{lem:locfirstmom} with $\xi=x_\star(T)$ and $R=\widetilde R_\star[T]$, then the decomposition \eqref{id:property-3}-\eqref{id:property-5} and \eqref{slow-variation-technical2} we infer
	\begin{align*}
	& \left|\dot I(t)\right|
	\lesssim
	\int_{|x-x_\star(T)|\ge \widetilde R_\star}
	\frac{u(x,t)}{|x-x_\star(T)|}\,dx = \int_{|x-x_\star(T)|\ge \widetilde R_\star}
	\frac{U_{\lambda(t),x_\star(t)}+\tilde u(x,t)}{|x-x_\star(T)|}\,dx \\
	&\qquad \qquad  \leq \widetilde R^{-2}_\star \int_{|x-x_\star(T)|\ge \widetilde R_\star} U_{\lambda(t),x_\star(t)} |x-x_\star(T)| \,dx+ \widetilde R^{-1}_\star \int_{|x-x_\star(T)|\ge \widetilde R_\star} |\tilde u(x,t)| \,dx = o(\widetilde R^{-1}_\star ).
	\end{align*}
	Therefore, $|x_\star(T+\widetilde R_\star^2)-x_\star(T)|=o(\widetilde R_\star)<\delta \widetilde R_\star /2$, provided $T$ has been chosen large enough depending on $\delta$. By the very definition \eqref{slow-variation-technical10} of $\widetilde R_\star$ and a standard continuity argument, this implies $\widetilde R_\star=1$ as desired.
	
\end{proof}

\medskip

\noindent 
\subsection{Weighted pointwise estimates for the remainder} \label{subsec:pointwise-remainder}
 
In this section we derive pointwise estimates for $\tilde u$, $\nabla \tilde{u}$ and $\nabla \Phi_{\tilde u}$. These estimates rely on standard parabolic regularization estimates near the soliton, and on the new $\varepsilon$-regularity result of Theorem \ref{thm:L1Linfsmall} away from the soliton. The time dependence of the modulation parameter $x_{\star}(t)$ introduces additional difficulties, which will be handled using its slow variation established in Lemma \ref{lemma:QualSlowVariation}.

\begin{proposition}\label{Prop:Pointest}
	Let $u$ be a solution of the Keller-Segel equation \eqref{KS} as in Proposition \ref{pr:cv-scale-invariant}. Then asymptotically as $t\to \infty$ the following hold for all $x\in\mathbb{R}^2$.
	
	\medskip
	
	\noindent\emph{Interior region.} If $|x-x_\star(t)|\le \sqrt t$, then
	\begin{equation} \label{bd:pointwise-tildeu}
	(\lambda(t)+|x-x_\star(t)|)\,|\nabla \widetilde u(x,t)|
	+
	|\widetilde u(x,t)|
	=
	o\!\left(\frac{1}{\lambda(t)^2+|x-x_\star(t)|^2}\right).
	\end{equation}
	
	\medskip
	
	\noindent\emph{Exterior region.} If $|x-x_\star(t)|\ge \sqrt t$, then
	\[
	\sqrt{t}\,|\nabla \widetilde{u}(x,t)|
	+
	|\widetilde u(x,t)|
	\lesssim
	\frac{1}{t}\frac{1}{|x-x_\star(t)|^2}.
	\]
\end{proposition}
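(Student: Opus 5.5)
I split the region $\{|x-x_\star(t)|\le\sqrt t\}$ into the soliton zone $|x-x_\star(t)|\le M\lambda(t)$ and the intermediate zone $M\lambda(t)\le|x-x_\star(t)|\le\sqrt t$. The exterior zone $|x-x_\star(t)|\ge\sqrt t$ is handled separately.

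\emph{Soliton zone.} The bound \eqref{bd:pointwise-tildeu} there is a restatement of the soliton resolution estimates \eqref{id:property-5}. Indeed, $\lambda^{2+k}\|\nabla^k\tilde u\|_{L^\infty}=\|\nabla^k\tilde v\|_{L^\infty}\to0$, and on $|x-x_\star|\le M\lambda$ we have $\lambda^2+|x-x_\star|^2\approx_M\lambda^2$. So it is enough to let $M\to\infty$ slowly, with $M=M(t)\to\infty$ chosen after the $o(1)$ rates.

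\emph{Intermediate zone.} Fix a point $(x_0,t)$ with $r:=|x_0-x_\star(t)|\in[M\lambda,\sqrt t]$, and rescale parabolically at scale $r$: set $w(s,y)=r^2u(t+r^2(s-1),x_0+ry)$ for $s\in[1-\sigma,1]$. The aim is to apply the $\varepsilon$-regularity result, Theorem~\ref{thm:L1Linfsmall}, in its quantitative form (Theorem~\ref{thm:quantitative-epsilon-reg}, which gives smallness of $|w|+|\nabla w|$ at $(1,0)$ in terms of the local $L^1$ mass on a backward parabolic cylinder). This requires showing that $\int_{B_{8}}w(s)\,dy=\int_{B_{8r}(x_0)}u(t')\,dx$ is $o(1)$ uniformly for $t'\in[t-\sigma r^2,t]$. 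The ball must not contain the soliton, and here the slow variation of Lemma~\ref{lemma:QualSlowVariation} is essential: it gives $|x_\star(t')-x_\star(t)|=o(r)$, and Lemma~\ref{lem:cv-holder-scale} gives $\lambda(t')\le\lambda(t)+o(r)=o(r)$. To keep the center away, I will use a ball $B_{cr}(x_0)$ with $c$ small, and a corresponding rescaled version of Theorem~\ref{thm:L1Linfsmall}, instead of $B_{8r}$. On $B_{cr}(x_0)$ we then have $|x-x_\star(t')|\gtrsim r$, so the soliton mass there is $\lesssim\lambda(t')^2/r^2=o(1)$, and the remainder mass is at most $\|\tilde u(t')\|_{L^1}=o(1)$. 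This yields $|w(1,0)|+|\nabla w(1,0)|=o(1)$, that is, $|u(x_0,t)|=o(r^{-2})$ and $|\nabla u(x_0,t)|=o(r^{-3})$. Subtracting the soliton, which satisfies $U_{\lambda,x_\star}\lesssim\lambda^2r^{-4}=o(r^{-2})$ and $|\nabla U_{\lambda,x_\star}|\lesssim\lambda^2 r^{-5}=o(r^{-3})$ since $\lambda\le r/M$, gives \eqref{bd:pointwise-tildeu}.

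\emph{Exterior zone.} For $r\ge\sqrt t$ the parabolic cylinder of size $r$ would reach back before time $0$. I will therefore use cylinders of size $\sqrt{\sigma t}$ centered at $x_0$; these stay inside $\{|x-x_\star|\gtrsim r\}$. The local mass there is now controlled by the conserved second moment \eqref{id:property-2} rather than by $o(1)$: $\int_{B_{c\sqrt t}(x_0)}u\lesssim\mu/r^2$, which is small once $r\ge\sqrt t\gg1$. The quantitative $\varepsilon$-regularity bound should be linear in the local mass, giving $|u(x_0,t)|\lesssim t^{-1}\cdot\mu r^{-2}$ and $|\nabla u|\lesssim t^{-3/2}\mu r^{-2}$. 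The soliton part is $\lambda^2r^{-4}\le t^{-1}r^{-2}$, so it is absorbed.

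\emph{Main obstacle.} The delicate step is the uniformity of the intermediate zone across all scales $r\in[M\lambda,\sqrt t]$. The $o(1)$ must be uniform in $r$, which needs the Hölder slow-variation statements uniformly over $R=r$ as $t\to\infty$. One also needs the linear-in-mass dependence in the quantitative $\varepsilon$-regularity theorem, which is what gives the precise decay in the exterior zone. If linearity is not available, I would first try to bootstrap through the mild (Duhamel) formulation on the cylinder, using the heat kernel and a Poisson field bound, to upgrade the small-$L^1$ information to pointwise bounds proportional to the mass.
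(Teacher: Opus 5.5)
Your soliton-zone and intermediate-zone arguments follow the paper. You rescale at a scale comparable to $|x-x_\star(t)|$. You bound the local mass at the earlier time $t-\sigma R^2$ using $\|\tilde u\|_{L^1}=o(1)$, together with Lemmas \ref{lem:cv-holder-scale} and \ref{lemma:QualSlowVariation} to keep the soliton out of the ball. You then apply Corollary \ref{thm:L1Linfsmall} (which, unlike Theorem \ref{thm:quantitative-epsilon-reg}, also gives the gradient bound) and subtract $U_{\lambda,x_\star}$.

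The exterior zone, however, has a genuine gap. It rests on the $\varepsilon$-regularity bound being linear in the local mass, and it is not. Theorem \ref{thm:quantitative-epsilon-reg} gives $\|u(t)\|_{L^\infty(B_1)}\lesssim \varepsilon_0/t+M(1+M^2)t$. The second term, which accounts for mass entering $B_8$ from outside during the window, does not shrink with $\varepsilon_0$. Corollary \ref{thm:L1Linfsmall} only gives a bound $\le\delta$. In fact no estimate whose sole input is the local mass at the \emph{initial} time can be linear in it, since mass lying just outside the ball diffuses in over a time of order one. After rescaling at scale $\sqrt t$, all you can extract this way is $|u(x_0,t)|\lesssim\delta/t$, not $t^{-1}|x_0-x_\star|^{-2}$.

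The missing idea, which is what the paper does, is to separate the two roles.
\begin{itemize}
\item First, use $\varepsilon$-regularity (with $\delta=1$) only to get $|u|\lesssim t^{-1}$, hence $|\nabla\Phi_u|\lesssim t^{-1/2}$, on a region $B_{c\sqrt t}(x_0)\times[t/8,t]$. Then $u$ solves the \emph{linear} equation $\partial_t u=\Delta u+b\cdot\nabla u+cu$, with $b=-\nabla\Phi_u$ and $c=u$ regarded as given coefficients. These are bounded after parabolic rescaling at scale $\rho\approx\sqrt t$.
\item Second, apply the interior $L^1$--$L^\infty$ estimate for nonnegative solutions, Theorem \ref{thm:Lieberman}, which is linear in the \emph{space-time} integral of $u$ over the cylinder. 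That integral is $\lesssim\rho^2\mu/|x_0-x_\star|^2$, because the conserved second moment bounds the mass of $B_{2\rho}(x_0)$ at \emph{every} time of the cylinder, not just initially. This yields $u(x_0,t)\lesssim\rho^{-2}|x_0-x_\star|^{-2}\lesssim t^{-1}|x_0-x_\star|^{-2}$.
\item The gradient bound then follows from interior parabolic estimates.
\end{itemize}
Your Duhamel fallback can succeed only if it is exactly this. You would freeze $\nabla\Phi_u$ and $u$ as bounded coefficients, then iterate linear $L^p\to L^q$ smoothing with nested cutoffs, fed by the mass bound at all times of the cylinder. A one-step interpolation between the $L^1$ and $L^\infty$ bounds on the nonlinear term only produces a fractional power of the mass.
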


\begin{proof}
	We consider separately the interior and exterior regions.
	
	\noindent\emph{Interior region.}
	Fix $\delta>0$. We claim that for $t$ sufficiently large, if $|x-x_{\star}(t)|\le \sqrt{t}$, then
	\begin{equation}\label{eq:intgoal}
		(\lambda(t)^2+|x-x_\star(t)|^2)^{3/2}|\nabla \widetilde u(x,t)|
		+
		(\lambda(t)^2+|x-x_\star(t)|^2)|\widetilde u(x,t)|
		\le \delta.
	\end{equation}
	Since $\delta>0$ is arbitrary, this will yield the desired estimate. \newline 
	We first consider the intermediate region
	\begin{equation}  \label{pointwise-bounds20}
	M_\delta \lambda(t)\le |x-x_\star(t)|\le \sqrt t,
	\end{equation}
	where $M_\delta\gg 1$ will be fixed later. Let $\sigma=\sigma(\frac{\delta}{2})$ and $\varepsilon=\varepsilon(\sigma,\delta/2)$ be as in Theorem \ref{thm:L1Linfsmall}. Set
	\begin{equation}  \label{pointwise-bounds10}
	R^2=\beta |x-x_\star(t)|^2,
	\qquad
	t_0:=t-\sigma R^2,
	\end{equation}
	where $\beta>0$ is a sufficiently small universal constant. Choosing $\beta$ small enough, we have $0<\sigma R^2<t$ and $t_0\gtrsim t$. \newline 
	By the soliton resolution and the $L^1$ smallness of the remainder \eqref{id:property-3}-\eqref{id:property-5},
	\[
	\int_{B_{8R}(x)}u(t_0,z)\,dz
	=
	\int_{B_{8R}(x)}
	\frac{1}{\lambda(t_0)^2}
	U\!\left(\frac{z-x_\star(t_0)}{\lambda(t_0)}\right)\,dz
	+
	o_{t\to\infty}(1).
	\]
	Moreover, Lemma \ref{lemma:QualSlowVariation} yields
	\begin{equation}  \label{pointwise-bounds1}
	|x_\star(t_0)-x_\star(t)|=o(R)=o(|x-x_\star(t)|).
	\end{equation}
	Similarly, by Lemma \ref{lem:cv-holder-scale} we have
	\begin{equation}\label{pointwise-bounds2}
	\lambda(t_0)=\lambda(t)+o(R).
	\end{equation}
	Now let $z\in B_{8R}(x)$. For $\beta$ sufficiently small and $t$ sufficiently large, by \eqref{pointwise-bounds20}, \eqref{pointwise-bounds10}, \eqref{pointwise-bounds1} and \eqref{pointwise-bounds2}:
	\begin{align*}
		|z-x_\star(t_0)|\approx  |x-x_\star(t)|\gtrsim M_\delta \lambda(t_0).
	\end{align*}
	Using the decay $U(y)\lesssim |y|^{-4}$, we infer
	\[
	\int_{B_{8R}(x)}
	\frac{1}{\lambda(t_0)^2}
	U\!\left(\frac{z-x_\star(t_0)}{\lambda(t_0)}\right)\,dz
	\lesssim
	\int_{|y|\gtrsim M_\delta}U(y)\,dy
	\lesssim
	\frac{1}{M_\delta^2}.
	\]
	Therefore
	\[
	\int_{B_{8R}(x)}u(t_0,z)\,dz
	\le
	C M_\delta^{-2}+o_{t\to\infty}(1)\leq \varepsilon
	\]
	where the last inequality holds by choosing first $M_\delta$ sufficiently large and then $t$ sufficiently large. We may then apply Theorem \ref{thm:L1Linfsmall}, which yields
		\[
	R^3|\nabla u(x,t)|+R^2|u(x,t)|\le \frac{\delta}{2}.
	\]
	Since $u=U_{\lambda,x_\star}+\widetilde u$ and $|\nabla^k U_{\lambda(t),x_{\star}(t)}(x)|\lesssim |x-x_\star(t)|^{-2-k}M_\delta^{-2}$ by \eqref{pointwise-bounds20}, we can absorb the soliton contribution by taking $M_\delta$ sufficiently large, and obtain
	\[
	|x-x_\star(t)|^3|\nabla \widetilde u(x,t)|
	+
	|x-x_\star(t)|^2|\widetilde u(x,t)|.
	\le \delta 
	\]
	We recall the above bound was proved in the region $\{M_\delta\lambda(t)\le |x-x_\star(t)|\le \sqrt t\}$. In the complementary region $|x-x_\star(t)|\le M_\delta\lambda(t)$, the soliton resolution bound \eqref{id:property-5} directly yields
	\[
	\lambda(t)^3|\nabla \widetilde u(x,t)|+\lambda(t)^2|\widetilde u(x,t)|=o(1),
	\]
	uniformly in $x$. Combining the last two estimates, since $\delta>0$ is arbitrary, we obtain \eqref{eq:intgoal}.
	
	\smallskip

\noindent\emph{Exterior region.}
We now consider the region $|x-x_\star(t)|\ge \sqrt t$. We take $\delta=1$, and let $\sigma$ and $\varepsilon$ be given by Theorem \ref{thm:L1Linfsmall}. Fix $s\in[t/8,t]$, and set
$R=\beta \sqrt t$,
$t_0:=s-\sigma R^2$,
where $\beta>0$ is a sufficiently small universal constant. Then $t_0\gtrsim t$. Moreover, in this region one has $|x-x_\star(s)|\approx |x|$, and therefore
\[
\int_{B_{8R}(x)} u(z,t_0)\,dz
\lesssim
\frac{1}{|x-x_\star(t)|^2}\int_{\mathbb R^2} u(z,t_0)\,|z|^2\,dz
\lesssim
\frac{1}{|x-x_\star(t)|^2},
\]
where we used that the second moment is conserved and bounded. Since $|x-x_\star(t)|\ge \sqrt t$, the right-hand side is $o_{t\to\infty}(1)$. Hence, for $t$ sufficiently large,
\[
\int_{B_{8R}(x)} u(z,t_0)\,dz\le \varepsilon.
\]
Applying Theorem \ref{thm:L1Linfsmall}, we obtain
\[
R^3\|\nabla u(s,\cdot)\|_{L^\infty(B_{R/32}(x))}
+
R^2\|u(s,\cdot)\|_{L^\infty(B_{R/32}(x))}
\le 1
\qquad\text{for all } s\in[t/8,t].
\]
In particular, both $u$ and $\nabla \Phi_u$ are uniformly bounded in
$B_{R/64}(x)\times [t/8,t]$.
We may therefore apply Theorem \ref{thm:Lieberman}. Choose $r\lesssim \sqrt t$ sufficiently small, with universal implicit constant, so that for every $(x_0,s)\in B_{R/128}(x)\times [t/4,t]$ the parabolic cylinder $Q_{2r}(x_0,s)$ is contained in $B_{R/64}(x)\times [t/8,t]$. Then
\[
\iint_{Q_{2r}(x_0,s)} u(z,a)\,dz\,da
\lesssim
r^2\,\frac{1}{|x-x_\star(t)|^2}.
\]
Indeed, for each fixed $a\in[s-4r^2,s]$, the ball $B_{2r}(x_0)$ remains at distance comparable to $|x-x_\star(t)|$ from the origin, and the boundedness of the second moment gives
\[
\int_{B_{2r}(x_0)} u(z,a)\,dz
\lesssim
\frac{1}{|x-x_\star(t)|^2}.
\]
Integrating in time yields the desired bound on the cylinder. Applying Theorem \ref{thm:Lieberman} at the point $(x,t)$, we conclude that
\[
u(x,t)
\lesssim
r^{-4}\iint_{Q_{2r}(x,t)}u(z,a)\,dz\,da
\lesssim
\frac{1}{r^2}\frac{1}{|x-x_\star(t)|^2}
\lesssim
\frac{1}{t}\frac{1}{|x-x_\star(t)|^2}.
\]
After absorbing the contribution of the soliton this proves the desired pointwise bound for the solution in the exterior region. The estimate for the gradient is an immediate consequence of interior parabolic estimates and Sobolev embedding.
	
\end{proof}

We conclude with pointwise estimates for the potential generated by $\widetilde u$.

\begin{corollary}\label{Coro:PointPoisson}
	Assume $\tilde u$ is a function that satisfies $\| \tilde u\|_{L^1}\lesssim 1$ and the second bound in \eqref{bd:pointwise-tildeu} (i.e. only the inequality for $\tilde u$). Then, asymptotically as $t\to \infty$ we have for all $x\in\mathbb{R}^2$,
	\[
	|\nabla \Phi_{\widetilde u}(x,t)|
	=
	o\!\left(\frac{1}{\lambda(t)+|x-x_\star(t)|}\right).
	\]
\end{corollary}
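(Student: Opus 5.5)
The plan is a direct splitting of the Biot--Savart integral
$$
|\nabla\Phi_{\tilde u}(x,t)|\lesssim\int_{\mathbb R^2}\frac{|\tilde u(y,t)|}{|x-y|}\,dy .
$$
Fix $x$ and set $r=|x-x_\star(t)|$ and $\rho=\lambda(t)+r$. The target is to bound this integral by $\delta\rho^{-1}$ for an arbitrary $\delta>0$, once $t$ is large. I would split $\mathbb R^2$ into three regions: the local ball $A=\{|y-x|<\rho/4\}$, the inner ball $B=\{|y-x_\star|<\eta\rho\}$ with $\eta\ll1$, and the rest $C$. The triangle inequality gives $|x-y|\gtrsim\rho$ on $B$ and $|x-y|\gtrsim\max(\rho,|y-x_\star|)$ on $C$.

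\emph{Region $A$.} For $y\in A$ we have $\lambda+|y-x_\star|\approx\rho$. As long as $r\le\sqrt t/2$, the point $y$ lies in the interior region, so the hypothesis \eqref{bd:pointwise-tildeu} gives $|\tilde u(y)|\le\delta\rho^{-2}$. Since $\int_{A}|x-y|^{-1}dy\lesssim\rho$, the contribution of $A$ is at most $\delta\rho^{-1}$.

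\emph{Region $C$, interior part.} On $C\cap\{|y-x_\star|\le\sqrt t\}$ I would use the pointwise hypothesis in dyadic annuli $|y-x_\star|\approx2^j$. The annuli with $2^j\ge\rho$ contribute
$$
\sum_j\delta\,2^{-2j}\,2^{-j}\,2^{2j}\lesssim\delta\rho^{-1}.
$$
The annuli with $\eta\rho\le2^j\le\rho$ contribute $\lesssim\delta\log(1/\eta)\rho^{-1}$, which is harmless once $\eta$ is fixed.

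\emph{Region $B$ and the exterior part of $C$: the main obstacle.} For these two pieces the pointwise bound alone does not suffice. In $B$, integrating $\delta/(\lambda^2+|y-x_\star|^2)$ produces a factor $\log(\eta\rho/\lambda)$, which is unbounded. In $\{|y-x_\star|>\sqrt t\}$ the statement gives no pointwise information at all. Both pieces are therefore controlled only by mass:
$$
\int_{B\cup\{|y-x_\star|>\sqrt t\}}\frac{|\tilde u|}{|x-y|}\,dy\lesssim\rho^{-1}\int_{B}|\tilde u|\,dy+\int_{|y-x_\star|>\sqrt t}\frac{|\tilde u|}{|x-y|}\,dy .
$$
With only $\|\tilde u\|_{L^1}\lesssim1$, the $B$-term is $O(\rho^{-1})$ rather than $o(\rho^{-1})$, so the $O(1)$ hypothesis by itself does not close this step. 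I would therefore replace it by the $L^1$ smallness that holds in the setting where the corollary is applied: by \eqref{id:property-5} we have $\|\tilde u\|_{L^1}=o(1)$, which makes the $B$-term $o(\rho^{-1})$.

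For the exterior tail, the relevant quantity is the tail mass $\int_{|y-x_\star|>\eta\sqrt t}|\tilde u|$. It is $o(1)$ by conservation of the second moment \eqref{id:property-2}, which gives $\int_{|y-x_\star|>\eta\sqrt t}u\lesssim\mu/(\eta^2t)$, together with the bound $\int_{|y-x_\star|>\eta\sqrt t}U_{\lambda,x_\star}\lesssim\lambda^2/(\eta^2 t)$. I would then distinguish two cases:
\begin{itemize}
\item[(a)] If $r\le\sqrt t/2$, then $|x-y|\gtrsim\max(\rho,|y-x_\star|)$ on the tail, and the tail contributes $\lesssim\rho^{-1}\cdot o(1)$.
\item[(b)] If $r>\sqrt t/2$, the local ball $A$ is no longer in the interior region. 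There I would redo the region-$A$ step using the exterior bound \eqref{bd:pointwise-tildeu-exterior}, i.e. $|\tilde u|\lesssim t^{-1}|y-x_\star|^{-2}$, so that $A$ contributes $\lesssim t^{-1}\rho^{-1}$. The rest of the integral is covered by the $o(1)$ total mass divided by $\rho$, plus the dyadic sum.
\end{itemize}

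The steps, in order, are: fix $\delta$; choose $\eta=\eta(\delta)$; handle $A$ via the hypothesis; sum the dyadic annuli in $C$; and bound $B$ and the tail by the $L^1$ smallness. Letting $\delta\to0$ gives the claimed estimate.

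The delicate point is exactly the $B$ and tail step. It is where the pointwise hypothesis gives only a logarithmically divergent bound, and where I would need to confirm that the $L^1$ information available in context is $o(1)$, not merely $O(1)$.
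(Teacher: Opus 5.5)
Your proposal is correct and is essentially the paper's proof. The paper splits the integral into the ball $\{|w-x|\le\beta(\lambda+|x-x_\star|)\}$, handled by the pointwise bound, and its complement, bounded by $\|\tilde u\|_{L^1}/(\beta(\lambda+|x-x_\star|))$. Like you, it uses $\|\tilde u\|_{L^1}=o(1)$ from \eqref{id:property-5} rather than the stated $\|\tilde u\|_{L^1}\lesssim1$, and you are right that the stated hypothesis is insufficient: $\tilde u=|\log\lambda|^{-1}(\lambda^2+|y-x_\star|^2)^{-1}$ on $\{|y-x_\star|\le1\}$ (zero outside) has bounded mass and satisfies the pointwise hypothesis, yet $|\nabla\Phi_{\tilde u}|\to 2$ at $|x-x_\star|=1/2$.

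Once you invoke $o(1)$ mass, your dyadic sum and second-moment tail estimate are redundant. Everything outside $\{|y-x|<\rho/4\}$ is at most $4\rho^{-1}\|\tilde u\|_{L^1}$, where $\rho=\lambda+|x-x_\star|$. Your explicit use of \eqref{bd:pointwise-tildeu-exterior} when $|x-x_\star|\gtrsim\sqrt t$ makes precise a point the paper leaves implicit.
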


\begin{proof}
	Recalling the representation formula,
	\[
	|\nabla \Phi_{\widetilde u}(x,t)|
	\lesssim
	\int_{\mathbb{R}^2}\frac{|\widetilde u(w,t)|}{|x-w|}\,dw,
	\]
	we estimate the right-hand side by distinguishing two cases.
	
	\medskip
	
	\noindent\emph{Case 1: $|x-x_\star(t)|\le \lambda(t)$.}
	We split the integral into $|x-w|\le \lambda(t)$ and $|x-w|>\lambda(t)$. For the far contribution, by \eqref{id:property-5},
	\[
	\int_{|x-w|>\lambda(t)}\frac{|\widetilde u(w,t)|}{|x-w|}\,dw
	\lesssim
	\frac{1}{\lambda(t)}\|\widetilde u(t)\|_{L^1}
	=
	o\!\left(\frac{1}{\lambda(t)}\right).
	\]
	For the near contribution, we use the pointwise bound on $\widetilde u$ of Proposition \ref{Prop:Pointest}:
	\[
	\int_{|x-w|\le \lambda(t)}\frac{|\widetilde u(w,t)|}{|x-w|}\,dw
	\lesssim
	o\!\left(\frac{1}{\lambda(t)^2}\right)
	\int_{|x-w|\le \lambda(t)}\frac{dw}{|x-w|}
	\lesssim
	o\!\left(\frac{1}{\lambda(t)}\right).
	\]
	
	\medskip
	
	\noindent\emph{Case 2: $|x-x_\star(t)|\ge \lambda(t)$.}
	Fix $\beta>0$ sufficiently small and split into
	\[
	|x-w|\le \beta\bigl(\lambda(t)+|x-x_\star(t)|\bigr),
	\qquad
	|x-w|>\beta\bigl(\lambda(t)+|x-x_\star(t)|\bigr).
	\]
	For the far contribution, by \eqref{id:property-5},
	\[
	\int_{|x-w|>\beta(\lambda(t)+|x-x_\star(t)|)}\frac{|\widetilde u(w,t)|}{|x-w|}\,dw
	\lesssim
	\frac{\|\widetilde u(t)\|_{L^1}}{\lambda(t)+|x-x_\star(t)|}
	=
	o\!\left(\frac{1}{\lambda(t)+|x-x_\star(t)|}\right).
	\]
	For the near contribution, note that for $\beta$ sufficiently small one has
	\[
	|w-x_\star(t)|\gtrsim |x-x_\star(t)|
	\quad\text{whenever}\quad |x-w|\le \beta(\lambda(t)+|x-x_\star(t)|).
	\]
	Hence, $|\widetilde u(w,t)|=o\!\left(\lambda(t)^2+|x-x_\star(t)|^2\right)^{-2}$ by proposition \ref{Prop:Pointest}, and therefore
	\begin{align*}
		\int_{|x-w|\le \beta(\lambda(t)+|x-x_\star(t)|)}\frac{|\widetilde u(w,t)|}{|x-w|}\,dw
		&\lesssim
		o\!\left(\frac{1}{\lambda(t)^2+|x-x_\star(t)|^2}\right)
		\int_{|x-w|\le \beta(\lambda(t)+|x-x_\star(t)|)}\frac{dw}{|x-w|} \\
		&\lesssim
		o\!\left(\frac{1}{\lambda(t)+|x-x_\star(t)|}\right).
	\end{align*}
	
	Combining the above estimates yields the result.
\end{proof}

\section{Linearized analysis in parabolic self-similar variables}\label{section:LinAnalysis}

By the soliton resolution provided by Theorem~\ref{thn:soliton-resolution} and the additional estimates for this convergence provided by Proposition \ref{pr:cv-scale-invariant}, we now know that a solution $u$ of the Keller-Segel equation \eqref{KS} with $u_0\in L^1(\langle x \rangle^2 dx)$ and $\int u_0=8\pi$ can be written asymptotically as $t \to \infty$ in the form
\begin{equation} \label{id:linearized-analysis-decomposition}
u(x,t)= U_{\lambda(t),x_{\star}(t)}+\tilde u(x,t)
\end{equation}
where the modulation parameters $\lambda(t)$ and $x_{\star}(t)$ are determined by the orthogonality conditions \eqref{id:property-4} and satisfy $\lambda(t)\to 0$ and $x_{\star}(t)\to x^*$ with
\begin{align} \label{id:property-7}
& |\dot \lambda|+|\dot x_{\star}|=o(\lambda), \\
\label{id:property-8}&  |\lambda(t+R^2)-\lambda(t)|+| x_{\star}(t+R^2)-x_\star(t)|=o(R) \qquad \mbox{for all }R\geq 0,
\end{align}
as $t\to \infty$, where the $o()$ in the second estimate is uniform. The remainder $\tilde u$ satisfies
\begin{align} \label{id:property-9}
	|\tilde u(x,t)|
	=
	o\!\left(\frac{1}{\lambda^{2} + |x-x_\star|^{2}}\right),
	\qquad
	|\nabla \Phi_{\tilde u}(x,t)|
	=
	o\!\left(\frac{1}{\lambda + |x-x_\star|}\right).
\end{align}
and more generally the estimates \eqref{id:property-5} and \eqref{bd:pointwise-tildeu-interior}-\eqref{bd:pointwise-tildeu-exterior}-\eqref{bd:pointwise-Phitildeu}. To refine further the description of the solution, we need to understand the linearized dynamics driving the evolution of $\tilde u$. This is the purpose of this section.

\subsection{The renormalized Keller-Segel equation}

We introduce the self-similar variables
\begin{equation} \label{self-similarKS2}
	u(x,t) = \frac{1}{t} w(z,\tau),
	\qquad
	z = \frac{x-x^*}{\sqrt{t}}.
	\qquad
	\tau = \ln t,
\end{equation}
A direct computation shows that \(w\) solves
\begin{align}\label{self-similarKS}
	\partial_\tau w
	=
	\Delta w - \nabla\cdot(w\nabla \Phi_w)
	+ \frac{1}{2}\Lambda w.
\end{align}
The decomposition \eqref{id:linearized-analysis-decomposition} becomes
\begin{align*}
	w(z,\tau) = U_{\nu,\xi} + \tilde w(z,\tau),
\end{align*}
where
\begin{align*}
	\xi(\tau) = \frac{x_\star(t)-x^*}{\sqrt{t}},
	\qquad
	\nu(\tau) = \frac{\lambda(t)}{\sqrt{t}}, \qquad \tilde w(z,\tau)=t \tilde u(x,t).
\end{align*}
The orthogonality conditions \eqref{id:property-4} to determine \(\lambda\) and \(x_\star\) can be written as
\begin{align}\label{eq:ortoselfsimilar}
	\int_{\mathbb R^2} \tilde w  (\Lambda U)_{\nu,\xi}\,dz = 0,
	\quad \mbox{and} \quad
	\int_{\mathbb R^2} \tilde w  (\partial_{y_i} U)_{\nu,\xi}\,dz = 0 \quad \mbox{ for }i=1,2.
\end{align}
The properties of the parameters and the remainder in self-similar variables are as follows.

\begin{lemma} \label{lem:selfsimilarquali}
	Assume that $\lambda$, $x_{\star}$ and $\tilde u$ satisfy  \eqref{id:property-7}, \eqref{id:property-8} and \eqref{id:property-9}. Then asymptotically as $\tau\to \infty$ we have
	\begin{align*}
		|\tilde w(z,\tau)|
		=
		o\!\left(\frac{1}{\nu^2 + |z-\xi|^2}\right),
		\qquad
		|\nabla \Phi_{\tilde w}(z,\tau)|
		=
		o\!\left(\frac{1}{\nu + |z-\xi|}\right),
	\end{align*}
	as well as
	\begin{align}
	\label{id:qualitative-cv-nu-xi} &|\nu| +|\xi|  = o\!\left(e^{-\tau/2}\right),\\
		\label{id:qualitative-modulation-nu-xi} &|\nu_\tau| +|\xi_\tau|  = o\!\left(\nu^{-1}\right),\\
		\label{id:slow-variation-nu-xi} &|\nu(\tau+R^2) - \nu(\tau)|+|\xi(\tau+R^2) - \xi(\tau)| = o(R),
		\qquad
		\forall R\geq 0.
	\end{align}
\end{lemma}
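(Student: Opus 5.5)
This is a change of variables, so the plan is to translate each hypothesis through \eqref{self-similarKS2}, i.e. $z=(x-x^*)/\sqrt t$, $\tau=\ln t$, $\nu=\lambda/\sqrt t$, $\xi=(x_\star-x^*)/\sqrt t$, $\tilde w=t\tilde u$. The only care needed is to check that each $o(\cdot)$ is uniform and keeps the right power of $t=e^\tau$.

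\textbf{Pointwise bounds.} Since $x-x_\star(t)=\sqrt t\,(z-\xi)$, we have $\lambda^2+|x-x_\star|^2=t(\nu^2+|z-\xi|^2)$. Hence \eqref{id:property-9} gives
$$|\tilde w|=t|\tilde u|=o\big((\nu^2+|z-\xi|^2)^{-1}\big).$$
For the field, use the scaling identity $\nabla\Phi_{\tilde w}(z)=\sqrt t\,\nabla\Phi_{\tilde u}(x)$, which follows from \eqref{id:definition-nablaPhiu} after the substitution $y=x^*+\sqrt t\,z'$, so that $dy=t\,dz'$. Then $\lambda+|x-x_\star|=\sqrt t(\nu+|z-\xi|)$ gives the second bound.

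\textbf{Smallness of $\nu$ and $\xi$.} We have $\nu e^{\tau/2}=\lambda(t)\to0$ and $\xi e^{\tau/2}=x_\star(t)-x^*\to0$ by \eqref{id:property-4}. This is exactly $|\nu|+|\xi|=o(e^{-\tau/2})$.

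\textbf{Derivatives of $\nu$ and $\xi$.} Using $\partial_\tau=t\partial_t$,
$$\nu_\tau=\sqrt t\,\dot\lambda-\tfrac12\nu, \qquad \xi_\tau=\sqrt t\,\dot x_\star-\tfrac12\xi.$$
The rough modulation estimate to use is the one actually proved in Lemma \ref{lem:modest} and stated in Proposition \ref{pr:cv-scale-invariant}, namely $|\dot\lambda|+|\dot x_\star|=o(\lambda^{-1})$; \eqref{id:property-7} contains a typo there. It gives $\sqrt t\,|\dot\lambda|=o(\sqrt t/\lambda)=o(\nu^{-1})$, and likewise for $\dot x_\star$. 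The linear terms are harmless: $\nu=o(1)\le\nu^{-1}$, and $|\xi|=o(e^{-\tau/2})=o(1)$, which is $o(\nu^{-1})$ because $\nu\to0$.

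\textbf{Slow variation over parabolic scales.} This is the step I expect to need the most care. Fix $\tau$, $t=e^\tau$, and $R\ge0$, and set $t'=te^{R^2}$. Write
$$\nu(\tau+R^2)-\nu(\tau)=\frac{\lambda(t')-\lambda(t)}{\sqrt{t'}}+\lambda(t)\Big(\frac1{\sqrt{t'}}-\frac1{\sqrt t}\Big).$$
For the first term, apply \eqref{id:property-8} with time increment $t'-t=t(e^{R^2}-1)$. This gives $|\lambda(t')-\lambda(t)|=o\big(\sqrt{t(e^{R^2}-1)}\big)$, so after dividing by $\sqrt{t'}$ the term is $o(\sqrt{1-e^{-R^2}})$.

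Split according to $R$. If $R\le1$, then $\sqrt{1-e^{-R^2}}\le R$, so the first term is $o(R)$. If $R\ge1$, bound the first term crudely by $|\nu(\tau+R^2)|+|\nu(\tau)|=o(1)=o(R)$, using \eqref{id:qualitative-cv-nu-xi}.

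For the second term, $|t'^{-1/2}-t^{-1/2}|\le t^{-1/2}\min(1,R^2)$, so it is bounded by $\nu(\tau)\min(1,R^2)=o(R)$ since $\nu=o(1)$.

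The same computation for $\xi$ uses $x_\star-x^*$ in place of $\lambda$. All $o(\cdot)$ are uniform in $R$ because \eqref{id:property-8} is uniform.
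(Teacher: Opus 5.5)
Your proposal is correct and follows essentially the same route as the paper: rescale the pointwise bounds, compute $\nu_\tau=\sqrt t\,\dot\lambda-\tfrac12\nu$ (and likewise for $\xi$) using $\dot\lambda,\dot x_\star=o(\lambda^{-1})$ exactly as the paper does, and split the slow-variation estimate into $R\le 1$ (H\"older bound over the time increment $t(e^{R^2}-1)$) and $R\ge 1$ (both endpoint terms are $o(1)$). Your version is slightly cleaner in two places: you treat $\nu$ explicitly rather than by analogy, and in the small-$R$ case you invoke \eqref{id:property-8}, whereas the paper's text cites \eqref{id:property-9} at that step.
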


\begin{proof}
	The pointwise bounds for $\tilde w$ follow directly from rescaling the
	estimates on \(\tilde u\) and \(\nabla\Phi_{\tilde u}\). As $\nu=\lambda e^{-\tau/2}$ and $\xi=(x_{\star}-x^*)e^{-\tau/2}$, the bound \eqref{id:qualitative-cv-nu-xi} follows from the convergences $\lambda (t)\to 0$ and $x_{\star}(t)\to x^*$ as $t\to \infty$. \newline 
	We next consider $\xi$ and compute first its evolution. Since
	\(\partial_\tau t = t\), we obtain by \eqref{id:property-7}
	\begin{align*}
		\partial_\tau \xi
		&=
		t\,\partial_t\!\left(\frac{x_\star-x^*}{\sqrt{t}}\right)
		=
		t\left(
		\frac{\partial_t x_\star}{\sqrt{t}}
		-
		\frac{1}{2}\frac{x_\star-x^*}{t^{3/2}}
		\right) =
		o\!\left(\frac{\sqrt{t}}{\lambda}\right)
		+
		o\!\left(\frac{|x_\star-x^*|}{\sqrt{t}}\right)
		=
		o\!\left(\frac{1}{\nu}\right).
	\end{align*}
	This shows the estimate \eqref{id:qualitative-modulation-nu-xi} for $\xi$. Next, writing $\xi(\tau) = e^{-\tau/2}(x_\star(e^\tau)-x^*)$, we have for $R\geq 1$ that
	$$
	\xi(\tau+R^2)=\xi(\tau)+\frac{x_\star(e^\tau e^{R^2})-x^*}{e^{\tau/2}e^{R^2/2}}-\frac{x_\star(e^\tau )-x^*}{e^{\tau/2}}=\xi(\tau)+o(R)
	$$
	since $x_{\star}-x^*\to 0$. We have for $0\leq R\leq 1$, using $x_{\star}-x^*\to 0$ and then \eqref{id:property-9}, that
	\begin{align*}
		\xi(\tau+R^2)
		&=
		\frac{x_\star(e^\tau e^{R^2})-x^*}{e^{\tau/2}e^{R^2/2}}
		=
		\frac{x_\star(e^\tau e^{R^2})-x^*}{e^{\tau/2}}
		+
		O(R^2)\,e^{-\tau/2}(x_\star(e^\tau e^{R^2})-x^*)\\
		& \quad = \frac{x_\star(e^\tau )-x^*}{e^{\tau/2}}+ \frac{x_\star(e^\tau e^{R^2/2} )-x_\star(e^\tau) }{e^{\tau/2}}
		+o(R) = \xi(\tau)+ \frac{o(e^{\tau/2}R)  }{e^{\tau/2}}+o(R)=\xi(\tau)+o(R).
	\end{align*}
	Combining, this yields the estimate \eqref{id:slow-variation-nu-xi} for $\xi$. Noting that the assumptions and the conclusions of the lemma for $\nu$ are exactly the same as those for $\xi$, the same reasoning shows \eqref{id:qualitative-modulation-nu-xi}-\eqref{id:slow-variation-nu-xi} for $\nu$.
	
\end{proof}

The purpose of this section is to establish coercivity properties for the
linearized evolution. Writing
\[
w = W[\nu,\xi] + \varepsilon
\]
in \eqref{self-similarKS}, we obtain
\begin{align*}
	\partial_\tau \varepsilon
	=
	\Delta \varepsilon
	-
	\nabla\cdot(W\nabla\Phi_\varepsilon)
	-
	\nabla\cdot(\varepsilon\nabla\Phi_W)+ \Lambda \varepsilon
	+
	E[W,\nu,\xi]
	+
	Q[\varepsilon],
\end{align*}
where \(E\) collects the error terms arising from inserting the profile \(W\)
into the equation, and \(Q\) denotes quadratic terms in \(\varepsilon\).
The choice of the profile \(W\) is crucial. While the natural candidate would
be the rescaled soliton \(U_{\nu,\xi}\), this choice is not accurate far away from its center because this function does not have finite second moment. We thus introduce the \emph{matched soliton}
\begin{align}\label{matchedsol}
	\hat U_{\nu,\xi}
	=
	U_{\nu,\xi}
	\Bigl[
	\chi_{\zeta_*}(z)
	+
	\Bigl(1-\chi_{\zeta_*}(z)\Bigr)
	e^{-\frac{|z|^2}{4}}
	\Bigr],
\end{align}
where \(\zeta_*>0\) is a fixed parameter, chosen sufficiently small (independently of the dynamics). This profile matches a soliton in the inner region with a Gaussian at infinity. The exact form of the localization in \eqref{matchedsol} is important: it will permit coercivity estimates in the present section, will generate a suitably small error in the evolution equation and will produce cancellations in energy-type estimates. We denote by
\begin{align}\label{def:linoperator}
	L_{\nu,\xi}[u]
	&=
	\Delta u
	-
	\nabla\cdot(u\nabla\Phi_{\hat{U}_{\nu,\xi}})
	-
	\nabla\cdot(\hat{U}_{\nu,\xi}\nabla\Phi_u)
	+
	\frac{1}{2}\Lambda u
\end{align}
the linearized operator around \(\hat{U}_{\nu,\xi}\). This will be the main object of
study in this section. As $\tau \to \infty$, since $\nu\to 0$ and $\xi \to 0$, we face a singular limit. Four zones arise:
\begin{itemize}
\item The \emph{inner zone} $|z-\xi(\tau)|\lesssim \nu(\tau)$ corresponds to the core of the soliton. Its complement is the \emph{outer zone} $|z-\xi(\tau)|\gg \nu(\tau)$. There, the linearized operator $L_{\nu,\xi}$ is well captured by the \emph{interior operator}
\begin{align}\label{def:intlinoperator}
	L_0[u]
	&=
	\Delta u
	-
	\nabla\cdot(u\nabla U_{\nu,\xi})
	-
	\nabla\cdot(U_{\nu,\xi}\nabla\Phi_u).
\end{align}
\item The \emph{nearby outer zone} $\nu(\tau)\ll |z-\xi(\tau)|\ll 1$ corresponds to the tail of the soliton. There, the leading order part of $L_{\nu,\xi}$ is the \emph{linearization around a Dirac mass centered at \(\xi\)} 
\begin{align}\label{def:shiftedextlinoperator}
	L_{\infty,\xi}[u]
	&=
	\Delta u
	+
	4\frac{z-\xi}{|z-\xi|^2}\cdot\nabla u
\end{align}
\item The \emph{parabolic outer zone} $|z|\approx 1$ is the parabolic region surrounding the soliton, where the effect of the soliton ressembles that of a Dirac mass at the origin, corresponding to the \emph{limiting exterior operator}
\begin{align}\label{def:extlinoperator}
	L_\infty[u]
	&=
	\Delta u
	+
	4\frac{z}{|z|^2}\cdot\nabla u
	+
	\frac{1}{2}\Lambda u.
\end{align}
\item The \emph{far away outer zone} $|z|\gg 1$ is where the limiting operator remains $L_\infty$, but the parabolic part $\Delta u+4|z|^{-2}z\cdot\nabla u$ is of lower order compared to the scaling operator $\Lambda$.
\end{itemize}

Observe that \(L_{\infty,\xi}\to L_\infty\) as \(\xi\to 0\), but the shifted
operator captures the correct position in the nearby outer region. In particular, the usual dichotomy between interior and exterior
regimes is not sufficient in the present setting involving unprepared initial data. Indeed, so far we know very little about the modulation parameters $\xi$ and $\nu$; whether the soliton is close to the origin $|\xi|\lesssim \nu$ is unknown, and the estimates \eqref{id:qualitative-modulation-nu-xi} do not rule out arbitrarily fast oscillations $|\nu_\tau|,|\xi_\tau|\to \infty$! In the analysis of well-prepared initial data, usually the behaviour of modulation parameters is far more under control. Note that one may think that an alternative approach to the present one would consist in first refining first even more the control of the modulation parameters. However, their behaviour and that of the remainder is $\varepsilon$ are intimately linked, so that a more refined description for $\xi$, $\nu$ and $\varepsilon$ can only be obtained altogether.

One main novelty of the present work is thus a framework for obtaining decay estimates for $L_{\nu,\xi}$, the linearization around a soliton with very little control over its modulation parameters. A first new idea is for the nearby outer region, where we will introduce a suitable framework based on \emph{parabolic averaging} that will enable a dissipative estimate. A second one is the use of a \emph{matched scalar product} to produce a global Lyapunov functional without boundary terms between the inner zone, the nearby outer zone and the parabolic outer zone. A third one is a \emph{gluing-type decomposition} between the parabolic zone $|z|\lesssim 1$ and the far away outer zone $|z|\gg 1$ that will be explained in the next section.

\subsection{The parabolic outer zone} \label{subsection:parabolic-outer}

 The properties of the Fokker-Planck type operator $L_\infty$ are well-known. We first prove first a dissipative estimate with precise spectral gap for the operator for functions with Gaussian decay. The choice of the weight reflects the structure
of the exterior operator, which, in radial coordinates, can be viewed as a
six-dimensional Laplacian. Set
\begin{equation} \label{def:omega-infty}
\omega_\infty(z):=|z|^{4}e^{|z|^{2}/4} \quad \mbox{and} \quad L^2_\infty(\mathbb R^2)=L^2_{\omega_\infty}(\mathbb R^2).
\end{equation}
We introduce the scalar product
\[
\langle u,v\rangle_{\infty}= \int_{\mathbb R^2} u(z)v(z)\omega_\infty(z)dz.
\]
Note that it is the scalar product on $L^2_{\infty}$. By a direct inspection, the operator $L_\infty $ is self-adjoint on $L^2_\infty$. Note that this weighted space corresponds to functions with Gaussian localization, which we interpret as being localized in the parabolic region $|z|\lesssim 1$ but not in the far away outer region $|z|\gg 1$.

\begin{proposition}[Coercivity of the exterior operator]\label{prop:ExteriorCoercivity}
	For any \(\delta>0\) sufficiently small, for all \(u \in C^\infty_c (\R^2)\),
	\[
	\langle L_\infty u,u\rangle_{\infty}
	\le
	(-2+3\delta)\|u\|_{L^2_\infty}^2
	-
	\delta\|\nabla u\|_{L^2_\infty}^2.
	\]
\end{proposition}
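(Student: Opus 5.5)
The approach is to write $L_\infty$ in divergence form with respect to the weight $\omega_\infty$. This reduces the statement to a weighted Poincar\'e-type inequality, which I would then prove with a single integration by parts against the vector field $z\,\omega_\infty$.

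\textbf{Step 1 (divergence form).} Expanding $\frac12\Lambda u=u+\frac12 z\cdot\nabla u$ gives
$$
L_\infty u=\Delta u+\Bigl(\frac{4}{|z|^2}+\frac12\Bigr)z\cdot\nabla u+u .
$$
Since $\nabla\log\omega_\infty=\frac{4z}{|z|^2}+\frac z2$, this reads
$$
L_\infty u=\omega_\infty^{-1}\nabla\cdot(\omega_\infty\nabla u)+u .
$$
For $u\in C^\infty_c(\mathbb R^2)$ there are no boundary terms: at infinity this is because of the compact support, and at the origin because $\omega_\infty\nabla u=O(|z|^4)$. This also gives the self-adjointness mentioned before the statement. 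Setting
$$
A=\int u^2\omega_\infty,\qquad D=\int|\nabla u|^2\omega_\infty,\qquad B=\int u^2|z|^2\omega_\infty,
$$
we obtain $\langle L_\infty u,u\rangle_\infty=-D+A$.

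\textbf{Step 2 (spectral gap $D\ge 3A$).} This is the key step, and the only one where some care is needed. I would use the identity
$$
\nabla\cdot(z\,\omega_\infty)=\omega_\infty\Bigl(2+z\cdot\nabla\log\omega_\infty\Bigr)=\omega_\infty\Bigl(6+\frac{|z|^2}{2}\Bigr),
$$
which reflects the six-dimensional structure of the radial part. Integrating $u^2$ against this identity and integrating by parts gives
$$
6A+\frac12B=-2\int u\,\nabla u\cdot z\,\omega_\infty\le 2\sqrt{DB}.
$$
The boundary term at the origin vanishes since $z\,\omega_\infty=O(|z|^5)$. Applying Young's inequality in the form $2\sqrt{DB}\le 2D+\frac12 B$, the $B$ terms cancel exactly and we get $6A\le 2D$, that is $D\ge 3A$. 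The main point to check is that the constants match so that the growing Gaussian moment $B$ cancels completely. If they did not, the fallback would be a Fourier decomposition in angle and a one-dimensional radial Hermite-type analysis in dimension six, whose bottom eigenvalue $d/2=3$ gives the same constant.

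\textbf{Step 3 (conclusion).} For $\delta\in(0,1)$, split $-D=-(1-\delta)D-\delta D$ and use $D\ge 3A$ on the first part:
$$
\langle L_\infty u,u\rangle_\infty\le -3(1-\delta)A+A-\delta D=(-2+3\delta)\|u\|_{L^2_\infty}^2-\delta\|\nabla u\|_{L^2_\infty}^2,
$$
which is the claimed estimate.
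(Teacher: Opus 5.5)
Your proof is correct and has the same structure as the paper's. Both combine the divergence-form identity $\langle L_\infty u,u\rangle_\infty=\|u\|_{L^2_\infty}^2-\|\nabla u\|_{L^2_\infty}^2$ with the gap $\langle L_\infty u,u\rangle_\infty\le -2\|u\|_{L^2_\infty}^2$ (your $D\ge 3A$), and then take the $(1-\delta,\delta)$ convex combination.

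The only difference is how the gap is derived. The paper conjugates $u=e^{-|z|^2/4}v$ and obtains $\langle L_\infty u,u\rangle_\infty=-2\int v^2|z|^4e^{-|z|^2/4}\,dz-\int|\nabla v|^2|z|^4e^{-|z|^2/4}\,dz$. Your integration by parts against $z\,\omega_\infty$, followed by Young's inequality with exactly matching constants, is the completed-square form of that same identity: $D-3A=\int|\nabla u+\tfrac z2 u|^2\omega_\infty\,dz=\int|\nabla v|^2|z|^4e^{-|z|^2/4}\,dz$.
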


\begin{proof}
	We first perform an exponential conjugation. Set $u(z)=e^{-|z|^2/4}v(z)$. A direct computation gives
	\begin{align*}
	& \nabla u
	=
	e^{-|z|^2/4}
	\left(\nabla v-\frac{z}{2}v\right), \\
	& \Delta u
	=
	e^{-|z|^2/4}
	\left(
	\Delta v
	-
	z\cdot\nabla v
	+
	\left(\frac{|z|^2}{4}-1\right)v
	\right).
	\end{align*}
	It follows that
	\begin{align*}
	L_\infty u
	& =
	e^{-|z|^2/4}
	\left(
	\Delta v
	+
	4\frac{z}{|z|^2}\cdot\nabla v
	-
	\frac{z}{2}\cdot\nabla v
	-
	2v
	\right) = e^{-|z|^2/4}[\mu^{-1}\nabla\cdot(\mu \nabla v)-2 v]
	\end{align*}
	where $\mu(z)=|z|^4 e^{-|z|^2/4}$. Therefore, by integration by parts, we get
	\begin{align}\label{eq:coerc1}
		\langle L_\infty u,u\rangle_{L^2_\infty}
		=
		-2\int_{\R^2} v^2\,\mu(z)dz
		-
		\int_{\R^2} |\nabla v|^2\,\mu (z)dz
		\le
		-2\|u\|_{L^2_\infty}^2.
	\end{align}
	
	On the other hand, we observe that
	\[
	L_\infty u
	=
	\omega_\infty^{-1}\nabla\cdot(\omega_\infty \nabla u) + u.
	\]
	Hence, by integration by parts,
	\begin{align}\label{eq:coerc2}
		\langle L_\infty u,u\rangle_{L^2_\infty}
		=
		\|u\|_{L^2_\infty}^2
		-
		\|\nabla u\|_{L^2_\infty}^2.
	\end{align}
	
	Combining \eqref{eq:coerc1} and \eqref{eq:coerc2}, for \(0<\delta\ll1\),
	\[
	\langle L_\infty u,u\rangle_{L^2_\infty}
	=
	(1-\delta)\langle L_\infty u,u\rangle
	+
	\delta\langle L_\infty u,u\rangle\le
	-2(1-\delta)\|u\|_{L^2_\infty}^2
	+
	\delta\|u\|_{L^2_\infty}^2
	-
	\delta\|\nabla u\|_{L^2_\infty}^2,
	\]
	which gives the desired estimate.
\end{proof}

We next record a Hardy-type inequality associated to the weight $\omega_\infty$.

\begin{lemma}[Exterior Hardy inequality]\label{lem:exterior_hardy_weighted}
	We have for all $u\in C^\infty_c(\mathbb R^2)$,
	\begin{align*}
		\int_{\R^{2}}\frac{u^{2}(z)}{|z|^{2}}\omega_\infty(z)\,dz
		\lesssim\int_{\R^{2}}|\nabla u(z)|^{2}\omega_\infty(z)\,dz.
	\end{align*}
\end{lemma}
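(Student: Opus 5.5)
We prove the weighted Hardy inequality by a one-dimensional integration by parts in the radial variable, using the specific structure of the weight $\omega_\infty(z)=|z|^4e^{|z|^2/4}$. The key observation is that in polar coordinates $dz=r\,dr\,d\theta$, so the measure $\omega_\infty dz=r^5e^{r^2/4}dr\,d\theta$ behaves like a six-dimensional measure near the origin. In dimension six the classical Hardy constant $((6-2)/2)^2=4$ is positive. The Gaussian factor only helps, since its radial derivative has a favorable sign.

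Concretely, we write the left-hand side as
\[
\int_{\mathbb R^2}\frac{u^2}{|z|^2}\omega_\infty\,dz=\int_0^{2\pi}\!\!\int_0^\infty u^2\,r^3e^{r^2/4}\,dr\,d\theta ,
\]
and express the weight $r^3e^{r^2/4}$ as (a positive multiple of) a derivative minus a nonnegative term:
\[
\partial_r\bigl(r^4e^{r^2/4}\bigr)=4r^3e^{r^2/4}+\tfrac12 r^5e^{r^2/4}\ \ge\ 4r^3e^{r^2/4}.
\]
Hence, for each fixed $\theta$,
\[
4\int_0^\infty u^2 r^3e^{r^2/4}dr\le\int_0^\infty u^2\,\partial_r\bigl(r^4e^{r^2/4}\bigr)dr=-2\int_0^\infty u\,\partial_ru\,r^4e^{r^2/4}dr .
\]
The boundary terms vanish: at $r=0$ because of the factor $r^4$ and the boundedness of $u$, and at $r=\infty$ because $u\in C^\infty_c$. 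We then apply Cauchy--Schwarz, splitting $r^4e^{r^2/4}=(r^{3/2}e^{r^2/8})(r^{5/2}e^{r^2/8})$, which gives
\[
4A\le 2A^{1/2}B^{1/2},\qquad A=\int u^2r^3e^{r^2/4}dr,\quad B=\int|\partial_ru|^2r^5e^{r^2/4}dr .
\]
Therefore $A\le \frac14 B$. Integrating over $\theta$ and using $|\partial_r u|\le|\nabla u|$ together with $r^5e^{r^2/4}dr\,d\theta=\omega_\infty dz$ yields the claim with constant $1/4$.

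There is no genuine obstacle here. The only point requiring care is the sign of the Gaussian contribution $\tfrac12 r^5 e^{r^2/4}$: it is positive, so it can simply be dropped. In fact, keeping it would give an additional bound controlling $\int u^2\omega_\infty$ by $\int|\nabla u|^2\omega_\infty$. One should also check that $u$ merely being bounded near $0$ suffices, which holds since the weight $r^3$ is integrable near the origin in the radial variable. If needed for non-compactly supported functions later, a density argument extends the inequality to the closure of $C^\infty_c$ in the relevant weighted norm.
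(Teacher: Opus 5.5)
Your proof is correct and is the same argument as the paper's. The paper integrates by parts against the vector field $F(z)=|z|^2e^{|z|^2/4}z$, whose divergence $\frac1r\partial_r\bigl(r^4e^{r^2/4}\bigr)$ is exactly the radial derivative you use, and then concludes by Cauchy--Schwarz with $|F|=|z|^3e^{|z|^2/4}$; your polar-coordinate version simply makes the constant $1/4$ explicit.
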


\begin{proof}
	We use the vector field $F(z):=|z|^{2}e^{|z|^{2}/4}z$. A direct computation gives
	\begin{align*}
		\nabla\cdot F
		&=
		4|z|^{2}e^{|z|^{2}/4}
		+
		\frac{|z|^{4}}{2}e^{|z|^{2}/4}
		\ge
		4|z|^{2}e^{|z|^{2}/4}.
	\end{align*}
	Integrating by parts, we obtain
	\begin{align*}
		\int_{\R^{2}}u^{2}|z|^{2}e^{|z|^{2}/4}\,dz
		&\le
		\frac14\int_{\R^{2}}(\nabla\cdot F)u^{2}\,dz \le
		\frac12\int_{\R^{2}}|u|\,|\nabla u|\,|F|\,dz.
	\end{align*}
	Since \(|F|=|z|^{3}e^{|z|^{2}/4}\), Cauchy--Schwarz yields
	\begin{align*}
		\int_{\R^{2}}u^{2}|z|^{2}e^{|z|^{2}/4}\,dz
		&\lesssim
		\left(
		\int_{\R^{2}}u^{2}|z|^{2}e^{|z|^{2}/4}\,dz
		\right)^{1/2}
		\left(
		\int_{\R^{2}}|\nabla u|^{2}|z|^{4}e^{|z|^{2}/4}\,dz
		\right)^{1/2}.
	\end{align*}
	Cancelling the common factor gives the claim.
\end{proof}

\subsection{The far away outer zone}\label{subsection:far-outer}

The previous weighted space $L^2_\infty$ is not enough to study functions with only finite second moment, as they may be localized in the far away outer zone $|z|\gg 1$. There, we shall use weighted $L^1$ semigroup estimates.

\begin{lemma} \label{lem:far-away-outer}

Denote by $e^{\tau L_\infty}u_0$ the solution to $\partial_\tau u=L_\infty u$ on the set $\{|z|>1\}$ with homogeneous Dirichlet boundary condition on $\{|z|=1\}$ and initial data with $u(0)=u_0\in L^1(|z|^2dz)$. Then
$$
\int_{|z|>1} (e^{\tau L_\infty}u_0)|z|^2 dz \leq e^{-\tau} \int_{|z|>1} u_0|z|^2 dz \quad \mbox{and} \quad \int_{|z|>1} (e^{\tau L_\infty}u_0)|z|^2 dz=o_{u_0}(e^{-\tau})\mbox{ as }\tau \to \infty.
$$

\end{lemma}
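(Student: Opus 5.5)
The plan is a weighted moment identity for the first bound, followed by a density and comparison argument for the $o(e^{-\tau})$ statement. Throughout, I reduce to nonnegative data. The equation $\partial_\tau u=L_\infty u$ on $\{|z|>1\}$ with homogeneous Dirichlet condition obeys the parabolic maximum principle. Splitting $u_0=u_0^+-u_0^-$ and using linearity then gives $|e^{\tau L_\infty}u_0|\leq e^{\tau L_\infty}u_0^++e^{\tau L_\infty}u_0^-$. I also work first with smooth, compactly supported data in $\{|z|>1\}$, and pass to general $u_0\in L^1(|z|^2dz)$ at the end by density, using the linear contraction property.

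\emph{Step 1: the contraction bound.} For $u\geq 0$ smooth and decaying, set $M_2(\tau)=\int_{|z|>1}u|z|^2dz$ and integrate each term of $L_\infty u=\Delta u+4\frac{z}{|z|^2}\cdot\nabla u+u+\frac12 z\cdot\nabla u$ against $|z|^2$.
\begin{itemize}
\item The Laplacian gives $4\int u$ plus the boundary flux $-\int_{|z|=1}\partial_r u$. This flux is nonpositive, since $u=0$ on $\{|z|=1\}$ and $u\geq 0$ force $\partial_r u\geq 0$ there.
\item The drift term gives $\int 4z\cdot\nabla u=-8\int u$, with no boundary term because $u$ vanishes on the boundary.
\item The scaling part gives $\int u|z|^2-\frac12\int u\,\nabla\cdot(z|z|^2)=-\int u|z|^2$.
\end{itemize}
Adding these up,
$$
\frac{d}{d\tau}M_2=-M_2-4\int_{|z|>1}u\,dz-\int_{|z|=1}\partial_r u\leq -M_2 .
$$
Grönwall's lemma then yields the first inequality. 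This extends to general $u_0\in L^1(|z|^2dz)$ by approximation.

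\emph{Step 2: the $o(e^{-\tau})$ decay.} Since the map $u_0\mapsto e^{\tau}\int (e^{\tau L_\infty}u_0)|z|^2$ is bounded uniformly in $\tau$ by Step 1, it suffices to prove the decay for a dense class, namely $0\leq u_0\in C^\infty_c(\{|z|>1\})$. I go back to original variables $u(z,\tau)=t\,v(x,t)$ with $x=\sqrt t z$ and $t=e^\tau$. Then $e^{\tau}\int u|z|^2dz=\int_{|x|>\sqrt t}v|x|^2dx$, where $v$ solves
$$
\partial_t v=\Delta v+4\frac{x}{|x|^2}\cdot\nabla v
$$
on the expanding exterior domain, with $v=0$ on $|x|=\sqrt t$.

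On radial functions this operator is the $6$-dimensional Laplacian. Hence $\bar v(t,x)=C(t+1)^{-3}e^{-|x|^2/(4(t+1))}$ is an exact solution on $\mathbb R^2\setminus\{0\}$. It dominates $v(1)$ for $C$ large, because $v(1)$ is compactly supported away from $0$. It also dominates $v$ on the boundary, where $v=0\leq\bar v$. The comparison principle on the moving domain therefore gives $0\leq v\leq \bar v$. Consequently
$$
\int_{|x|>\sqrt t}v|x|^2dx\lesssim t^{-3}\int_{\mathbb R^2}|x|^2e^{-|x|^2/(4(t+1))}dx\lesssim t^{-1}\to 0,
$$
which gives the claimed decay for compactly supported data. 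A standard $\epsilon/2$ argument combining this with the uniform bound of Step 1 then extends it to all $u_0\in L^1(|z|^2dz)$.

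\emph{Main obstacle.} I expect the main difficulty to be justifying the integrations by parts at spatial infinity, together with the comparison principle on the time-dependent exterior domain, for data that are only in $L^1(|z|^2dz)$. To handle this, I would first work with smooth compactly supported data, for which parabolic regularity and Gaussian-type decay, obtained through the same supersolution, make all boundary terms legitimate. Only after that would I pass to the limit using the a priori contraction estimate.
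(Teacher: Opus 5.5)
Your argument is correct. Its first half is essentially the paper's, and its second half takes a genuinely different route.

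\textbf{First half.} Proposition~\ref{prop:ExteriorEvolutionContraction} also tests against $|z|^2$ and uses $L_\infty^*(|z|^2)=-|z|^2-4$. The only difference is how signs are handled. The paper uses a regularized Kato inequality with the truncated weight $|z|^2\chi(|z|/R)$. You use positivity of the flow, the splitting $u_0=u_0^+-u_0^-$, and the favourable sign of the Dirichlet flux. For signed data, what both arguments give is the bound on $\int|e^{\tau L_\infty}u_0||z|^2dz$, and this is also what your $\epsilon/2$ step needs. Without absolute values the inequality fails, e.g.\ for $u_0\le 0$.

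\textbf{Second half.} Both you and the paper reduce by density to compactly supported data. The paper then uses the Gaussian-weighted spectral gap of Proposition~\ref{prop:ExteriorCoercivity}, which is available on $\{|z|>1\}$ because of the Dirichlet condition. This gives $\|u_1\|_{L^2_{\omega_\infty}}^2\lesssim e^{-3\tau}$, and Cauchy--Schwarz then gives $\int|u_1||z|^2dz\lesssim e^{-3\tau/2}$. You instead return to original variables and use that $\Delta+4\frac{x}{|x|^2}\cdot\nabla$ acts as the six-dimensional Laplacian on radial functions, so a 6D heat kernel is an exact supersolution on $\{|x|>\sqrt t\}$.

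\textbf{What each route buys.} Your route gives an explicit pointwise bound $|e^{\tau L_\infty}u_0|\lesssim_{u_0} e^{-2\tau}e^{-|z|^2/8}$ and the sharper rate $O(e^{-2\tau})$. The price is a comparison principle on a non-cylindrical unbounded domain, and dependence on the exact structure of $L_\infty$. The paper's energy method carries over unchanged to the perturbations $b\cdot\nabla+c$ with $|b|\le\eta|z|^{-1}$, $|c|\le\eta|z|^{-2}$ in the outer problem \eqref{eq:outer_problem} (Propositions~\ref{prop:ExteriorCoercivityPerturbed} and~\ref{prop:OuterMomentPointwise}). Your supersolution would have to be rebuilt to absorb those terms.
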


\begin{remark}

The above Proposition is stated here for clarity. It is a particular case of Proposition \ref{prop:ExteriorEvolutionContraction} that we shall use instead for the full nonlinear evolution in the next section.

\end{remark}

\begin{proof}

The first inequality is a particular case of Proposition \ref{prop:ExteriorEvolutionContraction}. To obtain the second, we decompose $u_0=u_{0,1}+u_{0,2}$ where $u_{0,1}=u_0\chi_M$ with $M>1$ and let $u_i=e^{\tau L_\infty}u_{0,i}$ for $i=1,2$. The first inequality implies $\int_{|z|>1} u_2 |z|^2 dz\leq C(M)e^{-\tau}$ where $C(M)\to 0$ as $M\to \infty$. Due to the homogeneous Dirichlet boundary condition, applying Proposition \ref{prop:ExteriorCoercivity} thanks to a standard density argument shows $\partial_\tau (\int_{|z|>1} u_1^2\omega_\infty dz)< -3 \int_{|z|>1} u_1^2\omega_\infty dz$ provided $\delta<1/6$. Hence $ \int_{|z|>1} u_1^2\omega_\infty dz\lesssim_M e^{-3\tau}$ by Gronwall, which implies $ \int_{|z|>1} u_1|z|^2 dz\lesssim_M e^{-3\tau/2}$ thanks to Cauchy-Schwarz. Since $M$ is arbitrary and $3/2>1$, combining the two inequalities for $u_1$ and $u_2$ implies the second inequality.
	
\end{proof}

\subsection{The nearby outer zone} \label{subsection:nearby-outer}

We consider the following formal linear evolution,
\begin{equation} \label{id:formal-evolution-nearby-outer}
\partial_\tau u =L_{\infty,\xi} u=\Delta u+4\frac{z-\xi(\tau)}{|z-\xi(\tau)|^2}\cdot\nabla u.
\end{equation}
Knowing by Proposition \ref{prop:ExteriorCoercivity} the coercivity of $L_\infty$ for functions with Gaussian decay, we want to find an analogue dissipative estimate for \eqref{id:formal-evolution-nearby-outer}.

\subsubsection{Naive choices of scalar products}

For a weight $\gamma$ to be determined, we have formally if $u$ satisfies \eqref{id:formal-evolution-nearby-outer} and is localized in the zone $|z-\xi|\gtrsim \nu$ that
\begin{equation} \label{id:formal-evolution-nearby-outer-adjoint}
\partial_\tau \left(\frac 12 \int  u^2 \gamma dz \right)= - \int |\nabla u|^2\gamma dz+\frac 12 \int u^2(\partial_{\tau}\gamma -L_{\infty,\xi}^*\gamma)dz,
\end{equation}
where the formal adjoint operator is $L_{\infty,\xi}^* =-\Delta+4|z-\xi|^{-2}(z-\xi)\cdot\nabla $.

A first attempt is to consider whether the choice $\gamma_\infty=|z|^4$ makes a suitably small error. Since $L_{\infty,0}^*\gamma_\infty=0$, such error is $L_{\infty,\xi}^*\gamma_\infty=16|z|^{2}(|z-\xi|^{-2}(z-\xi)-|z|^{-2}z)\cdot z$. This cannot be absorbed by the first term in \eqref{id:formal-evolution-nearby-outer-adjoint} thanks to a Hardy-type inequality (note that by a more refined inspection a leading cancellation between the two terms could be used, but only if we knew $|\xi|\lesssim \nu$ which is unknown so far). A second attempt is to consider the choice $\gamma_{\xi}(z)= |z-\xi|^4$ that satisfies $L_{\infty,\xi}^*\gamma_\xi=0$. In that case the error is $\partial_{\tau}\gamma_{\xi}=-4\partial_\tau \xi \cdot (z-\xi)|z-\xi|^2$. As we only currently have $\partial_\tau \xi=o(\nu^{-1})$ which may diverge as $\tau \to \infty$, this error also cannot be absorbed. 

A more abstract attempt would then be to find a solution $\gamma$ of the adjoint evolution $\partial_{\tau}\gamma -L_{\infty,\xi}^*\gamma =0$. However, this reverse parabolic equation would cause technical problems; we will rather solve this equation approximately. Noticing that it is a parabolic equation, that the distance to the soliton should play a role, and that the possibly fast oscillations of its center $\xi$ should be eliminated by an averaging of some kind, we are lead to the \emph{parabolically averaged distance to the soliton} developed below.

\subsubsection{The parabolic averaging based scalar product}

We begin by fixing a smooth cut-off function \(\bar\chi\).
Let \(\bar\chi \in C_c^\infty(\mathbb R)\) be nonnegative, supported in $(1/3,2/3)$
and normalized so that
\[
\int_{\R} \bar\chi(s)\,ds = 1.
\]
For any $\mu \in \mathbb R$ with $|\mu|$ small enough, its smoothness implies that \(\bar\chi\) satisfies
\begin{align}
	|\bar\chi(t) - \bar\chi(t(1+\mu))|
	&= O(\mu\,\mathbf{1}_{\{|t|\le 1\}}), \label{barChi_Prop1}\\
	|\bar\chi'(t) - \bar\chi'(t(1+\mu))|
	&= O(\mu\,\mathbf{1}_{\{|t|\le 1\}}). \label{barChi_Prop2}
\end{align}

For $\tau\geq 0$ and $z\in \mathbb R^2$, the soliton is at distance $|z-\xi(\tau)|$. We will average this distance over the parabolic timescale $|z-\xi(\tau)|^2$.

\begin{definition}[Parabolically averaged distance to the soliton]\label{def:ParabolicDistance}
	For \((z,\tau)\in \R^2\times\R\), define
	\[
	I(\tau,z)
	:=
	[\tau - |z-\xi(\tau)|^2,\ \tau],
	\]
	and set
	\begin{align*}
		d (z,\xi,\tau)
		:=
		\int_{I(\tau,z)}
		|z-\xi(\tau')|\,
		\frac{1}{ |z-\xi(\tau')|^2}\,
		\bar\chi\!\left(
		\frac{\tau-\tau'}{ |z-\xi(\tau')|^2}
		\right)\,d\tau'.
	\end{align*}
\end{definition}

By Lemma~\ref{lem:selfsimilarquali}, if $|z|\lesssim 1$, then for every
\(\tau' \in I(\tau,z)\) we have $\xi(\tau')=\xi(\tau)+o\bigl(|z-\xi(\tau)|\bigr)$. Consequently
\begin{equation} \label{id:parabolic-stationarity-distance}
|z-\xi(\tau')|
=
|z-\xi(\tau)|\,(1+o(1)) \qquad \mbox{for all }\tau' \in I(\tau,z).
\end{equation}
Above, the $o()$ is as $\tau \to \infty$, and is uniform in $|z|\lesssim 1$ and $\tau' \in I(\tau,z)$. In particular, the distance is essentially constant along \(I(\tau,z)\) at
leading order.

\begin{lemma}[Basic properties of the parabolically averaged distance]\label{Lemma:dProp}
	One has the following estimates as $\tau \to \infty$, for all \(0< |z-\xi(\tau)|\le1\):
	\begin{align}
		d (z,\xi,\tau)
		&=
		|z-\xi(\tau)|(1+o(1)), \label{Lemma:expd}\\
		\partial_{\tau}d (z,\xi,\tau)
		&=
		o\!\left(\frac{1}{|z-\xi(\tau)|}\right), \label{Lemma:expdtd}\\
		\nabla_{z}d(z,\xi,\tau)
		&=
		\frac{z-\xi(\tau)}{|z-\xi(\tau)|}(1+o(1)) \quad \mbox{and} \quad \Delta_z d(z,\xi,\tau)=\frac{1}{|z-\xi(\tau)|}(1+o(1)). \label{Lemma:expdzd}
	\end{align}
\end{lemma}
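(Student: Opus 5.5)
The plan is to change variables to normalize the averaging kernel and then differentiate under the integral. For fixed $(z,\tau)$ I set $\rho(\tau'):=|z-\xi(\tau')|$ and write
$$
d=\int \rho(\tau')^{-1}\,\bar\chi\big((\tau-\tau')/\rho(\tau')^2\big)\,d\tau'.
$$
The support of $\bar\chi$ forces $\tau-\tau'\in(\rho(\tau')^2/3,\,2\rho(\tau')^2/3)$. Together with \eqref{id:parabolic-stationarity-distance}, this means the integrand is only nonzero for $\tau'\in I(\tau,z)$, where $\rho(\tau')=\rho_0(1+o(1))$ with $\rho_0=|z-\xi(\tau)|$. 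Here \eqref{id:parabolic-stationarity-distance} follows from \eqref{id:slow-variation-nu-xi} applied with $R=\rho_0$. For \eqref{Lemma:expd} I substitute $s=(\tau-\tau')/\rho_0^2$. This gives
$$
d=\rho_0\int \frac{\rho_0}{\rho}\,\bar\chi\Big(s\,\frac{\rho_0^2}{\rho^2}\Big)\,ds .
$$
By \eqref{barChi_Prop1} with $1+\mu=\rho_0^2/\rho^2=1+o(1)$, the integrand equals $\bar\chi(s)+o(1)\mathbf 1_{|s|\le 1}$. Since $\int\bar\chi=1$, this yields $d=\rho_0(1+o(1))$.

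For the derivatives I avoid differentiating $\xi$ itself, since $\xi_\tau=o(\nu^{-1})$ may be huge. I differentiate only where $\tau$ and $z$ appear explicitly. The $\tau$-derivative falls only on the explicit $\tau$ in $\bar\chi((\tau-\tau')/\rho(\tau')^2)$; the $\tau'$-integration range is just the support. This gives
$$
\partial_\tau d=\int \rho^{-3}\,\bar\chi'\big((\tau-\tau')/\rho^2\big)\,d\tau'.
$$
After the same rescaling, this is $\rho_0^{-1}\big[\int\bar\chi'(s)\,ds+o(1)\big]$, using \eqref{barChi_Prop2}. Since $\int\bar\chi'=0$, we get $o(\rho_0^{-1})$, which is \eqref{Lemma:expdtd}.

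For $\nabla_z$, the $z$-dependence enters through $\rho$, with $\nabla_z\rho(\tau')=(z-\xi(\tau'))/\rho(\tau')$. Differentiating gives two contributions: $\nabla(\rho^{-1})\bar\chi$, and $\rho^{-1}\bar\chi'\cdot(-2)(\tau-\tau')\rho^{-3}\nabla\rho$. Writing $\sigma=(\tau-\tau')/\rho^2$, the combination becomes
$$
-\rho^{-2}\big(\bar\chi(\sigma)+2\sigma\bar\chi'(\sigma)\big)\nabla\rho .
$$
Here $\nabla\rho=\frac{z-\xi(\tau)}{\rho_0}+o(1)$. Rescaling and approximating as before yields
$$
\nabla d=-\Big[\int(\bar\chi+2s\bar\chi')\,ds\Big]\frac{z-\xi}{\rho_0}+o(1).
$$
Integrating by parts, $\int 2s\bar\chi'=-2$, so the bracket equals $-1$ and $\nabla d=\frac{z-\xi}{\rho_0}(1+o(1))$. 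Differentiating once more gives a $\rho_0^{-1}$-sized expression, and the same cancellation mechanism produces $\Delta d=\rho_0^{-1}(1+o(1))$. In that computation I use $\Delta\rho=\rho^{-1}$ in 2D and $|\nabla\rho|=1$. The constant then comes from explicit moment integrals of $\bar\chi$ computed by integration by parts: $\int s\bar\chi'=-1$ and $\int s^2\bar\chi''=2$.

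The main obstacle is making the $o(1)$ replacements rigorous and uniform. One has to justify that \eqref{barChi_Prop1}–\eqref{barChi_Prop2} (and an analogue for $\bar\chi''$) apply with $\mu=\rho_0^2/\rho(\tau')^2-1$, which depends on $\tau'$ and hence on $s$. It is also essential that nowhere is $\xi_\tau$ differentiated, because only the Hölder-type slow variation \eqref{id:slow-variation-nu-xi} is available; the parabolic averaging is designed precisely so that this information suffices.
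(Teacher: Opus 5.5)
Your proposal is correct and is essentially the paper's proof. You rescale the averaging window by $\rho_0^2=|z-\xi(\tau)|^2$ and freeze $\rho(\tau')=\rho_0(1+o(1))$ via \eqref{id:parabolic-stationarity-distance} and \eqref{barChi_Prop1}--\eqref{barChi_Prop2}. You differentiate only the explicit $\tau$ and $z$ dependence, never $\xi$, and conclude with the moment identities $\int\bar\chi=1$, $\int s\bar\chi'=-1$, $\int s^2\bar\chi''=2$.

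The only difference is the bookkeeping for $\nabla_z d$. You keep the single term $-\rho^{-2}(\bar\chi+2\sigma\bar\chi')\nabla\rho$, whose moment $-1$ gives the answer. The paper instead splits off $\rho^{-2}\bar\chi\,\nabla\rho$ and kills the remainder $-2\rho^{-2}(\bar\chi+\sigma\bar\chi')\nabla\rho$ using $\int(\bar\chi+s\bar\chi')\,ds=0$. The identity printed there with a factor $2$ is a misprint, and your value $-1$ is the correct one.
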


\begin{proof}
	We start with \eqref{Lemma:expd}. Applying successively \eqref{id:parabolic-stationarity-distance}, then \eqref{barChi_Prop1} and finally changing variables using that $\int \bar \chi=1$, we find
	\begin{align}
		\label{id:averaged-distance-1}d(z,\xi,\tau)
		&=
		\int_{I(\tau,z)}
		|z-\xi(\tau')|
		\frac{1}{ |z-\xi(\tau')|^{2}}
		\bar{\chi}\!\left(
		\frac{\tau-\tau'}{ |z-\xi(\tau')|^{2}}
		\right)d\tau' \\
		\nonumber & \quad =
		\int_{I(\tau,z)}
		\frac{1}{ |z-\xi(\tau)|}
		\bar{\chi}\!\left(
		\frac{\tau-\tau'}{ |z-\xi(\tau)|^{2}}(1+o(1))
		\right)d\tau'
		+
		o(|z-\xi(\tau)|) \\
		\nonumber &\qquad =
		\int_{\tau-|z-\xi(\tau)|^{2}}^{\tau}
		\frac{1}{ |z-\xi(\tau)|}
		\bar{\chi}\!\left(
		\frac{\tau-\tau'}{ |z-\xi(\tau)|^{2}}
		\right)d\tau'
		+
		o(|z-\xi(\tau)|) = |z-\xi(\tau)|+o(|z-\xi(\tau)|).
	\end{align}
	This is \eqref{Lemma:expd}. We now prove \eqref{Lemma:expdtd}. Since $\bar \chi$ is supported in $(1/3,2/3)$, by \eqref{id:parabolic-stationarity-distance} the integrand vanishes at the endpoints of \(I(\tau,z)\). Therefore no boundary terms appear and
	\begin{align*}
		\partial_{\tau}d(z,\xi,\tau)
		&=
		\int_{I(\tau,z)}
		|z-\xi(\tau')|
		\frac{1}{|z-\xi(\tau')|^{4}}
		\bar{\chi}'\!\left(
		\frac{\tau-\tau'}{ |z-\xi(\tau')|^{2}}
		\right)d\tau'.
	\end{align*}
	Proceeding as above we obtain
	\begin{align} \label{id:averaged-distance-2}
		\partial_{\tau}d(z,\xi,\tau)
		&=
		\int_{\tau-|z-\xi(\tau)|^{2}}^{\tau}
		\frac{1}{|z-\xi(\tau)|^{3}}
		\bar{\chi}'\!\left(
		\frac{\tau-\tau'}{ |z-\xi(\tau)|^{2}}
		\right)d\tau'
		+
		o\!\left(\frac{1}{|z-\xi(\tau)|}\right).
	\end{align}
	Since $\int_0^{1} \bar\chi'(s)\,ds = 0$, the first term vanishes so that $\partial_{\tau}d(z,\xi,\tau)=o(|z-\xi(\tau)|^{-1})$. This is \eqref{Lemma:expdtd}. Finally, we prove \eqref{Lemma:expdzd}. As before, no boundary terms appear, and we compute
	\begin{align*}
		\nabla_{z}d(z,\xi,\tau)
		&=
		\int_{I(\tau,z)}
		\nabla_{z}(|z-\xi(\tau')|)
		\frac{1}{ |z-\xi(\tau')|^{2}}
		\bar{\chi}\!\left(
		\frac{\tau-\tau'}{ |z-\xi(\tau')|^{2}}
		\right)d\tau' \\
		&\quad
		+
		\int_{I(\tau,z)}
		|z-\xi(\tau')|
		\nabla_{z}\!\left(
		\frac{1}{ |z-\xi(\tau')|^{2}}
		\bar{\chi}\!\left(
		\frac{\tau-\tau'}{ |z-\xi(\tau')|^{2}}
		\right)
		\right)d\tau' \\
		& = \frac{z-\xi(\tau)}{|z-\xi(\tau)|}(1+o(1))+\frac{z-\xi(\tau)}{|z-\xi(\tau)|^{3}}
	\int_0^{1}
	\big(\bar\chi(s)+2s\bar\chi'(s)\big)\,ds+o(1)= \frac{z-\xi(\tau)}{|z-\xi(\tau)|}+o(1),
	\end{align*}
	where the first term was estimated similarly to \eqref{id:averaged-distance-1} and the second similarly to \eqref{id:averaged-distance-2}, and where we used the cancellation $\int_0^{1}(\bar\chi(s)+2s\bar\chi'(s))ds=0$ for the last equality. The function $\Delta_{z}d(z,\xi,\tau)$ is given analogously by
	\begin{align*}
		&\int_{I}
		\Delta_{z}(|z-\xi(\tau')|)
		\frac{1}{ |z-\xi(\tau')|^{2}}
		\bar{\chi}\!\left(
		\frac{\tau-\tau'}{ |z-\xi(\tau')|^{2}}
		\right)+
		2\int_{I}
		\nabla_z |z-\xi(\tau')|\cdot
		\nabla_{z}\!\left(
		\frac{1}{ |z-\xi(\tau')|^{2}}
		\bar{\chi}\!\left(
		\frac{\tau-\tau'}{ |z-\xi(\tau')|^{2}}
		\right)
		\right) \\
		&+\int_{I}
		|z-\xi(\tau')|\Delta_{z}\left(
		\frac{1}{ |z-\xi(\tau')|^{2}}
		\bar{\chi}\!\left(
		\frac{\tau-\tau'}{ |z-\xi(\tau')|^{2}}
		\right)\right)
		 =  \frac{1}{|z-\xi(\tau)|}(1+o(1))+o(\frac{1}{|z-\xi(\tau)|})+o(\frac{1}{|z-\xi(\tau)|})
	\end{align*}
	where the first term is estimated similarly to \eqref{id:averaged-distance-1}, the second similarly to \eqref{id:averaged-distance-2} and for the third one we used the cancellation $\int_0^{1}(\bar\chi(s)+3s\bar\chi'(s)+s^2\bar \chi''(s))ds=0$. This concludes the proof of \eqref{Lemma:expdzd}.
	
\end{proof}

We introduce the following weight and associated scalar product in the nearby zone
$$
\langle u,v\rangle_{\infty,\xi}= \int \gamma_{\infty,\xi} uvdz \quad \mbox{with} \quad \gamma_{\infty,\xi}(z)=d^4(z,\xi,\tau) .
$$
It solves approximately the adjoint equation appearing in the error term in the energy estimate \eqref{id:formal-evolution-nearby-outer-adjoint}.

\begin{lemma}

There holds $\partial_{\tau}\gamma_{\infty,\xi} -L_{\infty,\xi}^*\gamma_{\infty,\xi}=o(|z-\xi|^{-2}\gamma_{\infty,\xi})$ as $\tau \to \infty$ for all \(0< |z-\xi(\tau)|\le1\).

\end{lemma}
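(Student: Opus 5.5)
We compute the three quantities $\partial_\tau\gamma$, $\Delta_z\gamma$ and $\nabla_z\gamma$ directly from $\gamma_{\infty,\xi}=d^4$ by the chain rule, and then feed in the asymptotics of Lemma \ref{Lemma:dProp}. Write $r=|z-\xi(\tau)|$ and $e=(z-\xi(\tau))/r$. We have
\begin{align*}
\partial_\tau \gamma_{\infty,\xi} &= 4d^3\partial_\tau d,\\
\nabla_z\gamma_{\infty,\xi} &= 4d^3\nabla_z d,\\
\Delta_z\gamma_{\infty,\xi} &= 4d^3\Delta_z d+12 d^2|\nabla_z d|^2 .
\end{align*}
Hence
$$
\partial_\tau\gamma_{\infty,\xi}-L^*_{\infty,\xi}\gamma_{\infty,\xi}=4d^3\partial_\tau d+4d^3\Delta_z d+12d^2|\nabla_z d|^2-16\frac{d^3}{r}\,e\cdot\nabla_z d .
$$

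\textbf{First step: the time derivative.} By \eqref{Lemma:expd} and \eqref{Lemma:expdtd},
$$
4d^3\partial_\tau d=r^3(1+o(1))\,o(r^{-1})=o(r^2).
$$
Since $\gamma_{\infty,\xi}=r^4(1+o(1))$, this is $o(r^{-2}\gamma_{\infty,\xi})$.

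\textbf{Second step: the spatial part.} By \eqref{Lemma:expdzd} we have $\Delta_z d=r^{-1}(1+o(1))$, $|\nabla_z d|^2=1+o(1)$ and $e\cdot\nabla_z d=1+o(1)$. Combined with $d=r(1+o(1))$, the spatial terms become
$$
r^2\big(4+12-16\big)+o(r^2)=o(r^2).
$$
This reflects the exact identity $L^*_{\infty,0}|z|^4=0$, i.e.\ the cancellation $4+12=16$ coming from the six-dimensional structure. Dividing by $r^{-2}\gamma_{\infty,\xi}\approx r^2$ gives the claim.

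\textbf{Main obstacle.} The only delicate point is that the cancellation $4+12-16=0$ occurs at leading order. The error is therefore governed entirely by the $o(1)$ relative accuracy of the first and second derivatives of $d$ in Lemma \ref{Lemma:dProp}, and that accuracy must be uniform in $0<r\le 1$. This uniformity is exactly what \eqref{Lemma:expdzd} provides. The required uniformity of the $o(1)$'s comes from \eqref{id:parabolic-stationarity-distance}, hence from the Hölder slow variation \eqref{id:slow-variation-nu-xi}. No further input is needed: the possibly large $\xi_\tau$ never appears explicitly, since $d$ was built by averaging.
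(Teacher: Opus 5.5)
Your proof is correct and takes the same approach as the paper. You expand $\partial_\tau(d^4)$, $\nabla_z(d^4)$ and $\Delta_z(d^4)$ by the chain rule and insert the asymptotics of Lemma \ref{Lemma:dProp}: the time derivative is then $o(|z-\xi|^2)$, and the spatial terms cancel at leading order through $4+12-16=0$. Your formula $\Delta_z\gamma_{\infty,\xi}=4d^3\Delta_z d+12d^2|\nabla_z d|^2$ is the correct one; the paper's proof writes $12d^3|\nabla d|^2$, which is a misprint and does not affect the argument.
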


\begin{remark}

The above Lemma is stated here for clarity. In the next section we shall rely on a different but intimately related leading order cancellation.

\end{remark}

\begin{proof}

We have $\partial_{\tau}\gamma_{\infty,\xi}=o(|z-\xi|^{-2}\gamma_{\infty,\xi})$ by \eqref{Lemma:expdtd}. We compute using \eqref{Lemma:expd} and \eqref{Lemma:expdzd}:
\begin{align*}
&\nabla \gamma_{\infty,\xi}= 4d^3\nabla d=[4|z-\xi|^2(z-\xi)+o(d^3)],\\
& \Delta \gamma_{\infty,\xi}=4d^3\Delta d+12 d^3|\nabla d|^2= 16|z-\xi|^2+o(d^2).
\end{align*}
Using these two identities and then \eqref{Lemma:expd} we infer that $L_{\infty,\xi}^* \gamma_{\infty,\xi}$ equals
\begin{align*}
&-[16|z-\xi|^2+o(d^2)] +4\frac{z-\xi}{|z-\xi|^{2}}\cdot [4|z-\xi|^2(z-\xi)+o(d^3)]+o(d^2)=o(|z-\xi|^{-2})\gamma_{\infty,\xi}.
\end{align*}
\end{proof}

\subsection{The inner zone} \label{subsection:inner}

Recall the inner variables \eqref{id:inner-variables}. For simplicity, we keep the notation \(L\) for the operator obtained from
\(L_0\), defined in \eqref{def:intlinoperator}, in the variable \(y\). Thus
\[
L[u]
=
\Delta u
-
\nabla\cdot(U\nabla\Phi_u)
-
\nabla\cdot(u\nabla\Phi_U).
\qquad
\]
We recall below results proved by Rapha\"el and Schweyer in \cite{RS} and extended in \cite{CGMN2}, intimately linked to the linearization of the free energy around its minimizer $U$. Since $\nabla\Phi_U=U^{-1}\nabla U$ we can write
\begin{align} \label{id:formula-LU}
	L[u]
	&= \nabla\cdot\Bigl(U\nabla \mathcal M[u] \Bigr) \quad \mbox{where} \quad \mathcal M[u]:=\frac{u}{U}-\Phi_u.
\end{align}

\begin{proposition}[See Lemma 2.1 and Proposition 2.3 in \cite{RS}]\label{prop:RS}
	The following algebraic identities hold:
	\[
	\mathcal M[\Lambda U]=-2,
	\qquad
	\mathcal M[\nabla U]=0.
	\]
	Moreover, for any \(u\in\mathcal S(\R^2)\), there exist constants
	\(\delta,C>0\) such that
	\begin{align}\label{prop:coercivityproduct}
		\langle u,\mathcal M[u]\rangle_{L^2}
		\ge
		\delta\|u\|_{L^2_{1/U}}^2
		-
		C\left[
		\langle u,1\rangle_{L^2}^2
		+
		\langle u,\Lambda U\rangle_{L^2}^2
		+
		\sum_{i=1}^{2}
		\langle u,\partial_i U\rangle_{L^2}^2
		\right].
	\end{align}
\end{proposition}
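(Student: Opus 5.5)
The two algebraic identities will come from differentiating the family of stationary states $U_{\lambda,y}$ along its scaling and translation parameters. The coercivity estimate will come from diagonalising the quadratic form $\langle u,\mathcal M[u]\rangle_{L^2}$ explicitly after a stereographic projection onto $S^2$, where it becomes a multiplier on spherical harmonics.

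\emph{Algebraic identities.} Since $\nabla\Phi_U=\nabla\log U$, we have $\log U-\Phi_U=C$ for some constant $C$. By the scaling and translation invariance of the equation the same holds for every $U_{\lambda,y}$, with a constant that we track explicitly. From \eqref{id:definition-nablaPhiu}, interpreted with the potential $-(2\pi)^{-1}\log|\cdot|*u$, one has
$$\Phi_{U_{\lambda,0}}(x)=\Phi_U(x/\lambda)-\frac{8\pi}{2\pi}\log\lambda=\Phi_U(x/\lambda)-4\log\lambda, \qquad \log U_{\lambda,0}(x)=\log U(x/\lambda)-2\log\lambda .$$
Hence
$$\log U_{\lambda,0}-\Phi_{U_{\lambda,0}}=C+2\log\lambda .$$
We differentiate at $\lambda=1$, using $\partial_\lambda U_{\lambda,0}|_{\lambda=1}=-\Lambda U$ and linearity of $u\mapsto\Phi_u$. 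This gives $-\mathcal M[\Lambda U]=2$, that is, $\mathcal M[\Lambda U]=-2$.

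For translations the constant does not depend on $y$. Differentiating in $y$ therefore gives $\mathcal M[\partial_i U]=0$. Here $\Phi_{\partial_iU}=\partial_i\Phi_U$ is unambiguous because $\partial_iU$ has zero mass.

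\emph{Coercivity.} Write $u=Uf$. We may assume $u\in L^2_{1/U}$, since otherwise the left-hand side of \eqref{prop:coercivityproduct} is $+\infty$ and there is nothing to prove. We use the stereographic projection $\pi:\mathbb R^2\to S^2$. Its conformal factor satisfies $U\,dx=2\,d\sigma$, which matches $\int U=8\pi=2|S^2|$. The equation $-\Delta\Phi_u=Uf$ then becomes
$$-\Delta_{S^2}(\Phi_u\circ\pi^{-1})=2\,(f\circ\pi^{-1})$$
on zero-mean functions. After subtracting the mean value of $f$, the form reads
$$\langle u,\mathcal M[u]\rangle=2\int_{S^2}\Big(f^2-2f(-\Delta_{S^2})^{-1}f\Big)d\sigma + \big(\text{terms involving } \langle u,1\rangle\big).$$
On spherical harmonics of degree $k\ge1$ the multiplier is $1-2/(k(k+1))$. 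It vanishes for $k=1$ and is at least $2/3$ for $k\ge2$.

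The degree-one harmonics are exactly $\Lambda U/U$ and $\partial_iU/U$, up to constants; they correspond to the coordinate functions $x_3,x_1,x_2$ on $S^2$. This is consistent with the algebraic identities above. Consequently $\langle u,\Lambda U\rangle=\int Uf\,(\Lambda U/U)$ and $\langle u,\partial_iU\rangle$ are exactly the $L^2(S^2)$ projections of $f$ onto the degree-one modes. Likewise $\langle u,1\rangle$ is the projection onto $k=0$.

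Decompose $f=f_0+f_1+f_{\ge2}$ into the degree $0$, degree $1$ and degree $\ge 2$ parts. Since $\|u\|_{L^2_{1/U}}^2=\int Uf^2=2\|f\|^2_{L^2(S^2)}$, we obtain
$$\langle u,\mathcal M[u]\rangle\ge\tfrac23\|f_{\ge2}\|^2-C\|f_0\|^2 \ge \delta\|f\|^2-C'\big(\|f_0\|^2+\|f_1\|^2\big),$$
and the norms $\|f_0\|,\|f_1\|$ are controlled by the four scalar products on the right-hand side of \eqref{prop:coercivityproduct}.

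\emph{Main obstacle.} The delicate point is handling the nonzero-mass component. For $\langle u,1\rangle\neq0$, $\Phi_u$ grows logarithmically, and the additive constant in the definition of $\Phi_u$ must be fixed consistently with \eqref{id:definition-nablaPhiu}. I would write $u=u_0+\frac{\langle u,1\rangle}{8\pi}U$ with $\langle u_0,1\rangle=0$, and expand the form bilinearly. The cross terms are then $O(|\langle u,1\rangle|\,\|u\|_{L^2_{1/U}})$ and are absorbed by Young's inequality. A second, more routine point is justifying the transfer to $S^2$ for $u\in L^2_{1/U}$, in particular that $\Phi_{u_0}\circ\pi^{-1}$ lies in $H^1(S^2)$. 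This follows by density of $C^\infty_c$ functions.
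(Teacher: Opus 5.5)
Your algebraic identities are correct, and the stereographic reduction is sound. With $d\sigma=\tfrac U2\,dx$ and $\Delta_{S^2}=\tfrac 2U\Delta$, the form on mean-zero $f$ becomes $2\sum_{k\ge1}\bigl(1-\tfrac{2}{k(k+1)}\bigr)\|f_k\|^2$. Your treatment of the mass direction via $u=u_0+\tfrac{\langle u,1\rangle}{8\pi}U$ and Young's inequality is also fine. This is a different route from the paper, which gives no proof and only refers to Rapha\"el--Schweyer, where the analysis is radial. Your argument handles all angular modes at once and gives the explicit gap $2/3$. The last step, however, is wrong as written.

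You state $\langle u,\Lambda U\rangle=\int Uf\,(\Lambda U/U)$, but a factor $U$ has been lost: in fact $\langle u,\Lambda U\rangle=\int Uf\,\Lambda U\,dx=2\int_{S^2}f\,\Lambda U\,d\sigma$. Under stereographic projection $U=2(1-X_3)^2$, so $\Lambda U=-4X_3(1-X_3)^2$ and $\partial_iU=-4X_i(1-X_3)^2$. These are not degree-one harmonics: they have components in several degrees, and $\Lambda U$ even has nonzero mean on $S^2$. The exact degree-one projections would be $\langle u,\Lambda U/U\rangle$ and $\langle u,\partial_iU/U\rangle$, which are not the functionals in the statement.

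Consequently your claim that $\|f_0\|,\|f_1\|$ are controlled by the four scalar products is false. Take $f=3X_3^2+X_3-1=X_3+2P_2(X_3)$ and $u=Uf$: one checks that $\langle u,1\rangle$, $\langle u,\Lambda U\rangle$ and $\langle u,\partial_iU\rangle$ all vanish, while $f_1=X_3\neq0$. What you need instead is
\[
\|f_0\|+\|f_1\|\lesssim \bigl|\langle u,1\rangle\bigr|+\bigl|\langle u,\Lambda U\rangle\bigr|+\sum_{i=1}^2\bigl|\langle u,\partial_iU\rangle\bigr|+\|f_{\ge2}\|.
\]
This holds for two reasons. First, the four functionals are bounded on $L^2_{1/U}$, since $1,\Lambda U,\partial_iU\in L^2_U$. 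Second, evaluate them on $U,\Lambda U,\partial_1U,\partial_2U$, which are the functions $u$ corresponding to degree $\le1$. The resulting $4\times4$ matrix is lower triangular by parity, with diagonal entries $8\pi$, $\|\Lambda U\|_{L^2}^2$, $\|\partial_1U\|_{L^2}^2$, $\|\partial_2U\|_{L^2}^2$, hence invertible. Insert this into your lower bound and take $\delta$ small compared with the implicit constant. The resulting $\|f_{\ge2}\|^2$ term is then absorbed by the $2/3$ gap, and \eqref{prop:coercivityproduct} follows.
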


\begin{proposition}[See Lemma 3.5 in \cite{CGMN2}]\label{prop:coercivityoperator}
	For all \(u\in \mathcal S(\R^2)\), there exist universal constants
	\(\delta,C>0\) such that
	\[
	\langle L[u],\mathcal M[u]\rangle_{L^2}=\int_{\R^2} U|\nabla\mathcal M[u]|^2\,dy
	\ge
	\delta\|\nabla u\|_{L^2_{1/U}}^2
	-
	C\left[
	\langle u,\Lambda U\rangle_{L^2}^2
	+
	\sum_{i=1}^{2}
	\langle u,\partial_i U\rangle_{L^2}^2
	\right].
	\]
\end{proposition}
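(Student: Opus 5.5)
The identity is a direct integration by parts. Since $L[u]=\nabla\cdot(U\nabla\mathcal M[u])$ by \eqref{id:formula-LU}, pairing with $\mathcal M[u]$ gives
$$\langle L[u],\mathcal M[u]\rangle_{L^2}=\int U|\nabla\mathcal M[u]|^2\,dy.$$
Boundary terms vanish for $u\in\mathcal S$. The only point to check is $\mathcal M[u]$ at infinity: $u/U$ grows at most polynomially times a Schwartz factor, and $\nabla\Phi_u=O(|y|^{-1})$, so $U\,\mathcal M[u]\,\nabla\mathcal M[u]$ decays fast enough.

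For the inequality I would argue in two stages, a subcoercivity estimate followed by a compactness/contradiction argument built on the kernel structure.

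\emph{Stage 1: subcoercivity.} Expand
$$\nabla\mathcal M[u]=U^{-1}\nabla u-u\,U^{-2}\nabla U-\nabla\Phi_u .$$
The main term gives $\int U|U^{-1}\nabla u|^2=\|\nabla u\|^2_{L^2_{1/U}}$. The other two contributions I would treat as follows.
\begin{itemize}
\item The term $u\,U^{-2}\nabla U$ behaves at infinity like $-4u\,U^{-1}y|y|^{-2}$. It is not compact, so I would handle it together with the main term. Setting $f=u/U$, one has $\int U|\nabla f|^2$ with $U\sim 8|y|^{-4}$. A weighted Hardy inequality, in the spirit of Lemma \ref{lem:exterior_hardy_weighted} but with the power weight $|y|^{-4}$ (the six-dimensional radial structure), gives
$$\int U|\nabla f|^2\gtrsim \|\nabla u\|^2_{L^2_{1/U}}-C\int_{|y|\le A}u^2\,dy$$
for $A$ large. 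I would prove this by writing $\nabla u=U\nabla f+f\nabla U$ and bounding $\int f^2|\nabla U|^2/U$ by a Hardy inequality applied to $f$ outside a ball.
\item The Poisson term $\int U|\nabla\Phi_u|^2$ is compact relative to $\|\nabla u\|_{L^2_{1/U}}$, because $U$ decays like $|y|^{-4}$ and $\nabla\Phi_u$ is controlled by weighted $L^2$ norms of $u$ through Lemma \ref{lem:grad_phi_tilde_u}-type splitting. It can therefore also be absorbed into a local term.
\end{itemize}
Together these should give
$$\int U|\nabla\mathcal M[u]|^2\ge c\|\nabla u\|^2_{L^2_{1/U}}-C\|u\|^2_{L^2(B_A)}. \tag{S}$$

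\emph{Stage 2: the kernel.} If $\nabla\mathcal M[u]=0$, then $\mathcal M[u]=c$ is constant. Since $\mathcal M[\Lambda U]=-2$ by Proposition \ref{prop:RS}, the function $v=u+\tfrac c2\Lambda U$ satisfies $\mathcal M[v]=0$. This means $v=U\Phi_v$, so $\phi=\Phi_v$ solves the linearized Liouville equation $-\Delta\phi=U\phi$ with bounded gradient. By nondegeneracy of the Liouville solutions (Chen--Li, used as in \cite{RS}), $\phi$ lies in the span of the derivatives of $\phi_{\lambda,y}$ in \eqref{LiouvilleEq-2} together with constants. Hence $u\in\mathrm{span}\{\Lambda U,\partial_1U,\partial_2U\}$.

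\emph{Stage 3: contradiction.} Suppose the inequality fails. Then there is a sequence $u_n$ with $\|\nabla u_n\|_{L^2_{1/U}}=1$, $\int U|\nabla\mathcal M[u_n]|^2\to0$, and $\langle u_n,\Lambda U\rangle\to 0$, $\langle u_n,\partial_iU\rangle\to0$.
\begin{itemize}
\item Weighted Hardy and Rellich arguments give a subsequence converging weakly and strongly in $L^2(B_A)$.
\item By (S), the limit $u_\infty$ is nonzero.
\item By Stage 2, $u_\infty$ lies in $\mathrm{span}\{\Lambda U,\partial_1 U,\partial_2 U\}$.
\item The limiting orthogonality conditions then force $u_\infty=0$, a contradiction.
\end{itemize}
The absence of $\langle u,1\rangle$ among the projections, compared with Proposition \ref{prop:RS}, is consistent with this: the form only sees $\nabla\mathcal M$, so constants in $\mathcal M$ are free, and they are already accounted for by the $\Lambda U$ direction.

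\emph{Expected obstacle.} The hardest step is Stage 1. The cross term involving $u\nabla U/U^2$ is of the same order as the main term at infinity, so neither a crude Cauchy--Schwarz bound nor compactness disposes of it. I would first try the substitution $f=u/U$ and exploit the exact six-dimensional divergence structure $\mathrm{div}(|y|^{-4}\nabla\cdot)$ at large $|y|$. This requires tracking a sharp Hardy constant to keep a positive multiple of $\|\nabla u\|^2_{L^2_{1/U}}$ after absorption.
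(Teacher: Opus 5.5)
The paper does not prove this estimate; it imports it from Lemma~3.5 of \cite{CGMN2}. Your route is a legitimate self-contained alternative: subcoercivity modulo a local $L^2$ term, identification of the kernel of the dissipation via nondegeneracy of the linearized Liouville equation, then compactness. It uses only tools already in the paper (Lemmas~\ref{lemma:hardy} and \ref{lem:weightL2pote}, the identities of Proposition~\ref{prop:RS}) plus nondegeneracy, at the price of non-explicit $\delta,C$. The outline closes, but several steps need correcting.

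\emph{Sign.} Integration by parts gives $\langle L[u],\mathcal M[u]\rangle_{L^2}=-\int U|\nabla\mathcal M[u]|^2\,dy$. The displayed identity in the statement has a sign typo; the paper itself uses the minus sign in the proof of Proposition~\ref{InnCoercivity1} and in Subsection~\ref{subsec:asymptotics}. What you actually prove is the lower bound for $\int U|\nabla\mathcal M[u]|^2=-\langle L[u],\mathcal M[u]\rangle_{L^2}$, and your first step should say so rather than reproduce the typo.

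\emph{Poisson term.} Lemma~\ref{lem:grad_phi_tilde_u} is the wrong tool, because it needs $\|u\|_{L^1}$, which $\|\nabla u\|_{L^2_{1/U}}$ does not control. Lemma~\ref{lemma:hardy} only gives $\int u^2\langle y\rangle^2dy\lesssim\|\nabla u\|^2_{L^2_{1/U}}$, and $\langle y\rangle^{-2}\notin L^1$. For instance $u=\langle y\rangle^{-2}(\ln(2+|y|))^{-1}$ has finite energy but infinite mass. So an $L^1$-based bound is not uniform along your sequence, and the Stage~3 limit may have infinite mass. Use Lemma~\ref{lem:weightL2pote} with $A=2$ instead:
\begin{itemize}
\item For $W=U\mathbf{1}_{\{|y|>R\}}$ one has $K_W\lesssim R^{-4}\ln R$, so the tail $\int_{|y|>R}U|\nabla\Phi_u|^2$ is small.
\item For $|x|\le R$, Cauchy--Schwarz gives $|\nabla\Phi_{u(1-\chi_{2R})}(x)|\lesssim R^{-1}\|\langle y\rangle u\|_{L^2}$.
\item The remaining piece satisfies $\int U|\nabla\Phi_{u\chi_{2R}}|^2\lesssim R^2\|u\|^2_{L^2(B_{4R})}$.
\end{itemize}
Together these give $\int U|\nabla\Phi_u|^2\le\epsilon\|\nabla u\|^2_{L^2_{1/U}}+C_\epsilon\|u\|^2_{L^2(B_{4R})}$, which is what (S) needs. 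The same boundedness of $u\mapsto\nabla\Phi_u$ from $L^2(\langle y\rangle^2dy)$ to $L^2(U\,dy)$ also lets you pass $\nabla\mathcal M[u_n]\to0$ to the weak limit.

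\emph{The ``expected obstacle'' is not there.} Your first bullet already contains the argument: $\|\nabla u\|_{L^2_{1/U}}\le\|\nabla f\|_{L^2_U}+\|f\nabla U\|_{L^2_{1/U}}$, with $\|f\nabla U\|^2_{L^2_{1/U}}\le16\int Uf^2\langle y\rangle^{-2}$. This last term is bounded, after a cut-off, by $\int U|\nabla f|^2$ plus a local term, using $\int_{|y|>A}|y|^{-4}|\nabla g|^2\ge4\int_{|y|>A}|y|^{-6}g^2$ for $g=0$ on $\{|y|=A\}$. Hardy is only used to bound an error from above, so any constant works and no sharp constant or cancellation is required.

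\emph{Kernel.} Constants are not solutions of $-\Delta\phi=U\phi$. The input you need is nondegeneracy (Baraket--Pacard), not Chen--Li's classification. Also, since $u_\infty$ need not be integrable, $\Phi_{u_\infty}$ and $\mathcal M[u_\infty]$ are defined only up to constants. So replace $v=u+\frac c2\Lambda U$ by $\phi:=u_\infty/U-c$, which satisfies $\nabla\phi=\nabla\Phi_{u_\infty}$ and $-\Delta\phi=U\phi$. A spherical-harmonic ODE argument then works in the energy space:
\begin{itemize}
\item $u_\infty\in H^1_{loc}$ excludes the branches singular at the origin.
\item $\int u_\infty^2\langle y\rangle^2<\infty$ excludes the regular solutions of modes $k\ge2$, which grow like $r^k$ (e.g.\ $r^2\frac{3+r^2}{1+r^2}$ for $k=2$).
\end{itemize}
This leaves $u_\infty\in\mathrm{span}\{\Lambda U,\partial_1U,\partial_2U\}$. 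The orthogonality conditions pass to the limit because $\Lambda U,\partial_iU\in L^2(\langle y\rangle^{-2}dy)$ while $u_n\rightharpoonup u_\infty$ in $L^2(\langle y\rangle^2dy)$, and the contradiction follows.
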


We now introduce a quadratic form, obtained from $\langle f,\mathcal M[g]\rangle_{L^2}$ by localizing the contribution of the Poisson fields. Let \(R>0\) be a large parameter
to be fixed later, and define
\begin{align}\label{innerquadratic}
	\langle f,g\rangle_0
	:=
	\int_{\R^2} fg\,\frac{1}{U}\,dy
	-
	\int_{\R^2}
	\nabla\Phi_f\cdot\nabla\Phi_g\,
	\chi\!\left(\frac{y}{R}\right)\,dy.
\end{align}

\begin{proposition}\label{prop:innercoerproduct}
	For any \(u\in\mathcal S(\R^2)\), there exist constants \(\delta,C>0\)
	such that
	\[
	\langle u,u\rangle_0
	\ge
	\delta\|u\|_{L^2_{1/U}}^2
	-
	C\left[
	\langle u,1\rangle_{L^2}^2
	+
	\langle u,\Lambda U\rangle_{L^2}^2
	+
	\sum_{i=1}^{2}
	\langle u,\partial_i U\rangle_{L^2}^2
	\right].
	\]
\end{proposition}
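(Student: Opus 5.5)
The plan is to compare $\langle u,u\rangle_0$ with the nonlocalized form $\langle u,\mathcal M[u]\rangle_{L^2}$ and then use Proposition \ref{prop:RS}. By Green's formula, $-\int \Phi_u u\,dy=-\int |\nabla \Phi_u|^2 dy$, at least when $\int u=0$. Indeed, in that case $\Phi_u=O(|y|^{-1})$ and $\nabla\Phi_u=O(|y|^{-2})$, so no boundary terms appear. In general there is the logarithmic issue coming from the mass. So I first reduce to functions of zero mean. Write $u=u_1+a\,\Lambda U/\langle \Lambda U,1\rangle$-type correction; since $\int \Lambda U=0$, I instead use a fixed Schwartz profile $\rho$ with $\int\rho=1$, say $\rho=U/(8\pi)$, and set $u=u_1+a\rho$ with $a=\langle u,1\rangle_{L^2}$. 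Then $\int u_1=0$, and every term involving $a$ is bounded by $C a^2+\eta\|u\|^2_{L^2_{1/U}}$ by Cauchy--Schwarz. This works because $\langle\cdot,\cdot\rangle_0$ is bounded on $\rho$ for fixed $R$, and because the cross terms $\int \nabla\Phi_{u_1}\cdot\nabla\Phi_\rho\chi_R$ are controlled by $\|\nabla\Phi_{u_1}\|_{L^2(B_{2R})}$. The orthogonality terms $\langle u_1,\Lambda U\rangle$ and $\langle u_1,\partial_iU\rangle$ differ from those of $u$ by $O(|a|)$.

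For $u_1$ with zero mean, I write
$$\langle u_1,u_1\rangle_0=\langle u_1,\mathcal M[u_1]\rangle_{L^2}+\int |\nabla\Phi_{u_1}|^2(1-\chi_R)\,dy.$$
The second term is nonnegative, so Proposition \ref{prop:RS} directly gives the lower bound for $u_1$, with $\langle u_1,1\rangle=0$. Returning to $u$ then gives the claim after absorbing $\eta\|u\|^2$ with $\eta$ small, since $\|u_1\|^2_{L^2_{1/U}}\ge \frac12\|u\|^2_{L^2_{1/U}}-Ca^2$. Alternatively, one can avoid the decomposition and directly estimate the correction: $\int|\nabla \Phi_u|^2$ is infinite when $a\neq0$, so the localized version with $\chi_R$ is exactly what keeps the form finite. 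For that reason the decomposition route is the one I would carry out.

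The main obstacle is justifying the Green identity $\int\Phi_{u_1}u_1=\int|\nabla\Phi_{u_1}|^2$ for zero-mean $u_1$ with only $L^2_{1/U}$ decay. Here $1/U\sim|y|^4$, so $u_1\in L^1(\langle y\rangle^{1+}dy)$ by Cauchy--Schwarz. The first moment then gives $\nabla\Phi_{u_1}=O(|y|^{-2})$ at infinity and $\Phi_{u_1}=O(|y|^{-1})$. Hence boundary terms on $\partial B_r$ are $O(r^{-2})\to0$. For $u\in\mathcal S$ this is immediate. A second, minor point is that the constants $\delta,C$ must not depend on $R$ beyond what is stated. The $a$-terms bring $R$-dependence through $\int|\nabla\Phi_\rho|^2\chi_R\sim\log R$, which is allowed since $R$ is a fixed parameter in the statement, so $C=C(R)$.
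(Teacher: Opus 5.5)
Your argument is correct, and its core is the same as the paper's. Both compare $\langle u,u\rangle_0$ with $\langle u,\mathcal M[u]\rangle_{L^2}$, discard the nonnegative tail $\int|\nabla\Phi|^2(1-\chi_R)$, and invoke Proposition~\ref{prop:RS}.

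\textbf{Where you differ from the paper.} The difference is your reduction to zero mass, which the paper skips. The paper writes $\langle u,u\rangle_0\ge\int u^2/U-\int|\nabla\Phi_u|^2=\langle u,\mathcal M[u]\rangle_{L^2}$ for arbitrary $u$. When $\int u\neq0$ this identity fails: $\int_{B_r}|\nabla\Phi_u|^2$ grows like $\frac{(\int u)^2}{2\pi}\log r$, so $\int_{\mathbb R^2}|\nabla\Phi_u|^2=+\infty$, while in $\int u\Phi_u$ this divergence is cancelled by the boundary term. Your splitting $u=u_1+a\rho$ is what makes the one-line comparison rigorous.

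Your splitting also correctly shows that $C$ must grow like $\log R$. Testing with any fixed unit-mass profile shows this is unavoidable. It is harmless, since $R$ is fixed before the proposition is used.

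\textbf{Two small points to clean up.}
\begin{enumerate}
\item $U/(8\pi)$ is not a Schwartz function, so $u_1\notin\mathcal S$ and Proposition~\ref{prop:RS} does not apply verbatim. There are two easy fixes.
\begin{itemize}
\item Take $\rho$ to be a normalized Gaussian. Then $u_1\in\mathcal S$. The cross term $\int u_1\rho/U$ no longer vanishes, but it is handled by Cauchy--Schwarz.
\item Alternatively, extend Proposition~\ref{prop:RS} to $u_1$ by density in the zero-mass subspace of $L^2_{1/U}$. This uses $\|\nabla\Phi_v\|_{L^2}\lesssim\|v\|_{L^2_{1/U}}$ for $\int v=0$, which is Lemma~\ref{lem:weightL2pote} with $W\equiv1$ and $2<A<4$.
\end{itemize}
\item Your treatment of the cross term implicitly uses $\|\nabla\Phi_{u_1}\|_{L^2(B_{2R})}\lesssim\|u_1\|_{L^2_{1/U}}$. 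You should state this and cite Lemma~\ref{lem:weightL2pote} for it.
\end{enumerate}
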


\begin{proof}
	It follows directly from Proposition~\ref{prop:RS} since
	\[
	\langle u,u\rangle_0
	=
	\int_{\R^2}\frac{u^2}{U}\,dy
	-
	\int_{\R^2}|\nabla\Phi_u|^2
	\chi\!\left(\frac{y}{R}\right)\,dy \ge
	\int_{\R^2}\frac{u^2}{U}\,dy
	-
	\int_{\R^2}|\nabla\Phi_u|^2\,dy=\langle u ,\mathcal M u  \rangle_{L^2}.
	\]
\end{proof}

We now recall some inequalities that will be used in the sequel.

\begin{lemma}[Interior Hardy-type inequality, Lemma C.1 in \cite{CGMN2}]\label{lemma:hardy}
	There exists a constant \(C>0\) such that for all \(u\in \mathcal{S}(\R^{2})\),
	\begin{align*}
		\int_{\R^{2}}\frac{u^{2}}{1+|y|^{2}}\frac{1}{U}\,dy
		\le
		C \int_{\R^{2}}|\nabla u|^{2}\frac{1}{U}\,dy.
	\end{align*}
\end{lemma}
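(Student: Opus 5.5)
The plan is to reduce Lemma~\ref{lemma:hardy} to one-dimensional radial weighted Hardy inequalities, exploiting $1/U=(1+|y|^2)^2/8$. After dropping the constant $8$, we need
\[
\int_{\R^2}u^2(1+|y|^2)\,dy\lesssim \int_{\R^2}|\nabla u|^2(1+|y|^2)^2\,dy .
\]
We split $\R^2$ into the regions $\{|y|\le 2\}$ and $\{|y|\ge 1\}$ with a smooth partition of unity. The two regions are controlled by different mechanisms, because the weights are bounded below near the origin and grow polynomially at infinity.

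\textbf{Far region.} I would mimic the vector field argument of Lemma~\ref{lem:exterior_hardy_weighted}, taking $F(y)=(1+|y|^2)\,y$. Then
\[
\nabla\cdot F=2(1+|y|^2)+2|y|^2\ge 2(1+|y|^2) \quad\mbox{and}\quad |F|\le (1+|y|^2)^{3/2}.
\]
For $u\in\mathcal S$, integrating by parts gives
\[
2\int u^2(1+|y|^2)\le\int u^2\,\nabla\cdot F=-2\int u\nabla u\cdot F\le 2\Big(\int u^2(1+|y|^2)\Big)^{1/2}\Big(\int|\nabla u|^2(1+|y|^2)^2\Big)^{1/2}.
\]
There are no boundary terms since $u$ decays rapidly and $F$ grows only polynomially. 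This already seems to yield the full inequality with constant $1$, since the left side is $2A$ and the right side is $2\sqrt{A}\sqrt{B}$, so $A\le B$. Hence no splitting is actually needed: the divergence of $F$ dominates the weight globally, including near the origin, because $\nabla\cdot F\ge 2$ there.

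The key step is therefore just checking the two pointwise inequalities for $F$. With $r=|y|$, we have $\nabla\cdot F=2(1+r^2)+y\cdot\nabla(r^2)=2+4r^2$, which is at least $2(1+r^2)$. We also have $|F|=r(1+r^2)\le(1+r^2)^{3/2}$. I expect no real obstacle; the only care point is justifying the integration by parts and the division by $\sqrt{A}$. The division is legitimate because $A<\infty$ for $u\in\mathcal S$, and the case $A=0$ is trivial. Restoring the factor $8$ from $1/U$ on both sides gives $C=1$.
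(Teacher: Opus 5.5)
Your argument is correct. With $F=(1+|y|^2)y$ you have $\nabla\cdot F=2+4|y|^2\ge 2(1+|y|^2)$ and $|F|\le (1+|y|^2)^{3/2}$, so integration by parts and Cauchy--Schwarz give $\int u^2(1+|y|^2)\le \int |\nabla u|^2(1+|y|^2)^2$, which is the lemma with $C=1$; the preliminary partition of unity is unnecessary and can be deleted. The paper does not reprove this lemma (it cites Lemma C.1 of \cite{CGMN2}), but your proof uses the same vector-field integration-by-parts technique the paper uses for Lemma~\ref{lem:exterior_hardy_weighted}, so it is essentially the same approach, made self-contained here.
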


The coercivity of Proposition \ref{prop:coercivityoperator} remains true for the localized quadratic form $\langle \cdot,\cdot \rangle_0$:

\begin{proposition}\label{InnCoercivity1}
	There exist constants \(\delta,C>0\) such that for any sufficiently large \(R>0\), for all \(u\in \mathcal{S}(\R^{2})\),
	\begin{align*}
		\langle L[u],u\rangle_{0}
		\le
		-\delta \int_{\R^{2}} \frac{|\nabla u|^{2}}{U}
		+
		C\left[
		\langle u,\Lambda U\rangle_{L^{2}}^{2}
		+
		\sum_{i=1}^{2}\langle u,\partial_{i}U\rangle_{L^{2}}^{2}
		\right].
	\end{align*}
\end{proposition}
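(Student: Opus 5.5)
The plan is to compare $\langle L[u],u\rangle_0$ with the unlocalized quantity $\langle L[u],\mathcal M[u]\rangle_{L^2}$ of Proposition \ref{prop:coercivityoperator}, and to show that the localization error is small. Integrating by parts twice, since $-\Delta\Phi_u=u$, one has
$$
\int \nabla\Phi_f\cdot\nabla\Phi_g\,dy=\int f\Phi_g\,dy .
$$
Hence, without the cut-off, $\langle f,g\rangle_0$ would equal $\langle f,\mathcal M[g]\rangle_{L^2}$. We therefore write
$$
\langle L[u],u\rangle_0=\langle L[u],\mathcal M[u]\rangle_{L^2}+\int \nabla\Phi_{L[u]}\cdot\nabla\Phi_u\,(1-\chi_R)\,dy .
$$
Note the sign convention: $\langle L[u],\mathcal M[u]\rangle_{L^2}=-\int U|\nabla \mathcal M[u]|^2$ after integrating \eqref{id:formula-LU} by parts, so the quantity in Proposition \ref{prop:coercivityoperator} is minus this. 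That proposition then gives the main term
$$
\le -\delta\|\nabla u\|_{L^2_{1/U}}^2+C\Bigl[\langle u,\Lambda U\rangle_{L^2}^2+\sum_{i=1}^2\langle u,\partial_iU\rangle_{L^2}^2\Bigr].
$$
It remains to show that the tail term is bounded by $o_{R\to\infty}(1)\,\|\nabla u\|_{L^2_{1/U}}^2$ plus the same projection terms, and then absorb it for $R$ large.

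For the tail, I would use that $L[u]=\nabla\cdot(\nabla u-u\nabla\Phi_U-U\nabla\Phi_u)$ is in divergence form. Thus $\nabla\Phi_{L[u]}=-\nabla(-\Delta)^{-1}\nabla\cdot F$ with $F=\nabla u-u\nabla\Phi_U-U\nabla\Phi_u$, and this is a Riesz-type transform of $F$, bounded in weighted $L^2$. The first step is to control $F$ in $L^2_{\langle y\rangle^{a}}$ for a suitable weight:
\begin{itemize}
\item $|\nabla u|\langle y\rangle^2$ is controlled by $\|\nabla u\|_{L^2_{1/U}}$, since $1/U\approx\langle y\rangle^4$;
\item $u\nabla\Phi_U\approx u/|y|$ is handled by the Hardy inequality of Lemma \ref{lemma:hardy};
\item $U\nabla\Phi_u$ decays like $\langle y\rangle^{-4}|\nabla\Phi_u|$.
\end{itemize}
The second step is to bound $\nabla\Phi_u$ on $\{|y|\ge R\}$ using the decay given by $u\in L^2_{1/U}$. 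Indeed $\|u\langle y\rangle^{1+\epsilon}\|_{L^1}\lesssim\|u\|_{L^2_{1/U}}$, so $\nabla\Phi_u=-\frac{m}{2\pi}\frac{y}{|y|^2}+O(\langle y\rangle^{-1-\epsilon'})$ with $m=\int u$, and $\|u\|_{L^2_{1/U}}$ itself is bounded via Hardy by $\|\nabla u\|_{L^2_{1/U}}$. The main difficulty is the monopole part $m\,y/|y|^2$ of $\nabla\Phi_u$, which is not square integrable at infinity. However, $\nabla\Phi_{L[u]}$ has zero mass, since $L[u]$ is a divergence, and in fact $\nabla\Phi_{L[u]}=O(|y|^{-2})$ at infinity with a coefficient controlled by $\int |F|$. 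Pairing $O(|y|^{-2})$ against $O(|y|^{-1})$ on $\{|y|\ge R\}$ gives
$$
\int_{|y|\ge R}|y|^{-3}\,dy\approx R^{-1}.
$$

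So the key step is the decomposition of the tail integral into a far-field (monopole/dipole) part, handled by explicit decay rates of order $R^{-1}$ times $\|u\|_{L^2_{1/U}}\|\nabla u\|_{L^2_{1/U}}$, and a remainder handled by weighted Cauchy--Schwarz with an $o_R(1)$ factor. Both are then bounded through Lemma \ref{lemma:hardy} by $o_R(1)\|\nabla u\|^2_{L^2_{1/U}}$. The expected obstacle is justifying the weighted bounds for the Riesz-type operator and getting the precise $|y|^{-2}$ far-field decay of $\nabla\Phi_{L[u]}$, i.e. a multipole expansion requiring a first moment of $F$. I would get this from $|y||F|\in L^1$, which follows from the weighted $L^2$ bounds on $F$ above, and would apply the Poisson field estimates of Appendix \ref{sec:poisson}. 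Choosing $R$ large so that the $o_R(1)$ factor is below $\delta/2$ concludes.
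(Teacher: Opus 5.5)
Your reduction is correct and is exactly the paper's: $\langle L[u],u\rangle_0=-\int U|\nabla\mathcal M[u]|^2\,dy+\int\nabla\Phi_{L[u]}\cdot\nabla\Phi_u\,(1-\chi_R)\,dy$, and your sign remark is also correct. So everything rests on showing that the tail is $o_{R\to\infty}(1)\|\nabla u\|^2_{L^2_{1/U}}$.

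The gap is in how you control the factor $\nabla\Phi_u$. You expand it around the \emph{global} monopole $m=\int u$, and then use $\|\langle y\rangle^{1+\epsilon}u\|_{L^1}\lesssim\|u\|_{L^2_{1/U}}\lesssim\|\nabla u\|_{L^2_{1/U}}$. Both inequalities are false.
Lemma~\ref{lemma:hardy} only gives $\int u^2\langle y\rangle^2\lesssim\int|\nabla u|^2\langle y\rangle^4$, so it loses a factor $\langle y\rangle^2$ relative to $\|u\|^2_{L^2_{1/U}}$. Moreover, since $\int_{\mathbb R^2}\langle y\rangle^{-2}dy=\infty$, Cauchy--Schwarz lets $\|u\|_{L^2_{1/U}}$ control $\int|u|\langle y\rangle^a$ only for $a<1$, and lets $\|\nabla u\|_{L^2_{1/U}}$ control it only for $a<0$. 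That is not even the mass.

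Concretely, let $u$ be radial, equal to $|y|^{-2}$ on $\{2\le|y|\le\Lambda\}$, smoothly cut off, and corrected by a fixed bump so that $\langle u,\Lambda U\rangle=0$ (the $\partial_iU$ projections vanish by symmetry). Then $\|\nabla u\|^2_{L^2_{1/U}}\approx\log\Lambda$, while $\int u\approx\log\Lambda$, $\|u\|^2_{L^2_{1/U}}\approx\Lambda^2$ and $\|\langle y\rangle^{1+\epsilon}u\|_{L^1}\approx\Lambda^{1+\epsilon}$.

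The right-hand side of the proposition contains only the $\Lambda U$ and $\partial_iU$ projections; unlike Proposition~\ref{prop:innercoerproduct}, there is no $\langle u,1\rangle^2$ term. Hence a far-field bound of the form $R^{-1}\|u\|_{L^2_{1/U}}\|\nabla u\|_{L^2_{1/U}}$, or $R^{-1}|m|\,\|\nabla u\|_{L^2_{1/U}}$, cannot be absorbed for any fixed $R$. Similarly, $|y|F\in L^1$ does not follow from $F\in L^2_{\langle y\rangle^4}$, which only gives $\langle y\rangle^aF\in L^1$ for $a<1$.

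The way around this, and what the paper does, is never to isolate the global monopole. Instead, use Cauchy--Schwarz with the weights $\langle y\rangle^{\mp\varepsilon}$:
\[
\Big|\int\nabla\Phi_{L[u]}\cdot\nabla\Phi_u(1-\chi_R)\,dy\Big|\le\Big(\int|\nabla\Phi_{L[u]}|^2(1-\chi_R)\langle y\rangle^{\varepsilon}dy\Big)^{1/2}\Big(\int|\nabla\Phi_u|^2\langle y\rangle^{-\varepsilon}dy\Big)^{1/2}.
\]
The second factor is $\lesssim(\int u^2\langle y\rangle^2)^{1/2}\lesssim\|\nabla u\|_{L^2_{1/U}}$, by Lemma~\ref{lem:weightL2pote} with $A=2$ and Lemma~\ref{lemma:hardy}. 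The effective monopole at radius $\rho$ is the truncated mass $\int_{|z|<\rho}u=O(\sqrt{\log(2+\rho)}\,\|\langle y\rangle u\|_{L^2})$, not $m$, and the weight $\langle y\rangle^{-\varepsilon}$ absorbs this logarithmic growth.

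For the first factor, the paper writes $\Phi_{L[u]}$ as $\pm u$ plus the potentials of the zero-mass functions $\nabla\cdot(U\nabla\Phi_u)$ and $\nabla\cdot(u\nabla\Phi_U)$. It then applies the zero-mass case of Lemma~\ref{lem:weightL2pote} with $\varepsilon=1/2$ and weight exponent $3$.

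Your divergence-form approach also works for this factor, and needs no first moment. Use $\|F\|_{L^2_{\langle y\rangle^4}}\lesssim\|\nabla u\|_{L^2_{1/U}}$ as in your first step, and split $F=F\mathbf 1_{\{|z|<R/2\}}+F\mathbf 1_{\{|z|\ge R/2\}}$.
On $\{|y|\ge R\}$, the first piece is pointwise $O(|y|^{-2}\int|F|)$ with $\int|F|\lesssim\|F\|_{L^2_{\langle y\rangle^4}}$; it contributes $O(R^{-(2-\varepsilon)/2})$.
The second piece is controlled by boundedness of Riesz transforms on $L^2_{\langle y\rangle^{\varepsilon}}$ (an $A_2$ weight for $\varepsilon<2$); it contributes $O(R^{-(4-\varepsilon)/2})$.
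Note that your pointwise claim $\nabla\Phi_{L[u]}=O(|y|^{-2}\int|F|)$ holds only for the compactly supported piece.
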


\begin{proof}
	We first isolate the error introduced by the localization in the quadratic
	form. By \eqref{id:formula-LU} we have
	\begin{align*}
		\langle L[u],u\rangle_{0}
		&=
		\int_{\R^2} L[u]\frac{u}{U}
		-
		\int_{\R^2}\nabla\Phi_{L[u]}\cdot\nabla\Phi_u\,
		\chi\!\left(\frac{y}{R}\right)  \\
		&=
		-\int_{\R^2}U|\nabla\mathcal M[u]|^2
		-
		\int_{\R^2}\nabla\Phi_{L[u]}\cdot\nabla\Phi_u
		\left(\chi\!\left(\frac{y}{R}\right)-1\right).
	\end{align*}
	Therefore, by Proposition~\ref{prop:coercivityoperator}, it is enough to
	prove that
	\begin{align}\label{eq:inner_tail_error}
		\left|
		\int_{\R^2}\nabla\Phi_{L[u]}\cdot\nabla\Phi_u
		\left(\chi\!\left(\frac{y}{R}\right)-1\right)
		\right|
		=
		o_{R\to\infty}(1)
		\int_{\R^2}\frac{|\nabla u|^2}{U}.
	\end{align}
	
	Fix \(0<\varepsilon<1\). By Cauchy--Schwarz,
	\begin{align*}
		\left|
		\int \nabla\Phi_{L[u]}\cdot\nabla\Phi_u
		\left(\chi\!\left(\frac{y}{R}\right)-1\right)
		\right|
		&\le
		\left(
		\int |\nabla\Phi_{L[u]}|^2
		\left(1-\chi\!\left(\frac{y}{R}\right)\right)
		\langle y\rangle^\varepsilon
		\right)^{1/2} \left(
		\int |\nabla\Phi_u|^2\langle y\rangle^{-\varepsilon}
		\right)^{1/2}.
	\end{align*}
	Applying Lemma~\ref{lem:weightL2pote} to the second factor, together with the interior Hardy-type inequality (Lemma~\ref{lemma:hardy}), gives
	\begin{align*}
		\left(
		\int |\nabla\Phi_u|^2\langle y\rangle^{-\varepsilon}
		\right)^{1/2}
		\lesssim
		\left(
		\int \frac{|\nabla u|^2}{U}
		\right)^{1/2}.
	\end{align*}
	It remains to estimate the localized norm of \(\nabla\Phi_{L[u]}\).
	Using the equation defining \(L\), we decompose
	\[
	\Phi_{L[u]}
	=
	u
	-
	\Phi_{\nabla\cdot(U\nabla\Phi_u)}
	-
	\Phi_{\nabla\cdot(u\nabla\Phi_U)}.
	\]
	The contribution of the first term satisfies
	\begin{align*}
		\int |\nabla u|^2
		\left(1-\chi\!\left(\frac{y}{R}\right)\right)
		\langle y\rangle^\varepsilon
		\lesssim
		o_{R\to\infty}(1)
		\int \frac{|\nabla u|^2}{U},
	\end{align*}
	since \(U^{-1}\sim \langle y\rangle^4\) at infinity. We now estimate the two Poisson terms. First, $\int \nabla\cdot(U\nabla\Phi_u)=0$ and $\int \nabla\cdot(u\nabla\Phi_U)=0$, so that the zero-mass part of Lemma~\ref{lem:weightL2pote} applies. For every \(2<\alpha<4\),
	\begin{align}\label{Prop:coercivityClaim1}
		\int
		|\nabla\cdot(u\nabla\Phi_U)|^2
		\langle y\rangle^\alpha\,dy
		\lesssim \int u^2 U^2\langle y\rangle^\alpha\,dy
		+
		\int |\nabla u|^2\langle y\rangle^{\alpha-2}\,dy \lesssim
		\int \frac{|\nabla u|^2}{U},
	\end{align}
	where for the last inequality we used that both terms are controlled by Lemma~\ref{lemma:hardy}. Similarly, for every \(2<\alpha<4\),
	\begin{align}\label{Prop:coercivityClaim2}
		\int
		|\nabla\cdot(U\nabla\Phi_u)|^2
		\langle y\rangle^\alpha\,dy
		\lesssim \int |\nabla U|^2|\nabla\Phi_u|^2
		\langle y\rangle^\alpha\,dy
		+
		\int u^2U^2\langle y\rangle^\alpha\,dy \lesssim
		\int \frac{|\nabla u|^2}{U}.
	\end{align}
	where for the last inequality the second term is handled as above, and for the first one, we use
	Lemma~\ref{lem:weightL2pote} with
	\(\omega(y)=|\nabla U(y)|^2\langle y\rangle^\alpha\),
	and then apply the interior Hardy-type inequality (Lemma~\ref{lemma:hardy}). \newline
	We now choose, for instance, \(\varepsilon=\frac12\) and \(\alpha=3\).
	For
	\[
	\omega_R(y)
	=
	\left(1-\chi\!\left(\frac{y}{R}\right)\right)
	\langle y\rangle^\varepsilon,
	\]
	the constant \(K_{\omega_R,\alpha}\) satisfies
	\[
	K_{\omega_R,\alpha}
	\lesssim
	\int_R^\infty \rho^{1+\varepsilon-\alpha}\,d\rho
	=
	o_{R\to\infty}(1),
	\]
	since \(1+\varepsilon-\alpha<-1\). Applying Lemma~\ref{lem:weightL2pote}, together with
	\eqref{Prop:coercivityClaim1}--\eqref{Prop:coercivityClaim2}, we obtain
	\[
	\int |\nabla\Phi_{L[u]}|^2
	\left(1-\chi\!\left(\frac{y}{R}\right)\right)
	\langle y\rangle^\varepsilon
	\le
	o_{R\to\infty}(1)
	\int \frac{|\nabla u|^2}{U}.
	\]
	This proves \eqref{eq:inner_tail_error} and finishes the proof of the Proposition.

\end{proof}

\subsection{The matched scalar product} \label{subsection:matched-sp}

We found that $L_{\infty}$, $L_{\infty,\xi}$ and $L_{0}$, the leading order expressions of the linearized operator $L_{\nu,\xi}$ in the parabolic outer zone $|z|\approx 1$, the nearby outer zone $\nu\ll |z-\xi|\ll 1$ and the inner zone $|z-\xi|\lesssim \nu$ respectively, were associated to the scalar products $\langle \cdot,\cdot \rangle_\infty$, $\langle \cdot,\cdot \rangle_{\infty,\xi}$ and $\langle \cdot ,\cdot \rangle_0$ for which they satisfy a symmetric-type formula and a dissipation estimate. Moreover, one notices that these scalar products match in the transition zones $\{|z-\xi|\sim R\nu, \ R\gg1\}$ and $\{|z|\sim \zeta_*, \ \zeta_*\ll1\}$. In this section we construct a \emph{matched scalar product} that is a global quadratic form that matches these scalar products together.

\subsubsection{Construction of the matched scalar product}

We first introduce the matched parabolically averaged distance, which is obtained by matching the parabolically averaged distance with the true distance to the center of the soliton in the inner zone $|z-\xi|\leq \nu$.

\begin{definition}[Matched parabolically averaged distance]\label{def:adapted_distance}
	For \(|z-\xi(\tau)|\le1\), we define
	\begin{align}\label{def_Dist}
		\mathfrak d(z,\xi,\tau)
		:=
		|z-\xi(\tau)|\,\chi\!\left(\frac{|z-\xi(\tau)|}{\nu(\tau)}\right)
		+
		d(z,\xi,\tau)
		\left(
		1-\chi\!\left(\frac{|z-\xi(\tau)|}{\nu(\tau)}\right)
		\right).
	\end{align}
\end{definition}

The next lemma collects basic properties of \(\mathfrak d\).

\begin{lemma}[Basic properties of the matched parabolically averaged distance]\label{lem:adapted_distance}
	The following hold as $\tau \to \infty$, for all \(0<|z-\xi(\tau)|\le1\):
	\begin{align}
		\bigl|\mathfrak d(z,\xi,\tau)-|z-\xi(\tau)|\bigr|
		&\lesssim
		\begin{cases}
			0, & |z-\xi(\tau)|\le \nu(\tau),\\
			o\bigl(|z-\xi(\tau)|\bigr),
			& \nu(\tau)\le |z-\xi(\tau)|\le 1,
		\end{cases}
		\label{eq:adapted_d_0}
	\end{align}
	\begin{align}
		\bigl|\nabla_z\mathfrak d(z,\xi,\tau)-\nabla_z|z-\xi(\tau)|\bigr|
		&\lesssim
		\begin{cases}
			0, & |z-\xi(\tau)|\le \nu(\tau),\\
			o(1),
			& \nu(\tau)\le |z-\xi(\tau)|\le 1,
		\end{cases}
		\label{eq:adapted_d_1}
	\end{align}
	and
	\begin{align}
		\bigl|\partial_\tau \mathfrak d(z,\xi,\tau)\bigr|
		=
		o\!\left(\frac{1}{\nu(\tau)+|z-\xi(\tau)|}\right).
		\label{eq:adapted_d_t}
	\end{align}
\end{lemma}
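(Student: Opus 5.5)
The three estimates follow directly from Definition \ref{def:adapted_distance}, Lemma \ref{Lemma:dProp} and the bound \eqref{id:qualitative-modulation-nu-xi} of Lemma \ref{lem:selfsimilarquali}. Write $r=|z-\xi(\tau)|$ and $\chi_\nu=\chi(r/\nu)$. The cut-off equals $1$ for $r\le\nu$, equals $0$ for $r\ge 2\nu$, and is nonzero in derivative only on the annulus $\nu\le r\le 2\nu$.

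\textbf{Estimates \eqref{eq:adapted_d_0} and \eqref{eq:adapted_d_1}.} For $r\le \nu$ we have $\mathfrak d=r$ identically near such points, which gives the two zero cases. For $r\ge \nu$ we write
$$
\mathfrak d-r=(1-\chi_\nu)(d-r)
$$
and use $d-r=o(r)$ from \eqref{Lemma:expd}; this gives \eqref{eq:adapted_d_0}. Differentiating,
$$
\nabla_z\mathfrak d-\nabla_z r=(1-\chi_\nu)(\nabla d-\nabla r)-\nu^{-1}\chi'(r/\nu)\,\nabla r\,(d-r).
$$
The first term is $o(1)$ by \eqref{Lemma:expdzd}. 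The second term lives on $r\approx \nu$, where $\nu^{-1}|d-r|=o(r/\nu)=o(1)$, so it is also $o(1)$.

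\textbf{Estimate \eqref{eq:adapted_d_t}.} This is where the actual work lies, and it is the main obstacle. Since $r$ depends on $\tau$ both through $\xi$ and through the cut-off argument $r/\nu$, we compute
$$
\partial_\tau\mathfrak d=\chi_\nu\,\partial_\tau r+(1-\chi_\nu)\,\partial_\tau d+(r-d)\,\partial_\tau\chi_\nu,
$$
with $\partial_\tau r=-\xi_\tau\cdot\nabla r$. The first term is bounded by $|\xi_\tau|=o(\nu^{-1})$, which is acceptable because it is supported in $r\le 2\nu$, where $\nu^{-1}\approx(\nu+r)^{-1}$. This is exactly the reason for using the true distance in the inner zone: the raw modulation bound is usable only there. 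The second term is $o(r^{-1})=o((\nu+r)^{-1})$ by \eqref{Lemma:expdtd}, since it is supported in $r\ge\nu$. One caveat is that Lemma \ref{Lemma:dProp} is stated for $0<r\le 1$, and I will apply it there uniformly as $\tau\to\infty$. For the third term,
$$
\partial_\tau\chi_\nu=\chi'(r/\nu)\left(\frac{\partial_\tau r}{\nu}-\frac{r\,\nu_\tau}{\nu^2}\right),
$$
which is supported in $\nu\le r\le 2\nu$ and has size $O(\nu^{-1}(|\xi_\tau|+|\nu_\tau|))=o(\nu^{-2})$. Multiplying by $|r-d|=o(r)=o(\nu)$ there gives $o(\nu^{-1})$, which is acceptable. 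The step to check carefully is that the $o(\cdot)$ in \eqref{Lemma:expd} is uniform down to $r\sim\nu$. This holds because \eqref{id:parabolic-stationarity-distance} relies on \eqref{id:slow-variation-nu-xi}, which is uniform in $R\ge 0$. Collecting the three terms gives \eqref{eq:adapted_d_t}.
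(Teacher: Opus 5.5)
Your proof is correct and follows the paper's argument essentially line by line. It uses the same identity $\mathfrak d-|z-\xi|=(1-\chi(|z-\xi|/\nu))(d-|z-\xi|)$ for \eqref{eq:adapted_d_0}--\eqref{eq:adapted_d_1}, and the same three-term product-rule splitting of $\partial_\tau\mathfrak d$, estimated via \eqref{id:qualitative-modulation-nu-xi} where the cut-off or its derivative is supported and via Lemma \ref{Lemma:dProp} on $\{|z-\xi|\ge\nu\}$. Your remark that the $o(\cdot)$ in Lemma \ref{Lemma:dProp} must be uniform down to $|z-\xi|\sim\nu$, which follows from the uniformity in $R$ of \eqref{id:slow-variation-nu-xi}, is a point the paper uses only implicitly.
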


\begin{proof}
	By construction,
	\begin{align}\label{eq:adapted_distance_difference}
		\mathfrak d(z,\xi,\tau)-|z-\xi(\tau)|
		=
		\bigl(d(z,\xi,\tau)-|z-\xi(\tau)|\bigr)
		\left(
		1-\chi\!\left(\frac{|z-\xi(\tau)|}{\nu(\tau)}\right)
		\right).
	\end{align}
	In particular, \(\mathfrak d(z,\xi,\tau)=|z-\xi(\tau)|\) whenever
	\(|z-\xi(\tau)|\le\nu(\tau)\), and \eqref{eq:adapted_d_0} follows directly
	from Lemma~\ref{Lemma:dProp}.
	
	Differentiating \eqref{eq:adapted_distance_difference} with respect to \(z\),
	we get
	\begin{align*}
		\nabla_z\mathfrak d-\nabla_z|z-\xi|
		&=
		\bigl(\nabla_z d-\nabla_z|z-\xi|\bigr)
		\left(
		1-\chi\!\left(\frac{|z-\xi|}{\nu}\right)
		\right)-
		\bigl(d-|z-\xi|\bigr)
		\chi'\!\left(\frac{|z-\xi|}{\nu}\right) \frac{z-\xi}{\nu |z-\xi|}.
	\end{align*}
	The first term is \(o(1)\) by Lemma~\ref{Lemma:dProp}. The second one is
	supported in the transition region \(|z-\xi|\approx \nu\), where $d-|z-\xi|=o(|z-\xi|)$, hence is also \(o(1)\), proving \eqref{eq:adapted_d_1}.
	
	We now prove \eqref{eq:adapted_d_t} using directly the definition \eqref{def_Dist}. Differentiating in \(\tau\), we obtain
	\begin{align*}
		\partial_\tau \mathfrak d
		&=
		\partial_\tau |z-\xi|\,
		\chi\!\left(\frac{|z-\xi|}{\nu}\right)
		+
		|z-\xi|\,
		\chi'\!\left(\frac{|z-\xi|}{\nu}\right)
		\partial_\tau\!\left(\frac{|z-\xi|}{\nu}\right) \\
		&\quad
		+
		\partial_\tau d
		\left(
		1-\chi\!\left(\frac{|z-\xi|}{\nu}\right)
		\right)
		-
		d\,
		\chi'\!\left(\frac{|z-\xi|}{\nu}\right)
		\partial_\tau\!\left(\frac{|z-\xi|}{\nu}\right) \\
		&=
		\partial_\tau |z-\xi|\,
		\chi\!\left(\frac{|z-\xi|}{\nu}\right)
		+
		\partial_\tau d
		\left(
		1-\chi\!\left(\frac{|z-\xi|}{\nu}\right)
		\right)+
		\bigl(|z-\xi|-d\bigr)
		\chi'\!\left(\frac{|z-\xi|}{\nu}\right)
		\partial_\tau\!\left(\frac{|z-\xi|}{\nu}\right).
	\end{align*}
	Here and below \(\xi=\xi(\tau)\), \(\nu=\nu(\tau)\). By the modulation estimate \eqref{id:qualitative-modulation-nu-xi},
	\[
	\partial_\tau |z-\xi|
	=
	-\frac{z-\xi}{|z-\xi|}\cdot \xi_\tau=o(\nu^{-1}),
	\]
	so using that the first term is supported where \(|z-\xi|\lesssim\nu\) we get
	\[
	\left|
	\partial_\tau |z-\xi|\,
	\chi\!\left(\frac{|z-\xi|}{\nu}\right)
	\right|
	=
	o\!\left(\frac1\nu \mathbbm 1(|z-\xi|\lesssim \nu)\right)
	=
	o\!\left(\frac1{\nu+|z-\xi|}\right).
	\]
	For the second term, the factor \(1-\chi(|z-\xi|/\nu)\) restricts to
	\(|z-\xi|\gtrsim\nu\), and $|\partial_\tau d|=o\!\left(|z-\xi|^{-1}\right)$ by Lemma~\ref{Lemma:dProp}, so that
	\[
	\left|
	\partial_\tau d
	\left(
	1-\chi\!\left(\frac{|z-\xi|}{\nu}\right)
	\right)
	\right|
	=
	o\!\left(\frac1{\nu+|z-\xi|}\right).
	\]
	Finally, the last term is supported in the transition region
	\(|z-\xi|\approx \nu\). There, $|d-|z-\xi||=o(\nu)$ by \eqref{Lemma:expd} and
	\[
	\partial_\tau\!\left(\frac{|z-\xi|}{\nu}\right)
	=
	\frac{\partial_\tau |z-\xi|}{\nu}
	-
	\frac{|z-\xi|\,\nu_\tau}{\nu^2}
	=
	o\!\left(\frac1{\nu^2}\right),
	\]
	by the modulation estimates \eqref{id:qualitative-modulation-nu-xi}. Thus
	\[
	\left|
	\bigl(|z-\xi|-d\bigr)
	\chi'\!\left(\frac{|z-\xi|}{\nu}\right)
	\partial_\tau\!\left(\frac{|z-\xi|}{\nu}\right)
	\right|
	=
	o\!\left(\frac1\nu \mathbbm 1(|z-\xi|\approx \nu)\right)
	=
	o\!\left(\frac1{\nu+|z-\xi|}\right).
	\]
	Combining the three estimates gives \eqref{eq:adapted_d_t}.
\end{proof}

Using $\mathfrak d$, we construct a weight that matches the weights of the inner and nearby outer zones. Let
\begin{align} \label{id:def-tilde-gamma}
	\gamma[\nu,\xi] 
	&= \frac{(\nu^{2}+|z-\xi|^{2})^{2}}{8}, 
	\qquad 
	\tilde \gamma[\nu,\xi]
	= \frac{(\nu^{2}+\mathfrak{d}^{2}(z,\xi,\tau))^{2}}{8}.
\end{align}

Note that $\gamma=1/U_{\nu,\xi}$ is the renormalization of the weight $1/U$ appearing in the scalar product $\langle \cdot ,\cdot \rangle_0$ for the inner zone, and that $d^4$ is the leading order term near the origin of the weight associated to the nearby outer region. Hence $\tilde \gamma$ is a weight that matches these two weights.

\begin{lemma}\label{tildegamma-gamma}
	Let \(0<|z-\xi(\tau)|\le 1\). Then
	\[
	|\tilde{\gamma}-\gamma|
	=
	o(\gamma),
	\qquad
	|\nabla_z(\tilde{\gamma}-\gamma)|
	=
	o(|\nabla_z\gamma|) \quad \mbox{and} \quad |\partial_{\tau}\tilde{\gamma}|
	=
	o\!\left(
	\frac{\gamma}{\nu(\tau)^2+|z-\xi(\tau)|^2}
	\right).
	\]
\end{lemma}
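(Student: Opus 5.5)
The plan is to compute all three quantities directly from the explicit formulas \eqref{id:def-tilde-gamma}, using only the properties of $\mathfrak d$ in Lemma~\ref{lem:adapted_distance} and the modulation bound \eqref{id:qualitative-modulation-nu-xi}. Throughout, we write $r=|z-\xi(\tau)|$, so that $\gamma=(\nu^2+r^2)^2/8$ and $\tilde\gamma=(\nu^2+\mathfrak d^2)^2/8$.

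\emph{First estimate.} On $\{r\le\nu\}$ we have $\mathfrak d=r$ by \eqref{eq:adapted_d_0}, so $\tilde\gamma=\gamma$ there. On $\{\nu\le r\le 1\}$, \eqref{eq:adapted_d_0} gives $\mathfrak d=r(1+o(1))$. Hence
\[
\nu^2+\mathfrak d^2=\nu^2+r^2+o(r^2)=(\nu^2+r^2)(1+o(1)).
\]
Squaring this yields $\tilde\gamma=\gamma(1+o(1))$, which is $|\tilde\gamma-\gamma|=o(\gamma)$.

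\emph{Second estimate.} Differentiating gives $\nabla\tilde\gamma=\tfrac12(\nu^2+\mathfrak d^2)\mathfrak d\nabla\mathfrak d$ and $\nabla\gamma=\tfrac12(\nu^2+r^2)r\nabla r$, so $|\nabla\gamma|=\tfrac12(\nu^2+r^2)r$ because $|\nabla r|=1$. On $\{r\le\nu\}$ the difference vanishes identically. On $\{\nu\le r\le1\}$ we write
\begin{align*}
(\nu^2+\mathfrak d^2)\mathfrak d\nabla\mathfrak d-(\nu^2+r^2)r\nabla r
={}&(\mathfrak d^2-r^2)\mathfrak d\nabla\mathfrak d+(\nu^2+r^2)(\mathfrak d-r)\nabla\mathfrak d\\
&+(\nu^2+r^2)r(\nabla\mathfrak d-\nabla r).
\end{align*}
We bound the three terms as follows.
\begin{itemize}
\item Since $\mathfrak d^2-r^2=o(r^2)$ and $|\nabla\mathfrak d|=1+o(1)$ by \eqref{eq:adapted_d_1}, the first term is $o(r^3)$.
\item Since $\mathfrak d-r=o(r)$, the second term is $o((\nu^2+r^2)r)$.
\item Since $\nabla\mathfrak d-\nabla r=o(1)$ by \eqref{eq:adapted_d_1}, the third term is $o((\nu^2+r^2)r)$.
\end{itemize}
Each term is therefore $o((\nu^2+r^2)r)=o(|\nabla\gamma|)$.

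\emph{Third estimate.} Differentiating in $\tau$ gives
\[
\partial_\tau\tilde\gamma=\tfrac12(\nu^2+\mathfrak d^2)\bigl(\nu\nu_\tau+\mathfrak d\,\partial_\tau\mathfrak d\bigr).
\]
We first bound the bracket.
\begin{itemize}
\item By \eqref{id:qualitative-modulation-nu-xi}, $\nu_\tau=o(\nu^{-1})$, hence $\nu\nu_\tau=o(1)$.
\item By \eqref{eq:adapted_d_0} we have $\mathfrak d\lesssim r\le\nu+r$, and by \eqref{eq:adapted_d_t} we have $\partial_\tau\mathfrak d=o((\nu+r)^{-1})$. Hence $\mathfrak d\,\partial_\tau\mathfrak d=o(1)$.
\end{itemize}
So the bracket is $o(1)$. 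Combining this with $\nu^2+\mathfrak d^2\approx\nu^2+r^2$ from the first step gives
\[
\partial_\tau\tilde\gamma=o(\nu^2+r^2)=o\!\left(\frac{\gamma}{\nu^2+r^2}\right),
\]
using the identity $\gamma/(\nu^2+r^2)=(\nu^2+r^2)/8$. This is exactly the claimed bound, including the $\nu\nu_\tau$ contribution.

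The only genuinely delicate input is \eqref{eq:adapted_d_t}. Its proof, in Lemma~\ref{lem:adapted_distance}, relies on the parabolic averaging and the cancellation $\int\bar\chi'=0$. Since that lemma is already established, no step here is a real obstacle. The one point to check is that all the $o(\cdot)$ bounds used are uniform in $z$ on $\{0<r\le1\}$, and they are as stated in Lemmas~\ref{Lemma:dProp} and~\ref{lem:adapted_distance}.
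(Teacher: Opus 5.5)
Your proof is correct and follows the paper's argument essentially line by line. You use the same factorization of $\tilde\gamma-\gamma$ and the same expansion of $\nabla(\tilde\gamma-\gamma)$ via Lemma~\ref{lem:adapted_distance}; you only split it into three terms instead of two and treat $\{r\le\nu\}$ separately. You also use the same formula $\partial_\tau\tilde\gamma=\tfrac12(\nu^2+\mathfrak d^2)(\nu\nu_\tau+\mathfrak d\,\partial_\tau\mathfrak d)$ with both bracket terms shown to be $o(1)$, where your check of $\mathfrak d\,\partial_\tau\mathfrak d=o(1)$ via $\mathfrak d\lesssim r\le\nu+r$ is slightly more explicit than the paper's.
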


\begin{proof}
	We first compare \(\tilde\gamma\) and \(\gamma\). We have
	\begin{align*}
		8(\tilde\gamma-\gamma)
		&=
		\bigl(\nu^2+\mathfrak d^2\bigr)^2
		-
		\bigl(\nu^2+|z-\xi|^2\bigr)^2 =
		\bigl(\mathfrak d^2-|z-\xi|^2\bigr)
		\bigl(\mathfrak d^2+|z-\xi|^2+2\nu^2\bigr).
	\end{align*}
	By Lemma~\ref{lem:adapted_distance}, $\mathfrak d(z,\xi,\tau)=|z-\xi(\tau)|(1+o(1))$. Therefore
	\[
	|\tilde\gamma-\gamma|
	=
	o\!\left((\nu^2+|z-\xi|^2)^2\right)
	=
	o(\gamma).
	\]
	
	For the gradient estimate, differentiating the previous identity gives
	\begin{align*}
		8\nabla_z(\tilde\gamma-\gamma)
		&=
		4\bigl(\nu^2+\mathfrak d^2\bigr)\mathfrak d\nabla_z\mathfrak d
		-
		4\bigl(\nu^2+|z-\xi|^2\bigr)(z-\xi) \\
		&=4\Bigl[
		\bigl(\nu^2+\mathfrak d^2\bigr)
		\bigl(\mathfrak d\nabla_z\mathfrak d-(z-\xi)\bigr) +
		\bigl(\mathfrak d^2-|z-\xi|^2\bigr)(z-\xi)
		\Bigr].
	\end{align*}
	Using again Lemma~\ref{lem:adapted_distance}, $\mathfrak d\nabla_z\mathfrak d=(z-\xi)(1+o(1))$ and $\mathfrak d^2-|z-\xi|^2=o(|z-\xi|^2)$. Thus
	\[
	|\nabla_z(\tilde\gamma-\gamma)|
	=
	o\!\left((\nu^2+|z-\xi|^2)|z-\xi|\right)
	=
	o(|\nabla_z\gamma|).
	\]
	
	It remains to estimate the time derivative. A direct computation gives
	\[
	\partial_\tau\tilde\gamma
	=
	\frac12
	\bigl(\nu^2+\mathfrak d^2\bigr)
	\bigl(\nu\nu_\tau+\mathfrak d\,\partial_\tau\mathfrak d\bigr).
	\]
	By Lemma~\ref{lem:selfsimilarquali}, $\nu\nu_\tau=o(1)$, while Lemma~\ref{lem:adapted_distance} gives $\mathfrak d\,\partial_\tau\mathfrak d=o(1)$. Since \(\mathfrak d^2=|z-\xi|^2(1+o(1))\), we obtain
	\[
	|\partial_\tau\tilde\gamma|
	=
	o(\nu^2+|z-\xi|^2)
	=
	o\!\left(
	\frac{\gamma}{\nu^2+|z-\xi|^2}
	\right).
	\]
\end{proof}

We next introduce a weight that glues together the weights of the nearby zone and the far away outer zone. Let \(\zeta_*>0\) as in \eqref{matchedsol} be fixed small later. We define
\begin{align}\label{def:omeganuxi}
	\omega(z)
	=
	\tilde{\gamma}(z)\,\chi_{\zeta_*}(z)
	+
	\Bigl(1-\chi_{\zeta_*}(z)\Bigr)
	\frac{1}{8}|z|^{4}e^{\frac{|z|^{2}}{4}}.
\end{align}
Incorporating the term involving the Poisson fields in the scalar product for the inner zone, this leads to the \emph{matched scalar product}
\begin{align}\label{def:matchedscalarproduct}
	\langle f ,g  \rangle_*
	:=
	\int_{\R^2} f g\,\omega\,dz
	-
	\nu^{2}\int_{\R^2} \nabla \Phi_{f}\cdot \nabla \Phi_{g}\,
	\chi_{R\nu,\xi}(z)\,dz.
\end{align}

\subsubsection{A global Hardy inequality}

We first record the rescaled version of Lemma~\ref{lemma:hardy}.

\begin{lemma}\label{lem:hardy_rescaled_weighted}
	Let \(u\in \mathcal S(\R^2)\). Then
	\begin{align*}
		\int_{\R^{2}}
		\frac{u^{2}(z)}{\nu^{2}+|z-\xi|^{2}}\,
		\gamma(z)\,dz
		\lesssim
		\int_{\R^{2}}
		|\nabla u(z)|^{2}\,
		\gamma(z)\,dz.
	\end{align*}
\end{lemma}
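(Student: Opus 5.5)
The natural route is a change of variables reducing to Lemma~\ref{lemma:hardy}. Set $z=\xi+\nu y$ and $u(z)=\tilde u(y)$, so that $\nabla_z u=\nu^{-1}\nabla_y\tilde u$ and $dz=\nu^2dy$. Then
\[
\gamma(z)=\frac{(\nu^2+|z-\xi|^2)^2}{8}=\nu^4\frac{(1+|y|^2)^2}{8}=\frac{\nu^4}{U(y)}, \qquad \nu^2+|z-\xi|^2=\nu^2(1+|y|^2).
\]

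The left-hand side becomes
\[
\int \frac{\tilde u^2(y)}{\nu^2(1+|y|^2)}\frac{\nu^4}{U(y)}\nu^2dy=\nu^4\int\frac{\tilde u^2}{1+|y|^2}\frac{1}{U}dy.
\]
The right-hand side becomes
\[
\int \nu^{-2}|\nabla_y\tilde u|^2\frac{\nu^4}{U}\nu^2dy=\nu^4\int|\nabla\tilde u|^2\frac1U dy.
\]
Both sides carry the same factor $\nu^4$. Since $\tilde u\in\mathcal S(\mathbb R^2)$ whenever $u$ is, Lemma~\ref{lemma:hardy} applies to $\tilde u$, and its constant $C$ is independent of $\nu$ and $\xi$. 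This gives the claimed inequality uniformly in the parameters.

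There is essentially no obstacle here; the only point to check is the scaling bookkeeping above. If one preferred a self-contained proof, one could also argue directly as in Lemma~\ref{lem:exterior_hardy_weighted}. In that case one would integrate by parts against the vector field $F=(\nu^2+|z-\xi|^2)(z-\xi)$, whose divergence is bounded below by a multiple of $(\nu^2+|z-\xi|^2)$, and then apply Cauchy--Schwarz. The rescaling argument is shorter, so it is the one to write.
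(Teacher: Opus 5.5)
Your proposal is correct and is exactly the paper's argument: the paper proves this lemma by the change of variables $z=\xi+\nu y$, which turns $\gamma$ into $\nu^4/U(y)$ and reduces the inequality to Lemma~\ref{lemma:hardy}. Your scaling bookkeeping is right: both sides pick up the same factor $\nu^4$, so the constant is uniform in $\nu$ and $\xi$.
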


\begin{proof}
	The estimate follows from Lemma~\ref{lemma:hardy} by the change of variables
	\(z=\xi+\nu y\).
\end{proof}
We shall also use the exterior Hardy inequality of Lemma \ref{lem:exterior_hardy_weighted}.

\begin{lemma}[Global Hardy-type inequality]\label{lem:global_hardy_matched}
	There exist \(\tau_0>0\) and a universal constant \(C>0\) such that, for
	all \(\tau\ge \tau_0\) and all \(u\in\mathcal S(\R^2)\),
	\begin{align*}
		\int_{\R^{2}}
		\frac{u^{2}(z)}{\nu^{2}+|z-\xi|^{2}}\,
		\omega(z)\,dz
		\le
		C
		\int_{\R^{2}}
		|\nabla u(z)|^{2}\,
		\omega(z)\,dz.
	\end{align*}
\end{lemma}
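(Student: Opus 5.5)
We will prove the global Hardy inequality by splitting $\mathbb R^2$ with a partition of unity adapted to the two matching regions in the definition \eqref{def:omeganuxi} of $\omega$. On $\{|z|\le \zeta_*\}$ the weight is $\tilde\gamma$, which by Lemma \ref{tildegamma-gamma} is comparable to $\gamma=(\nu^2+|z-\xi|^2)^2/8$ up to a factor $1+o(1)$ (recall $|\xi|=o(e^{-\tau/2})$ by \eqref{id:qualitative-cv-nu-xi}, so $|z-\xi|\le 1$ there). On $\{|z|\ge 2\zeta_*\}$ the weight is $\frac18\omega_\infty$, and in the transition annulus $\{\zeta_*\le |z|\le 2\zeta_*\}$ both $\tilde\gamma$ and $\frac18|z|^4e^{|z|^2/4}$ are comparable to $|z|^4$ (using $\nu,|\xi|\to 0$). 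Hence there exist universal constants with $\omega\approx \gamma$ on $\{|z|\le 2\zeta_*\}$ and $\omega\approx \omega_\infty$ on $\{|z|\ge \zeta_*\}$, for $\tau\ge\tau_0$. Also, on $\{|z|\ge\zeta_*\}$ we have $\nu^2+|z-\xi|^2\approx |z|^2$.

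Write $u=u\chi_{2\zeta_*}+u(1-\chi_{2\zeta_*})=:u_1+u_2$, wait — better to use a cut-off $\psi$ equal to $1$ on $|z|\le \frac32\zeta_*$... the simplest choice: $u_1=u\chi_{\zeta_*}$ supported in $|z|\le 2\zeta_*$ and $u_2=u(1-\chi_{\zeta_*})$ supported in $|z|\ge\zeta_*$. For $u_1$ apply Lemma \ref{lem:hardy_rescaled_weighted} (it extends to compactly supported Lipschitz functions by density), then replace $\gamma$ by $\omega$ using comparability on the support. For $u_2$ apply Lemma \ref{lem:exterior_hardy_weighted}, with $|z|^{-2}\approx (\nu^2+|z-\xi|^2)^{-1}$ and $\omega_\infty\approx\omega$ on the support. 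In both cases $|\nabla u_i|^2\lesssim |\nabla u|^2+\zeta_*^{-2}u^2\mathbbm 1_{\zeta_*\le|z|\le 2\zeta_*}$.

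The only real issue is the commutator term $\zeta_*^{-2}\int_{\zeta_*\le |z|\le 2\zeta_*}u^2\omega$, which is of the same order as the left-hand side restricted to the annulus and so cannot be absorbed naively. Here I would exploit that the Hardy inequalities control the left side with room to spare on the annulus: rather than a sharp cutoff, use a logarithmic-type (or many-scale averaged) cut-off, or more simply note that the annulus term is bounded by the exterior Hardy inequality applied to $u$ on a larger region. Concretely, I would first try to prove directly a one-sided weighted Hardy estimate via the vector field method, as in the proof of Lemma \ref{lem:exterior_hardy_weighted}: find a vector field $F$ with $\nabla\cdot F\gtrsim \omega/(\nu^2+|z-\xi|^2)$ and $|F|^2\lesssim \omega\cdot \omega/(\nu^2+|z-\xi|^2)$ globally, taking $F=(z-\xi)(\nu^2+|z-\xi|^2)\cdot(\ldots)$ matched to $|z|^2e^{|z|^2/4}z$ outside. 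For $\gamma$ this works with $F=\frac18(z-\xi)(\nu^2+|z-\xi|^2)$, giving $\nabla\cdot F=\frac14(\nu^2+|z-\xi|^2)+\frac14|z-\xi|^2\ge \frac14(\nu^2+|z-\xi|^2)$, and for $\omega_\infty$ the paper's field works; gluing them with $\chi_{\zeta_*}$ produces an error $\nabla\chi_{\zeta_*}\cdot(F_{in}-F_{out})$ which is of size $\zeta_*^{-1}\cdot\zeta_*^3\cdot o(1)$? No: both fields are $\approx \frac18|z|^2z$ near $|z|\sim\zeta_*$, differing by $O(\zeta_*^5+|\xi|\zeta_*^2+\nu^2\zeta_*)$, so the error is $O(\zeta_*^4+o(1))$, small compared to $\nabla\cdot F\approx \zeta_*^2$ there when $\zeta_*$ is small and $\tau$ large. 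Then integration by parts plus Cauchy–Schwarz, as in Lemma \ref{lem:exterior_hardy_weighted}, concludes; the additional $o(1)$ deviation of $\tilde\gamma$ from $\gamma$ (Lemma \ref{tildegamma-gamma}) is handled by the comparability $\omega\approx\gamma$ rather than by placing $\tilde\gamma$ inside $F$. This gluing of vector fields is the step I expect to be the main obstacle.
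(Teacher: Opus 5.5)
Your final argument is correct, and it takes a genuinely different route from the paper. You build a single glued field $F=\chi_{\zeta_*}F_{\mathrm{in}}+(1-\chi_{\zeta_*})\frac18F_{\mathrm{out}}$, with $F_{\mathrm{in}}=\frac18(z-\xi)(\nu^2+|z-\xi|^2)$ and $F_{\mathrm{out}}=|z|^2e^{|z|^2/4}z$. It satisfies $\nabla\cdot F\gtrsim\omega/(\nu^2+|z-\xi|^2)$ and $|F|\lesssim\omega/(\nu^2+|z-\xi|^2)^{1/2}$, and one integration by parts plus Cauchy--Schwarz then concludes. Your checks hold:
\begin{itemize}
\item $F_{\mathrm{in}}$ works for $\gamma$, hence for $\tilde\gamma$ by Lemma~\ref{tildegamma-gamma}, since the method only uses two-sided pointwise bounds on the weight.
\item The gluing error $|\nabla\chi_{\zeta_*}|\,|F_{\mathrm{in}}-\frac18F_{\mathrm{out}}|=O(\zeta_*^4+|\xi|\zeta_*+\nu^2)$ is negligible against $\nabla\cdot F\gtrsim\zeta_*^2$ on the transition annulus. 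This is exactly because the two weights match there; at leading order the error even has a favorable sign, since $\chi'\le0$ and $F_{\mathrm{in}}-\frac18F_{\mathrm{out}}$ points toward the origin.
\end{itemize}
The paper instead keeps the cut-off decomposition $u=u\chi_\eta+u(1-\chi_\eta)$ and uses Lemmas~\ref{lem:hardy_rescaled_weighted} and~\ref{lem:exterior_hardy_weighted} as black boxes. It handles the commutator by choosing $\eta$ among $N$ disjoint dyadic annuli in $[\eta_\star,\zeta_*/4]$, so that $\int_{\eta\le|z|\le2\eta}u^2\omega/(\nu^2+|z-\xi|^2)$ is at most $\frac1N$ of the left-hand side, and then absorbs it; here $\tau_0$ depends on $\eta_\star$. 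Your route gives explicit constants, needs no auxiliary scale, and in rescaled form proves Lemma~\ref{lemma:hardy} for free, at the price of exhibiting explicit fields. The paper's route is more modular: the same pigeonhole template is reused in Propositions~\ref{prop:matched_scalar_coercivity} and~\ref{prop:global_matched_coercivity}, where the estimates being glued involve nonlocal Poisson terms and are not of vector-field type.

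One intermediate suggestion does not work as stated. Bounding the annulus term by the exterior inequality applied to $u$ itself produces $\int|\nabla u|^2|z|^4e^{|z|^2/4}$. That weight is not dominated by $\omega$ near $z=\xi$ when $|\xi|\gg\nu$: there $\omega\approx\nu^4$ while $|z|^4\approx|\xi|^4$, and this configuration is not excluded at this stage. The interior inequality does work instead. For $\tau$ large, $\gamma\lesssim\omega$ on all of $\mathbb R^2$, so by Lemma~\ref{lem:hardy_rescaled_weighted}
\[
\zeta_*^{-2}\int_{\zeta_*\le|z|\le2\zeta_*}u^2\omega\lesssim\int\frac{u^2\gamma}{\nu^2+|z-\xi|^2}\lesssim\int|\nabla u|^2\gamma\lesssim\int|\nabla u|^2\omega .
\]
This would close your first, single-cut-off attempt without any pigeonholing.
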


\begin{proof}
	We shall use an intermediate scale \(\eta\), chosen in a fixed interval $\eta\in[\eta_\star, \zeta_* /4]$, where \(\eta_\star>0\) will be fixed below. For such an \(\eta\), write
	\[
	u
	=
	u\chi_\eta \!\left(z\right)
	+
	u\left(1-\chi_\eta\left(z\right)\right)
	=:u_1+u_2.
	\]
	We first estimate the interior contribution. Since \(u_1\) is supported in
	\(\{|z|\le 2\eta\}\subset \{|z|\le \zeta_*/2\}\), we are in the region where $\omega=\tilde\gamma$. For \(\tau\) sufficiently large, Lemma~\ref{tildegamma-gamma} gives $\tilde\gamma\approx\gamma$ uniformly on this region. Hence, by the interior Hardy-type inequality (Lemma~\ref{lem:hardy_rescaled_weighted}),
	\begin{align*}
		\int_{\R^2}
		\frac{u_1^2}{\nu^2+|z-\xi|^2}\,
		\omega\,dz
		\approx
		\int_{\R^2}
		\frac{u_1^2}{\nu^2+|z-\xi|^2}\,
		\gamma\,dz 
		\lesssim
		\int_{\R^2}
		|\nabla u_1|^2\gamma\,dz 
		\approx
		\int_{\R^2}
		|\nabla u_1|^2\omega\,dz.
	\end{align*}
	We next estimate the exterior contribution. On the support of \(u_2\), we
	have \(|z|\ge\eta\ge\eta_\star\). Since \(\nu,\xi=o(1)\) as
	\(\tau\to\infty\), for \(\tau\) sufficiently large, $\nu^2+|z-\xi|^2\approx |z|^2$ uniformly on \(\{|z|\ge\eta_\star\}\). Moreover, by the definition of
	\(\omega\) and the comparability
	\(\tilde\gamma\sim \gamma\) in the bounded transition
	region, we also have
	\[
	\omega\approx \omega_\infty
	\qquad\text{on } \{|z|\ge\eta_\star\}.
	\]
	Therefore, by the exterior Hardy-type inequality (Lemma~\ref{lem:exterior_hardy_weighted}),
	\begin{align*}
		\int_{\R^2}
		\frac{u_2^2}{\nu^2+|z-\xi|^2}\,
		\omega\,dz
		\approx
		\int_{\R^2}
		\frac{u_2^2}{|z|^2}\,
		\omega_\infty\,dz 
		\lesssim
		\int_{\R^2}
		|\nabla u_2|^2\omega_\infty\,dz 
		&\approx
		\int_{\R^2}
		|\nabla u_2|^2\omega\,dz.
	\end{align*}
	Combining the interior and exterior estimates, we obtain
	\begin{align}\label{eq:global_hardy_preabsorb}
		\int_{\R^2}
		\frac{u^2}{\nu^2+|z-\xi|^2}\,
		\omega\,dz
		\lesssim
		\int_{\R^2}
		\bigl(|\nabla u_1|^2+|\nabla u_2|^2\bigr)
		\omega\,dz.
	\end{align}
	By the definition of \(u_1,u_2\) and the bounds on the derivatives of the
	cut-off,
	\begin{align*}
		\int_{\R^2}
		\bigl(|\nabla u_1|^2+|\nabla u_2|^2\bigr)
		\omega\,dz
		\lesssim
		\int_{\R^2}|\nabla u|^2\omega\,dz
		+
		\int_{\eta\le |z|\le 2\eta}
		\frac{u^2}{\eta^2}\omega\,dz.
	\end{align*}
	On the annulus \(\eta\le |z|\le2\eta\), with
	\(\eta\in[\eta_\star,\zeta_*/4]\), we have $\nu^2+|z-\xi|^2\sim \eta^2$ for \(\tau\) sufficiently large. Hence
	\[
	\int_{\eta\le |z|\le 2\eta}
	\frac{u^2}{\eta^2}\omega\,dz
	\lesssim
	\int_{\eta\le |z|\le 2\eta}
	\frac{u^2}{\nu^2+|z-\xi|^2}\omega\,dz.
	\]
	It remains to choose \(\eta\). Fix an integer \(N\) large, and choose
	disjoint dyadic annuli
	\[
	A_j:=\{\eta_j\le |z|\le 2\eta_j\},
	\qquad
	\eta_j:=4^{-j}\frac{\zeta_*}{4},
	\qquad
	j=0,\dots,N.
	\]
	Set \(\eta_\star:=\eta_N\). Since the annuli \(A_j\) are disjoint, there
	exists \(j\in\{0,\dots,N\}\) such that
	\[
	\int_{A_j}
	\frac{u^2}{\nu^2+|z-\xi|^2}\omega\,dz
	\le
	\frac{1}{N}
	\int_{\R^2}
	\frac{u^2}{\nu^2+|z-\xi|^2}\omega\,dz.
	\]
	We choose \(\eta=\eta_j\). With this choice, \eqref{eq:global_hardy_preabsorb}
	gives
	\begin{align*}
		\int_{\R^2}
		\frac{u^2}{\nu^2+|z-\xi|^2}\omega\,dz
		\lesssim
		\int_{\R^2}|\nabla u|^2\omega\,dz
		+
		\frac{1}{N}
		\int_{\R^2}
		\frac{u^2}{\nu^2+|z-\xi|^2}\omega\,dz.
	\end{align*}
	Choosing \(N\) sufficiently large, the last term is absorbed on the left.
	This proves the claim.
\end{proof}
\subsubsection{Coercivity of the matched scalar product}

In this section we prove that the matched scalar product $\langle \cdot,\cdot \rangle_*$ is equivalent to the weighted norm of $L^2_{\omega}$, on the subspace of functions with local zero mass and orthogonalities to the scaling and translation directions of the soliton. From now on, \(R>0\) is fixed large enough such that the result of Proposition \ref{InnCoercivity1} holds true.

\begin{proposition}[Coercivity of the matched scalar product]\label{prop:matched_scalar_coercivity}
	Let $\zeta_*>0$. There exist
	universal constants \(\tau_0>0\), \(\delta>0\), \(C>0\), and
	\(\eta_\star>0\) such that, for all \(\tau\ge\tau_0\), $ \eta \in (\eta_{\star},1)$ and all
	\(u\in\mathcal S(\R^2)\),
	\begin{align*}
		\langle u,u \rangle_*
		\ge
		\delta \|u\|^{2}_{L^{2}_{\omega}}
		-
		C\left[\nu^{2}
		\left(\int u\,\chi_{ \eta} \,dz\right)^{2}+\nu^{6}
		\left(\int u\, (\Lambda U)_{\nu,\xi}
		\,\chi_\eta \,dz\right)^{2}+\sum_{i=1}^{2}\nu^{8}
		\left(\int u\, \partial_{z_{i}}U_{\nu,\xi}
		\,\chi_\eta \,dz\right)^{2}\right].
	\end{align*}
\end{proposition}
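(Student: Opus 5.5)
The proof will split $u$ into an inner piece near the soliton, where the rescaled inner coercivity of Proposition~\ref{prop:innercoerproduct} applies, and an outer piece, where the matched product reduces to the weighted norm $L^2_\omega$ with no sign issue. Write $u=u_1+u_2$ with $u_1=u\chi_\eta$ and $u_2=u(1-\chi_\eta)$. The Poisson term in $\langle\cdot,\cdot\rangle_*$ involves $\nabla\Phi_u=\nabla\Phi_{u_1}+\nabla\Phi_{u_2}$ integrated only over $\{|z-\xi|\le 2R\nu\}$.

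\textbf{Step 1: the inner piece.} Since $\chi_\eta$ is supported in $\{|z|\le 2\eta\}\subset\{|z|\le\zeta_*/2\}$, we have $\omega=\tilde\gamma$ on the support of $u_1$, and $\tilde\gamma=\gamma(1+o(1))$ by Lemma~\ref{tildegamma-gamma}. Changing variables $z=\xi+\nu y$ and setting $v(y)=\nu^2u_1(\xi+\nu y)$, one checks that
\[
\int u_1^2\gamma\,dz=\nu^2\int v^2U^{-1}dy, \qquad \nu^2\int|\nabla\Phi_{u_1}|^2\chi_{R\nu,\xi}\,dz=\nu^2\int|\nabla\Phi_v|^2\chi_R\,dy.
\]
Hence the quadratic form restricted to $u_1$ equals $\nu^2\langle v,v\rangle_0$ up to a factor $(1+o(1))$ on the weight part. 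Proposition~\ref{prop:innercoerproduct} then gives
\[
\langle u_1,u_1\rangle_*\ge\delta\|u_1\|_{L^2_\omega}^2-C\nu^2\Bigl[\langle v,1\rangle^2+\langle v,\Lambda U\rangle^2+\textstyle\sum_i\langle v,\partial_iU\rangle^2\Bigr].
\]
Rescaling back, $\langle v,1\rangle=\int u\chi_\eta\,dz$, and $\langle v,\Lambda U\rangle$ and $\langle v,\partial_iU\rangle$ are exactly $\int u(\Lambda U)_{\nu,\xi}\chi_\eta$ and $\int u\,\partial_{z_i}U_{\nu,\xi}\chi_\eta$ multiplied by powers of $\nu$. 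This produces the weights $\nu^2,\nu^6,\nu^8$ in the statement. I will track these powers carefully but not belabor them.

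\textbf{Step 2: cross terms and the outer piece.} The cross terms are $2\int u_1u_2\omega$ and the Poisson contribution
\[
\nu^2\int\bigl(2\nabla\Phi_{u_1}\cdot\nabla\Phi_{u_2}+|\nabla\Phi_{u_2}|^2\bigr)\chi_{R\nu,\xi}.
\]
The weight cross term sits on the annulus $\{\eta\le|z|\le2\eta\}$ and has the form $\int\chi_\eta(1-\chi_\eta)u^2\omega\ge0$, so it only helps. For the Poisson terms I use that $u_2$ vanishes on $|z|\le\eta$, while the integration region has size $R\nu\ll\eta$. There $|\nabla\Phi_{u_2}(z)|\lesssim\int_{|w|\ge\eta}|u_2|/|z-w|\,dw\lesssim\eta^{-1}\|u_2\|_{L^1}$, and Cauchy--Schwarz with the weight $\omega\gtrsim\eta^4$ on the support bounds $\|u_2\|_{L^1}$ by $C_\eta\|u_2\|_{L^2_\omega}$. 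Consequently
\[
\nu^2\int_{|z-\xi|\le 2R\nu}|\nabla\Phi_{u_2}|^2\lesssim\nu^4R^2C_\eta\|u_2\|_{L^2_\omega}^2=o(1)\|u_2\|_{L^2_\omega}^2 .
\]
The mixed term is handled by Cauchy--Schwarz against $\nu^2\int|\nabla\Phi_{u_1}|^2\chi_{R\nu,\xi}$. By the rescaled version of Lemma~\ref{lem:weightL2pote}, the latter is controlled by $\nu^2\|v\|^2_{L^2_{1/U}}\approx\|u_1\|^2_{L^2_\omega}$, so the mixed term is $o(1)\bigl(\|u_1\|^2_{L^2_\omega}+\|u_2\|^2_{L^2_\omega}\bigr)$ as $\tau\to\infty$.

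\textbf{Step 3: conclusion.} Summing the three contributions gives
\[
\langle u,u\rangle_*\ge\delta\|u_1\|^2_{L^2_\omega}+(1-o(1))\|u_2\|^2_{L^2_\omega}-C[\dots],
\]
and $\|u\|^2_{L^2_\omega}\le2\bigl(\|u_1\|^2_{L^2_\omega}+\|u_2\|^2_{L^2_\omega}\bigr)$ concludes the proof. The lower bound $\eta_\star$ is needed to make the $o(1)$ uniform: $C_\eta$ blows up as $\eta\to0$, and we require $\nu R\ll\eta_\star$.

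\textbf{Main obstacle.} I expect the delicate point to be Step 1. There I must check that the inner coercivity survives replacing $\gamma$ by $\tilde\gamma$ and the cutoff $\chi_\eta$ in the orthogonality directions. I also need to make sure the error $\int u_1^2(\tilde\gamma-\gamma)$ is absorbed by $\delta\|u_1\|^2$ rather than left as a loss, which uses $|\tilde\gamma-\gamma|=o(\gamma)$ uniformly on $|z|\le2\eta$.
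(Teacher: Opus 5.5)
Your argument has the same skeleton as the paper's proof. You split $u=u\chi+u(1-\chi)$, rescale the inner piece and apply Proposition~\ref{prop:innercoerproduct}, absorbing $\int u_1^2(\tilde\gamma-\gamma)$ through Lemma~\ref{tildegamma-gamma}. You then show $\nu^2\int|\nabla\Phi_{u_2}|^2\chi_{R\nu,\xi}=o(1)\|u_2\|^2_{L^2_\omega}$ and handle the mixed Poisson term by Cauchy--Schwarz.

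It improves on the paper in one respect. You observe that $u_1u_2=\chi_\eta(1-\chi_\eta)u^2\ge 0$, so the weighted cross term $2\int u_1u_2\,\omega$ can simply be discarded. The paper instead bounds $|\int u_1u_2\,\omega|$ by the $L^2_\omega$-mass of $u$ on the annulus $\{\tilde\eta\le|z|\le2\tilde\eta\}$. It then chooses the splitting scale $\tilde\eta$ by pigeonholing over $N$ disjoint dyadic annuli, which is where its $\eta_\star=4^{-N}\zeta_*/4$ comes from. With your sign observation this selection is superfluous, and $\eta_\star>0$ may be arbitrary with $\tau_0$ depending on it.

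Your direct pointwise bound on $\nabla\Phi_{u_2}$ near the core is also a fine, more elementary substitute for Lemma~\ref{lem:weightedL2poisson_farfield}. One small correction: for $\|u_2\|_{L^1}\lesssim_\eta\|u_2\|_{L^2_\omega}$ you need $\omega\gtrsim|z|^4$ on $\{|z|\ge\eta\}$, so that $\int_{|z|\ge\eta}\omega^{-1}\lesssim\eta^{-2}$. The bound $\omega\gtrsim\eta^4$ alone does not suffice on an unbounded set.

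\textbf{The step that fails as written.} In Step~1 you claim $\operatorname{supp}\chi_\eta\subset\{|z|\le\zeta_*/2\}$. This needs $\eta\le\zeta_*/4$, but the statement allows every $\eta\in(\eta_\star,1)$ with $\zeta_*$ small. For $\eta>\zeta_*/4$ the piece $u_1=u\chi_\eta$ reaches $\{|z|\ge2\zeta_*\}$, where $\omega=\frac18|z|^4e^{|z|^2/4}$. So ``$\omega=\tilde\gamma$ on the support of $u_1$'' is false there. The paper sidesteps this by always splitting at $\tilde\eta\le\zeta_*/4$. Only at the end does it replace $\chi_{\tilde\eta}$ by $\chi_\eta$ in the three projection terms, at a cost $o(1)\|u\|^2_{L^2_\omega}$.

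\textbf{Two repairs.} You can do the same as the paper. Alternatively, keep your split and use that, for $\tau$ large, $\omega\ge(1-o(1))\gamma$ on all of $\mathbb{R}^2$. This follows from Lemma~\ref{tildegamma-gamma} on $\{|z|\le2\zeta_*\}$, and from $\gamma=\frac18|z|^4(1+o(1))$ uniformly on $\{|z|\ge\zeta_*\}$. Write $\int u_1^2\omega=\int u_1^2\gamma+\int u_1^2(\omega-\gamma)$ and use $\delta\gamma+(\omega-\gamma)\ge\delta\omega-o(1)\gamma$ for $\delta\le1$. This gives your Step~1 bound for every $\eta$.

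\textbf{The split is dispensable.} The Poisson part of $\langle\cdot,\cdot\rangle_*$ is exactly the rescaled Poisson part of $\langle\cdot,\cdot\rangle_0$. Hence $\langle u,u\rangle_*=\nu^6\langle v,v\rangle_0+\int u^2(\omega-\gamma)\,dz$ with $v(y)=u(\xi+\nu y)$. Apply Proposition~\ref{prop:innercoerproduct} to the whole of $u$, then localize the projections with $\chi_\eta$. The localization errors are $o(1)\|u\|^2_{L^2_\omega}$ because $\eta\ge\eta_\star$. This already yields the proposition.
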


\begin{proof}
	Let \(\tilde \eta\in[\eta_\star,\frac{\zeta_*}{4}]\), to be chosen below and decompose
	\[
	u
	=
	u\,\chi_{\tilde \eta}
	+
	u\left(1-\chi_{\tilde \eta}\right)
	=:u_1+u_2.
	\]
	The parameter \(\tilde \eta\) will be allowed to depend on \(u\), but it will always
	belong to a fixed universal interval.
	
	We first estimate the interior part. Since \(u_1\) is supported in
	\(\{|z|\le 2\eta\}\subset \{|z|\le \zeta_*/2\}\), we are in the region where
	the matched weight is $\omega=\tilde \gamma$. Set
	\[
	u_1(z)=u_1(\xi+\nu y)=:v(y).
	\]
	Changing variables \(z=\xi+\nu y\), we obtain
	\[
	\int u_1^2\gamma\,dz
	=
	\int u_1^2\frac{\nu^2}{U_{\nu,\xi}}\,dz
	=
	\nu^6\int v^2\frac1U\,dy \quad \mbox{and} \quad \nabla_z\Phi_{u_1} =\nu\nabla_y\Phi_v.
	\]
	Moreover,
	\[
	\int v\,dy
	=
	\frac1{\nu^2}\int u_1\,dz,
	\quad
	\int v\,\Lambda U\,dy
	=
	\int u_1 (\Lambda U)_{\nu,\xi}\,dz \quad \mbox{and} \quad \int v\,\partial_i U\,dy
	=
	\nu
	\int u_1\partial_{z_i}U_{\nu,\xi}\,dz
	\]
	for $i=1,2$. Applying Proposition~\ref{prop:innercoerproduct} and returning to the
	\(z\)-variables yields
	\begin{align}\nonumber
		\langle u_1,u_1 \rangle_* &= \int u_1^2 \gamma dz -\nu^2 \int |\nabla \Phi_{u_1}|^2\chi_{R\nu,\xi}dz+\int u_1^2 (\tilde \gamma-\gamma) dz\\
		\label{eq:matched_inner_coercivity} &\ge
		c\|u_1\|_{L^2_{\omega}}^2
		-
		C\Bigl[
		\nu^{2}\Bigl(\int u_1\,dz\Bigr)^{2}
		+
		\nu^{6}\Bigl(\int u_1 (\Lambda U)_{\nu,\xi}\,dz\Bigr)^{2}
		+
		\nu^{8}\sum_{i=1}^{2}
		\Bigl(\int u_1\partial_{z_i}U_{\nu,\xi}\,dz\Bigr)^{2}
		\Bigr].
	\end{align}
	Above, to bound $\int u_1^2 (\tilde \gamma-\gamma) $ we used that $\tilde \gamma-\gamma=o(\gamma)=o(\omega)$ by Lemma \ref{tildegamma-gamma}. We next estimate the exterior part. Since \(u_2\) is supported in
	\(\{|z|\ge \tilde \eta\}\), with \(\tilde \eta\) in a fixed interval, Lemma~\ref{lem:weightedL2poisson_farfield} gives, for
	\(\tau\) sufficiently large,
	\[
	\nu^{2}
	\int |\nabla\Phi_{u_2}|^2
	\chi_{R\nu,\xi}(z)\,dz
	=
	o(1)\|u_2\|^2_{L^2_{\omega}}.
	\]
	Consequently,
	\begin{align}\label{eq:matched_exterior_coercivity}
		\langle u_2,u_2 \rangle_*
		\ge
		\frac12\|u_2\|^2_{L^2_{\omega}}.
	\end{align}
	Combining \eqref{eq:matched_inner_coercivity} and
	\eqref{eq:matched_exterior_coercivity}, and using Cauchy--Schwarz together
	with Lemma~\ref{lem:weightedL2poisson_inner} and Lemma~\ref{lem:weightedL2poisson_farfield} for the Poisson mixed term,
	we obtain, for \(\tau\) sufficiently large,
	\begin{align*}
		\langle u,u \rangle_*
		&\ge
		c\left(
		\|u_1\|^2_{L^2_{\omega}}
		+
		\|u_2\|^2_{L^2_{\omega}}
		\right)
		+
		2\int u_1u_2\,\omega\,dz \\
		&\quad
		-
		C\Bigl[
		\nu^{2}\Bigl(\int u_1\,dz\Bigr)^{2}
		+
		\nu^{6}\Bigl(\int u_1 (\Lambda U)_{\nu,\xi}\,dz\Bigr)^{2}
		+
		\nu^{8}\sum_{i=1}^{2}
		\Bigl(\int u_1\partial_{z_i}U_{\nu,\xi}\,dz\Bigr)^{2}
		\Bigr].
	\end{align*}
	The only remaining term is the weighted \(L^2\) interaction between \(u_1\)
	and \(u_2\). This term is supported in the transition annulus
	\(\{\tilde \eta\le |z|\le 2\tilde \eta\}\), and therefore
	\[
	\left|
	\int u_1u_2\,\omega\,dz
	\right|
	\lesssim
	\int_{\tilde \eta\le |z|\le 2\tilde \eta}
	u^2\omega\,dz.
	\]
	We now choose \(\tilde \eta\) by a dyadic decomposition. Fix an integer \(N\) large,
	and consider the disjoint annuli
	\[
	A_j:=\{\eta_j\le |z|\le 2\eta_j\},
	\qquad
	\eta_j:=4^{-j}\frac{\zeta_*}{4},
	\qquad
	j=0,\dots,N.
	\]
	Set
	\[
	\eta_\star:=\eta_N.
	\]
	Since the annuli \(A_j\) are disjoint, there exists
	\(j\in\{0,\dots,N\}\) such that
	\[
	\int_{A_j}u^2\omega\,dz
	\le
	\frac1N
	\int_{\R^2}u^2\omega\,dz.
	\]
	We choose \(\tilde \eta=\eta_j\). With this choice,
	\[
	\left|
	\int u_1u_2\,\omega\,dz
	\right|
	\le
	\frac{C}{N}
	\|u\|_{L^2_{\omega}}^2.
	\]
	Taking \(N\) sufficiently large, this term is absorbed into the coercive
	part.
	
	Finally, since $u_1=u\chi_{\tilde \eta}$ and \(\tilde \eta\in[\eta_\star,\zeta_*]\), we have
	$$
	\nu^2 \Bigl(\int u_1 \,dz\Bigr)^{2}=\nu^2 \Bigl(\int u \chi_{\eta} \,dz+\int u (\chi_{\eta}-\chi_{\tilde \eta}) \,dz\Bigr)^2=\nu^2 \Bigl(\int u \chi_{\eta} \,dz \Bigr)^2+O(\nu \| u\|_{L^2_\omega}^2).
	$$
	where the error term was estimated by Cauchy-Schwarz. The other two projection terms involving $u_1$ can be similarly written as projection terms involving $\chi_{\eta}u$ up to negligible error terms. This gives
	\[
	\langle u,u \rangle_*
	\ge
	\delta \|u\|^{2}_{L^{2}_{\omega}}
	-C\left[\nu^{2}
		\left(\int u\,\chi_{ \eta} \,dz\right)^{2}+\nu^{6}
		\left(\int u\, (\Lambda U)_{\nu,\xi}
		\,\chi_\eta \,dz\right)^{2}+\sum_{i=1}^{2}\nu^{8}
		\left(\int u\, \partial_{z_{i}}U_{\nu,\xi}
		\,\chi_\eta \,dz\right)^{2}\right]
	\]
	after possibly decreasing \(\delta>0\). The proof is complete.
\end{proof}

\subsection{Global coercivity of the linearized operator}\label{subsection:coercivity}

In this section we study the linearized operator $L_{\nu,\xi}$ given by \eqref{def:linoperator} around the matched soliton \(\hat U_{\nu,\xi}\) given by \eqref{matchedsol}. We prove that $L_{\nu,\xi}$ is coercive for the matched scalar product $\langle \cdot,\cdot \rangle_*$ in that it admits a dissipative estimate.

\begin{proposition}[Global matched coercivity]\label{prop:global_matched_coercivity}

	For all \(\delta_0>0\) sufficiently small, there exists $c_0>0$ such that the following holds. For \(\zeta_*>0\)
	sufficiently small there exist \(\eta_\star>0\), \(\tau_0>0\), and
	\(C>0\) such that, for all \(\tau\ge\tau_0\), $\eta\in (\eta_\star,\zeta_*)$ and all
	\(u\in\mathcal S(\R^2)\),
	\begin{align*}
		\langle L_{\nu,\xi}[u],u\rangle_*
		\le
		(-2+ \delta_0)\langle u,u\rangle_* 
		-c_0\|\nabla u\|_{L^2_{\omega}}^{2}
		+
		C\left[\nu^{6}
		\langle u\, ,(\Lambda U)_{\nu,\xi}
		\,\chi_\eta\rangle^{2}_{L^2}+\sum_{i=1}^{2}\nu^{8}
		\langle u\, \partial_{z_{i}}U_{\nu,\xi}
		\,\chi_\eta \rangle^{2}_{L^2}\right].
	\end{align*}
\end{proposition}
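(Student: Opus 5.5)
The plan is to split $u$ with a cut-off at an intermediate scale and treat the two pieces by the zone-wise estimates of Subsections \ref{subsection:parabolic-outer}--\ref{subsection:inner}. The interaction terms will be handled by a dyadic pigeonhole choice of the cut-off scale, exactly as in Lemma \ref{lem:global_hardy_matched} and Proposition \ref{prop:matched_scalar_coercivity}.

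\textbf{Decomposition of the operator.} First I write $L_{\nu,\xi}$ in a form adapted to each zone.
\begin{itemize}
\item On $\{|z|\le \zeta_*\}$ we have $\hat U_{\nu,\xi}=U_{\nu,\xi}$. Hence $L_{\nu,\xi}=L_0+\frac12\Lambda$, and in inner variables $L_0$ is $\nu^{-2}L$ with $L=\nabla\cdot(U\nabla\mathcal M)$.
\item On $\{|z|\ge 2\zeta_*\}$, the field $\nabla\Phi_{\hat U}$ equals $-4z/|z|^2$ up to $O(\nu^2)$ corrections (using $\int\hat U=8\pi+O(\nu^2)$ and radiality). The term $\hat U\nabla\Phi_u$ is Gaussian-small times $\nu^2$. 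So $L_{\nu,\xi}=L_\infty+$ small there.
\end{itemize}

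\textbf{Computing the quadratic form, inner part.} I compute $\langle L_{\nu,\xi}u,u\rangle_*$ by expanding $\int L_{\nu,\xi}[u]\,u\,\omega$ and the Poisson part. For $u_1=u\chi_{\tilde\eta}$ I rescale to $y$. Proposition \ref{InnCoercivity1}, with the weight $\tilde\gamma=\gamma(1+o(1))$ of Lemma \ref{tildegamma-gamma}, yields dissipation $-\delta\nu^{-2}\int|\nabla u|^2\gamma$ up to the projection terms. In the nearby range $\nu\ll|z-\xi|\le\zeta_*$ I integrate $L_{\infty,\xi}$ against $\tilde\gamma$. The error $\int u^2(L^*_{\infty,\xi}\tilde\gamma)$ is $o(\int u^2\gamma/(\nu^2+|z-\xi|^2))$ by Lemma \ref{Lemma:dProp}, and the global Hardy inequality (Lemma \ref{lem:global_hardy_matched}) absorbs it into $-\int|\nabla u|^2\omega$.

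\textbf{The $\frac12\Lambda$ term.} This term is what produces the spectral gap $-2$. The integration by parts gives
\[
\int\tfrac12\Lambda u\,u\,\omega=\int u^2\Big(\tfrac12\omega-\tfrac14 z\cdot\nabla\omega\Big).
\]
In the region $|z|\lesssim\zeta_*$, $\omega\approx|z-\xi|^4/8$ is homogeneous of degree $4$. So this equals $(\frac12-1)\int u^2\omega=-\frac12\int u^2\omega$ up to errors controlled by $\xi$, $\nu$ and Hardy. This is not $-2$. I would recover the missing amount from the Hardy inequality: since $\zeta_*$ is small, $\int_{|z|\le\zeta_*}u^2\omega\le\zeta_*^2\int u^2\omega/|z|^2\lesssim\zeta_*^2\|\nabla u\|^2_{L^2_\omega}$. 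The gradient dissipation therefore dominates any multiple of the $L^2$ norm near the origin.

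\textbf{Outer part.} For $u_2=u(1-\chi_{\tilde\eta})$ I use Proposition \ref{prop:ExteriorCoercivity}, which gives $(-2+3\delta)\|u_2\|^2_{L^2_\infty}-\delta\|\nabla u_2\|^2$. The weight $\omega$ coincides with $\omega_\infty/8$ for $|z|\ge 2\zeta_*$. In the transition zone $\zeta_*\le|z|\le2\zeta_*$ it equals $\omega_\infty/8$ up to $O(\zeta_*^2)$ relative errors, and these are absorbed by the gradient term via the exterior Hardy inequality (Lemma \ref{lem:exterior_hardy_weighted}).

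\textbf{Poisson parts and cross terms.} I expect the main obstacle to be the Poisson part $-\nu^2\int\nabla\Phi_{L u}\cdot\nabla\Phi_u\chi_{R\nu,\xi}$ and the cross terms between $u_1$ and $u_2$.
\begin{itemize}
\item The inner Poisson term: the $\frac12\Lambda$ contribution and the cutoff errors give terms like $\nu^2\int|\nabla\Phi_u|^2\chi_{R\nu}$. I would bound these by $o(1)\nu^{-2}\int|\nabla u|^2\gamma$ using Lemma \ref{lem:weightL2pote}, noting that $\nu^{-2}\gg1$ dominates.
\item The far contributions to $\Phi_u$ are handled by Lemma \ref{lem:weightedL2poisson_farfield}.
\item The cross terms are localized in the annulus $\{\tilde\eta\le|z|\le2\tilde\eta\}$. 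A pigeonhole choice among $N$ dyadic annuli makes them $\le C/N$ times the total dissipation.
\end{itemize}

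\textbf{Conclusion.} Finally I replace $\int u_1$ and the other projections by their $\chi_\eta$ versions, as in Proposition \ref{prop:matched_scalar_coercivity}. The mass term $\nu^2(\int u\chi_\eta)^2$ does not appear in the conclusion. I expect it to cancel because $L$ annihilates the direction $\mathcal M=\mathrm{const}$, so that in Proposition \ref{InnCoercivity1} only the $\Lambda U$ and $\partial_iU$ projections occur. I then convert $\|u\|^2_{L^2_\omega}$ into $\langle u,u\rangle_*$ via the upper bound $\langle u,u\rangle_*\le\|u\|^2_{L^2_\omega}$, which holds because the Poisson part is nonpositive. This gives the factor $(-2+\delta_0)$ after choosing $\delta$ small relative to $\delta_0$ and then $\zeta_*$ small.
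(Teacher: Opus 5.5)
Your route is the paper's. You cut off at a pigeonholed dyadic scale $\tilde\eta\in[\eta_\star,\zeta_*/4]$ and apply the rescaled Proposition~\ref{InnCoercivity1} to $u_1$, treating $\tilde\gamma-\gamma$ perturbatively via Lemma~\ref{tildegamma-gamma}. You recover the spectral gap on $u_1$ from the interior Hardy inequality and $\zeta_*\ll1$, and apply Proposition~\ref{prop:ExteriorCoercivity} to $u_2$. You then convert using $\langle u,u\rangle_*\le\|u\|_{L^2_\omega}^2$; since only this trivial upper bound is used and Proposition~\ref{InnCoercivity1} has no mass projection, the mass term of Proposition~\ref{prop:matched_scalar_coercivity} indeed never enters. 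Your separate $L_{\infty,\xi}$, $L^*_{\infty,\xi}\tilde\gamma$ computation in the nearby range is unnecessary at fixed $\tau$. In inner variables, Proposition~\ref{InnCoercivity1} already covers all of $\operatorname{supp}u_1$. The parabolic averaging only matters for $\partial_\tau\omega$ in Proposition~\ref{prop:evolinnereps:pre}.

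Your stated reason for the smallness of the inner Poisson terms rests on a scaling error. Write $u_1(z)=v(y)$ with $y=(z-\xi)/\nu$. Then $L_0u_1=\nu^{-2}Lv$ and $\gamma=\nu^4U(y)^{-1}$. With $\gamma$ in place of $\omega$, this gives $\langle L_0u_1,u_1\rangle_*=\nu^4\langle Lv,v\rangle_0$, while $\int|\nabla v|^2U^{-1}dy=\nu^{-4}\|\nabla u_1\|_{L^2_\gamma}^2$. So the dissipation is $-\delta\|\nabla u_1\|^2_{L^2_\gamma}$, with no factor $\nu^{-2}$, and a bound $o(1)\nu^{-2}\|\nabla u_1\|^2_{L^2_\gamma}$ would not be absorbed.

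What actually makes these terms harmless is the explicit $\nu^2$ in front of the Poisson part of $\langle\cdot,\cdot\rangle_*$. Lemma~\ref{lem:weightedL2poisson_inner} with $A=2$, together with Lemma~\ref{lem:hardy_rescaled_weighted} and the identity $\int u_1^2(\nu^2+|z-\xi|^2)=8\int u_1^2\gamma(\nu^2+|z-\xi|^2)^{-1}$, gives $\nu^2\int|\nabla\Phi_{u_1}|^2\chi_{R\nu,\xi}\,dz\lesssim\nu^2(\ln R)^2\|\nabla u_1\|^2_{L^2_\gamma}$. For the $\tfrac12\Lambda$ term, use $\nabla\Phi_{\Lambda f}=\nabla(z\cdot\nabla\Phi_f)$ and $\|u_1\|_{L^2}\lesssim\nu^{-1}\|\nabla u_1\|_{L^2_\gamma}$. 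Its Poisson part is then $O\bigl(\nu^2(\ln R)^2+\nu(|\xi|+R\nu)\ln R\bigr)\|\nabla u_1\|^2_{L^2_\gamma}$. Similarly, your Hardy step needs the weight $(\nu^2+|z-\xi|^2)^{-1}$, not $|z|^{-2}$: $\int u^2\omega|z|^{-2}$ is not controlled by $\|\nabla u\|^2_{L^2_\omega}$ when $\xi\neq0$.

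Finally, track the projections through the same rescaling, using $\langle v,\Lambda U\rangle=\langle u_1,(\Lambda U)_{\nu,\xi}\rangle$ and $\langle v,\partial_iU\rangle=\nu\langle u_1,\partial_{z_i}U_{\nu,\xi}\rangle$. You get $\nu^4\langle u,(\Lambda U)_{\nu,\xi}\chi_\eta\rangle^2+\nu^6\sum_i\langle u,\partial_{z_i}U_{\nu,\xi}\chi_\eta\rangle^2$. The exponents $\nu^6,\nu^8$ printed in the statement are those of Proposition~\ref{prop:matched_scalar_coercivity}, where the form scales like $\nu^6$ rather than $\nu^4$, and they are too strong. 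Test with $u(z)=(\chi_K\Lambda U)((z-\xi)/\nu)$ for $K$ large. The left side is $O(K^{-4})\nu^4+o(\nu^4)$. The right side is $-c_0\nu^4\int|\nabla(\chi_K\Lambda U)|^2U^{-1}dy\,(1+o(1))+O_K(\nu^6)$, which is smaller. The paper's own proof also only reaches $\nu^4$ for the scaling direction. This weaker form is all Proposition~\ref{prop:evolinnereps:pre} needs, since there these projections are $o(e^{-2\tau})$ anyway, by the orthogonality conditions and $\nu=o(e^{-\tau/2})$.
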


In order to prove Proposition \ref{prop:global_matched_coercivity}, we first estimate the difference between $\hat U_{\nu,\xi}$ and $U_{\nu,\xi}$.

\begin{lemma}[Difference between the soliton and the matched soliton]\label{lem:matched_errors_global}
	Fix \(\eta>0\). For \(\tau\) sufficiently large, the following estimates hold:
	\begin{align}
		|\hat{U}_{\nu,\xi}-U_{\nu,\xi}|
		\lesssim 
		\frac{\nu^{2}}{|z|^{4}}\,
		\mathbf{1}_{\{|z|\ge \zeta_*\}}, \qquad 
		|\nabla(\hat{U}_{\nu,\xi}-U_{\nu,\xi})|
		\lesssim 
		\frac{\nu^{2}}{|z|^{5}}\,
		\mathbf{1}_{\{|z|\ge \zeta_*\}}, \label{errorsmatched:eq1_new}
	\end{align}
	\begin{align}
		|\nabla \Phi_{\hat{U}_{\nu,\xi}-U_{\nu,\xi}}(z)|
		\lesssim_{\zeta_*} 
		\frac{\nu^{2}}{1+|z|}, \label{errorsmatched:eq2_new}
	\end{align}
	and, for all \(|z|\ge \eta\),
	\begin{align}
		\left|\nabla \Phi_{\hat{U}_{\nu,\xi}}(z)
		+4\frac{z}{|z|^{2}}\right|
		\lesssim_{\eta} 
		\frac{o(1)}{|z|}. \label{errorsmatched:eq3_new}
	\end{align}
\end{lemma}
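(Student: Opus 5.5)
The plan is to work directly from the definition \eqref{matchedsol}, which gives
\[
\hat U_{\nu,\xi}-U_{\nu,\xi}=-U_{\nu,\xi}\,(1-\chi_{\zeta_*})\,(1-e^{-|z|^2/4}).
\]
This difference is supported in $\{|z|\ge\zeta_*\}$. Since $|\xi|=o(1)$ and $\nu=o(1)$ by \eqref{id:qualitative-cv-nu-xi}, for $\tau$ large we have $|z-\xi|\ge |z|/2$ on this set. The explicit formula $U_{\nu,\xi}=8\nu^2(\nu^2+|z-\xi|^2)^{-2}$ then gives $U_{\nu,\xi}\lesssim \nu^2|z|^{-4}$ and $|\nabla U_{\nu,\xi}|\lesssim \nu^2|z|^{-5}$ there. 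The factors $(1-\chi_{\zeta_*})$ and $(1-e^{-|z|^2/4})$ are bounded, and their gradients are $O_{\zeta_*}(1)$ and $O(|z|e^{-|z|^2/4})$. These are harmless at large $|z|$, and at $|z|\approx\zeta_*$ they are absorbed into the implicit constant, which may depend on $\zeta_*$ (as in \eqref{errorsmatched:eq2_new}). This proves \eqref{errorsmatched:eq1_new}.

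For \eqref{errorsmatched:eq2_new}, set $f=\hat U_{\nu,\xi}-U_{\nu,\xi}$. By \eqref{errorsmatched:eq1_new}, $|f|\lesssim \nu^2\langle z\rangle^{-4}\mathbf 1_{|z|\ge\zeta_*}$, so $\|f\|_{L^1}+\|f\|_{L^\infty}\lesssim_{\zeta_*}\nu^2$. The bound \eqref{bd:Poisson-field-Linfty} with $\rho=1$ gives $|\nabla\Phi_f|\lesssim_{\zeta_*}\nu^2$, which settles $|z|\le 2$. For $|z|\ge 2$, split $\int |f(w)||z-w|^{-1}dw$ into three regions:
\begin{itemize}
\item $|w|\le |z|/2$: here $|z-w|\ge|z|/2$, so the contribution is at most $\|f\|_{L^1}/|z|$.
\item $|z-w|\le |z|/2$: here $|f(w)|\lesssim \nu^2|z|^{-4}$, so the contribution is $\lesssim \nu^2|z|^{-3}$.
\item The rest: $|z-w|\gtrsim |z|$, and again the contribution is $\lesssim \|f\|_{L^1}/|z|$.
\end{itemize}
Together these give $|\nabla\Phi_f(z)|\lesssim_{\zeta_*}\nu^2/(1+|z|)$.

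For \eqref{errorsmatched:eq3_new}, write $\nabla\Phi_{\hat U_{\nu,\xi}}=\nabla\Phi_{U_{\nu,\xi}}+\nabla\Phi_{f}$. The second term is $O(\nu^2)=o(1)/|z|$ on $|z|\ge\eta$ by \eqref{errorsmatched:eq2_new}. For the first term, $U$ is radial with mass $8\pi$, so Newton's theorem gives the explicit field
\[
\nabla\Phi_{U_{\nu,\xi}}(z)=-\frac{z-\xi}{2\pi|z-\xi|^2}\int_{|w-\xi|\le|z-\xi|}U_{\nu,\xi}(w)\,dw=-4\frac{z-\xi}{\nu^2+|z-\xi|^2}.
\]
On $|z|\ge\eta$ with $\xi,\nu=o(1)$, this differs from $-4z/|z|^2$ by $O_\eta(|\xi|+\nu^2)/|z|^2$, which is $o(1)/|z|$ uniformly. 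The only point needing care is uniformity in $z$, both near $|z|\approx\eta$ and as $|z|\to\infty$. It is handled by the decay factors above: the error carries $|z|^{-2}$, which beats the required $|z|^{-1}$. I do not expect a genuine obstacle here; the mildly delicate step is tracking the $\zeta_*$-dependence of the constants in the cut-off region.
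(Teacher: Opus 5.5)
Your proof is correct and follows the paper's route. The pointwise bounds come directly from $\hat U_{\nu,\xi}-U_{\nu,\xi}=-U_{\nu,\xi}(1-\chi_{\zeta_*})(1-e^{-|z|^2/4})$; your three-region splitting for the field just spells out what the paper calls ``a direct estimate'' of $\int_{|y|\ge\zeta_*}|z-y|^{-1}|y|^{-4}\,dy$; and the far-field bound uses the explicit formula $\nabla\Phi_{U_{\nu,\xi}}=-4(z-\xi)/(\nu^2+|z-\xi|^2)$, which you rederive via Newton's theorem. One cosmetic remark: the $\zeta_*$-dependence you allow in the gradient bound is unnecessary, since $|\nabla\chi_{\zeta_*}|\lesssim\zeta_*^{-1}\lesssim|z|^{-1}$ on its support, so \eqref{errorsmatched:eq1_new} holds with a universal constant as stated.
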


\begin{proof}
	The pointwise estimates in \eqref{errorsmatched:eq1_new} follow directly
	from the definition of \(\hat U_{\nu,\xi}\). In particular, $|\hat{U}_{\nu,\xi}(z)-U_{\nu,\xi}(z)|\lesssim \nu^{2}|z|^{-4}\,
	\mathbf{1}_{\{|z|\ge \zeta_*\}}$ and therefore
	\[
	|\nabla \Phi_{\hat{U}_{\nu,\xi}-U_{\nu,\xi}}(z)|
	\lesssim 
	\nu^{2}\int_{|y|\ge \zeta_*} \frac{dy}{|z-y||y|^{4}}
	=:\nu^{2} I(z).
	\]
	A direct estimate shows $I(z)\lesssim_{\zeta_*}\langle z \rangle^{-1}$, and \eqref{errorsmatched:eq2_new} follows. We now prove the far-field estimate \eqref{errorsmatched:eq3_new} for \(|z|\ge\eta\). By \eqref{errorsmatched:eq2_new}, $|\nabla \Phi_{\hat{U}_{\nu,\xi}}(z)
	-\nabla \Phi_{U_{\nu,\xi}}(z)|
	\lesssim_{\zeta_*}\nu^{2}\langle z \rangle^{-1}$. Moreover,
	\[
	\nabla \Phi_{U_{\nu,\xi}}(z)
	=
	-4\,\frac{z-\xi}{\nu^{2}+|z-\xi|^{2}}= -\frac{4z}{|z|^2}+o(|z|^{-1})
	\]
	since \(\nu,\xi=o(1)\) as \(\tau\to\infty\). Combining, we get the desired estimate \eqref{errorsmatched:eq3_new}.
\end{proof}

In order to simplify the exposition of the proof of Proposition \ref{prop:global_matched_coercivity} it is convenient to first establish the global coercivity for localized functions.

\begin{proposition}[Interior coercivity]\label{prop:interior_coercivity_matched}
	There exists \(\zeta_*>0\) sufficiently small such that the following holds.
	For every \(u\in \mathcal S(\R^{2})\) satisfying $\operatorname{supp}(u)\subset \{|z|\le \zeta_*\}$, there exists \(\tau_0>0\) such that for all \(\tau\ge \tau_0\),
	\begin{align*}
		\langle L_{\nu,\xi}[u],u \rangle_*
		\le
		-\delta \|\nabla u\|_{L^{2}_{\gamma}}^{2}
		+
		C\left[
		\nu^{4}\langle u, (\Lambda U)_{\nu,\xi}\rangle_{L^{2}}^{2}
		+
		\nu^{5}\sum_{i=1}^{2}\langle u,\partial_{z_i}U_{\nu,\xi}\rangle_{L^{2}}^{2}
		\right],
	\end{align*}
	for some universal constants \(\delta,C>0\).
\end{proposition}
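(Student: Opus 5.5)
Since $u$ is supported in $\{|z|\le\zeta_*\}$, there $\hat U_{\nu,\xi}=U_{\nu,\xi}$ (the cut-off $\chi_{\zeta_*}$ equals one on $|z|\le\zeta_*$) and $\omega=\tilde\gamma$. I would split $L_{\nu,\xi}[u]=L_0[u]+\frac12\Lambda u-\nabla\cdot(u\nabla\Phi_{\hat U_{\nu,\xi}-U_{\nu,\xi}})$, where the last term vanishes on the support of $u$ except for the field $\nabla\Phi_{\hat U-U}$, which is $O_{\zeta_*}(\nu^2)$ by \eqref{errorsmatched:eq2_new}; this contribution is handled by integration by parts and Cauchy--Schwarz, using the global Hardy inequality (Lemma \ref{lem:global_hardy_matched}) to bound $\int u^2|\nabla\tilde\gamma|\nu^2$ by $o(1)\|\nabla u\|_{L^2_\gamma}^2$. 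The main term is then $\langle L_0[u],u\rangle_*$, and the scaling term $\frac12\langle\Lambda u,u\rangle_*$ is treated separately.

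For the main term, I would first replace $\tilde\gamma$ by $\gamma=\nu^2/U_{\nu,\xi}$, the error being $\int L_0[u]\,u(\tilde\gamma-\gamma)$. Writing $L_0[u]=\nabla\cdot(U_{\nu,\xi}\nabla\mathcal M_{\nu,\xi}[u])$ by \eqref{id:formula-LU}, integrating by parts and using $|\tilde\gamma-\gamma|=o(\gamma)$ and $|\nabla(\tilde\gamma-\gamma)|=o(|\nabla\gamma|)$ (Lemma \ref{tildegamma-gamma}), this error is $o(1)$ times $\|\nabla u\|^2_{L^2_\gamma}$ plus Poisson terms, after Hardy. The remaining expression $\int L_0[u]u\gamma-\nu^2\int\nabla\Phi_{L_0u}\cdot\nabla\Phi_u\chi_{R\nu,\xi}$ is exactly, after the change of variables $z=\xi+\nu y$, $u(z)=v(y)$, a positive multiple of $\nu^{4}\langle L[v],v\rangle_0$ (one checks the powers: $\int u^2\gamma\,dz=\nu^6\int v^2/U$, with one extra $\nu^{-2}$ from $L_0$ in $z$ variables). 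Proposition \ref{InnCoercivity1} then gives $-\delta\|\nabla u\|^2_{L^2_\gamma}$ plus the projection terms, which rescale to $\nu^4\langle u,(\Lambda U)_{\nu,\xi}\rangle^2$ and $\nu^{6}\langle u,\partial_{z_i}U_{\nu,\xi}\rangle^2\le\nu^5(\cdots)$.

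For the scaling term $\frac12\langle\Lambda u,u\rangle_*$, integration by parts gives $\frac12\int u^2(2\omega-\nabla\cdot(z\omega))/2$-type terms bounded by $\int u^2\gamma\lesssim\zeta_*^2\int u^2\gamma/(\nu^2+|z-\xi|^2)\lesssim\zeta_*^2\|\nabla u\|^2_{L^2_\gamma}$ by the rescaled Hardy inequality (Lemma \ref{lem:hardy_rescaled_weighted}), since $|z-\xi|\lesssim\zeta_*$ on the support, plus a term $|z||\nabla\gamma|\lesssim\gamma$. The Poisson part of $\Lambda$ commutes with scaling up to a factor, giving $\nu^2\int|\nabla\Phi_u|^2\chi_{R\nu,\xi}$-type terms, controlled by Lemma \ref{lem:weightedL2poisson_inner} and Hardy by $\|\nabla u\|^2_{L^2_\gamma}$ times a factor small as $\zeta_*\to0$ — I expect this to be the delicate step: making sure the smallness comes from $\zeta_*$ (support size in units where the Gaussian/$\Lambda$ term is perturbative) rather than requiring orthogonality, and choosing $\zeta_*$ before $\tau_0$. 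Absorbing all small terms into $-\delta\|\nabla u\|^2_{L^2_\gamma}$ concludes.
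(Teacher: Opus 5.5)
Your argument follows the paper's proof. You isolate $L_0$ and trade $\tilde\gamma$ for $\gamma$ at $o(1)$ cost via Lemma~\ref{tildegamma-gamma} and Hardy. You rescale so the main part is exactly $\nu^4\langle L[v],v\rangle_0$; your power counting is right, and the translation projections come out as $\nu^6\langle u,\partial_{z_i}U_{\nu,\xi}\rangle^2\le\nu^5\langle u,\partial_{z_i}U_{\nu,\xi}\rangle^2$. You then apply Proposition~\ref{InnCoercivity1} and absorb the drift using Hardy on the small support. The conclusion is correct, but some steps are misstated as written.

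\emph{The dropped term.} Your formula for $L_{\nu,\xi}[u]-L_0[u]$ omits $-\nabla\cdot((\hat U_{\nu,\xi}-U_{\nu,\xi})\nabla\Phi_u)$. This term is supported in $\{|z|\ge\zeta_*\}$, so it pairs to zero with $u$ in $L^2_\omega$. Its Poisson field, however, still enters the part $\nu^2\int\nabla\Phi_{(\cdot)}\cdot\nabla\Phi_u\,\chi_{R\nu,\xi}$ of $\langle\cdot,\cdot\rangle_*$. The same is true of the term with $u\nabla\Phi_{\hat U_{\nu,\xi}-U_{\nu,\xi}}$, which you only treat in $L^2_\omega$. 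Both contributions are negligible because the profile difference carries a factor $\nu^2$, so this is only an omission.

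\emph{The off-centre drift.} More to the point, $\Lambda$ is centred at the origin, while $\gamma$ and $\chi_{R\nu,\xi}$ are centred at $\xi$, and at this stage nothing controls $|\xi|/\nu$.
\begin{itemize}
\item Your claim $|z||\nabla\gamma|\lesssim\gamma$ is false. The ratio is $4|z||z-\xi|/(\nu^2+|z-\xi|^2)$, which is of order $|\xi|/\nu$ on $\{|z-\xi|=\nu\}$ when $|\xi|\gg\nu$. What holds on the support for $\tau$ large, and suffices, is $|z||\nabla\gamma|\lesssim\zeta_*\gamma/(\nu+|z-\xi|)\lesssim\zeta_*^2\gamma/(\nu^2+|z-\xi|^2)$, likewise for $\tilde\gamma$. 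Then apply Lemma~\ref{lem:hardy_rescaled_weighted}.
\item The same offset affects the Poisson step you flagged as delicate. Since $\nabla\Phi_{\Lambda u}=(1+z\cdot\nabla)\nabla\Phi_u$, an integration by parts turns the Poisson part of $\frac12\langle\Lambda u,u\rangle_*$ into $\frac{\nu^2}{4}\int|\nabla\Phi_u|^2\,z\cdot\nabla\chi_{R\nu,\xi}\,dz$. On the annulus $R\nu\le|z-\xi|\le2R\nu$ one only has $|z\cdot\nabla\chi_{R\nu,\xi}|\lesssim1+|\xi|/(R\nu)$.
\item Seeking smallness from $\zeta_*$ (Lemma~\ref{lem:weightedL2poisson_inner} with $A=4$, then Hardy) leaves this unbounded factor. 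Take $A=2$ instead: $\int_{R\nu\le|z-\xi|\le2R\nu}|\nabla\Phi_u|^2\lesssim\ln R\int u^2(\nu^2+|z-\xi|^2)\lesssim\ln R\,\|\nabla u\|_{L^2_\gamma}^2$.
\item The term is then $O\bigl((\nu^2+\nu|\xi|/R)\ln R\bigr)\|\nabla u\|_{L^2_\gamma}^2=o(1)\|\nabla u\|_{L^2_\gamma}^2$.
\end{itemize}
So the smallness comes from the $\nu^2$ prefactor as $\tau\to\infty$, not from $\zeta_*$. No orthogonality is needed, and the order of choices $R$, then $\zeta_*$, then $\tau_0$ works without further care.
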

\begin{proof}
	We split the operator as
	\[
	L_{\nu,\xi}[u]
	=
	L_0 [u]
	+
	\bigl(L_{\nu,\xi}[u]-L_0[u]\bigr),
	\]
	where we recall $L_0[u]=\Delta u-\nabla \cdot(U_{\nu,\xi}\nabla \Phi_{u})-\nabla \cdot(u\nabla \Phi_{U_{\nu,\xi}})$. We first estimate the contribution of the main operator $L_0$. Since
	\(\operatorname{supp}(u)\subset \{|z|\le \zeta_*\}\), the weight $\omega$
	coincides with \(\tilde\gamma\) and satisfies
	\(\tilde\gamma\approx\gamma\) for \(\tau\) sufficiently large.
	Rescaling \(z=\xi+\nu y\), writing \(u(z)=v(y)\), and using
	Proposition~\ref{InnCoercivity1}, we obtain
	\begin{align*}
		\langle \bar{L}_{\nu,\xi}[u],u \rangle_*
		\le
		-\delta \|\nabla u\|_{L^{2}_{\gamma}}^{2}
		+
		C\Bigl[
		\nu^{4}\langle u,\Lambda_{\xi}U_{\nu,\xi}\rangle_{L^{2}}^{2}
		+
		\nu^{5}\sum_{i=1}^{2}\langle u,\partial_{z_i}U_{\nu,\xi}\rangle_{L^{2}}^{2}
		\Bigr]
		+
		\mathcal{R}_1,
	\end{align*}
	where
	\[
	\mathcal{R}_1
	=
	\int \bar{L}_{\nu,\xi}[u]\,u\,(\tilde\gamma-\gamma)\,dz.
	\]
	The remainder \(\mathcal{R}_1\) is negligible. Indeed, by
	Lemma~\ref{tildegamma-gamma}, the difference
	\(\tilde\gamma-\gamma\) is lower order, and all
	resulting terms are controlled using
	Lemma~\ref{lem:hardy_rescaled_weighted} and Lemma~\ref{lem:weightedL2poisson_inner}. This yields
	\[
	\mathcal{R}_1
	=
	o\!\left(\|\nabla u\|_{L^{2}_{\gamma}}^{2}\right),
	\]
	so that, after possibly decreasing \(\delta\),
	\begin{align}\label{eq:main_coercivity_clean}
		\langle \bar{L}_{\nu,\xi}[u],u \rangle_*
		\le
		-\delta \|\nabla u\|_{L^{2}_{\gamma}}^{2}
		+
		C\bigl[\text{projection terms}\bigr].
	\end{align}
	We now turn to the perturbative part
	\(L_{\nu,\xi}-\bar{L}_{\nu,\xi}\), namely
	\[
	L_{\nu,\xi}[u]-\bar{L}_{\nu,\xi}[u]
	=
	-\nabla \cdot((\hat{U}_{\nu,\xi}-U_{\nu,\xi})\nabla \Phi_{u})
	-
	\nabla \cdot(u\nabla \Phi_{\hat{U}_{\nu,\xi}-U_{\nu,\xi}})
	+
	\frac{1}{2}\Lambda u.
	\]
	
	The terms involving \(\hat{U}_{\nu,\xi}-U_{\nu,\xi}\) are small.
	Indeed, by Lemma~\ref{lem:matched_errors_global}, Lemma~\ref{lem:hardy_rescaled_weighted} and Lemma~\ref{lem:weightedL2poisson_inner},
	they contribute
	\[
	o\!\left(\|\nabla u\|_{L^{2}_{\gamma}}^{2}\right).
	\]
	The drift term is treated directly. Since \(u\) is supported in
	\(\{|z|\le \zeta_*\}\), we have the interior Hardy-type inequality (Lemma \ref{lem:hardy_rescaled_weighted})
	\begin{align*}
		\left|
		\int \Lambda u \, u \,\tilde\gamma
		\right|
		\lesssim
		\zeta_*^{2}\|\nabla u\|_{L^{2}_{\gamma}}^{2},
	\end{align*}
	which can be absorbed into the coercive term by choosing \(\zeta_*\)
	sufficiently small. The associated Poisson contribution is again
	handled using Lemma~\ref{lem:weightedL2poisson_inner}, yielding
	a negligible error. Collecting the above estimates and combining with
	\eqref{eq:main_coercivity_clean}, we conclude that
	\[
	\langle L_{\nu,\xi}[u],u \rangle_*
	\le
	-\delta \|\nabla u\|_{L^{2}_{\gamma}}^{2}
	+
	C\bigl[\text{projection terms}\bigr],
	\]
	which proves the proposition.
\end{proof}
\begin{proposition}[Exterior coercivity]\label{prop:exterior_coercivity_matched}
	There exist \(\zeta_*>0\) sufficiently small such that the following holds for all
	\(\delta>0\) sufficiently small. For every \(\eta>0\), there
	exists \(\tau_0=\tau_0(\eta)\) such that, for all \(\tau\ge\tau_0\), if
	\(u\in\mathcal S(\R^2)\) satisfies
	\[
	\operatorname{supp}(u)\subset \{|z|\ge \eta\},
	\]
	then
	\begin{align*}
		\langle L_{\nu,\xi}[u],u \rangle_*
		\le
		(-2+3\delta)\|u \|_{L^2_\omega}^2
		-\delta \|\nabla u\|_{L^{2}_{\omega}}^{2}.
	\end{align*}
\end{proposition}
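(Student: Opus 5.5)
The plan is to reduce everything to the exterior coercivity of Proposition~\ref{prop:ExteriorCoercivity}, treating the differences between $L_{\nu,\xi}$ and $L_\infty$, and between $\langle\cdot,\cdot\rangle_*$ and $\frac18\langle\cdot,\cdot\rangle_\infty$, as perturbations on $\{|z|\ge\eta\}$.

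\textbf{Step 1: the Poisson part of the scalar product.} Since $u$ is supported in $\{|z|\ge \eta\}$ and $\chi_{R\nu,\xi}$ is supported in $\{|z-\xi|\le 2R\nu\}$, which for $\tau$ large lies inside $\{|z|\le\eta/2\}$, the Poisson part of $\langle L_{\nu,\xi}u,u\rangle_*$ is
$$
\nu^2\int\nabla\Phi_{L_{\nu,\xi}u}\cdot\nabla\Phi_u\,\chi_{R\nu,\xi}\,dz.
$$
Here both fields are evaluated at distance $\gtrsim\eta$ from their sources. We bound $|\nabla\Phi_u|\lesssim_\eta \|u\|_{L^2_\omega}$ there by Cauchy--Schwarz, since $\omega\gtrsim_\eta 1$ on the support of $u$. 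For $\nabla\Phi_{L_{\nu,\xi}u}$ we write $L_{\nu,\xi}u$ in divergence form and integrate by parts against the kernel, which is smooth away from the region of integration. This gives a bound by $C_\eta(\|u\|_{L^2_\omega}+\|\nabla u\|_{L^2_\omega})$. Because the region of integration has area $\sim\nu^2$, the whole term is $O_\eta(\nu^4)(\|u\|^2_{L^2_\omega}+\|\nabla u\|^2_{L^2_\omega})$, which is negligible.

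\textbf{Step 2: replacing $\hat U_{\nu,\xi}$ by the Dirac mass.} On $\{|z|\ge\eta\}$ we write
$$
L_{\nu,\xi}u=L_\infty u+\Bigl(-\nabla\Phi_{\hat U_{\nu,\xi}}-4\frac{z}{|z|^2}\Bigr)\cdot\nabla u+\hat U_{\nu,\xi}\,u-\nabla\hat U_{\nu,\xi}\cdot\nabla\Phi_u .
$$
This uses $-\nabla\cdot\nabla\Phi_{\hat U}=\hat U$ and the fact that the term $-u\,\Delta\Phi_{\hat U}$ contributes $+\hat U u$.
\begin{itemize}
\item By \eqref{errorsmatched:eq3_new}, the drift error is $o_\eta(1)|z|^{-1}$. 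Its contribution is $o(1)\|u/|z|\|_{L^2_\omega}\|\nabla u\|_{L^2_\omega}$, which Lemma~\ref{lem:exterior_hardy_weighted} (or Lemma~\ref{lem:global_hardy_matched}) bounds by $o(1)\|\nabla u\|_{L^2_\omega}^2$.
\item On $|z|\ge\eta$ we have $\hat U_{\nu,\xi}\lesssim_\eta\nu^2$, and the same for $\nabla\hat U_{\nu,\xi}$, multiplied by a Gaussian factor for $|z|\ge\zeta_*$. So $\int\hat U u^2\omega=o(\|u\|^2_{L^2_\omega})$.
\item For the last term we need a weighted bound $\int|\nabla\Phi_u|^2|\nabla\hat U|^2\omega\lesssim\|u\|^2_{L^2_\omega}$. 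We get it from Lemma~\ref{lem:weightedL2poisson_farfield}, or directly because $|\nabla\hat U|^2\omega$ is integrable with a $\nu^4$ prefactor.
\end{itemize}

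\textbf{Step 3: the weight.} We compare $\omega$ with $\frac18\omega_\infty$. For $|z|\ge 2\zeta_*$ they coincide. On $\eta\le|z|\le2\zeta_*$ we have $\omega=\chi_{\zeta_*}\tilde\gamma+(1-\chi_{\zeta_*})\frac18\omega_\infty$. Lemma~\ref{tildegamma-gamma} and $\nu,\xi\to0$ give $\tilde\gamma=\frac18|z|^4(1+o(1))$, with gradient control, and $\frac18\omega_\infty=\frac18|z|^4(1+O(\zeta_*^2))$. We then redo the two integrations by parts from the proof of Proposition~\ref{prop:ExteriorCoercivity} with the weight $\omega$ instead of $\omega_\infty$. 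The errors are terms like
$$
\int u\,\nabla u\cdot\bigl(\nabla\omega-\tfrac18\nabla\omega_\infty\bigr),\qquad \int u^2\bigl(L_\infty^*\omega-\text{its exact counterpart}\bigr).
$$
The relative size of these errors is $o(1)+O(\zeta_*^2)$, after dividing by $|z|$ and using the Hardy inequality. The $\partial_\tau\omega$ term does not enter here because no time derivative is taken. For $\zeta_*$ small and $\tau\ge\tau_0(\eta)$, all errors are absorbed into the spare $\delta$ in Proposition~\ref{prop:ExteriorCoercivity}, which yields the stated bound.

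\textbf{Main obstacle.} I expect Step 3 to be the delicate step: keeping the sharp constant $-2$ while the weight is not exactly $\omega_\infty$ in the transition annulus $|z|\sim\zeta_*$. The errors there must be genuinely small relative to the dissipation, not merely bounded. I would first try to show that $\omega/\omega_\infty$ and $|z|\,|\nabla\log(\omega/\omega_\infty)|$ are $O(\zeta_*^2)+o(1)$ uniformly on $\{|z|\ge\eta\}$. All weight errors can then be bounded in the form
$$
\bigl(O(\zeta_*^2)+o(1)\bigr)\bigl(\|\nabla u\|^2_{L^2_\omega}+\|u\|^2_{L^2_\omega}\bigr).
$$
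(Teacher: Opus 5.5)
Your architecture is the paper's. You compare $L_{\nu,\xi}$ with $L_\infty$ and $\omega$ with $\frac18\omega_\infty$ on $\{|z|\ge\eta\}$, apply Proposition~\ref{prop:ExteriorCoercivity}, and show that the Poisson part of $\langle\cdot,\cdot\rangle_*$, the operator difference and the weight difference are perturbative. Step~2 is fine, up to a slip: the zeroth-order term is $2\hat U_{\nu,\xi}u$, since $-\nabla\cdot(\hat U_{\nu,\xi}\nabla\Phi_u)$ also contributes $+\hat U_{\nu,\xi}u$. Step~1, however, is wrong as written.

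\textbf{Step~1.} You claim both Poisson fields are evaluated at distance $\gtrsim\eta$ from their sources. For $\nabla\Phi_{L_{\nu,\xi}u}$ this is false.
\begin{itemize}
\item $L_{\nu,\xi}u$ contains $-\nabla\hat U_{\nu,\xi}\cdot\nabla\Phi_u$. This term is not supported on $\operatorname{supp}u$; it is concentrated precisely in $\{|z-\xi|\lesssim\nu\}$, where $\chi_{R\nu,\xi}$ lives, with size $\nu^{-3}|\nabla\Phi_u(\xi)|$.
\item To leading order its field there is, up to sign, $\nabla\bigl(a\cdot\nabla\Phi_{U_{\nu,\xi}}\bigr)$ with $a=\nabla\Phi_u(\xi)$. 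This has size $\nu^{-2}|a|$.
\item So no bound $C_\eta(\|u\|_{L^2_\omega}+\|\nabla u\|_{L^2_\omega})$ on $\nabla\Phi_{L_{\nu,\xi}u}$ holds in that region. The Poisson term is genuinely of order $\nu^2$, not $\nu^4$.
\end{itemize}
The conclusion survives, but you need the paper's treatment of this piece. Set $f=\nabla\hat U_{\nu,\xi}\cdot\nabla\Phi_u$ and use $\|\nabla\Phi_f\|_{L^\infty}^2\lesssim\|f\|_{L^1}\|f\|_{L^\infty}\lesssim\nu^{-4}\|\nabla\Phi_u\|_{L^\infty}^2$. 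This requires a bound on $\nabla\Phi_u$ on all of $\mathbb R^2$, including near $\operatorname{supp}u$. There $L^1$ control is not enough, and one uses $\|u\|_{L^3}\lesssim_\eta\|u\|_{L^2_\omega}+\|\nabla u\|_{L^2_\omega}$ via Sobolev, as in \eqref{exterior-coercivity-technical}. Your integration by parts against a smooth kernel only applies to the sources $\Delta u$, $\nabla\cdot(u\nabla\Phi_{\hat U_{\nu,\xi}})$, $\Lambda u$ and $\hat U_{\nu,\xi}u$, which are supported in $\operatorname{supp}u$.

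\textbf{Step~3.} Your treatment here is more accurate than the paper's.
\begin{itemize}
\item The paper asserts that $\omega-\omega_\infty$ and its gradient are $o(1)$ as $\tau\to\infty$ on $\{\eta\le|z|\le2\zeta_*\}$. Its formula for $\omega-\omega_\infty$ implicitly puts the Gaussian factor on the inner part of $\omega$ as well.
\item With \eqref{def:omeganuxi} as written, $\omega=\tilde\gamma\approx\frac18|z|^4$ for $|z|\le\zeta_*$. The mismatch with $\frac18\omega_\infty$ therefore has relative size $O(\zeta_*^2)$, also for $|z|\,|\nabla\log(\omega/\omega_\infty)|$, and does not vanish as $\tau\to\infty$. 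This is exactly what you estimate.
\item The price is that absorbing this error forces $\zeta_*^2\ll\delta$. Your argument proves the bound with the quantifiers in the order ``$\delta$ first, then $\zeta_*$'', which is the reverse of the statement. The constants $(-2+3\delta,-\delta)$ must also be slightly relaxed, as the paper does implicitly.
\item That ordering is what Proposition~\ref{prop:global_matched_coercivity} actually uses, since there $\delta_0$ is fixed before $\zeta_*$. Nothing downstream is affected.
\end{itemize}
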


\begin{proof}
	Fix \(\eta>0\). All estimates below are for \(\tau\) sufficiently large,
	depending on \(\eta\). We will use numerous times that, on the support of \(u\), we have by Lemma \ref{tildegamma-gamma}:
	\[
	\omega(z)\approx \omega_\infty(z),
	\qquad
	\omega_\infty(z):=|z|^{4}e^{|z|^{2}/4}, \quad \mbox{and} \quad \nu^2+|z-\xi|^2\approx |z|^2.
	\]
	Since we already know the coercivity of $L_\infty$ in $L^2_{\omega_\infty}$ by Proposition \ref{prop:ExteriorCoercivity}, we decompose $L_{\nu,\xi}=L_{\infty}+(L_{\nu,\xi}-L_{\infty})$ and $\omega=\omega_\infty+(\omega-\omega_\infty)$ and have
	\begin{align*}
	\langle L_{\nu,\xi}[u],u \rangle_* & \leq -(2+3\delta)\| u\|_{L^2_\infty}^2-\delta \| \nabla u\|_{L^2_\infty}^2\\
	& \quad + \int L_\infty (u) u (\omega-\omega_\infty) dz+ \int (L_{\nu,\xi}-L_\infty)(u) u \omega dz-\nu^2\int_{\R^2}
		\nabla\Phi_{L_{\nu,\xi}[u]}\cdot\nabla\Phi_u\,
		\chi_{R\nu,\xi}(z)\,dz.
	\end{align*}
	The remainder terms will be shown to be small in \eqref{eq:poisson_matched_exterior_small}, \eqref{eq:difference-operators_matched_exterior_small} which will imply the result of the proposition up to taking a possibly smaller $\delta>0$.
	
	\medskip
	
	\noindent \underline{Estimate for the Poisson contribution}. We claim that
	\begin{align}\label{eq:poisson_matched_exterior_small}
		\nu^{2}
		\left|
		\int_{\R^2}
		\nabla\Phi_{L_{\nu,\xi}[u]}\cdot\nabla\Phi_u\,
		\chi_{R\nu,\xi}(z)\,dz
		\right|
		=
		o_{\eta}(1)
		\left(
		\|u\|_{L^2_{\omega}}^{2}
		+
		\|\nabla u\|_{L^2_{\omega}}^{2}
		\right)
	\end{align}
	as $\tau \to \infty$. We have $\| u\|_{L^1}\lesssim \| u \|_{L^2_\omega}$ by Cauchy-Schwarz, and, using that $\omega\gtrsim_\eta 1$ on the support of $u$, the standard Sobolev embedding implies $\| u\|_{L^3}\lesssim \| u\|_{L^2_{\omega}}+\| \nabla u\|_{L^2_{\omega}}$. The H\"older inequalities with exponents $(1,\infty)$ and $(4/3,3)$ then imply
	\begin{equation} \label{exterior-coercivity-technical}
	|\nabla \Phi_u (y)|\lesssim \int_{|y-y'|>1} \frac{|u(y')|}{|y-y'|}dy'+\int_{|y-y'|<1} \frac{|u(y')|}{|y-y'|}dy'\lesssim_\eta \| u\|_{L^2_{\omega}}+\| \nabla u\|_{L^2_{\omega}}
	\end{equation}
	for any $y \in \mathbb R^2$. Now pick $z$ such that $|z-\xi|\leq 2R\nu$. We decompose
	$$
	\nabla \Phi_{L_{\nu,\xi}u}(z)=\nabla \Phi_{\Delta u-\nabla \cdot (u\nabla \Phi_{\hat U_{\nu,\xi}})+\Lambda u+\hat U_{\nu,\xi}u}(z)-\nabla \Phi_{\nabla U_{\nu,\xi}\cdot \nabla \Phi_{u}}(z).
	$$
	For the first term, the distance between $z$ and the support of $\Delta u$ is greater than $\eta/2$. We thus have thanks to integration by parts, up to a constant factor,
	\begin{align*}
	& \nabla \Phi_{\Delta u}(z) = \int \frac{z-y}{|z-y|^2}(1-\chi_{\frac \eta 2}(z-y)) \Delta u(y)dy=\int \Delta \left( \frac{z-y}{|z-y|^2}(1-\chi_{\frac \eta 2}(z-y))\right)u(y) dy =O_\eta( \| u \|_{L^2_\omega}).
	\end{align*}
	By the same computations, using that $\hat U_{\nu,\xi}=O(\nu^2)$ and $\nabla \Phi_{\hat U_{\nu,\xi}}=O(1)$ on the support of $u$, we bound
	\begin{align*}
	& \nabla \Phi_{\Delta u-\nabla \cdot (u\nabla \Phi_{\hat U_{\nu,\xi}})+\Lambda u+\hat U_{\nu,\xi}u}(z) = O_\eta( \| u \|_{L^2_\omega}).
	\end{align*}
	For the second term, we have by \eqref{exterior-coercivity-technical} that for any $y\in \mathbb R^2$,
	$$
	|(\nabla \hat U_{\nu,\xi}\cdot \nabla \Phi_u)(y)|\lesssim \nu^{-3}\langle \frac{y-\xi}{\nu}\rangle^{-5} (\| u\|_{L^2_{\omega}}+\| \nabla u\|_{L^2_{\omega}}).
	$$
	Recalling that $\| \nabla \Phi_{f}\|_{L^\infty}^2\lesssim \| f\|_{L^2}\| f\|_{L^\infty}$ by \eqref{bd:Poisson-field-Linfty}, we get $\| \nabla \Phi_{\nabla \hat U_{\nu,\xi}\cdot \nabla \Phi_u}\|_{L^\infty}\lesssim \nu^{-2}(\| u\|_{L^2_{\omega}}+\| \nabla u\|_{L^2_{\omega}})$. Combining the above inequalities, the integrand in \eqref{eq:poisson_matched_exterior_small} is of size $O(\nu^{-2}(\| u\|_{L^2_{\omega}}+\| \nabla u\|_{L^2_{\omega}}))$ and the integral is performed over the set $\{|z-\xi|\leq R\nu\}$ that has area $\lesssim \nu^2$, implying the desired inequality.
	
	\medskip
	
	\noindent \underline{Estimate for the local \(L^2\)-part of the scalar product}. We claim that
	\begin{equation} \label{eq:difference-operators_matched_exterior_small}
	\int (L_{\nu,\xi}-L_\infty) (u)u\omega dz = o_{\eta} (\| u \|_{L^2_\omega}^2+\| \nabla u \|_{L^2_\omega}^2).
	\end{equation}
	To prove it, we write
	\[
	(L_{\nu,\xi}-L_\infty) u
	= (-\nabla \Phi_{\hat U_{\nu,\xi}}-4\frac{z}{|z|^2})\cdot\nabla u+2U_{\nu,\xi}u-\nabla U_{\nu,\xi}\cdot \nabla \Phi_u.
	\]
For $\tau$ large enough, for all $|z|\geq \eta$ we have $\nabla \Phi_{\hat U_{\nu,\xi}}+4|z|^{-2}z=o(1)$ by \eqref{errorsmatched:eq3_new}, and $|\nabla^k \hat U_{\nu,\xi}|\lesssim_\eta \nu^2 \langle z\rangle^{-3}e^{-|z|^2/4}$ for $k=0,1$. Using these inequalities and \eqref{exterior-coercivity-technical} for the last term, with the standard inequality $ab\lesssim a^2+b^2$, we find that for all $z\in \mathbb R^2$,
	\[
	[(L_{\nu,\xi}-L_\infty) (u)u](z)=o_\eta (|u|^2+|\nabla u|^2+e^{-|z|^2/4}(\| u \|_{L^2_\omega}^2+\| \nabla u \|_{L^2_\omega}^2)).
	\]
	The claimed inequality \eqref{eq:difference-operators_matched_exterior_small} follows by integration.
	
	\medskip
	
	\noindent \underline{Estimate for the difference of weights}. We claim that
	\begin{equation} \label{eq:difference-weights_matched_exterior_small}
	\int L_\infty (u) u (\omega-\omega_\infty) dz= o_{\eta} (\| u \|_{L^2_\omega}^2+\| \nabla u \|_{L^2_\omega}^2)
	\end{equation}
	as $\tau \to \infty$. Indeed, by the explicit formulas \eqref{def:omeganuxi} and \eqref{id:def-tilde-gamma},
	$$
	\omega-\omega_\infty
	=
	\left(\tilde{\gamma}-\gamma +\frac 18 ((\nu^2+|z-\xi|^2)^2-|z|^4)\right)\,\chi_{\zeta_*}(z)e^{\frac{|z|^{2}}{4}}.
	$$
	The support of the above function is located in \(|z|\leq 2\zeta_*\), and, assuming $\eta\leq 2\zeta_*$ without loss of generality, its intersection with the support of $u$ is inside the fixed annulus \(\{\eta \leq |z|\leq 2\zeta_*\}\). There, using $\nu(\tau),\xi(\tau)\to 0$ and \eqref{tildegamma-gamma}, we find 
	$$
	\omega-\omega_\infty=o(1), \quad \mbox{and} \quad \nabla(\omega-\omega_\infty)=o(1)
	$$
	as $\tau \to \infty$. Writing
	$$
	\int L_\infty (u) u (\omega-\omega_\infty) dz= \int \left(\Delta u+(4\frac{z}{|z|^2}+\frac 12 z)\cdot \nabla u+u\right)u (\omega-\omega_\infty) dz,
	$$
	the desired estimate \eqref{eq:difference-weights_matched_exterior_small} follows from integrating by parts and using the inequality above.
	
	\end{proof}

We are ready to prove Proposition \ref{prop:global_matched_coercivity}.

\begin{proof}[Proof of Proposition \ref{prop:global_matched_coercivity}]
	Let \(\eta\in[\eta_\star,\zeta_*/4]\), to be chosen below, and decompose
	\[
	u
	=
	u\,\chi_\eta
	+
	u\left(1-\chi_\eta \right)
	=:u_1+u_2.
	\]
	We introduce the annulus $A_\eta=\{\eta\leq |z|\leq 2\eta\}$. The parameter \(\eta\) may depend on \(u\), but it is always taken in the
	fixed interval \([\eta_\star,\zeta_*/4]\). In particular, all implicit
	constants below are uniform, and all \(o(1)\)-terms depend only on \(\tau\).
By linearity,
\begin{align} \label{id:global-coercivity-expression}
	\langle L_{\nu,\xi}[u],u\rangle_*
	&=
	\langle L_{\nu,\xi}[u_1],u_1\rangle_*
	+
	\langle L_{\nu,\xi}[u_2],u_2\rangle_* 
	+
	\langle L_{\nu,\xi}[u_1],u_2\rangle_*
	+
	\langle L_{\nu,\xi}[u_2],u_1\rangle_* .
\end{align}

\noindent \textbf{Step 1.} \emph{Control of the mixed terms.} We claim that
\begin{equation} \label{bd:global-coercivity-mixed}
|\langle L_{\nu,\xi}[u_1],u_2\rangle_*+\langle L_{\nu,\xi}[u_2],u_1\rangle_*|  \leq C \int_{A_\eta}\left(|\nabla u|^2+\frac{u^2}{\eta^2}\right)\omega\,dz+o(1)\left(\|\nabla u_1\|_{L^2_{\gamma}}^{2}+\|u_2\|_{L^2_{\omega}}^{2}+\| \nabla u_2\|_{L^2}^2\right).
\end{equation}
The above inequality \eqref{bd:global-coercivity-mixed} follows from \eqref{bd:global-coercivity-local-mixed}, \eqref{bd:global-coercivity-poisson-mixed-1} and \eqref{bd:global-coercivity-poisson-mixed-2} we prove below.
\smallskip

\noindent \underline{Local terms}. Integrating by parts and expanding the divergence in the
nonlocal terms, we obtain
\begin{align*}
	\int L_{\nu,\xi}[u_2]\,u_1\,\omega
	&=-\int \nabla u_2\cdot\nabla u_1\,\omega
	-\int u_1\nabla u_2\cdot\nabla\omega 
	-\int \nabla\hat U_{\nu,\xi}\cdot\nabla\Phi_{u_2}\,
	u_1\,\omega
	+
	\int \hat U_{\nu,\xi}u_1u_2\,\omega \\
	&\quad
	+\int u_2\nabla\Phi_{\hat U_{\nu,\xi}}\cdot\nabla u_1\,\omega
	+
	\int u_1u_2\nabla\Phi_{\hat U_{\nu,\xi}}\cdot\nabla\omega 
	+\frac12\int \Lambda u_2\,u_1\,\omega.
\end{align*}
Hence
\begin{align*}
	\left|
	\int_{\R^2} L_{\nu,\xi}[u_2]\,u_1\,\omega\,dz
	\right|
	&\lesssim
	\int_{A_\eta}
	\left(
	|\nabla u|^2+\frac{u^2}{\eta^2}
	\right)\omega\,dz+
	\int_{\R^2}
	|\nabla\hat U_{\nu,\xi}|\,
	|\nabla\Phi_{u_2}|\,
	|u_1|\,
	\omega\,dz.
\end{align*}
The last term is perturbative. Indeed, by Cauchy--Schwarz,
\begin{align*}
	\int
	|\nabla\hat U_{\nu,\xi}|\,
	|\nabla\Phi_{u_2}|\,
	|u_1|\,
	\omega \le
	\left(
	\int u_1^2(\nu^2+|z-\xi|^2)^{-1}\gamma\,dz
	\right)^{1/2}
	\left(
	\int
	\frac{|\nabla\hat U_{\nu,\xi}|^2}{\nu^2+|z-\xi|^2}
	\omega^2
	|\nabla\Phi_{u_2}|^2\,dz
	\right)^{1/2}.
\end{align*}
Using the definition of the matched soliton and the fact that \(u_2\) is
supported away from the origin at scale \(\eta\), we have
\[
\frac{|\nabla\hat U_{\nu,\xi}|^2}{\nu^2+|z-\xi|^2}
\omega^2
\lesssim
\nu^2\frac{1}{1+|z|^2}.
\]
Thus \eqref{exterior-coercivity-technical} and the interior Hardy-type
inequality (Lemma \ref{lem:hardy_rescaled_weighted}) yield
\[
\int
|\nabla\hat U_{\nu,\xi}|\,
|\nabla\Phi_{u_2}|\,
|u_1|\,
\omega
=
o(1)
\left(
\|\nabla u_1\|_{L^2_{\gamma}}^{2}
+
\|u_2\|_{L^2_{\omega}}^{2}
\right).
\]
Similarly, we expand the other mixed term.
This gives
\begin{align*}
	\int L_{\nu,\xi}[u_1]\,u_2\,\omega
	&=
	-\int \nabla u_1\cdot\nabla u_2\,\omega
	-\int u_2\nabla u_1\cdot\nabla\omega 
	-\int \nabla\hat U_{\nu,\xi}\cdot\nabla\Phi_{u_1}\,u_2\,\omega
	+
	\int \hat U_{\nu,\xi}u_1u_2\,\omega \\
	&\quad
	+\int u_1\nabla\Phi_{\hat U_{\nu,\xi}}\cdot\nabla u_2\,\omega
	+
	\int u_1u_2\nabla\Phi_{\hat U_{\nu,\xi}}\cdot\nabla\omega 
	+\frac12\int \Lambda u_1\,u_2\,\omega.
\end{align*}
We obtain
\begin{align*}
	\left|
	\int_{\R^2} L_{\nu,\xi}[u_1]\,u_2\,\omega\,dz
	\right|
	\lesssim
	\int_{A_\eta}
	\left(
	|\nabla u|^2+\frac{u^2}{\eta^2}
	\right)\omega\,dz +
	\int_{|z|\ge \eta}
	|\nabla\hat U_{\nu,\xi}|\,
	|\nabla\Phi_{u_1}|\,
	|u_2|\,
	\omega\,dz.
\end{align*}
	For the last term, Cauchy--Schwarz gives
	\begin{align*}
		\int_{|z|\ge\eta}
		|\nabla\hat U_{\nu,\xi}|\,
		|\nabla\Phi_{u_1}|\,
		|u_2|\,
		\omega\,dz \le
		\|u_2\|_{L^2_{\omega}}
		\left(
		\int_{|z|\ge\eta}
		|\nabla\hat U_{\nu,\xi}|^2
		|\nabla\Phi_{u_1}|^2
		\omega\,dz
		\right)^{1/2}.
	\end{align*}
	On \(|z|\ge\eta\), the matched soliton satisfies
	\[
	|\nabla\hat U_{\nu,\xi}|^2\omega
	\lesssim
	\nu^2\frac{1}{1+|(z-\xi)/\nu|^2}.
	\]
	Hence Lemma~\ref{lem:weightedL2poisson_inner} gives
	\[
	\int_{|z|\ge\eta}
	|\nabla\hat U_{\nu,\xi}|\,
	|\nabla\Phi_{u_1}|\,
	|u_2|\,
	\omega\,dz
	=
	o(1)
	\left(
	\|\nabla u_1\|_{L^2_{\gamma}}^{2}
	+
	\|u_2\|_{L^2_{\omega}}^{2}
	\right).
	\]
	Combining the above estimates, we have proved that
	\begin{equation} \label{bd:global-coercivity-local-mixed}
	\int L_{\nu,\xi}[u_1]\,u_2\,\omega+\int L_{\nu,\xi}[u_2]\,u_1\,\omega=O\left(\int_{A_\eta}\left(|\nabla u|^2+\frac{u^2}{\eta^2}\right)\omega\,dz\right) +o(1)\left(\|\nabla u_1\|_{L^2_{\gamma}}^{2}+\|u_2\|_{L^2_{\omega}}^{2}\right).
	\end{equation}
	
	\smallskip

\noindent \underline{Mixed Poisson terms}. It remains to control the mixed Poisson terms in the scalar product. By
	Cauchy--Schwarz,
	\begin{align*}
		&\nu^2
		\left|
		\int
		\nabla\Phi_{L_{\nu,\xi}[u_2]}\cdot\nabla\Phi_{u_1}
		\chi_{R\nu,\xi}(z)\,dz
		\right|\\
		&\qquad\le
		\nu^2
		\left(
		\int |\nabla\Phi_{u_1}|^2
		\chi_{R\nu,\xi}(z)\,dz
		\right)^{1/2}
		\left(
		\int |\nabla\Phi_{L_{\nu,\xi}[u_2]}|^2
		\chi_{R\nu,\xi}(z)\,dz
		\right)^{1/2}.
	\end{align*}
	The first factor is controlled by Lemma~\ref{lem:weightedL2poisson_inner}, while the second
	one is estimated exactly as in the proof of
	Proposition~\ref{prop:exterior_coercivity_matched}. Therefore
	\begin{equation} \label{bd:global-coercivity-poisson-mixed-1}
	\nu^2
	\left|
	\int
	\nabla\Phi_{L_{\nu,\xi}[u_2]}\cdot\nabla\Phi_{u_1}
	\chi_{R\nu,\xi}(z)\,dz
	\right|
	=
	o(1)
	\left(
	\|\nabla u_1\|_{L^2_{\gamma}}^{2}
	+
	\| u_2\|_{L^2_{\omega}}^{2}+
	\| \nabla u_2\|_{L^2_{\omega}}^{2}
	\right).
	\end{equation}
	The term with \(L_{\nu,\xi}[u_1]\) and \(u_2\) is treated similarly. Indeed,
	writing $
	\nabla\Phi_{L_{\nu,\xi}[u_1]}=
	\nabla u_1+\nabla\Phi_F,$
	where
	\[
	F
	=
	-\nabla\cdot(\hat U_{\nu,\xi}\nabla\Phi_{u_1})
	-\nabla\cdot(u_1\nabla\Phi_{\hat U_{\nu,\xi}})
	+\frac12\Lambda u_1,
	\]
	one has
	\[
	\int
	|\nabla u_1|^2
	\chi_{R\nu,\xi}(z)\,dz
	\lesssim
	\nu^{-4}
	\|\nabla u_1\|_{L^2_{\gamma}}^{2},
	\]
	and
	\[
	\int F^2(\nu^2+|z-\xi|^2)\,dz
	\lesssim
	\nu^{-4}
	\|\nabla u_1\|_{L^2_{\gamma}}^{2}.
	\]
	The last bound follows from the decay of \(\hat U_{\nu,\xi}\), the explicit
	formula for \(\nabla\Phi_{\hat U_{\nu,\xi}}\), and
	Lemma~\ref{lem:weightedL2poisson_inner}. Consequently,
	\begin{equation} \label{bd:global-coercivity-poisson-mixed-2}
	\nu^2
	\left|
	\int
	\nabla\Phi_{L_{\nu,\xi}[u_1]}\cdot\nabla\Phi_{u_2}
	\chi_{R\nu,\xi}(z)\,dz
	\right|
	\le
	o(1)
	\left(
	\|\nabla u_1\|_{L^2_{\gamma}}^{2}
	+
	\|u_2\|_{L^2_{\omega}}^{2}+
	\| \nabla u_2\|_{L^2_{\omega}}^{2}
	\right).
	\end{equation}
	
	\smallskip
	
	\noindent \textbf{Step 2.} \emph{Use of interior and exterior coercivity and choice of the annulus.} Appealing to Propositions~\ref{prop:interior_coercivity_matched} and
	\ref{prop:exterior_coercivity_matched} for the first two terms in \eqref{id:global-coercivity-expression}, and to \eqref{bd:global-coercivity-mixed} for the last two, we obtain
	\begin{align*}
		\langle L_{\nu,\xi}[u],u\rangle_*
		&\le
		-c\|\nabla u_1\|_{L^2_{\gamma}}^{2}
		+
		(-2+5\delta)\| u_2\|_{L^2}^2
		-\delta \|\nabla u_2\|_{L^2_{\omega}}^{2}\\
		&\quad
		+
		C\left[
		\nu^{4}\langle u_1, (\Lambda U)_{\nu,\xi}\rangle_{L^{2}}^{2}
		+
		\nu^{5}\sum_{i=1}^{2}\langle u_1,\partial_{z_i}U_{\nu,\xi}\rangle_{L^{2}}^{2}
		\right]+C\int_{A_\eta}
		\left(
		|\nabla u|^2+\frac{u^2}{\eta^2}
		\right)\omega\,dz,
	\end{align*}
	where $c>0$ is universal, after increasing \(\tau_0\) if necessary, and taking $\delta$ sufficiently small.

	Using the interior Hardy-type inequality (Lemma \ref{lem:hardy_rescaled_weighted}) on \(u_1\),
	\[
	\frac{1}{\zeta_*^{2}}\int u_{1}^{2}\gamma dz\le\int \frac{u_1^2}{\nu^2+|z-\xi|^2}\gamma\,dz
	\lesssim
	\int |\nabla u_1|^2\gamma\,dz,
	\]
	we may use part of the negative interior gradient term to control
	\(\langle u_1,u_1\rangle_*\). Since \(\eta\ge\eta_\star\), this gives
	\[
	-c\|\nabla u_1\|_{L^2_{\gamma}}^{2}
	\le(-2+\delta) \langle u_1,u_1\rangle_*
	-\frac{c}{2}\|\nabla u_1\|_{L^2_{\gamma}}^{2}.
	\]
	We may also use part of the negative $L^2$ outer term to control $\langle u_2,u_2\rangle_*$. Indeed, by \eqref{exterior-coercivity-technical},
	$$
	\langle u_2,u_2\rangle_* = \| u_2\|_{L^2_\omega}^2-\nu^2 \int_{\mathbb R^2} |\nabla \Phi_{u_2}|^2\chi_{R\nu,\xi}dz =\| u_2\|_{L^2_\omega}^2+o(\| u_2\|_{L^2_{\omega}}^2+\| \nabla u_2\|_{L^2_{\omega}}^2).
	$$
	Thus, after decreasing \(\delta >0\),
	\begin{align}\label{eq:global_pre_dyadic}
		\langle L_{\nu,\xi}[u],u\rangle_*
		&\le (-2+6\delta)(\langle u_1,u_1\rangle_*+  \langle u_2,u_2\rangle_*)
		-\delta \left(
		\|\nabla u_1\|_{L^2_{\gamma}}^{2}
		+
		\|\nabla u_2\|_{L^2_{\omega}}^{2}
		\right)\\
		\nonumber &\quad
		+
		C\left[
		\nu^{4}\langle u_1, (\Lambda U)_{\nu,\xi}\rangle_{L^{2}}^{2}
		+
		\nu^{5}\sum_{i=1}^{2}\langle u_1,\partial_{z_i}U_{\nu,\xi}\rangle_{L^{2}}^{2}
		\right] 
		+
		C\int_{A_\eta}
		\left(
		|\nabla u|^2+\frac{u^2}{\eta^2}
		\right)\omega\,dz.
	\end{align}
	It remains to choose \(\eta\). Fix an integer \(N\) large and consider the
	disjoint annuli
	\[
	A_j:=\{\eta_j\le |z|\le 2\eta_j\},
	\qquad
	\eta_j:=4^{-j}\frac{\zeta_*}{4},
	\qquad
	j=0,\dots,N.
	\]
	Set \(\eta_\star:=\eta_N\). Since the annuli \(A_j\) are disjoint, there
	exists \(j\in\{0,\dots,N\}\) such that
	\begin{align*}
		\int_{A_j}
		\left(
		|\nabla u|^2+
		\frac{u^2}{\nu^2+|z-\xi|^2}
		\right)\omega\,dz
		\le
		\frac1N
		\int_{\R^2}
		\left(
		|\nabla u|^2+
		\frac{u^2}{\nu^2+|z-\xi|^2}
		\right)\omega\,dz.
	\end{align*}
	We choose \(\eta=\eta_j\). On \(A_\eta\), for \(\tau\) sufficiently large, $\eta^2\sim \nu^2+|z-\xi|^2$, and therefore the annular error in \eqref{eq:global_pre_dyadic} is bounded
	by
	\[
	\frac{C}{N}
	\int_{\R^2}
	\left(
	|\nabla u|^2+
	\frac{u^2}{\nu^2+|z-\xi|^2}
	\right)\omega\,dz \lesssim \frac{1}{N}\|\nabla u\|_{L^2_{\omega}}^{2},
	\]
	where the second inequality follows from the global Hardy inequality, Lemma~\ref{lem:global_hardy_matched}. Choosing \(N\) sufficiently large, this term is absorbed by the negative
	gradient contribution in \eqref{eq:global_pre_dyadic}. Finally, the mixed
	terms in \(\langle u,u\rangle_*\) are controlled as we did for the coercivity of the
	matched scalar product, Proposition~\ref{prop:matched_scalar_coercivity}.
	After relabelling $\delta$ and decreasing \(c>0\), we conclude
	\[
	\langle L_{\nu,\xi}[u],u\rangle_*
	\le
	(-2+\delta)\langle u,u\rangle_*
	-c\|\nabla u\|_{L^2_{\omega}}^{2}
	+C\left[
		\nu^{4}\langle u_1, (\Lambda U)_{\nu,\xi}\rangle_{L^{2}}^{2}
		+
		\nu^{5}\sum_{i=1}^{2}\langle u_1,\partial_{z_i}U_{\nu,\xi}\rangle_{L^{2}}^{2}
		\right] 
	\]
	This proves the proposition.
\end{proof}

\section{Refined convergence} \label{sec:refined-convergence}

Throughout this section we consider a fixed solution $u$ of the Keller-Segel equation \eqref{KS} with $u_0\in L^1(\langle x \rangle^2dx)$ and $\int u_0=8\pi$. We recall it satisfies the soliton resolution provided by Theorem~\ref{thn:soliton-resolution} with the estimates of Proposition \ref{pr:cv-scale-invariant}. We will study its renormalization in self-similar variables, denoted by $w$ and given by \eqref{self-similarKS2}. It solves
\begin{align}\label{self-similarKS-refined}
	\partial_\tau w
	=
	\Delta w - \nabla\cdot(w\nabla \Phi_w)
	+ \frac{1}{2}\Lambda w.
\end{align}
By \eqref{second-momentum}, its center of mass and second momentum are
\begin{equation} \label{refined:second-momentum}
\int_{\mathbb R^2} w(z,\tau)zdz=0 \quad \mbox{and}\quad \int_{\mathbb R^2} w(z,\tau) |z|^2dz= \mu e^{-\tau}.
\end{equation}
By Lemma \ref{lem:selfsimilarquali}, we recall that it can be decomposed as
\begin{equation}\label{self-similarKS-refined-decomposition}
w(z,\tau) = U_{\nu,\xi}(z) + \tilde w(z,\tau),
\end{equation}
where the modulation parameters $\xi(\tau)$ and $\nu(\tau)$ are determined thanks to the orthogonality conditions
\begin{align}\label{eq:ortoselfsimilar-refined}
	\int_{\mathbb R^2} \tilde w  (\Lambda U)_{\nu,\xi}\,dz = 0,
	\qquad
	\int \tilde w  (\partial_{y_i} U)_{\nu,\xi}\,dz = 0 \quad \mbox{ for }i=1,2,
\end{align}
and satisfy as $\tau \to \infty$,
\begin{align}
	\label{id:qualitative-cv-nu-xi-refined} &|\nu| +|\xi|  = o\!\left(e^{-\tau/2}\right),\\
	\label{id:qualitative-modulation-nu-xi-refined} &|\nu_\tau| +|\xi_\tau|  = o\!\left(\nu^{-1}\right),\\
	\label{id:slow-variation-nu-xi-refined} &|\nu(\tau+R^2) - \nu(\tau)|+|\xi(\tau+R^2) - \xi(\tau)| = o(R),\qquad \forall R\geq 0.
\end{align}
and where the remainder satisfies
\begin{align} \label{qualitative-tildew}
		\| \tilde w(\tau)\|_{L^1}=o(1), \quad |\tilde w(z,\tau)|
		=
		o\!\left(\frac{1}{\nu^2 + |z-\xi|^2}\right)
		\quad \mbox{and} \quad
		|\nabla \Phi_{\tilde w}(z,\tau)|
		=
		o\!\left(\frac{1}{\nu + |z-\xi|}\right).
	\end{align}
By \eqref{self-similarKS-refined} the evolution equation for the remainder is
	\begin{align}\label{self-similarKS-refined-remainder}
		\partial_{\tau} \tilde{w}
		&=
		L^{(0)}_{\nu,\xi}[\tilde{u}w]
		+
		\Bigl(\frac{1}{2}+\frac{\nu_{\tau}}{\nu}\Bigr)\Lambda_{\xi}U_{\nu,\xi}
		-
		\xi_{\tau}\cdot \nabla U_{\nu,\xi}
		-
		\nabla \cdot(\tilde{w}_{\nu,\xi}\nabla \Phi_{\tilde{w}_{\nu,\xi}}),
	\end{align}
	where
	\begin{align*}
		L^{(0)}_{\nu,\xi}[\tilde{w}]
		=
		\Delta \tilde{w}
		-
		\nabla \cdot (U_{\nu,\xi}\nabla \Phi_{\tilde{w}})
		-
		\nabla \cdot(\tilde{w}\nabla \Phi_{U_{\nu,\xi}})
		+
		\frac{1}{2}\Lambda \tilde{w}.
	\end{align*}
We will now performed a refined study of the modulation parameters $(\xi,\nu)$, and of the remainder $\tilde w$ in non-scale invariant spaces, based on the framework for the linearized dynamics developed in Section \ref{section:LinAnalysis}.

\subsection{Quantitative modulation estimates} \label{subsec:quant-mod}

In this subsection we start by completing the qualitative modulation estimates \eqref{id:qualitative-modulation-nu-xi-refined} and \eqref{id:slow-variation-nu-xi-refined} by quantitative estimates. To distinguish between the contribution of $\tilde w$ from the nearby parabolic region $|z|\lesssim 1$ and the far away parabolic region $|z|\gtrsim 1$, we let
	\begin{equation} \label{decomposition-tildew}
	\tilde w_{1}:=\tilde{w} \chi,
	\qquad
	\tilde w_{2}:=\tilde{w} (1-\chi).
	\end{equation}
\begin{lemma}\label{lem:modulation_bounds}
	
	For $\tau$ sufficiently large,
	\begin{align}
		|\nu_{\tau}|+|\xi_{\tau}|
		&\lesssim
		\frac{1}{\nu}
		\left(\int \tilde w_{1}^{2}(\nu^{2}+|z-\xi|^{2})\,dz\right)^{1/2}
		+
		\| \tilde w_{2}\|_{L^{1}}
		+\nu.
	\end{align}
	The same estimate holds with $(\tilde w_1,\tilde w_2)$ replaced by
$(\varepsilon_1,\varepsilon_2)$, where
\[
\varepsilon_1:=\varepsilon\chi,
\qquad
\varepsilon_2:=\varepsilon(1-\chi).
\]
\end{lemma}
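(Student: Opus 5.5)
The plan is to repeat the argument of Lemma \ref{lem:modest}, now working directly in self-similar variables and keeping track of every error term quantitatively. I will differentiate in $\tau$ the three orthogonality conditions \eqref{eq:ortoselfsimilar-refined} and insert the remainder equation \eqref{self-similarKS-refined-remainder}. This produces a $3\times 3$ linear system for $(\nu_\tau/\nu,\xi_\tau)$, and the task reduces to bounding its right-hand side.

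\textbf{Step 1: the modulation system.} Since $\partial_\tau (\Lambda U)_{\nu,\xi}$ contains factors $\nu_\tau/\nu$ and $\xi_\tau/\nu$ multiplying localized profiles, differentiating the scaling condition gives
\begin{align*}
0=\int \partial_\tau\tilde w\,(\Lambda U)_{\nu,\xi}\,dz+\frac{\nu_\tau}{\nu}\int \tilde w\,(\Lambda\Lambda U)_{\nu,\xi}\,dz+\frac{\xi_\tau}{\nu}\cdot\int \tilde w\,(\nabla\Lambda U)_{\nu,\xi}\,dz .
\end{align*}
The translation conditions are handled the same way. After substituting \eqref{self-similarKS-refined-remainder}, the leading matrix comes from pairing $\frac{\nu_\tau}{\nu}\Lambda_\xi U_{\nu,\xi}-\xi_\tau\cdot\nabla U_{\nu,\xi}$ with $(\Lambda U)_{\nu,\xi}$ and $(\partial_iU)_{\nu,\xi}$. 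After rescaling $z=\xi+\nu y$, this matrix is diagonal with entries of order $\|\Lambda U\|_{L^2}^2$ and $\nu^{-1}\|\partial_iU\|^2_{L^2}$. The cross terms $\int \tilde w\,(\Lambda\Lambda U)_{\nu,\xi}$ and similar ones are $o(1)$ by \eqref{qualitative-tildew}, so the system is invertible. The inhomogeneous term $\frac12\Lambda_\xi U_{\nu,\xi}$ paired with the test profiles gives $O(1)$ contributions. Since $\Lambda_\xi$ differs from $\Lambda$ by $\xi\cdot\nabla$ and $\xi=o(e^{-\tau/2})$, after multiplying by the appropriate $\nu$-weights these contribute the $O(\nu)$ term in the statement.

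\textbf{Step 2: linear terms.} I will use the adjoint structure \eqref{id:formula-LU}. In inner variables,
\begin{align*}
\int L^{(0)}_{\nu,\xi}[\tilde w]\,(\Lambda U)_{\nu,\xi}\,dz=-\int U_{\nu,\xi}\nabla\mathcal M_{\nu,\xi}[\tilde w]\cdot\nabla(\ldots)\,dz+\tfrac12\int\Lambda\tilde w\,(\Lambda U)_{\nu,\xi}\,dz .
\end{align*}
Testing against $\Lambda U$ and $\partial_iU$, I exploit $L^*$ acting on $\mathcal M[\Lambda U]=-2$ and $\mathcal M[\nabla U]=0$ (Proposition \ref{prop:RS}); more simply, I move all derivatives onto the explicit profiles. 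Each resulting term is $\int \tilde w\, G_{\nu,\xi}$ or $\int \nabla\Phi_{\tilde w}\cdot H_{\nu,\xi}$, where $G,H$ are rescaled profiles decaying like $\langle y\rangle^{-4}$ or faster, with powers $\nu^{-2}$ or $\nu^{-3}$ in front.

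Split $\tilde w=\tilde w_1+\tilde w_2$ as in \eqref{decomposition-tildew}. For $\tilde w_1$, Cauchy--Schwarz with weight $(\nu^2+|z-\xi|^2)$ gives
\begin{align*}
\Big|\int\tilde w_1 G_{\nu,\xi}\Big|\le \Big(\int \tilde w_1^2(\nu^2+|z-\xi|^2)\Big)^{1/2}\Big(\int \frac{G_{\nu,\xi}^2}{\nu^2+|z-\xi|^2}\Big)^{1/2}.
\end{align*}
Scaling shows the second factor is $\nu^{-1}$ times the normalization that matches the leading matrix. For $\tilde w_2$, supported in $|z|\geq1$ where $|z-\xi|\gtrsim 1$, the profiles are $O(\nu^2)$ and bounded, so this part is $\lesssim\|\tilde w_2\|_{L^1}$.

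\textbf{Step 3: the Poisson-field terms, which I expect to be the main obstacle.} The nonlocal field $\nabla\Phi_{\tilde w}$ paired against $\nabla(\Lambda U)_{\nu,\xi}$ is where care is needed. For the $\tilde w_2$ part, I use that in $|z-\xi|\lesssim 1/2$ one has $|\nabla\Phi_{\tilde w_2}|\lesssim\|\tilde w_2\|_{L^1}$. For the $\tilde w_1$ part, I apply the weighted Poisson estimate Lemma \ref{lem:weightedL2poisson_inner}, the appendix estimate in the same rescaled form as in Proposition \ref{InnCoercivity1}, or alternatively integrate by parts back onto $\tilde w_1$ against $\Phi_{\text{profile}}$. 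The latter uses that $\Phi_{\Lambda U}$ and $\Phi_{\partial_i U}$ grow at most logarithmically and are tempered by the weight; where it is needed, the log growth is compensated by the zero-mass structure of $\nabla\cdot$. Note that $\Lambda U$ has zero integral, so $\nabla\Phi_{\Lambda U}$ decays like $|y|^{-3}$.

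\textbf{Step 4: the quadratic term.} After integration by parts, $\nabla\cdot(\tilde w\nabla\Phi_{\tilde w})$ is bounded by $\|\nabla\Phi_{\tilde w}\|$ times the same linear quantities. Since $\nabla\Phi_{\tilde w}=o(1/(\nu+|z-\xi|))$ by \eqref{qualitative-tildew}, this term is bounded by $o(1)$ times the right-hand side of the statement.

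\textbf{Conclusion.} Inverting the near-diagonal system gives the estimate. For $(\varepsilon_1,\varepsilon_2)$ the argument is identical; the only change is that the difference $\hat U-U$ produces extra terms of size $O(\nu^2)$, which are absorbed into the $+\nu$ term.
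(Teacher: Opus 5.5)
Your proposal is essentially the paper's proof. Like the paper, you differentiate the three orthogonality conditions, insert the remainder equation to get a near-diagonal system for $(\nu_\tau/\nu,\xi_\tau)$, bound the $\tilde w_1$ contributions by weighted Cauchy--Schwarz and Lemma~\ref{lem:weightedL2poisson_inner}, bound the $\tilde w_2$ contributions via $|\nabla\Phi_{\tilde w_2}|\lesssim\|\tilde w_2\|_{L^1}$ near the core, treat the quadratic and test-function-derivative terms as lower order, and pass to $(\varepsilon_1,\varepsilon_2)$ through the $O(\nu^2)$ difference $\tilde U_{\nu,\xi}-U_{\nu,\xi}$.

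One caveat, shared with the paper (whose displayed remainder equation drops this term): since $\tfrac12\Lambda U_{\nu,\xi}=\tfrac12\Lambda_\xi U_{\nu,\xi}+\tfrac12\xi\cdot\nabla U_{\nu,\xi}$, the translation rows actually control $\xi_\tau+\xi/2$, so this term contributes $|\xi|$ to $|\xi_\tau|$, not $O(\nu)$ as you claim ($\xi=o(e^{-\tau/2})$ does not give $|\xi|\lesssim\nu$); this is harmless in the later applications because $|\xi|\,\nu=o(e^{-\tau})$.
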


\begin{proof}

	We differentiate the orthogonality conditions \eqref{eq:ortoselfsimilar-refined}. Testing the equation \eqref{self-similarKS-refined-remainder} against
	\[
	\Psi=\psi_{\xi,\nu}, \quad \mbox{with} \quad \psi=\Lambda U
	\quad\text{or}\quad
	\psi=\partial_{y_i}U,
	\]
	we obtain
	\begin{align}\label{eq:general_projection_modulation_final}
		0
		&=
		\Bigl(\frac12+\frac{\nu_\tau}{\nu}\Bigr)
		\int \Lambda_\xi U_{\nu,\xi}\,\Psi
		-
		\xi_\tau\cdot
		\int \nabla U_{\nu,\xi}\,\Psi
		+
		\mathcal R_\psi,
	\end{align}
	where $\mathcal R_\psi$ collects the contributions of
	\(L^{(0)}_{\nu,\xi}[\tilde{w}]\), the quadratic term, and the derivatives of the test functions:
	\begin{align*}
	\mathcal R_\psi &= \int \left((L^{(0)}_{\nu,\xi}[\tilde{w}_1])+L^{(0)}_{\nu,\xi}[\tilde{w}_2]-\nabla (\tilde w \nabla \Phi_{\tilde w})\right)\Psi dz+\int \tilde w \left(-\frac{\nu_\tau}{\nu}(\Lambda \psi)_{\xi,\nu}-\frac{\xi_\tau}{\nu}\cdot (\nabla \psi)_{\xi,\nu}\right)dz\\
	&=: \mathcal R_{\psi,\mathrm{loc}}+\mathcal R_{\psi,\mathrm{ext}}+\mathcal R_{\psi,\mathrm{NL}}+\mathcal R_{\psi,\mathrm{deriv}}
	\end{align*}
	
	The local contribution is controlled by integration by parts, Cauchy--Schwarz, and Lemma~\ref{lem:weightedL2poisson_inner}. More precisely, one obtains
	\begin{align*}
		|\mathcal R_{\Lambda U,\mathrm{loc}}|+|\mathcal R_{\partial_{y_i}U,\mathrm{loc}}|
		\lesssim
		\frac{1}{\nu^{4}}
		\left(\int \tilde w_{1}^{2}(\nu^{2}+|z-\xi|^{2})\,dz\right)^{1/2}.
	\end{align*}
	For the exterior part, the supports are essentially separated. Hence, for $z$ in the core $|z|\leq 1/4$,
	\[
	|\nabla\Phi_{\tilde u_2}(z)|
	\lesssim
	\| \tilde u_2\|_{L^1}.
	\]
	Therefore, integrating by parts and using
	\[
	\int U_{\nu,\xi}|\nabla (\Lambda U_{\nu,\xi})|\,dz+\int U_{\nu,\xi}|\nabla ((\partial_{y_i}U)_{\nu,\xi})|\,dz
	\lesssim
	\frac{1}{\nu^3},
	\]
	we obtain
	\begin{align*}
		|\mathcal R_{\Lambda U,\mathrm{ext}}|+|\mathcal R_{\partial_{y_i}U,\mathrm{ext}}| \lesssim
		\frac{1}{\nu^{3}}\| \tilde w_2\|_{L^1}.
	\end{align*}
	The remaining contributions, coming from the modulation derivatives of the test functions and the quadratic term, are controlled by the qualitative bounds \eqref{qualitative-tildew} on $\tilde w$ and yield lower-order terms, which are absorbed after dividing by the coercive quantities in each projection. \newline
	We now use the diagonal structure
	\[
	\int \nabla U_{\nu,\xi}(\Lambda U)_{\nu,\xi}=0,
	 \quad \int(\Lambda_\xi U_{\nu,\xi})^2\,dz\approx \nu^{-4}\quad
	\mbox{and} \quad
	\int(\partial_{z_i}U_{\nu,\xi})^2\,dz\approx \nu^{-4}.
	\]
	Taking $\Psi=(\Lambda U)_{\nu,\xi}$ in \eqref{eq:general_projection_modulation_final} yields
	\[
	|\nu_\tau|
	\lesssim
	\frac{1}{\nu}
	\left(\int \tilde w_1^2(\nu^2+|z-\xi|^2)\,dz\right)^{1/2}
	+
	\| \tilde w_2\|_{L^1}
	+
	o(|\xi_\tau|)
	+\nu.
	\]
	Taking $\Psi=(\partial_{y_i}U)_{\nu,\xi}$ gives
	\[
	|\xi_\tau|
	\lesssim
	\frac{1}{\nu}
	\left(\int \tilde w_1^2(\nu^2+|z-\xi|^2)\,dz\right)^{1/2}
	+
	\| \tilde w_2\|_{L^1}
	+
	o(|\nu_\tau|)
	+\nu.
	\]
	For $\tau$ sufficiently large, the coupling terms are absorbed, yielding the desired bounds. \newline 
	Finally, using
	\begin{align}
		\varepsilon
		=
		U_{\nu,\xi}(1-\mathcal{X}_{\star})
		-
		(\alpha-1)\hat{U}_{\nu,\xi}
		+
		\tilde{w},
	\end{align}
	and recalling that $|\alpha-1|=O(\nu^2)$, the additional terms are either boundary contributions or lower-order errors and are estimated exactly as above. This yields the same bounds for $\varepsilon$.
\end{proof}
We have just proved that 
\begin{align}\label{QuantModNot}
	|\nu_{\tau}|
	\le \frac{\tilde{\mathcal{M}}(\tau)}{\nu},
	\qquad
	|\xi_{\tau}|
	\le \frac{\tilde{\mathcal{M}}(\tau)}{\nu},
\end{align}
with 
	$\tilde{\mathcal{M}}(\tau)
	:=
	\left(\int \tilde w_{1}^{2}(\nu^{2}+|z-\xi|^{2})\,dz\right)^{1/2}
	+\nu \| \tilde w_{2}\|_{L^{1}}
	+\nu^{2}$.

\medskip

\noindent
For later purposes, it is convenient to work with a slightly stronger quantity, defined by
\begin{align}\label{def:M-strong}
	\mathcal{M}(\tau)
	:=
	\left(\int \tilde w_{1}^{2}(\nu^{2}+|z-\xi|^{2})\,dz\right)^{1/2}
	+\| \tilde w_{2}\|_{L^{1}}
	+\nu^{2}.
\end{align}
Since $\nu\le 1$, this modified definition is stronger than the previous one, and the bounds \eqref{QuantModNot} remain valid (up to a harmless change of constants). The advantage of \eqref{def:M-strong} is that it allows us to control error terms at scales $R\gg \nu$ without keeping track of the small prefactor $\nu$ in front of the $L^{1}$ contribution of $u_{2}$. In particular, it enables us to close estimates at intermediate spatial scales, which are larger than the soliton scale $\nu$ but still small compared to the ambient scale.
\begin{lemma}\label{lem:locfirstmom_selfsimilar}
	Let $w=U_{\nu,\xi}+\tilde w$ solve \eqref{self-similarKS-refined} and let
	\[
	\phi_{\eta,R}(z)
	:=
	(z-\eta)\chi_{\eta,R}.
	\]
	Assume moreover that, for some sufficiently small universal
	constant $c>0$,
	$|\xi-\eta|+\nu\le cR$.
	Then
	\begin{align}
		\label{eq:refined-locfirstmom-selfsimilar} &\left|
		\frac{d}{d\tau}
		\int_{\mathbb R^2}w(z,\tau)\phi_{\eta,R}(z)\,dz
		+
		\frac12
		\int_{\mathbb R^2}
		w(z,\tau)\,z\cdot\nabla\phi_{\eta,R}(z)\,dz
		\right| \\
		&\lesssim
		(1+\|\tilde w\|_{L^1})
		\int_{|z-\eta|\ge R/2}
		\frac{|\tilde w(z)|}{|z-\eta|}\,dz +
		\frac{\nu^2}{R^4}
		\int_{|z-\eta|\le R/2}
		|\tilde w(z)|\,|z-\eta|\,dz+
		|\xi-\eta|\frac{\nu^2}{R^4}
		\left|
		\int_{|z-\eta|\le R/2}
		\tilde w(z)\,dz
		\right|.
		\nonumber
	\end{align}
	Moreover,
	\begin{align}\label{eq:refined-locfirstmom-selfsimilar-drift}
		|\int_{\R^{2}} w(z,\tau)z\cdot \nabla \phi_{\eta,R}(z)dz|\lesssim |\xi-\eta|+|\eta|.
	\end{align}
\end{lemma}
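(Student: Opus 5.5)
We will follow the proof of Lemma \ref{lem:locfirstmom} in self-similar variables, then insert the decomposition $w=U_{\nu,\xi}+\tilde w$ and extract the cancellations coming from the radial symmetry of the soliton.

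\textbf{Step 1: weak formulation.} Testing \eqref{self-similarKS-refined} against $\phi=\phi_{\eta,R}$ gives
$$
\frac{d}{d\tau}\int w\phi=\int w\Delta\phi+\iint \rho_\phi(z,z')w(z)w(z')\,dz\,dz'+\frac12\int \Lambda w\,\phi .
$$
Integrating by parts, the scaling term is
$$
\frac12\int \Lambda w\,\phi=-\frac12\int w\,z\cdot\nabla\phi ,
$$
which is exactly the correction on the left of \eqref{eq:refined-locfirstmom-selfsimilar}. It therefore remains to bound the Laplacian term and the bilinear term.

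\textbf{Step 2: the Laplacian term.} Since $\Delta\phi$ is supported on $R\le|z-\eta|\le 2R$ with $|\Delta\phi|\lesssim R^{-1}$, we split $w=U_{\nu,\xi}+\tilde w$.
\begin{itemize}
\item The $\tilde w$ part is bounded by $\int_{|z-\eta|\ge R/2}|\tilde w|\,|z-\eta|^{-1}$.
\item For the soliton part we do not estimate crudely. Because $\Delta[(z-\eta)\chi((z-\eta)/R)]$ is an odd function of $z-\eta$, we expand $U_{\nu,\xi}(z)\approx 8\nu^2|z-\eta|^{-4}$ plus a correction linear in $\xi-\eta$.
\item The leading term cancels by oddness. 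The correction term, together with the $O(\nu^4|z|^{-6})$ tail, gives contributions $\lesssim \nu^2 R^{-4}|\xi-\eta|+\nu^4R^{-5}$.
\end{itemize}
We expect the exact radial soliton-soliton contribution to vanish identically once the bilinear term is included, since $U_{\nu,\eta}$ is a stationary state with $\int U\,\phi=0$. Our plan is therefore to write $U_{\nu,\xi}=U_{\nu,\eta}+(U_{\nu,\xi}-U_{\nu,\eta})$ and treat the difference, which is $O(|\xi-\eta|\nu^2|z-\eta|^{-5})$ far away, as a perturbation.

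\textbf{Step 3: the bilinear term.} As in Lemma \ref{lem:locfirstmom}, $\rho_\phi$ vanishes on $I_R\times I_R$ and on $E_R\times E_R$. The bound $|\rho_\phi|\lesssim R^{-1}$ when one variable lies in $M_R$, and $|\rho_\phi|\lesssim|z-z'|^{-1}$ on $I_R\times E_R$, reduce everything to expanding $w\otimes w$ into four pieces.
\begin{itemize}
\item $\tilde w\otimes w$ with $\tilde w$ outside $B_{R/2}(\eta)$: bounded by $(8\pi+\|\tilde w\|_{L^1})\int_{|z-\eta|\ge R/2}|\tilde w|/|z-\eta|$.
\item $U\otimes U$: it vanishes by stationarity of $U_{\nu,\eta}$, up to the $\xi-\eta$ perturbation handled as in Step 2.
\item Cross terms with $\tilde w$ inside $B_{R/2}(\eta)$ and $U_{\nu,\xi}$ outside: this is the delicate part. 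For $z'\in I_R$ we need
$$
\int_{E_R\cup M_R}\rho_\phi(z,z')\,U_{\nu,\xi}(z)\,dz=\nabla\Phi_{U_{\nu,\xi}\mathbbm 1_{\mathrm{out}}}(z')\;\text{(up to localization)} .
\]
\end{itemize}
The field generated by the exterior part of a radial density vanishes at the center and is smooth there. So we Taylor expand in $z'-\eta$: the zero-order term is $O(|\xi-\eta|\nu^2R^{-4})$ by the off-centering, and the first-order term is $O(\nu^2R^{-4}|z'-\eta|)$. Integrating against $\tilde w(z')$ gives precisely the last two terms of \eqref{eq:refined-locfirstmom-selfsimilar}.

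\textbf{Step 4: the drift bound, and the main obstacle.} For \eqref{eq:refined-locfirstmom-selfsimilar-drift}, write
$$
z\cdot\nabla\phi=(z-\eta)\cdot\nabla\phi+\eta\cdot\nabla\phi .
$$
Then:
\begin{itemize}
\item $|\eta\cdot\nabla\phi|\lesssim|\eta|$, and the total mass is $8\pi$, which gives the $|\eta|$ term.
\item $(z-\eta)\cdot\nabla\phi$ is odd-type homogeneous of degree one in $z-\eta$ and bounded by $\lesssim R$ on the support. Against $U_{\nu,\xi}$ it equals $O(|\xi-\eta|)$ by symmetry, using $U_{\nu,\eta}$ plus the shift. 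Against $\tilde w$ it is bounded using the moment control.
\end{itemize}
The main obstacle is Step 3: getting the precise cancellation of the soliton self-interaction and the Taylor structure of the exterior field at the center, uniformly in the scale $R$ relative to $\nu$. The hypothesis $|\xi-\eta|+\nu\le cR$ ensures the soliton core lies well inside $B_{R/2}(\eta)$, which is what makes these expansions legitimate.
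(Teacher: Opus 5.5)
Your treatment of the $\tilde w$ tails and of the soliton--remainder cross term follows the paper. The function $B(y)=\int\rho_{\phi}(z,y)U_{\nu,\xi}(z)\,dz$ vanishes at $y=\eta$ for the centred soliton, the off-centring costs $O(|\xi-\eta|\nu^2R^{-4})$, and $B$ is Lipschitz with constant $\lesssim\nu^2R^{-4}$ on $B_{R/2}(\eta)$. The justification for the vanishing should be the oddness of $\rho_{\phi_i}(z,\eta)=(z-\eta)_iF(|z-\eta|)$, not an exterior Poisson field, since the weight $\nabla\phi_i-e_i$ is not constant on $M_R$.

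\textbf{The gap: the soliton self-interaction.} You invoke stationarity only for $U_{\nu,\eta}$ and propose to treat $U_{\nu,\xi}-U_{\nu,\eta}$ as a perturbation. Estimated in absolute value, the resulting pieces leave at best a residual of order $|\xi-\eta|\nu^2R^{-4}$. These pieces are the soliton part of $\int w\Delta\phi$ and the bilinear terms involving $U_{\nu,\xi}-U_{\nu,\eta}$. Your Step 2 bound also carries a $\nu^4R^{-5}$ term. Neither is allowed by \eqref{eq:refined-locfirstmom-selfsimilar}: its right-hand side is linear in $\tilde w$, so at $\tilde w=0$ it asserts exact vanishing. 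The missing observation is that stationarity does not depend on the centre. $U_{\nu,\xi}$ is itself a steady state of \eqref{KS}, so $\int U_{\nu,\xi}\Delta\phi+\iint\rho_\phi U_{\nu,\xi}U_{\nu,\xi}=\frac{d}{dt}\int U_{\nu,\xi}\phi=0$ for every test function $\phi$. Only $\int\tilde w\Delta\phi+2\iint\rho_\phi U_{\nu,\xi}\tilde w+\iint\rho_\phi\tilde w\tilde w$ survives. This is what the paper uses; no shift-perturbation is needed, and the symmetry $\int U\phi=0$ plays no role. A stray $|\xi-\eta|\nu^2R^{-4}$ would be harmless in Lemma \ref{lemma:QuantSlowVariation}, where such a term appears anyway. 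A $\nu^4R^{-5}$ term would not be: over a window of length $R^2$ it integrates to $\nu^4R^{-3}$, which is far larger than $R^2$ at scales $R\sim K\nu$.

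\textbf{The drift bound.} For \eqref{eq:refined-locfirstmom-selfsimilar-drift}, your handling of $\eta\cdot\nabla\phi$ and of the soliton (translate to $U_{\nu,\eta}$, then oddness plus a Lipschitz bound) is correct. However, ``against $\tilde w$ it is bounded using the moment control'' cannot give $|\xi-\eta|+|\eta|$. Indeed $(z-\eta)\cdot\nabla\phi_i=(z-\eta)_i\,[\chi(r)+r\chi'(r)]$ with $r=|z-\eta|/R$. For $\xi=\eta=0$ the soliton and $\eta$ contributions vanish, while $\int\tilde w\,z_i[\chi(r)+r\chi'(r)]\,dz$ generally does not. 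What one can prove is the bound with the extra term $\int_{|z-\eta|\le 2R}|\tilde w|\,|z-\eta|\,dz$. The paper's one-line proof of \eqref{eq:refined-locfirstmom-selfsimilar-drift} is also silent on this contribution, so you should state the estimate with it. Nothing downstream breaks: in Lemma \ref{lemma:QuantSlowVariation}, Cauchy--Schwarz on $\tilde w_1$ bounds this term by $\lesssim \widetilde R_\star\,\mathcal M(\tau)+|\xi(\tau)-\xi(T)|\,\|\tilde w\|_{L^1}$. That is absorbed by the terms already in \eqref{eq:Idot-quant-clean}, since $\widetilde R_\star\le 1$.
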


\begin{proof}
	We use the weak formulation of the self-similar equation. For every
	$\phi\in C_c^\infty(\mathbb R^2;\mathbb R^2)$,
	\begin{align}
		\frac{d}{d\tau}\int_{\mathbb R^2}w\,\phi
		&=
		\int_{\mathbb R^2}w\,\Delta\phi
		+
		\iint
		\rho_\phi(z,y)w(z)w(y)\,dz\,dy
		-
		\frac12\int_{\mathbb R^2}w\,z\cdot\nabla\phi,
		\label{eq:weak-selfsimilar}
	\end{align}
	componentwise, where
	\[
	\rho_\phi(z,y)
	=
	-\frac{1}{4\pi}
	\frac{z-y}{|z-y|^2}
	\cdot
	\bigl(\nabla\phi(z)-\nabla\phi(y)\bigr).
	\]
	We apply this with $\phi=\phi_{\eta,R}$ and set
	\[
	\mathcal K_\phi[f]
	:=
	\int f\,\Delta\phi
	+
	\iint \rho_\phi(z,y)f(z)f(y)\,dz\,dy.
	\]
	Since $U_{\nu,\xi}$ is stationary, one has
	$\mathcal K_{\phi_{\eta,R}}[U_{\nu,\xi}]=0$.
	Writing $w=U_{\nu,\xi}+\tilde w$,
	\begin{align}
		\mathcal K_{\phi_{\eta,R}}[w]
		&=
		\int \tilde w\,\Delta\phi_{\eta,R}
		+
		2\iint
		\rho_{\phi_{\eta,R}}(z,y)
		U_{\nu,\xi}(z)\tilde w(y)\,dz\,dy
		+
		\iint
		\rho_{\phi_{\eta,R}}(z,y)
		\tilde w(z)\tilde w(y)\,dz\,dy.
		\label{eq:K-expansion}
	\end{align}
	The first and third terms are treated as in Lemma~\ref{lem:locfirstmom}, yielding
	\[
	\left|
	\int \tilde w\,\Delta\phi_{\eta,R}
	\right|
	+
	\left|
	\iint
	\rho_{\phi_{\eta,R}}
	\tilde w\tilde w
	\right|
	\lesssim
	(1+\|\tilde w\|_{L^1})
	\int_{|z-\eta|\ge R}
	\frac{|\tilde w(z)|}{|z-\eta|}\,dz.
	\]
	It remains to estimate the mixed term
	\[
	\mathcal I
	:=
	\iint
	\rho_{\phi_{\eta,R}}(z,y)
	U_{\nu,\xi}(z)\tilde w(y)\,dz\,dy.
	\]
	Set
	\[
	Z=z-\eta,\qquad Y=y-\eta,\qquad a=\xi-\eta.
	\]
	Then $\phi_{\eta,R}(z)=\phi_R(Z)$ and $U_{\nu,\xi}(z)=U_{\nu,a}(Z)$, while
	$\rho_{\phi_{\eta,R}}(z,y)=\rho_{\phi_R}(Z,Y)$.
	Thus it suffices to treat the case $\eta=0$, with $|a|+\nu\le cR$. \newline 
	We work componentwise. Let
	\[
	\phi_i(z)=z_i\chi\!\left(\frac zR\right),
	\qquad
	B_i^{(a)}(y)
	:=
	\int
	\rho_{\phi_i}(z,y)U_{\nu,a}(z)\,dz.
	\]
	Then
	\[
	\mathcal I_i
	=
	\int B_i^{(a)}(y)\tilde w(y)\,dy.
	\]
	We first estimate $B_i^{(a)}(0)$. When $a=0$, the profile is radial and
	\[
	\rho_{\phi_i}(z,0)
	=
	-\frac1{4\pi}
	\frac z{|z|^2}
	\cdot(\nabla\phi_i(z)-e_i).
	\]
	This vanishes for $|z|\le R$. For $|z|\ge R$, one checks that
	$\rho_{\phi_i}(z,0)=z_i F(|z|)$ for some radial function $F$. Indeed, for $|z|\ge R$,
	both $\chi\!\left(\frac{|z|}{R}\right)$ and $\chi'\!\left(\frac{|z|}{R}\right)$ depend only on $|z|$, and
	$\nabla\phi_i(z)-e_i$ is a linear combination of $e_i$ and $\frac{z_i}{|z|}z$ with radial coefficients; taking the product with $\frac{z}{|z|^2}$ yields a quantity proportional to $z_i$ with radial weight.
	Hence $\rho_{\phi_i}(z,0)$ is odd in $z_i$, and therefore
	$B_i^{(0)}(0)=0$.
	For general $a$,
	\[
	B_i^{(a)}(0)
	=
	\int
	\rho_{\phi_i}(z,0)\bigl(U_{\nu,a}(z)-U_{\nu,0}(z)\bigr)\,dz.
	\]
	Using
	$|\rho_{\phi_i}(z,0)|\lesssim |z|^{-1}$ for $|z|\ge R$ and
	\[
	|U_{\nu,a}(z)-U_{\nu,0}(z)|
	\lesssim
	|a|\frac{\nu^2}{|z|^5},
	\]
	we obtain
	\begin{equation}\label{eq:B0-bound-clean}
		|B_i^{(a)}(0)|
		\lesssim
		|a|\frac{\nu^2}{R^4}.
	\end{equation}
	
	Next, for $|y|\le R/2$, we compare $B_i^{(a)}(y)$ and $B_i^{(a)}(0)$. Since $\nabla\phi_i(y)=e_i$,
	\[
	\rho_{\phi_i}(z,y)-\rho_{\phi_i}(z,0)
	=
	-\frac1{4\pi}
	\left[
	\frac{z-y}{|z-y|^2}
	-
	\frac z{|z|^2}
	\right]
	\cdot(\nabla\phi_i(z)-e_i).
	\]
	For $|z|\ge R$ one has
	\[
	\left|
	\frac{z-y}{|z-y|^2}
	-
	\frac z{|z|^2}
	\right|
	\lesssim
	\frac{|y|}{|z|^2},
	\]
	which yields
	\begin{equation}\label{eq:B-lipschitz-inner-clean}
		|B_i^{(a)}(y)-B_i^{(a)}(0)|
		\lesssim
		|y|
		\int_{|z|\ge R}
		\frac{U_{\nu,a}(z)}{|z|^2}\,dz
		\lesssim
		|y|\frac{\nu^2}{R^4}.
	\end{equation}
	
	For $|y|\ge R/2$, one has the uniform bound
	\begin{equation}\label{eq:B-tail-bound-clean}
		|B_i^{(a)}(y)|
		\lesssim
		\frac1{|y|}.
	\end{equation}
	Indeed, if $R/2\le |y|\le 4R$, then $|y|\approx R$ and, by the Lipschitz bound
	$|D^2\phi_i|\lesssim R^{-1}$ together with $|\nabla\phi_i|\lesssim1$,
	\[
	|\rho_{\phi_i}(z,y)|
	\lesssim
	\frac1{|z-y|}
	\min\left\{1,\frac{|z-y|}{R}\right\}
	\lesssim
	\frac1R
	\lesssim
	\frac1{|y|}.
	\]
	Hence $|B_i^{(a)}(y)|\lesssim |y|^{-1}\int U_{\nu,a}\lesssim |y|^{-1}$.
	If instead $|y|\ge 4R$, then $\nabla\phi_i(y)=0$ and
	$\nabla\phi_i(z)=0$ for $|z|\ge 2R$, so the integrand is supported in
	$|z|\le 2R$. On this set $|z-y|\ge |y|/2$, and therefore
	\[
	|\rho_{\phi_i}(z,y)|
	\lesssim
	\frac1{|y|}.
	\]
	Integrating again against $U_{\nu,a}$ gives \eqref{eq:B-tail-bound-clean}.
	
	\medskip
	
	We now estimate $\mathcal I_i$. Splitting at $|y|=R/2$ and using
	\eqref{eq:B-tail-bound-clean}, \eqref{eq:B-lipschitz-inner-clean}, and \eqref{eq:B0-bound-clean}, we obtain
	\begin{align*}
		|\mathcal I_i|
		&\lesssim
		\int_{|y|\ge R/2}
		\frac{|\tilde w(y)|}{|y|}\,dy
		+
		\frac{\nu^2}{R^4}
		\int_{|y|\le R/2}
		|y|\,|\tilde w(y)|\,dy \\
		&\quad+
		|a|\frac{\nu^2}{R^4}
		\left|
		\int_{|y|\le R/2}
		\tilde w(y)\,dy
		\right|.
	\end{align*}
	
	Returning to the original variables yields \eqref{eq:refined-locfirstmom-selfsimilar}. Inserting this into \eqref{eq:K-expansion} and then into \eqref{eq:weak-selfsimilar} concludes the proof of \eqref{eq:refined-locfirstmom-selfsimilar}.
	
	\medskip
	
	Finally, we prove \eqref{eq:refined-locfirstmom-selfsimilar-drift}. We write
	\[
	\int w\,z\cdot\nabla\phi_{\eta,R}
	=
	\int w\,(z-\eta)\cdot\nabla\phi_{\eta,R}
	+
	\eta\cdot\int w\,\nabla\phi_{\eta,R}.
	\]
	The second term is bounded by $|\eta|$. For the first one, using that
	$\phi_{\eta,R}$ is a translation of $\phi_{\xi,R}$ and that $U_{\nu,\xi}$ is radially symmetric around $\xi$, we obtain cancellation of the leading contribution, and the remainder is bounded by $|\xi-\eta|$. This concludes the proof.
\end{proof}

We now use Lemma \ref{lem:locfirstmom_selfsimilar} to complement \eqref{id:slow-variation-nu-xi-refined} by a quantitative estimate.

\begin{lemma}\label{lemma:QuantSlowVariation}
	As $\tau \to \infty$, for any $R\in[\nu(\tau),1]$ as $\tau\to\infty$, one has
	\begin{align*}
		|\xi(\tau+R^{2})-\xi(\tau)|
		\lesssim
		\left[
		\frac{1}{R^{2}}\int_{\tau}^{\tau+R^{2}}\mathcal{M}(\sigma)\,d\sigma
		+
		\mathcal{M}(\tau+R^{2})
		+
		\mathcal{M}(\tau)
		\right]R
		+
		o_{\tau\to\infty}(R^{2}).
	\end{align*}
\end{lemma}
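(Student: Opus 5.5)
The plan is to follow the proof of Lemma \ref{lemma:QualSlowVariation} (Step 3), now in self-similar variables and with the quantitative identity of Lemma \ref{lem:locfirstmom_selfsimilar} replacing the qualitative one. Fix $\tau$ large and $R\in[\nu(\tau),1]$, set $\eta:=\xi(\tau)$, and consider the localized first momentum
$$
J(\sigma):=\int_{\mathbb R^2} w(z,\sigma)\,\phi_{\eta,\rho}(z)\,dz,\qquad \rho:=KR,
$$
with $K\gg1$ a fixed constant. On the window $[\tau,\tau+R^2]$ we need $|\xi(\sigma)-\eta|+\nu(\sigma)\le c\rho$. This follows from \eqref{id:slow-variation-nu-xi-refined}, which gives $|\xi(\sigma)-\eta|=o(R)$ and $\nu(\sigma)\le \nu(\tau)+o(R)\le (1+o(1))R$, provided $K$ is large enough. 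Thus Lemma \ref{lem:locfirstmom_selfsimilar} applies at every $\sigma$ in the window.

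\emph{Step 1: relate $J$ to $\xi$ at the endpoints.} Insert $w=U_{\nu,\xi}+\tilde w$. The soliton part of $J(\sigma)$ equals $8\pi(\xi(\sigma)-\eta)$ up to the tail $\int_{|z-\eta|\gtrsim\rho}U_{\nu,\xi}|z-\eta|\,dz\lesssim \nu^2/\rho$, as in \eqref{slow-variation-technical2}. Since $\nu\lesssim R$ and $\nu^2\le\mathcal M$, this tail is $\lesssim \mathcal M(\sigma) R$. For the remainder part, split $\tilde w=\tilde w_1+\tilde w_2$:
$$
\Big|\int \tilde w\,\phi_{\eta,\rho}\Big|\lesssim \rho\|\tilde w_2\|_{L^1}+\Big(\int \tilde w_1^2(\nu^2+|z-\xi|^2)\Big)^{1/2}\Big(\int_{|z-\eta|\le 2\rho}\frac{|z-\eta|^2}{\nu^2+|z-\xi|^2}\Big)^{1/2}.
$$
The last integral is $\lesssim\rho^2$, so this is $\lesssim \mathcal M(\sigma)R$. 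Applied at $\sigma=\tau$ and $\sigma=\tau+R^2$, this gives
$$
8\pi\big(\xi(\tau+R^2)-\xi(\tau)\big)=J(\tau+R^2)-J(\tau)+O\big((\mathcal M(\tau)+\mathcal M(\tau+R^2))R\big).
$$

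\emph{Step 2: integrate $\dot J$.} By \eqref{eq:refined-locfirstmom-selfsimilar}, $\dot J$ is the drift $-\tfrac12\int w\,z\cdot\nabla\phi_{\eta,\rho}$ plus three error terms. Each is bounded pointwise in $\sigma$:
\begin{itemize}
\item The tail $\int_{|z-\eta|\ge\rho/2}|\tilde w|/|z-\eta|$ is bounded by $\rho^{-1}\|\tilde w_2\|_{L^1}$ plus a weighted Cauchy--Schwarz on $\tilde w_1$. This gives $\lesssim\mathcal M/R$ up to a logarithm, which I would try to avoid by using the weight $|z-\xi|$.
\item The term $\nu^2\rho^{-4}\int|\tilde w||z-\eta|$ is $\lesssim\nu^2R^{-4}\cdot R\,\mathcal M\lesssim \mathcal M/R$, using $\nu\lesssim R$.
\item The term $|\xi-\eta|\nu^2\rho^{-4}|\int\tilde w|$ is bounded similarly.
\end{itemize}
Integrating over the window of length $R^2$ yields $R\cdot R^{-2}\int_\tau^{\tau+R^2}\mathcal M$, which is the first term of the claim.

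\emph{Step 3: the drift, which is the main obstacle.} The crude bound \eqref{eq:refined-locfirstmom-selfsimilar-drift}, $|\xi-\eta|+|\eta|=o(e^{-\tau/2})$, integrated over $R^2$, gives $o(R^2)$. This is acceptable, since the statement allows $o_{\tau\to\infty}(R^2)$, and it explains where that term comes from. However, I expect the delicate point to be checking that \eqref{eq:refined-locfirstmom-selfsimilar-drift} really holds with the cutoff at scale $\rho$ rather than $R$. Concretely, one must verify that the $\tilde w$-contribution to the drift, $\int\tilde w\,z\cdot\nabla\phi$, is also acceptable. It is bounded by $|\eta|\|\tilde w\|_{L^1}+\int|\tilde w||z-\eta|\mathbf 1_{|z-\eta|\le2\rho}$, which again is $\lesssim o(1)+R\mathcal M$, and after integration gives $o(R^2)+R\int\mathcal M$.

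Combining the three steps and dividing by $8\pi$ gives the stated inequality. The main technical check is the uniform-in-window verification of the hypothesis $|\xi-\eta|+\nu\le c\rho$, including the regime $R\approx\nu(\tau)$, where one uses that $\nu(\sigma)=\nu(\tau)(1+o(1))$ on soliton timescales.
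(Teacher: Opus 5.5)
Your Step 1 contains a false estimate, and it sits exactly where the argument needs a cancellation rather than a size bound. You bound the truncation error of the soliton's first moment by $\nu^2/\rho$. You then assert $\nu^2/\rho\lesssim \mathcal M(\sigma)R$, because $\nu\lesssim R$ and $\nu^2\le \mathcal M$. This does not follow, and it fails for short windows.

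At $R=\nu(\tau)$ the crude tail has size $\nu/K$. Every term on the right-hand side of the lemma is $o(\nu)$ there, since $\mathcal M=o(1)$: the bracket is $o(1)$ times $R=\nu$, plus $o(\nu^2)$. So your identity $8\pi(\xi(\tau+R^2)-\xi(\tau))=J(\tau+R^2)-J(\tau)+O((\mathcal M(\tau)+\mathcal M(\tau+R^2))R)$ is not established, and the lemma does not follow in that regime.

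A smaller defect of the same kind affects the remainder at $\sigma=\tau+R^2$. When $\xi(\sigma)\neq\eta$, the weight $\int_{|z-\eta|\le 2\rho}|z-\eta|^2(\nu^2+|z-\xi|^2)^{-1}dz$ is not $\lesssim\rho^2$; it picks up a term $|\xi-\eta|^2\log(\rho/\nu)$.

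The missing idea is radial symmetry, which the paper uses for this purpose.
\begin{itemize}
\item At $\sigma=\tau$, both $U_{\nu(\tau),\eta}$ and $\chi_{\eta,\rho}$ are radial about $\eta=\xi(\tau)$, so the soliton part of $J(\tau)$ vanishes identically.
\item At $\sigma=\tau+R^2$, set $a:=\xi(\sigma)-\eta$ and write $z-\eta=(z-\xi(\sigma))+a$. The first piece is $\int U_{\nu,0}(y)\,y\,[\chi_\rho(y+a)-\chi_\rho(y)]\,dy=O(|a|\nu^2\rho^{-2})$. The second is $a\,(8\pi+O(\nu^2\rho^{-2}))$.
\item Hence the soliton part equals $8\pi a\,(1+O(K^{-2}))$. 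The error is absorbed into the left-hand side once $K$ is large; this is the paper's absorption of $O(K^{-2})|\xi(T+\widetilde R_\star^2)-\xi(T)|$.
\item Recentre the remainder the same way: bound $\int \tilde w\,(z-\xi(\sigma))\chi_{\eta,\rho}\lesssim \rho\,\mathcal M$, and absorb $a\int\tilde w\,\chi_{\eta,\rho}=o(|a|)$.
\end{itemize}

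With this repair your proof closes, and it is a genuinely different route from the paper's. The paper runs a bootstrap on $\widetilde R_\star[T]$ and treats $R\lesssim K\nu$ separately by modulation estimates. It needs this because it bounds $|\int\tilde w|$ by $O(1)$ in the third flux term of Lemma~\ref{lem:locfirstmom_selfsimilar}, and must then control $\int \nu^2\widetilde R_\star^{-4}|\xi(\sigma)-\xi(T)|\,d\sigma$ through the bootstrap hypothesis.

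You instead take the cutoff at scale $KR$ and obtain $|\xi-\eta|+\nu\le c\rho$ directly from the qualitative H\"older bound. The third flux term then goes through if you keep $|\int_{|z-\eta|\le\rho/2}\tilde w|\lesssim \mathcal M\sqrt{\log(2+\rho/\nu)}$. This is harmless because $(\nu/\rho)^2\sqrt{\log(2+\rho/\nu)}$ is bounded. Your route therefore removes both the bootstrap and the separate short-window case.
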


\begin{proof}
	The proof relies on a bootstrap argument. Fix any $\delta>0$, and let $K\gg1$. For each $T\gg1$, define
	\begin{align} \label{refinedcv-def-tildeRstar}
		\widetilde R_\star[T]&
		:=
		\sup\Bigl\{
		\widetilde R\in[\nu(T),1]\ : \ \mbox{for all }R\in[\nu(T),\widetilde R) \mbox{ one has }\\
		\nonumber & \qquad \qquad |\xi(T+R^{2})-\xi(T)|
		<
		K\Bigl[
		\frac{1}{R^{2}}\int_{T}^{T+R^{2}}\mathcal{M}(\sigma)\,d\sigma
		+
		\mathcal{M}(T+R^{2})
		+
		\mathcal{M}(T)
		\Bigr]R
		+
		\delta R^{2}
		\Bigr\},
	\end{align}
	We will write at many instances $\widetilde R_\star=\widetilde R_\star[T]$ to ease notation. Since $\delta>0$ is arbitrary, to prove the Lemma it suffices to show that for $K$ sufficiently large and fixed, one has for all large enough $T>0$ that
	\[
	\widetilde R_\star=1.
	\]
	
	\smallskip
	
	\noindent\textbf{Step 1.} \emph{Nontriviality of the bootstrap interval.}
	We first prove that one has
	\begin{equation}\label{eq:prelim-lower-bound-quant}
		\widetilde R_\star[T]\gtrsim K \nu(T)
	\end{equation}
	for all sufficiently large $T$. Indeed, let $\kappa>0$ be chosen small in a universal way later, and pick $R\in[\nu(T),\kappa K \nu(T)]$. By the modulation estimate of Lemma \ref{lem:modulation_bounds},
	\[
	|\xi(T+R^2)-\xi(T)|
	\lesssim
	\int_T^{T+R^2}
	\frac{\mathcal M(\sigma)}{\nu(\sigma)}\,d\sigma .
	\]
	Since $\nu(T)\leq R\leq \kappa K \nu(T)$, the convergence in H\"older $1/2$ norm \eqref{id:slow-variation-nu-xi-refined} of $\nu$ gives
	\[
	\nu(\sigma)=\nu(T)(1+o_{T\to\infty}(1))\geq \frac{1}{\kappa K} R
	\qquad
	\text{for } \sigma\in[T,T+R^2].
	\]
	Therefore
	\begin{align*}
		|\xi(T+R^2)-\xi(T)|
		&\lesssim
		\kappa K(1+o_{T\to\infty}(1))
		\frac1R
		\int_T^{T+R^2}\mathcal M(\sigma)\,d\sigma  \\
		&<
		 K\left[
		\frac{1}{R^2}\int_T^{T+R^2}\mathcal M(\sigma)\,d\sigma
		+
		\mathcal M(T+R^2)
		+
		\mathcal M(T)
		\right]R
		+
		\delta R^2
	\end{align*}
	for $T$ large enough, provided $\kappa>0$ is small enough. This proves \eqref{eq:prelim-lower-bound-quant}. Consequently, by choosing $K\gg1$ large we have for all $T$ large,
	$\widetilde R_\star[T]\gg \nu(T)$. 
	Moreover, if
	$\tau\in[T,T+\widetilde R_\star^2]$,
	then the convergence to $0$ in H\"older norms of $\nu$ and $\xi$, see \eqref{id:slow-variation-nu-xi-refined}, imply
	\begin{equation}\label{eq:nu-small-quant}
		\nu(\tau)=\nu(T)+o(\widetilde R_\star)
		\le
		\frac{C}{K}\widetilde R_\star \quad \mbox{and} \quad |\xi(\tau)-\xi(T)|
		=
		o_{T\to\infty}\bigl(\widetilde R_\star\bigr)
		\qquad
		\text{for all }
		\tau\in[T,T+\widetilde R_\star].
	\end{equation}
	
	\smallskip
	
	\noindent\textbf{Step 2.} \emph{Localized center of mass.} Define
	\[
	I(\tau)
	:=
	\int_{\mathbb R^2}
	w(z,\tau)(z-\xi(T))\chi_{\widetilde R_\star,\xi(T)}(z)\,dz,
	\qquad
	\tau\in[T,T+\widetilde R_\star^2].
	\]
	At time $T$, using the decomposition \eqref{decomposition-tildew},
	$w=U_{\nu,\xi}+\tilde w_1+\tilde w_2$; by the radiality of
	$U_{\nu(T),\xi(T)}$ around $\xi(T)$, the profile contribution vanishes.
	Thus, by Cauchy-Schwarz and H\"older,
	\begin{align}
	I(T) & = \int (\tilde w_1+\tilde w_2)(z,T)(z-\xi(T)) \chi_{\widetilde R_\star,\xi(T)}(z)\,dz \\
	\label{eq:I-initial-quant-clean}&=O\left( \left(\int \tilde w_1(T)^2(\nu(T)^2+|z-\xi(T)|^2)\,dz \right)^{1/2}
	\widetilde R_\star\right)+O\left(\widetilde R_\star \|\tilde w_2(T)\|_{L^1}\right)=O\left( \mathcal M(T)\widetilde R_\star\right).
	\end{align}
	where we recall \eqref{def:M-strong}. We next evaluate $I(T+\widetilde R_\star^2)$. We write
	\begin{align*}
		I(T+\widetilde R_\star^2)
		&=
		\int
		w(z,T+\widetilde R_\star^2)
		(z-\xi(T+\widetilde R_\star^2))
		\chi_{\widetilde R_\star,\xi(T)}(z)\,dz \\
		&\quad+
		(\xi(T+\widetilde R_\star^2)-\xi(T))
		\int
		w(z,T+\widetilde R_\star^2)
		\chi_{\widetilde R_\star,\xi(T)}(z)\,dz.
	\end{align*}
	By \eqref{eq:nu-small-quant}, the cutoff
	still captures the full core mass up to an error $O(K^{-2})+o(1)$.
	Therefore
	\[
	\int
	w(z,T+\widetilde R_\star^2)
	\chi_{\widetilde R_\star}(z)\,dz
	=
	8\pi
	+
	O\!\left(\frac1{K^2}\right)
	+
	o_{T\to\infty}(1).
	\]
	For the centered term, the untruncated profile has zero first moment
	around $\xi(T+\widetilde R_\star^2)$ by radiality. Thus only the tail
	removed by the cutoff contributes. Using the decay of $U$ and
	\eqref{eq:nu-small-quant}, we obtain
	\begin{align*}
	\left|
	\int
	U_{\nu(T+\widetilde R_\star^2),\xi(T+\widetilde R_\star^2)}(z)
	(z-\xi(T+\widetilde R_\star^2))
	\chi_{\widetilde R_\star,\xi(T)}(z)\,dz
	\right|
	&\lesssim
	\frac{\nu(T+\widetilde R_\star^2)^2}{\widetilde R_\star^2}
	|\xi(T+\widetilde R_\star^2)-\xi(T)|
	+
	o_{T\to\infty}(\widetilde R_\star^2),\\
	& \lesssim
	O\!\left(\frac1{K^2}\right)
	|\xi(T+\widetilde R_\star^2)-\xi(T)|
	+
	o_{T\to\infty}(\widetilde R_\star^2).
	\end{align*}
	The perturbative contribution is estimated as at the initial time:
	\[
	\left|
	\int
	\tilde w(z,T+\widetilde R_\star^2)
	(z-\xi(T+\widetilde R_\star^2))
	\chi_{\widetilde R_\star,\xi(T)}(z)\,dz
	\right|
	\lesssim
	\mathcal M(T+\widetilde R_\star^2)\widetilde R_\star.
	\]
	Combining the above estimates, and choosing $K$ large enough to absorb the
	$O(K^{-2})|\xi(T+\widetilde R_\star^2)-\xi(T)|$ term, we obtain
	\begin{align}
		|\xi(T+\widetilde R_\star^2)-\xi(T)|
		&\lesssim
		\int_T^{T+\widetilde R_\star^2}|I'(\tau)|\,d\tau
		+
		\bigl(\mathcal M(T)+\mathcal M(T+\widetilde R_\star^2)\bigr)
		\widetilde R_\star+
		o_{T\to\infty}(\widetilde R_\star^2).
		\label{eq:slowvarFormula-clean}
	\end{align}
	\noindent\textbf{Step 3.} \emph{Estimate of the localized flux.}
	We now estimate the integral of $|I'(\tau)|$. We apply
	Lemma~\ref{lem:locfirstmom_selfsimilar} with
	\[
	\eta=\xi(T),
	\qquad
	R=\widetilde R_\star,
	\qquad
	\xi=\xi(\tau),
	\qquad
	\nu=\nu(\tau).
	\]
	The assumptions of that lemma are satisfied thanks to
	\eqref{eq:nu-small-quant}. Its main
	estimate, together with the drift bound
	\[
	\left|
	\int w(z,\tau)z\cdot\nabla\phi_{\xi(T),\widetilde R_\star}(z)\,dz
	\right|
	\lesssim
	|\xi(\tau)-\xi(T)|+|\xi(T)|,
	\]
	give
	\begin{align}
		|I'(\tau)|
		&\lesssim
		\frac{\mathcal M(\tau)}{\widetilde R_\star}
		+
		o_{T\to\infty}(1)
		+
		\frac{\nu(\tau)^2}{\widetilde R_\star^4}
		|\xi(\tau)-\xi(T)|
		+
		|\xi(\tau)-\xi(T)|
		+
		|\xi(T)|.
		\label{eq:Idot-quant-clean}
	\end{align}
	Integrating over time, using $\xi(\tau)\to 0$ as $\tau \to \infty$, we obtain
	\begin{align}
		\int_T^{T+\widetilde R_\star^2}|I'(\tau)|\,d\tau
		&\lesssim
		\frac1{\widetilde R_\star}
		\int_T^{T+\widetilde R_\star^2}\mathcal M(\tau)\,d\tau
		+
		\int_T^{T+\widetilde R_\star^2}
		\frac{\nu(\tau)^2}{\widetilde R_\star^4}
		|\xi(\tau)-\xi(T)|\,d\tau +
		o_{T\to\infty}(\widetilde R_\star^2)
		\label{eq:flux-before-bootstrap}
	\end{align}
	It remains to estimate the second term. We use \eqref{eq:nu-small-quant} and the definition \eqref{refinedcv-def-tildeRstar} of $\tilde R_\star$, and then Fubini and explicit estimates to bound the integrals that appear, to find
	\begin{align*}
	& \int_0^{\widetilde R_\star^2} \frac{\nu(\tau)^2}{\widetilde R_\star^4} |\xi(T+\tau)-\xi(T)|\,d\tau \\
	& \lesssim \frac{1}{K^2\tilde R_\star^2} \int_0^{\widetilde R_\star^2}  \left\{ K\left[ \frac{1}{\tau} \int_0^{\tau}\mathcal M(T+\sigma)d\sigma +\mathcal M(T+\tau)+\mathcal M(T)\right]\sqrt{\tau}+\delta \tau \right\}\,d\tau  \\
	& \lesssim \frac{1}{K \tilde R_\star^2}\left[ \int_{[0,\widetilde R_\star]^2} \frac{\mathcal M(T+\sigma)}{\sqrt{\tau}}\mathbbm 1 (\sigma<\tau)d\sigma d\tau +\int_0^{\tilde R_\star^2} (\mathcal M(T+\tau)+\mathcal M(T))\sqrt{\tau} d\tau \right]+\frac{\delta}{K^2}\tilde R_\star^2 \\
	&\lesssim  \frac{1}{K \tilde R_\star^2}\left[ \tilde R_\star \int_{0}^{\widetilde R_\star^2} \mathcal M(T+\sigma) d\sigma  +\tilde R_\star \mathcal M(T) \right]+\frac{\delta}{K^2}\tilde R_\star^2 .
	\end{align*}
	Injecting this inequality in \eqref{eq:flux-before-bootstrap} shows
	$$
	\int_T^{T+\widetilde R_\star^2}|I'(\tau)|\,d\tau \lesssim \frac1{\widetilde R_\star}
		\int_T^{T+\widetilde R_\star^2}\mathcal M(\tau)\,d\tau +\frac{1}{K\tilde R_\star}\mathcal M(T)+\frac{\delta}{K^2}\tilde R_\star^2 +
		o_{T\to\infty}(\widetilde R_\star^2)
	$$
	Injecting in turn this inequality back in \eqref{eq:slowvarFormula-clean} shows
	\begin{align*}
		|\xi(T+\widetilde R_\star^2)-\xi(T)|
		&\lesssim \frac1{\widetilde R_\star}
		\int_T^{T+\widetilde R_\star^2}\mathcal M(\tau)\,d\tau 
		+
		\bigl(\mathcal M(T)+\mathcal M(T+\widetilde R_\star^2)\bigr)
		\widetilde R_\star+\frac{\delta}{K^2}\tilde R_\star^2+
		o_{T\to\infty}(\widetilde R_\star^2)\\
		&< \frac{K}{2}\left[\frac{1}{\widetilde R_\star^2}
		\int_T^{T+\widetilde R_\star^2}\mathcal M(\tau)\,d\tau 
		+
		\bigl(\mathcal M(T)+\mathcal M(T+\widetilde R_\star^2)\bigr)\right]\tilde R_\star+\frac{\delta}{2}\tilde R_\star^2
	\end{align*}
	provided $K$ and then $T$ have been chosen large enough. Note that the above bound does not saturate the bound defining $\tilde R_\star$ in \eqref{refinedcv-def-tildeRstar}. Thus, by a continuity argument, we have $\tilde R_\star =1$. This ends the proof.
	
\end{proof}

Now we proceed by providing a quantitative bound for the difference between the parabolically averaged weight $\tilde{\gamma}$ and the unaveraged weight $\gamma$. It will involve the localized one-sided Hardy-Littlewood maximal function of $\mathcal M$.

\begin{lemma}\label{tildegamma-gamma_quant}
	Let $\tau$ be sufficiently large and assume
	$0\le |z-\xi(\tau)|\le 1$. Then
	\[
	\left|1-\frac{\tilde{\gamma}}{\gamma}\right|
	\lesssim \sup_{0<h<1}\frac{1}{h}\int_{\tau-h}^{\tau} \mathcal M(\sigma)\,d\sigma
	+
	o(|z-\xi(\tau)|).
	\]

\end{lemma}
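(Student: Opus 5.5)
We reduce the ratio estimate to a quantitative version of \eqref{Lemma:expd}, namely
\[
\bigl|\mathfrak d(z,\xi,\tau)-|z-\xi(\tau)|\bigr|\lesssim |z-\xi(\tau)|\Bigl(\sup_{0<h<1}\frac1h\int_{\tau-h}^\tau\mathcal M(\sigma)\,d\sigma\Bigr)+o(|z-\xi(\tau)|^2).
\]
Indeed, writing $8(\tilde\gamma-\gamma)=(\mathfrak d^2-|z-\xi|^2)(\mathfrak d^2+|z-\xi|^2+2\nu^2)$ as in the proof of Lemma \ref{tildegamma-gamma}, and using $\mathfrak d=|z-\xi|(1+o(1))$, one gets $|1-\tilde\gamma/\gamma|\lesssim |\mathfrak d-|z-\xi||/|z-\xi|$. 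In the zone $|z-\xi|\le\nu$ there is nothing to prove, since $\mathfrak d=|z-\xi|$. By \eqref{def_Dist}, it therefore suffices to prove the bound for $d$ itself when $|z-\xi(\tau)|\ge\nu(\tau)$.

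Set $r=|z-\xi(\tau)|$. In the integral defining $d$ (Definition \ref{def:ParabolicDistance}), $\tau'$ ranges over $[\tau-r^2,\tau]$, and the weight is supported where $\tau-\tau'\approx r^2$. We write $|z-\xi(\tau')|=r+O(|\xi(\tau')-\xi(\tau)|)$. We then apply Lemma \ref{lemma:QuantSlowVariation} backward in time, with base point $\tau'$ and scale $R=\sqrt{\tau-\tau'}\in[\nu,1]$. The lower bound $R\ge\nu$ holds because $R\approx r\ge\nu$; the Hölder estimate \eqref{id:slow-variation-nu-xi-refined} lets us compare $\nu(\tau')$ with $\nu(\tau)$, if needed after adjusting constants in the support of $\bar\chi$. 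This gives
\[
|\xi(\tau')-\xi(\tau)|\lesssim\Bigl[\frac1{R^2}\int_{\tau'}^{\tau}\mathcal M+\mathcal M(\tau)+\mathcal M(\tau')\Bigr]R+o(R^2).
\]
The first term is bounded by $R$ times the one-sided maximal function $\sup_h h^{-1}\int_{\tau-h}^\tau\mathcal M$ at $\tau$, since $\int_{\tau'}^\tau\mathcal M\le h\cdot\sup$ with $h=\tau-\tau'$. The term $\mathcal M(\tau)$ is also dominated by this maximal function: either by a Lebesgue point argument, or by averaging, see the next paragraph.

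The main obstacle is the pointwise term $\mathcal M(\tau')$, which is not controlled by the maximal function at $\tau$. The plan is to exploit the averaging built into $d$. We insert the bound into
\[
d-r=\int\bigl(|z-\xi(\tau')|-r\bigr)\frac{1}{|z-\xi(\tau')|^2}\bar\chi\Bigl(\frac{\tau-\tau'}{|z-\xi(\tau')|^2}\Bigr)\,d\tau'+\bigl(\text{error from replacing }|z-\xi(\tau')|\text{ by }r\text{ inside }\bar\chi\bigr).
\]
The weight is $\lesssim r^{-2}$ on an interval of length $\lesssim r^2$. Integrating the $\mathcal M(\tau')R$ term therefore gives at most $r\cdot r^{-2}\int_{\tau-r^2}^\tau\mathcal M\lesssim r\,\sup_h h^{-1}\int_{\tau-h}^\tau\mathcal M$. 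The same averaging handles $\mathcal M(\tau)$ if we keep the $\tau$-dependence honest.

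For the argument-shift error inside $\bar\chi$ we use \eqref{barChi_Prop1} with $\mu=O(|\xi(\tau')-\xi(\tau)|/r)$, which gives a relative error of the same size; it is treated in the same way. Finally, the $o(R^2)=o(r^2)$ contribution divided by $r$ yields $o(r)=o(|z-\xi(\tau)|)$, which is the last term in the statement. Collecting these bounds and dividing by $r$ gives the lemma. We also have to check that the Fubini-type averaging does not lose more than a constant factor, exactly as in Step 3 of Lemma \ref{lemma:QuantSlowVariation}.
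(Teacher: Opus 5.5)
Your proposal is correct and follows essentially the paper's route. You reduce to the quantitative bound on $d$ for $|z-\xi(\tau)|\ge\nu$, control it by a time average of $|\xi(\tau')-\xi(\tau)|$, and apply Lemma \ref{lemma:QuantSlowVariation}. As in the paper, the pointwise term $\mathcal M(\tau')$ is absorbed through the averaging. The term $\mathcal M(\tau)$ is absorbed into the maximal function by continuity of $\mathcal M$, not by averaging.

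The one loose point is the claim $R\ge\nu$. On the support of $\bar\chi$ you only have $\sqrt{\tau-\tau'}\gtrsim |z-\xi(\tau)|/\sqrt3$, which can fall below $\nu$ when $|z-\xi(\tau)|<\sqrt3\,\nu$. In that range, use the modulation bound \eqref{QuantModNot} directly, as the paper does for $h\le\nu^2$, rather than altering the support of $\bar\chi$, which is fixed by the definition of $d$.
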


\begin{proof}
	To ease notation in the proof, we will write
	$$
	A_{\mathcal{M}}(\tau)= \sup_{0<h<1}\frac{1}{h}\int_{\tau-h}^{\tau} \mathcal M(\sigma)\,d\sigma .
	$$
	We start from the definition
	\[
	d(z,\xi,\tau)
	=
	\int_{\tau-|z-\xi(\tau)|^{2}}^{\tau}
	|z-\xi(\tau')|
	\frac{1}{|z-\xi(\tau')|^{2}}
	\bar{\chi}\!\left(
	\frac{\tau-\tau'}{|z-\xi(\tau')|^{2}}
	\right)\,d\tau'.
	\]
	Our goal is to prove for $\nu\le |z-\xi(\tau)|\le 1$
	\begin{equation}\label{key}
		|d(z,\xi,\tau)-|z-\xi(\tau)|| 
		\lesssim
		\left( \sup_{0<h<1}\frac{1}{h}\int_{\tau-h}^{\tau} \mathcal M(\sigma)\,d\sigma\right)
		|z-\xi(\tau)|
		+
		o(|z-\xi(\tau)|^{2}).
	\end{equation}
	The conclusion then follows as in Lemma~\ref{lem:adapted_distance} and
	Lemma~\ref{tildegamma-gamma}. \newline 
	We write
	\begin{align*}
		&|d(z,\xi,\tau)-|z-\xi(\tau)|| \\
		&\quad\lesssim
		\frac{1}{|z-\xi(\tau)|^{2}}
		\int_{\tau-|z-\xi(\tau)|^{2}}^{\tau}
		\bigl||z-\xi(\tau')|-|z-\xi(\tau)|\bigr|\,d\tau' \\
		&\qquad+
		|z-\xi(\tau)|
		\left|
		\int_{\tau-|z-\xi(\tau)|^{2}}^{\tau}
		\left(
		\frac{1}{|z-\xi(\tau')|^{2}}
		\bar{\chi}\!\left(
		\frac{\tau-\tau'}{|z-\xi(\tau')|^{2}}
		\right)
		-
		\frac{1}{|z-\xi(\tau)|^{2}}
		\bar{\chi}\!\left(
		\frac{\tau-\tau'}{|z-\xi(\tau)|^{2}}
		\right)
		\right)d\tau'
		\right|.
	\end{align*}
	Using the smoothness of $\bar\chi$ and the fact that
	$|z-\xi(\tau')|\approx |z-\xi(\tau)|$ on the integration region by \eqref{id:slow-variation-nu-xi-refined}, the second
	term is controlled by the first one, and therefore
	\[
	|d(z,\xi,\tau)-|z-\xi(\tau)|| 
	\lesssim
	\frac{1}{|z-\xi(\tau)|^{2}}
	\int_{\tau-|z-\xi(\tau)|^{2}}^{\tau}
	|\xi(\tau')-\xi(\tau)|\,d\tau'.
	\]
	Setting $h=\tau-\tau'$, we obtain
	\[
	|d(z,\xi,\tau)-|z-\xi(\tau)|| 
	\lesssim
	\frac{1}{|z-\xi(\tau)|^{2}}
	\int_{0}^{|z-\xi(\tau)|^{2}}
	|\xi(\tau-h)-\xi(\tau)|\,dh.
	\]
	We split the integral according to $h\le \nu(\tau)^2$ and
	$h\ge \nu(\tau)^2$.
	
	If $h\le \nu(\tau)^2$, the modulation estimate (Lemma \ref{lem:modulation_bounds}) gives
	\[
	|\xi(\tau-h)-\xi(\tau)|
	\lesssim
	\frac{1}{\nu(\tau)}
	\int_{\tau-h}^{\tau}\mathcal{M}(\sigma)\,d\sigma,
	\]
	and therefore
	\begin{align*}
		&\frac{1}{|z-\xi(\tau)|^{2}}
		\int_{0}^{\nu(\tau)^2}
		|\xi(\tau-h)-\xi(\tau)|\,dh
		\lesssim
		\frac{1}{|z-\xi(\tau)|^{2}}
		\frac{1}{\nu(\tau)}
		\int_{0}^{\nu(\tau)^2}
		\int_{\tau-h}^{\tau}\mathcal{M}(\sigma)\,d\sigma\,dh \\
		&\qquad \qquad \qquad \qquad \lesssim
		\frac{\nu(\tau)}{|z-\xi(\tau)|^{2}}
		\int_{\tau-\nu(\tau)^2}^{\tau}\mathcal{M}(\sigma)\,d\sigma \lesssim
		A_{\mathcal{M}}(\tau)\,|z-\xi(\tau)|.
	\end{align*}
	
	If $h\ge \nu(\tau)^2$, we apply the quantitative slow variation estimate (Lemma \ref{lemma:QuantSlowVariation}):
	\[
	|\xi(\tau-h)-\xi(\tau)|
	\lesssim
	\left[
	\frac{1}{h}\int_{\tau-h}^{\tau}\mathcal{M}(\sigma)\,d\sigma
	+
	\mathcal{M}(\tau)
	+
	\mathcal{M}(\tau-h)
	\right]\sqrt{h}
	+
	o(h).
	\]
	Each contribution is estimated directly. The nonlocal term yields
	\[
	\frac{1}{|z-\xi(\tau)|^{2}}
	\int_{\nu(\tau)^2}^{|z-\xi(\tau)|^{2}}
	\frac{1}{\sqrt{h}}
	\int_{\tau-h}^{\tau}\mathcal{M}(\sigma)\,d\sigma\,dh
	\lesssim
	A_{\mathcal{M}}(\tau)\,|z-\xi(\tau)|,
	\]
the contribution of the term $\mathcal{M}(\tau)$ gives
	\[
	\frac{1}{|z-\xi(\tau)|^{2}}
	\int_{\nu(\tau)^2}^{|z-\xi(\tau)|^{2}}
	\sqrt{h}\,\mathcal{M}(\tau)\,dh
	\lesssim
	\mathcal{M}(\tau)\,|z-\xi(\tau)|.
	\]
	Similarly,
	\[
	\frac{1}{|z-\xi(\tau)|^{2}}
	\int_{\nu(\tau)^2}^{|z-\xi(\tau)|^{2}}
	\mathcal{M}(\tau-h)\sqrt{h}\,dh
	\lesssim
	A_{\mathcal{M}}(\tau)\,|z-\xi(\tau)|,
	\]
	and finally
	\[
	\frac{1}{|z-\xi(\tau)|^{2}}
	\int_{\nu(\tau)^2}^{|z-\xi(\tau)|^{2}} o(h)\,dh
	=
	o(|z-\xi(\tau)|^{2}).
	\]
	Collecting all contributions yields \eqref{key}, which concludes the proof.
\end{proof}

\subsection{The ansatz} \label{subsec:ansatz}

In our current decomposition \eqref{self-similarKS-refined-decomposition}, we want to replace the soliton $U_{\nu,\xi}$ par by a more suitable approximate solution. We recall the definition \eqref{matchedsol} of the \emph{matched soliton}
\begin{align}
	\hat{U}_{\nu,\xi} = U_{\nu,\xi}\,\mathcal{X}_{\star}(z), \qquad \mathcal{X}_{\star}(z)= \chi_{\zeta_*}(z)+ \Bigl(1-\chi_{\zeta_*}(z)\Bigr) e^{-\frac{|z|^2}{4}}
\end{align}
and introduce the ansatz
\begin{align}\label{eq:ansatz}
	w = \tilde{U}_{\nu,\xi} + \varepsilon ,
	\qquad
	\tilde{U}_{\nu,\xi} = \alpha(\tau)\,U_{\nu,\xi}\,\mathcal{X}_{\star},
\end{align}
where $\alpha(\tau)$ is adjusted so that $\int \tilde{U}_{\nu,\xi} =8\pi$, see below. Since $\mathcal{X}_{\star}$ is independent of $\tau$, we have
\begin{align}
	\partial_\tau \tilde{U}_{\nu,\xi}
	=
	\alpha_\tau \hat{U}_{\nu,\xi}
	-
	\alpha \frac{\nu_\tau}{\nu}\Lambda_\xi U_{\nu,\xi}\mathcal{X}_{\star}
	-
	\alpha\,\xi_\tau \cdot \nabla U_{\nu,\xi}\mathcal{X}_{\star},
\end{align}
where $\Lambda_\xi u = 2u + (z-\xi)\cdot \nabla u$.

Substituting \eqref{eq:ansatz} into \eqref{self-similarKS-refined} yields
\begin{align}\label{eq:eps0}
	\partial_\tau \varepsilon
	&= \Delta \varepsilon
	- \nabla \cdot (\varepsilon \nabla \Phi_{\tilde{U}_{\nu,\xi}})
	- \nabla \cdot (\tilde{U}_{\nu,\xi} \nabla \Phi_\varepsilon)
	+ \frac{1}{2}\Lambda \varepsilon \nonumber\\
	&\quad
	+ \frac{1}{2}\Lambda \tilde{U}_{\nu,\xi}
	- \partial_\tau \tilde{U}_{\nu,\xi}
	+ \Delta \tilde{U}_{\nu,\xi}
	- \nabla \cdot (\tilde{U}_{\nu,\xi}\nabla \Phi_{\tilde{U}_{\nu,\xi}})
	- \nabla \cdot (\varepsilon \nabla \Phi_\varepsilon).
\end{align}
We now expand each term. Using $\tilde{U}_{\nu,\xi} = \alpha \hat{U}_{\nu,\xi}$, we write
\begin{align*}
	-\nabla \cdot (\varepsilon \nabla \Phi_{\tilde{U}_{\nu,\xi}})
	- \nabla \cdot (\tilde{U}_{\nu,\xi} \nabla \Phi_\varepsilon)
	=
	-\nabla \cdot (\varepsilon \nabla \Phi_{\hat{U}_{\nu,\xi}})
	- \nabla \cdot (\hat{U}_{\nu,\xi} \nabla \Phi_\varepsilon)
	+ A[\varepsilon],
\end{align*}
where $A[\varepsilon]$ is defined below. Next,
\begin{align*}
	\frac{1}{2}\Lambda \tilde{U}_{\nu,\xi}
	&=
	\frac{\alpha}{2}\Lambda_\xi U_{\nu,\xi}\mathcal{X}_{\star}
	+ \frac{\alpha}{2}\,\xi \cdot \nabla U_{\nu,\xi}\mathcal{X}_{\star} 
	+ \frac{\alpha}{2}U_{\nu,\xi}(z-\xi)\cdot \nabla \mathcal{X}_{\star}
	+ \frac{\alpha}{2}U_{\nu,\xi}\xi \cdot \nabla \mathcal{X}_{\star}.
\end{align*}
Moreover
\begin{align*}
	\Delta \tilde{U}_{\nu,\xi}
	=
	\alpha \Delta U_{\nu,\xi} \mathcal{X}_{\star}
	+
	\alpha\Bigl[
	2 \nabla \mathcal{X}_{\star} \cdot \nabla U_{\nu,\xi}
	+ U_{\nu,\xi} \Delta \mathcal{X}_{\star}
	\Bigr].
\end{align*}
For the nonlinear term, using
$\nabla \cdot (f \nabla \Phi_f)
=
\nabla f \cdot \nabla \Phi_f - f^2$,
we obtain
\begin{align*}
	\nabla \cdot (\tilde{U}_{\nu,\xi} \nabla \Phi_{\tilde{U}_{\nu,\xi}})
	=
	\alpha^2
	\left[
	\nabla \hat{U}_{\nu,\xi} \cdot \nabla \Phi_{\hat{U}_{\nu,\xi}}
	- \hat{U}_{\nu,\xi}^2
	\right].
\end{align*}
Decomposing $\hat{U}_{\nu,\xi}=U_{\nu,\xi}\mathcal{X}_{\star}$ yields
\begin{align*}
	\nabla \cdot (\tilde{U}_{\nu,\xi} \nabla \Phi_{\tilde{U}_{\nu,\xi}})
	&=
	\alpha
	\left[
	\nabla U_{\nu,\xi} \cdot \nabla \Phi_{U_{\nu,\xi}}
	- U_{\nu,\xi}^{2}
	\right]\mathcal{X}_{\star} 
	+ \alpha(\alpha-1)
	\left[
	\nabla U_{\nu,\xi} \cdot \nabla \Phi_{U_{\nu,\xi}}
	- U_{\nu,\xi}^{2}
	\right]\mathcal{X}_{\star} \nonumber\\
	&\quad
	- \alpha^{2} \nabla \hat{U}_{\nu,\xi} \cdot \nabla \Phi_{U_{\nu,\xi}(1-\mathcal{X}_{\star})}
	+ \alpha^{2} \hat{U}_{\nu,\xi} U_{\nu,\xi} (1-\mathcal{X}_{\star})
	+ \alpha^{2} U_{\nu,\xi} \nabla \mathcal{X}_{\star} \cdot \nabla \Phi_{U_{\nu,\xi}}.
\end{align*}
Using
$\Delta U_{\nu,\xi}
-
\nabla \cdot (U_{\nu,\xi} \nabla \Phi_{U_{\nu,\xi}}) = 0$,
the leading terms cancel, and we obtain
\begin{align}\label{eq:finaleps}
	\partial_{\tau} \varepsilon-L_{\nu,\xi}\varepsilon
	&=  \Bigl[
	\alpha \Bigl( \frac{\nu_{\tau}}{\nu} + \frac{1}{2} \Bigr)\Lambda_{\xi} U_{\nu,\xi}
	- \alpha_{\tau} U_{\nu,\xi}
	+\alpha \Bigl( \xi_\tau+ \frac{\xi}{2} \Bigr) \cdot \nabla U_{\nu,\xi}
	\Bigr]\mathcal{X}_{\star} \nonumber\\
	&\quad
	+ A[\varepsilon]
	+ A_{\nu,\xi}^{[0]}
	+ B_{\nu,\xi}^{[0]}
	+ B_{\nu,\xi}^{[1]}
	- \nabla \cdot (\varepsilon \nabla \Phi_\varepsilon)
\end{align}
where we recall the linearized operator is
\begin{align*}
	L_{\nu,\xi}[\varepsilon]
	=
	\Delta \varepsilon
	-
	\nabla \cdot(\varepsilon \nabla \Phi_{\hat U_{\nu,\xi}})
	-
	\nabla \cdot(\hat U_{\nu,\xi}\nabla \Phi_\varepsilon)
	+
	\frac12 \Lambda \varepsilon.
\end{align*}
The first line in \eqref{eq:finaleps} corresponds to the linearized operator around the truncated profile $\hat{U}_{\nu,\xi}$, while the second line collects the modulation terms depending only on $(\alpha,\nu,\xi)$. The error terms have been organized according to their origin and symmetry.

The terms denoted by $A$ arise from the fact that $\alpha \neq 1$, and are therefore small in the regime $\alpha \to 1$. The terms denoted by $B$ correspond to boundary contributions generated by the cutoff $\mathcal{X}_{\star}$ and its derivatives, and are thus localized in the transition region away from the core.

We further distinguish radial and non-radial contributions through the superscripts $[0]$ and $[1]$: when $\xi=0$, the terms carrying the superscript $[0]$ are radial, whereas the terms carrying the superscript $[1]$ vanish. More precisely, 
\begin{align}
	A[\varepsilon]
	&=
	(1-\alpha)
	\Bigl[
	\nabla \cdot (\varepsilon \nabla \Phi_{\hat{U}_{\nu,\xi}})
	+ \nabla \cdot (\hat{U}_{\nu,\xi} \nabla \Phi_{\varepsilon})
	\Bigr],
	\label{eq:Aeps}
	\\[0.4em]
	A_{\nu,\xi}^{[0]}
	&=
	\alpha(1-\alpha)
	\left[
	\nabla U_{\nu,\xi} \cdot \nabla \Phi_{U_{\nu,\xi}}
	- U_{\nu,\xi}^{2}
	\right]\mathcal{X}_{\star},
	\label{eq:A0}
	\\[0.4em]
	B_{\nu,\xi}^{[0]}
	&=
	\alpha \Bigl[
	\frac{1}{2} U_{\nu,\xi}\cdot \nabla \mathcal{X}_{\star}
	+ 2 \nabla \mathcal{X}_{\star} \cdot \nabla U_{\nu,\xi}
	+ U_{\nu,\xi} \Delta \mathcal{X}_{\star}
	\nonumber\\
	&\hspace{3.5em}
	+ \nabla \tilde{U}_{\nu,\xi} \cdot \nabla \mathcal{R}_{\nu,\xi}
	- \tilde{U}_{\nu,\xi} U_{\nu,\xi} (1-\mathcal{X}_{\star})
	- \alpha U_{\nu,\xi} \nabla \mathcal{X}_{\star} \cdot \nabla \Phi_{U_{\nu,\xi}}
	\Bigr],
	\label{eq:B0}
	\\[0.4em]
	B_{\nu,\xi}^{[1]}
	&=
	\frac{\alpha}{2}U_{\nu,\xi}\,\xi \cdot \nabla \mathcal{X}_{\star}.
	\label{eq:B1}
\end{align}
where we used the notation
\begin{equation}\label{eq:Rdef}
	\nabla\mathcal{R}_{\nu,\xi}
	=
	\nabla \Phi_{U_{\nu,\xi}-\hat{U}_{\nu,\xi}}
\end{equation}
the error induced by truncating the profile.
\subsubsection{The $8\pi$ normalization}

We fix the parameter $\alpha$ by imposing the mass constraint
\begin{align}\label{normalization-alpha}
	\int \tilde{U}_{\nu,\xi} = 8\pi.
\end{align}

\begin{lemma}\label{lem:alpha}
	Under the above normalization one has 
	\begin{align}
		|\alpha-1|\lesssim\nu^2=o(e^{-\tau}) \quad \mbox{and}  \quad |\alpha_{\tau}|\lesssim |\nu \nu_{\tau}|+\nu^{2}|\xi_{\tau}|=o(1). 
	\end{align}
\end{lemma}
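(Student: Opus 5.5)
The plan is to compute $\int \hat U_{\nu,\xi}\,dz$ explicitly. The normalization \eqref{normalization-alpha} then reads $\alpha=8\pi\big/\int \hat U_{\nu,\xi}$, and both bounds follow from estimating this integral and its $\tau$-derivative.

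\textbf{Step 1: the size of $1-\alpha$.} Write
\[
\int \hat U_{\nu,\xi}\,dz=\int U_{\nu,\xi}\,dz-\int U_{\nu,\xi}(1-\mathcal X_\star)\,dz=8\pi-\int U_{\nu,\xi}(1-\mathcal X_\star)\,dz .
\]
We have $0\le 1-\mathcal X_\star\le \mathbf 1_{\{|z|\ge \zeta_*\}}$. Since $|\xi|,\nu=o(1)$ by \eqref{id:qualitative-cv-nu-xi-refined}, on $\{|z|\ge\zeta_*\}$ one has $|z-\xi|\ge \zeta_*/2$, and therefore $U_{\nu,\xi}(z)\lesssim \nu^2|z-\xi|^{-4}\lesssim \nu^2|z|^{-4}$. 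This bound is integrable on $\{|z|\ge\zeta_*\}$, giving
\[
0\le \int U_{\nu,\xi}(1-\mathcal X_\star)\,dz\lesssim_{\zeta_*}\nu^2 .
\]
Hence $\int\hat U_{\nu,\xi}=8\pi(1+O(\nu^2))$, and so $|\alpha-1|\lesssim\nu^2$. Combining with $\nu=o(e^{-\tau/2})$ from \eqref{id:qualitative-cv-nu-xi-refined} gives $|\alpha-1|=o(e^{-\tau})$.

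\textbf{Step 2: the derivative $\alpha_\tau$.} Differentiating $\alpha\int\hat U_{\nu,\xi}=8\pi$ gives
\[
\alpha_\tau=-\frac{\alpha}{\int\hat U_{\nu,\xi}}\int \partial_\tau U_{\nu,\xi}\,\mathcal X_\star\,dz
=\frac{\alpha}{\int\hat U_{\nu,\xi}}\int \partial_\tau U_{\nu,\xi}\,(1-\mathcal X_\star)\,dz ,
\]
where the second equality uses $\int\partial_\tau U_{\nu,\xi}=0$, which holds by mass invariance. Next, $\partial_\tau U_{\nu,\xi}=-\frac{\nu_\tau}{\nu}\Lambda_\xi U_{\nu,\xi}-\xi_\tau\cdot\nabla U_{\nu,\xi}$. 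On $\{|z|\ge\zeta_*\}$ we have the pointwise bounds
\[
|\Lambda_\xi U_{\nu,\xi}|\lesssim \nu^2|z|^{-4},\qquad |\nabla U_{\nu,\xi}|\lesssim \nu^2|z|^{-5},
\]
coming from $\Lambda U(y)=O(|y|^{-4})$ and $\nabla U(y)=O(|y|^{-5})$. Integrating these against $1-\mathcal X_\star$ yields
\[
|\alpha_\tau|\lesssim \frac{|\nu_\tau|}{\nu}\,\nu^2+|\xi_\tau|\,\nu^2=|\nu\nu_\tau|+\nu^2|\xi_\tau| .
\]
By \eqref{id:qualitative-modulation-nu-xi-refined}, $|\nu_\tau|+|\xi_\tau|=o(\nu^{-1})$, so $|\nu\nu_\tau|=o(1)$ and $\nu^2|\xi_\tau|=o(\nu)=o(1)$.

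\textbf{Main obstacle.} The delicate step is Step 2: the bound must carry no $\nu^{-2}$ loss from the scaling derivative. This is achieved by rewriting the integral on the complement region $\{|z|\ge\zeta_*\}$, where only the tail of $\Lambda U$ contributes and carries the factor $\nu^2$. Everything else is a direct tail estimate, uniform for $\tau$ large.
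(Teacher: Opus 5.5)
Your proof is correct and follows the paper's argument. You bound the mass removed by the cutoff $\mathcal X_\star$ using the $\nu^2|z|^{-4}$ tail of $U_{\nu,\xi}$ on $\{|z|\ge\zeta_*\}$. You then differentiate the normalization and use $\int\Lambda_\xi U_{\nu,\xi}=\int\nabla U_{\nu,\xi}=0$, so that only the tails of size $\nu^2|z|^{-4}$ and $\nu^2|z|^{-5}$ on $\{|z|\ge\zeta_*\}$ contribute. Writing this through the factor $1-\mathcal X_\star$ is only a slightly more explicit presentation of the paper's computation.
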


\begin{proof}
	As $\int U_{\nu,\xi}=8\pi$ and $U_{\nu,\xi}(z)\lesssim \nu^2 |z|^{-4}$ for $|z|\gtrsim 1$, we infer $\int U_{\nu,\xi}\mathcal{X}_{\star}=8\pi + O(\nu^2)$. It follows that $|\alpha-1| = O(\nu^2)$. Differentiating the constraint, we obtain
	\begin{align*}
		\alpha_\tau \int U_{\nu,\xi}\mathcal{X}_{\star}
		+
		\alpha \int \left(-\frac{\nu_\tau}{\nu}\Lambda_\xi U_{\nu,\xi}
		-
		\xi_\tau \cdot \nabla U_{\nu,\xi}\right)\mathcal{X}_{\star} = 0.
	\end{align*}
	As $\int\Lambda_\xi U_{\nu,\xi}=\int \partial_{z_i}U_{\nu,\xi}= 0$ for $i=1,2$ and $|\Lambda_\xi U_{\nu,\xi}|\lesssim \nu^2 |z|^{-4}$ and $|\partial_{z_i}U_{\nu,\xi}|\lesssim \nu^2|z|^{-5}$ for $|z|\gtrsim 1$, we infer $|\alpha_\tau|\lesssim|\nu \nu_\tau| + \nu^2 |\xi_\tau|$. The conclusion follows from Lemma~\ref{lem:selfsimilarquali} and the estimate $\nu = o(e^{-\tau/2})$.
\end{proof}

In energy estimates for $\varepsilon$ later on, a crossed term will involve the motion of the soliton by scaling. A cancellation will be obtained from the fact that the weight in the matched scalar product \eqref{def:omeganuxi} is to leading order the inverse of the tail of the matched soliton.

\begin{lemma}\label{lem:propmathfrakm}
	Let
	\begin{align}
		\mathfrak{w}_{\star}(z)
		=
		\mathcal{X}_{\star}(z)
		\Bigl[
		\chi_{\zeta_*}(z)
		+
		\Bigl(1 - \chi_{\zeta_*}(z)\Bigr)
		e^{\frac{|z|^{2}}{4}}
		\Bigr].
	\end{align}
	Then if $\zeta_*$ is sufficiently small we have
	\begin{align}
		|\mathfrak{w}_{\star}(z) - 1|
		=
		\begin{cases}
			0, & \text{if } |z| \le \zeta_* \ \text{or} \ |z| \ge 2\zeta_*, \\
			O(\zeta_*^{2}), & \text{otherwise}.
		\end{cases}
	\end{align}
\end{lemma}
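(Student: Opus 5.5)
The plan is a direct computation from the definitions of $\mathcal{X}_\star$ and $\mathfrak{w}_\star$, split according to the three regions cut out by $\chi_{\zeta_*}$. Write $c=\chi_{\zeta_*}(z)\in[0,1]$ and $g=e^{-|z|^2/4}$. Then
\[
\mathfrak{w}_\star = \bigl(c+(1-c)g\bigr)\bigl(c+(1-c)g^{-1}\bigr) = c^2+(1-c)^2 + c(1-c)\bigl(g+g^{-1}\bigr),
\]
so that
\[
\mathfrak{w}_\star-1 = c(1-c)\bigl(g+g^{-1}-2\bigr) = c(1-c)\bigl(g^{1/2}-g^{-1/2}\bigr)^2 .
\]
This algebraic identity is the heart of the argument, and it settles the exact vanishing claim immediately.

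For the vanishing region: if $|z|\le \zeta_*$ then $c=1$, and if $|z|\ge 2\zeta_*$ then $c=0$. In either case $c(1-c)=0$, hence $\mathfrak{w}_\star=1$.

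For the transition annulus $\zeta_*\le |z|\le 2\zeta_*$, I bound $0\le c(1-c)\le 1/4$. I also note that
\[
g^{1/2}-g^{-1/2} = -2\sinh\bigl(|z|^2/8\bigr).
\]
Since $|z|^2/8\le \zeta_*^2/2\le 1$ when $\zeta_*$ is small, the inequality $|\sinh x|\lesssim |x|$ on $[0,1]$ gives $|g^{1/2}-g^{-1/2}|\lesssim |z|^2\lesssim \zeta_*^2$. Squaring this actually yields the stronger bound $O(\zeta_*^4)$, which in particular implies the claimed $O(\zeta_*^2)$.

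There is no real obstacle here. The only point needing care is to expand the product correctly so that the factor $c(1-c)$ is visible; smallness of $\zeta_*$ is used only to linearize $\sinh$.
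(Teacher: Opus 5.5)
Your proof is correct and follows the same route as the paper. In both, $\mathfrak{w}_\star-1$ vanishes exactly where $\chi_{\zeta_*}\in\{0,1\}$, and the Gaussian is expanded for small argument on the annulus $\zeta_*\le|z|\le 2\zeta_*$. Your exact factorization $\mathfrak{w}_\star-1=\chi_{\zeta_*}(1-\chi_{\zeta_*})\bigl(e^{-|z|^2/8}-e^{|z|^2/8}\bigr)^2$ makes the first-order cancellation explicit and gives the sharper bound $O(\zeta_*^4)$, whereas the paper's one-line Taylor argument only asserts the stated $O(\zeta_*^2)$.
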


\begin{proof}
	Recalling that $\mathcal{X}_{\star}(z)=\chi_{\zeta_\star}+( 1 - \chi_{\zeta_\star} )e^{-|z|^2/4}$, the claim is immediate in the regions $|z| \le \zeta_*$ and $|z| \ge 2\zeta_*$. In the transition region $\zeta_* \le |z| \le 2\zeta_*$, it follows from the Taylor expansion of the Gaussian.
	
\end{proof}
We now show that the pointwise bounds obtained in Lemma~\ref{lem:selfsimilarquali} transfer to the perturbation $\varepsilon$.

\begin{lemma}\label{selfsimilarqualieps}
	One has, asymptotically as $\tau \to \infty$,
	\begin{align} \label{bd:pointwise-varepsilon}
		\| \varepsilon\|_{L^1}=o(1), \quad |\varepsilon(z,\tau)|
		=
		o\!\left(\frac{1}{\nu^2 + |z-\xi|^2}\right)
		\quad \mbox{and} \quad
		|\nabla \Phi_{\varepsilon}(z,\tau)|
		=
		o\!\left(\frac{1}{\nu + |z-\xi|}\right).
	\end{align}
\end{lemma}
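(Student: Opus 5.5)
The plan is to compare $\varepsilon$ with $\tilde w$ directly and then transfer the bounds \eqref{qualitative-tildew}. By \eqref{self-similarKS-refined-decomposition} and \eqref{eq:ansatz}, the identity is
$$
\varepsilon=\tilde w+U_{\nu,\xi}-\alpha U_{\nu,\xi}\mathcal X_\star=\tilde w+U_{\nu,\xi}(1-\mathcal X_\star)-(\alpha-1)\hat U_{\nu,\xi}.
$$
It therefore suffices to check that the two explicit correction terms satisfy the three bounds in \eqref{bd:pointwise-varepsilon}. The bounds for $\tilde w$ are exactly those in \eqref{qualitative-tildew}.

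\textbf{$L^1$ bound.} The first correction is supported in $\{|z|\ge\zeta_*\}$. There $|\xi|=o(1)$, so $U_{\nu,\xi}\lesssim\nu^2|z|^{-4}$, and hence $\|U_{\nu,\xi}(1-\mathcal X_\star)\|_{L^1}\lesssim_{\zeta_*}\nu^2$. For the second, Lemma \ref{lem:alpha} gives $|\alpha-1|\lesssim\nu^2$ and $\|\hat U_{\nu,\xi}\|_{L^1}\le 8\pi$. Both corrections are thus $O(\nu^2)=o(1)$ in $L^1$.

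\textbf{Pointwise bound.} On $\{|z|\ge\zeta_*\}$ we have $|z-\xi|\approx|z|$, so
$$
|U_{\nu,\xi}(1-\mathcal X_\star)|\lesssim\nu^2|z|^{-4}\lesssim\nu^2(\nu^2+|z-\xi|^2)^{-1}\zeta_*^{-2}=o\bigl((\nu^2+|z-\xi|^2)^{-1}\bigr).
$$
Likewise, $|(\alpha-1)\hat U_{\nu,\xi}|\lesssim\nu^2U_{\nu,\xi}=8\nu^4(\nu^2+|z-\xi|^2)^{-2}\le 8\nu^2(\nu^2+|z-\xi|^2)^{-1}$.

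\textbf{Poisson field.} For the first correction, \eqref{errorsmatched:eq2_new} of Lemma \ref{lem:matched_errors_global} gives $|\nabla\Phi_{U_{\nu,\xi}-\hat U_{\nu,\xi}}|\lesssim_{\zeta_*}\nu^2\langle z\rangle^{-1}$. Since $\nu+|z-\xi|\lesssim\langle z\rangle$ (using $\xi=o(1)$), this is $\nu^2\langle z\rangle^{-1}\lesssim \nu^2(\nu+|z-\xi|)^{-1}=o\bigl((\nu+|z-\xi|)^{-1}\bigr)$. For the second correction we write $\hat U_{\nu,\xi}=U_{\nu,\xi}-(U_{\nu,\xi}-\hat U_{\nu,\xi})$. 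The explicit field satisfies $|\nabla\Phi_{U_{\nu,\xi}}|=4|z-\xi|/(\nu^2+|z-\xi|^2)\lesssim(\nu+|z-\xi|)^{-1}$, and the difference is handled by the bound just cited. The prefactor $|\alpha-1|\lesssim\nu^2$ then gives $o\bigl((\nu+|z-\xi|)^{-1}\bigr)$.

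All steps are routine. The only point requiring care is checking uniformity of the comparison $|z-\xi|\approx|z|$ on $\{|z|\ge\zeta_*\}$ for $\tau$ large, which follows from $\xi=o(e^{-\tau/2})$ in \eqref{id:qualitative-cv-nu-xi-refined}.
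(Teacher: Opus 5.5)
Your proof is correct. For the $L^1$ and pointwise bounds it follows the paper exactly: the same decomposition $\varepsilon=\tilde w+U_{\nu,\xi}(1-\mathcal X_\star)-(\alpha-1)\hat U_{\nu,\xi}$, with the two correction terms estimated by the tail of $U_{\nu,\xi}$ on $\{|z|\ge\zeta_*\}$ and by $|\alpha-1|\lesssim\nu^2$. Your bound $8\nu^4(\nu^2+|z-\xi|^2)^{-2}$ for the $(\alpha-1)$ term is the right one; the paper's text has the exponent misprinted as $-4$.

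The only real difference is the Poisson-field estimate. The paper proves the first two bounds for $\varepsilon$ and then feeds them into Corollary \ref{Coro:PointPoisson}, treating $\varepsilon$ as a single function. You instead use linearity of $\Phi$. You take the field bound for $\tilde w$ directly from \eqref{qualitative-tildew}, and you bound the correction fields explicitly via \eqref{errorsmatched:eq2_new} and the formula $|\nabla\Phi_{U_{\nu,\xi}}|=4|z-\xi|/(\nu^2+|z-\xi|^2)$.

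The paper's route is shorter and works for any function with those $L^1$ and pointwise bounds. Yours is a few lines longer but gives an explicit $O(\nu^2)(\nu+|z-\xi|)^{-1}$ bound on the correction fields. It also avoids relying on the corollary, whose statement assumes only $\|\tilde u\|_{L^1}\lesssim 1$ while its proof actually uses $\|\tilde u\|_{L^1}=o(1)$.
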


\begin{proof}
	Comparing the two expressions \eqref{self-similarKS-refined-decomposition} and \eqref{eq:ansatz}, we obtain
	\begin{align}\label{eq:eps_decomp}
		\varepsilon
		=
		U_{\nu,\xi}(1-\mathcal{X}_{\star})
		-
		(\alpha-1)\hat{U}_{\nu,\xi}
		+
		\tilde{w}.
	\end{align}
	The first term in \eqref{eq:eps_decomp} is supported in the exterior region $|z|\gtrsim 1$ and satisfies the same decay as $U_{\nu,\xi}$, which is $O(\nu^2|z|^{-4})=o(|z|^{-4})$. The second term is pointwise bounded by $\nu^4(\nu^2+|z-\xi|^2)^{-4}$ thanks to the estimate on $\alpha$ given in Lemma~\ref{lem:alpha}. The third term satisfies $\| \tilde w\|_{L^1}=o(1)$ and was estimated pointwise in Lemma~\ref{lem:selfsimilarquali}. These bounds directly imply the first and second estimates in \eqref{bd:pointwise-varepsilon}. Moreover, the second estimate in \eqref{bd:pointwise-varepsilon} was proved to imply the third one in Corollary \ref{Coro:PointPoisson}, concluding the proof of the Lemma.
\end{proof}

\subsection{Decay of the remainder in the far away outer zone}\label{subsec:outer}

\subsubsection{Inner--outer gluing system}
We set
\begin{equation}\label{eq:F_nuxi_def}
	\begin{aligned}
		F_{\nu,\xi}
		:=
		&\Biggl[
		\alpha\left(\frac{\nu_\tau}{\nu}+\frac12\right)\Lambda_\xi U_{\nu,\xi}
		-
		\alpha_\tau U_{\nu,\xi}
		+
		\alpha\left(\xi_\tau+\frac{\xi}{2}\right)\cdot\nabla U_{\nu,\xi}
		\Biggr]\mathcal X_\star
		+
		A_{\nu,\xi}^{[0]}
		+
		B_{\nu,\xi}^{[0]}
		+
		B_{\nu,\xi}^{[1]}.
	\end{aligned}
\end{equation}
so by \eqref{eq:finaleps} the perturbation equation can be written as
\begin{equation}\label{eq:eps_simplified}
	\partial_\tau \varepsilon
	=
	L_{\nu,\xi}[\varepsilon]
	+
	A[\varepsilon]
	+
	F_{\nu,\xi}
	-
	\nabla \cdot (\varepsilon \nabla \Phi_\varepsilon).
\end{equation}
We recall that there are four zones at stake: the inner region $|z-\xi|\lesssim \nu$, the nearby parabolic region $ \nu \ll |z-\xi| \l 1$, the outer parabolic region $|z|\approx 1$ and the far away outer region $|z|\gg 1$. Each of the first three zones cannot be treated separately, because diffusion spreads the solution across these zones. At a technical level, localizing the evolution in such a zone would generate uncontrollable boundary terms. To overcome this issue, we use the matched scalar product which defines a globally defined Lyapunov functional over these three zones, thus covering the full parabolic zone $|z|\lesssim 1$. However, the matched scalar product cannot be extended up to the far away outer region $|z|\gg 1$; this is because the functional frameworks for the outer parabolic zone (Proposition \ref{lem:far-away-outer}) and that for the far away outer zone (Lemma \ref{lem:far-away-outer}) cannot be matched.

We remark that, by diffusive effects, in the far away outer region $|z|\gg 1$ the solution moves towards the parabolic region $|z|\lesssim 1$, but that the far away outer region remains isolated to leading order. We will therefore localize the evolution in the far away outer zone $|z|\gg 1$, thanks to an \emph{inner-outer gluing scheme}. We look for a solution under the form
\begin{equation}\label{eq:inner_outer_decomp}
	\varepsilon
	=
	\varepsilon_{\mathrm{in}}
	+
	(1-\chi_2)\varepsilon_{\mathrm{out}}.
\end{equation}
Above, $\varepsilon_{\mathrm{in}}$ can be interpreted as the remainder in the parabolic region $|z|\lesssim 1$, and $\varepsilon_{\mathrm{out}}$ as that in the far away outer region $|z|\gg 1$. This will transform \eqref{eq:inner_outer_decomp} below into a triangular system, where $\varepsilon_{\mathrm{in}}$ is forced by boundary terms generated by $\varepsilon_{\mathrm{out}}$, but $\varepsilon_{\mathrm{out}}$ does not receive boundary terms from $\varepsilon_{\mathrm{in}}$. 

Direct computation give
\begin{align*}
	& \Delta\bigl[(1-\chi_2)\varepsilon_{\mathrm{out}}\bigr]
	=
	(1-\chi_2)\Delta\varepsilon_{\mathrm{out}}
	-
	2\nabla\chi_2\cdot\nabla\varepsilon_{\mathrm{out}}
	-
	\varepsilon_{\mathrm{out}}\Delta\chi_2, \\
	& \Lambda\bigl[(1-\chi_2)\varepsilon_{\mathrm{out}}\bigr]
	=
	(1-\chi_2)\Lambda\varepsilon_{\mathrm{out}}
	-
	(z\cdot\nabla\chi_2)\varepsilon_{\mathrm{out}},\\
		&-\nabla\cdot\bigl((1-\chi_2)\varepsilon_{\mathrm{out}}
		\nabla\Phi_{\tilde U_{\nu,\xi}}\bigr)
		=
		-(1-\chi_2)\nabla\cdot
		\bigl(\varepsilon_{\mathrm{out}}\nabla\Phi_{\tilde U_{\nu,\xi}}\bigr)
		+
		\varepsilon_{\mathrm{out}}\nabla\chi_2\cdot\nabla\Phi_{\tilde U_{\nu,\xi}}.
\end{align*}
Thus
\begin{equation}\label{eq:outer_linear_expansion}
	\begin{aligned}
		&L_{\nu,\xi}\bigl[(1-\chi_2)\varepsilon_{\mathrm{out}}\bigr]
		+
		A\bigl[(1-\chi_2)\varepsilon_{\mathrm{out}}\bigr]
		\\
		&\qquad
		=
		(1-\chi_2)
		\left[
		\Delta\varepsilon_{\mathrm{out}}
		+
		\frac12\Lambda\varepsilon_{\mathrm{out}}
		-
		\nabla\cdot
		\bigl(\varepsilon_{\mathrm{out}}\nabla\Phi_{\tilde U_{\nu,\xi}}\bigr)
		\right]
		+
		B_{\mathrm{lin}}[\varepsilon_{\mathrm{out}}]
		-
		\nabla\cdot
		\bigl(\tilde U_{\nu,\xi}
		\nabla\Phi_{(1-\chi_2)\varepsilon_{\mathrm{out}}}\bigr),
	\end{aligned}
\end{equation}
where $B_{\mathrm{lin}}[\varepsilon_{\mathrm{out}}]$ gathers boundary terms produced by $\varepsilon_{\mathrm{out}}$,
\begin{equation}\label{eq:Blin_out_def}
	B_{\mathrm{lin}}[\varepsilon_{\mathrm{out}}]
	=
	-2\nabla\chi_2\cdot\nabla\varepsilon_{\mathrm{out}}
	-
	\varepsilon_{\mathrm{out}}\Delta\chi_2
	-
	\frac12(z\cdot\nabla\chi_2)\varepsilon_{\mathrm{out}}
	+
	\varepsilon_{\mathrm{out}}\nabla\chi_2\cdot\nabla\Phi_{\tilde U_{\nu,\xi}}.
\end{equation}
We now extract the limiting outer operator. Since the mass of the profile is $8\pi$,
the far-field drift is $\nabla\Phi_{\tilde U_{\nu,\xi}}=-4|z|^{-2}z+o\left(|z|^{-1}\right)$. Then
\begin{equation}\label{eq:outer_operator_decomp}
	\begin{aligned}
		&\Delta\varepsilon_{\mathrm{out}}
		+
		\frac12\Lambda\varepsilon_{\mathrm{out}}
		-
		\nabla\cdot
		\bigl(\varepsilon_{\mathrm{out}}\nabla\Phi_{\tilde U_{\nu,\xi}}\bigr)
		=
		L_\infty[\varepsilon_{\mathrm{out}}]
		-
		\nabla\varepsilon_{\mathrm{out}}\cdot
		\left(
		\nabla\Phi_{\tilde U_{\nu,\xi}}
		+
		4\frac{z}{|z|^2}
		\right)
		+
		\tilde U_{\nu,\xi}\varepsilon_{\mathrm{out}}.
	\end{aligned}
\end{equation}
where we recall $L_\infty$ is given by \eqref{def:shiftedextlinoperator}. Furthermore,
\begin{equation}\label{eq:nonlocal_outer_split}
	\begin{aligned}
		\nabla\cdot
		\bigl(\tilde U_{\nu,\xi}
		\nabla\Phi_{(1-\chi_2)\varepsilon_{\mathrm{out}}}\bigr)
		&=
		\nabla\tilde U_{\nu,\xi}\cdot
		\nabla\Phi_{(1-\chi_2)\varepsilon_{\mathrm{out}}}
		-
		(1-\chi_2)\tilde U_{\nu,\xi}\varepsilon_{\mathrm{out}}.
	\end{aligned}
\end{equation}
Combining \eqref{eq:outer_operator_decomp} and \eqref{eq:nonlocal_outer_split}, the outer contribution becomes
\begin{align*}
	\begin{aligned}
		L_{\nu,\xi}\bigl[(1-\chi_2)\varepsilon_{\mathrm{out}}\bigr]
		+
		A\bigl[(1-\chi_2)\varepsilon_{\mathrm{out}}\bigr] =&(1-\chi_2)
		\Biggl[
		L_\infty[\varepsilon_{\mathrm{out}}]
		-
		\nabla\varepsilon_{\mathrm{out}}\cdot
		\left(
		\nabla\Phi_{\tilde U_{\nu,\xi}}
		+
		4\frac{z}{|z|^2}
		\right)
		+
		2\tilde U_{\nu,\xi}\varepsilon_{\mathrm{out}}\Biggr]\\
		&+B_{\mathrm{lin}}[\varepsilon_{\mathrm{out}}]-
		\nabla\tilde U_{\nu,\xi}\cdot
		\nabla\Phi_{(1-\chi_2)\varepsilon_{\mathrm{out}}}.
	\end{aligned}
\end{align*}
We split the quadratic term according to the inner--outer decomposition. Since
\[
-\nabla\cdot(\varepsilon\nabla\Phi_\varepsilon)
=
-\nabla\varepsilon\cdot\nabla\Phi_\varepsilon
+
\varepsilon^2,
\]
we have
\begin{align*}
	-\nabla\cdot(\varepsilon\nabla\Phi_\varepsilon)
	=
	Q_{\mathrm{in}}[\varepsilon]
	+
	(1-\chi_2)
	\left[
	-\nabla\varepsilon_{\mathrm{out}}\cdot\nabla\Phi_\varepsilon
	+
	\varepsilon_{\mathrm{out}}\varepsilon
	\right],
\end{align*}
where
\begin{equation}\label{eq:Qin_def}
	Q_{\mathrm{in}}[\varepsilon]
	=
	-\nabla\varepsilon_{\mathrm{in}}\cdot\nabla\Phi_\varepsilon
	+
	\varepsilon_{\mathrm{out}}\nabla\chi_2\cdot\nabla\Phi_\varepsilon
	+
	\varepsilon_{\mathrm{in}}\varepsilon.
\end{equation}
We collect the boundary terms in
\begin{equation}\label{eq:Bout_def}
	\begin{aligned}
		B[\varepsilon_{\mathrm{out}}]
		:=
		&-2\nabla\chi_2\cdot\nabla\varepsilon_{\mathrm{out}}
		-
		\varepsilon_{\mathrm{out}}\Delta\chi_2
		-
		\frac12(z\cdot\nabla\chi_2)\varepsilon_{\mathrm{out}}
		+
		\varepsilon_{\mathrm{out}}\nabla\chi_2\cdot\nabla\Phi_{\tilde U_{\nu,\xi}}
		-\nabla\tilde U_{\nu,\xi}\cdot
		\nabla\Phi_{(1-\chi_2)\varepsilon_{\mathrm{out}}}.
	\end{aligned}
\end{equation}
Finally, define
\begin{equation}\label{eq:bc_e_def}
	b_\varepsilon(z,\tau)
	:=
	-
	\left(
	\nabla\Phi_{\tilde U_{\nu,\xi}}
	+
	4\frac{z}{|z|^2}
	+
	\nabla\Phi_\varepsilon
	\right),
	\qquad
	c_\varepsilon(z,\tau)
	:=
	2\tilde U_{\nu,\xi}
	+
	\varepsilon.
\end{equation}
With this notation, the equation becomes
\begin{equation}\label{eq:gluing_identity}
	\begin{aligned}
		\partial_\tau\varepsilon_{\mathrm{in}}
		+
		(1-\chi_2)\partial_\tau\varepsilon_{\mathrm{out}}
		=
		&L_{\nu,\xi}[\varepsilon_{\mathrm{in}}]
		+
		A[\varepsilon_{\mathrm{in}}]
		+
		F_{\nu,\xi}
		+
		B[\varepsilon_{\mathrm{out}}]
		+
		Q_{\mathrm{in}}[\varepsilon]
		\\
		&+
		(1-\chi_2)
		\Bigl[
		L_\infty[\varepsilon_{\mathrm{out}}]
		+
		b_\varepsilon\cdot\nabla\varepsilon_{\mathrm{out}}
		+
		c_\varepsilon\varepsilon_{\mathrm{out}}
		\Bigr].
	\end{aligned}
\end{equation}

We are naturally led to define $(\varepsilon_{\mathrm{in}},\varepsilon_{\mathrm{out}})$ as the solution to the following inner--outer gluing system:
\begin{equation}\label{eq:inner_problem}
	\begin{cases}
		\partial_\tau\varepsilon_{\mathrm{in}}
		=
		L_{\nu,\xi}[\varepsilon_{\mathrm{in}}]
		+
		A[\varepsilon_{\mathrm{in}}]
		+
		F_{\nu,\xi}
		+
		B[\varepsilon_{\mathrm{out}}]
		+
		Q_{\mathrm{in}}[\varepsilon],
		\\[0.4em]
		\varepsilon_{\mathrm{in}}(\tau_0,z)
		=
		\varepsilon_0(z)\chi_4(z).
	\end{cases}
\end{equation}
and
\begin{equation}\label{eq:outer_problem}
	\begin{cases}
		\partial_\tau\varepsilon_{\mathrm{out}}
		=
		L_\infty[\varepsilon_{\mathrm{out}}]
		+
		b_\varepsilon\cdot\nabla\varepsilon_{\mathrm{out}}
		+
		c_\varepsilon\varepsilon_{\mathrm{out}},
		&
		(z,\tau)\in B_1(0)^c\times(\tau_0,\infty),
		\\[0.4em]
		\varepsilon_{\mathrm{out}}(\tau,z)=0,
		&
		z\in\partial B_1(0),
		\\[0.4em]
		\varepsilon_{\mathrm{out}}(\tau_0,z)
		=
		\varepsilon_0(z)(1-\chi_4(z)).
	\end{cases}
\end{equation}
where we recall the notation for the ball $B_1(0)=\{|z|\leq 1\}$. We verify that
\begin{equation}\label{eq:initial_data_compatibility}
	\varepsilon_{\mathrm{in}}(\tau_0)
	+
	(1-\chi_2)\varepsilon_{\mathrm{out}}(\tau_0)
	=
	\varepsilon_0\chi_4
	+
	(1-\chi_2)\varepsilon_0(1-\chi_4)
	=
	\varepsilon_0.
\end{equation}
Therefore, if the system \eqref{eq:inner_problem}--\eqref{eq:outer_problem} is solved, then $\varepsilon_{\mathrm{in}}+(1-\chi_2)\varepsilon_{\mathrm{out}}$ solves the original perturbation equation \eqref{eq:eps_simplified}, so by uniqueness, we indeed have
\[
\varepsilon
=
\varepsilon_{\mathrm{in}}
+
(1-\chi_2)\varepsilon_{\mathrm{out}}.
\]

Finally, by the pointwise bounds on $\varepsilon$ (Lemma \ref{selfsimilarqualieps}) and the far-field expansion of
$\nabla\Phi_{\tilde U_{\nu,\xi}}$ (Lemma \ref{lem:matched_errors_global}), we have, for any prescribed $\eta>0$, that there exists $\tau_0$ sufficiently large,  such that for all $\tau\geq \tau_0$ and $z\in \mathbb R^2$,
\begin{equation}\label{eq:b_eps_small}
	|b_\varepsilon(z,\tau)|
	\le
	\frac{\eta}{|z|},
	\qquad
	|\nabla\cdot b_\varepsilon(z,\tau)|
	\le
	\frac{\eta}{|z|^2},
\end{equation}
and
\begin{equation}\label{eq:c_eps_small}
	|c_\varepsilon(z,\tau)|
	\le
	\frac{\eta}{|z|^2}.
\end{equation}

\subsubsection{The outer problem with Dirichlet boundary condition}

In what follows we study the evolution \eqref{eq:outer_problem} of $\varepsilon_{\mathrm{out}}$, on
$$
\Omega=\{|z|> 1\}.
$$

\begin{proposition}[Coercivity of the perturbed exterior operator]\label{prop:ExteriorCoercivityPerturbed}
	Recall $\omega_\infty$ is defined by \eqref{def:omega-infty}. Then the following holds for all $\delta>0$ small enough. Assume that $b$ and $c$ are smooth functions on $\bar \Omega$ such that
	\[
	|b(z)|\le \frac{\eta}{|z|},
	\qquad
	|c(z)|\le \frac{\eta}{|z|^2}
	\qquad \text{for } z\in \bar \Omega,
	\]
	with \(\eta>0\) sufficiently small depending on $\delta$. Let \(u\in C^\infty_c(\bar \Omega )\) be such that \(u=0\) on \(\partial B_1\).  Then, one has
	\[
	\left\langle
	L_\infty u+b\cdot\nabla u+cu,u
	\right\rangle_{L^2_\infty(\Omega)}
	\le
	(-2+4\delta)\|u\|_{L^2_\infty(\Omega)}^2
	-
	\frac{\delta}{2}
	\|\nabla u\|_{L^2_\infty(\Omega)}^2.
	\]
\end{proposition}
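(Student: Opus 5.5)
The plan is to treat $b\cdot\nabla u+cu$ as a perturbation of the unperturbed exterior estimate. We first redo Proposition \ref{prop:ExteriorCoercivity} on $\Omega$ with the Dirichlet condition, and then absorb the two extra terms with the exterior Hardy inequality of Lemma \ref{lem:exterior_hardy_weighted}.

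\textbf{Step 1 (unperturbed operator on $\Omega$).} We write $u=e^{-|z|^2/4}v$, so that $v=0$ on $\partial B_1$. This gives
\[
\langle L_\infty u,u\rangle_{L^2_\infty(\Omega)}=\int_\Omega e^{-|z|^2/4}\big(\mu^{-1}\nabla\cdot(\mu\nabla v)-2v\big)v\,|z|^4dz .
\]
After integrating by parts, the boundary term at $|z|=1$ carries a factor $v$ and therefore vanishes. This yields the analogue of \eqref{eq:coerc1} on $\Omega$. In the same way, the identity $L_\infty u=\omega_\infty^{-1}\nabla\cdot(\omega_\infty\nabla u)+u$ together with $u|_{\partial B_1}=0$ gives the analogue of \eqref{eq:coerc2}. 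Since $u$ has compact support in $\bar\Omega$, there is no boundary term at infinity. Taking the convex combination exactly as in the proof of Proposition \ref{prop:ExteriorCoercivity}, we obtain
\[
\langle L_\infty u,u\rangle_{L^2_\infty(\Omega)}\le(-2+3\delta)\|u\|^2_{L^2_\infty(\Omega)}-\delta\|\nabla u\|^2_{L^2_\infty(\Omega)}.
\]

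\textbf{Step 2 (Hardy on $\Omega$).} Next we observe that Lemma \ref{lem:exterior_hardy_weighted} applies to $u$ after extending it by zero inside $B_1$. The extension is Lipschitz, because $u$ vanishes on $\partial B_1$, so a density argument lets us use the lemma. This gives
\[
\int_\Omega \frac{u^2}{|z|^2}\,\omega_\infty\,dz\le C_H\int_\Omega|\nabla u|^2\,\omega_\infty\,dz .
\]
Alternatively, one can rerun that lemma's integration by parts with the vector field $F$ directly on $\Omega$; the boundary term again vanishes since $u=0$ there.

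\textbf{Step 3 (perturbation terms).} For the drift term we use Cauchy--Schwarz and Step 2:
\[
\Big|\int_\Omega (b\cdot\nabla u)\,u\,\omega_\infty\Big|\le\eta\Big(\int_\Omega\frac{u^2}{|z|^2}\omega_\infty\Big)^{1/2}\|\nabla u\|_{L^2_\infty}\le\eta\sqrt{C_H}\,\|\nabla u\|^2_{L^2_\infty}.
\]
For the potential term, Step 2 gives directly
\[
\Big|\int_\Omega c\,u^2\,\omega_\infty\Big|\le\eta\,C_H\,\|\nabla u\|^2_{L^2_\infty}.
\]
We then choose $\eta$ small enough that $\eta(\sqrt{C_H}+C_H)\le\delta/2$. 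The perturbation is absorbed into half of the gradient term, and the claimed inequality follows, with a margin in the $L^2$ coefficient ($-2+3\delta\le-2+4\delta$).

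\textbf{Main obstacle.} I expect the only delicate point to be the boundary bookkeeping at $|z|=1$. Each integration by parts (the conjugated form, the divergence form, and the Hardy vector field) must produce boundary terms proportional to $u$ or $v$ on $\partial B_1$, so that the Dirichlet condition kills them. It is also essential that the Hardy constant stays uniform on $\Omega$, since nothing forces $u$ to be small near $|z|=1$.
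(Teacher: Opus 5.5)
Your proof is correct, but it treats the perturbation differently from the paper. For the unperturbed part you argue exactly as the paper does: the two integrations by parts from Proposition~\ref{prop:ExteriorCoercivity}, with the boundary terms at $|z|=1$ killed by the Dirichlet condition.

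For $b\cdot\nabla u+cu$, however, the paper uses no Hardy inequality. It uses $|z|\ge 1$ on $\Omega$ to reduce to $|b|,|c|\le\eta$, and then applies Young's inequality:
\[
\eta\int_\Omega|\nabla u||u|\,\omega_\infty+\eta\int_\Omega u^2\omega_\infty\le\frac{\delta}{2}\|\nabla u\|^2_{L^2_\infty(\Omega)}+\Bigl(\eta+\frac{\eta^2}{2\delta}\Bigr)\|u\|^2_{L^2_\infty(\Omega)} .
\]
The last term is absorbed into the extra $\delta$ of the $L^2$ coefficient, which is where the $-2+4\delta$ in the statement comes from.

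You instead invoke Lemma~\ref{lem:exterior_hardy_weighted} to move both perturbative terms onto the gradient term. This keeps the sharper rate $-2+3\delta$ and actually exploits the decay $|z|^{-1}$, $|z|^{-2}$ of the coefficients. As a result, your argument does not need the restriction $|z|\ge 1$ and would survive coefficients with that singular behaviour near the origin. The price is checking that the Hardy inequality applies to $u$ on $\Omega$, via zero extension and density, or by rerunning the vector-field argument with a vanishing boundary term; you justify this correctly. The paper's route is more elementary, but it discards the decay of $b,c$ and uses up the $L^2$ margin.
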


\begin{proof}
	The coercivity of the unperturbed operator \(L_\infty\) follows exactly as in Proposition~\ref{prop:ExteriorCoercivity}. The same computations apply on $\Omega$, since the boundary term vanishes due to the Dirichlet condition \(u=0\) on \(\partial B_1\). In particular, for any \(0<\delta\ll1\),
	\begin{equation}\label{eq:coerc_ext_unperturbed}
		\langle L_\infty u,u\rangle_{L^2_\infty(\Omega)}
		\le
		(-2+3\delta)\|u\|_{L^2_\infty(\Omega)}^2
		-
		\delta\|\nabla u\|_{L^2_\infty(\Omega)}^2.
	\end{equation}
	It remains to estimate the perturbative terms. Using the assumptions on \(b\) and \(c\), and the fact that \(|z|\ge1\) in \(\Omega\), we obtain
	\[
	\left|
	\int_{\Omega}(b\cdot\nabla u)u\,\omega_\infty\,dz
	\right|+\left|
	\int_{\Omega}cu^2\,\omega_\infty\,dz
	\right|
	\le
	\eta \int_{\Omega} |\nabla u|\,|u|\,\omega_\infty\,dz+\eta \int_{\Omega} u^2  \omega_\infty dz
	\le
	\frac{\delta}{2}\|\nabla u\|_{L^2_\infty(\Omega)}^2
	+
	C_\delta\eta^2\|u\|_{L^2_\infty(\Omega)}^2,
	\]
	where we used Young's inequality. Choosing \(\eta>0\) sufficiently small yields the claimed inequality.
	
\end{proof}

\begin{proposition}[Contraction of the exterior evolution]\label{prop:ExteriorEvolutionContraction}
	For all $\eta>0$ small enough we have the following. Assume that for some $\tau_0*\geq 0$, $b$ and $c$ are smooth functions defined on $\bar \Omega \times [0,\infty)$ such that
	\[
	|b(z,\tau)|\le \frac{\eta}{|z|},
	\qquad
	|\nabla \cdot b(z,\tau)|+|c(z,\tau)|
	\le
	\frac{\eta}{|z|^2}
	\quad \text{in }\bar \Omega.
	\]
	for all $\tau\geq \tau_0*$. Consider the operator
	\[
	L(\tau)u
	=
	L_\infty u
	+
	b(z,\tau)\cdot\nabla u
	+
	c(z,\tau)u,
	\]
	with homogeneous Dirichlet boundary condition on $\partial B_1$, and denote by $S(\tau,\tau_0)$ its solution map. Then it satisfies
	\begin{equation} \label{bd:semi-group-contraction-outer}
	\|S(\tau,\tau_0)f\|_{L^1(\Omega,|z|^2dz)}
	\le
	e^{-(\tau-\tau_0)}
	\|f\|_{L^1(\Omega,|z|^2dz)},
	\qquad
	\tau\ge \tau_0.
	\end{equation}
\end{proposition}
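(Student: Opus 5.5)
The natural route is an $L^1$ weighted estimate by duality or a direct sign argument. I would first reduce to sign-definite data. The evolution is linear, so $|S(\tau,\tau_0)f|\le S(\tau,\tau_0)|f|$ pointwise by the parabolic comparison principle. This holds for the operator $\partial_\tau-\Delta-(4z/|z|^2+z/2+b)\cdot\nabla-(1+c)$, which has bounded zeroth order coefficient on $\Omega$, with Dirichlet conditions. Writing $f=f_+-f_-$, it therefore suffices to prove the bound for $f\ge 0$. In that case $u=S(\tau,\tau_0)f\ge0$ by the maximum principle, so $\|u\|_{L^1(|z|^2)}=\int_\Omega u|z|^2dz$.

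Next I would compute $\frac{d}{d\tau}\int_\Omega u|z|^2dz$ by integrating the equation against $|z|^2$. To justify this, I would first treat compactly supported smooth data and obtain Gaussian decay from the $L^2_\infty$ theory of Proposition \ref{prop:ExteriorCoercivityPerturbed}, so that no boundary terms appear at infinity, and then use density. The formal adjoint gives
$$\int L_\infty u\,|z|^2=\int_{\partial B_1}(\text{flux})+\int u\Big(\Delta|z|^2-\nabla\cdot\big((4\tfrac{z}{|z|^2}+\tfrac z2)|z|^2\big)+|z|^2\Big).$$
With $\Delta|z|^2=4$, $\nabla\cdot(4z)=8$ and $\nabla\cdot(\tfrac12 z|z|^2)=2|z|^2$, the interior integrand becomes $(4-8-2|z|^2+|z|^2)u=(-4-|z|^2)u$, so this part is $\le -\int u|z|^2$ with an additional $-4\int u$ to spare. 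At $|z|=1$ we have $u=0$, and since $u\ge0$ inside, $\partial_n u\ge0$ for the outward normal of $B_1$ (inward into $\Omega$). The boundary term coming from $\int\Delta u\,|z|^2$ is $-\int_{\partial B_1}|z|^2\partial_{n_\Omega}u$, which is $\le0$; the drift boundary terms vanish since $u=0$ there. So the Dirichlet condition only helps.

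The perturbations come next. We have $\int (b\cdot\nabla u)|z|^2=-\int u\,\nabla\cdot(b|z|^2)=-\int u\,(|z|^2\nabla\cdot b+2z\cdot b)$, with no boundary term since $u=0$ on $\partial B_1$. This is bounded by $\int u(\eta+2\eta)=3\eta\int u$. Similarly $\int c\,u|z|^2\le\eta\int u$. Both are absorbed by the spare $-4\int u$ once $\eta\le1$, which gives $\frac{d}{d\tau}\int u|z|^2\le-\int u|z|^2$, and Gronwall yields the claimed contraction \eqref{bd:semi-group-contraction-outer}. For signed $f$, I would add the estimates for $S f_\pm$.

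The main obstacle is rigour rather than algebra. One has to justify the well-posedness and positivity of $S$ on the unbounded domain with the singular-looking drift, which is smooth on $\bar\Omega$ since $|z|\ge1$, and to justify the integration by parts for data only in $L^1(|z|^2)$. I would handle this by truncating the data and using the Gaussian-weighted $L^2$ bounds from Proposition \ref{prop:ExteriorCoercivityPerturbed} for decay, then passing to the limit using the uniform $L^1(|z|^2)$ bound itself.
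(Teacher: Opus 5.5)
Your proposal is correct and is essentially the paper's argument: test the equation against $|z|^2$, use $L_\infty^*(|z|^2)=-|z|^2-4$, absorb the $b$ and $c$ contributions (at most $4\eta\int|u|$) into the spare $-4\int|u|$, and conclude by Gronwall and density. The only difference is how you handle the modulus: the paper applies a regularized Kato inequality to $\Psi_\delta(u)=\sqrt{u^2+\delta^2}-\delta$, which treats signed solutions directly and removes the flux at $\partial B_1$ because $\Psi_\delta'(0)=0$; you instead use positivity of $S$ to reduce to $f\ge 0$ and drop that boundary flux because it has a good sign, and this works equally well.
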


\begin{proof}
	We first consider smooth compactly supported initial data \(u(\tau_0)=f\), so that the corresponding solution is classical. The general case follows by density and the contraction estimate below.
	
	We use a regularized version of the standard Kato inequality. Let
	\[
	\Psi_\delta(r)=\sqrt{r^2+\delta^2}-\delta,
	\qquad
	w_\delta=\Psi_\delta(u).
	\]
	Then \(w_\delta\to |u|\), \(0\le w_\delta\le |u|\), and \(\Psi_\delta''\ge0\). Writing
	\[
	L_\infty=A+\mathrm{Id},
	\qquad
	A=\Delta+\left(4\frac{z}{|z|^2}+\frac12 z\right)\cdot\nabla,
	\]
	the convexity of \(\Psi_\delta\) gives
	\[
	A w_\delta
	\ge
	\Psi_\delta'(u)Au.
	\]
	Multiplying the equation by \(\Psi_\delta'(u)\), we obtain
	\[
	\partial_\tau w_\delta
	\le
	L_\infty w_\delta
	+
	b\cdot\nabla w_\delta
	+
	|c|w_\delta
	+
	C\delta(1+|c|).
	\]
	
	Let
	\[
	\phi_R(z)=|z|^2\chi\left(\frac{|z|}{R}\right),
	\]
	with \(\chi\) a standard radial cutoff. Since \(u=0\) on \(\partial B_1\), also \(w_\delta=0\) on \(\partial B_1\), and no boundary contribution appears. Thus
	\[
	\frac{d}{d\tau}
	\int_\Omega w_\delta \phi_R
	\le
	\int_\Omega w_\delta L_\infty^\ast\phi_R
	-
	\int_\Omega w_\delta \nabla\cdot(b\phi_R)
	+
	\int_\Omega |c|w_\delta\phi_R
	+
	C\delta\int_\Omega (1+|c|)\phi_R.
	\]
	Here the adjoint is taken with respect to Lebesgue measure and is given by
	\[
	L_\infty^\ast\phi
	=
	\Delta\phi
	-
	\nabla\cdot\left[
	\left(
	4\frac{z}{|z|^2}
	+
	\frac12 z
	\right)\phi
	\right]
	+
	\phi.
	\]
	In particular,
	\[
	L_\infty^\ast(|z|^2)
	=
	-|z|^2-4.
	\]
	Passing first to the limit \(\delta\to0\), using dominated convergence on compact sets, and then letting \(R\to\infty\), we obtain
	\[
	\frac{d}{d\tau}
	\int_\Omega |u||z|^2
	\le
	-\int_\Omega |u||z|^2
	-
	4\int_\Omega |u|
	+
	C\eta\int_\Omega |u|\le
	-\int_\Omega |u||z|^2
	-
	2\int_\Omega |u|
	\]
	where the last inequality holds provided $\eta>0$ is small enough. This differential inequality implies the desired estimate \eqref{bd:semi-group-contraction-outer} by Gronwall. The extension to arbitrary data in \(L^1(\Omega,|z|^2dz)\) follows by density.
\end{proof}

\begin{proposition}[Outer moment and pointwise decay]\label{prop:OuterMomentPointwise}
	Let $\varepsilon_{\mathrm{out}}$ solve
	\begin{equation}\label{eq:OuterEqRiv}
		\begin{cases}
			\partial_\tau \varepsilon_{\mathrm{out}}
			=
			L_\infty[\varepsilon_{\mathrm{out}}]
			+
			b\cdot\nabla\varepsilon_{\mathrm{out}}
			+
			c\varepsilon_{\mathrm{out}},
			&
			(z,\tau)\in \Omega\times(\tau_0,\infty),
			\\[0.3em]
			\varepsilon_{\mathrm{out}}=0,
			&
			z\in\partial B_1,
		\end{cases}
	\end{equation}
	where we recall $L_\infty$ is given by \eqref{def:extlinoperator}, and where $b$ and $c$ are smooth functions defined on $\bar\Omega \times [\tau_0^*,\infty)$ that satisfy for all $\tau\geq \tau_0^*$,
	\[
	|b(z,\tau)|\le \frac{\eta}{|z|},
	\qquad
	|\nabla\cdot b(z,\tau)|+|c(z,\tau)|
	\le
	\frac{\eta}{|z|^2},
	\qquad z\in \bar \Omega,
	\]
	with $\eta>0$ sufficiently small. Assume that the initial datum at time $\tau_0\geq \tau_0^*$ is smooth and bounded on $\bar \Omega$ and satisfies
	\[
	\varepsilon_{\mathrm{out}}(\tau_0)\in
	L^\infty(\Omega)\cap L^1(\Omega,|z|^2dz).
	\]
	Then
	\begin{equation}\label{SecMomEst}
		\int_\Omega
		|\varepsilon_{\mathrm{out}}(z,\tau)|\,|z|^2\,dz
		=
		o(e^{-\tau}),
		\qquad
		\|\varepsilon_{\mathrm{out}}(\tau)\|_{L^\infty(\Omega)}
		=
		o(e^{-\tau}),
	\end{equation}
	and, for every fixed $R>1$,
	\begin{equation}\label{PointEst}
		\|\nabla \varepsilon_{\mathrm{out}}(\tau)\|_{L^\infty(\Omega\cap B_R)}
		=
		o_R(e^{-\tau}).
	\end{equation}
\end{proposition}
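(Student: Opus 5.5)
The second-moment bound in \eqref{SecMomEst} follows from a splitting argument, exactly as in the proof of Lemma \ref{lem:far-away-outer}, but with the perturbed semigroup. Fix $M>1$ and decompose the initial datum as $\varepsilon_{\mathrm{out}}(\tau_0)=f_1+f_2$ with $f_1=\varepsilon_{\mathrm{out}}(\tau_0)\chi_M$ and $f_2=\varepsilon_{\mathrm{out}}(\tau_0)(1-\chi_M)$. By linearity, $\varepsilon_{\mathrm{out}}=S(\tau,\tau_0)f_1+S(\tau,\tau_0)f_2$.

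For the tail part, Proposition \ref{prop:ExteriorEvolutionContraction} gives
\[
\|S(\tau,\tau_0)f_2\|_{L^1(\Omega,|z|^2dz)}\le e^{-(\tau-\tau_0)}\int_{|z|\ge M}|\varepsilon_{\mathrm{out}}(\tau_0)||z|^2dz,
\]
and the integral on the right tends to zero as $M\to\infty$.

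For the compactly supported part, $f_1$ belongs to $L^2_\infty(\Omega)$ and vanishes on $\partial B_1$ up to a density argument. Proposition \ref{prop:ExteriorCoercivityPerturbed} with $\delta<1/8$ yields
\[
\frac{d}{d\tau}\|S f_1\|^2_{L^2_\infty}\le -3\|Sf_1\|^2_{L^2_\infty},
\]
so $\|Sf_1\|_{L^2_\infty}\lesssim_M e^{-3(\tau-\tau_0)/2}$. Cauchy--Schwarz with the weight $|z|^4 e^{-|z|^2/4}\omega_\infty^{-1}$, which is integrable, then gives $\int|Sf_1||z|^2\lesssim_M e^{-3\tau/2}$. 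Choosing first $M$ large and then $\tau$ large proves the $o(e^{-\tau})$ bound on the second moment.

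The $L^\infty$ and gradient bounds come from parabolic smoothing combined with this moment decay. Since $L_\infty$ contains the term $\frac12 z\cdot\nabla$, it is convenient to undo the self-similar change of variables: set $t=e^\tau$ and $x=\sqrt t\,z$, and return to the original variables. In these variables $\varepsilon_{\mathrm{out}}$ solves a linear parabolic equation of the form
\[
\partial_t v=\Delta v+\tilde b\cdot\nabla v+\tilde c\,v
\]
on the exterior of a ball of radius $\sqrt t$, with Dirichlet data, and with coefficients bounded by $|x|^{-1}$ and $|x|^{-2}$. On unit parabolic cylinders in $(z,\tau)$, which correspond to cylinders of size $\sqrt t$ in $x$, we apply local parabolic estimates. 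These are the interior and boundary $L^1\to L^\infty$ estimates in the spirit of Theorem \ref{thm:Lieberman}, and they hold up to the flat Dirichlet boundary $|z|=1$. They give
\[
\sup_{B_1(z_0)}|\varepsilon_{\mathrm{out}}(\tau)|\lesssim \int_{\tau-1}^{\tau}\int_{B_2(z_0)\cap\Omega}|\varepsilon_{\mathrm{out}}|\,dz\,d\sigma,
\]
where the constants are uniform in $z_0$ because the drift $\frac12 z+4z/|z|^2+b$ is handled by the change of frame.

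Since $|z|\ge1$ on $\Omega$, the right-hand side is bounded by $\int|\varepsilon_{\mathrm{out}}||z|^2=o(e^{-\tau})$. This gives the $L^\infty$ part of \eqref{SecMomEst}. The gradient bound \eqref{PointEst} on $\Omega\cap B_R$ then follows from standard interior and boundary Schauder or $W^{1,p}$ estimates on the bounded region $B_{2R}\cap\Omega$, where the coefficients are smooth and bounded depending on $R$. This is why the constant is $o_R$ there.

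The main obstacle is the uniformity of the smoothing estimate far away, where the drift $\frac12 z$ is unbounded. On unit cylinders centered at large $|z_0|$ it is not a small perturbation. The fix is to work along characteristics. Write $\varepsilon_{\mathrm{out}}(z,\tau)=\tilde v\big(e^{(\tau-\tau_1)/2}z,\tau\big)$, which removes the $\frac12 z\cdot\nabla$ term, or equivalently work in the $(x,t)$ variables. The remaining equation is $\partial_s\tilde v=e^{s}\Delta\tilde v+\dots$ over a time interval of length $1$, and the time-dependent diffusion there stays comparable to $1$. After this reduction the local estimates are uniform in $z_0$.
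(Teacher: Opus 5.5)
Your proposal is correct and follows the paper's proof. The weighted $L^1$ decay comes from the same $\chi_M$ splitting: Proposition~\ref{prop:ExteriorCoercivityPerturbed} gives $e^{-3(\tau-\tau_0)/2}$ decay in $L^2_\infty$ for the compact part, and Proposition~\ref{prop:ExteriorEvolutionContraction} handles the tail. The $L^\infty$ and local gradient bounds then come from parabolic smoothing on $[\tau-1,\tau]$ after removing the $\frac12 z\cdot\nabla$ drift by the self-similar change of variables. The only difference is that you use local interior and boundary $L^1\to L^\infty$ estimates on transported unit cylinders, whereas the paper uses a global short-time heat-kernel and maximum-principle bound. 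Two details in your local estimate need fixing, though neither affects the conclusion since you bound by the global weighted $L^1$ norm anyway: the region at time $\sigma$ is the ball transported to $e^{(\tau-\sigma)/2}z_0$, not $B_2(z_0)$, and the local maximum principle must be applied to $\pm\varepsilon_{\mathrm{out}}$ because the solution is not signed.
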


\begin{proof}
	We recall $\omega_\infty = |z|^4 e^{|z|^2/4}$ and set
	\[
	X:=L^1(\Omega,|z|^2dz),
	\]
	\textbf{Step 1.} \emph{Weighted $L^1$ decay.}
	We first prove the weighted $L^1$ decay. Fix $M\gg1$ and split
	\[
	\varepsilon_{\mathrm{out}}(\tau_0)
	=
	\varepsilon_{\mathrm{out}}(\tau_0)\chi_M
	+
	\varepsilon_{\mathrm{out}}(\tau_0)(1-\chi_M)
	=:f_1+f_2.
	\]
	Let $\varepsilon_1,\varepsilon_2$ be the corresponding solutions of
	\eqref{eq:OuterEqRiv}. By linearity, $\varepsilon_{\mathrm{out}}=\varepsilon_1+\varepsilon_2$. Since $f_1$ is compactly supported, $f_1\in L^2(\Omega,\omega_\infty dz)$. Using
	Proposition~\ref{prop:ExteriorCoercivityPerturbed} and taking $\eta$ small enough, we obtain
	\[
	\frac{d}{d\tau}
	\|\varepsilon_1(\tau)\|_{L^2(\Omega,\rho dz)}^2
	\le
	-3
	\|\varepsilon_1(\tau)\|_{L^2(\Omega,\rho dz)}^2.
	\]
	Hence
	\[
	\|\varepsilon_1(\tau)\|_{L^2(\Omega,\rho dz)}
	\lesssim_M
	e^{-\frac32(\tau-\tau_0)}.
	\]
	By Cauchy's inequality, and since $3/2>1$, we deduce
	\[
	\|\varepsilon_1(\tau)\|_X
	\lesssim
	\|\varepsilon_1(\tau)\|_{L^2(\Omega,\rho dz)}
	=
	o(e^{-\tau}).
	\]
	For $\varepsilon_2$, Proposition~\ref{prop:ExteriorEvolutionContraction} and the fact that $\|f_2\|_X\to 0$ as $M\to \infty$ give
	\[
	\|\varepsilon_2(\tau)\|_X
	\le
	e^{-(\tau-\tau_0)}
	\|f_2\|_X \lesssim o_{M\to \infty}(1)e^{-\tau}.
	\]
	Combining the two inequalities for $\varepsilon_1 $ and $\varepsilon_2$, as $M\gg1$ is arbitrary, shows the first bound in \eqref{SecMomEst}.
	
	\smallskip
	
	\noindent\textbf{Step 2.} \emph{$L^\infty$ and local gradient decay via parabolic smoothing.}
	We next prove the $L^\infty$ estimate. Since $\varepsilon_{\mathrm{out}}(\tau_0)$, $b$ and $c$ are smooth and bounded, by maximum principle, we have $\| \varepsilon_{\mathrm{out}}(\tau)\|_{L^\infty(\Omega)}\lesssim 1$ for $\tau\in [\tau_0,\tau_0+1]$. For $\tau\geq \tau_0+1$, applying Duhamel's formula on $[\tau-1,\tau]$, and using the weighted $L^1$ decay \eqref{SecMomEst} together with the short-time $L^1\to L^\infty$ smoothing estimate for the exterior evolution (obtained by removing the drift term in $L_\infty$ through the standard self-similar change of variables and then applying Gaussian heat kernel bounds and the maximum principle), we obtain
	\[
	\|\varepsilon_{\mathrm{out}}(\tau)\|_{L^\infty(\Omega)}
	=
	o(e^{-\tau}).
	\]
	The local gradient estimate \eqref{PointEst} follows similarly by applying local interior parabolic estimates on
	$[\tau-\frac12,\tau]\times(\Omega\cap B_{R'})$. Since all coefficients are
	bounded and smooth, and the source term is now controlled in $L^\infty$ by the above inequality.
	
\end{proof}

\subsection{Decay of the remainder in the parabolic zone}\label{subsec:inner}

We begin by collecting some preliminary lemmas. We recall the weight obtained by gluing together the interior and exterior weighted $L^{2}$ structures. Let $\zeta_* > 0$ be fixed, and recall the definition \eqref{def:omeganuxi} of the matched weight
\begin{align*}
	\omega(z)
	=
	\tilde{\gamma}(z)\,\chi_{\zeta_*}(z)
	+
	\Bigl(1-\chi_{\zeta_*}(z)\Bigr)
	\frac{1}{8}|z|^{4}e^{\frac{|z|^{2}}{4}}.
\end{align*}
We also recall the definition of the matched scalar product
\begin{align}
	\langle f ,g  \rangle_*
	:=
	\int_{\mathbb{R}^2} f g\,\omega\,dz
	-
	\nu^{2}\int_{\mathbb{R}^2} \nabla \Phi_{f}\cdot \nabla \Phi_{g}\,
	\chi_{R\nu,\xi}(z)\,dz.
\end{align}
We now revisit the quantitative modulation estimates established in Section \ref{sec:roughconv}, taking advantage of the inner--outer decomposition \eqref{eq:inner_outer_decomp}.
\subsubsection{Energy Estimates: Preliminaries}
\begin{lemma}\label{lem:quantmodlemma}
	The following estimates hold:
	\begin{align*}
		|\xi_{\tau}| + |\nu_{\tau}|
		\lesssim
		\frac{\|\nabla \varepsilon_{\mathrm{in}}\|_{L^{2}_{\omega}} + o(e^{-\tau})}{\nu},
	\end{align*}
	and
	\begin{align*}
		\left|1 - \frac{\tilde{\gamma}}{\gamma}\right|
		\lesssim
		\sup_{0<h<1}\frac{1}{h}\int_{\tau-h}^\tau \|\nabla \varepsilon_{\mathrm{in}}(\sigma)\|_{L^{2}_{\omega}}d\sigma
		+ o(e^{-\tau})
		+ o(|z-\xi|).
	\end{align*}

\end{lemma}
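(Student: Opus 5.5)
The plan is to re-run the two earlier quantitative results, Lemma \ref{lem:modulation_bounds} and Lemma \ref{tildegamma-gamma_quant}, replacing the quantity $\mathcal M(\tau)$ of \eqref{def:M-strong} by $\|\nabla\varepsilon_{\mathrm{in}}\|_{L^2_\omega}+o(e^{-\tau})$. Lemma \ref{lem:modulation_bounds} already holds with $(\tilde w_1,\tilde w_2)$ replaced by $(\varepsilon_1,\varepsilon_2)=(\varepsilon\chi,\varepsilon(1-\chi))$. So the first estimate reduces to proving
\begin{equation*}
\Bigl(\int \varepsilon_1^2(\nu^2+|z-\xi|^2)\,dz\Bigr)^{1/2}+\nu\|\varepsilon_2\|_{L^1}+\nu^2\lesssim \|\nabla\varepsilon_{\mathrm{in}}\|_{L^2_\omega}+o(e^{-\tau}).
\end{equation*}
The conclusion for $|1-\tilde\gamma/\gamma|$ then follows verbatim from the proof of Lemma \ref{tildegamma-gamma_quant}, which only used the modulation bound and the slow variation bound of Lemma \ref{lemma:QuantSlowVariation}, both with the new $\mathcal M$.

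\textbf{Step 1.} Insert the decomposition \eqref{eq:inner_outer_decomp}. Since $\chi$ is supported in $|z|\le 2$ and $(1-\chi_2)$ vanishes for $|z|\le 2$, we get $\varepsilon_1=\varepsilon_{\mathrm{in}}\chi$.

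For the $L^2$ term I compare weights. On $|z|\lesssim 1$ one has $(\nu^2+|z-\xi|^2)\lesssim \gamma/(\nu^2+|z-\xi|^2)$, and $\gamma\approx\omega$ there by Lemma \ref{tildegamma-gamma} together with $\omega\approx\omega_\infty$ in the annulus. The global Hardy inequality, Lemma \ref{lem:global_hardy_matched}, then gives
\begin{equation*}
\int\varepsilon_1^2(\nu^2+|z-\xi|^2)\lesssim\int\frac{\varepsilon_{\mathrm{in}}^2}{\nu^2+|z-\xi|^2}\,\omega\lesssim\|\nabla\varepsilon_{\mathrm{in}}\|_{L^2_\omega}^2.
\end{equation*}

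\textbf{Step 2.} The quantity $\nu^2$ is $o(e^{-\tau})$ by \eqref{id:qualitative-cv-nu-xi-refined}.

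For $\varepsilon_2$, split $\varepsilon_2=\varepsilon_{\mathrm{in}}(1-\chi)+(1-\chi_2)\varepsilon_{\mathrm{out}}$.

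The outer piece satisfies $\|(1-\chi_2)\varepsilon_{\mathrm{out}}\|_{L^1}\le\int_\Omega|\varepsilon_{\mathrm{out}}||z|^2=o(e^{-\tau})$ by Proposition \ref{prop:OuterMomentPointwise}. Its hypotheses \eqref{eq:b_eps_small}--\eqref{eq:c_eps_small} are verified for $\tau_0$ large.

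For the inner piece, Cauchy--Schwarz with the Gaussian weight gives $\|\varepsilon_{\mathrm{in}}(1-\chi)\|_{L^1}\lesssim\|\varepsilon_{\mathrm{in}}\|_{L^2_\omega}$, since $\int_{|z|\ge1}\omega^{-1}<\infty$. Hardy again bounds this by $\|\nabla\varepsilon_{\mathrm{in}}\|_{L^2_\omega}$, because $\nu^2+|z-\xi|^2\lesssim \langle z\rangle^2$ and the exterior Hardy inequality controls $\int\varepsilon^2\omega$ on $|z|\ge1$. This needs $\varepsilon_{\mathrm{in}}\in L^2_\omega$, which holds by construction of the inner problem: the initial data are compactly supported and the forcing has Gaussian localization.

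\textbf{Main obstacle.} The delicate point is that the modulation proof of Lemma \ref{lem:modulation_bounds} was written for the decomposition with $U_{\nu,\xi}$, whereas $\varepsilon$ is defined relative to $\tilde U_{\nu,\xi}$. By \eqref{eq:eps_decomp}, $\varepsilon-\tilde w=U_{\nu,\xi}(1-\mathcal X_\star)-(\alpha-1)\hat U_{\nu,\xi}$. Its local weighted norm is $O(\nu^2)$ and its exterior $L^1$ norm is $O(\nu^2)$, both $o(e^{-\tau})$ by Lemma \ref{lem:alpha}, so these additions are absorbed.

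A second check is that the nonlinear remainder terms in $\mathcal R_\psi$ are bounded by $o(1)$ times the same quantities, using the pointwise bounds of Lemma \ref{selfsimilarqualieps}, rather than merely by $o(1)$ times a constant. I would verify this by redoing the projection estimate with $\varepsilon$ in Cauchy--Schwarz form: $|\nabla\Phi_\varepsilon|=o((\nu+|z-\xi|)^{-1})$, paired with $\varepsilon$ in the weighted $L^2$ norm.
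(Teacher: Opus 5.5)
Your proposal is correct and is essentially the paper's proof. Both reduce to Lemmas \ref{lem:modulation_bounds} and \ref{tildegamma-gamma_quant} and bound $\mathcal M$ using \eqref{eq:eps_decomp}, \eqref{eq:inner_outer_decomp}, Lemma \ref{lem:alpha}, the global Hardy inequality of Lemma \ref{lem:global_hardy_matched} and \eqref{SecMomEst}. The only difference is cosmetic: the paper writes $\tilde w_1,\tilde w_2$ directly in terms of $\varepsilon_{\mathrm{in}},\varepsilon_{\mathrm{out}}$ and the explicit $O(\nu^2)$ soliton corrections, whereas you bound $\varepsilon_1,\varepsilon_2$ and then compare with $\tilde w$, which is also what makes the $\tilde w$-based Lemma \ref{lemma:QuantSlowVariation} applicable.

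One justification should be tightened. For $\|(1-\chi)\varepsilon_{\mathrm{in}}\|_{L^1}$, keep the factor $\nu^2+|z-\xi|^2$ inside Cauchy--Schwarz, as the paper does. The complementary integral $\int_{|z|\ge 1}(\nu^2+|z-\xi|^2)\,\omega^{-1}dz$ is finite by the Gaussian, and Lemma \ref{lem:global_hardy_matched} then applies directly. As you wrote it, Lemma \ref{lem:exterior_hardy_weighted} only controls $\int u^2|z|^{-2}\omega_\infty$, not $\int_{|z|\ge1}\varepsilon_{\mathrm{in}}^2\omega$, and the inequality $\nu^2+|z-\xi|^2\lesssim\langle z\rangle^2$ points the wrong way for that purpose.
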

\begin{proof}
	Thanks to Lemmas \ref{lem:modulation_bounds} and \ref{tildegamma-gamma_quant}, the result follows directly once we will prove that
	\begin{align} \label{bd:technical-precision3}
		\mathcal{M}(\tau)
		&=
		\left(\int \tilde w_{1}^{2}(\nu^{2}+|z-\xi|^{2})dz\right)^{1/2}
		+ \|\tilde w_{2}\|_{L^{1}} + \nu^{2},\\
		\label{bd:technical-precision4} &\lesssim \| \nabla \varepsilon_{\mathrm{in}}\|_{L^2_\omega}+o(e^{-\tau}),
	\end{align}
	where we recall $\tilde w_1=\chi \tilde w$ and $\tilde w_2=(1-\chi)\tilde w$. Rewriting $\tilde w_1$ and $\tilde w_2$ in terms of $\varepsilon_{\mathrm{in}}$ and $\varepsilon_{\mathrm{out}}$, by combining \eqref{eq:eps_decomp} and \eqref{eq:inner_outer_decomp}, we see
	\begin{align*}
	& \tilde w_1= \chi U_{\nu,\xi}(\mathcal{X}_{\star}-1)+\chi (1-\alpha)\hat{U}_{\nu,\xi}+\chi\varepsilon_{\mathrm{in}},\\
	& \tilde w_2 = (1-\chi) U_{\nu,\xi}(\mathcal{X}_{\star}-1)+(1-\chi) (1-\alpha)\hat{U}_{\nu,\xi}+(1-\chi)\varepsilon_{\mathrm{in}}+(1-\chi_2)\varepsilon_{\mathrm{out}}.
	\end{align*}
	We first bound $\tilde w_1$. The first term is supported for $|z|\geq \zeta_*$ where $U_{\nu,\xi}\lesssim \nu^2|z|^{-4}= o(e^{-\tau})|z|^{-4}$ by \eqref{id:qualitative-cv-nu-xi-refined}. For the second we have $\int U_{\nu,\xi}^{2}(\nu^{2}+|z-\xi|^{2})dz\lesssim 1$ and $\alpha=1+O(\nu^2)=1+o(e^{-\tau})$ by Lemma \ref{lem:alpha}. Hence
	$$
	\int (|\chi U_{\nu,\xi}(\mathcal{X}_{\star}-1)|^2+|\chi (1-\alpha)\hat{U}_{\nu,\xi}|^2)(\nu^{2}+|z-\xi|^{2})dz=o(e^{-2\tau})
	$$
	For the third term, observing that $(\nu^{2}+|z-\xi|^{2}) \lesssim \frac{\omega}{\nu^{2}+|z-\xi|^{2}}$, applying the global Hardy-type inequality (Lemma \ref{lem:global_hardy_matched}) for the interior contribution shows
	$$
	\int |\chi \varepsilon_{\mathrm{in}}|^2(\nu^{2}+|z-\xi|^{2})dz\lesssim \| \nabla \varepsilon_{\mathrm{in}}\|^2_{L^2_\omega}.
	$$
	Combining the two inequalities above shows
	\begin{equation} \label{bd:technical-precision1}
	\left(\int \tilde w_{1}^{2}(\nu^{2}+|z-\xi|^{2})dz\right)^{1/2}\lesssim o(e^{-\tau})+\| \nabla \varepsilon_{\mathrm{in}}\|_{L^2_\omega}.
	\end{equation}
	We next bound $\tilde w_2$. By the aforementioned bounds on $U_{\nu,\xi}$ and $\alpha-1$ we have for the first two terms,
	$$
	\|(1-\chi) U_{\nu,\xi}(\mathcal{X}_{\star}-1)+(1-\chi) (1-\alpha)\hat{U}_{\nu,\xi}\|_{L^1}\lesssim \nu^2+|1-\alpha|=o(e^{-\tau}).
	$$
	For the third term, we use that $1-\chi$ is supported for $|z|\geq 1$, Cauchy-Schwarz and the Hardy-type inequality of Lemma \ref{lem:global_hardy_matched}. For the fourth, we simply apply \eqref{SecMomEst}. This shows
	$$
	\| (1-\chi)\varepsilon_{\mathrm{in}}+(1-\chi_2)\varepsilon_{\mathrm{out}}\|_{L^1}\lesssim \| \nabla \varepsilon_{\mathrm{in}}\|_{L^2_\omega}+o(e^{-\tau}).
	$$
	Combining these two inequalities shows
	\begin{equation} \label{bd:technical-precision2}
	\| \tilde w_1\|_{L^1}\lesssim \| \nabla \varepsilon_{\mathrm{in}}\|_{L^2_\omega}+o(e^{-\tau}).
	\end{equation}
	Injecting \eqref{bd:technical-precision1}, \eqref{bd:technical-precision2} and $\nu =o(e^{-\tau})$ (by \eqref{id:qualitative-cv-nu-xi-refined}) in \eqref{bd:technical-precision3} shows the desired inequality \eqref{bd:technical-precision4}.
	
\end{proof}

For the second lemma it is convenient to introduce the notation
\begin{equation}\label{eq:F1}
	F^{(1)}_{\nu,\xi}
	:=
	A_{\nu,\xi}^{[0]}
	+
	B_{\nu,\xi}^{[0]}
	+
	B_{\nu,\xi}^{[1]}.
\end{equation}
where we recall the definitions \eqref{eq:A0}, \eqref{eq:B0} and \eqref{eq:B1}.
\begin{lemma}\label{lem:energyestpre}
	For any $\delta>0$ one has
	\begin{align*}
		|\langle B[\varepsilon_{\mathrm{out}}], \varepsilon_{\mathrm{in}}  \rangle_*|
		&\lesssim o(e^{-2\tau})
		+
		\delta \|\nabla \varepsilon_{\mathrm{in}}\|^{2}_{L^{2}_{\omega}},
		\\
		|\langle F^{(1)}_{\nu,\xi},\varepsilon_{\mathrm{in}} \rangle_*|
		&\lesssim o(e^{-2\tau})+
		\delta \|\nabla \varepsilon_{\mathrm{in}}\|^{2}_{L^{2}_{\omega}},
		\\
		|\langle A[\varepsilon_{\mathrm{in}}], \varepsilon_{\mathrm{in}} \rangle_*|
		&\lesssim
		\delta \|\nabla \varepsilon_{\mathrm{in}}\|^{2}_{L^{2}_{\omega}}.
	\end{align*}
\end{lemma}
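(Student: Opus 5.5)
The three bounds share one strategy. We write each pairing $\langle G,\varepsilon_{\mathrm{in}}\rangle_*$ as a local part $\int G\,\varepsilon_{\mathrm{in}}\,\omega\,dz$ plus a Poisson part $\nu^2\int \nabla\Phi_G\cdot\nabla\Phi_{\varepsilon_{\mathrm{in}}}\chi_{R\nu,\xi}\,dz$. Each part is then estimated by Cauchy--Schwarz with the weight adapted to the support of $G$. Finally, $\varepsilon_{\mathrm{in}}$ is controlled by $\|\nabla\varepsilon_{\mathrm{in}}\|_{L^2_\omega}$ through the global Hardy inequality (Lemma \ref{lem:global_hardy_matched}), and the Poisson field of $\varepsilon_{\mathrm{in}}$ near the core through Lemma \ref{lem:weightedL2poisson_inner}. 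Young's inequality $ab\le \delta a^2+C_\delta b^2$ then produces the $\delta\|\nabla \varepsilon_{\mathrm{in}}\|^2_{L^2_\omega}$ term, while the source contributes $C_\delta\|G\|^2$ in a norm that we show is $o(e^{-2\tau})$.

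For $B[\varepsilon_{\mathrm{out}}]$ we use the definition \eqref{eq:Bout_def}. All terms except the last are supported in the annulus $\{2\le|z|\le4\}$, where $\omega\approx 1$ and $\nu^2+|z-\xi|^2\approx1$. On this annulus, \eqref{SecMomEst} and \eqref{PointEst} give $|\varepsilon_{\mathrm{out}}|+|\nabla\varepsilon_{\mathrm{out}}|=o(e^{-\tau})$, and $\nabla\Phi_{\tilde U_{\nu,\xi}}$ is bounded there, so the local part is $o(e^{-\tau})\|\varepsilon_{\mathrm{in}}/|z|\|_{L^2_\omega}$. The nonlocal term $\nabla\tilde U_{\nu,\xi}\cdot\nabla\Phi_{(1-\chi_2)\varepsilon_{\mathrm{out}}}$ is concentrated near the core. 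There $|\nabla\Phi_{(1-\chi_2)\varepsilon_{\mathrm{out}}}|\lesssim\|\varepsilon_{\mathrm{out}}\|_{L^1}=o(e^{-\tau})$ because the supports are separated. Since $|\nabla\tilde U|^2\omega(\nu^2+|z-\xi|^2)$ is integrable uniformly after rescaling, Hardy closes this term as well. For the Poisson parts we use $\|\nabla\Phi_G\|_{L^\infty(B_{2R\nu}(\xi))}=o(e^{-\tau})$ when $G$ is supported away from the core. For the core-supported term, the bound $\|\nabla \Phi\|_{L^\infty}^2\lesssim\|\cdot\|_{L^1}\|\cdot\|_{L^\infty}$ from \eqref{bd:Poisson-field-Linfty} suffices.

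For $F^{(1)}_{\nu,\xi}$, the term $A^{[0]}_{\nu,\xi}$ carries the prefactor $|1-\alpha|\lesssim\nu^2=o(e^{-\tau})$ by Lemma \ref{lem:alpha}. It is of the form $(1-\alpha)\nabla\cdot(U\nabla\Phi_U)$-type expressions localized at scale $\nu$. Computing $\int |A^{[0]}|^2(\nu^2+|z-\xi|^2)\omega\,dz$ by rescaling $z=\xi+\nu y$, this is $|1-\alpha|^2\nu^{-4}$ times a finite constant, hence $O(\nu^{4}\nu^{-4})\cdot O(\nu^{0})$. The powers must be checked carefully; I expect $\nu^2\cdot O(1)=o(e^{-\tau})$ squared. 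The $B$ terms are supported in $\{\zeta_*\le|z|\le2\zeta_*\}$, where $U_{\nu,\xi}\lesssim\nu^2$, $\nabla\Phi_{U_{\nu,\xi}}=O(1)$ and, by \eqref{errorsmatched:eq2_new}, $|\nabla\mathcal R_{\nu,\xi}|\lesssim\nu^2$. They are therefore $O(\nu^2)=o(e^{-\tau})$ in $L^\infty$ on a fixed annulus, and $B^{[1]}$ even has an extra factor $|\xi|$. Their Poisson fields are $O(\nu^2)$ near the core. The main obstacle is the $A^{[0]}$ term, whose Poisson pairing near the core must not lose powers of $\nu$. I would use that $\nabla U\cdot\nabla\Phi_U-U^2=\nabla\cdot(U\nabla\Phi_U)$ is a divergence, so that $\Phi_{A^{[0]}}$ is explicitly $(1-\alpha)$ times a term like $U\nabla\Phi_U$ plus a truncation error. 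This gives $\nu^2|\nabla\Phi_{A^{[0]}}|\lesssim|1-\alpha|\,\nu^{-1}$ on $B_{2R\nu}$, which paired with Lemma \ref{lem:weightedL2poisson_inner} gives $o(e^{-\tau})\|\nabla\varepsilon_{\mathrm{in}}\|_{L^2_\omega}$.

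For $A[\varepsilon_{\mathrm{in}}]=(1-\alpha)(\cdots)$, the bracket is exactly the non-diffusive part of $L_{\nu,\xi}$ applied to $\varepsilon_{\mathrm{in}}$. The estimates in the proof of Proposition \ref{prop:global_matched_coercivity} bound its pairing against $\varepsilon_{\mathrm{in}}$ in $\langle\cdot,\cdot\rangle_*$ by $C\nu^{-2}\|\nabla\varepsilon_{\mathrm{in}}\|^2_{L^2_\omega}$; the worst weight is $\hat U\sim\nu^{-2}$ against $\varepsilon^2\omega$, controlled by Hardy. If instead it is $C\|\nabla\varepsilon_{\mathrm{in}}\|^2$, that is even better. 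With $|1-\alpha|\lesssim\nu^2$ this is $O(\nu^2)\cdot\nu^{-2}$, so I must verify that the factor is $o(1)$. In any case, after integration by parts the inner Hardy inequality of Lemma \ref{lem:hardy_rescaled_weighted} gives $\int\hat U\varepsilon^2\gamma\lesssim\int\varepsilon^2\gamma/(\nu^2+|z-\xi|^2)\lesssim\|\nabla\varepsilon\|^2_{L^2_\gamma}$ with no $\nu^{-2}$ loss, since $\hat U\lesssim(\nu^2+|z-\xi|^2)^{-1}$. Hence the total contribution is $O(\nu^2)\|\nabla\varepsilon_{\mathrm{in}}\|^2_{L^2_\omega}\le\delta\|\nabla\varepsilon_{\mathrm{in}}\|^2_{L^2_\omega}$ for $\tau$ large.
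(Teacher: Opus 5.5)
Your scheme matches the paper's. You split $\langle G,\varepsilon_{\mathrm{in}}\rangle_*$ into its $L^2_\omega$ part and its Poisson part, use Cauchy--Schwarz against $\omega/(\nu^2+|z-\xi|^2)$ with the global Hardy inequality and Lemma~\ref{lem:weightedL2poisson_inner} near the core, and finish with Young. Your treatment of $B[\varepsilon_{\mathrm{out}}]$ is fine. Two steps, however, do not hold as written.

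\emph{The $B$-terms in $F^{(1)}_{\nu,\xi}$.} You take all $B$-terms to be supported in $\{\zeta_*\le|z|\le 2\zeta_*\}$, with sup norm and core Poisson field $O(\nu^2)$. This fails for $\alpha\nabla\tilde U_{\nu,\xi}\cdot\nabla\mathcal R_{\nu,\xi}$ in \eqref{eq:B0}.
\begin{itemize}
\item The field $\nabla\mathcal R_{\nu,\xi}=\nabla\Phi_{U_{\nu,\xi}-\hat U_{\nu,\xi}}$ is nonlocal and does not vanish at the core in general; \eqref{errorsmatched:eq2_new} only bounds it by $\nu^2$ there.
\item Meanwhile $|\nabla\tilde U_{\nu,\xi}|\sim\nu^{-3}$ at $|z-\xi|\sim\nu$, so the term lives at scale $\nu$ around $\xi$. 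The available bounds give only $O(\nu^{-1})$ for its sup norm and $O(1)$ for its Poisson field near $\xi$.
\item It must be treated like the nonlocal term of $B[\varepsilon_{\mathrm{out}}]$; this is what the core profile $\nu^2(\nu^2+|z-\xi|^2)^{-3}$ in \eqref{eq:pointwiseF} records. One gets $\int|\nabla\tilde U_{\nu,\xi}|^2|\nabla\mathcal R_{\nu,\xi}|^2(\nu^2+|z-\xi|^2)\omega\,dz\lesssim\nu^6$, and the Poisson part is $\lesssim_R\nu^3\|\nabla\varepsilon_{\mathrm{in}}\|_{L^2_\omega}$.
\end{itemize}
The other $B$-terms are not compactly supported either. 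They live on all of $\{|z|\ge\zeta_*\}$, and are square integrable against $\omega\sim|z|^4e^{|z|^2/4}$ only because of the factor $e^{-|z|^2/4}$ carried by $\mathcal{X}_\star$ and its derivatives.

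Your power count for $A^{[0]}_{\nu,\xi}=\alpha(1-\alpha)\Delta U_{\nu,\xi}\mathcal{X}_\star$ is also off. With $z=\xi+\nu y$ one gets $\int|A^{[0]}_{\nu,\xi}|^2(\nu^2+|z-\xi|^2)\omega\,dz\lesssim|1-\alpha|^2\int(\Delta U)^2\langle y\rangle^6dy\lesssim\nu^4$. There is no factor $\nu^{-4}$; with one the bound would not close.

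\emph{The term $A[\varepsilon_{\mathrm{in}}]$.} Since $|1-\alpha|$ is only $O(\nu^2)$, you need the bracket $\nabla\cdot(\varepsilon_{\mathrm{in}}\nabla\Phi_{\hat U_{\nu,\xi}})+\nabla\cdot(\hat U_{\nu,\xi}\nabla\Phi_{\varepsilon_{\mathrm{in}}})$ to pair with $\varepsilon_{\mathrm{in}}$ at cost $O(\|\nabla\varepsilon_{\mathrm{in}}\|_{L^2_\omega}^2)$, with no $\nu^{-2}$ loss. A loss would leave an $O(1)\|\nabla\varepsilon_{\mathrm{in}}\|^2_{L^2_\omega}$ term that is not $\delta$-small.
\begin{itemize}
\item You check this only for the zeroth-order term $\hat U_{\nu,\xi}\varepsilon_{\mathrm{in}}^2\omega$.
\item The proof of Proposition~\ref{prop:global_matched_coercivity} does not supply the rest. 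In the core it controls only the full operator $L_0$, through Proposition~\ref{InnCoercivity1}, never its non-diffusive part separately.
\item The critical term is $\int\nabla\hat U_{\nu,\xi}\cdot\nabla\Phi_{\varepsilon_{\mathrm{in}}}\,\varepsilon_{\mathrm{in}}\,\omega$, where $|\nabla\hat U_{\nu,\xi}|\sim\nu^{-3}$.
\end{itemize}
The missing step goes as follows. Cauchy--Schwarz against $\omega/(\nu^2+|z-\xi|^2)$ leaves the weight $|\nabla\hat U_{\nu,\xi}|^2(\nu^2+|z-\xi|^2)\omega\lesssim W((z-\xi)/\nu)$ with $W(y)=|y|^2\langle y\rangle^{-6}$, which is scale invariant. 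Lemma~\ref{lem:weightedL2poisson_inner} with exponent $A=2$ then gives
\[
\int|\nabla\Phi_{\varepsilon_{\mathrm{in}}}|^2W\lesssim\int\varepsilon_{\mathrm{in}}^2(\nu^2+|z-\xi|^2)\lesssim\int\varepsilon_{\mathrm{in}}^2\,\frac{\omega}{\nu^2+|z-\xi|^2}\lesssim\|\nabla\varepsilon_{\mathrm{in}}\|^2_{L^2_\omega}.
\]
Use Lemma~\ref{lem:global_hardy_matched} here rather than the $\gamma$-weighted inner Hardy inequality, since $\varepsilon_{\mathrm{in}}$ is not supported in the core.

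You must also bound the Poisson part of $\langle A[\varepsilon_{\mathrm{in}}],\varepsilon_{\mathrm{in}}\rangle_*$ separately. For instance, $\int_{B_{2R\nu}(\xi)}|\nabla\Phi_{A[\varepsilon_{\mathrm{in}}]}|^2\lesssim|1-\alpha|^2\nu^{-4}\|\nabla\varepsilon_{\mathrm{in}}\|^2_{L^2_\omega}$, using $\omega\gtrsim\nu^4$, so that the prefactor $\nu^2$ compensates exactly.

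With these corrections your argument coincides with the paper's.
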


\begin{proof}

	We decompose for $f=B[\varepsilon_{\mathrm{out}}],F^{(1)}_{\nu,\xi},A[\varepsilon_{\mathrm{in}}]$,
	$$
	\langle f,\varepsilon_{\mathrm{in}}\rangle_*= \langle f,\varepsilon_{\mathrm{in}}\rangle_{L^2_\omega}-\nu^2 \int \nabla \Phi_{f}\cdot \nabla \Phi_{\varepsilon_{\mathrm{in}}}\chi\left(\frac{z-\xi}{R\nu}\right)dz.
	$$
	We will estimate the two terms separately, so that the desired estimate will follow from \eqref{bd:energy-estimate-technical1}, \eqref{bd:energy-estimate-technical2}, \eqref{bd:energy-estimate-technical3} and \eqref{bd:energy-estimate-technical4} we prove below.
	
	\smallskip
	
	\noindent\textbf{Step 1.} \emph{Pointwise bounds.}
	We recall
	\begin{align*}
		B[\varepsilon_{\mathrm{out}}]
		:=
		&-2\nabla\chi_2\cdot\nabla\varepsilon_{\mathrm{out}}
		-
		\varepsilon_{\mathrm{out}}\Delta\chi_2
		-
		\frac12(z\cdot\nabla\chi_2)\varepsilon_{\mathrm{out}}
		+
		\varepsilon_{\mathrm{out}}\nabla\chi_2\cdot\nabla\Phi_{\tilde U_{\nu,\xi}}
		-
		\nabla\tilde U_{\nu,\xi}\cdot
		\nabla\Phi_{(1-\chi_2)\varepsilon_{\mathrm{out}}}.
	\end{align*}
	Using Proposition~\ref{prop:OuterMomentPointwise}, all local terms are easily bounded. For the nonlocal term we use
	\[
	|\nabla \Phi_{(1-\chi_2)\varepsilon_{\mathrm{out}}}(z)|
	\lesssim
	\int_{\R^2}\frac{|(1-\chi_2(y))\varepsilon_{\mathrm{out}}(y)|}{|z-y|}\,dy,
	\]
	and split into $|z-y|\le 1$ and $|z-y|\ge 1$, using the decay of $\varepsilon_{\mathrm{out}}$ to obtain
	\[
	|\nabla \Phi_{(1-\chi_2)\varepsilon_{\mathrm{out}}}|
	\lesssim o(e^{-\tau}).
	\]
	Altogether,
	\begin{align}\label{eq:pointwiseB}
		|B[\varepsilon_{\mathrm{out}}](z)|
		\lesssim
		o(e^{-\tau})
		\Bigl[
		\frac{\nu^{2}}{(\nu^{2}+|z-\xi|^{2})^{5/2}}+1
		\Bigr]
		\langle z \rangle^2 e^{-|z|^2/4}.
	\end{align}
	Similarly, using the explicit expressions of $A_{\nu,\xi}^{[0]},B_{\nu,\xi}^{[0]},B_{\nu,\xi}^{[1]}$ and Lemma~\ref{lem:matched_errors_global}, we obtain
	\begin{align}\label{eq:pointwiseF}
		|F_{\nu,\xi}^{(1)}(z)|
		\lesssim_{\zeta_*}
		\begin{cases}
			o(e^{-\tau})\dfrac{\nu^{2}}{(\nu^{2}+|z-\xi|^{2})^{3}}, & |z|\le \frac{\zeta_*}{2},\\
			o(e^{-\tau}) e^{-\frac{|z|^{2}}{4}}, & |z|>\frac{\zeta_*}{2}.
		\end{cases}
	\end{align}
	\textbf{Step 2.} \emph{Weighted $L^2$ control.}
	Using Cauchy--Schwarz and Lemma~\ref{lem:global_hardy_matched}, we first estimate the local part
	\begin{align}
		\nonumber |\langle F_{\nu,\xi}^{(1)}+B[\varepsilon_{\mathrm{out}}],\varepsilon_{\mathrm{in}}\rangle_{L^2_{\omega}}|
		&\lesssim
		o(e^{-\tau})\nu^{2}
		\Bigl(
		\int_{\R^{2}}\frac{dz}{(\nu^{2}+|z-\xi|^{2})^{3}}
		\Bigr)^{1/2}
		\|\nabla \varepsilon_{\mathrm{in}}\|_{L^{2}_{\omega}}
		\\
		\label{bd:energy-estimate-technical1}&\lesssim
		o(e^{-\tau})
		\|\nabla \varepsilon_{\mathrm{in}}\|_{L^{2}_{\omega}},
	\end{align}
	where we used that $\int (\nu^{2}+|z-\xi|^{2})^{-3}dz \lesssim \nu^{-4}$. \newline 
	We now estimate the contribution of $A[\varepsilon_{\mathrm{in}}]$. Expanding the divergence terms,
	\begin{align*}
		A[\varepsilon_{\mathrm{in}}]
		=
		(1-\alpha)
		\Bigl[
		\nabla \cdot(\varepsilon_{\mathrm{in}}\nabla \Phi_{\hat U_{\nu,\xi}})
		+
		\nabla \cdot(\hat U_{\nu,\xi}\nabla \Phi_{\varepsilon_{\mathrm{in}}})
		\Bigr],
	\end{align*}
	and using $(1-\alpha)=o(e^{-\tau})$, we obtain
	\begin{align*}
		|\langle A[\varepsilon_{\mathrm{in}}], \varepsilon_{\mathrm{in}}\rangle_{L^2_{\omega}}|
		\lesssim
		o(e^{-\tau})
		\int |\nabla \hat{U}_{\nu,\xi}|
		|\nabla \Phi_{\varepsilon_{\mathrm{in}}}|
		|\varepsilon_{\mathrm{in}}|
		\omega dz
		+
		o(e^{-\tau})\|\nabla \varepsilon_{\mathrm{in}}\|^{2}_{L^2_{\omega}}.
	\end{align*}
	By Cauchy--Schwarz, decomposing the potential into inner and outer contributions, and using Lemma~\ref{lem:global_hardy_matched} together with Lemmas~\ref{lem:weightedL2poisson_inner} and \ref{lem:weightedL2poisson_farfield}, we infer
	\begin{align*}
		\int |\nabla \hat{U}_{\nu,\xi}|
		|\nabla \Phi_{\varepsilon_{\mathrm{in}}}|
		|\varepsilon_{\mathrm{in}}|
		\omega dz
		\lesssim
		\|\nabla \varepsilon_{\mathrm{in}}\|^{2}_{L^2_{\omega}},
	\end{align*}
	which yields
	\begin{equation} \label{bd:energy-estimate-technical2}
	|\langle A[\varepsilon_{\mathrm{in}}], \varepsilon_{\mathrm{in}}\rangle_{L^2_{\omega}}|\lesssim e^{-\tau}\|\nabla \varepsilon_{\mathrm{in}}\|^{2}_{L^2_{\omega}}.
	\end{equation}
	\newline
	\textbf{Step 3.} \emph{Potential part.}
	We estimate
	\begin{align*}
		\nu^{2}\left|\int \nabla \Phi_{f}\cdot \nabla \Phi_{\varepsilon_{\mathrm{in}}}
		\chi\!\left(\frac{z-\xi}{R\nu}\right)dz \right|
		\lesssim_{R}
		\nu^{2}
		\|\nabla \varepsilon_{\mathrm{in}}\|_{L^{2}_{\omega}}
		\Bigl(
		\int |\nabla \Phi_{f}|^{2}\chi\!\left(\frac{z-\xi}{R\nu}\right)dz
		\Bigr)^{1/2}.
	\end{align*}
	Using \eqref{eq:pointwiseB}--\eqref{eq:pointwiseF}, an inner--outer decomposition and Lemma~\ref{lem:weightedL2poisson_inner}, we obtain
	\[
	\int |\nabla \Phi_{f}|^{2}\chi\!\left(\frac{z-\xi}{R\nu}\right)dz
	\lesssim \nu^{-4}o(e^{-2\tau}),
	\]
	for $f=F^{(1)}_{\nu,\xi}, B[\varepsilon_{\mathrm{out}}]$, which yields
	\begin{equation} \label{bd:energy-estimate-technical3}
	\left|\nu^{2}\int \nabla \Phi_{F^{(1)}_{\nu,\xi}}\cdot \nabla \Phi_{\varepsilon_{\mathrm{in}}}\chi\!\left(\frac{z-\xi}{R\nu}\right)dz\right|+\left|\nu^{2}\int \nabla \Phi_{B[\varepsilon_{\mathrm{out}}]}\cdot \nabla \Phi_{\varepsilon_{\mathrm{in}}}\chi\!\left(\frac{z-\xi}{R\nu}\right)dz\right| \lesssim o(e^{-\tau})\|\nabla \varepsilon_{\mathrm{in}}\|_{L^{2}_{\omega}}.
	\end{equation}
	\newline 
	For $f=A[\varepsilon_{\mathrm{in}}]$, Lemma~\ref{lem:weightedL2poisson_inner} and $1-\alpha=o(e^{-\tau})$ give
    \begin{align}
    	\nonumber \int |\nabla \Phi_{A[\varepsilon_{\mathrm{in}}]}|^{2}\chi(\frac{z-\xi}{R\nu})dz &\lesssim_{R}\int |A[\varepsilon_{\mathrm{in}}]|^{2}(\nu^{2}+|z-\xi|^{2})dz\\
    	\label{bd:energy-estimate-technical4}&\lesssim e^{-\tau}\int |\nabla \varepsilon_{\mathrm{in}}|^{2}+\int |\nabla \hat{U}_{\nu,\xi}|^{2}|\nabla \Phi_{\varepsilon_{\mathrm{in}}}|^{2}(\nu^{2}+|z-\xi|^{2})dz+\int |\varepsilon_{\mathrm{in}}|^{2}U_{\nu,\xi}^{2}(\nu^{2}+|z-\xi|^{2})dz 
    \end{align}
	and using $\omega\gtrsim \nu^{4}$, we conclude that this contribution is controlled by
	$\delta \|\nabla \varepsilon_{\mathrm{in}}\|^{2}_{L^{2}_{\omega}}$. Collecting all contributions concludes the proof.
	
\end{proof}

The last two lemmas we introduce will play a crucial role in closing the energy estimates. Let us first introduce the following notation:
\begin{align*}
	\beta_{\nu,\xi} := \frac{\omega}{\gamma} \mathcal{X}_{\star}.
\end{align*}
In the following lemma we collect all the properties we need for this weight.

\begin{lemma}\label{lem:propbeta}
	Let $\tau$ be sufficiently large. Then we have
	\begin{align*}
		|\beta_{\nu,\xi} - 1| \lesssim |\mathfrak{m}_{\star} - 1| + \left|\frac{\tilde{\gamma}}{\gamma} - 1\right| \chi_{\zeta_*}(z) + o(1)\bigl(1 - \chi_{\zeta_*}(z)\bigr).
	\end{align*}
	In particular, $|\beta_{\nu,\xi}| \lesssim 1$, and if $|z - \xi| \le 2R\nu$ then $|\beta_{\nu,\xi} - 1| = o(1)$.
\end{lemma}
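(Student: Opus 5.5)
The plan is to compute $\beta_{\nu,\xi}=\omega\mathcal X_\star/\gamma$ separately in the three regions determined by the cut-off $\chi_{\zeta_*}$, inserting the definition \eqref{def:omeganuxi} of $\omega$. Write
$$
\beta_{\nu,\xi}=\chi_{\zeta_*}\frac{\tilde\gamma}{\gamma}\mathcal X_\star+(1-\chi_{\zeta_*})\frac{|z|^4e^{|z|^2/4}}{(\nu^2+|z-\xi|^2)^2}\mathcal X_\star ,
$$
using $\gamma=(\nu^2+|z-\xi|^2)^2/8$. (I read $\mathfrak m_\star$ in the statement as the function $\mathfrak w_\star$ of Lemma \ref{lem:propmathfrakm}.)

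\emph{First term.} Split
$$
\frac{\tilde\gamma}{\gamma}\mathcal X_\star-\mathcal X_\star=\Bigl(\frac{\tilde\gamma}{\gamma}-1\Bigr)\mathcal X_\star .
$$
Since $0\le\mathcal X_\star\le1$, this difference is bounded by $|\tilde\gamma/\gamma-1|\chi_{\zeta_*}$.

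\emph{Second term.} On the support of $1-\chi_{\zeta_*}$ we have $|z|\ge\zeta_*$, while $\nu,\xi=o(1)$ by \eqref{id:qualitative-cv-nu-xi-refined}. Hence $|z|^4/(\nu^2+|z-\xi|^2)^2=1+o_{\zeta_*}(1)$ uniformly there. The second term is therefore $(1+o(1))(1-\chi_{\zeta_*})e^{|z|^2/4}\mathcal X_\star$.

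\emph{Combining.} Collecting both pieces,
$$
\beta_{\nu,\xi}=\mathfrak w_\star+\chi_{\zeta_*}\Bigl(\frac{\tilde\gamma}{\gamma}-1\Bigr)\mathcal X_\star+o(1)(1-\chi_{\zeta_*})e^{|z|^2/4}\mathcal X_\star .
$$
Subtracting $1$ gives the claimed bound, once we check that $e^{|z|^2/4}\mathcal X_\star$ is bounded on $\{|z|\ge\zeta_*\}$. This holds because $\mathcal X_\star=e^{-|z|^2/4}$ for $|z|\ge2\zeta_*$, and $e^{|z|^2/4}\le e^{\zeta_*^2}$ on the transition annulus.

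\emph{Consequences.} The uniform bound $|\beta_{\nu,\xi}|\lesssim1$ then follows from three facts:
\begin{itemize}
\item $|\mathfrak w_\star-1|\lesssim\zeta_*^2$ by Lemma \ref{lem:propmathfrakm};
\item $|\tilde\gamma/\gamma-1|=o(1)$ by Lemma \ref{tildegamma-gamma} on $|z-\xi|\le1$, which contains the support of $\chi_{\zeta_*}$ for large $\tau$;
\item the third term is $o(1)$.
\end{itemize}
For $|z-\xi|\le2R\nu$ we have $|z|\le2R\nu+|\xi|\ll\zeta_*$, so $\chi_{\zeta_*}=1$, $\mathcal X_\star=1$ and $\mathfrak w_\star=1$. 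Thus $\beta_{\nu,\xi}-1=\tilde\gamma/\gamma-1=o(1)$. In fact $\tilde\gamma=\gamma$ exactly when $|z-\xi|\le\nu$, and in general Lemma \ref{tildegamma-gamma} applies.

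The only point needing care is the uniformity of $|z|^4/(\nu^2+|z-\xi|^2)^2\to1$ on the unbounded set $\{|z|\ge\zeta_*\}$. It is handled by writing $|z-\xi|=|z|(1+O(|\xi|/\zeta_*))$, so the $o(1)$ depends on $\zeta_*$, which is fixed.
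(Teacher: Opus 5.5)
Your proof is correct and follows the paper's argument. You use the same splitting $\beta_{\nu,\xi}=\mathfrak w_\star+\chi_{\zeta_*}\bigl(\tfrac{\tilde\gamma}{\gamma}-1\bigr)\mathcal X_\star+(1-\chi_{\zeta_*})\bigl(\tfrac{|z|^4}{8\gamma}-1\bigr)e^{|z|^2/4}\mathcal X_\star$, with the last bracket bounded uniformly on $\{|z|\ge\zeta_*\}$ by $O\bigl(|\xi|/\zeta_*+(\nu^2+|\xi|^2)/\zeta_*^2\bigr)$. You also verify explicitly that $e^{|z|^2/4}\mathcal X_\star$ is bounded there, which the paper leaves implicit.
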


\begin{proof}
	By expanding the definition we obtain
	\begin{align*}
		\beta_{\nu,\xi}
		= \Bigl[\chi_{\zeta_*}(z) + \bigl(1 - \chi_{\zeta_*}(z)\bigr)e^{-\frac{|z|^{2}}{4}}\Bigr]
		\Bigl[\frac{\tilde{\gamma}}{\gamma} \chi_{\zeta_*}(z)
		+ \frac{1}{8}\frac{|z|^{4} e^{\frac{|z|^{2}}{4}}}{\gamma} \bigl(1 - \chi_{\zeta_*}(z)\bigr)\Bigr].
	\end{align*}
	The uniform boundedness is immediate. Moreover, if $|z - \xi| \le 2R\nu$, thanks to Lemma \ref{tildegamma-gamma} we have $|\beta_{\nu,\xi} - 1| = o(1)$.
	
	Now we simply observe that
	\begin{align*}
		\beta_{\nu,\xi}
		=& \Bigl[\chi_{\zeta_*}(z) + \bigl(1 - \chi_{\zeta_*}(z)\bigr)e^{-\frac{|z|^{2}}{4}}\Bigr]
		\Bigl[\frac{\tilde{\gamma}}{\gamma} \chi_{\zeta_*}(z)
		+ \frac{1}{8}\frac{|z|^{4} e^{\frac{|z|^{2}}{4}}}{\gamma} \bigl(1 - \chi_{\zeta_*}(z)\bigr)\Bigr] \\
		=& \Bigl[\chi_{\zeta_*}(z) + \bigl(1 - \chi_{\zeta_*}(z)\bigr)e^{-\frac{|z|^{2}}{4}}\Bigr]
		\Bigl[\chi_{\zeta_*}(z) + e^{\frac{|z|^{2}}{4}} \bigl(1 - \chi_{\zeta_*}(z)\bigr)\Bigr] \\
		& + \Bigl[\chi_{\zeta_*}(z) + \bigl(1 - \chi_{\zeta_*}(z)\bigr)e^{-\frac{|z|^{2}}{4}}\Bigr]  \Bigl[\left(\frac{\tilde{\gamma}}{\gamma} - 1\right)\chi_{\zeta_*}(z)
		+ \left(\frac{1}{8}\frac{|z|^{4}}{\gamma} - 1\right)e^{\frac{|z|^{2}}{4}} \bigl(1 - \chi_{\zeta_*}(z)\bigr)\Bigr].
	\end{align*}
	Observing that if $|z| \ge \zeta_*$ we have by \eqref{id:def-tilde-gamma},
	\begin{align*}
		\left|\frac{1}{8}\frac{|z|^{4}}{\gamma} - 1\right|
		\lesssim \left(\frac{|\xi|}{\zeta_*} + \frac{\nu^{2} + |\xi|^{2}}{\zeta_*^{2}}\right) = o(1),
	\end{align*}
	and recalling the definition of $\mathfrak{m}_{\star}$ introduced in Lemma \ref{lem:propmathfrakm}, we conclude the proof.
\end{proof}
In the next lemma we exploit integration by parts and show that all the resulting error terms can be absorbed.

\begin{lemma}\label{lem:energyintparts}
	Let $\tau$ be sufficiently large. The following estimates hold:
	\begin{align}\label{eq:energy_ibp_lambda}
		\Bigl|\int \nabla \Phi_{\Lambda_{\xi}U_{\nu,\xi}\mathcal{X}_{\star}}\cdot \nabla \Phi_{\varepsilon_{\mathrm{in}}}\,\chi_{R\nu,\xi}(z)\,dz
		- \int \Phi_{\Lambda_{\xi}U_{\nu,\xi}}\,\varepsilon_{\mathrm{in}}\,\beta_{\nu,\xi}\,dz\Bigr|
		\lesssim \frac{\sqrt{\ln R}}{R^{2}}\|\nabla \varepsilon_{\mathrm{in}}\|_{L^{2}_{\omega}},
	\end{align}
	\begin{align}\label{eq:energy_ibp_translation}
		\Bigl|\int \nabla \Phi_{\partial_{z_{i}}U_{\nu,\xi}\mathcal{X}_{\star}}\cdot \nabla \Phi_{\varepsilon_{\mathrm{in}}}\,\chi_{R\nu,\xi}(z)\,dz
		- \int \Phi_{\partial_{z_{i}}U_{\nu,\xi}}\,\varepsilon_{\mathrm{in}}\,\beta_{\nu,\xi}\,dz\Bigr|
		\lesssim \frac{\sqrt{\ln R}}{\nu R}\|\nabla \varepsilon_{\mathrm{in}}\|_{L^{2}_{\omega}},
	\end{align}
	\begin{align}\label{eq:energy_ibp_U}
		\Bigl|\int \nabla \Phi_{U_{\nu,\xi}\mathcal{X}_{\star}}\cdot \nabla \Phi_{\varepsilon_{\mathrm{in}}}\,\chi_{R\nu,\xi}(z)\,dz\Bigr|
		\lesssim \sqrt{\ln R}\|\nabla \varepsilon_{\mathrm{in}}\|_{L^{2}_{\omega}}.
	\end{align}
\end{lemma}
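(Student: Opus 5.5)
The plan is to integrate by parts once in each of the three estimates, so that the Poisson field of the smooth test profile is moved onto $\varepsilon_{\mathrm{in}}$ via $-\Delta\Phi_f=f$. For a profile $f\in\{\Lambda_\xi U_{\nu,\xi}\mathcal X_\star,\ \partial_{z_i}U_{\nu,\xi}\mathcal X_\star,\ U_{\nu,\xi}\mathcal X_\star\}$ we write
\[
\int \nabla\Phi_f\cdot\nabla\Phi_{\varepsilon_{\mathrm{in}}}\chi_{R\nu,\xi}\,dz=\int \Phi_f\,\varepsilon_{\mathrm{in}}\,\chi_{R\nu,\xi}\,dz-\int \Phi_f\,\nabla\Phi_{\varepsilon_{\mathrm{in}}}\cdot\nabla\chi_{R\nu,\xi}\,dz .
\]
Here $\Phi_f$ is normalized so that it is bounded on the relevant set. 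For $\Lambda U$ and $\partial_iU$ the field is explicit and decays, because these profiles have zero mass. For $U$, the potential grows logarithmically, and this is where the factor $\sqrt{\ln R}$ comes from.

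The main term is then compared with $\int\Phi_{\Lambda_\xi U_{\nu,\xi}}\varepsilon_{\mathrm{in}}\beta_{\nu,\xi}$. The intended reading is that on $\{|z-\xi|\le 2R\nu\}$ both $\chi_{R\nu,\xi}$ and $\beta_{\nu,\xi}$ are essentially $1$ (by Lemma \ref{lem:propbeta}), and $\mathcal X_\star\equiv1$ there. Outside $B_{R\nu}(\xi)$ the difference is controlled by the decay of $\Phi_{\Lambda U}$ and $\Phi_{\partial_iU}$.

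The error terms are handled as follows.

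\emph{Commutator term.} In $\int\Phi_f\nabla\Phi_{\varepsilon_{\mathrm{in}}}\cdot\nabla\chi_{R\nu,\xi}$ the integrand lives on the annulus $R\nu\le|z-\xi|\le2R\nu$. We bound it by Cauchy--Schwarz, using Lemma \ref{lem:weightedL2poisson_inner} to control $\|\nabla\Phi_{\varepsilon_{\mathrm{in}}}\|_{L^2}$ on that annulus by $\|\nabla\varepsilon_{\mathrm{in}}\|_{L^2_\omega}$ (after rescaling $z=\xi+\nu y$ and applying the Hardy inequality, Lemma \ref{lem:global_hardy_matched}). On the annulus the explicit sizes $|\Phi_{\Lambda U}|\sim R^{-2}$ and $|\Phi_{\partial_iU}|\sim \nu^{-1}R^{-1}$ (in $z$-variables) produce the prefactors $R^{-2}$ and $(\nu R)^{-1}$.

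\emph{Exterior term.} The weight term $\int\Phi_f\varepsilon_{\mathrm{in}}(\beta-\chi_{R\nu,\xi})$ on $|z-\xi|\ge R\nu$ is estimated by Cauchy--Schwarz in $L^2_\omega$:
\[
\Bigl(\int_{|z-\xi|\ge R\nu}\Phi_f^2\,\omega^{-1}(\nu^2+|z-\xi|^2)\Bigr)^{1/2}\Bigl(\int\frac{\varepsilon_{\mathrm{in}}^2\,\omega}{\nu^2+|z-\xi|^2}\Bigr)^{1/2},
\]
followed by the global Hardy inequality. Since $\omega\sim|z-\xi|^4$ near the soliton, the first integral behaves like $\int_{R\nu}^1 r^{-3}\cdot r^{-4}\cdots\,r\,dr$ times the decay of $\Phi_f$. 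This is where the $\sqrt{\ln R}$ loss appears, coming from the borderline logarithmic integral (in particular for the $U$ case, where $\Phi_U\sim \ln$).

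\emph{Cutoff correction.} The correction $\Phi_{f}-\Phi_{U\cdot\mathcal X_\star\text{-part}}$ due to $\mathcal X_\star$ is handled with Lemma \ref{lem:matched_errors_global}, which gives an $O(\nu^2)$ contribution and is therefore negligible.

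For \eqref{eq:energy_ibp_U} we skip the comparison and simply bound the full integral. We split into the part where $\chi_{R\nu,\xi}=1$, using $|\nabla\Phi_{U_{\nu,\xi}}|\lesssim (\nu+|z-\xi|)^{-1}$, and apply Cauchy--Schwarz over $B_{2R\nu}(\xi)$ with Lemma \ref{lem:weightedL2poisson_inner}. The $L^2$ norm of $(\nu+|z-\xi|)^{-1}$ over $B_{2R\nu}$ equals $\sqrt{\ln R}$ up to constants, which gives the claim directly.

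The main obstacle is keeping track of the exact powers of $\nu$ and $R$, and checking that the weights $\omega$, $\gamma$ and $\beta_{\nu,\xi}$ convert consistently between the $L^2_\omega$ norm of $\nabla\varepsilon_{\mathrm{in}}$ and the Poisson $L^2$ norms. The delicate inputs are the scaling of Lemma \ref{lem:weightedL2poisson_inner}, and confirming that the $\beta_{\nu,\xi}$-weighted term is indeed the correct limit rather than carrying an $O(1)$ discrepancy near $|z|\sim\zeta_*$; Lemma \ref{lem:propmathfrakm} is designed for exactly that region.
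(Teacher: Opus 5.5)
Your proposal follows the paper's proof essentially step by step. You integrate by parts and remove the $\mathcal X_\star$ cutoff at $O(\nu^2)$ cost. You then bound the commutator on the annulus via Lemma \ref{lem:weightedL2poisson_inner}, use $|\beta_{\nu,\xi}-1|=o(1)$ inside, use the decay of $\Phi_f$ with the global Hardy inequality outside, and bound the $U$ term directly by Cauchy--Schwarz.

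One bookkeeping correction concerns the source of the $\sqrt{\ln R}$ in the first two estimates. It comes from the constant $K_W\sim\ln R$ of the annulus weight when you apply Lemma \ref{lem:weightedL2poisson_inner} (with $A=2$) in the commutator term. Your exterior Cauchy--Schwarz actually yields the clean factors $R^{-2}$ and $(R\nu)^{-1}$, with no logarithm.
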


\begin{proof}
To ease the notation, in what follows $\chi$ denotes $\chi_{R\nu,\xi}(z)$.

\smallskip

	\noindent\textbf{Step 1.} \emph{Reduction to the uncut profiles.} Since the domain of integration is $|z-\xi|\le 2R\nu$, we have
	\begin{align*}
		\nabla \Phi_{\Lambda_{\xi}U_{\nu,\xi}\mathcal{X}_{\star}}
		= \nabla \Phi_{\Lambda_{\xi}U_{\nu,\xi}} + O(\nu^{2}), 
		\qquad
		\nabla \Phi_{\partial_{z_{i}}U_{\nu,\xi}\mathcal{X}_{\star}}
		= \nabla \ \Phi_{\partial_{z_{i}}U_{\nu,\xi}} + O(\nu^{2}).
	\end{align*}
	The corresponding error is negligible. Indeed,
	\begin{align*}
		\nu^{2}\int |\nabla \Phi_{\varepsilon_{\mathrm{in}}}|\chi\!dz
		\lesssim \nu^{2}
		\Bigl(\int \chi\!dz\Bigr)^{1/2}
		\Bigl(\int |\nabla \Phi_{\varepsilon_{\mathrm{in}}}|^{2}\chi\!dz\Bigr)^{1/2}
		\lesssim_{R} \nu^{3}\|\nabla \varepsilon_{\mathrm{in}}\|_{L^{2}_{\omega}},
	\end{align*}
	where we used the usual inner–outer decomposition $\varepsilon_{\mathrm{in}}=\chi\varepsilon_{\mathrm{in}}+(1-\chi)\varepsilon_{\mathrm{in}}$ together with Lemmas \ref{lem:weightedL2poisson_inner} and \ref{lem:weightedL2poisson_farfield}.
    
    \smallskip 
	\noindent\textbf{Step 2.} \emph{Integration by parts.} We write
	\begin{align*}
		\int \nabla \Phi_{\Lambda_{\xi}U_{\nu,\xi}}\cdot \nabla \Phi_{\varepsilon_{\mathrm{in}}}\chi
		=& \int \Phi_{\Lambda_{\xi}U_{\nu,\xi}}\,\varepsilon_{\mathrm{in}}\,\chi
		+ \int \Phi_{\Lambda_{\xi}U_{\nu,\xi}}\,\nabla \Phi_{\varepsilon_{\mathrm{in}}}\cdot \nabla \chi \\
		=& \int \Phi_{\Lambda_{\xi}U_{\nu,\xi}}\,\varepsilon_{\mathrm{in}}\,\beta_{\nu,\xi}\,\chi
		+ \int \Phi_{\Lambda_{\xi}U_{\nu,\xi}}\,\varepsilon_{\mathrm{in}}(1-\beta_{\nu,\xi})\,\chi \\
		&+ \int \Phi_{\Lambda_{\xi}U_{\nu,\xi}}\,\nabla \Phi_{\varepsilon_{\mathrm{in}}}\cdot \nabla \chi,
	\end{align*}
	and similarly for $\partial_{z_i}U_{\nu,\xi}$. We recall the explicit formulas
	\begin{align*}
		\Phi_{\Lambda_{\xi}U_{\nu,\xi}}=\frac{4\nu^{2}}{\nu^{2}+|z-\xi|^{2}},
		\qquad
		\Phi_{\partial_{z_{i}}U_{\nu,\xi}}=-4\frac{(z-\xi)_{i}}{\nu^{2}+|z-\xi|^{2}}.
	\end{align*}
	\textbf{Step 3.} \emph{Estimate of boundary terms.} For the cutoff error we have
	\begin{align*}
		\int \Phi_{\Lambda_{\xi}U_{\nu,\xi}}\nabla \Phi_{\varepsilon_{\mathrm{in}}}\cdot \nabla \chi
		\lesssim \frac{1}{R^{2}}
		\Bigl(\int|\nabla \chi|^{2}\Bigr)^{1/2}
		\Bigl(\int_{R\nu\le |z-\xi|\le 2R\nu}|\nabla \Phi_{\varepsilon_{\mathrm{in}}}|^{2}\Bigr)^{1/2}
		\lesssim \frac{\sqrt{\ln R}}{R^{2}}\|\nabla \varepsilon_{\mathrm{in}}\|_{L^{2}_{\omega}},
	\end{align*}
	and similarly
	\begin{align*}
		\int \Phi_{\partial_{z_{i}}U_{\nu,\xi}}\nabla \Phi_{\varepsilon_{\mathrm{in}}}\cdot \nabla \chi
		\lesssim \frac{\sqrt{\ln R}}{R\nu}\|\nabla \varepsilon_{\mathrm{in}}\|_{L^{2}_{\omega}}.
	\end{align*}
	\textbf{Step 4.} \emph{Estimate of $(1-\beta_{\nu,\xi})$ terms.} By Lemma \ref{lem:propbeta}, $|\beta_{\nu,\xi}-1|=o(1)$ in the region $|z-\xi|\le 2R\nu$. Hence,
	\begin{align*}
		\int \Phi_{\Lambda_{\xi}U_{\nu,\xi}}\varepsilon_{\mathrm{in}}(1-\beta_{\nu,\xi})\chi
		\lesssim o(1)\|\nabla \varepsilon_{\mathrm{in}}\|_{L^{2}_{\omega}},
	\end{align*}
	and
	\begin{align*}
		\int \Phi_{\partial_{z_{i}}U_{\nu,\xi}}\varepsilon_{\mathrm{in}}(1-\beta_{\nu,\xi})\chi
		\lesssim o(1)\frac{1}{\nu}\|\nabla \varepsilon_{\mathrm{in}}\|_{L^{2}_{\omega}}.
	\end{align*}
	\textbf{Step 5.} \emph{Exterior contribution.} Using the boundedness of $\beta_{\nu,\xi}$ and the global Hardy inequality (Lemma \ref{lem:global_hardy_matched}), we obtain
	\begin{align*}
		\int \Phi_{\Lambda_{\xi}U_{\nu,\xi}}\varepsilon_{\mathrm{in}}\beta_{\nu,\xi}(1-\chi)
		\lesssim \frac{1}{R^{2}}\|\nabla \varepsilon_{\mathrm{in}}\|_{L^{2}_{\omega}},
	\end{align*}
	and
	\begin{align*}
		\int \Phi_{\partial_{z_{i}}U_{\nu,\xi}}\varepsilon_{\mathrm{in}}\beta_{\nu,\xi}(1-\chi)
		\lesssim \frac{1}{R\nu}\|\nabla \varepsilon_{\mathrm{in}}\|_{L^{2}_{\omega}}.
	\end{align*}
	\textbf{Step 6.} \emph{Proof of \eqref{eq:energy_ibp_U}.} Finally, recalling Lemma \ref{lem:matched_errors_global}, we have
	\begin{align*}
		\Bigl|\int \nabla \Phi_{U_{\nu,\xi}\mathcal{X}_{\star}}\cdot \nabla \Phi_{\varepsilon_{\mathrm{in}}}\chi\Bigr|
		\lesssim \int \frac{1}{\nu+|z-\xi|}|\nabla \Phi_{\varepsilon_{\mathrm{in}}}|\chi
		\lesssim \sqrt{\ln R}\|\nabla \varepsilon_{\mathrm{in}}\|_{L^{2}_{\omega}}.
	\end{align*}
	
	Collecting all the estimates yields \eqref{eq:energy_ibp_lambda}, \eqref{eq:energy_ibp_translation}, and \eqref{eq:energy_ibp_U}.
\end{proof}

\subsubsection{Evolution of $\varepsilon_{\mathrm{in}}$}
We restate here, for clarity, the equation we need to control, namely \eqref{eq:inner_problem}:
\begin{equation*}
	\begin{cases}
		\partial_\tau\varepsilon_{\mathrm{in}}
		=
		L_{\nu,\xi}[\varepsilon_{\mathrm{in}}]
		+
		A[\varepsilon_{\mathrm{in}}]
		+
		F_{\nu,\xi}
		+
		B[\varepsilon_{\mathrm{out}}]
		+
		Q_{\mathrm{in}}[\varepsilon],
		\\[0.4em]
		\varepsilon_{\mathrm{in}}(\tau_0,z)
		=
		\varepsilon_0(z)\chi_4(z).
	\end{cases}
\end{equation*}

\begin{proposition}\label{prop:evolinnereps:pre}
	Let $\zeta_*>0$ be sufficiently small. Then for any $\delta>0$ there exist constants $C>0$ and $\tau_{0}$ such that for all $\tau\ge \tau_{0}$ one has
	\begin{align*}
		\partial_{\tau}\langle \varepsilon_{\mathrm{in}},\varepsilon_{\mathrm{in}} \rangle_*
		\le
		(-4+\delta)\langle \varepsilon_{\mathrm{in}},\varepsilon_{\mathrm{in}} \rangle_*
		-
		C\|\nabla \varepsilon_{\mathrm{in}}\|^{2}_{L^{2}_{\omega}}
		+
		o\!\left( \left(\sup_{0<h<1}\frac{1}{h}\int_{\tau-h}^\tau\|\nabla \varepsilon_{\mathrm{in}}(\sigma)\|_{L^{2}_{\omega}}d\sigma\right)^{2}\right)
		+
		o(e^{-2\tau}).
	\end{align*}
\end{proposition}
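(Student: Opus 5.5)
We will compute $\partial_\tau\langle \varepsilon_{\mathrm{in}},\varepsilon_{\mathrm{in}}\rangle_*$ directly. Since the matched product depends on time through the weight $\omega$ (via $\tilde\gamma$, hence $\nu,\xi,\mathfrak d$) and through the Poisson localization $\nu^2\chi_{R\nu,\xi}$, we get
\begin{equation*}
\partial_\tau\langle \varepsilon_{\mathrm{in}},\varepsilon_{\mathrm{in}}\rangle_* = 2\langle \partial_\tau\varepsilon_{\mathrm{in}},\varepsilon_{\mathrm{in}}\rangle_* + \int \varepsilon_{\mathrm{in}}^2\,\partial_\tau\omega\,dz - \int |\nabla\Phi_{\varepsilon_{\mathrm{in}}}|^2\,\partial_\tau\bigl(\nu^2\chi_{R\nu,\xi}\bigr)\,dz .
\end{equation*}
We then insert the equation \eqref{eq:inner_problem} for $\partial_\tau\varepsilon_{\mathrm{in}}$. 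The linear part $2\langle L_{\nu,\xi}\varepsilon_{\mathrm{in}},\varepsilon_{\mathrm{in}}\rangle_*$ is bounded by Proposition \ref{prop:global_matched_coercivity}, giving $(-4+2\delta_0)\langle\varepsilon_{\mathrm{in}},\varepsilon_{\mathrm{in}}\rangle_*-2c_0\|\nabla\varepsilon_{\mathrm{in}}\|^2_{L^2_\omega}$ plus the projection terms onto $(\Lambda U)_{\nu,\xi}\chi_\eta$ and $\partial_{z_i}U_{\nu,\xi}\chi_\eta$. These projections are handled through the orthogonality conditions \eqref{eq:ortoselfsimilar-refined} for $\tilde w$. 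Using \eqref{eq:eps_decomp} and \eqref{eq:inner_outer_decomp}, $\varepsilon_{\mathrm{in}}$ differs from $\tilde w$ only by $U_{\nu,\xi}(1-\mathcal X_\star)$, $(\alpha-1)\hat U_{\nu,\xi}$, and $(1-\chi_2)\varepsilon_{\mathrm{out}}$. All of these are $o(e^{-\tau})$ in the relevant weighted norms by Lemma \ref{lem:alpha}, \eqref{id:qualitative-cv-nu-xi-refined} and Proposition \ref{prop:OuterMomentPointwise}. The cut-off $\chi_\eta$ costs only tails controlled by Lemma \ref{lem:global_hardy_matched}. Hence the projection terms are $o(e^{-2\tau})+o(\|\nabla\varepsilon_{\mathrm{in}}\|^2_{L^2_\omega})$.

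The terms $A[\varepsilon_{\mathrm{in}}]$, $B[\varepsilon_{\mathrm{out}}]$ and $F^{(1)}_{\nu,\xi}$ are absorbed by Lemma \ref{lem:energyestpre}, with $\delta$ small compared to $c_0$. For the quadratic term $Q_{\mathrm{in}}[\varepsilon]$ we use the pointwise bounds of Lemma \ref{selfsimilarqualieps}: $|\varepsilon|=o((\nu^2+|z-\xi|^2)^{-1})$ and $|\nabla\Phi_\varepsilon|=o((\nu+|z-\xi|)^{-1})$. After integrating by parts the transport term against $\varepsilon_{\mathrm{in}}\omega$, using $|\nabla\omega|\lesssim\omega/(\nu+|z-\xi|)$ on $|z|\lesssim1$ and Lemma \ref{lem:global_hardy_matched}, this gives $o(\|\nabla\varepsilon_{\mathrm{in}}\|^2_{L^2_\omega})$. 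The part containing $\varepsilon_{\mathrm{out}}\nabla\chi_2$ is $o(e^{-2\tau})$. The Poisson part of the scalar product for these terms is treated with Lemma \ref{lem:weightedL2poisson_inner}, exactly as in Lemma \ref{lem:energyestpre}.

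\emph{Main obstacle: the modulation term.} This is the bracket in $F_{\nu,\xi}$,
\begin{equation*}
\Bigl[\alpha\Bigl(\tfrac{\nu_\tau}{\nu}+\tfrac12\Bigr)\Lambda_\xi U_{\nu,\xi}-\alpha_\tau U_{\nu,\xi}+\alpha\Bigl(\xi_\tau+\tfrac{\xi}{2}\Bigr)\cdot\nabla U_{\nu,\xi}\Bigr]\mathcal X_\star .
\end{equation*}
Its coefficients are only of size $\|\nabla\varepsilon_{\mathrm{in}}\|/\nu^{2}$ by Lemma \ref{lem:quantmodlemma}, so a naive Cauchy--Schwarz loses powers of $\nu$. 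We will instead exploit an algebraic cancellation. Since $\omega\,\mathcal X_\star=\beta_{\nu,\xi}\gamma$ and $\gamma=1/U_{\nu,\xi}$, the $L^2_\omega$ part equals $\int \varepsilon_{\mathrm{in}}\beta_{\nu,\xi}\,\Lambda_\xi U_{\nu,\xi}/U_{\nu,\xi}$. Lemma \ref{lem:energyintparts} converts the Poisson part into $\int\Phi_{\Lambda_\xi U_{\nu,\xi}}\varepsilon_{\mathrm{in}}\beta_{\nu,\xi}$, up to errors of size $R^{-2}\sqrt{\ln R}\,\|\nabla\varepsilon_{\mathrm{in}}\|_{L^2_\omega}$. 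Combining the two gives $\int\varepsilon_{\mathrm{in}}\beta_{\nu,\xi}\,\mathcal M[\Lambda U]_{\nu,\xi}$, up to scaling factors. By Proposition \ref{prop:RS}, $\mathcal M[\Lambda U]=-2$, so this is a multiple of the localized mass $\int\varepsilon_{\mathrm{in}}\beta_{\nu,\xi}$, and likewise $\mathcal M[\nabla U]=0$ kills the translation term.

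The localized mass is in turn small. The total mass of $\varepsilon$ vanishes because of the $8\pi$ normalization \eqref{normalization-alpha}, and the mass of $\varepsilon_{\mathrm{out}}$ is $o(e^{-\tau})$. The remaining weight error $\beta_{\nu,\xi}-1$ is controlled by Lemmas \ref{lem:propmathfrakm} and \ref{lem:propbeta}: the $\mathfrak m_\star-1$ part is $O(\zeta_*^2)$ on the annulus $\zeta_*\le|z|\le2\zeta_*$, and $\tilde\gamma/\gamma-1$ is bounded by the maximal function in Lemma \ref{lem:quantmodlemma}. Multiplied by $|\nu_\tau|/\nu\lesssim\|\nabla\varepsilon_{\mathrm{in}}\|/\nu^{2}$, the maximal-function piece produces exactly the nonlocal term $o\bigl((\sup_h h^{-1}\int_{\tau-h}^\tau\|\nabla\varepsilon_{\mathrm{in}}\|_{L^2_\omega})^2\bigr)$ in the statement. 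The $\alpha_\tau U_{\nu,\xi}$ term is handled similarly, using the $\sqrt{\ln R}$ bound \eqref{eq:energy_ibp_U} and $|\alpha_\tau|\lesssim|\nu\nu_\tau|+\nu^2|\xi_\tau|$. The terms $\tfrac12\Lambda_\xi U$ and $\tfrac{\xi}{2}\cdot\nabla U$ cancel against the corresponding drift contained in $L_{\nu,\xi}$ up to $o(e^{-2\tau})$.

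Finally we estimate the time-derivative terms of the scalar product. By Lemma \ref{tildegamma-gamma}, $|\partial_\tau\tilde\gamma|=o(\gamma/(\nu^2+|z-\xi|^2))$, so the Hardy inequality (Lemma \ref{lem:global_hardy_matched}) bounds $\int\varepsilon_{\mathrm{in}}^2\partial_\tau\omega$ by $o(\|\nabla\varepsilon_{\mathrm{in}}\|^2_{L^2_\omega})$. The derivative of $\nu^2\chi_{R\nu,\xi}$ is supported where $|z-\xi|\approx R\nu$, has size $o(1)$ after using $\nu\nu_\tau,\nu\xi_\tau=o(1)$, and is handled via Lemma \ref{lem:weightedL2poisson_inner}. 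Collecting all the bounds and choosing $\delta_0$ and $R$ appropriately yields the stated inequality.
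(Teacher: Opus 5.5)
Your proposal follows the paper's proof essentially step by step. You differentiate the matched product in the same way, and apply Proposition \ref{prop:global_matched_coercivity} with the projection terms removed through the orthogonality of $\tilde w$ via \eqref{id:varepsilon-in-tildew}. You dispose of $A$, $B$ and $F^{(1)}_{\nu,\xi}$ with Lemma \ref{lem:energyestpre}. For the modulation bracket you use Lemma \ref{lem:energyintparts}, the identities $\mathcal M[\Lambda U]=-2$ and $\mathcal M[\nabla U]=0$, the zero mass of $\varepsilon$, Lemmas \ref{lem:propmathfrakm} and \ref{lem:propbeta}, and the maximal-function bound of Lemma \ref{lem:quantmodlemma}.

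One step is wrong as written. The source terms $\tfrac{\alpha}{2}\Lambda_\xi U_{\nu,\xi}\mathcal X_\star$ and $\tfrac{\alpha}{2}\xi\cdot\nabla U_{\nu,\xi}\mathcal X_\star$ do not cancel against any drift in $L_{\nu,\xi}$. They come from $\tfrac12\Lambda\tilde U_{\nu,\xi}$, whereas the $\tfrac12\Lambda$ inside $L_{\nu,\xi}$ acts on $\varepsilon_{\mathrm{in}}$ and is already used up in Proposition \ref{prop:global_matched_coercivity}. So they must be estimated. The paper does this by keeping the full coefficients $\tfrac{\nu_\tau}{\nu}+\tfrac12$ and $\xi_\tau+\tfrac{\xi}{2}$ inside the cancellation argument you already use.

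Note that $\gamma=\nu^2/U_{\nu,\xi}$, not $1/U_{\nu,\xi}$. Hence
$\langle \Lambda_\xi U_{\nu,\xi}\mathcal X_\star,\varepsilon_{\mathrm{in}}\rangle_*=-2\nu^2\int\varepsilon_{\mathrm{in}}\beta_{\nu,\xi}+O(\nu^2R^{-2}\sqrt{\ln R}\,\|\nabla\varepsilon_{\mathrm{in}}\|_{L^2_\omega})$.
This factor $\nu^2$ does two things:
\begin{itemize}
\item it balances $|\nu_\tau|/\nu\lesssim(\|\nabla\varepsilon_{\mathrm{in}}\|_{L^2_\omega}+o(e^{-\tau}))/\nu^2$;
\item it makes the $\tfrac12$-piece harmless. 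Your mass-cancellation argument gives $|\int\varepsilon_{\mathrm{in}}\beta_{\nu,\xi}|\lesssim\|\nabla\varepsilon_{\mathrm{in}}\|_{L^2_\omega}+\mathcal A(\tau)+o(e^{-\tau})$, where $\mathcal A$ is the maximal function in the statement, and $\nu^2=o(e^{-\tau})$. A plain $L^1$ bound on $\varepsilon_{\mathrm{in}}$ would not suffice here.
\end{itemize}
The $\tfrac{\xi}{2}$-piece is $O(|\xi|\nu R^{-1}\sqrt{\ln R}\,\|\nabla\varepsilon_{\mathrm{in}}\|_{L^2_\omega})=o(e^{-\tau})\|\nabla\varepsilon_{\mathrm{in}}\|_{L^2_\omega}$ by $\mathcal M[\nabla U]=0$. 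With this correction your argument matches the paper's.
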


\begin{proof}
	We start by observing that
	\begin{align*}
		\langle \varepsilon_{\mathrm{in}},\varepsilon_{\mathrm{in}} \rangle_*
		=
		\int \varepsilon_{\mathrm{in}}^{2}\omega
		-
		\nu^{2}\int |\nabla \Phi_{\varepsilon_{\mathrm{in}}}|^{2}
		\chi\!\left(\frac{z-\xi}{R\nu}\right)dz.
	\end{align*}
	Taking the time derivative, we get
	\begin{align*}
		\partial_{\tau}\langle \varepsilon_{\mathrm{in}},\varepsilon_{\mathrm{in}} \rangle_*
		=&
		2\int \varepsilon_{\mathrm{in}}\partial_{\tau}\varepsilon_{\mathrm{in}}\omega
		-
		2\nu^{2}\int \nabla \Phi_{\varepsilon_{\mathrm{in}}}\cdot \nabla \Phi_{\partial_{\tau}\varepsilon_{\mathrm{in}}}
		\chi\!\left(\frac{z-\xi}{R\nu}\right)dz
		\\
		&+
		\int \varepsilon_{\mathrm{in}}^{2}\partial_{\tau}\omega
		-
		2\nu\nu_{\tau}\int |\nabla \Phi_{\varepsilon_{\mathrm{in}}}|^{2}
		\chi\!\left(\frac{z-\xi}{R\nu}\right)dz
		\\
		&-
		\nu^{2}\int |\nabla \Phi_{\varepsilon_{\mathrm{in}}}|^{2}
		\nabla_{y}\chi\!\left(\frac{z-\xi}{R\nu}\right)
		\cdot
		\left(
		-\frac{\nu_{\tau}}{\nu}\frac{z-\xi}{R\nu}
		-\frac{\xi_{\tau}}{R\nu}
		\right)dz.
	\end{align*}
	By the qualitative modulation estimates, Lemma \ref{lem:selfsimilarquali}, the global Hardy-type inequality, Lemma \ref{lem:global_hardy_matched}, and Lemma \ref{tildegamma-gamma}, which gives
	\begin{align*}
		\partial_{\tau}\omega
		=
		o\!\left(\frac{\omega}{\nu^{2}+|z-\xi|^{2}}\right),
	\end{align*}
	we obtain
	\begin{align*}
		\partial_{\tau}\langle \varepsilon_{\mathrm{in}},\varepsilon_{\mathrm{in}} \rangle_*
		=
		2\langle \partial_{\tau}\varepsilon_{\mathrm{in}},\varepsilon_{\mathrm{in}} \rangle_*
		+
		o\!\left(\|\nabla \varepsilon_{\mathrm{in}}\|_{L^{2}_{\omega}}^{2}\right).
	\end{align*}
	Substituting the equation for $\varepsilon_{\mathrm{in}}$ and recalling Lemma \ref{lem:energyestpre}, we obtain, for any $\delta>0$,
	\begin{align*}
		\partial_{\tau}\langle \varepsilon_{\mathrm{in}},\varepsilon_{\mathrm{in}} \rangle_*
		\leq&
		2\langle L_{\nu,\xi}[\varepsilon_{\mathrm{in}}], \varepsilon_{\mathrm{in}} \rangle_*
		+
		\delta \|\nabla \varepsilon_{\mathrm{in}}\|_{L^{2}_{\omega}}^{2}
		+
		o(e^{-2\tau})
		+
		2\alpha\left(\frac{\nu_{\tau}}{\nu}+\frac{1}{2}\right)
		\langle \Lambda_{\xi}U_{\nu,\xi}\mathcal{X}_{\star},\varepsilon_{\mathrm{in}} \rangle_*
		\\
		&-
		2\alpha_{\tau}
		\langle U_{\nu,\xi}\mathcal{X}_{\star},\varepsilon_{\mathrm{in}} \rangle_*+
		2\alpha\left(\xi_{\tau}+\frac{\xi}{2}\right)
		\cdot
		\langle \nabla U_{\nu,\xi}\mathcal{X}_{\star},\varepsilon_{\mathrm{in}} \rangle_*+
		2\langle Q_{\mathrm{in}}[\varepsilon],\varepsilon_{\mathrm{in}} \rangle_*.
	\end{align*}
    The desired inequality will then follow from the identity \eqref{estimate-energy-linear} for the linear term and the estimates \eqref{estimate-crossed-terms} and \eqref{estimate-energy-quad1}-\eqref{estimate-energy-quad2} for the modulation terms and the quadratic term we prove below, up to taking a large enough $R\gg1$, a small enough $0<\zeta_*\ll 1$ and then $\tau$ large enough.

    \smallskip 
	\noindent \textbf{Step 1.} \emph{The linear term.} Let us first consider the coercivity of the linearized operator. We observe that by the decompositions \eqref{self-similarKS-refined-decomposition} and \eqref{eq:ansatz}
	\begin{align}\label{id:varepsilon-in-tildew}
		\varepsilon_{\mathrm{in}}
		=
		\tilde{w}
		-(\tilde{U}_{\nu,\xi}-U_{\nu,\xi})
		-\varepsilon_{\mathrm{out}}(1-\chi_{2}).
	\end{align}
	By Proposition \ref{prop:global_matched_coercivity}, using \eqref{eq:ortoselfsimilar} and Lemma \ref{lem:matched_errors_global}, picking an $\eta_*\ll 2$, we get
	\begin{align*}
	\nu^{6}
		\langle \varepsilon_{\mathrm{in}}\, ,(\Lambda U)_{\nu,\xi}
		\,\chi_\eta\rangle^{2}_{L^2}& =\nu^{6}
		\langle \tilde w\, ,(\Lambda U)_{\nu,\xi}
		\,\chi_\eta\rangle^{2}_{L^2}=o(e^{-3\tau}\| \tilde w\|_{L^1}^2)=o(e^{-2\tau}).
	\end{align*}
	and similarly
	$$
	\sum_{i=1}^{2}\nu^{8}
		\langle \varepsilon_{\mathrm{in}}\, \partial_{z_{i}}U_{\nu,\xi}
		\,\chi_\eta \rangle^{2}_{L^2}=o(e^{-2\tau}).
	$$
	Therefore, for any $\delta_0>0$ small, and for $R$ sufficiently large, there exists $c_0>0$ such that
	\begin{align}\label{estimate-energy-linear}
		2\langle L_{\nu,\xi}[\varepsilon_{\mathrm{in}}],\varepsilon_{\mathrm{in}} \rangle_*
		\le
		(-4+2\delta_0)
		\langle \varepsilon_{\mathrm{in}},\varepsilon_{\mathrm{in}} \rangle_*
		-
		c_0\|\nabla \varepsilon_{\mathrm{in}}\|_{L^{2}_{\omega}}^{2}
		+
		o(e^{-2\tau}).
	\end{align}
    
    \smallskip
    
	\noindent \textbf{Step 2.} \emph{The modulation terms.} These crossed terms a priori have a problematic size, and we will rely crucially on the algebraic identities \eqref{id:algebraic-cancellation-energy-estimate} and the zero mass $\int \varepsilon=0$ to obtain a cancellation to leading order. We claim that
\begin{align} \label{estimate-crossed-terms}
		&\left|\alpha\left(\frac{\nu_{\tau}}{\nu}+\frac{1}{2}\right)
		\langle \Lambda_{\xi}U_{\nu,\xi}\mathcal{X}_{\star},\varepsilon_{\mathrm{in}} \rangle_*\right| +\left|\alpha_{\tau}
		\langle U_{\nu,\xi}\mathcal{X}_{\star},\varepsilon_{\mathrm{in}} \rangle_*\right|+\left|\alpha\left(\xi_{\tau}+\frac{\xi}{2}\right)
		\cdot
		\langle \nabla U_{\nu,\xi}\mathcal{X}_{\star},\varepsilon_{\mathrm{in}} \rangle_*\right|\\
       \nonumber &\lesssim o\left( \sup_{0<h<1}\frac{1}{h}\int_{\tau-h}^\tau \|\nabla \varepsilon_{\mathrm{in}}(\sigma)\|_{L^{2}_{\omega}}d\sigma\right)^2+o(e^{-2\tau})+(\zeta_*+\frac{\ln R}{R})\|\nabla \varepsilon_{\mathrm{in}}\|_{L^{2}_{\omega}}^2
\end{align}

    We recall the definition
	\begin{align*}
		\beta_{\nu,\xi}
		=
		\frac{\omega}{\gamma}\mathcal{X}_{\star}.
	\end{align*}
	By Lemma \ref{lem:energyintparts}, we have
	\begin{align*}
		\langle U_{\nu,\xi}\mathcal{X}_{\star},\varepsilon_{\mathrm{in}} \rangle_*
		=
		\int \varepsilon_{\mathrm{in}}U_{\nu,\xi}\gamma\beta_{\nu,\xi}
		+
		O\!\left(
		\nu^{2}\sqrt{\ln R}
		\|\nabla \varepsilon_{\mathrm{in}}\|_{L^{2}_{\omega}}
		\right),
	\end{align*}
	\begin{align*}
		\langle \nabla U_{\nu,\xi}\mathcal{X}_{\star},\varepsilon_{\mathrm{in}} \rangle_*
		=
		\int \varepsilon_{\mathrm{in}}\nabla U_{\nu,\xi}\gamma\beta_{\nu,\xi}
		-
		\nu^{2}\int \varepsilon_{\mathrm{in}}\Phi_{\nabla U_{\nu,\xi}}\beta_{\nu,\xi}\,dz
		+
		O\!\left(
		\nu\frac{\sqrt{\ln R}}{R}
		\|\nabla \varepsilon_{\mathrm{in}}\|_{L^{2}_{\omega}}
		\right),
	\end{align*}
	and
	\begin{align*}
		\langle \Lambda_{\xi}U_{\nu,\xi}\mathcal{X}_{\star},\varepsilon_{\mathrm{in}} \rangle_*
		=
		\int \varepsilon_{\mathrm{in}}\Lambda_{\xi}U_{\nu,\xi}\gamma\beta_{\nu,\xi}
		-
		\nu^{2}\int \varepsilon_{\mathrm{in}}\Phi_{\Lambda_{\xi}U_{\nu,\xi}}\beta_{\nu,\xi}\,dz
		+
		O\!\left(
		\nu^{2}\frac{\sqrt{\ln R}}{R^{2}}
		\|\nabla \varepsilon_{\mathrm{in}}\|_{L^{2}_{\omega}}
		\right).
	\end{align*}
	After rescaling the algebraic identities introduced in Proposition \ref{prop:RS}, we obtain
	\begin{align} \label{id:algebraic-cancellation-energy-estimate}
		\frac{\Lambda_{\xi}U_{\nu,\xi}}{U_{\nu,\xi}}
		-
		\Phi_{\Lambda_{\xi}U_{\nu,\xi}}
		=
		-2,
		\qquad
		\frac{\nabla U_{\nu,\xi}}{U_{\nu,\xi}}
		-
		\Phi_{\nabla U_{\nu,\xi}}
		=
		0.
	\end{align}
	Since $\gamma=\frac{\nu^{2}}{U_{\nu,\xi}}$, we infer
	\begin{align*}
		\langle U_{\nu,\xi}\mathcal{X}_{\star},\varepsilon_{\mathrm{in}} \rangle_*
		=
		\nu^{2}\int \varepsilon_{\mathrm{in}}\beta_{\nu,\xi}
		+
		O\!\left(
		\nu^{2}\sqrt{\ln R}
		\|\nabla \varepsilon_{\mathrm{in}}\|_{L^{2}_{\omega}}
		\right),
	\end{align*}
	\begin{align*}
		\langle \nabla U_{\nu,\xi}\mathcal{X}_{\star},\varepsilon_{\mathrm{in}} \rangle_*
		=
		O\!\left(
		\nu\frac{\sqrt{\ln R}}{R}
		\|\nabla \varepsilon_{\mathrm{in}}\|_{L^{2}_{\omega}}
		\right),
	\end{align*}
	and
	\begin{align*}
		\langle \Lambda_{\xi}U_{\nu,\xi}\mathcal{X}_{\star},\varepsilon_{\mathrm{in}} \rangle_*
		=
		-2\nu^{2}\int \varepsilon_{\mathrm{in}}\beta_{\nu,\xi}
		+
		O\!\left(
		\nu^{2}\frac{\sqrt{\ln R}}{R^{2}}
		\|\nabla \varepsilon_{\mathrm{in}}\|_{L^{2}_{\omega}}
		\right).
	\end{align*}
	Recalling the quantitative modulation estimate, Lemma \ref{lem:quantmodlemma}, and using $|\xi|+\nu=o(e^{-\tau/2})$, we obtain
	\begin{align*}
		&\left|\alpha\left(\xi_{\tau}+\frac{\xi}{2}\right)
		\cdot
		\langle \nabla U_{\nu,\xi}\mathcal{X}_{\star},\varepsilon_{\mathrm{in}} \rangle_*\right|
		\lesssim
		(|\xi_{\tau}|+|\xi|)
		\nu\frac{\sqrt{\ln R}}{R}
		\|\nabla \varepsilon_{\mathrm{in}}\|_{L^{2}_{\omega}}
		\\
		&\qquad \qquad \lesssim
		\nu |\xi|\frac{\sqrt{\ln R}}{R}
		\|\nabla \varepsilon_{\mathrm{in}}\|_{L^{2}_{\omega}}
		+
		\left(
		\|\nabla\varepsilon_{\mathrm{in}}\|_{L^{2}_{\omega}}
		+
		o(e^{-\tau})
		\right)
		\frac{\sqrt{\ln R}}{R}
		\|\nabla \varepsilon_{\mathrm{in}}\|_{L^{2}_{\omega}}
		\lesssim
		\frac{\sqrt{\ln R}}{R}
		\|\nabla \varepsilon_{\mathrm{in}}\|_{L^{2}_{\omega}}^{2}
		+
		o(e^{-2\tau}).
	\end{align*}
    This shows the desired bound for the third term in \eqref{estimate-crossed-terms}. Using similarly Lemma \ref{lem:quantmodlemma} and  $|\xi|+\nu=o(e^{-\tau/2})$,
	\begin{align*}
		&\alpha\left(\frac{\nu_{\tau}}{\nu}+\frac{1}{2}\right)
		\langle \Lambda_{\xi}U_{\nu,\xi}\mathcal{X}_{\star},\varepsilon_{\mathrm{in}} \rangle_* \\
        &=O\left(
		\frac{\|\nabla \varepsilon_{\mathrm{in}}\|_{L^{2}_{\omega}}}{\nu^{2}}
		+
		\frac{o(e^{-\tau})}{\nu^{2}}
		+
		1
		\right)
		\!\left(-2\nu^2 \int \varepsilon_{\mathrm{in}}\beta_{\nu,\xi}dz+O\left(
		\nu^{2}\frac{\sqrt{\ln R}}{R^{2}}
		\|\nabla \varepsilon_{\mathrm{in}}\|_{L^{2}_{\omega}}
		\right)\right)
		\\
		&\quad=(O(\|\nabla \varepsilon_{\mathrm{in}}\|_{L^{2}_{\omega}})+o(e^{-\tau})) \int \varepsilon_{\mathrm{in}}\beta_{\nu,\xi}dz+O\left(
		\frac{\sqrt{\ln R}}{R^{2}}
		\|\nabla \varepsilon_{\mathrm{in}}\|_{L^{2}_{\omega}}^{2}\right)
		+
		o(e^{-2\tau}).
	\end{align*}
    where we used $\| \varepsilon\|_{L^1}\lesssim \sqrt{|\ln \nu|}\| \nabla \varepsilon_{\mathrm{in}}\|_{L^2_{\omega}}$. By Lemma \ref{lem:alpha}, we also have
	\begin{align*}
		\left|\alpha_{\tau}
		\langle U_{\nu,\xi}\mathcal{X}_{\star},\varepsilon_{\mathrm{in}} \rangle_*\right|
		&=
		\left|\nu^{2}\alpha_{\tau}
		\int \varepsilon_{\mathrm{in}}\beta_{\nu,\xi}\right|
		+
		o(e^{-2\tau})
		+
		o\!\left(\|\nabla \varepsilon_{\mathrm{in}}\|_{L^{2}_{\omega}}^2\right)
		\\
		&\lesssim
		\nu^{2}
		\left(
		|\nu\nu_{\tau}|+\nu^{2}|\xi_{\tau}|
		\right)
		\left|
		\int \varepsilon_{\mathrm{in}}\beta_{\nu,\xi}
		\right|
		+
		o(e^{-2\tau})
		+
		o\!\left(\|\nabla \varepsilon_{\mathrm{in}}\|_{L^{2}_{\omega}}^2\right)
		\\
		&\lesssim
		\nu^{2}
		\left(
		\|\nabla \varepsilon_{\mathrm{in}}\|_{L^{2}_{\omega}}
		+
		o(e^{-2\tau})
		\right)
		\left|
		\int \varepsilon_{\mathrm{in}}\beta_{\nu,\xi}
		\right|
		+
		o(e^{-2\tau})
		+
		o\!\left(\|\nabla \varepsilon_{\mathrm{in}}\|_{L^{2}_{\omega}}^2\right)
		\\
		&\lesssim
		o(e^{-2\tau})
		+
		o\!\left(\|\nabla \varepsilon_{\mathrm{in}}\|_{L^{2}_{\omega}}^2\right).
	\end{align*}
    This shows the desired bound for the second term in \eqref{estimate-crossed-terms}. So there remains to finish bounding the first one. Recalling that
		$\varepsilon
		=
		\varepsilon_{\mathrm{in}}
		+
		(1-\chi_{2})\varepsilon_{\mathrm{out}}$, 
		and the mass cancellation $\int \varepsilon=0$,
	we can write, using Lemma \ref{lem:propbeta},
	\begin{align*}
		\left|\int \varepsilon_{\mathrm{in}}\beta_{\nu,\xi}\right|
		&=\left|
		-\int \varepsilon_{\mathrm{out}}(1-\chi_{2})
		+
		\int \varepsilon_{\mathrm{in}}(\beta_{\nu,\xi}-1)\right|\lesssim
		\int |\varepsilon_{\mathrm{in}}||1-\beta_{\nu,\xi}|
		+
		o(e^{-\tau})
		\\
		&\lesssim
		\int |\varepsilon_{\mathrm{in}}||\mathfrak{m}_{\star}-1|
		+
		\int |\varepsilon_{\mathrm{in}}|
		\left|
		\frac{\tilde{\gamma}}{\gamma}-1
		\right|
		\chi_{\zeta_*}(z)+
		o\!\left(
		\int |\varepsilon_{\mathrm{in}}|
		\left(
		1-\chi_{\zeta_*}(z)
		\right)
		\right).
	\end{align*}
	As a consequence, using again Lemma \ref{lem:quantmodlemma} and  $|\xi|+\nu=o(e^{-\tau/2})$
	\begin{align*}
		&\left|\alpha\left(\frac{\nu_{\tau}}{\nu}+\frac{1}{2}\right)
		\langle \Lambda_{\xi}U_{\nu,\xi}\mathcal{X}_{\star},\varepsilon_{\mathrm{in}} \rangle_*\right|
		\\
		&\quad\lesssim
		(O(\|\nabla \varepsilon_{\mathrm{in}}\|_{L^{2}_{\omega}})+o(e^{-\tau}))
		\Biggl(
		\int |\varepsilon_{\mathrm{in}}||\mathfrak{m}_{\star}-1|
		+
		\int |\varepsilon_{\mathrm{in}}|
		\left|
		\frac{\tilde{\gamma}}{\gamma}-1
		\right|
		\chi_{\zeta_*}(z)
		\Biggr)+
		o(e^{-2\tau})
		+
		\frac{\sqrt{\ln R}}{R^{2}}
		\|\nabla \varepsilon_{\mathrm{in}}\|_{L^{2}_{\omega}}^{2}.
	\end{align*}
	For the first term, recalling Lemma \ref{lem:propmathfrakm} we have
	\begin{align*}
		& \int |\varepsilon_{\mathrm{in}}| |\mathfrak{m}_{\star}-1|
		\lesssim
		\zeta_*^{2}
		\int_{\zeta_*\le |z|\le 2\zeta_*}
		|\varepsilon_{\mathrm{in}}|
		\\
		&\qquad \lesssim
		\zeta_*^{2}
		\left(
		\int_{\zeta_*\le |z|\le 2\zeta_*}
		\varepsilon_{\mathrm{in}}^{2}
		\frac{\omega}{\nu^{2}+|z-\xi|^{2}}
		\right)^{1/2}
		\left(
		\int_{\zeta_*\le |z|\le 2\zeta_*}
		\frac{\nu^{2}+|z-\xi|^{2}}{\omega}
		\right)^{1/2}
		\ \lesssim
		\zeta_*^{2}
		\|\nabla \varepsilon_{\mathrm{in}}\|_{L^{2}_{\omega}}.
	\end{align*}
    so that,
$$
(O(\|\nabla \varepsilon_{\mathrm{in}}\|_{L^{2}_{\omega}})+o(e^{-\tau}))
		\int |\varepsilon_{\mathrm{in}}||\mathfrak{m}_{\star}-1|\lesssim \zeta_*^2\|\nabla \varepsilon_{\mathrm{in}}\|_{L^{2}_{\omega}}+o(e^{-2\tau}).
$$
For the second term, by Lemma \ref{lem:quantmodlemma},
    \begin{align*}
    \int |\varepsilon_{\mathrm{in}}|\left|\frac{\tilde \gamma}{\gamma}-1\right|\chi_{\zeta_*}dz &\lesssim  \int |\varepsilon_{\mathrm{in}}|\left( \sup_{0<h<1}\frac{1}{h}\int_{\tau-h}^\tau \|\nabla \varepsilon_{\mathrm{in}}(\sigma)\|_{L^{2}_{\omega}}d\sigma+ o(e^{-\tau})+ o(|z-\xi|)\right)\chi_{\zeta_*}\\
    & \lesssim o\left( \sup_{0<h<1}\frac{1}{h}\int_{\tau-h}^\tau \|\nabla \varepsilon_{\mathrm{in}}(\sigma)\|_{L^{2}_{\omega}}d\sigma\right)+o(e^{-\tau})+\zeta_*\|\nabla \varepsilon_{\mathrm{in}}\|_{L^{2}_{\omega}}
    \end{align*}
    where we used $\| \varepsilon_{\mathrm{in}}\|_{L^1}\to 0$ and $|\int \varepsilon_{\mathrm{in}}|z-\xi|\chi_{\zeta_*}|\lesssim \zeta_*\|\nabla \varepsilon_{\mathrm{in}}\|_{L^{2}_{\omega}} $ by the Hardy inequality (Lemma \ref{lem:global_hardy_matched}). Hence
\begin{align*}
 &(O(\| \nabla \varepsilon_{\mathrm{in}}\|_{L^2_\omega})+o(e^{-\tau}))\int |\varepsilon_{\mathrm{in}}|\left|\frac{\tilde \gamma}{\gamma}-1\right|\chi_{\zeta_*}dz \\
 &\lesssim o\left( \sup_{0<h<1}\frac{1}{h}\int_{\tau-h}^\tau \|\nabla \varepsilon_{\mathrm{in}}(\sigma)\|_{L^{2}_{\omega}}d\sigma\right)^2+o(e^{-2\tau})+\zeta_*\|\nabla \varepsilon_{\mathrm{in}}\|_{L^{2}_{\omega}}^2.
\end{align*}
Combining, we obtain
\begin{align*}
		&\left|\alpha\left(\frac{\nu_{\tau}}{\nu}+\frac{1}{2}\right)
		\langle \Lambda_{\xi}U_{\nu,\xi}\mathcal{X}_{\star},\varepsilon_{\mathrm{in}} \rangle_*\right|\lesssim o\left( \sup_{0<h<1}\frac{1}{h}\int_{\tau-h}^\tau \|\nabla \varepsilon_{\mathrm{in}}(\sigma)\|_{L^{2}_{\omega}}d\sigma\right)^2+o(e^{-2\tau})+(\zeta_*+\frac{\ln R}{R^2})\|\nabla \varepsilon_{\mathrm{in}}\|_{L^{2}_{\omega}}^2
\end{align*}
This implies the first bound in \eqref{estimate-crossed-terms}.
\smallskip

\noindent \textbf{Step 3.} \emph{Quadratic terms.} We recall that
\begin{equation*}
	Q_{\mathrm{in}}[\varepsilon]
	=
	-\nabla\varepsilon_{\mathrm{in}}\cdot\nabla\Phi_\varepsilon
	+
	\varepsilon_{\mathrm{out}}\nabla\chi_2\cdot\nabla\Phi_\varepsilon
	+
	\varepsilon_{\mathrm{in}}\varepsilon.
\end{equation*}
Thanks to Lemma \ref{selfsimilarqualieps}, Proposition \ref{prop:OuterMomentPointwise}, and the global Hardy-type inequality, Lemma \ref{lem:hardy_rescaled_weighted}, we immediately obtain
\begin{align}\label{estimate-energy-quad1}
	\langle Q_{\mathrm{in}}[\varepsilon],\varepsilon_{\mathrm{in}}\rangle_{L^{2}_{\omega}}
	=
	o\!\left(\|\nabla \varepsilon_{\mathrm{in}}\|_{L^{2}_{\omega}}^{2}\right)
	+
	o(e^{-2\tau}).
\end{align}
Moreover, proceeding as in the absorption of the operator $A[\varepsilon_{\mathrm{in}}]$ in the proof of Lemma \ref{lem:energyestpre}, we also get
\begin{align}\label{estimate-energy-quad2}
	\nu^{2}
	\int
	\nabla \Phi_{Q_{\mathrm{in}}[\varepsilon]}
	\cdot
	\nabla \Phi_{\varepsilon_{\mathrm{in}}}
	\chi\!\left(\frac{z-\xi}{R\nu}\right)dz
	=
	o\!\left(\|\nabla \varepsilon_{\mathrm{in}}\|_{L^{2}_{\omega}}^{2}\right).
\end{align}
\end{proof}

As a consequence of the previous proposition, we obtain the following result.
\begin{proposition}\label{prop:globdynepsinn}

	Let $\tau$ be sufficiently large. Then
	\begin{align}\label{bd:varepsilon-out-matched-scalar}
		|\langle \varepsilon_{\mathrm{in}}, \varepsilon_{\mathrm{in}}  \rangle_*|+\int_{\tau}^\infty \| \nabla \varepsilon_{\mathrm{in}}\|_{L^2_\omega}^2 d\tau = o(e^{-2\tau})
	\end{align}
	and
	\begin{equation} \label{bd:varepsilon-out-L2omega}
	\|\varepsilon_{\mathrm{in}}\|_{L^{2}_{\omega}}
	= o(e^{-\tau}).
	\end{equation}
	
\end{proposition}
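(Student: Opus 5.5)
We integrate the differential inequality of Proposition \ref{prop:evolinnereps:pre} by a Gronwall argument in the weighted quantity $e^{(4-\delta)\tau}\langle \varepsilon_{\mathrm{in}},\varepsilon_{\mathrm{in}}\rangle_*$. The one genuinely new difficulty is the nonlocal maximal-function term.

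Set $E(\tau)=\langle \varepsilon_{\mathrm{in}},\varepsilon_{\mathrm{in}}\rangle_*$, $D(\tau)=\|\nabla\varepsilon_{\mathrm{in}}(\tau)\|_{L^2_\omega}$ and $A(\tau)=\sup_{0<h<1}h^{-1}\int_{\tau-h}^\tau D(\sigma)d\sigma$. Multiplying the inequality by $e^{(4-\delta)\tau}$ and integrating from a large $\tau_1$ to $\tau$ gives
\begin{equation*}
e^{(4-\delta)\tau}E(\tau)+C\int_{\tau_1}^\tau e^{(4-\delta)s}D(s)^2ds\le e^{(4-\delta)\tau_1}E(\tau_1)+\int_{\tau_1}^\tau e^{(4-\delta)s}\bigl(o(A(s)^2)+o(e^{-2s})\bigr)ds .
\end{equation*}

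The key step is absorbing the maximal term. By Cauchy--Schwarz, $A(s)^2\le \sup_{0<h<1}h^{-1}\int_{s-h}^s D^2=:M[D^2](s)$, the one-sided local Hardy--Littlewood maximal function of $D^2$. Since we only have an $L^1$-type (weighted) integral of $D^2$, I would not appeal to the $L^1$ maximal theorem, which fails. Instead I would exploit that the $o(\cdot)$ in front is a coefficient $\kappa(s)\to0$. Because $h<1$ and the exponential weight varies by at most a factor $e^{4}$ on windows of length $1$, a Fubini argument should bound $\int_{\tau_1}^\tau e^{(4-\delta)s}\kappa(s)A(s)^2ds$ by $\sup_{s\ge\tau_1-1}\kappa(s)\cdot C\int_{\tau_1-1}^\tau e^{(4-\delta)s}D(s)^2 ds$ times a logarithmic factor coming from the $1/h$ singularity. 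Here I expect the main obstacle. The crude Fubini bound produces $\int_0^1 dh/h$, which diverges. I would handle it by returning to where $A$ enters, namely Lemmas \ref{tildegamma-gamma_quant} and \ref{lem:quantmodlemma}, and observing that only the averages at scales $h\gtrsim \nu^2$ or $|z-\xi|^2$ actually occur. Tracking the weights there should replace $M[D^2]$ by an average $\int_0^1 D(s-h)^2\,\rho(h)dh$ with integrable kernel $\rho$ (for instance $\rho(h)\sim h^{-1/2}$, coming from the $\sqrt h$ in Lemma \ref{lemma:QuantSlowVariation}). A simple Fubini then absorbs this term into the left-hand side once $\tau_1$ is large. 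The part of the window in $[\tau_1-1,\tau_1]$ is a fixed finite constant by the qualitative bounds.

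With this absorption, and since $\int^\tau e^{(4-\delta)s}o(e^{-2s})ds=o(e^{(2-\delta)\tau})$, we get $e^{(4-\delta)\tau}E(\tau)+\int_{\tau_1}^\tau e^{(4-\delta)s}D^2\lesssim C_{\tau_1}+o(e^{(2-\delta)\tau})$. Hence $E(\tau)=o(e^{-2\tau})$, the $C_{\tau_1}e^{-(4-\delta)\tau}$ part being negligible. For the tail integral, I would repeat the argument on $[\tau,\infty)$ with the weight $e^{2s}$ instead, or use dyadic windows $[\tau+k,\tau+k+1]$. On each window, integrating the inequality without weight gives $C\int_{\tau+k}^{\tau+k+1}D^2\le E(\tau+k)+o(e^{-2(\tau+k)})$, and summing yields $\int_\tau^\infty D^2=o(e^{-2\tau})$. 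This is \eqref{bd:varepsilon-out-matched-scalar}.

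Finally, \eqref{bd:varepsilon-out-L2omega} follows from the coercivity of Proposition \ref{prop:matched_scalar_coercivity}: $\|\varepsilon_{\mathrm{in}}\|^2_{L^2_\omega}\lesssim E+\nu^2(\int\varepsilon_{\mathrm{in}}\chi_\eta)^2+\nu^6(\cdot)^2+\nu^8(\cdot)^2$. The projections onto $\Lambda U$ and $\partial_iU$ were shown to be $o(e^{-2\tau})$ in Step 1 of the proof of Proposition \ref{prop:evolinnereps:pre}. The mass term is bounded by $\nu^2\|\varepsilon\|_{L^1}^2=o(e^{-2\tau})$, using $\nu=o(e^{-\tau/2})$ and Lemma \ref{selfsimilarqualieps}. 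Alternatively, one can use $\int\varepsilon=0$ together with the bound on $\varepsilon_{\mathrm{out}}$ from Proposition \ref{prop:OuterMomentPointwise}.
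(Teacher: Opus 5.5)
Your overall scheme (Gronwall on $e^{(4-\delta)\tau}\langle\varepsilon_{\mathrm{in}},\varepsilon_{\mathrm{in}}\rangle_*$, then coercivity) is the paper's. However, the step you single out as the crux is never carried out, and the obstacle you describe there is one you create yourself. Writing $A(s)^2\le M[D^2](s)$ trades the maximal function of the $L^2$ function $D$ for that of the $L^1$ function $D^2$, and you then correctly observe that the $L^1$ maximal inequality fails. Your workaround is only sketched (``should replace''): it would mean reopening Lemmas \ref{tildegamma-gamma_quant}, \ref{lem:quantmodlemma} and re-deriving Proposition \ref{prop:evolinnereps:pre} with an integrable kernel $\rho(h)$ in place of the supremum, i.e.\ redoing the energy estimate instead of using it. It is also unnecessary.

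The missing idea is the $L^2$ Hardy--Littlewood maximal inequality applied to $D$ itself, $\|A_f\|_{L^2[\tau_1,\tau]}\lesssim\|f\|_{L^2[\tau_1-1,\tau]}$. Combine it with the observation that, since $h<1$, one has $e^{cs}A_D(s)\le e^{c}A_{e^{c\cdot}D}(s)$ for $c=(4-\delta)/2$. Together these give
\begin{equation*}
\int_{\tau_1}^{\tau}e^{(4-\delta)s}A(s)^2\,ds\lesssim\int_{\tau_1}^{\tau}A_{e^{c\cdot}D}(s)^2\,ds\lesssim\int_{\tau_1-1}^{\tau}e^{(4-\delta)s}D(s)^2\,ds .
\end{equation*}
The coefficient of $A^2$ is at most $\eta$ for $\tau_1$ large. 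So the part over $[\tau_1,\tau]$ is absorbed by the dissipation term, and the part over $[\tau_1-1,\tau_1]$ is a fixed constant. That this constant, and $E,D$ themselves, are finite does not follow from the qualitative pointwise bounds, which give only $|z|^{-2}$ decay against the Gaussian weight $\omega$. It needs the parabolic argument that $\varepsilon_{\mathrm{in}}$ has Gaussian decay, since it has compactly supported data and Gaussian-decaying forcing. With this in place your conclusion $E=o(e^{-2\tau})$ follows. The tail bound is then read off from the weighted inequality as $\int_\tau^{\tau+1}D^2\lesssim C_{\tau_1}e^{-(4-\delta)\tau}+o(e^{-2\tau})$, summed over unit windows. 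Your unweighted window argument as written drops the $o(A^2)$ term, which again requires this same absorption.

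In the coercivity step, your first bound for the mass term fails. From $\nu=o(e^{-\tau/2})$ and $\|\varepsilon\|_{L^1}=o(1)$ you only get $\nu^2\|\varepsilon\|_{L^1}^2=o(e^{-\tau})$, not $o(e^{-2\tau})$. A posteriori $\nu^2\approx e^{-\tau}/\tau$, so this route cannot be improved. Your alternative is the correct one, and it is the paper's. By $\int\varepsilon=0$,
\begin{equation*}
\int\varepsilon_{\mathrm{in}}\chi_\eta=-\int(1-\chi_2)\varepsilon_{\mathrm{out}}-\int\varepsilon_{\mathrm{in}}(1-\chi_\eta).
\end{equation*}
The first term is $o(e^{-\tau})$ by \eqref{SecMomEst}. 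The second is $O_\eta(\|\varepsilon_{\mathrm{in}}\|_{L^2_\omega})$ by Cauchy--Schwarz, since $\int_{|z|\ge\eta}\omega^{-1}dz<\infty$. The resulting $C_\eta\nu^2\|\varepsilon_{\mathrm{in}}\|^2_{L^2_\omega}$ is then absorbed because $\nu\to0$. This same estimate also gives $\langle\varepsilon_{\mathrm{in}},\varepsilon_{\mathrm{in}}\rangle_*\ge -o(e^{-2\tau})$. You need that lower bound twice, because the matched product is not sign-definite: once for the absolute value in \eqref{bd:varepsilon-out-matched-scalar}, and once to extract the dissipation integral from the integrated inequality.
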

\begin{proof}

	The first step is to derive local-in-time bounds for 
	$\|\varepsilon_{\mathrm{in}}\|_{L^{2}_{\omega}}$ and 
	$\|\nabla \varepsilon_{\mathrm{in}}\|_{L^{2}_{\omega}}$.
	We recall that $\varepsilon_{\mathrm{in}}$ solves
	\begin{equation*}
		\begin{cases}
			\partial_\tau \varepsilon_{\mathrm{in}}
			=
			L_{\nu,\xi}[\varepsilon_{\mathrm{in}}]
			+
			A[\varepsilon_{\mathrm{in}}]
			+
			F_{\nu,\xi}
			+
			B[\varepsilon_{\mathrm{out}}]
			+
			Q_{\mathrm{in}}[\varepsilon],
			\\[0.4em]
			\varepsilon_{\mathrm{in}}(\tau_0,z)
			=
			\varepsilon_0(z)\chi_4(z),
		\end{cases}
	\end{equation*}
	where the initial data is smooth and compactly supported.
	Since $\varepsilon_{\mathrm{in}}=\varepsilon-\varepsilon_{\mathrm{out}}\in L^{1}(\mathbb{R}^{2})\cap L^{\infty}(\mathbb{R}^{2})$, we can rewrite the equation as
	\begin{align*}
		\partial_{\tau} \varepsilon_{\mathrm{in}}
		=
		\Delta \varepsilon_{\mathrm{in}}
		+
		\frac{z}{2}\cdot \nabla \varepsilon_{\mathrm{in}}
		+
		b(z,\tau)\cdot \nabla \varepsilon_{\mathrm{in}}
		+
		c(z,\tau)\varepsilon_{\mathrm{in}}
		+
		F(z,\tau),
	\end{align*}
	where for any $\tau \in [\tau_0,\tau_1]$,
	\begin{align*}
		|b(z,\tau)|+|c(z,\tau)|\lesssim_{\tau_{0},\tau_{1}}1\qquad  |F(z,\tau)| \lesssim_{\tau_0,\tau_1} \frac{e^{-|z|^{2}/4}}{1+|z|^{4}}.
	\end{align*}
Up to renormalization to absorb the drift term $z/2\cdot \nabla$, this is a second order parabolic equation with bounded coefficients, compactly supported initial data, and forcing with Gaussian decay. Therefore, standard parabolic estimates show that $\varepsilon_{\mathrm{in}}$ and $\nabla \varepsilon_{\mathrm{in}}$ have Gaussian decay in that they belong to $L^2(\langle z\rangle^4e^{|z|^2/4})$, so that the quantities $\langle \varepsilon_{\mathrm{in}},\varepsilon_{\mathrm{in}}\rangle_*$ and $\| \nabla\varepsilon_{\mathrm{in}}\|_{L^2_{\omega}}$ are well-defined.

	We now turn to the global dynamics of 
	$\langle \varepsilon_{\mathrm{in}},\varepsilon_{\mathrm{in}} \rangle_*$. We denote by $A_{f}$ the
one-sided Hardy--Littlewood maximal function truncated at scales
$h<1$ of a function $f$, defined by
\[
A_{f}(\tau)
:=
\sup_{0<h<1}\frac{1}{h}\int_{\tau-h}^{\tau} |f(\sigma)|\,d\sigma .
\]

Before proceeding, we recall the following elementary fact, which is a direct consequence of the boundedness of the maximal function in $L^2$:

	\begin{equation} \label{lem:hlmLemma}
	\|A_{f}\|_{L^{2}[\tau_{0},\tau_{1}]}
	\le C \|f\|_{L^{2}[\tau_{0}-1,\tau_{1}]}.
	\end{equation}

	From Proposition \ref{prop:evolinnereps:pre}, for $\tau \ge \tau_0$,
	\begin{align*}
		\partial_{\tau}\langle \varepsilon_{\mathrm{in}},\varepsilon_{\mathrm{in}} \rangle_*
		\le
		(-4+\delta)\langle \varepsilon_{\mathrm{in}},\varepsilon_{\mathrm{in}} \rangle_*
		-
		C\|\nabla \varepsilon_{\mathrm{in}}\|_{L^{2}_{\omega}}^{2}
		+
		o\!\left(\mathcal{A}^{2}_{\|\nabla \varepsilon_{\mathrm{in}}\|_{L^{2}_{\omega}}}\right)
		+
		o(e^{-2\tau}).
	\end{align*}
	Hence, for any $\eta>0$, there exists $\tau_0=\tau_0(\eta)$ such that
	\begin{align}\label{prop:innevolez}
		\partial_{\tau}\langle \varepsilon_{\mathrm{in}},\varepsilon_{\mathrm{in}} \rangle_*
		+
		(4-\delta)\langle \varepsilon_{\mathrm{in}},\varepsilon_{\mathrm{in}} \rangle_*
		+
		C\|\nabla \varepsilon_{\mathrm{in}}\|_{L^{2}_{\omega}}^{2}
		\le
		\eta\, \mathcal{A}^{2}_{\|\nabla \varepsilon_{\mathrm{in}}\|_{L^{2}_{\omega}}}
		+
		\eta e^{-2\tau}.
	\end{align}
	By the definition of $\mathcal{A}$,
	\begin{align}\label{ineqhlm}
		e^{\frac{4-\delta}{2}\tau}
		\mathcal{A}_{\|\nabla \varepsilon_{\mathrm{in}}\|_{L^{2}_{\omega}}}(\tau)
		&=
		\sup_{0<h<1}\frac{1}{h}
		\int_{\tau-h}^{\tau}
		e^{\frac{4-\delta}{2}(\tau-\sigma)}
		e^{\frac{4-\delta}{2}\sigma}
		\|\nabla \varepsilon_{\mathrm{in}}\|_{L^{2}_{\omega}}(\sigma)
		\, d\sigma
		\\
		&\lesssim
		\mathcal{A}_{e^{\frac{(4-\delta)\cdot}{2}}
			\|\nabla \varepsilon_{\mathrm{in}}\|_{L^{2}_{\omega}}}(\tau).
		\nonumber
	\end{align}
	Integrating \eqref{prop:innevolez} from $\tau_1 > \tau_0+1$ to $\tau$, we obtain
	\begin{align*}
		e^{(4-\delta)\tau}
		\langle \varepsilon_{\mathrm{in}},\varepsilon_{\mathrm{in}} \rangle_*
		&-
		e^{(4-\delta)\tau_1}
		\langle \varepsilon_{\mathrm{in}},\varepsilon_{\mathrm{in}} \rangle_*
		+
		C \int_{\tau_1}^{\tau}
		e^{(4-\delta)\sigma}
		\|\nabla \varepsilon_{\mathrm{in}}\|_{L^{2}_{\omega}}^{2}(\sigma)
		\, d\sigma
		\\
		&\le
		\eta \int_{\tau_1}^{\tau}
		e^{(4-\delta)\sigma}
		\mathcal{A}^{2}_{\|\nabla \varepsilon_{\mathrm{in}}\|_{L^{2}_{\omega}}}(\sigma)
		\, d\sigma
		+
		\eta \int_{\tau_1}^{\tau}
		e^{(4-\delta)\sigma} e^{-2\sigma}
		\, d\sigma.
	\end{align*}
	Using \eqref{ineqhlm} and the bound \ref{lem:hlmLemma} on the maximal function, we deduce
	\begin{align*}
		\langle \varepsilon_{\mathrm{in}},\varepsilon_{\mathrm{in}} \rangle_*+
		Ce^{-(4-\delta)\tau} \int_{\tau_1}^{\tau}
		e^{(4-\delta)\sigma}
		\|\nabla \varepsilon_{\mathrm{in}}\|_{L^{2}_{\omega}}^{2}(\sigma)
		\, d\sigma
		\lesssim
		\eta e^{-2\tau}
		+
		\eta e^{-(4-\delta)\tau}
		\int_{\tau_1-1}^{\tau_1}
		e^{(4-\delta)\sigma}
		\|\nabla \varepsilon_{\mathrm{in}}\|^{2}_{L^{2}_{\omega}}(\sigma)
		\, d\sigma.
	\end{align*}
	The first inequality \eqref{bd:varepsilon-out-matched-scalar} of the Lemma then follows, using that $\eta$ is arbitrary.

    There remains to show \eqref{bd:varepsilon-out-L2omega}. Then, by the coercivity of the matched scalar product (Proposition \ref{prop:matched_scalar_coercivity}), we have
    \begin{align*}
		& \|\varepsilon_{\mathrm{in}}\|^{2}_{L^{2}_{\omega}} \lesssim \langle \varepsilon_{\mathrm{in}},\varepsilon_{\mathrm{in}} \rangle_*+
		C\left[\nu^{2}
		\left(\int \varepsilon_{\mathrm{in}}\,\chi_{ \eta} \right)^{2}+\nu^{6}
		\left(\int \varepsilon_{\mathrm{in}}\, (\Lambda U)_{\nu,\xi}
		\,\chi_\eta \right)^{2}+\sum_{i=1}^{2}\nu^{8}
		\left(\int \varepsilon_{\mathrm{in}}\, \partial_{z_{i}}U_{\nu,\xi}
		\,\chi_\eta \right)^{2}\right]
	\end{align*}
    By \eqref{bd:varepsilon-out-matched-scalar} the first term is $o(e^{-2\tau})$. 
    This is the desired second bound \eqref{bd:varepsilon-out-L2omega}. Using the identity \eqref{id:varepsilon-in-tildew} for $\varepsilon_{\mathrm{in}}$, the cancellation $\int \varepsilon dz=0$ by \eqref{normalization-alpha} and the cancellation of the scalar products \eqref{eq:ortoselfsimilar-refined} for $\tilde w$, and the bounds \eqref{errorsmatched:eq1_new} and \eqref{SecMomEst}, the scalar products above are all $o(e^{-2\tau})$. This shows \eqref{bd:varepsilon-out-L2omega}.
    
\end{proof}

\subsection{Precise asymptotic behaviour and proof of the main theorem}\label{subsec:asymptotics}

Combining the ansatz \eqref{eq:ansatz} and the inner-outer decomposition \eqref{eq:inner_outer_decomp} we found that
\begin{equation} \label{proof-theorem-decomposition}
w=U_{\nu,\xi}+\tilde w=\tilde U_{\nu,\xi}+\varepsilon_{\mathrm{in}}+(1-\chi_{2})\varepsilon_{\mathrm{out}}
\end{equation}
where by \eqref{bd:varepsilon-out-L2omega} and the equivalence of the weight $8\omega=(\nu^2+|z-\xi|^2)^2e^{|z|^2/4}(1+o(1))$ (by \eqref{def:omeganuxi}, \eqref{id:qualitative-cv-nu-xi-refined} and Lemma \ref{tildegamma-gamma}) and Proposition \ref{prop:OuterMomentPointwise},
\begin{equation} \label{proof-theorem-bounds}
\int_{\mathbb R^2} \varepsilon_{\mathrm{in}}^2(\nu^2+ |z-\xi|^2)^2 e^{|z|^2/4}dz =o(e^{-2\tau}) \quad \mbox{and} \quad \int_{\mathbb R^2} (1-\chi_{2})|\varepsilon_{\mathrm{out}}| |z|^2dz=o(e^{-\tau}).
\end{equation}
The main Theorem \ref{main:th:main} will be a direct consequence of the precise formula \eqref{proof-theorem-identity-nu-final} for $\nu$ and of the bounds \eqref{bd:refined-L1} and \eqref{bd:refined-xi} for $\|\tilde w\|_{L^1}$ and $\xi$ we prove in this section. The $L^\infty$ bound follows from a standard application of parabolic regularization.

We start by computing the second momentum of the matched soliton.

\begin{lemma}\label{lem:secondmomentmatched}
	Assume that $\nu=o(e^{-\tau/2})$, $\xi=o(e^{-\tau/2})$ and $\alpha-1=o(e^{-\tau})$ as $\tau\to+\infty$. Then
	\begin{align}\label{eq:secondmomentmatched}
		& \int_{\mathbb{R}^{2}}\tilde{U}_{\nu,\xi}(z)|z|^{2}\,dz
		=
		16\pi \nu^{2}|\log \nu|
		+
		o(e^{-\tau}),\\
		& \label{eq:centreofmassmatched}
		\int_{\mathbb{R}^{2}} \tilde{U}_{\nu,\xi}(z)z\,dz
		= 8\pi \xi +o(e^{-3\tau/2}),
	\end{align}
\end{lemma}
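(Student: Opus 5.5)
The plan is a direct computation. We write $\tilde U_{\nu,\xi}=\alpha U_{\nu,\xi}\mathcal X_\star$ and first peel off $\alpha$. Indeed $\alpha=1+o(e^{-\tau})$, and we will show below that $\int U_{\nu,\xi}\mathcal X_\star|z|^2\,dz=O(\nu^2|\log\nu|)+O(|\xi|^2)=o(1)$. Hence replacing $\alpha$ by $1$ costs $o(e^{-\tau})\cdot o(1)$ in \eqref{eq:secondmomentmatched}, which is acceptable. For \eqref{eq:centreofmassmatched}, $\int U_{\nu,\xi}\mathcal X_\star z\,dz=O(|\xi|+\nu^2)=o(e^{-\tau/2})$, so the $\alpha$-error is $o(e^{-3\tau/2})$. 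It therefore suffices to treat $\alpha=1$.

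\textbf{Second moment.} We expand $|z|^2=|z-\xi|^2+2\xi\cdot(z-\xi)+|\xi|^2$ and change variables $z=\xi+\nu y$, so that $U_{\nu,\xi}(z)dz=U(y)dy$. Three contributions arise.

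\emph{(a)} The term $|\xi|^2\int U_{\nu,\xi}\mathcal X_\star=8\pi|\xi|^2+O(\nu^2|\xi|^2)$, which is $o(e^{-\tau})$.

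\emph{(b)} The cross term $2\xi\cdot\int U_{\nu,\xi}\mathcal X_\star(z-\xi)$. By radiality of $U$ around $\xi$, the integral against $1$ vanishes, so only $\int U_{\nu,\xi}(\mathcal X_\star-1)(z-\xi)$ remains. This is supported in $|z|\ge\zeta_*$, where $U_{\nu,\xi}\lesssim\nu^2|z|^{-4}$, so it is $O(\nu^2)$. The whole term is then $O(|\xi|\nu^2)=o(e^{-3\tau/2})$.

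\emph{(c)} The main term $\int U_{\nu,\xi}\mathcal X_\star|z-\xi|^2\,dz$. We split with the cutoff. On $|z|\le\zeta_*$, where $\mathcal X_\star=1$, and since $|\xi|=o(1)$ this region is comparable to $|z-\xi|\le\zeta_*+o(1)$, the integral equals
\[
\nu^2\int_{|y|\lesssim\zeta_*/\nu}\frac{8|y|^2}{(1+|y|^2)^2}\,dy=16\pi\nu^2\log(\zeta_*/\nu)+O(\nu^2),
\]
using $\int_{|y|\le L}8|y|^2(1+|y|^2)^{-2}dy=8\pi[\log(1+L^2)+(1+L^2)^{-1}-1]$. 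On $|z|\ge\zeta_*$ we have $U_{\nu,\xi}|z-\xi|^2\lesssim\nu^2|z|^{-2}$, and $\mathcal X_\star$ contains the Gaussian factor for $|z|\ge2\zeta_*$. This piece is therefore $O_{\zeta_*}(\nu^2)$, the logarithm being cut at scale $\zeta_*$ from above. The perturbation of the boundary $|z|=\zeta_*$ versus $|z-\xi|=\zeta_*$ costs only $O(\nu^2|\xi|)$.

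Altogether the second moment equals $16\pi\nu^2|\log\nu|+O_{\zeta_*}(\nu^2)$. Since $\nu^2=o(e^{-\tau})$ and $\zeta_*$ is fixed, this gives \eqref{eq:secondmomentmatched}.

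\textbf{Center of mass.} We write $z=(z-\xi)+\xi$. The $\xi$ part gives $\xi\int U_{\nu,\xi}\mathcal X_\star=8\pi\xi+O(|\xi|\nu^2)$. The $(z-\xi)$ part is $\int U_{\nu,\xi}(\mathcal X_\star-1)(z-\xi)$, with the full integral vanishing by oddness. We need this to be $o(e^{-3\tau/2})$, whereas the crude bound gives only $O(\nu^2)$; this is the step where care is needed. We use that $\mathcal X_\star$ is radial about $0$, not about $\xi$. Writing $U_{\nu,\xi}(z)=U_{\nu,0}(z)+O(|\xi|\nu^2|z|^{-5})$ on $|z|\ge\zeta_*$, the $U_{\nu,0}$ contribution $\int U_{\nu,0}(\mathcal X_\star-1)z$ vanishes by radiality. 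We also have $\int U_{\nu,0}(\mathcal X_\star-1)\xi=O(|\xi|\nu^2)$. The remainder is bounded by $|\xi|\nu^2\int_{|z|\ge\zeta_*}|z|^{-4}\,dz=O_{\zeta_*}(|\xi|\nu^2)=o(e^{-3\tau/2})$. This yields \eqref{eq:centreofmassmatched}.

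The expected main obstacle is purely bookkeeping: extracting the exact constant $16\pi$ in the logarithm. The point is that $|\log\nu|$ dominates $\log\zeta_*$ and all $O(\nu^2)$ terms only because $\nu^2=o(e^{-\tau})$. The center-of-mass cancellation likewise requires the symmetry argument around $0$ rather than the crude tail bound.
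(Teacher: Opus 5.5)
Your proposal is correct and follows essentially the paper's route. You expand $|z|^2$ around $\xi$, rescale to extract $16\pi\nu^2\log(\zeta_*/\nu)$, absorb the $O_{\zeta_*}(\nu^2)$ and $|\xi|^2$ terms into $o(e^{-\tau})$, and handle the centre of mass by a symmetry-plus-translation argument giving an $O(|\xi|\nu^2)=o(e^{-3\tau/2})$ error. The only differences are cosmetic: you shift the soliton ($U_{\nu,\xi}$ versus $U_{\nu,0}$) where the paper shifts the cutoff ($\mathcal X_\star(z)$ versus $\mathcal X_\star(z-\xi)$), and by peeling off $\alpha$ first you use only $\alpha-1=o(e^{-\tau})$ rather than the normalization $\int\tilde U_{\nu,\xi}=8\pi$.
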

\begin{proof}	
	Recall that the matched soliton is
	$$
	\tilde U_{\nu,\xi}=\alpha U_{\nu,\xi}\mathcal X_\star \quad \mbox{with} \quad U_{\nu,\xi}(z)=\frac{8\nu^2}{(\nu^2+|z-\xi|^2)^2}\quad \mbox{and}\quad \mathcal X_\star(z)=\chi_{\zeta_*}(z)+(1-\chi_{\zeta_*}(z))e^{-|z|^2/4}.
	$$
	From the bounds on the parameters, we see $U_{\nu,\xi}(z)=O(\nu^2)=o(e^{-\tau})$ for $|z|\geq \zeta_*/2$, so that
	$$
	\int_{\mathbb{R}^{2}}\tilde{U}_{\nu,\xi}(z)|z|^{2}\,dz = 8 \alpha \nu^2 \int_{\mathbb R^2} \frac{|z|^2}{(\nu^2+|z-\xi|^2)^2}\chi_{\zeta^*,\xi}(z)dz+o(e^{-\tau})
	$$
	and it remains for us to compute the first term in the right-hand side. We decompose it as
	\begin{align*}
	& \int \frac{|z-\xi|^2}{(\nu^2+|z-\xi|^2)^2}\chi_{\zeta^*,\xi}dz+2\int \frac{(z-\xi)\cdot \xi}{(\nu^2+|z-\xi|^2)^2}\chi_{\zeta^*,\xi}dz+\int \frac{|\xi|^2}{(\nu^2+|z-\xi|^2)^2}\chi_{\zeta^*,\xi}dz \\
	&\qquad = \int \frac{|y|^2}{(1+|y|^2)^2} \chi_{\zeta^*/\nu}(y)dy+0+|\xi|^2O(\nu^{-2})=2\pi|\log\nu|+O(1)+O(\nu^{-2}e^{-\tau}).
	\end{align*}
	where the second term vanishes because $\chi$ is radial. The identity \eqref{eq:secondmomentmatched} follows. Next, notice that $ \int U_{\nu,\xi}\mathcal X_\star(z-\xi)(z-\xi)dz=0$ by radiallity of $\mathcal X_\star$. Using this, the normalization $\int \tilde U_{\nu,\xi}dz=8\pi$, and the facts that $\xi=o(e^{-\tau/2})$ and $U_{\nu,\xi}(z)=O(\nu^2)=o(e^{-\tau})$ for $|z|\geq \zeta_*/2$ we get
	\begin{align*}
	& \int_{\mathbb{R}^{2}}\tilde{U}_{\nu,\xi}(z)z\,dz=8\pi \xi +\alpha \int_{\mathbb{R}^{2}} U_{\nu,\xi}(z)(\mathcal X_\star(z)-\mathcal X_\star(z-\xi))(z-\xi)\,dz= 8\pi \xi+O(e^{-3\tau/2}).
	\end{align*}
	This is \eqref{eq:centreofmassmatched}.
	
\end{proof}

We are now ready to give the asymptotic behaviour for the scale.

\begin{lemma}
	There holds as $\tau\to \infty$,
	\begin{equation}\label{proof-theorem-identity-nu-final}
	\nu(\tau)
	=
	\sqrt{\frac{\mu}{8\pi}}\,
	\frac{e^{-\tau/2}}{\sqrt{\tau}}
	(1+o(1)).
	\end{equation}
	
\end{lemma}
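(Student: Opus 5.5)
The plan is to test the conserved second momentum \eqref{refined:second-momentum} against the decomposition \eqref{proof-theorem-decomposition}, namely
\[
\mu e^{-\tau}=\int w|z|^2dz=\int \tilde U_{\nu,\xi}|z|^2dz+\int \varepsilon_{\mathrm{in}}|z|^2dz+\int (1-\chi_2)\varepsilon_{\mathrm{out}}|z|^2dz .
\]
The hypotheses of Lemma \ref{lem:secondmomentmatched} hold by \eqref{id:qualitative-cv-nu-xi-refined} and Lemma \ref{lem:alpha}, so the first term equals $16\pi\nu^2|\log\nu|+o(e^{-\tau})$. The third term is $o(e^{-\tau})$ directly by \eqref{proof-theorem-bounds}.

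For the middle term I would use Cauchy--Schwarz against the weight of \eqref{proof-theorem-bounds}:
\[
\Big|\int\varepsilon_{\mathrm{in}}|z|^2\Big|\le \Big(\int \varepsilon_{\mathrm{in}}^2(\nu^2+|z-\xi|^2)^2e^{|z|^2/4}\Big)^{1/2}\Big(\int \frac{|z|^4e^{-|z|^2/4}}{(\nu^2+|z-\xi|^2)^2}dz\Big)^{1/2}.
\]
The first factor is $o(e^{-\tau})$. The second factor is the delicate one. Since $\xi,\nu\to0$, the ratio $|z|^4/(\nu^2+|z-\xi|^2)^2$ is bounded by a constant away from $\{|z-\xi|\lesssim |\xi|\}$. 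Near that set it is at most $|\xi|^4/\nu^4$ on a region of area $|\xi|^2$, which a priori could be large if $|\xi|\gg\nu$.

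I would handle this by splitting $|z|^2\le 2|z-\xi|^2+2|\xi|^2$. The $|z-\xi|^2$ part gives a bounded weight integral, hence a contribution $o(e^{-\tau})$. For the $|\xi|^2\int|\varepsilon_{\mathrm{in}}|$ part I would use $\|\varepsilon_{\mathrm{in}}\|_{L^1}=o(1)$, which follows from Lemma \ref{selfsimilarqualieps} and \eqref{SecMomEst}, together with $|\xi|^2=o(e^{-\tau})$. So this term is also $o(e^{-\tau})$. This yields
\[
16\pi\nu^2|\log\nu|=\mu e^{-\tau}+o(e^{-\tau}).
\]

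The last step is to invert this relation. Set $X=\nu^2e^{\tau}=o(1)$, so that $\nu^2|\log\nu|=\tfrac12X e^{-\tau}(\tau-\log X)$ and hence
\[
8\pi X(\tau-\log X)=\mu+o(1).
\]
Since $\mu>0$, $X$ cannot be too small. Indeed, for $X\le e^{-\tau}$ the map $X\mapsto X(\tau-\log X)$ is increasing and at most $(2\tau)e^{-\tau}\to0$, so $X\ge e^{-\tau}$. Then $-\log X\le\tau$, and more precisely $X\gtrsim 1/\tau$, so $\log X=O(\log\tau)=o(\tau)$. Consequently $X\tau(1+o(1))=\mu/(8\pi)+o(1)$, that is $X=\frac{\mu}{8\pi\tau}(1+o(1))$, and taking square roots gives \eqref{proof-theorem-identity-nu-final}.

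I expect the main obstacle to be the middle term: controlling $\int\varepsilon_{\mathrm{in}}|z|^2$ near the soliton when $\xi$ is not known to be $O(\nu)$. The splitting around $\xi$ is what resolves it.
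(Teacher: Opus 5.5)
Your proposal is correct and follows the paper's proof. You expand the conserved second momentum along \eqref{proof-theorem-decomposition}, use Lemma \ref{lem:secondmomentmatched} for the matched soliton and \eqref{proof-theorem-bounds} for the outer part, control $\int \varepsilon_{\mathrm{in}}|z|^2$ by splitting around $\xi$ (Cauchy--Schwarz for the $|z-\xi|^2$ part, and $\|\varepsilon_{\mathrm{in}}\|_{L^1}=o(1)$ with $|\xi|^2=o(e^{-\tau})$ for the rest), and then invert $16\pi\nu^2|\log\nu|=\mu e^{-\tau}(1+o(1))$. The differences are minor: the bound $|z|^2\le 2|z-\xi|^2+2|\xi|^2$ sidesteps the cross term that the paper treats with a separate $\int|\varepsilon_{\mathrm{in}}||z-\xi|=o(e^{-\tau/2})$ estimate, and your inversion via $X=\nu^2e^{\tau}$ spells out what the paper states in one line.
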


\begin{proof}

Expanding the second momentum \eqref{refined:second-momentum} thanks to the decomposition \eqref{proof-theorem-decomposition}, the expression \eqref{eq:secondmomentmatched} and the second bound in \eqref{proof-theorem-bounds}, we find
	\[
	\mu e^{-\tau}
	=
	16\pi \nu^{2}|\log\nu|
	+
	\int_{\mathbb{R}^{2}}\varepsilon_{\mathrm{in}}(z)|z|^{2}\,dz
	+
	o(e^{-\tau}).
	\]
	We now estimate the inner error contribution. Since $\varepsilon=\varepsilon_{\mathrm{in}}+(1-\chi_2)\varepsilon_{\mathrm{out}}$, the estimate \eqref{selfsimilarqualieps} and the second inequality in \eqref{proof-theorem-bounds} imply
	\begin{equation} \label{proof-theorem-bounds-2}
	\int| \varepsilon_{\mathrm{in}}|dz =o(1).
	\end{equation}
	In turn, the above bound \eqref{proof-theorem-bounds-2} and \eqref{proof-theorem-bounds} with Cauchy-Schwarz imply, since $\nu=o(e^{-\tau/2})$,
	\begin{equation} \label{proof-theorem-bounds-3}
	\int |\varepsilon_{\mathrm{in}}||z-\xi|dz = e^{-\tau/2}\int_{|z-\xi|<e^{-\tau/2}}|\varepsilon_{\mathrm{in}}|dz+e^{\tau/2}\int_{|z-\xi|>e^{-\tau/2}}|\varepsilon_{\mathrm{in}}|(|\nu+|z-\xi|)^2dz=o(e^{-\tau/2}).
	\end{equation}
	By decomposing $|z|^2=|\xi-z|^2+2z\cdot (z-\xi)+|\xi|^2$, using $|\xi|=o(e^{-\tau/2})$, \eqref{proof-theorem-bounds-2} and \eqref{proof-theorem-bounds-3} we obtain
	\[
	\int_{\mathbb{R}^{2}}\varepsilon_{\mathrm{in}}|z|^{2}\,dz
	=
	\int_{\mathbb{R}^{2}}\varepsilon_{\mathrm{in}}|z-\xi|^{2}\,dz
	+
	o(e^{-\tau/2})\cdot \int \varepsilon_{\mathrm{in}}(z-\xi)dz+o(e^{-\tau}) \int \varepsilon dz =o(e^{-\tau}).
	\]
	Therefore
	\[
	16\pi \nu^{2}|\log\nu|
	=
	\mu (1+o(1))e^{-\tau}.
	\]
	It remains to invert this relation. Since $\nu\to0$, the previous identity
	implies
	\[
	|\log\nu|
	=
	\frac{\tau}{2}(1+o(1)).
	\]
	The desired identity \eqref{proof-theorem-identity-nu-final} follows.

\end{proof}

\begin{lemma}
For any $\kappa\in (0,1)$, one has
\begin{equation} \label{bd:refined-L1}
\| \tilde w \|_{L^1}=O(e^{-\kappa \tau}).
\end{equation}
\end{lemma}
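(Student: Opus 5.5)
We aim to show $\|\tilde w\|_{L^1}=O(e^{-\kappa\tau})$ for every $\kappa<1$ by combining the decomposition \eqref{proof-theorem-decomposition} with the weighted bounds \eqref{proof-theorem-bounds}. Write
\[
\tilde w=(\tilde U_{\nu,\xi}-U_{\nu,\xi})+\varepsilon_{\mathrm{in}}+(1-\chi_2)\varepsilon_{\mathrm{out}}
\]
and estimate the three pieces separately.

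\textbf{The two explicit pieces.} First, since $\tilde U_{\nu,\xi}-U_{\nu,\xi}=(\alpha-1)\hat U_{\nu,\xi}+U_{\nu,\xi}(\mathcal X_\star-1)$, Lemma \ref{lem:alpha} together with the tail bound $U_{\nu,\xi}\lesssim \nu^2|z|^{-4}$ on $\{|z|\ge\zeta_*\}$ gives an $L^1$ bound $O(\nu^2)$. By \eqref{proof-theorem-identity-nu-final}, $\nu^2\approx e^{-\tau}/\tau$. Second, for the outer piece, the bound $\int(1-\chi_2)|\varepsilon_{\mathrm{out}}||z|^2\,dz=o(e^{-\tau})$ holds on a region where $|z|\ge 2$, so $\|(1-\chi_2)\varepsilon_{\mathrm{out}}\|_{L^1}=o(e^{-\tau})$.

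\textbf{The inner piece.} This is the main obstacle: one must pass from the weighted $L^2$ bound to $L^1$ while losing only a logarithm. By Cauchy--Schwarz with the weight from \eqref{proof-theorem-bounds},
\[
\|\varepsilon_{\mathrm{in}}\|_{L^1}\le\Big(\int\varepsilon_{\mathrm{in}}^2(\nu^2+|z-\xi|^2)^2e^{|z|^2/4}\Big)^{1/2}\Big(\int\frac{e^{-|z|^2/4}}{(\nu^2+|z-\xi|^2)^2}\,dz\Big)^{1/2}.
\]
The second factor is of order $\nu^{-1}$, because $\int(\nu^2+|y|^2)^{-2}\,dy\approx\nu^{-2}$. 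This naively yields only $o(e^{-\tau})\nu^{-1}$, which is useless. We therefore split the $L^1$ norm at the scale $|z-\xi|=\rho$.
\begin{itemize}
\item On $\{|z-\xi|\ge\rho\}$, the weighted integral $\int_{|z-\xi|\ge\rho}(\nu^2+|z-\xi|^2)^{-2}e^{-|z|^2/4}$ is of order $\rho^{-2}$, so this part contributes $o(e^{-\tau})\rho^{-1}$.
\item On $\{|z-\xi|\le\rho\}$, we instead use \eqref{proof-theorem-bounds-2} together with the pointwise bound of Lemma \ref{selfsimilarqualieps}, namely $|\varepsilon|=o((\nu^2+|z-\xi|^2)^{-1})$ with $\varepsilon_{\mathrm{out}}$ negligible there. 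This only gives $o(\log(\rho/\nu))$, which is insufficient.
\end{itemize}

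The better route is to exploit the dissipation: Proposition \ref{prop:globdynepsinn} gives $\int_\tau^\infty\|\nabla\varepsilon_{\mathrm{in}}\|_{L^2_\omega}^2=o(e^{-2\tau})$. Combining this with the Hardy inequality (Lemma \ref{lem:global_hardy_matched}) controls $\int\varepsilon_{\mathrm{in}}^2\omega/(\nu^2+|z-\xi|^2)$ in time average. Cauchy--Schwarz against the weight $(\nu^2+|z-\xi|^2)/\omega$ then only costs $\int(\nu^2+|y|^2)^{-1}\,dy\sim|\log\nu|\sim\tau$, as already noted in Step 2 of Proposition \ref{prop:evolinnereps:pre}: $\|\varepsilon\|_{L^1}\lesssim\sqrt{|\ln\nu|}\,\|\nabla\varepsilon_{\mathrm{in}}\|_{L^2_\omega}$. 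This gives $\|\varepsilon_{\mathrm{in}}\|_{L^1}\lesssim\sqrt\tau\,\|\nabla\varepsilon_{\mathrm{in}}\|_{L^2_\omega}$, but only in time average.

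To convert the average into a pointwise-in-time bound, we use local parabolic smoothing on unit time intervals $[\tau-1,\tau]$. The inner equation \eqref{eq:inner_problem} has a smoothing effect in the matched norm, via the energy inequality of Proposition \ref{prop:evolinnereps:pre} integrated from a good time $\tau'\in[\tau-1,\tau]$ at which $\|\nabla\varepsilon_{\mathrm{in}}(\tau')\|_{L^2_\omega}^2=o(e^{-2\tau})$. If controlling $\nabla\varepsilon_{\mathrm{in}}$ at the later time proves delicate, we fall back on the following: the $L^1$ norm of $\varepsilon_{\mathrm{in}}$ satisfies a Gronwall-type inequality on unit intervals, since the equation is a divergence-form Fokker--Planck equation with sources $F_{\nu,\xi}$ and $B[\varepsilon_{\mathrm{out}}]$ of size $O(e^{-\tau})$. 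Hence $\|\varepsilon_{\mathrm{in}}(\tau)\|_{L^1}\lesssim\|\varepsilon_{\mathrm{in}}(\tau')\|_{L^1}+o(e^{-\kappa\tau})$.

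\textbf{Conclusion.} Combining the three pieces yields $\|\tilde w\|_{L^1}\lesssim\sqrt\tau\,e^{-\tau}+o(e^{-\tau})=O(e^{-\kappa\tau})$ for any $\kappa<1$. The polynomial losses in $\tau$ are absorbed into the exponent gap $1-\kappa$.
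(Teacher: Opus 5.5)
Your bounds on the explicit pieces and on $(1-\chi_2)\varepsilon_{\mathrm{out}}$ are fine. So is your bound on $\varepsilon_{\mathrm{in}}$ at \emph{good times}: Proposition~\ref{prop:globdynepsinn} together with $\|\varepsilon_{\mathrm{in}}\|_{L^1}\lesssim\sqrt{|\ln\nu|}\,\|\nabla\varepsilon_{\mathrm{in}}\|_{L^2_\omega}$ gives $\|\varepsilon_{\mathrm{in}}(\tau')\|_{L^1}=o(\sqrt{\tau}e^{-\tau})$ for most $\tau'$ in each unit interval. The crude Cauchy--Schwarz bound you call useless is not: it gives $O(\sqrt\tau e^{-\tau/2})$, i.e.\ the lemma for $\kappa<1/2$, and it is the starting point of the paper's proof.

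The genuine gap is passing from good times to all times, and neither mechanism you propose does it.

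\emph{The energy route.} Proposition~\ref{prop:evolinnereps:pre} estimates the zeroth-order quantity $\langle\varepsilon_{\mathrm{in}},\varepsilon_{\mathrm{in}}\rangle_*$, and $\|\nabla\varepsilon_{\mathrm{in}}\|_{L^2_\omega}$ enters it only as dissipation. Integrating it from $\tau'$ therefore controls $\|\varepsilon_{\mathrm{in}}(\tau)\|_{L^2_\omega}$, which gives $L^1$ only with the $\nu^{-1}$ loss again. Pointwise-in-time control of the gradient requires a higher-order energy estimate, and that is what is missing.

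\emph{The $L^1$ Gronwall fallback} is false, for two reasons.
First, $L_{\nu,\xi}$ contains $-\nabla\cdot(\hat U_{\nu,\xi}\nabla\Phi_{\varepsilon})=-\nabla\hat U_{\nu,\xi}\cdot\nabla\Phi_\varepsilon+\hat U_{\nu,\xi}\varepsilon$, with $\|\hat U_{\nu,\xi}\|_{L^\infty}\approx\nu^{-2}$. So the $L^1$ norm is not a Lyapunov functional, and Gronwall on a unit $\tau$-interval costs a factor of order $e^{C\nu^{-2}}$.
Second, $F_{\nu,\xi}$ in \eqref{eq:F_nuxi_def} is not $O(e^{-\tau})$ in $L^1$. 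It contains $\alpha(\frac{\nu_\tau}{\nu}+\frac12)\Lambda_\xi U_{\nu,\xi}\mathcal X_\star$ and $\alpha\xi_\tau\cdot\nabla U_{\nu,\xi}\mathcal X_\star$, whose $L^1$ sizes are $\approx|\frac{\nu_\tau}{\nu}+\frac12|$ and $\approx|\xi_\tau|/\nu$. Lemma~\ref{lem:quantmodlemma} bounds these only by $\nu^{-2}(\|\nabla\varepsilon_{\mathrm{in}}\|_{L^2_\omega}+o(e^{-\tau}))$, which is $O(\tau)$ even at good times. For the expected dynamics, $\frac{\nu_\tau}{\nu}+\frac12=t\dot\lambda/\lambda\approx-1/(2\tau)$, far larger than $e^{-\kappa\tau}$. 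These forcings are harmless only because they point along $\Lambda U$ and $\partial_{y_i}U$, which lie in the kernel of $L$ and are absorbed by modulation. An $L^1$ estimate cannot see this cancellation.

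\emph{What the paper does.} It works in inner variables, where a unit $\tau$-interval corresponds to $s\in[s_n,s_{n+1}]$ with $s_n\approx 2^n$. For $h=\chi_{\nu^{-1}}\tilde v$ it uses the gradient-level functional $-\langle Lh,\mathcal M h\rangle$, which by Proposition~\ref{prop:coercivityoperator} controls $\int|\nabla h|^2(1+|y|^2)^2dy$ up to tiny orthogonality defects.
\begin{itemize}
\item This functional is small at the good times $s_n$ by the dissipation bound.
\item When it is differentiated, the modulation forcings drop out exactly because $L(\Lambda U)=L(\partial_{y_i}U)=0$, and $\langle Lh,\mathcal M Lh\rangle\ge0$ supplies the sign.
\item The remaining renormalization terms, such as $\frac{\lambda_s}{\lambda}\langle Lh,\mathcal M\Lambda h\rangle$, are controlled by $|\lambda_s|/\lambda+|x_{\star,s}|/\lambda\lesssim\|\tilde v\|_{L^1}$.
\end{itemize}
That last bound feeds the unknown $L^1$ norm back in and forces a bootstrap: $\|\tilde w\|_{L^1}=O(e^{-\kappa\tau})$ implies the same bound for every $\kappa'<(1+\kappa)/2$, seeded by the $\kappa<1/2$ bound. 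This iteration is why the lemma holds only for $\kappa<1$. Your final claim $\|\tilde w\|_{L^1}\lesssim\sqrt{\tau}e^{-\tau}$ is stronger than what this argument yields and is not supported by the steps you give.
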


\begin{proof}

Notice first that by Cauchy-Schwarz, the first inequality in \eqref{proof-theorem-bounds} and \eqref{proof-theorem-identity-nu-final} implies $\| \varepsilon_{\mathrm{in}}\|_{L^1}\lesssim \nu^{-1} \| \varepsilon_{\mathrm{in}}\|_{L^2_{\omega}}\lesssim \sqrt{\tau}e^{-\tau/2}$. Combining this inequality with the second inequality in \eqref{proof-theorem-bounds} and the bound $|\alpha-1|=o(e^{-\tau})$ already shows
\begin{equation} \label{bd:refined-L1-tech1}
\| \tilde w \|_{L^1}=O(e^{-\kappa \tau})
\end{equation}
for all $\kappa \in (0,1/2)$. We will now show that if \eqref{bd:refined-L1} holds for some $\kappa\in (0,1)$, then the same inequality \eqref{bd:refined-L1} holds for all $\kappa'<(\kappa+1)/2$. This will then imply the desired inequality \eqref{bd:refined-L1} for all $\kappa \in (0,1)$.

We now prove this fact and assume \eqref{bd:refined-L1-tech1} holds for some $\kappa \in (0,1)$. By the decomposition \eqref{proof-theorem-decomposition}, and the bounds \eqref{proof-theorem-bounds} and \eqref{id:qualitative-cv-nu-xi-refined}, the exterior contribution is already under control,
$$
\| (1-\chi_{\eta})\tilde U_{\nu,\xi}\|_{L^1}+\| (1-\chi_{\eta})\tilde w\|_{L^1}=o( e^{-\tau})
$$
for any $\eta>0$. Since in the parabolic zone away from the origin $\{\eta<|z|<\eta^{-1}\}$, the Keller-Segel equation \eqref{self-similarKS-refined} for $w$ can be regarded as a parabolic equation with smooth and bounded coefficients thanks to \eqref{qualitative-tildew}-\eqref{id:qualitative-cv-nu-xi-refined}, applying standard parabolic regularization estimates shows
\begin{equation} \label{bd:refined-L1-tech-boundary}
|\nabla^{k} \tilde w(z,\tau)|=o_\eta(e^{-\tau}) \mbox{ for all }\eta<|z|<\eta^{-1},
\end{equation}
for $k=0,1,2$, for any $\eta>0$.

For the interior contribution, since by Cauchy-Schwarz and \eqref{proof-theorem-identity-nu-final} we have $\| \chi \tilde w\|_{L^1}\lesssim \sqrt{\ln t} \| (\nu+|z-\xi|)\chi \tilde w\|_{L^2}$, thanks to the Hardy-type inequality of Lemma \ref{lem:hardy_rescaled_weighted}) it will suffice for us to prove
\begin{equation} \label{bd:refined-L1-tech2}
\| \nabla (\chi \tilde w) \|_{L^2_{\gamma}}=O(e^{-\kappa' \tau})
\end{equation}
(up to taking a slightly larger $\kappa'$ to absorb the logarithmic loss).

We will consider the evolution inner variables, defined by \eqref{id:inner-variables}-\eqref{id:property-3bis}. Thanks to \eqref{proof-theorem-identity-nu-final} we have explicitly
$$
s= c_\infty t \ln t (1+o(1)) \quad \mbox{and} \quad \frac{ds}{d\tau}=\frac{1}{\nu^2}=s (1+o(1))
$$
where we write $c_\infty=8\pi\mu^{-1}$ to ease notation.

Changing variables, to prove the bound \eqref{bd:refined-L1-tech2} it suffices to prove
\begin{equation} \label{bd:refined-L1-tech3}
\int |\nabla (\chi_{\nu^{-1}}\tilde v)|^2(1+|y|^2)^2dy \lesssim t^{-2\kappa'}.
\end{equation}
Note that we shall write $t=t(s)$ to ease notation. The inequality \eqref{bd:varepsilon-out-matched-scalar}, combined with \eqref{bd:refined-L1-tech-boundary} and \eqref{id:qualitative-cv-nu-xi-refined}, implies after changing variables
\begin{equation} \label{bd:refined-L1-tech-dissipation-bound}
\int_s^\infty \int_{\mathbb R^2} |\nabla (\chi_{\nu^{-1}}\tilde v)|^2(1+|y|^2)^2dy \frac{ds}{s} =o(t^{-2}).
\end{equation}
We will use several times in the sequel that for $s\in [2^n,2^{n+1}]$, one has $s\approx 2^n$ and $t\approx 2^n/n$. By the mean value theorem, there exists a sequence of numbers $s_n\in (2^n,2^{n+1})$ such that
\begin{equation} \label{bd:refined-L1-tech-sequential}
 \int_{\mathbb R^2} |\nabla (\chi_{\nu^{-1}}\tilde v(s_n))|^2(1+|y|^2)^2dy  =o(t^{-2}_n).
\end{equation}
Therefore, the desired bound \eqref{bd:refined-L1-tech3} holds true at $s_n$, and in the sequel we will prove it for all $s\in [s_n,s_{n+1})$.

To prove this bound, we recall the evolution equation for $\tilde v$,
$$
\partial_s \tilde v =L[\tilde v]+\frac{\lambda_s}{\lambda}(\Lambda U+\Lambda \tilde v)+\frac{x_{\star,s}}{\lambda}\cdot (\nabla U+\nabla \tilde v)-\nabla (\tilde v\nabla \Phi_{\tilde v}),
$$
where $L$, the linearized operator around the stationary state is given by \eqref{id:formula-LU}. The a priori bound \eqref{bd:refined-L1-tech1} is
\begin{equation} \label{bd:refined-L1-tech-a-priori-L1-renormalized}
\| \tilde v \|_{L^1}\lesssim t^{-\kappa}.
\end{equation}
Using $\| \tilde v \|_{L^\infty}+\| \nabla \Phi_{\tilde v}\|_{L^\infty}=o(1)$ by \eqref{qualitative-tildew}, we can proceed exactly as in the proof of Lemma \ref{lem:modest} to establish bounds on the derivative of the modulation parameters, which gives
\begin{equation}\label{bd:refined-L1-tech-modulation-bound}
\frac{|\lambda_s|}{\lambda}+\frac{|x_{\star,s}|}{\lambda} \lesssim \| \tilde v \|_{L^1}\lesssim t^{-\kappa}.
\end{equation}
In turn, using the above bound, and the fact that the evolution equation for $\tilde v$ is, by the bounds of Theorem \ref{thn:soliton-resolution} for all derivatives, a second order parabolic equation with smooth and bounded coefficients (up to renormalization), applying standard parabolic regularization estimates gives
$$
\| \nabla^k \tilde v \|_{L^\infty}\lesssim t^{-\kappa} \qquad \mbox{for }k=0,1,2.
$$
We shall use the estimate
\[
\|\nabla^{k}\nabla\Phi_{\tilde v}\|_{L^\infty}
\lesssim
\|\nabla^{k-1}\tilde v\|_{L^1}
+
\|\nabla^k \tilde v\|_{L^\infty},
\qquad k=1,2,3,
\]
which follows from the cancellation of the principal value: after splitting the
kernel into \(|z-y|\le 1\) and \(|z-y|\ge 1\), the far-field contribution is
bounded by \(\|\nabla^{k-1}\tilde v\|_{L^1}\), while in the near field one
subtracts \(\nabla^{k-1}\tilde v(z)\) and obtains
\(\|\nabla^k\tilde v\|_{L^\infty}\).

We now introduce the localization
$$
h=\chi_{\nu^{-1}}\tilde v .
$$
To show the desired estimate \eqref{bd:refined-L1-tech3}, we will consider the quadratic form $\langle Lh,\mathcal M h\rangle$ studied in Proposition \ref{prop:coercivityoperator}. Indeed, since $\int \tilde v \Lambda U=0$, we infer $\int \Lambda U \tilde h=\int \Lambda U (\chi_{\nu^{-1}}-1)\tilde v$. The decays $|\Lambda U|=O(\langle y\rangle^{-4})$ and \eqref{bd:refined-L1-tech-a-priori-L1-renormalized} then imply $\int h\Lambda U=O(\nu^4t^{-\kappa})=o(t^{-2-\kappa})$. The same reasoning applies for estimating the scalar product $\int \tilde v \nabla U=0$, so we get
$$
\left|\int h  \Lambda U dy \right|+\left|\int h \nabla U dy \right|=o(t^{-2-\kappa}).
$$
By Proposition \ref{prop:coercivityoperator} and the inequalities above, we deduce that
\begin{equation} \label{bd:refined-L1-tech-equivalence-quadratic}
\int |\nabla h(s)|^2(1+|y|^2)^2dy \lesssim -\langle L[h],\mathcal M[h]\rangle_{L^{2}} +O(t^{-2-\kappa }).
\end{equation}
By \eqref{bd:refined-L1-tech-sequential}, we have
\begin{equation} \label{bd:refined-L1-tech-initial-quadratic}
\int |\nabla h(s_n)|^2(1+|y|^2)^2dy = o(t_n^{-2}).
\end{equation}
Combining \eqref{bd:refined-L1-tech-equivalence-quadratic} and \eqref{bd:refined-L1-tech-initial-quadratic}, and since $t\approx t_n$ on $[s_n,s_{n+1}]$, in order to prove the desired inequality \eqref{bd:refined-L1-tech3} it will then suffice for us to show
\begin{equation} \label{bd:refined-L1-tech-goal-quadratic}
\int_{s_n}^{s_{n+1}} \frac{d}{ds} (-\langle Lh,\mathcal M h\rangle) ds \leq O(t^{-2\kappa'}_n).
\end{equation}
The rest of the proof is now devoted to showing \eqref{bd:refined-L1-tech-goal-quadratic}. The evolution equation for $h$ is
$$
\partial_s h = L[h]+\frac{\lambda_s}{\lambda}(\Lambda U+\Lambda \tilde h)+\frac{x_{\star,s}}{\lambda}\cdot (\nabla U+\nabla \tilde h)-\nabla \cdot (\tilde h\nabla \Phi_{\tilde v})+\mathcal B
$$
where the boundary terms are
$$
\mathcal B=[\chi_{\nu^{-1}},L-\partial_s]\tilde v+\frac{\lambda_s}{\lambda}(\chi_{\nu^{-1}}-1) \Lambda U+(\chi_{\nu^{-1}}-1)\frac{x_{\star s}}{\lambda}\cdot  \nabla U-\frac{\lambda_s}{\lambda}y\cdot \nabla \chi_{\nu^{-1}} \tilde v-\frac{x_{\star ,s}}{\lambda}\cdot \nabla \chi_{\nu^{-1}} \tilde v+\tilde v \nabla \chi_{\nu^{-1}}\cdot \nabla \Phi_{\tilde v}.
$$
We recall the cancellations $L(\Lambda U)=L(\partial_{y_i}U)=0$ for $i=1,2$ and the nonnegativity $\langle L[h],\mathcal M[L[ h]]\rangle_{L^{2}}\geq 0$ from (2.19) in \cite{RS} (where the latter holds since $\int Lh=0$). Applying $L$ to the evolution equation, then multiplying by $-\mathcal M h$ and integrating shows that
$$
\partial_s \left( -\frac 12 \langle Lh,\mathcal M h\rangle \right)\leq   -\frac{\lambda_s}{\lambda}\langle Lh,\mathcal M \Lambda h\rangle-\langle Lh,\mathcal M \frac{x_{\star, s}}{\lambda}\cdot \nabla h\rangle+\langle Lh,\mathcal M(\nabla \cdot (\tilde h\nabla \Phi_{\tilde v}))\rangle-\langle Lh,\mathcal M \mathcal B\rangle.
$$
The desired inequality \eqref{bd:refined-L1-tech-goal-quadratic} then follows from the estimates \eqref{bd:refined-L1-tech-renormalization1}, \eqref{bd:refined-L1-tech-renormalization2}, \eqref{bd:refined-L1-tech-boundary1}, \eqref{bd:refined-L1-tech-boundary2} and \eqref{bd:refined-L1-tech-nonlinear} we show below for the terms in the right-hand side.

\smallskip
\noindent \underline{Bound for the renormalization terms}. A straightforward integration by parts gives
\[
\int \Delta h\, A\cdot\nabla h\, w
=
-\int w\,\nabla h\cdot(\nabla A)^T\nabla h
+\frac12\int |\nabla h|^2 \nabla\cdot(wA)
-\int (A\cdot\nabla h)(\nabla h\cdot\nabla w),
\]
and therefore
\begin{equation}\label{rateofconv:intpart}
\left|\int \Delta h\, A\cdot\nabla h\, w\right|
\lesssim
\int |\nabla h|^2
\bigl(
w|\nabla A|
+w|\nabla\!\cdot A|
+|A||\nabla w|
\bigr).
\end{equation}
We have by \eqref{id:formula-LU} after integrating by parts as above with $A=y$ and $w=U^{-1}$ and using Lemmas \ref{lemma:hardy} and \ref{lem:weightL2pote},
\begin{equation} \label{rateofconv:intpart2}
\left|\langle Lh,\mathcal M \Lambda h \rangle \right|
\lesssim
\int |\nabla h|^2(1+|y|^2)^2\,dy.
\end{equation}
Combining \eqref{rateofconv:intpart2} with \eqref{bd:refined-L1-tech-dissipation-bound} and \eqref{bd:refined-L1-tech-modulation-bound}, using that $s\approx s_n\approx t_n \ln t_n$ on $[s_n,s_{n+1}]$ and $t\approx t_n$, we get
\begin{equation} \label{bd:refined-L1-tech-renormalization1}
\int_{s_n}^{s_{n+1}} |\frac{\lambda_s}{\lambda}\langle Lh,\mathcal M \Lambda h\rangle| ds\lesssim s_n t_n^{-\kappa}\int_{s_n}^{s_{n+1}}  \int |\nabla h|^2(1+|y|^2)^2 dy\frac{ ds}{s}\lesssim s_n t_n^{-\kappa}t_n^{-2}\lesssim t_n^{-2\kappa'}
\end{equation}
since $2\kappa'<\kappa+1$.
The translation term can be estimated similarly, giving
\begin{equation} \label{bd:refined-L1-tech-renormalization2}
\int_{s_n}^{s_{n+1}} |\langle Lh,\mathcal M (\frac{x_{\star,s}}{\lambda} \cdot \nabla h)\rangle| ds \lesssim t_n^{-2\kappa'}.
\end{equation}

\noindent \underline{Bound for the boundary terms}. In the zone $\eta\nu^{-1}<|y|<\eta^{-1}\nu^{-1}$ for any $0<\eta\ll 1$, the bound \eqref{bd:refined-L1-tech-boundary} translates in inner variables as
\begin{align} \label{bd:refined-L1-tech-boundary-renormalized}
|\nabla^k \tilde v(y,s)|\lesssim \nu^{2+k} t^{-1}= t^{-2-k/2} (\sqrt{\ln t})^{-2-k}
\end{align}
for $k=0,1,2$. We expand the boundary term as
$$
\mathcal B=\mathcal B_1+\mathcal B_2
$$
where $\mathcal B_1$ gathers terms that are localized in the annulus $|y|\approx \nu^{-1}$:
\begin{align*}
\mathcal B_1
={}&
\tilde v\Big(
\partial_s\chi_{\nu^{-1}}
-\Delta\chi_{\nu^{-1}}
-\frac{\lambda_s}{\lambda}\,y\cdot\nabla\chi_{\nu^{-1}}
-\frac{x_{\star,s}}{\lambda}\cdot\nabla\chi_{\nu^{-1}}
\Big)\\
&-2\nabla\chi_{\nu^{-1}}\cdot\nabla\tilde v
+\tilde v\,\nabla\chi_{\nu^{-1}}\cdot
\big(\nabla\Phi_U+\nabla\Phi_{\tilde v}\big)
\end{align*}
and $\mathcal B_2$ contains the nonlocal commutator term together
with the renormalization terms supported outside the core region
$|y|\lesssim \nu^{-1}$:
\begin{align*}
\mathcal B_2
={}&
\nabla U\cdot
\Big(
\nabla\Phi_{\chi_{\nu^{-1}}\tilde v}
-\chi_{\nu^{-1}}\nabla\Phi_{\tilde v}
\Big)+
\frac{\lambda_s}{\lambda}
(\chi_{\nu^{-1}}-1)\Lambda U
+
(\chi_{\nu^{-1}}-1)
\frac{x_{\star,s}}{\lambda}\cdot\nabla U.
\end{align*}
By \eqref{bd:refined-L1-tech-modulation-bound} and \eqref{bd:refined-L1-tech-boundary-renormalized}, we have
$$
\nabla^k\mathcal B_1(y) = O(t^{-2-\kappa-k/2} (\sqrt{\ln t})^{-2-k} \mathbbm 1(\nu^{-1}<|y|<2\nu^{-1})).
$$
Integrating by parts thanks to \eqref{id:formula-LU}, then using Lemmas \ref{lemma:hardy} and \ref{lem:weightL2pote}, we find
\begin{align*}
|\langle Lh,\mathcal M \mathcal B_{1}\rangle| \lesssim& \left(\int |\nabla h|^2(1+|y|^2)^2 dy\right)^{1/2} \left(\int |\nabla \mathcal B_1|^2(1+|y|^2)^2 dy\right)^{1/2}\\
&\lesssim t^{-1-\kappa}\left(\int |\nabla h|^2(1+|y|^2)^2 dy\right)^{1/2}.
\end{align*}
By \eqref{bd:refined-L1-tech-dissipation-bound} and Cauchy-Schwarz, this implies
\begin{equation} \label{bd:refined-L1-tech-boundary1}
\int_{s_n}^{s_{n+1}} |\langle Lh,\mathcal M \mathcal B_{1}\rangle| ds \lesssim t_n^{-1-\kappa}(\ln t_{n})\lesssim t_{n}^{-2\kappa'}.
\end{equation}
The local terms appearing in $\mathcal B_2$ are estimated exactly as above.
It remains only to treat the terms produced by the commutator in the nonlocal
part. We first observe that, if $|z|\le \frac12\nu^{-1}$, then on the support of
$(\chi_{\nu^{-1}}-1)\tilde v$ one has $|y|\ge \nu^{-1}$, and hence
$|z-y|\ge \frac12\nu^{-1}$. Therefore,
\begin{align*}
 \left|\nabla \Phi_{(\chi_{\nu^{-1}}-1)\tilde v}(z)\right|
 &= \left|
 -\frac{1}{2\pi}
 \int_{\mathbb R^2}
 \frac{z-y}{|z-y|^2}
 (\chi_{\nu^{-1}}-1)\tilde v(y)\,dy
 \right|  \\
 &\le C\nu \int_{\mathbb R^2}|(\chi_{\nu^{-1}}-1)\tilde v(y)|\,dy
 \lesssim \nu t^{-\kappa}.
\end{align*}
For $|z|\ge \frac12\nu^{-1}$, the decay of the factor $\nabla U$ gives the
required gain. Treating the remaining terms as in the estimate of
$\mathcal B_1$, we obtain
\begin{equation}\label{bd:refined-L1-tech-boundary2}
\int_{s_n}^{s_{n+1}}
\left|\langle Lh,\mathcal M\mathcal B_2\rangle\right|\,ds
\lesssim t_n^{-1-\kappa}\log t_n
\lesssim t_n^{-2\kappa'} .
\end{equation}

\noindent \underline{Bound for the nonlinear term}. Since
\[
\nabla \cdot (h\nabla\Phi_{\tilde v})
=\nabla h\cdot\nabla\Phi_{\tilde v}-h\tilde v,
\]
the nonlinear term is estimated exactly as the normalization terms.
Using the bounds
\[
\|\nabla\Phi_{\tilde v}\|_{L^\infty}
+\|\tilde v\|_{L^\infty}
\lesssim t^{-\kappa},
\]
we obtain
\begin{equation}\label{bd:refined-L1-tech-nonlinear}
\left|
\left\langle
Lh,\mathcal M (\nabla\cdot(h\nabla\Phi_{\tilde v}))
\right\rangle
\right|
\lesssim
t^{-\kappa}
\int |\nabla h|^2(1+|y|^2)^2\,dy.
\end{equation}

\end{proof}

\begin{lemma}
For any $\kappa\in (0,1)$, one has
\begin{equation} \label{bd:refined-xi}
\xi =O(e^{-\kappa \tau}).
\end{equation}
\end{lemma}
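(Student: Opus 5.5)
The plan is to read off $\xi$ from the conservation of the center of mass, in the same spirit as the determination of $\nu$ via the second momentum. By \eqref{refined:second-momentum} we have $\int w\,z\,dz=0$ for all $\tau$. We insert the decomposition \eqref{proof-theorem-decomposition}, $w=\tilde U_{\nu,\xi}+\varepsilon_{\mathrm{in}}+(1-\chi_2)\varepsilon_{\mathrm{out}}$, and use \eqref{eq:centreofmassmatched} from Lemma \ref{lem:secondmomentmatched}. This gives
$$
8\pi\xi=-\int \varepsilon_{\mathrm{in}}\,z\,dz-\int(1-\chi_2)\varepsilon_{\mathrm{out}}\,z\,dz+o(e^{-3\tau/2}).
$$
The outer term is $o(e^{-\tau})$ by \eqref{SecMomEst}, since $|z|\le |z|^2$ on $\{|z|\ge1\}$. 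So everything reduces to bounding the first moment of $\varepsilon_{\mathrm{in}}$ by $O(e^{-\kappa\tau})$.

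For $\int\varepsilon_{\mathrm{in}}z\,dz$, I would rather express it through $\tilde w$, since the last lemma gives the sharp bound $\|\tilde w\|_{L^1}=O(e^{-\kappa\tau})$ for every $\kappa<1$. By \eqref{id:varepsilon-in-tildew}, $\varepsilon_{\mathrm{in}}=\tilde w-(\tilde U_{\nu,\xi}-U_{\nu,\xi})-(1-\chi_2)\varepsilon_{\mathrm{out}}$. The first moment of the far part of $\tilde w$, together with the matched-soliton correction, is handled as in Lemma \ref{lem:secondmomentmatched}. Alternatively, one can write $w=U_{\nu,\xi}\chi_\eta+\tilde w$ locally and split $\int \tilde w z\,dz$ into $\{|z|\le \eta\}$ and $\{|z|\ge\eta\}$.

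On $\{|z|\le\eta\}$ I would bound $|z|\lesssim 1$ and use $\|\tilde w\|_{L^1}=O(e^{-\kappa\tau})$, giving $O(e^{-\kappa\tau})$. On $\{|z|\ge\eta\}$ I would write $\tilde w=\varepsilon+U_{\nu,\xi}(1-\mathcal X_\star)-(\alpha-1)\hat U_{\nu,\xi}$. The term $(1-\chi_2)\varepsilon_{\mathrm{out}}$ is controlled by its second moment, which is $o(e^{-\tau})$. The soliton corrections have size $\nu^2\int_{|z|\ge\zeta_*}|z|^{-3}\lesssim \nu^2=o(e^{-\tau})$, where the untruncated tail of $U_{\nu,\xi}$ must be kept consistent with the exact center-of-mass cancellation. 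Finally, $\varepsilon_{\mathrm{in}}$ on $|z|\ge\eta$ is bounded via Cauchy--Schwarz against the Gaussian weight: $\int_{|z|\ge\eta}|\varepsilon_{\mathrm{in}}||z|\lesssim_\eta \|\varepsilon_{\mathrm{in}}\|_{L^2_\omega}=o(e^{-\tau})$ by \eqref{bd:varepsilon-out-L2omega}.

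The step I expect to be the main obstacle is the bookkeeping of exact cancellations. The profile's first moment must be computed exactly: $\int U_{\nu,\xi}(z-\xi)\,dz=0$ requires an absolutely convergent tail, since $U_{\nu,\xi}$ has only $|z|^{-4}$ decay. One must also check that the errors in \eqref{eq:centreofmassmatched} are genuinely $O(e^{-3\tau/2})$ rather than merely $O(\nu^2)$. Once these are verified, combining the bounds gives $|\xi|\lesssim \|\tilde w\|_{L^1}+o(e^{-\tau})=O(e^{-\kappa\tau})$ for every $\kappa\in(0,1)$, which is \eqref{bd:refined-xi}. Undoing the self-similar variables then yields $|x(t)-x^*|=\sqrt t\,|\xi|\lesssim t^{-1/2+\epsilon}$.
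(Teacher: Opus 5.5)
Your proposal is correct and is essentially the paper's argument: you use center-of-mass conservation with the decomposition \eqref{proof-theorem-decomposition} and \eqref{eq:centreofmassmatched}, bound the outer part by its second moment, and control $\int z\,\varepsilon_{\mathrm{in}}\,dz$ by $\|\tilde w\|_{L^1}=O(e^{-\kappa\tau})$ near the soliton and by the Gaussian-weighted bound \eqref{bd:varepsilon-out-L2omega} away from it. The cancellation you flag as the main obstacle is harmless: $U_{\nu,\xi}|z-\xi|\sim \nu^2|z|^{-3}$ is integrable on $\mathbb R^2$, and even an $O(\nu^2)=o(e^{-\tau})$ error in \eqref{eq:centreofmassmatched} would suffice for $\kappa<1$.
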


\begin{proof}

Since the center of mass of the solution is zero by \eqref{refined:second-momentum}, by the decomposition \eqref{proof-theorem-decomposition} and the identity \eqref{eq:centreofmassmatched} we infer
$$
8\pi \xi = e^{-3\tau/2}+\int z \varepsilon_{\mathrm{in}}dz+\int z (1-\chi_2)\varepsilon_{\mathrm{out}}dz.
$$
The contribution of $\varepsilon_{\mathrm{in}}$ is $O(e^{-\kappa \tau})$ by \eqref{proof-theorem-bounds}, \eqref{proof-theorem-identity-nu-final} and \eqref{bd:refined-L1}, while that of $\varepsilon_{\mathrm{out}}$ is $o(e^{- \tau})$ by \eqref{proof-theorem-bounds}. This shows the estimate \eqref{bd:refined-xi}.

\end{proof}

\begin{appendix}
	\section{Quantitative $\varepsilon$-regularity}

\label{sec:epsilon-regularity}

\subsection{Statement and application to local $L^1-W^{1,\infty}$ smallness propagation}

The purpose of this section is to study solutions with possibly large total $L^1$ norm, but whose initial data have a small $L^1$ norm at some location. We will prove that the solution remains small locally over a universal timescale, via a quantitative estimate.

Such $\varepsilon$-regularity theory for solutions of the Keller–Segel system was studied for instance in \cite{SS}. 
It is worth emphasizing that $\varepsilon$-regularity results have played a crucial role in the analysis of other models, 
most notably the Navier–Stokes equations (see, for example, \cite{CKN, FLin, Va}; see also \cite{BP} and the references therein). 
The theorem stated below can be regarded as a generalization of Lemma~2.1 in \cite{SS} or Lemma~3.2 in \cite{SBook}. 
The main novelty is to establish smallness of the local $L^{\infty}$ norm, via a quantitative estimate. Our approach, building upon the local well-posedness provided by Theorem \ref{th:lwp:L1}, also relaxes the regularity assumptions on the initial data that were required in their result.

\begin{theorem}[Quantitative $\varepsilon$-regularity]\label{thm:quantitative-epsilon-reg}
	There exists  $\varepsilon_*>0$ such that for any $M_*>0$, there exists $T_{*}>0$ small enough such that the following holds. Consider any solution $u$ of the Keller-Segel equation \eqref{KS} with total mass $ \int u_0 dx=M\leq M_*$ and local mass
	$$
	\int_{B_8(0)} u_0 \, dx = \varepsilon_0\leq \varepsilon_*
	$$
	and let $T(u_0))$ denote its time of existence. Then
	\[
	\|u(t)\|_{L^\infty(B_{1}(0))}
	\lesssim \frac{\varepsilon_0}{t}+ M(1+M^2)t
	\qquad\text{for all } t \in (0,\min\{T_*,T(u_0)\}).
	\]
\end{theorem}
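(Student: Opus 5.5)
We localize the solution and run a two-stage argument: first propagate smallness of the local mass over a short universal time, then convert local $L^1$ smallness into the stated $L^\infty$ bound through a localized Duhamel formula.

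\textbf{Step 1: propagation of local mass.} Let $\psi$ be a smooth radial cut-off equal to $1$ on $B_4(0)$ and vanishing outside $B_8(0)$. We use the weak formulation recalled in the proof of Lemma \ref{lem:locfirstmom}:
\[
\frac{d}{dt}\int u\psi = \int u\Delta\psi + \iint \rho_\psi(x,y)u(x)u(y)\,dx\,dy, \qquad |\rho_\psi|\lesssim \|\nabla^2\psi\|_{L^\infty}.
\]
This gives $\bigl|\frac{d}{dt}\int u\psi\bigr|\lesssim M+M^2$. Hence $\int_{B_4} u(t)\le \varepsilon_0+C(M+M^2)t\le 2\varepsilon_*$ for $t\le T_*$, with $T_*$ small depending on $M_*$. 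The same bound holds on intermediate balls $B_r$, $1\le r\le 4$.

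\textbf{Step 2: localized $L^{4/3}$ and $L^\infty$ control.} Set $v=u\phi$, where $\phi$ is a cut-off equal to $1$ on $B_2$ and supported in $B_4$. Then $v$ solves
\[
\partial_t v=\Delta v-\nabla\cdot(v\nabla\Phi_u)+\mathcal E,
\]
where $\mathcal E$ collects commutator terms: $u\Delta\phi$, $\nabla\phi\cdot\nabla u$, and $u\nabla\phi\cdot\nabla\Phi_u$. We split $\nabla\Phi_u=\nabla\Phi_{u\phi'}+\nabla\Phi_{u(1-\phi')}$ with $\phi'$ a wider cut-off. The far part is bounded on $\mathrm{supp}\,\phi$ by $\lesssim M$ and is smooth. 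The near part has small mass $\lesssim\varepsilon_*$. Writing Duhamel for $v$ and mimicking the fixed-point estimates behind Theorem \ref{th:lwp:L1}, via Hardy--Littlewood--Sobolev $\|\nabla\Phi_f\|_{L^4}\lesssim\|f\|_{L^{4/3}}$, we bootstrap on $N(t)=\sup_{s\le t}s^{1/4}\|v(s)\|_{L^{4/3}}$. The quadratic term is controlled by $\varepsilon_*$-small norms since only local mass enters. The gradient in the commutator $\nabla\phi\cdot\nabla u$ is moved onto the heat kernel by writing it as $\nabla\cdot(u\nabla\phi)-u\Delta\phi$. Its contribution is then $\int_0^t(t-s)^{-1/2}\cdot(\text{local }L^1\text{ or }L^{4/3}\text{ norms of }u\text{ on the larger annulus})\,ds$.

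The linear term gives $\|G(t)*v_0\|_{L^\infty}\lesssim \varepsilon_0/t$ by the $L^1\to L^\infty$ heat estimate. It is essential here that $v_0$ has mass at most $\varepsilon_0$, not $M$. The forcing terms from the far field and the commutators produce contributions linear in time with size controlled by $M(1+M^2)$. Upgrading from $L^{4/3}$ to $L^\infty$ uses the smoothing $\|\nabla G(t-s)*f\|_{L^\infty}\lesssim (t-s)^{-1/2-1/p}\|f\|_{L^{p/2}}$ iterated twice. This is done in nested balls $B_1\subset B_2\subset B_4$, each iteration shrinking the ball.

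\textbf{Main obstacle.} The hard step is the commutator terms. Away from $B_2$, $u$ is only known to be locally small in $L^1$, not bounded, so we cannot bound $u\nabla\phi\cdot\nabla\Phi_u$ or $u\nabla\phi$ in $L^\infty$ directly. The plan is to let these terms enter Duhamel only through $L^1$-type norms, using integrable kernels $(t-s)^{-1/2}$ and $(t-s)^{-1}$ against a source supported at distance $\gtrsim 1$ from $B_1$. For $x\in B_1$ and a source in the annulus $2\le|y|\le4$, the kernel $\nabla G(t-s,x-y)$ is bounded by $C$ uniformly in $s$, which is the key gain. Combined with the mass bound and $|\nabla\Phi_u|\lesssim$ (local part) $+M$, this yields the $M(1+M^2)t$ remainder.

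Finally, absorbing the quadratic near-field term requires $\varepsilon_*$ small universally. Running the bootstrap up to $\min\{T_*,T(u_0)\}$ uses continuity of $N(t)$ and $N(t)\to0$ as $t\downarrow0$, both provided by Theorem \ref{th:lwp:L1}.
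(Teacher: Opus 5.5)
Your localized bootstrap does not close, because of a loss of domain that the proposal never resolves. The quantity $N(t)=\sup_{s\le t}s^{1/4}\|u\phi(s)\|_{L^{4/3}}$ controls $u$ only where $\phi$ lives, but the nonlinear terms you must estimate inside it need control on a strictly larger set.

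\begin{itemize}
\item The near field $\nabla\Phi_{u\phi'}$, with $\phi'$ wider than $\phi$, is bounded by Hardy--Littlewood--Sobolev through $s^{1/4}\|u\phi'(s)\|_{L^{4/3}}$. Your $N(t)$ does not control this quantity, and smallness of the mass of $u\phi'$ does not bound it.
\item The commutator $u\nabla\phi\cdot\nabla\Phi_u$ lives on the annulus where $\phi<1$. There $\int u|\nabla\Phi_u|$ is not bounded by the mass; it blows up if $u$ concentrates in the annulus.
\end{itemize}

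The ``distance gain'' you invoke helps only at the final evaluation at $x\in B_1$. There the commutator can indeed be handled by symmetrizing the kernel as in the weak formulation, since $G(t-s,x-\cdot)\nabla\phi$ is smooth. But it is useless inside $N(t)$, which is a global-in-space norm of $u\phi$ whose sources lie right next to where it is measured. Nesting cut-offs only moves the problem to a larger ball.

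The paper never runs a localized nonlinear bootstrap. It first proves $\|u(t)\|_{L^\infty(B_4)}\le 2/t$ for $t<t_*$ by a blow-up/compactness argument (Property (P), Lemma~\ref{lem:controlP}, Proposition~\ref{pr:uniform-regularization-bound}), relying on uniform boundedness of solutions with small total mass. From this it deduces $|\nabla\Phi_u|\lesssim t^{-1/2}$ on $B_2$. After rescaling at the parabolic scale $\sqrt{t}/2$, the equation is then treated as linear with bounded coefficients, and the local $L^1$--$L^\infty$ estimate of Theorem~\ref{thm:Lieberman} applies with only local space--time mass as input.

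Second, your $M(1+M^2)t$ remainder is asserted, not derived. Your only quantitative mass input is Step 1, whose error is linear in $t$, while passing from local mass to $L^\infty$ at the parabolic scale costs a factor $t^{-1}$. For instance, the smooth far-field drift contributes, for $s<t/2$, at most
$t^{-3/2}\int_0^{t/2}M\|u\phi(s)\|_{L^1}\,ds\lesssim M\varepsilon_0t^{-1/2}+M^2(1+M)t^{1/2}$.
Likewise, feeding the bound of Step 1 into a local $L^1$--$L^\infty$ estimate gives only $\varepsilon_0/t+O(M+M^2)$. To get a remainder of order $t$ you need an $O(t^2)$ error in the local mass.

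The paper obtains this in Lemma~\ref{lem:smallL1} by applying the cut-off argument twice on nested balls $B_8\supset B_{15/2}\supset B_7$. At the second stage, the time derivative of the mass inside the inner cut-off is bounded by $(1+M)$ times the mass in $B_{15/2}$, which the first stage already controls up to $O(t)$. Then $t^{-2}\int_0^t\bigl(\varepsilon_0+C(M)\tau^2\bigr)\,d\tau\lesssim\varepsilon_0/t+C(M)t$.
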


\begin{remark}

It is worth comparing with the linear heat equation, for which under the same assumptions there holds $\|u(t)\|_{L^\infty(B_{1}(0))}\lesssim \varepsilon_0 t^{-1}+ Mt^k$ for any integer $k$. Iterating the argument in the proof of Lemma \ref{lem:smallL1} would also yield a $t^k$ factor for the second term for $k\geq 2$, but we decided to take $k=1$ for simplicity.

\end{remark}

For clarity we defer the proof of Theorem \ref{thm:quantitative-epsilon-reg} to Section \ref{proof:thmuniespreg}.
As a consequence of Theorem \ref{thm:quantitative-epsilon-reg} we obtain the following result which states that smallness of the local $L^1$ mass propagates into smallness of the local $W^{1,\infty}$ norm.

\begin{corollary}[Local $L^1$--$W^{1,\infty}$ smallness propagation]\label{thm:L1Linfsmall}
	Let $u_0 \ge 0$ with $u_0 \in L^1(\mathbb{R}^2)$, and let $u$ be the corresponding solution of \eqref{KS} defined on $(0,T(u_0))$. \newline 
	Given any $\delta > 0$, there exist constants
	\[
	\sigma = \sigma(\delta,\|u_0\|_{L^1})>0,
	\qquad
	\varepsilon = \varepsilon(\sigma,\delta)>0,
	\]
	such that the following holds. \newline 
	For every $x_0 \in \mathbb{R}^2$, every $R>0$, and every $t_0 \ge 0$ satisfying
	$t_0+\sigma R^2 < T(u_0)$,
	if
	\[
	\int_{B_{8R}(x_0)} u(t_0,x)\,dx \le \varepsilon,
	\]
	then
	\[
	\sup_{t\in \left[t_0+\frac{\sigma}{2}R^2,\; t_0+\sigma R^2\right]}
	\left(
	R^3 \|\nabla u(t)\|_{L^\infty(B_{R/2}(x_0))}
	+
	R^2 \|u(t)\|_{L^\infty(B_{R/2}(x_0))}
	\right)
	\le \delta.
	\]
\end{corollary}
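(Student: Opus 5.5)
The plan is to derive Corollary \ref{thm:L1Linfsmall} from Theorem \ref{thm:quantitative-epsilon-reg} by scaling and translation, and then to upgrade the $L^\infty$ bound to a $W^{1,\infty}$ bound by local parabolic regularity. By the translation and scaling invariance of \eqref{KS}, set $v(s,y)=R^2u(t_0+R^2s,x_0+Ry)$. This is again a solution, started at $v_0(y)=R^2u(t_0,x_0+Ry)$, with total mass $M=\|u_0\|_{L^1}$ by mass conservation \eqref{id:mass-conservation}. Its local mass is $\int_{B_8(0)}v_0=\int_{B_{8R}(x_0)}u(t_0)\le\varepsilon$, and its existence time is larger than $\sigma$. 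The claimed inequality is scale invariant, so it is enough to prove
\[
\sup_{s\in[\sigma/2,\sigma]}\big(\|\nabla v(s)\|_{L^\infty(B_{1/2})}+\|v(s)\|_{L^\infty(B_{1/2})}\big)\le\delta .
\]
Uniqueness in Theorem \ref{th:lwp:L1} lets us identify $v$ with the solution issued from $v_0$.

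\textbf{Step 1: smallness of $v$ itself.} Apply Theorem \ref{thm:quantitative-epsilon-reg} with $M_*=M$ and $\varepsilon\le\varepsilon_*$. For $s\le\min(\sigma,T_*)$ it gives
\[
\|v(s)\|_{L^\infty(B_1)}\lesssim \frac{\varepsilon}{s}+M(1+M^2)s .
\]
First choose $\sigma\le T_*(M)$ so small that $C M(1+M^2)\sigma\le\delta'/2$. Then choose $\varepsilon$ so small, depending on $\sigma$, that $2C\varepsilon/\sigma\le\delta'/2$. This yields $\|v(s)\|_{L^\infty(B_1)}\le\delta'$ on $[\sigma/4,\sigma]$, and the same argument applied on $B_1$ controls the slightly larger cylinder we need.

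\textbf{Step 2: gradient bound.} We want $\nabla v$ small on $B_{1/2}\times[\sigma/2,\sigma]$, knowing that $v$ is small on $B_1\times[\sigma/4,\sigma]$. On this cylinder $v$ solves the linear-in-$v$ equation $\partial_s v-\Delta v+\nabla\Phi_v\cdot\nabla v=v^2$. The field $\nabla\Phi_v$ is bounded there, by splitting the kernel as in Lemma \ref{lem:grad_phi_tilde_u}: the near part is at most $\rho\|v\|_{L^\infty(B_1)}$, and the far part is at most $M/\rho$ for a fixed $\rho$, say $\rho=1/4$. So the drift coefficient has size $O_M(1)$, but it is not small, because the far-field mass $M$ is large.

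The plan is to write a local Duhamel formula. Multiply by a cutoff $\psi(y)\theta(s)$ with $\psi$ supported in $B_1$, equal to $1$ on $B_{1/2}$, and $\theta$ vanishing near $s=\sigma/4$. Then $\psi\theta v$ solves a heat equation with sources $\nabla\cdot(\psi\theta v\nabla\Phi_v)$, commutator terms involving $v\nabla\psi$ and $v\,\Delta\psi$, and the terms $v\nabla\psi\cdot\nabla\Phi_v$ and $v\,\partial_s\theta$. Every source is linear in $v|_{B_1}$ with $O_{M,\sigma}(1)$ coefficients, apart from the divergence term, which is handled through the $\nabla G$ kernel. We estimate the gradient via $\|\nabla G(s)\|_{L^1}\lesssim s^{-1/2}$. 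Since $\nabla\Phi_v$ is only bounded (not Hölder) a priori, we first bootstrap: an $L^\infty$ bound on $\nabla\Phi_v$ gives a $C^{1-}$ bound on $\psi v$, which in turn gives $\nabla\Phi_v\in C^{\alpha}$ locally, and then Schauder-type bounds give $\nabla v$. The outcome is $\|\nabla v\|_{L^\infty(B_{1/2}\times[\sigma/2,\sigma])}\le C(M,\sigma)\,\delta'$, and we then choose $\delta'=\delta/C(M,\sigma)$.

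\textbf{Main obstacle.} The difficulty is the ordering of constants. The regularity constant $C(M,\sigma)$ depends on $\sigma$, and $\sigma$ must be fixed before $\delta'$. This works: we fix $\sigma$ using only $M$ and the target $\delta'/2$ in the second term of Step 1, then $\varepsilon$ from $\sigma$. Because $\delta'$ itself depends on $\sigma$ through $C(M,\sigma)$, we should make $\sigma$ depend only on $M$, by taking $\sigma\le T_*$ with $\sigma\,C(M,\sigma)$ not the criterion. Concretely, we fix $\sigma=\sigma(M)$ small with the gradient constant $C(M,\sigma)$ thereby determined. The second term $CM(1+M^2)s$ is then bounded by $\delta'/2$ only if $\sigma$ is small relative to $\delta/C(M,\sigma)$, which is circular. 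The way out is that the gradient estimate is really linear in $\sup|v|$ with a constant blowing up like a power $\sigma^{-a}$. We take $\sigma=\sigma(\delta,M)$ satisfying $M(1+M^2)\sigma^{1+a}\ll\delta$, which is possible provided $a<1$ in the natural scaling, since $\nabla v\sim s^{-1/2}\sup v$. The Step 1 bound gives $\sup v\lesssim\varepsilon/\sigma+M^3\sigma$, so $\sup|\nabla v|\lesssim \sigma^{-1/2}(\varepsilon/\sigma+M^3\sigma)$. Both terms can be made $\le\delta$ by choosing $\sigma$ and then $\varepsilon$, consistent with $\sigma=\sigma(\delta,\|u_0\|_{L^1})$ and $\varepsilon=\varepsilon(\sigma,\delta)$.
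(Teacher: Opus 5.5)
Your proposal is correct and follows the paper's proof: you reduce to $x_0=0$, $t_0=0$, $R=1$ by scaling, and apply Theorem \ref{thm:quantitative-epsilon-reg} to get $\|v\|_{L^\infty(B_1)}\lesssim \varepsilon/\sigma+M(1+M^2)\sigma$ on $[\sigma/4,\sigma]$. You then bound $\nabla\Phi_v$ on a smaller ball by splitting the kernel, and use interior parabolic estimates at scale $\sqrt{\sigma}$ (where the rescaled coefficients $\sqrt{\sigma}\nabla\Phi_v$ and $\sigma v$ stay bounded) to get $|\nabla v|\lesssim \sigma^{-1/2}\sup|v|\lesssim \varepsilon\sigma^{-3/2}+M(1+M^2)\sigma^{1/2}$, choosing $\sigma$ first and then $\varepsilon$. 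The only slip is in your intermediate condition, which should read $M(1+M^2)\sigma^{1-a}\ll\delta$ rather than $\sigma^{1+a}$; your final bound already reflects the correct exponent.
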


\begin{proof}[Proof of Corollary \ref{thm:L1Linfsmall}]
	By scaling invariance and translation invariance, it suffices to prove the statement in the case $x_0 = 0$, $t_0 = 0$, and $R = 1$. \newline 
	First, let $M=\int u_0dx$, and set $M_*=M$. let $\varepsilon_*$ and $T_*$ be as in Theorem \ref{thm:quantitative-epsilon-reg}. 
	Assume that $0 < \varepsilon < \varepsilon_*$, $0<\sigma<\min(T_*,T(u_0))$ and
	\[
	\int_{B_2(0)} u_0 \, dx \le \varepsilon.
	\]
	Then, by Theorem \ref{thm:quantitative-epsilon-reg}, we have
	\[
	\|u(t)\|_{L^\infty(B_{1})}
	\lesssim \frac{\varepsilon}{\sigma}+ M\sigma\leq \delta
	\qquad \text{for all } \sigma/4 < t < \sigma.
	\]
	where above we chose $\sigma$ small enough depending on $\delta$, $\varepsilon$ and $M$. Using the bound $\| \nabla \Phi_u\|_{L^\infty(B_{3/4}(0))}\lesssim \| u\|_{L^1(B_1(0)^c)}+\| u\|_{L^\infty(B_1(0))}$ implies
	\[
	\|u(t)\|_{L^\infty(B_{3/4}(0))}+\| \nabla \Phi_{u(t)}\|_{L^\infty(B_{3/4}(0))}
	\lesssim_M 1
	\qquad \text{for all } \sigma/4 < t < \sigma.
	\]
	We can then write the Keller-Segel equation $u_t=\Delta u+b \cdot \nabla u+cu$ with $b= -\nabla \Phi_u$ and $c=u$ that are bounded coefficients on $[\sigma/4,\sigma]\times B_{3/4(0)}$. After rescaling to the parabolic scale \(\sqrt{\sigma}\) and applying
	standard interior parabolic estimates then for all $\sigma/2<t<\sigma$ we have
	$$
	\| \nabla u(t)\|_{L^\infty(B_{1/2}(0))}\lesssim \sigma^{-1/2}\sup_{\sigma/4<t<\sigma} \| u(t)\|_{L^\infty(B_{3/4}(0))} \lesssim \frac{\varepsilon}{\sigma^{3/2}}+ M\sigma^{1/2}\leq \delta
	$$
	where again we chose $\sigma$ small enough depending on $\delta$, $\varepsilon$ and $M$. Hence
	\[
	\sup_{t\in \left[\frac{\sigma}{2},\; \sigma \right]}
	\left(
	 \|\nabla u(t)\|_{L^\infty(B_{1/2}(x_0))}
	+
	 \|u(t)\|_{L^\infty(B_{1/2}(x_0))}
	\right)
	\leq \delta .
	\]
	This is indeed the desired bound for $(x_0,t_0,R)=(0,0,1)$.
	
\end{proof}

\subsection{Proof of Theorem \ref{thm:quantitative-epsilon-reg}}\label{proof:thmuniespreg}

The starting point is the propagation of $L^{1}$ smallness, for which we refer to the proof of Lemma 3.2 in \cite{SBook} and the references therein.

\begin{lemma}[Propagation of local $L^1$ smallness]\label{lem:smallL1}
	There exists $C>0$ such that for any solution $u$ to \eqref{KS} with $u_0 \in L^1(\mathbb{R}^2)$, one has for all $ 0\leq t<T(u_{0})$,
	\[
	 \int_{B_7} u(x,t)\,dx
	\leq \int_{B_8} u_0(x)\,dx + Ct \int_{B_8} u_0(x)\,dx (1+\| u_0\|_{L^1})+Ct^2 \| u_0\|_{L^1}(1+\| u_0\|_{L^1}^2).
	\]
\end{lemma}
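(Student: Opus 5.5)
We will test the weak formulation of \eqref{KS} (recalled in the proof of Lemma \ref{lem:locfirstmom}) against a cutoff $\psi$ with $\psi=1$ on $B_7$, $\psi=0$ outside $B_8$, $0\le\psi\le1$. This gives
$$
\frac{d}{dt}\int u\psi=\int u\Delta\psi+\iint\rho_\psi(x,y)u(x)u(y)\,dx\,dy,\qquad \rho_\psi(x,y)=-\frac1{4\pi}\frac{(x-y)\cdot(\nabla\psi(x)-\nabla\psi(y))}{|x-y|^2}.
$$
Since $|\rho_\psi|\lesssim\|\nabla^2\psi\|_{L^\infty}$ and $\rho_\psi=0$ when both $x,y\notin \operatorname{supp}\nabla\psi$, each term only sees mass near the annulus $A=B_8\setminus B_7$ or in the support of $\psi$. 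The linear term is bounded by $\int_{B_8}u$. In the bilinear term, at least one variable lies in $\operatorname{supp}\nabla\psi\subset B_8$ and the other is controlled by the total mass $M=\|u_0\|_{L^1}$. Hence
$$
\Big|\frac{d}{dt}\int u\psi\Big|\lesssim (1+M)\int_{B_8}u(t).
$$
Integrating in time gives
$$
\int_{B_7}u(t)\le\int u(t)\psi\le\int u_0\psi+C(1+M)\int_0^t\int_{B_8}u(s)\,ds .
$$
The quantity $\int_{B_8}u(s)$ is itself a local mass at a larger radius, so we must control it. We do this at the cruder level needed for the $t^2$ term. Take a second cutoff $\tilde\psi$ equal to $1$ on $B_8$ and supported in $B_9$. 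The same computation gives
$$
\Big|\frac{d}{dt}\int u\tilde\psi\Big|\lesssim (1+M)M,
$$
so $\int_{B_8}u(s)\le\int_{B_9}u_0+C(1+M)Ms$. This loses the radius. The statement, however, only has $B_8$ on the right in the $t$-linear term, and its $t^2$ term involves $M$ only, which suggests the following plan. Use the crude global bound $\int_{B_8}u(s)\le \int_{B_8}u_0+C s M(1+M)$, obtained by choosing the cutoff $\psi$ itself in the inner step. Then $\frac{d}{dt}\int u\psi\le C(1+M)\,[\int u\psi+(\text{mass in the annulus})]$, and the annulus mass is bounded by $\int_{B_8}u_0+CsM(1+M)$.

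Concretely, we run a Gronwall-free two-step argument on the nested balls $B_7\subset B_{7.5}\subset B_8$ with cutoffs $\psi_1$ (from $B_7$ to $B_{7.5}$) and $\psi_2$ (from $B_{7.5}$ to $B_8$). For $\psi_2$, we use $|\frac{d}{dt}\int u\psi_2|\lesssim M(1+M)$, which follows from $|\rho|\lesssim1$ and $\int u\le M$. This gives $\int_{B_{7.5}}u(s)\le\int_{B_8}u_0+CsM(1+M)$. For $\psi_1$, we bound the derivative by $(1+M)\int_{B_{7.5}}u(s)$. The bilinear pairs with one variable in the annulus of $\psi_1$ and the other far away satisfy $|\rho|\lesssim1$, and we bound them by $M\int_{B_{7.5}}u$. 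Integrating in time then yields exactly
$$
\int_{B_7}u(t)\le\int_{B_8}u_0+Ct(1+M)\int_{B_8}u_0+Ct^2M(1+M)^2 .
$$
The last term can be written as $Ct^2 M(1+M^2)$ up to constants.

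The main point to check is the case analysis of $\rho_{\psi_1}$, namely that every nonvanishing pair has one point in $\operatorname{supp}\nabla\psi_1\subset B_{7.5}$, together with the justification of the weak formulation for $L^1$ mild solutions. For the latter we will use smoothness on $(0,T)$ from Theorem \ref{th:lwp:L1} and continuity in $L^1$ at $t=0$.
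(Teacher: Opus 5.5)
Your concrete two-step argument is correct and is exactly the paper's proof: nested cutoffs on $B_7\subset B_{15/2}\subset B_8$, a crude $M(1+M)$ bound on the flux of the outer cutoff giving $\int_{B_{15/2}}u(s)\le\int_{B_8}u_0+CsM(1+M)$, then bounding the flux of the inner cutoff by $(1+M)\int_{B_{15/2}}u(s)$ and integrating in time. The only differences are minor: the paper justifies the weak formulation by density of smooth data rather than by smoothness on $(0,T)$ plus $L^1$-continuity at $t=0$, which is equally fine, and your exploratory paragraph before ``Concretely'' should be dropped, since a cutoff equal to $1$ only on $B_7$ cannot by itself bound $\int_{B_8}u(s)$.
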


\begin{proof}
	We prove the inequality for smooth and compactly supported initial data, and the result then follows by a standard density argument. Let $\chi$ denote a smooth cut-off that is $1$ on $B_{15/2}(0)$ and $0$ on $B_8(0)^c$. Using the weak formulation of the equation, we obtain
	\[
	\frac{d}{dt}\int_{\mathbb{R}^2} u(x,t)\chi(x)\,dx
	= \int_{\mathbb{R}^2} u(x,t)\Delta \chi(x)\,dx
	+ \frac{1}{2}\int_{\mathbb{R}^2 \times \mathbb{R}^2} \rho_\chi(x,y)\, u(x,t)u(y,t)\,dx\,dy,
	\]
	where
	\[
	\rho_\chi(x,y)
	= -\frac{x-y}{|x-y|^2} \cdot \big(\nabla \chi(x) - \nabla \chi(y)\big).
	\]
	Both $\|\Delta \chi\|_\infty$ and $\|\rho_\chi\|_\infty$ are finite. Moreover, the support of $\Delta \chi$ is contained $\{15/2<|x|<8\}$ and the support of $\rho_\chi$ is contained in $\{15/2<|x|<8\}\times \mathbb R^2\cup \mathbb R^2\times \{15/2<|x|<8\}$. Using conservation of mass, we deduce that
	\[
	\left|\frac{d}{dt}\int_{\mathbb{R}^2} u(x,t)\chi(x)\,dx\right|
	\lesssim \| u(t)\|_{L^1(B_8(0))}(1+\| u_0\|_{L^1})\lesssim \| u_0\|_{L^1}(1+\| u_0\|_{L^1}).
	\]
	This implies, after integration,
	$$
	\| u(t)\|_{L^1(B_{15/2}(0))}\leq \| u(t)\|_{L^1(B_{8}(0))}+Ct\| u_0\|_{L^1}(1+\| u_0\|_{L^1}).
	$$
	We now repeat the argument, taking $\chi'$ a smooth cut-off that is $1$ on $B_{7}(0)$ and $0$ on $B_{15/2}(0)^c$, and obtain
	\[
	\left|\frac{d}{dt}\int_{\mathbb{R}^2} u(x,t)\chi'(x)\,dx\right|\lesssim [\| u(t)\|_{L^1(B_{8}(0))}+Ct\| u_0\|_{L^1}(1+\| u_0\|_{L^1})](1+\| u_0\|_{L^1}).
	\]
	The desired result follows after time integration.
		
\end{proof}

The second step is to obtain a uniform local regularization rate in $L^\infty$. Solutions of the Keller-Segel equation with initial data in $L^1$ satisfy $\| u\|_{L^\infty(\mathbb R^2)}\leq C(u_0) t^{-1}$ for small times. Here we show a local version of this bound on a uniform time interval and with a uniform constant.

\begin{proposition} \label{pr:uniform-regularization-bound}

There exists $\varepsilon_*>0$ such that for any $M_*>0$, there exist $t_*>0$ small enough such that for any solution $u$ to \eqref{KS} with $\| u\|_{L^1}\leq M_*$ and $\| u_0\|_{L^1(B_8(0))}\leq \varepsilon_*$, one has
$$
\| u(t)\|_{L^\infty(B_4(0))} \leq \frac{2}{t} \quad \mbox{for all $t\in [0,\min(t_*,T(u_0))$}.
$$

\end{proposition}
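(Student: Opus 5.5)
We argue by contradiction combined with a blow-up (rescaling) and compactness argument, using the local $L^1$ propagation of Lemma \ref{lem:smallL1} and the small-data theory behind Theorem \ref{th:lwp:L1}. Fix $M_*$, and let $t_*\le 1$ be chosen later so that Lemma \ref{lem:smallL1} gives $\int_{B_7}u(t)\,dx\le 2\varepsilon_*+Ct_*M_*(1+M_*^2)\le 3\varepsilon_*$ for $t\le t_*$. Consider the localized, time-weighted quantity
$$
N(t):=\sup_{0<s\le t}\ \sup_{x\in B_6(0)} s\, d(x)^{2}\,u(s,x)\,\min(1,\cdot)
$$
or more simply $N(t)=\sup_{0<s\le t}s\|u(s)\|_{L^\infty(B_{4+\rho})}$ with a family of nested balls. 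The plan is a continuity argument: $N(t)\to0$ as $t\to0$ is not available for $L^1$ data, so instead we use the smallness of the local mass to show directly that $s\,u(s,x)\le 2$ cannot fail first at some time.

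\textbf{Step 1 (localization).} Fix $(t,x_0)$ with $t\le t_*$, $|x_0|\le 4$. Split $u=u\chi+u(1-\chi)$ with $\chi$ a cut-off equal to $1$ on $B_6$, supported in $B_7$. The field $\nabla\Phi_{u(1-\chi)}$ is smooth and bounded by $C M_*$ on $B_5$, together with its derivatives. Write the Duhamel formula for $v=u\chi_{5}$ (cutting off on $B_5$) on $[0,t]$: $v$ solves a heat equation with drift $-\nabla\Phi_{u\chi}$, a bounded drift $b$ from the far part, and commutator source terms supported in the annulus $\{5<|x|<6\}$ involving $u,\nabla u$ there. \textbf{Step 2 (bootstrap).} Define $t_1$ the first time where $\sup_{s\le t_1}s\|u(s)\|_{L^\infty(B_{5})}=2$ (it is positive since the solution is smooth for $s>0$, and for small $s$ we argue below by approximating $u_0$ by bounded data; the bound is uniform so it passes to the limit via the $L^1$ continuity of the flow). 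On $[0,t_1]$, standard small-$L^1$ estimates for the Keller--Segel nonlinearity give, using Hardy--Littlewood--Sobolev, $\|u\chi\nabla\Phi_{u\chi}\|$ bounded by $\|u\|_{L^1(B_7)}^{1/2}\|u\|_{L^\infty}^{3/2}$-type quantities, so the Duhamel integral $\int_0^s\nabla G(s-s')*(\dots)$ contributes at most $C\varepsilon_*^{1/2}\cdot 2^{3/2}/s$, while $G(s)*(u_0\chi)$ contributes at most $\varepsilon_*/(4\pi s)$, the bounded drift contributes $C M_* s\cdot (2/s)^{\cdot}$ small when $t_*$ is small, and the annulus terms are handled because on the annulus we only need $L^1$ control (already propagated) after integrating $\nabla G$ against them: they contribute $CM_*\,s^{0}\le 1/(4s)$ for $s\le t_*$ small. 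Hence $s\|u(s)\|_{L^\infty(B_5)}\le 1$ on $[0,t_1]$ if $\varepsilon_*$ is universal small and $t_*$ small depending on $M_*$, contradicting the definition of $t_1$.

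\textbf{Main obstacle.} The annulus/commutator terms contain $\nabla u$ and $u\nabla\Phi_u$ on $\{5<|x|<6\}$, where we have no $L^\infty$ control a priori. I would handle this by putting the derivative onto the kernel (writing the commutator in divergence form so only $u$ appears, against $\nabla G$ or $\nabla^2G$ evaluated at distance $\ge 1$ from $x_0\in B_4$, which is bounded by $C e^{-c/s}$), so that only the propagated $L^1$ bound is needed; the term $u\nabla\Phi_u$ on the annulus requires $\nabla\Phi_u$ there, which I would control by splitting $\Phi_u$ into near and far parts and using the nested-ball structure (bootstrap on $B_{5+\rho}$ with $\rho$ decreasing), accepting a loss only in $M_*$-dependent constants absorbed by choosing $t_*$ small.
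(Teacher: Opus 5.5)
\textbf{There is a genuine gap: your continuity argument does not close as set up.} The obstruction is exactly the domain loss you list as the ``main obstacle''.

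Your bootstrap quantity is $\sup_{s\le t}s\|u(s)\|_{L^\infty(B_5)}$, but Step~2 only improves the bound at points $x_0\in B_4$ (as fixed in Step~1). These points are at distance $\ge 1$ from the annulus $\{5<|x|<6\}$ carrying the commutator terms, and only there does $\nabla G(s-s',x_0-\cdot)$ supply the factor $e^{-c/(s-s')}$ that lets you use mere $L^1$ control. To close the argument you must improve the bound at every $x_0\in B_5$, including points within $\sqrt{s}$ of $\partial B_5$. There the Duhamel formula sees $u$ and $\nabla\Phi_u$ on the annulus, with neither an $L^\infty$ bound nor a kernel gain.

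The same defect already affects your ``interior'' term: $\|u\chi\,\nabla\Phi_{u\chi}\|_{L^\infty}$ with $\chi$ supported in $B_7$ is not controlled by a bound on $B_5$. Nested balls $B_{5+\rho}$ do not repair this, for three reasons. A continuity argument needs the improved bound on the same set as the assumed one. A finite chain of balls always leaves the outermost ball uncontrolled. And the cost of shrinking by $\rho$ is governed by $\rho/\sqrt{s}$, not by $M_*$, so it cannot be absorbed by taking $t_*$ small.

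A secondary, fixable point: with $\|u(s')\|_{L^\infty}\lesssim 1/s'$ you get $\|u\nabla\Phi_u\|_{L^\infty}\lesssim\varepsilon_*^{1/2}s'^{-3/2}$, and $\int_0^s(s-s')^{-1/2}s'^{-3/2}\,ds'=\infty$. On $[0,s/2]$ you must use $\|\nabla G(s-s')\|_{L^\infty}\|u\nabla\Phi_u\|_{L^1}$ instead. The same applies to the bounded-drift term.

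\textbf{How the paper handles the domain loss.} It uses point-picking rather than a bootstrap. Property (P) at $(t,r)$ says that $\|u(t)\|_{L^\infty(B_r)}$ dominates, up to a factor $100$, the supremum over the larger ball $B_{r+\sqrt{\lambda}}$ on the backward window $[t-\lambda^2,t]$, where $\lambda=\|u(t)\|_{L^\infty(B_r)}^{-1/2}$.
\begin{itemize}
\item Where (P) holds, Lemma~\ref{lem:controlP} rescales at the maximum point. This gives solutions bounded by $100$ on balls of radius $\lambda^{-1/2}\to\infty$ with local mass $\le 2\varepsilon_*$. A compactness limit then has $v_\infty(1,0)=1$ but mass $\le 2\varepsilon_*$, contradicting small-mass regularity.
\item Where (P) fails, one jumps to an earlier time and a slightly larger ball whose supremum is at least $100$ times larger. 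Since $\lambda_{k+1}<\lambda_k/10$, the increments $\sqrt{\lambda_k}$ and $\lambda_k^2$ are summable. The points therefore stay in $[t_0/4,t_0]\times B_6$, where $u$ is bounded, which is a contradiction.
\end{itemize}

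\textbf{How to repair your direct Duhamel route.} The missing ingredient is a weight that degenerates at the parabolic scale near the boundary, for example $N(t)=\sup_{0<s\le t,\ |x|<6}\min\{s,(6-|x|)^2\}\,u(s,x)$. Your first, abandoned definition of $N$ points in this direction. At each $(s,x)$, rescale by $\ell=\min\{\sqrt{s},6-|x|\}$ and run Duhamel on $[s-\ell^2,s]$. Then:
\begin{itemize}
\item the whole cylinder $B_{\ell/2}(x)\times[s-\ell^2,s]$ lies in the controlled region, with the bound worsened only by a factor $4$;
\item the cutoff commutators sit inside this region, at distance comparable to $\ell$ from $x$;
\item the field generated by mass outside the cylinder is $O(\varepsilon_*+\ell M_*)$ in rescaled units.
\end{itemize}
This closes with $\varepsilon_*$ universal and $t_*$ small depending on $M_*$. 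It would give a direct, compactness-free alternative to the paper's argument.
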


The proof of Proposition \ref{pr:uniform-regularization-bound} will require an intermediate Lemma. We shall rely on the known fact that a small total mass implies a small $L^\infty$ norm, via a blow-up type argument. To implement it, we define the following property describing a zone where the solution has a regular behaviour.

\begin{definition}[Property (P)]\label{def:PropP}
	Let $t \ge 0$ and $r>0$. We say that $u$ satisfies property (P) at $(r,t)$ if, setting
	\[
	\lambda(t,r) := \|u(t)\|_{L^\infty(B_r(0))}^{-1/2},
	\]
	one has
	\[
	\|u(t)\|_{L^\infty(B_r(0))} \ge \frac{1}{100}
	\sup_{s \in [t-\lambda(t,r)^2,\ t]}
	\|u(s)\|_{L^\infty(B_{r+\sqrt{\lambda(t,r)}}(0))}.
	\]
\end{definition}

\begin{lemma}[Control under property (P)]\label{lem:controlP}
	There exists $\varepsilon_*>0$ such that for any $M_*>0$, there exist $t_*>0$ small enough such that for any solution $u$ to \eqref{KS} with $\| u\|_{L^1}\leq M_*$ and $\| u_0\|_{L^1(B_8(0))}\leq \varepsilon_*$, then for every $t \in (0,\min(t_*,T(u_0)))$ and every $r \in [4,6]$, if $u$ satisfies {\rm (P)} at $(t,r)$, one has
	\[
	\|u(t)\|_{L^\infty(B_r(0))} \le \frac{1}{t}.
	\]
\end{lemma}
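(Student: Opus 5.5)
We argue by contradiction through a blow-up (rescaling) argument, in the spirit of the proof of Lemma \ref{FirstDecoLemma}. First fix $\varepsilon_*$ smaller than the critical mass, say $\varepsilon_*\le \pi$; this choice is used at the end. Given $M_*$, suppose that no $t_*$ works. Then there are solutions $u_n$ with $\|u_{n,0}\|_{L^1}\le M_*$ and $\|u_{n,0}\|_{L^1(B_8)}\le\varepsilon_*$, times $t_n\to 0$ and radii $r_n\in[4,6]$ such that (P) holds at $(t_n,r_n)$ but $\|u_n(t_n)\|_{L^\infty(B_{r_n})}>1/t_n$. Write $\lambda_n=\lambda(t_n,r_n)$. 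Then $\lambda_n^2< t_n\to 0$, so the time window $[t_n-\lambda_n^2,t_n]$ in (P) lies inside $(0,t_n]$. Choose $x_n\in \bar B_{r_n}$ with $u_n(t_n,x_n)\ge \tfrac12\lambda_n^{-2}$ and rescale:
$$
v_n(s,y)=\lambda_n^2\,u_n(t_n+\lambda_n^2 s,\,x_n+\lambda_n y),\qquad s\in[-1,0].
$$
Property (P) gives $0\le v_n\le 100$ on $[-1,0]\times B_{\lambda_n^{-1/2}}$, since $|x_n+\lambda_n y|\le r_n+\sqrt{\lambda_n}$ there. This region exhausts $\mathbb R^2$ as $n\to\infty$. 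We also have $v_n(0,0)\ge 1/2$ and $\|v_n(s)\|_{L^1}\le M_*$.

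Next we need the local mass to stay small. Lemma \ref{lem:smallL1} gives $\int_{B_7}u_n(t)\le \varepsilon_*+Ct_n(1+M_*)\varepsilon_*+Ct_n^2M_*(1+M_*^2)$, which is at most $2\varepsilon_*$ once $t_n$ is small. Since $B_{\lambda_n^{-1/2}}$ in $y$ corresponds to a ball of radius $\sqrt{\lambda_n}$ around $x_n$, contained in $B_7$, we get $\int_{B_{\lambda_n^{-1/2}}}v_n(s)\,dy\le 2\varepsilon_*$ for all $s\in[-1,0]$.

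The step I expect to be the main obstacle is obtaining uniform local regularity for $v_n$. The function $v_n$ is bounded only on a large ball, while far away its $L^1$ mass can be as large as $M_*$. Split the drift as $\nabla\Phi_{v_n}=\nabla\Phi_{v_n\mathbf 1_{B_\rho}}+\nabla\Phi_{v_n\mathbf 1_{B_\rho^c}}$ with $\rho=\lambda_n^{-1/2}/2$. On $B_{\rho/2}$ the second piece is bounded by $CM_*/\rho\to 0$ and is smooth there. The first piece is bounded by $\|v_n\|_{L^\infty(B_\rho)}+\|v_n\|_{L^1}$, as in \eqref{bd:Poisson-field-Linfty}. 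So on compact sets, $v_n$ solves $\partial_s v=\Delta v+b\cdot\nabla v+cv$ with uniformly bounded coefficients. Interior parabolic (Schauder) estimates, bootstrapped, give uniform $C^k$ bounds on $[-1/2,0]\times K$ for every compact $K$.

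By Arzel\`a--Ascoli and a diagonal extraction, $v_n\to v$ in $C^\infty_{loc}$ on $(-1/2,0]\times\mathbb R^2$. The limit satisfies $v(0,0)\ge 1/2$ and $0\le v\le 100$, and by Fatou $\int_{\mathbb R^2} v(s)\le 2\varepsilon_*$. The nonlocal drift also converges: $\nabla\Phi_{v_n}\to\nabla\Phi_v$ locally uniformly, by the near/far splitting used in Step 5 of Lemma \ref{FirstDecoLemma}, since the tail contributes at most $M_*/\rho\to 0$. Hence $v$ is a bounded smooth solution of \eqref{KS} on $(-1/2,0]$ with mass at most $2\varepsilon_*$.

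To conclude, we show that a Keller--Segel solution with small mass $m\le 2\varepsilon_*$ on a time interval of length $1/2$ satisfies $\|v(0)\|_{L^\infty}\lesssim m$. This would follow from the small-mass global regularization estimate in the local well-posedness theory of \cite{BC}, namely $\|v(s_0+\tau)\|_{L^\infty}\le Cm/\tau$ with $C$ universal for $m$ below a threshold, applied with $s_0=-1/2$ and $\tau=1/2$. If that estimate is unavailable, I would instead run the Duhamel formula for $v$, using the boundedness $v\le 100$ to control the nonlinearity. Shrinking $\varepsilon_*$ so that $2C\varepsilon_*<1/2$ contradicts $v(0,0)\ge1/2$, which proves the lemma.
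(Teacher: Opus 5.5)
Your proposal is correct and follows essentially the same route as the paper. Both argue by contradiction with $t_n\to 0$ and rescale at the scale $\lambda_n=\|u_n(t_n)\|_{L^\infty(B_{r_n})}^{-1/2}$, so that property (P) gives a uniform bound $v_n\le 100$ on an exhausting ball. Both then use Lemma \ref{lem:smallL1} for local smallness of the mass, pass to a limit via interior parabolic estimates, and reach a contradiction with the small-mass $L^\infty$ regularization of the limiting Keller--Segel solution. The only differences are cosmetic: you use the time window $[-1,0]$ rather than $[0,1]$, choose a point where $u_n\ge \tfrac12\lambda_n^{-2}$ instead of the maximum, and split the drift explicitly into near and far parts.
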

\begin{proof}

	Throughout the proof, balls are implicitly assumed to be closed balls. Let $\varepsilon_*>0$ small to be fixed later on. We reason by contradiction and assume the result does not hold true. There exists $M_*>0$ and a sequence of times $t_n\to 0$, radii $r_n\in [4,6]$ and solutions $u_n$ to \eqref{KS} with $\| u_{n,0}\|_{L^1}\leq M_*$ and $\| u_{n,0}\|_{L^1(B_8(0))}\leq \varepsilon_*$ such that $u_n$ satisfies Property {\rm (P)} at $(t,r)$ but
	\[
	\|u_n(t_n)\|_{L^\infty(B_{r_n}(0))} > \frac{1}{t_n}.
	\]
	By Lemma~\ref{lem:smallL1}, we obtain for all $t\in [0,t_n]$ that
	\[
	 \int_{B_7(0)} u_n(x,t)\,dx
	\leq\varepsilon_* + Ct_n\varepsilon_* (1+M_*)+Ct^2_n M_*(1+M_*^2)\leq 2\varepsilon_*,
	\]
	where the last inequality holds for $n$ large enough. Set
	\[
	\lambda_n:=  \|u_n(t_n)\|_{L^\infty(B_{r_n}(0))}^{-1/2}.
	\]
	Then
	\[
	\frac{\lambda_n^2}{t_n}
	=
	\frac{1}{t_n \|u_n(t_n)\|_{L^\infty(B_{r_n}(0))}}
	<1 \quad \mbox{and} \quad \lambda_n\to 0.
	\]
	Choose $x_n \in B_{r_n}(0)$ such that
	\[
	u_n(t_n,x_n)=\|u_n(t_n)\|_{L^\infty(B_{r_n}(0))}=\lambda_n^{-2}.
	\]
	This is possible since $t_n>0$ and, by parabolic regularization, $u_n(t_n,\cdot)$ is smooth. Define the rescaled functions
	\[
	v_n(s,y):=\lambda_n^2\,u_n\bigl(t_n-\lambda_n^2+\lambda_n^2 s,\ x_n+\lambda_n y\bigr),
	\qquad (s,y)\in[0,1]\times\mathbb{R}^2.
	\]
	By construction,
	\[
	v_n(1,0)=1.
	\]
	Note that since $t_n>\lambda_n^2$, we have indeed $t_n-\lambda_n^2  \ge 0$. Moreover,
	\[
	t_n-\lambda_n^2+\lambda_n^2 s \le t_n
	\qquad\text{for all } s\in[0,1].
	\]
	Therefore, for $n$ large one has for every $s\in[0,1]$,
	\[
	\int_{B_7(0)} u_n\bigl(x,t_n-\lambda_n^2+\lambda_n^2 s\bigr)\,dx
	\le 2\varepsilon_*.
	\]
	Using the change of variables $x=x_n+\lambda_n y$, since $|x_n|\leq 6$, we obtain
	\[
	\int_{B_{\frac{7}{\lambda_n}}(\frac{x_n}{\lambda_n}) }
	v_n(s,y)\,dy
	\le 2\varepsilon_*.
	\]
	Using Property {\rm (P)} at $(t_n,r_n)$, we have
	\[
	\sup_{t'\in[t_n-\lambda_n^2,t_n]}
	\|u_n(t')\|_{L^\infty(B_{r_n+\sqrt{\lambda_n}}(0))}
	\le
	100\,\|u_n(t_n)\|_{L^\infty(B_{r_n}(0))}
	=
	100\,\lambda_n^{-2}.
	\]
	Therefore,
	\[
	\sup_{s\in[0,1]}
	\|v_n(s)\|_{L^\infty\!\left(B_{\frac{r_n}{\lambda_n}+\frac{1}{\sqrt{\lambda_n}}} \left(-\frac{x_n}{\lambda_n}\right)\right)}
	\le 100.
	\]
	Since $|x_n|\le r_n\le 6$, we have
	\[
	B_{\frac{1}{\sqrt{\lambda_n}}}\!\left(0\right)
	\subset
	B_{\frac{r_n}{\lambda_n}+\frac{1}{\sqrt{\lambda_n}}}\!\left(-\frac{x_n}{\lambda_n}\right) \quad \mbox{and} \quad B_{\frac{1}{2\lambda_n}}\!\left(0\right)
	\subset
	B_{\frac{7}{\lambda_n}}\!\left(-\frac{x_n}{\lambda_n},\right).
	\]
	Consequently,
	\begin{equation} \label{bd:epsilon-regularity-vn}
	\sup_{s\in[0,1]}
	\|v_n(s)\|_{L^\infty(B_{1/\sqrt{\lambda_n}}(0))} \le 100,
	\qquad
	\sup_{s\in[0,1]}
	\|v_n(s)\|_{L^1(B_{1/(2\lambda_n)}(0))} \le 2\varepsilon_*.
	\end{equation}
	We next record the following compactness statement, whose proof is given after the proof of the lemma.
	\begin{claim}\label{claim:compactness}
		There exist a subsequence, still denoted by $\{v_n\}_n$, and a function
		$v_\infty \in C^2_{\mathrm{loc}}\bigl(\mathbb{R}^2\times[\tfrac12,1]\bigr)$,
		such that
		\[
		v_n \to v_\infty
		\quad\text{in } C^2_{\mathrm{loc}}\bigl(\mathbb{R}^2\times[\tfrac12,1]\bigr),
		, \qquad
		\nabla \Phi_{v_n} \to \nabla \Phi_{v_\infty}
		\quad\text{in } C_{\mathrm{loc}}\bigl(\mathbb{R}^2\times[\tfrac12,1]\bigr).
		\]
		Moreover, $v_\infty\ge 0$,
		\[
		\int_{\mathbb{R}^2} v_\infty(x,s)\,dx \le M_*
		\qquad\text{for every } s\in[\tfrac12,1],
		\]
		and $v_\infty$ solves the Keller--Segel equation on
		$\mathbb{R}^2\times[\tfrac12,1]$.
	\end{claim}
	Assuming the claim we can now complete the proof. On the one hand, since
	$v_n(1,0)=1$ for all $n$,
	it follows that
	\[
	v_\infty(1,0)=1.
	\]
	On the other hand, since $\lambda_n^{-1}\to \infty$, the second inequality in \eqref{bd:epsilon-regularity-vn} and Fatou's Lemma imply
	\[
	\|v_\infty(\tfrac12,\cdot)\|_{L^1(\mathbb{R}^2)}\leq 2\varepsilon_*.
	\]
	But this is a contradiction for $\varepsilon_*$ chosen small enough, because it is known that in that case any solution $w$ to \eqref{KS} with $\| w_0\|_{L^1(\mathbb R^2)}\leq 2\varepsilon_*$ satisfies $\| w\|_{L^\infty(\mathbb R^2)}\lesssim \varepsilon_*$, by a straightforward adaptation of the proof of Lemma \ref{lem:uniform-regularization-near-soliton} to the case $U=0$, see also \cite{EFJS}.

\medskip

\underline{Proof of Claim~\ref{claim:compactness}}
Fix $R>0$. Since $\lambda_n \to 0$, for $n$ sufficiently large we have
	$B_R(0) \subset B_{1/\sqrt{\lambda_n}}(0)$
	and 
	$B_R(0) \subset B_{1/(2\lambda_n)}(0)$.
	Therefore, the previous estimates apply on $B_R(0)$, namely
	\[
	\|v_n(t)\|_{L^\infty(B_R(0))} \le 100,
	\qquad
	\|v_n(t)\|_{L^1(B_R(0))} \le 1,
	\qquad
	\|v_n(t)\|_{L^1(\mathbb{R}^2)} \le K.
	\]
	In particular, using the representation of $\nabla \Phi_{v_n}$, it follows that there exists a $C>0$ depending only on $K$ such that
	\[
	\|\nabla \Phi_{v_n}(t)\|_{L^\infty(B_{R/2}(0))} \le C
	\qquad \text{for all } t\in[0,1].
	\]
We can thus write the equation for $v_n$ in the form
\[
\partial_t v_n = \Delta v_n + b_n \cdot \nabla v_n + c_n v_n,
\qquad
b_n := -\nabla \Phi_{v_n},
\quad
c_n := v_n.
\]
By the bounds above, $b_n$ and $c_n$ are uniformly bounded in $L^\infty([0,1]\times B_{R/2}(0))$.
Standard parabolic interior estimates then yield that for every $0<\tau<1$
\[
\sup_{(t,x)\in [\tau,1]\times B_{R/4}(0)} |\nabla v_n(t,x)| \le C(R,\tau).
\]
At this point we fix $v_n$ and regard $b_n$ and $c_n$ as given bounded coefficients. 
The equation is therefore linear with uniformly bounded coefficients, and interior $W^{2,1}_p$ estimates can be iterated. 
In particular, by parabolic Sobolev embedding, this yields uniform $C^{2}$ bounds on smaller cylinders:
\[
\|v_n\|_{C^{2}([\tfrac12,1]\times B_{R/8}(0))} \le C(R).
\]
	By a diagonal argument, up to extraction of a subsequence (still denoted by $v_n$), we obtain
	\[
	v_n \to v_\infty
	\qquad \text{in } C^2_{\mathrm{loc}}(\mathbb{R}^2 \times [\tfrac12,1]).
	\]
	Since $v_n \ge 0$ and $\|v_n(t)\|_{L^1(\mathbb{R}^2)} \le M_*$, Fatou's lemma yields
	\[
	v_\infty \ge 0,
	\qquad
	\int_{\mathbb{R}^2} v_\infty(x,t)\,dx \le M_*
	\quad \text{for all } t\in[\tfrac12,1].
	\]
	Moreover, by the convergence and the representation formula for $\nabla \Phi_{v_n}$, we have
	\[
	\nabla \Phi_{v_n} \to \nabla \Phi_{v_\infty}
	\qquad \text{in } C_{\mathrm{loc}}(\mathbb{R}^2 \times [\tfrac12,1]).
	\]
	Passing to the limit in the equation, we conclude that $v_\infty$ solves the Keller--Segel system on
	$\mathbb{R}^2 \times [\tfrac12,1]$.
\end{proof}

We can now prove Proposition \ref{pr:uniform-regularization-bound}.

\begin{proof}[Proof of Proposition \ref{pr:uniform-regularization-bound}.]
	We fix $\varepsilon_*>0$, $M^*>0$ and $t^*$ as in Lemma~\ref{lem:controlP}. We will possibly reduce the value of $t_*$ at the end of the proof. We argue by contradiction. Suppose that there exists $t_0 \in (0,\min(t_*,T(u_0)))$ such that, setting $r_0 = 4$,
	\[
	\lambda_0^{-2} := \|u(t_0)\|_{L^\infty(B_{r_0}(0))} > \frac{2}{t_0}.
	\]
	By Lemma \ref{lem:controlP}, property {\rm (P)} cannot hold at $(t_0,r_0)$, and by definition of {\rm (P)} there exists
	$t_1 \in [t_0-\lambda_0^2,\,t_0]$ such that, setting
	$r_1 := r_0 + \sqrt{\lambda_0}$,
	\[
	\lambda_1^{-2} := \|u(t_1)\|_{L^\infty(B_{r_1}(0))}
	> 100\,\lambda_0^{-2}.
	\]
	In particular, $\lambda_1 < \frac{\lambda_0}{10}$. Moreover, since
	$t_0 \in (0,t_*]$ and
	$\lambda_0^2 = \|u(t_0)\|_{L^\infty(B_{r_0}(0))}^{-1} \le \frac{t_0}{2}$,
	we obtain
	\[
	t_1 \ge t_0 - \lambda_0^2 \ge \frac{t_0}{2},
	\qquad
	t_1 \le t_0 \le t_*.
	\]
	Furthermore, since $r_0 = 4$ and $\lambda_0 \le \sqrt{t_*/2}$,
	we have
	\[
	r_1 = r_0 + \sqrt{\lambda_0}
	\le 4 + \sqrt{t_*/2} \le 6,
	\qquad
	r_1 \ge r_0 = 4.
	\]
	Hence
	$t_1 \in \Big[\tfrac{t_0}{4},\,t_0\Big]$,
	$r_1 \in [4,6]$. This implies in particular that $\| u(t_1)\|_{L^\infty(B_{r_1}(0))}>2/t_1$, so that by Lemma \ref{lem:controlP}, property {\rm (P)} cannot hold at $(t_1,r_1)$. Iterating this construction, we obtain sequences $\{t_k\}$, $\{r_k\}$, $\{\lambda_k\}$ such that for all $k\ge 0$:
	\begin{align*}
		&t_{k+1} \in [t_k-\lambda_k^2,\,t_k], \quad r_{k+1} = r_k + \sqrt{\lambda_k},\\
		& \lambda_{k+1} < \frac{\lambda_k}{10},\quad \lambda_k^{-2} = \|u(t_k)\|_{L^\infty(B_{r_k}(0))}.
	\end{align*}
	By induction we obtain
	\[
	t_k \ge t_0- \sum_{i=0}^{k-1} \lambda_i^2
	\ge t_0- \frac{t_0}{2}\sum_{i=0}^{\infty} 10^{-2i}
	\ge \frac{t_0}{4},
	\]
	and
	\[
	r_k \le 4 + \sum_{i=0}^{k-1} \sqrt{\lambda_i}
	\le 4 + \sqrt{\frac{t_0}{2}}\sum_{i=0}^{\infty} 10^{-i}
	\le 6
	\]
	where, using $t_0\leq t_*$, for these inequalities to hold we possibly took a smaller $t_*$. Hence, $t_k \in [t/4,\,t_0]$ and $r_k \in [4,6]$ for all $k$. On the other hand,
	\[
	\|u(t_k)\|_{L^\infty(B_{r_k}(0))} = \lambda_k^{-2} \to \infty.
	\]
	Since $u$ is bounded on $[t_0/4,t_0]\times B_6(0)$, this is a contradiction. Hence the result of the proposition.
	
\end{proof}

We are now ready to prove Theorem~\ref{thm:quantitative-epsilon-reg}. 
Our argument relies on a classical interior $L^1$--$L^\infty$ regularity estimate for parabolic equations; see \cite[Theorem~7.36, p.~186]{Lieberman}. For the reader's convenience, we state below the particular case needed in this paper, namely $p=1$, $n=2$, and nonnegative solutions.

\begin{theorem}[Interior $L^1$--$L^\infty$ estimate {[}Theorem 7.36, p.~186{]}]\label{thm:Lieberman}
	Let $u\ge 0$ solve
	\[
	\partial_t u = \Delta u + b(x,t)\cdot \nabla u + c(x,t) u
	\]
	in $Q_{2} := (t_0-4,t_0)\times B_{2}(x_0)$, with
	\[
	\|b\|_{L^\infty(Q_{2})} + \|c\|_{L^\infty(Q_{2})} \le M.
	\]
	Then
	\[
	\sup_{Q_1} u
	\le C(M)\, \int_{Q_{2}} u(x,t)\,dx\,dt.
	\]
\end{theorem}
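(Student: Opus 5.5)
The plan is to prove this by the classical Moser iteration. I would first derive an $L^2\to L^\infty$ bound with an explicit dependence on the gap between cylinders, and then lower the exponent from $2$ to $1$ by an interpolation-and-absorption argument. Throughout, write $Q_\rho=(t_0-\rho^2,t_0)\times B_\rho(x_0)$ for $1\le\rho\le 2$. Since the coefficients are only assumed bounded, and not scale-small, I keep the cylinders of unit size and let all constants depend on $M$.

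\emph{Step 1: Caccioppoli inequality for powers.} Fix $1\le r<\rho\le2$ and a cut-off $\eta$ in space-time, supported in $Q_\rho$, equal to $1$ on $Q_r$, vanishing on the parabolic boundary, with $|\nabla\eta|+|\partial_t\eta|^{1/2}\lesssim(\rho-r)^{-1}$. For $q\ge1$, test the equation against $u^{2q-1}\eta^2$; this is legitimate because $u\ge0$ is smooth in the applications. The diffusion term gives the good quantity $\frac{2q-1}{q^2}\int|\nabla(u^q)|^2\eta^2$. The drift term is bounded by
\[
\Bigl|\int b\cdot\nabla u\,u^{2q-1}\eta^2\Bigr|\le \frac{M}{q}\int|\nabla(u^q)|\,u^q\eta^2 .
\]
I absorb it with Young's inequality, at the cost of $CM^2\int u^{2q}\eta^2$. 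The $c$ term contributes at most $M\int u^{2q}\eta^2$. Writing $w=u^q$, this yields
\[
\sup_t\int w^2\eta^2+\int|\nabla(w\eta)|^2\le C(M)\,q^{2}\,(\rho-r)^{-2}\int_{Q_\rho}w^2 .
\]
The power of $q$ here only affects constants that multiply geometrically, so its exact value is harmless.

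\emph{Step 2: Sobolev gain and iteration.} In two space dimensions, the parabolic Sobolev inequality
\[
\|f\|_{L^4(Q)}^2\lesssim \sup_t\|f\|_{L^2}^2+\|\nabla f\|_{L^2(Q)}^2
\]
(with gain $(n+2)/n=2$) converts Step 1 into $\|u\|_{L^{4q}(Q_r)}\le (C(M)q^2(\rho-r)^{-2})^{1/(2q)}\|u\|_{L^{2q}(Q_\rho)}$. I iterate with $q_k=2^k$ and radii $\rho_k=r+(\rho-r)2^{-k}$. The product of constants converges, which gives
\[
\sup_{Q_r}u\le C(M)(\rho-r)^{-2}\Bigl(\int_{Q_\rho}u^2\Bigr)^{1/2}.
\]
The exponent $-2=-(n+2)/2$ on $\rho-r$ is the one dictated by scaling.

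\emph{Step 3: from $L^2$ to $L^1$.} This is where $u\ge0$ and the specific exponent matter. I bound $\int_{Q_\rho}u^2\le\sup_{Q_\rho}u\int_{Q_2}u$ and apply Young's inequality:
\[
\sup_{Q_r}u\le\tfrac12\sup_{Q_\rho}u+C(M)(\rho-r)^{-4}\int_{Q_2}u,\qquad 1\le r<\rho\le2 .
\]
The standard iteration lemma then applies: if $f$ is bounded on $[1,2]$ and satisfies $f(r)\le\frac12 f(\rho)+A(\rho-r)^{-\alpha}$ for all $1\le r<\rho\le 2$, then $f(1)\le C_\alpha A$. Taking $f(r)=\sup_{Q_r}u$, which is finite by smoothness, I conclude $\sup_{Q_1}u\le C(M)\int_{Q_2}u$.

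I expect the main obstacle to be Step 1. There, the first-order drift must be absorbed uniformly in $q$ with only an $L^\infty$ bound on $b$, and the dependence of the constants on $q$ and $\rho-r$ must be tracked precisely enough that the geometric product in Step 2 converges and the exponent of $\rho-r$ is exactly the one needed for the absorption in Step 3.
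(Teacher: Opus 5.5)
Your argument is correct, but it does not follow the paper. The paper gives no proof: it quotes the statement as the case $p=1$, $n=2$ of the local maximum principle in Lieberman's book (Theorem 7.36), which is established there in much greater generality (general principal coefficients, any exponent $p>0$, solutions of limited regularity).

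You instead use the fact that the principal part is exactly $\Delta$, so Moser iteration is available. The chain is: a Caccioppoli bound for $u^q$, Ladyzhenskaya's inequality for the parabolic gain $2$, the $L^2\to L^\infty$ bound with the scaling exponent $(\rho-r)^{-2}$, and the absorption trick (using $u\ge 0$) to descend to $L^1$.

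The worries you flag are milder than you think. After normalising by the time-derivative term, the drift costs $O(M^2)$ and the $c$-term costs $O(qM)$, so the Caccioppoli constant is $C(M)\,q\,(\rho-r)^{-2}$. Moreover, the absorption lemma works for any power of $(\rho-r)^{-1}$, so no exact exponent is needed.

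The citation buys generality at no cost in length. Your proof buys a short, self-contained argument with transparent constants. That is all that is needed for the smooth solutions to which the paper applies the estimate (Proposition~\ref{Prop:Pointest} and Theorem~\ref{thm:quantitative-epsilon-reg}).

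One small correction in Step 3: $\sup_{Q_2}u$ need not be finite. In the proof of Theorem~\ref{thm:quantitative-epsilon-reg}, the bottom of the rescaled cylinder corresponds to the initial time, where $u_0$ is only in $L^1$. So run the iteration lemma on $[1,3/2]$ rather than $[1,2]$. Since your right-hand side involves only $\int_{Q_2}u$, this changes nothing but the constant.
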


\begin{proof}[Proof of Theorem~\ref{thm:quantitative-epsilon-reg}]

Let $\varepsilon_*$ be given by Proposition \ref{pr:uniform-regularization-bound}. Let $u$ be a solution to \eqref{KS} satisfying the assumptions of the Theorem. By Lemma \ref{lem:smallL1}, if $T_*\ll 1$ has been chosen small enough depending on $M_*$, we have
$$
	 \int_{B_7(0)} u(x,t)\,dx
	\lesssim \varepsilon_0 +t^2 C(M)
$$
for all $0\leq t\leq \min (T_*,T(u_0))$, where we recall $C(M)=M(1+M^2)$. By Proposition \ref{pr:uniform-regularization-bound}, by taking $T_*$ small enough we have
$$
\| u(t)\|_{L^\infty(B_4(0))} \leq \frac{2}{t} \quad \mbox{for all $t\in [0,\min(T_*,T(u_0))$}.
$$
	Observe that for $y\in B_2(0)$, and $t\in [0,\min(T_*,T(u_0))$, 
	\begin{align*}
	|\nabla \Phi_{u(t)}(y)| & \lesssim \int_{|y-z|<\sqrt{t}} \frac{dz}{|y-z|}u(z,t)+\int_{\sqrt{t}<|y-z|<2} \frac{dz}{|y-z|}u(z,t)+\int_{|y-z|>2} \frac{dz}{|y-z|}u(z,t) \\
	& \lesssim \sqrt{t}\| u(t)\|_{L^\infty(B_4(0))}+\frac{1}{\sqrt{t}}\| u(t)\|_{L^1(B_4(0))}+\| u(t)\|_{L^1}\lesssim \frac{1}{\sqrt{t}}
	\end{align*}
	provided once again $T_*$ is small enough depending on $M_*$. We will apply Theorem~\ref{thm:Lieberman} to a renormalization of the Keller-Segel equation
	\[
	\partial_t u=\Delta u+b\cdot \nabla u+c\,u,
	\qquad
	b=-\nabla\Phi_u,
	\qquad
	c=u.
	\]
	For every $(x,t)\in B_1(0)\times[0,\min(T_*,T(u_0))]$, set $R=\sqrt{t}/2$. we have $Q_{2R}(x,t)\subset B_2(0)\times[0,\min(T_*,T(u_0))]$. Define
	\[
	v(y,s):=R^2 u(x+Ry,t+R^2s),
	\qquad (y,s)\in Q_2:=(-4,0)\times B_2(0).
	\]
	Then $v$ solves
	\[
	\partial_s v=\Delta v+\tilde b\cdot \nabla v+\tilde c\,v,
	\]
	where
	\[
	\tilde b(y,s)=R\,b(x+Ry,t+R^2s),
	\qquad
	\tilde c(y,s)=R^2 c(x+Ry,t+R^2s).
	\]
	By the bounds established above,
	\[
	\|\tilde b\|_{L^\infty(Q_2)}+\|\tilde c\|_{L^\infty(Q_2)}\lesssim 1
	\]
	are uniformly bounded. Therefore, Theorem~\ref{thm:Lieberman} yields
	\[
	R^{2}u(x,t)=v(0,0)\le \sup_{Q_1} v \le C \int_{Q_2} v(y,s)\,dy\,ds.
	\]
	Changing variables back, using the explicit value of $R$, we obtain
	\[
	u(x,t)
	\lesssim \frac{1}{t^2}
	\int_{0}^{t}\int_{B_{2R}(x)} u(z,\tau)\,dz\,d\tau \lesssim \frac{1}{t^2} \int_0^t [ \varepsilon_0 +\tau^2 C(M)]d\tau \lesssim \frac{\varepsilon_0}{t}+tC(M).
	\]
	This proves the theorem.
\end{proof}

\section{Weighted estimates for the Poisson field}	\label{sec:poisson}

We collect here the weighted estimates for the Poisson field used throughout the paper. Their proof is omitted, since both estimates follow from the same decomposition into radial and non-radial components together with standard Fourier mode estimates. Similar arguments have appeared for instance in \cite{RS,CGMN0,CGMN1,CGMN2}. 
\begin{lemma}\label{lem:weightL2pote}
	Let \(u\in \mathcal{S}(\R^2)\), and let
\(W(y)=W(|y|)\ge 0\) be radial. Define
\begin{align*}
	K_{W}
	&:=
	\int_0^1 \rho |\ln \rho|\,W(\rho)\,d\rho
	+
	\int_1^\infty W(\rho)\frac{\ln(2+\rho)}{\rho}\,d\rho,\\
	K_{W,A}
	&:=
	\int_0^1 \rho |\ln \rho|\,W(\rho)\,d\rho
	+
	\int_1^\infty W(\rho)\rho^{1-A}\,d\rho.
\end{align*}
If \(A\ge 2\) and \(K_W<\infty\), then
\begin{align*}
	\int_{\R^2} |\nabla \Phi_u(y)|^2 W(|y|)\,dy
	\lesssim
	K_{W}
	\int_{\R^2} u^2(y)\langle y\rangle^A\,dy.
\end{align*}
If \(2<A<4\), \(\int_{\R^2}u=0\) and
\(K_{W,A}<\infty\), then
\begin{align*}
	\int_{\R^2} |\nabla \Phi_u(y)|^2 W(|y|)\,dy
	\lesssim
	K_{W,A}
	\int_{\R^2} u^2(y)\langle y\rangle^A\,dy.
\end{align*}
\end{lemma}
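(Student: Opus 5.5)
The approach is to decompose $u$ in angular Fourier modes, $u(y)=\sum_{m\in\mathbb Z}u_m(\rho)e^{im\theta}$. Then $\Phi_u$ decomposes accordingly and each mode can be written explicitly. For $m=0$, Gauss's law gives $\partial_\rho\Phi_0(\rho)=-\rho^{-1}\int_0^\rho u_0(r)r\,dr$, so $|\nabla\Phi_{u_0}(\rho)|=\rho^{-1}|\int_0^\rho u_0 r\,dr|$. For $m\neq0$, the Green's function of $-\partial_\rho^2-\rho^{-1}\partial_\rho+m^2\rho^{-2}$ gives
\[
\Phi_m(\rho)=\frac{1}{2|m|}\Big(\rho^{-|m|}\int_0^\rho r^{|m|+1}u_m\,dr+\rho^{|m|}\int_\rho^\infty r^{1-|m|}u_m\,dr\Big),
\]
with $|\nabla(\Phi_me^{im\theta})|\lesssim|\Phi_m'|+|m|\rho^{-1}|\Phi_m|$. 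By orthogonality of the modes and radiality of $W$, the left-hand side splits as a sum over $m$, so it suffices to bound each mode with constants summable (in fact uniform) in $m$, and then use $\sum_m\int|u_m|^2\langle\rho\rangle^A\rho\,d\rho\approx\int u^2\langle y\rangle^A$.

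\emph{Radial mode, first statement.} Cauchy--Schwarz gives $|\int_0^\rho u_0r\,dr|\le(\int u_0^2\langle r\rangle^Ar\,dr)^{1/2}(\int_0^\rho\langle r\rangle^{-A}r\,dr)^{1/2}$. The last factor is $\lesssim\min(\rho,\sqrt{\ln(2+\rho)})$ for $A\ge2$ (for $A=2$ it is $\sqrt{\ln(1+\rho^2)}/\sqrt2$). Hence
\[
|\nabla\Phi_{u_0}|^2\lesssim \min(1,\rho^{-2}\ln(2+\rho))\,\|u\|^2_{L^2_{\langle y\rangle^A}}.
\]
Integrating against $W\rho\,d\rho$ yields $\int_0^1W\rho\,d\rho+\int_1^\infty W\rho^{-1}\ln(2+\rho)\,d\rho\le K_W$. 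The near-origin factor $\rho|\ln\rho|$ is more than enough here; it is needed only for the non-radial modes and the full-plane Hardy--Littlewood--Sobolev-type bound near the origin.

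\emph{Radial mode, zero-mass statement.} When $\int u=0$ and $2<A<4$, rewrite $\int_0^\rho u_0r\,dr=-\int_\rho^\infty u_0r\,dr$ for $\rho\ge1$. Cauchy--Schwarz then gives $\big(\int_\rho^\infty r^{1-A}dr\big)^{1/2}\approx\rho^{(2-A)/2}$, so $|\nabla\Phi_{u_0}|^2\lesssim\rho^{-A}\|u\|^2$ and the tail contributes $\int_1^\infty W\rho^{1-A}\,d\rho$. The small-$\rho$ part is as before.

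\emph{Non-radial modes.} Apply Cauchy--Schwarz to each integral in the formula for $\Phi_m$, using the weight $\langle r\rangle^A$. For $|m|\ge2$ one gets $|\nabla\Phi_m|\lesssim\rho^{-1}\min(\rho,1)\|u_m\|$ uniformly in $m$, with a $1/|m|$ gain from the Green's function. For $|m|=1$ the outer integral $\int_\rho^\infty u_m\,dr$ converges, since $\int_\rho^\infty\langle r\rangle^{-A}r^{-1}dr<\infty$, but only with a logarithmic loss $\sqrt{|\ln\rho|}$ as $\rho\to0$. This loss is exactly the source of the $\rho|\ln\rho|$ weight in $K_W$. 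I expect this $m=\pm1$ small-$\rho$ bookkeeping, with constants uniform in $m$ so the mode sum closes, to be the main technical point; everything else is routine one-dimensional Cauchy--Schwarz. Summing over $m$ concludes both inequalities.
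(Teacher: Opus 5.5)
Your angular-mode decomposition is the route the paper has in mind; it omits the proof and refers to standard Fourier-mode estimates. Your radial-mode computations and the first inequality go through, apart from the issue in the second paragraph.

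\textbf{Main gap: the zero-mass inequality for the non-radial modes.} For $m\neq0$ the only large-$\rho$ bound you give is $|\nabla(\Phi_m e^{im\theta})|\lesssim\rho^{-1}\|u_m\|$, stated for $|m|\geq2$, and you give nothing at large $\rho$ for $m=\pm1$. Integrated against $W\rho\,d\rho$ on $[1,\infty)$, this produces $\int_1^\infty W\rho^{-1}\,d\rho$. For $A>2$ that quantity is not controlled by $\int_1^\infty W\rho^{1-A}\,d\rho$: take $W=\rho^{A-2-\epsilon}$ with $0<\epsilon<A-2$. So ``summing over $m$ concludes both inequalities'' does not follow.

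Relatedly, your argument never uses $A<4$, and that hypothesis cannot be dispensed with. A zero-mass Schwartz $u$ with nonzero first moment has $|\nabla\Phi_u|^2\approx c|y|^{-4}$ at infinity. Hence for $A>4$ the weight $W=\rho^{A-2-\epsilon}\mathbf{1}_{\rho\geq1}$, with $0<\epsilon<A-4$, has $K_{W,A}<\infty$ but makes the left-hand side infinite.

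What you need is $|\Phi_m'|+|m|\rho^{-1}|\Phi_m|\lesssim\rho^{-A/2}\|u_m\|$ for $\rho\geq1$, uniformly in $m\neq0$, where $\|u_m\|^2=\int_0^\infty|u_m|^2\langle r\rangle^A r\,dr$.
\begin{itemize}
\item The outer piece $\rho^{|m|-1}\int_\rho^\infty r^{1-|m|}|u_m|\,dr$ satisfies this bound by Cauchy--Schwarz for every $|m|\geq1$.
\item The inner piece satisfies $\rho^{-|m|-1}\int_0^\rho r^{|m|+1}|u_m|\,dr\le\rho^{-|m|-1}\big(\int_0^\rho r^{2|m|+1-A}\,dr\big)^{1/2}\|u_m\|=(2|m|+2-A)^{-1/2}\rho^{-A/2}\|u_m\|$. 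This requires $2|m|+2>A$, which for the dipole modes $m=\pm1$ is exactly $A<4$.
\end{itemize}
With this bound every mode's tail is controlled by $\int_1^\infty W\rho^{1-A}\,d\rho$, and the second inequality closes.

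\textbf{Secondary issue: the weight near $\rho=1$.} Your claim $\int_0^1W\rho\,d\rho\le K_W$ is false, because $|\ln\rho|\to0$ as $\rho\to1$. Likewise, the $m=\pm1$ loss on $(0,1)$ is $1+|\ln\rho|$, not $|\ln\rho|$. In fact the lemma as literally written fails: for a positive radial bump $u$ and $W=\mathbf{1}_{[1-\delta,1]}$, the left-hand side is of order $\delta$, whereas $K_W\approx\delta^2/2$.

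What your mode bounds actually prove is the statement with $\rho|\ln\rho|$ replaced by $\rho(1+|\ln\rho|)$ on $(0,1)$, in both $K_W$ and $K_{W,A}$. This is presumably the intended statement. You should state the correction explicitly rather than assert the literal inequality.
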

As a consequence of the previous lemma, we also obtain the following
rescaled version, which is often more convenient for applications
involving localization at scale \(\nu\) and center \(\xi\).
\begin{lemma}\label{lem:weightedL2poisson_inner}
	Let \(u\in \mathcal{S}(\R^2)\), let \(\nu>0\), \(\xi\in\R^2\), and let
	\[
	W(z):=W\!\left(\frac{|z-\xi|}{\nu}\right),
	\]
	where \(W=W(|\cdot|)\ge 0\) is radial. Define
	\begin{align*}
		K_{W}
		&:=
		\int_0^1 \rho |\ln \rho|\,W(\rho)\,d\rho
		+
		\int_1^\infty W(\rho)\frac{\ln(2+\rho)}{\rho}\,d\rho.
	\end{align*}
	 If
	\(A\ge 2\) and  \(K_W<\infty\), then
	\begin{align*}
		\int_{\R^2} |\nabla \Phi_u(z)|^2 W(z)\,dz
		\lesssim
		\nu^{2-A}
		K_{W}
		\int_{\R^2} u^2(z)\,(\nu^{2}+|z-\xi|^{2})^{A/2}\,dz .
	\end{align*}
\end{lemma}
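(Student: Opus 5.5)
The statement to prove is Lemma \ref{lem:weightedL2poisson_inner}, the rescaled version of Lemma \ref{lem:weightL2pote}. The plan is to reduce it directly to Lemma \ref{lem:weightL2pote} (first case, $A\ge 2$, no zero-mass assumption) by the change of variables $z=\xi+\nu y$.

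Set $v(y):=u(\xi+\nu y)$. The first step is to record how the Poisson field transforms. From the representation \eqref{id:definition-nablaPhiu}, substituting $z'=\xi+\nu y'$, one finds
\[
\nabla\Phi_u(\xi+\nu y)=-\frac{1}{2\pi}\int\frac{\nu(y-y')}{\nu^2|y-y'|^2}\,u(\xi+\nu y')\,\nu^2\,dy'=\nu\,\nabla\Phi_v(y).
\]
This is consistent with the scaling identities used in the proof of Proposition \ref{prop:matched_scalar_coercivity}, where $\nabla_z\Phi_{u_1}=\nu\nabla_y\Phi_v$ for the $L^1$-normalised rescaling; here the normalisation differs by a factor $\nu^2$, which is why the extra $\nu$ appears.

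The second step computes the left-hand side. Changing variables gives
\[
\int|\nabla\Phi_u(z)|^2W\!\left(\frac{|z-\xi|}{\nu}\right)dz=\nu^2\cdot\nu^2\int|\nabla\Phi_v(y)|^2W(|y|)\,dy .
\]
Since $v\in\mathcal S$, $W$ is radial and $K_W<\infty$, Lemma \ref{lem:weightL2pote} applies and bounds this by $\nu^4K_W\int v^2\langle y\rangle^A\,dy$.

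The third step converts back to $z$. We have $\langle y\rangle^A=\nu^{-A}(\nu^2+|z-\xi|^2)^{A/2}$ and $dy=\nu^{-2}dz$. Hence
\[
\int v^2\langle y\rangle^A\,dy=\nu^{-A-2}\int u^2(z)(\nu^2+|z-\xi|^2)^{A/2}\,dz .
\]
Combining with the second step yields the total power $\nu^{4-A-2}=\nu^{2-A}$, which is the claimed inequality.

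There is no genuine obstacle here: the argument is pure bookkeeping of scaling exponents. The only points that need care are the factor $\nu$ in the transformation of $\nabla\Phi$ and checking that the constant $K_W$ is scale-free, which holds because $W$ is already expressed in the rescaled variable $|z-\xi|/\nu$.
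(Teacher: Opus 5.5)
Your proof is correct and is exactly the rescaling argument the paper has in mind: it presents this lemma as a direct consequence of Lemma \ref{lem:weightL2pote} via $z=\xi+\nu y$, and your exponent bookkeeping $\nu^{4}\cdot\nu^{-A-2}=\nu^{2-A}$ checks out. One minor correction to your aside: in the proof of Proposition \ref{prop:matched_scalar_coercivity} the paper uses the same unnormalised rescaling $v(y)=u_1(\xi+\nu y)$ as you, which is precisely why it also obtains $\nabla_z\Phi_{u_1}=\nu\nabla_y\Phi_v$, so there is no $L^1$-normalisation discrepancy there; this does not affect your argument.
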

Analogous estimates can also be obtained for functions localized away from the origin. In this case, the localization yields improved decay properties for the associated Poisson field, and the argument follows from the same angular decomposition together with the additional decay induced by the support condition.
\begin{lemma}\label{lem:weightedL2poisson_farfield}
	Let \(u\in\mathcal S(\R^2)\) be supported in \(\{|z|\ge\eta\}\)
for some \(\eta>0\), and let \(W(z)=W(|z|)\ge0\) be radial.
Define
\[
K_W
:=
\int_0^1 r|\ln r|\,W(r)\,dr
+
\int_1^\infty W(r)\frac{dr}{r}.
\]
If \(K_W<\infty\), then
\[
\int_{\R^2}|\nabla\Phi_u(z)|^2 W(|z|)\,dz
\lesssim_{\eta}
K_W
\int_{\R^2}u^2(z)\,|z|^4 e^{\frac{|z|^2}{4}}\,dz.
\]
\end{lemma}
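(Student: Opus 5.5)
The statement to prove is Lemma~\ref{lem:weightedL2poisson_farfield}, the weighted $L^2$ bound for the Poisson field of a function supported in $\{|z|\ge\eta\}$. The plan is to use a pointwise bound on $\nabla\Phi_u$ in two regimes, inside the ball $|z|<\eta/2$ and outside it, and then integrate that bound against $W$.

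\textbf{Step 1: the region $|z|\le \eta/2$.} Here $|z-y|\ge|y|/2\ge \eta/2$ on the support of $u$, so
$$|\nabla\Phi_u(z)|\lesssim_\eta \|u\|_{L^1}.$$
The $L^1$ norm is controlled by Cauchy--Schwarz:
$$\|u\|_{L^1}\le \Big(\int u^2|z|^4e^{|z|^2/4}\Big)^{1/2}\Big(\int_{|z|\ge\eta}|z|^{-4}e^{-|z|^2/4}\Big)^{1/2}\lesssim_\eta \|u\|_{L^2_{\omega_\infty}}.$$
Integrating the square against $W$ over $r\le\eta/2$ costs $\int_0^{\eta/2}W(r)\,r\,dr$. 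Splitting $[0,1]$ and $[1,\infty)$ if $\eta/2>1$, this is bounded by $K_W$ up to $\eta$-dependent constants: $r\lesssim r|\ln r|$ near $0$ away from $r=1$, and $r\lesssim_\eta 1/r$ on $[1,\eta]$. The point $r\approx 1$ needs care, since $r|\ln r|$ vanishes there. I would handle it by choosing the comparison weight as $\int_0^1 r|\ln r|W+\int_1^\infty W/r$ plus an honest bound near $r=1$. The realistic fix is that the implicit constant in $\lesssim_\eta$ is allowed, and one replaces $\eta$ by $\min(\eta,1/2)$ so that only $r\le1/4$ appears, where $r\lesssim r|\ln r|$.

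\textbf{Step 2: the region $|z|\ge\eta/2$.} Here I will show the decay bound
$$|\nabla\Phi_u(z)|\lesssim_\eta \frac{\|u\|_{L^2_{\omega_\infty}}}{|z|}.$$
Split the integral $\int \frac{|u(y)|}{|z-y|}\,dy$ into three pieces.
\begin{itemize}
\item The far piece $|z-y|\ge|z|/2$ contributes at most $\|u\|_{L^1}/|z|$.
\item The near piece $|z-y|<\min(1,|z|/2)$: use Cauchy--Schwarz with the kernel in $L^{4/3}$ (local integrability) and $u$ in $L^4$. I would avoid $L^4$ and instead use $\int_{|z-y|<1}\frac{|u|}{|z-y|}\le \|u\|_{L^2(B_1(z))}\|\,|\cdot|^{-1}\|_{L^2(B_1)}$. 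That fails logarithmically, so an $L^p$ bound with $p>2$ is needed, which $L^2_{\omega_\infty}$ alone does not give.
\item The intermediate range $1\le|z-y|\le|z|/2$ is handled in the same way as the near piece.
\end{itemize}

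\textbf{Main obstacle.} The near-field singularity in Step 2 is the crux. The right tool is the Fourier/angular-mode approach the paper alludes to, applied directly to the $L^2$ quantity rather than pointwise. Write $\nabla\Phi_u=\nabla\Phi_{u_0}+\nabla\Phi_{u_{\ne0}}$, with $u_0$ the radial part and $u_{\ne0}$ the rest.
\begin{itemize}
\item For the radial part, $\nabla\Phi_{u_0}(z)=-\frac{z}{|z|^2}\frac{1}{2\pi}\int_{|y|<|z|}u_0$. This is bounded by $\|u\|_{L^1}/|z|$ and vanishes for $|z|<\eta$.
\item For each mode $k\ne0$, use the explicit Green's representation $\phi_k(r)=\frac{1}{2|k|}\big(r^{-|k|}\int_0^r s^{|k|+1}u_k+r^{|k|}\int_r^\infty s^{1-|k|}u_k\big)$ and weighted Hardy inequalities in $r$. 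These give $\int|\nabla\Phi_{u_{\ne0}}|^2W\lesssim_\eta K_W\|u\|^2_{L^2_{\omega_\infty}}$, the Gaussian weight absorbing all growth. The coefficients decay in $|k|$, so summing over $k$ is harmless by Plancherel in the angle.
\end{itemize}
Combining Steps 1 and 2 with this modewise bound yields the claimed inequality with constant depending on $\eta$.
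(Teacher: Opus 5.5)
Your final route (angular Fourier modes, the explicit Green's function for each $k\neq0$, Cauchy--Schwarz in $r$, Plancherel in $\theta$) is the argument the paper points to; its proof is omitted. You are right to drop the pointwise bound of Step 2, since $L^2_{\omega_\infty}$ does not control $\nabla\Phi_u$ in $L^\infty$.

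To make the sketch precise, use that $W$ is radial, so only circle averages matter. The key estimates are:
\begin{itemize}
\item from your representation, for $|k|\ge2$, $|\phi_k'(r)|+|k|r^{-1}|\phi_k(r)|\lesssim|k|^{-1/2}\|u_k\|_{L^2(s\,ds)}$ uniformly in $r$;
\item for $k=\pm1$, the term $\int_r^\infty u_{\pm1}\,ds$ is bounded using the support condition and the Gaussian weight;
\item the decay in $r$ comes from splitting $\int_0^r=\int_0^{r/2}+\int_{r/2}^r$, together with $|\nabla\Phi_{u_0}|\le\|u\|_{L^1}/(2\pi r)$.
\end{itemize}
These give
\[
\int_0^{2\pi}|\nabla\Phi_u(r,\theta)|^2\,d\theta\lesssim_\eta\min(1,r^{-2})\,\|u\|_{L^2_{\omega_\infty}}^2
\quad\text{hence}\quad
\int_{\mathbb R^2}|\nabla\Phi_u|^2W\,dz\lesssim_\eta\|u\|_{L^2_{\omega_\infty}}^2\int_0^\infty W(r)\min(r,r^{-1})\,dr .
\]
Your Step 1 is then redundant.

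The genuine gap is at $r\approx1$, and your fix does not close it. Replacing $\eta$ by $\min(\eta,1/2)$ only moves the unit circle from Step 1 into Step 2. There you still need $\int_{1/2}^1W(r)\,r\,dr\lesssim K_W$, which fails because $r|\ln r|$ vanishes at $r=1$.

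No proof can close this, because the inequality as literally stated is false. Take $u(z)=z_1g(|z|)$ with $0\le g\in C_c^\infty((3,4))$, $g\not\equiv0$; it is admissible for any $\eta\le3$. Only the modes $k=\pm1$ are present, so $\nabla\Phi_u\equiv(c,0)$ on $\{|z|<3\}$, with $c=\frac1{4\pi}\int_{\mathbb R^2}g(|y|)\,dy>0$. Now take $W(r)=(1-r)^{-3/2}\mathbf 1_{(1/2,1)}(r)$. Then $K_W<\infty$, while $\int|\nabla\Phi_u|^2W\,dz=\infty$. The same example, which also has zero mass, defeats Lemma~\ref{lem:weightL2pote} as written.

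The correct statement replaces $r|\ln r|$ by $r(1+|\ln r|)$ on $(0,1)$; the display above then finishes the proof. This amendment is harmless for how the paper uses the lemma, since the weights there are bounded, and in fact small, near $|z|=1$.
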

\end{appendix}

\end{document}